\documentclass[12pt]{amsart}
\usepackage{amsmath,amsthm,amsfonts,amscd,amssymb}
\usepackage[left=2cm,right=2cm]{geometry}

\usepackage[colorlinks = true,
linkcolor = blue,
urlcolor  = blue,
citecolor = purple,
anchorcolor = blue]{hyperref}
\usepackage[capitalize]{cleveref}
\usepackage{tikz-cd}
\usepackage{mathtools}
\usepackage{thmtools}
\usepackage{thm-restate}
\usepackage{xcolor,float}
\usepackage{graphics,graphicx}
\graphicspath{ {./Diagrams_Prelim/} }
\usepackage[colorinlistoftodos]{todonotes}
\usepackage{commath} 
\usepackage{tkz-fct}  
\usepackage{enumerate}  
\usepackage{enumitem}
\usepackage{comment}
\usepackage{lipsum}
\usepackage{caption}
\usepackage{subcaption}
\setcounter{tocdepth}{2}
\usepackage{mdframed}
\usepackage{lipsum}

\theoremstyle{plain}
\newtheorem{thm}{Theorem}[section]
\newtheorem{proposition}[thm]{Proposition}
\newtheorem{lemma}[thm]{Lemma}

\newtheorem{theorem}[thm]{Theorem}
\newtheorem{corollary}[thm]{Corollary}
\newtheorem*{theorem*}{Theorem}

\theoremstyle{plain}
\newtheorem*{definition*}{Definition}
\newtheorem*{proposition*}{Proposition}
\newtheorem*{lem*}{Lemma}

\theoremstyle{plain}
\newtheorem{cor}[thm]{Corollary} 

\theoremstyle{plain}

\theoremstyle{plain}
\newtheorem{remark}[thm]{Remark}

\theoremstyle{definition}
\newtheorem{definition}[thm]{Definition}

\theoremstyle{definition}

\newtheorem{exmp}[thm]{Example}
\newtheorem{ex}[thm]{Example}

\newtheorem{theo}{Theorem}
\newenvironment{ftheo}
  {\begin{mdframed}[innertopmargin = 3pt, innerbottommargin=3pt,skipabove=5pt,skipbelow=5pt,linewidth=0.25pt,nobreak=true,align=center]\begin{theo}}
  {\end{theo}\end{mdframed}}

\theoremstyle{definition}

\theoremstyle{definition}

\definecolor{DarkGreen}{RGB}{1,50,32}

\makeatletter
\def\@tocline#1#2#3#4#5#6#7{\relax
  \ifnum #1>\c@tocdepth 
  \else
    \par \addpenalty\@secpenalty\addvspace{#2}%
    \begingroup \hyphenpenalty\@M
    \@ifempty{#4}{%
      \@tempdima\csname r@tocindent\number#1\endcsname\relax
    }{%
      \@tempdima#4\relax
    }%
    \parindent\z@ \leftskip#3\relax \advance\leftskip\@tempdima\relax
    \rightskip\@pnumwidth plus4em \parfillskip-\@pnumwidth
    #5\leavevmode\hskip-\@tempdima
      \ifcase #1
       \or\or \hskip 1em \or \hskip 2em \else \hskip 3em \fi%
      #6\nobreak\relax
    \dotfill\hbox to\@pnumwidth{\@tocpagenum{#7}}\par
    \nobreak
    \endgroup
  \fi}
\makeatother
\newcommand{\sse}{\subset}

\newcommand{\R}{\mathbb R}
\newcommand{\C}{\mathbb C}

\newcommand{\N}{\mathbb N}

\newcommand{\CP}{\mathbb{CP}}
\newcommand{\sS}{\mathbb{S}}

\renewcommand{\Re}{\text{Re}}
\renewcommand{\Im}{\text{Im}}

\newcommand{\CF}{CF^\bullet}

\newcommand{\s}{\sigma}

\DeclareMathOperator{\Sh}{Sh}

\DeclareMathOperator{\cone}{cone}

\DeclareMathOperator{\Hom}{Hom}

\DeclareMathOperator{\sgn}{sgn}

\newcommand{\loct}{\mbox{Loc}^\dagger}
\newcommand{\ff}{\mathbb{F}}

\newcommand\ram[1]{\text{ram}(#1)}
\newcommand\inner[2]{\langle #1, #2 \rangle}

\newcommand{\lknot}{\Lambda} 
\newcommand{\radial}{r}
\newcommand{\scurve}{\Sigma}
\newcommand{\snetwork}{\EuScript{W}}
\newcommand{\fnetwork}{\EuScript{F}}
\newcommand{\primitive}{W}

\newcommand{\trunparam}{\delta}


\newcommand{\injradius}{\mbox{inj}(g)}
\newcommand{\sheetgap}{\rho}
\newcommand{\heightR}{R}

\newcommand{\bis}{{\bf S}}
\newcommand{\bic}{C}
\newcommand{\obic}{\widetilde{C}}

\newcommand{\tbis}{\bf \overline{S}}
\newcommand{\obis}{S}


\newcommand{\Loc}{\mbox{Loc}}
\newcommand{\slit}{\xi}
\newcommand{\soliton}[1]{{\mathfrak{s}(#1)}}

\newcommand{\bdir}{x}
\newcommand{\ldir}{\bar{x}}
\newcommand{\Rcs}{\mathcal{R}}
\newcommand{\Bspts}{\mathcal{B}}
\newcommand{\bspt}{b}
\newcommand{\bsptl}{\bar{b}}
\newcommand{\pts}{\Phi}

\newcommand{\mkpts}{\boldsymbol{m}}
\newcommand{\iregc}{\Theta}
\newcommand{\esym}{\mathfrak{c}}

\newcommand{\treegraph}{\Gamma}
\newcommand{\treemap}{F}
\newcommand{\Laggerm}{L}
\newcommand{\multigraph}{L}

\newcommand{\caustL}{K_{\multigraph}}

\newcommand{\sheet}{f}
\newcommand{\holsheet}{\lambda}
\newcommand{\irreg}{Q}
\newcommand{\irregsheet}{q}
\newcommand{\energy}{Z}
\newcommand{\sphsc}{\mathring{L}}
\newcommand{\sphb}{\mathring{S}}

\newcommand{\basepath}{\alpha}
\newcommand{\cotpath}{{\alpha}}


\newcommand{\locsys}{\textbf{V}}
\newcommand{\spins}{\widetilde{\mathfrak{s}}}
\newcommand{\spinb}{\mathfrak{s}}


\newcommand{\precptset}{K}
\newcommand{\nbh}{U}

\newcommand{\smallcst}{\hbar}

\newcommand{\fibre}[1]{T^{\ast}_{#1}\obis}

\newcommand{\halfdisk}{A}
\newcommand{\stdm}{\triangle_m}

\newcommand{\defac}{J_{cyl}}

\newcommand{\strip}{\mathcal{Z}}

\newcommand{\elon}{l}
\newcommand{\dist}{d}

\newcommand{\dcgraph}{\treegraph}


\newcommand{\dfs}{D_4^{-}}
\newcommand{\wcfl}[1]{\mathbb{F}(#1)}
\newcommand{\wcfle}[1]{\mathbb{F}_{\e}(#1)}
\newcommand{\wcflm}{\mathbb{F}}




\newcommand{\jbis}{J^1\bis}


\newcommand{\wfc}{\mathfrak{W}(T^{\ast}\obis)}
\newcommand{\kmod}{\mbox{mod}_k}
\newcommand{\pwfc}{\mathfrak{W}(T^{\ast}\obis,\Lambda)}
\newcommand{\dset}{\overrightarrow{P}}

\newcommand{\wto}{\rightsquigarrow}


\newcommand{\lksat}{\lknot_{\beta}^{+}}

\newcommand{\blag}{L}
\newcommand{\tlag}{P}


\DeclareMathAlphabet\EuScript{U}{eus}{m}{n}
\SetMathAlphabet\EuScript{bold}{U}{eus}{b}{n}

\newcommand{\std}{\text{st}}
\newcommand{\La}{\Lambda}

\newcommand{\la}{\lambda}
\newcommand{\e}{\varepsilon}

\newcommand{\T}{\mathbb{T}}
\newcommand{\w}{\mathfrak{w}}
\newcommand{\reeb}{\mathfrak{r}}
\renewcommand{\tt}{\mathfrak{t}}
\newcommand{\dd}{\partial}
\newcommand{\SL}{\mbox{SL}}
\newcommand{\GL}{\mbox{GL}}
\newcommand{\lr}{\longrightarrow}


\newcommand{\ppp}{\rho}

\newcommand{\ccc}{\mathring{\mathfrak{s}}}

\newcommand{\tdfd}{\Phi_{\snetwork}(V)}
\newcommand{\wall}{\mathfrak{w}}


\newcommand{\pneck}{\blag^{\dagger}}
\newcommand{\neckg}{g^{\dagger}}

\newcommand{\floor}[1]{\lfloor #1 \rfloor }
\newcommand{\ylag}{\EuScript{Y}(\blag)}
\newcommand{\ylagv}{\EuScript{Y}_{(\blag,V)}}

\newcommand{\ww}{\mathfrak{w}}
\newcommand{\ldisk}{\EuScript{D}}
\newcommand{\lcyl}{\blag_\circ}
\newcommand{\ycyl}{\EuScript{Y}(\blag_\circ,V)}
\newcommand{\ycylv}{\EuScript{Y}_{(\blag_\circ,V)}}
\newcommand{\bcyl}{\blag_{\circ}}
\newcommand{\SA}{\mathcal{A}}
\newcommand{\D}{\mathcal{D}}
\newcommand{\Z}{\mathbb{Z}}

\newcommand{\inva}{\bar{\alpha}}

\usepackage[style=alphabetic]{biblatex}
\bibliography{main}

\graphicspath{ {./figures/} }

\begin{document}
\title{Spectral Networks and Betti Lagrangians}
\author{Roger Casals}
	\address{University of California Davis, Dept. of Mathematics, USA}
	\email{casals@ucdavis.edu}
\author{Yoon Jae Nho}
	\address{University of California Davis, Dept. of Mathematics, USA}
	\email{yjnho@ucdavis.edu}
\maketitle

\begin{abstract}
We introduce and develop the theory of spectral networks in real contact and symplectic topology. First, we establish the existence and pseudoholomorphic characterization of spectral networks for Lagrangian fillings in the cotangent bundle of a smooth surface. These are proven via analytic results on the adiabatic degeneration of Floer trajectories and the explicit computation of continuation strips. Second, we construct a Family Floer functor for Lagrangian fillings endowed with a spectral network and prove its equivalence to the non-abelianization functor. In particular, this implies that both the framed 2d-4d BPS states and the Gaiotto-Moore-Neitzke non-abelianized parallel transport are realized as part of the $A_\infty$-operations of the associated 4d partially wrapped Fukaya categories. To conclude, we present a new construction relating spectral networks and Lagrangian fillings using Demazure weaves, and show the precise relation between spectral networks and augmentations of the Legendrian contact dg-algebra.
\end{abstract}

\tableofcontents
\section{Introduction}\label{section_introduction}

The object of this article is to develop the theory of spectral networks within the context of real contact and symplectic topology. First, we provide foundational results on the existence and characterization of spectral networks for Lagrangian fillings in the cotangent bundle of a smooth surface. In particular, we characterize the walls of spectral networks in terms of pseudoholomorphic strips. These are proven via new results on the adiabatic degeneration of Floer trajectories, the behavior of the associated flowlines, and explicit computations of continuation strips. Second, we construct a Family Floer functor for Betti Lagrangians endowed with a spectral network and show that the non-abelian parallel transport in Gaiotto-Moore-Neitzke's supersymmetric $N=4$ context is both generalized and made rigorous by this Floer-theoretic parallel transport. Third, the article brings forth new constructions and applications relating spectral networks and Lagrangian fillings, including the study of spectral networks from Demazure weaves, their relation to the Legendrian contact dg-algebra, and an interpretation of framed 2d-4d BPS states and the non-abelianization maps as part of the $A_\infty$-operations and module categories of partially wrapped Fukaya categories.\\

A central theme of this work is that spectral networks can be studied and characterized in terms of the Floer theory of real Lagrangians in the real cotangent bundle of a smooth surface. This allows for a key generalization of the standard physical theory of spectral networks, which itself focuses on meromorphic spectral curves in the holomorphic cotangent bundle of a Riemann surface. This generalization is needed to define and study the Betti moduli space in the wild case for smooth asymptotic data, not necessarily meromorphic Stokes data. In particular, it allows us to tackle asymptotics associated to any positive braid, including and generalizing all braid varieties, and perform computations on their associated cluster structures, BPS monodromy and Donaldson-Thomas theory. Indeed, the Floer-theoretic framework and results we establish provide a precise bridge to apply the new developments on cluster algebras in 4-dimensional symplectic topology to the study of spectral networks.\\

The introduction of spectral networks in mathematical physics has lead to an outstanding number of breakthroughs in string theory. The present work also serves the purpose of establishing a rigorous mathematical foundation for many beautiful insights and claims from the physics community, in our case within the context of contact and symplectic topology. A number of such conjectured properties and examples are discussed and proven throughout this manuscript, including the characterization of spectral networks in terms of pseudo-holomorphic disks and the study of higher-order Stokes phenomena, including the Berk-Nevins-Roberts example and beyond.
\\


\subsection{Scientific context} In 2012, within the context of supersymmetric gauge theories in mathematical physics, D.~Gaiotto, G.~Moore and A.~Neitzke introduced and developed the study of spectral networks in \cite{GMN13_Framed,GNMSN,GMN14_Snakes}. These contributions to physical theories of class $S$ allowed for the computation of different BPS degeneracies in the associated 4-dimensional $N=2$ theories (e.g.~coupling to surface defects) and a better understanding of how the wall-crossing phenomenon in coupled 2d-4d systems \cite{GMN12_WallCrossCoupled,WallcrossingHitchinSystemWKBapproximation} related to the BPS spectrum. Since then, an abundance of exciting work illustrates that spectral networks are part of a fertile and productive ground for mathematical physics. To wit, further explorations of different types of BPS states were achieved in \cite{MR3688368,MR3395151,MR3555651,MR3640256,MR3769247} and \cite{MR4089181,MR4370932,MR3638316,MR4097010}, non-abelianization and quantum holonomies were studied in \cite{MR3613514,MR3500424,MR4166626,MR4568006,MR4190271} and see also the surveys \cite{MR3263304,MR4380684}.\\

In pure mathematics, spectral networks have started to be used in a variety of areas, including exact WKB analysis \cite{MR4814083,MR4640921,MR4372666,MR4567381} (see also the preceding works \cite{AokiKawaiTakei01_StokesCurves,HowlsLangmanOlde04_HigherOrderStokes}), the study of moduli of Higgs bundles and their Hitchin fibrations \cite{MR4214341,MR4324961,MR3322389,MR3551264}, and that of spaces of stability conditions \cite{MR3349833,MR4808258,MR3416110}; see also \cite{MR4748489,kuwagaki2024genericexistencewkbspectral,nho2024familyfloertheorynonabelianization,MR4843307}. Throughout these contributions, there is a focus towards the case $G=\mbox{SL}_2(\C)$, where the spectral curve is a 2-fold branch cover of the base. This case is rather well-understood by now, in part due to its direct relation to the theory of quadratic differentials on Riemann surfaces and the combinatorics of their associated foliations. The higher rank case, for $G=\mbox{SL}_n(\C)$, remains an interesting subject of exploration, cf.~\cite{MR3322389,MR4340928}, and a main focus of this article.

In the $\mbox{SL}_2(\C)$ case, a first connection between the Floer theory of real Lagrangian surfaces and framed 2d-4d BPS states was hinted by the first author in \cite{CasalsDGAcubic}, being further developed in \cite{nho2024familyfloertheorynonabelianization} by the second author. Subsequently, the introduction of Legendrian weaves \cite{legendrianweaves} has provided key insights to establish the general connection between Floer theory and spectral networks in all higher ranks, significantly beyond the theory of quadratic differentials. Equipped with new tools in symplectic topology \cite{casals2022conjugate,legendrianweaves,GPSCV,GPSSD}, the present manuscript now greatly generalizes the results in \cite{CasalsDGAcubic,nho2024familyfloertheorynonabelianization} to arbitrary rank, and crystallizes a number of further insights relating spectral networks to both partially wrapped Fukaya categories \cite{GPSCV,GPSSD,MR3911570} and the construction of cluster structures on braid varieties \cite{casals2024algebraicweavesbraidvarieties,casals2024clusterstructuresbraidvarieties,casals2023microlocal}. From the Betti perspective on wild character varieties, a key difference between rank $n=2$ and higher rank spectral networks is that, in the latter case, asymptotics are not constrained to be meromorphic. Namely, the putative spectral curves can have asymptotics dictated by non-algebraic links, and thus they are not necessarily associated to meromorphic curves. Thus far, all higher rank explorations in the literature focus on the algebraic case, on the so-called WKB spectral networks, where the spectral curve is a holomorphic subvariety of the holomorphic cotangent bundle of a Riemann surface. The results we present go much beyond such algebraic constraints.\\

A key conceptual advance of this article is that the technique of spectral networks also works for real Lagrangian fillings of Legendrian links, regardless of whether the Lagrangian surface and their asymptotic Legendrian links have algebraic origin or not. For this, we introduce a more general definition of spectral network than \cite{GNMSN}, where only an almost complex structure is used and the necessary analytical properties of pseudo-holomorphic strips and their adiabatic limits can be proven. An advantage of intertwining spectral networks with some of the current tools from real contact and symplectic topology is that structural results known within the latter translate into theorems for spectral networks. An instance of this is the invariance of BPS soliton classes under real Hamiltonian isotopies, and not just holomorphic deformations. Another example is the functorial behavior of Floer-theoretic invariants for Legendrian links under Lagrangian cobordisms, which readily explains how to glue and compute spectral networks associated to Demazure weaves. These new classes of spectral networks, built from Demazure weaves, include and generalize many guiding examples in the literature and allow for direct computations of the BPS monodromy generator by using explicit reddening sequences from their associated cluster algebras.


\subsection{Main results} Let ${\bf S}$ be a smooth closed oriented surface and $\mkpts\sse {\bf S}$ a finite collection of marked points. A triple $({\bf S},\mkpts;{\bf \lknot})$ is said to be a Betti surface of rank $n$ if ${\bf \lknot}$ is a collection of Legendrian links in the ideal contact boundary $(T^\infty S,\xi_\std)$, each of which is associated to an $n$-stranded positive braid over the Legendrian fiber of a marked point in $\mkpts$. Let us denote by $S:={\bf S}\setminus\mkpts$ the complement of the marked points. By depicting the wavefronts of the links in ${\bf \lknot}$ onto $S\sse {\bf S}$, a Betti surface is pictorially drawn as in Figure \ref{fig:BettiSurface}.

\begin{center}
	\begin{figure}[h!]
		\centering
		\includegraphics[scale=0.6]{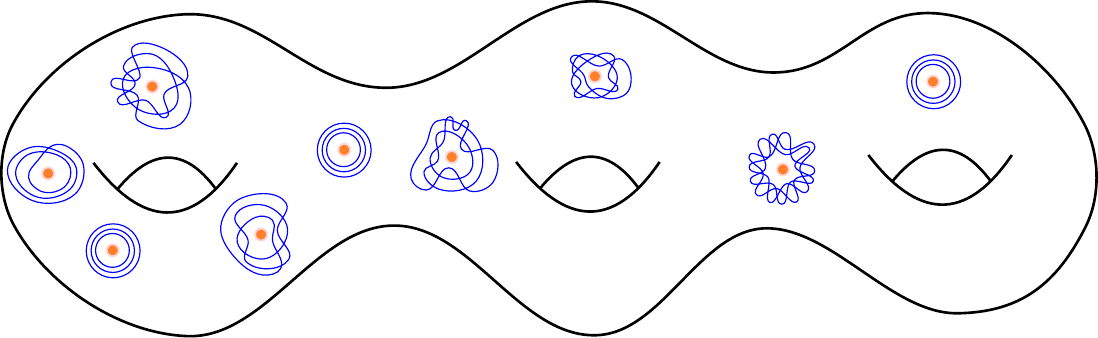}
		\caption{A Betti surface ${\bf S}$ of genus $3$ and rank $3$, with 9 marked points $\mkpts=\{m_1,\ldots,m_9\}$, in orange, and their associated Legendrian links in ${\bf \lknot}$, in blue. The Legendrian links associated to marked points $m_1,m_5,m_8$ have concentric circles as their fronts, which corresponds to a regular singularity. A non-concentric Legendrian isotopy class corresponds to an irregular singularity.}
		\label{fig:BettiSurface}
	\end{figure}
\end{center}

The Legendrian links in $\lknot$ are the contact topological incarnation of the Stokes data from the theory of irregular singularities of meromorphic connections, cf.~\cite[Remark 2.17]{MR2483750}. In particular, any meromorphic spectral curve in the sense of \cite{GNMSN} yields a well-defined Betti surface of the corresponding rank, where the fronts of the Legendrians in $\lknot$ are the associated Stokes diagrams. Details are provided in Section \ref{section_setup}, cf.~also \cite{Boalch21_TopologyStokes,STWZ} and \cite[Appendix B]{Su25_DualBoundary}. At core, a Betti Lagrangian $L\sse(T^*S,\omega_\std)$ is a Lagrangian submanifold whose projection onto $S$ is an $n$-fold branched cover and whose asymptotics are dictated by the Legendrian links in ${\bf \lknot}$; this is detailed in Definition \ref{definition:tamespectralcurve}. There are two important classes of such Betti Lagrangians, both discussed in \cref{subsection:scurves}:\\

\begin{itemize}
    \item[(i)] Exact Betti Lagrangians, where the restriction of the Liouville form $\la_\std$ to $L$ is an exact 1-form, i.e.~all real periods of $\la_\std$ on $L$ vanish. This class is defined with $\obis$ a smooth punctured surface and it yields Lagrangian fillings $L\sse(T^*S,\la_\std)$ of the Legendrian link given by the union of all Legendrian links $\lknot$, after cylindrization near the contact boundary; cf.~\cref{subsection:PWFC} and \cite{CasalsGao22,CasalsGao24,legendrianweaves,EHKexactLagrangiancobordisms}.\\
    
    \item[(ii)] If $S$ is endowed with a Riemann surface structure $\sS$, we can consider meromorphic Betti Lagrangians. These are the Betti Lagrangians that appear as the real part of a holomorphic Lagrangian curve $\scurve\sse (\T^*\sS,\omega_\C)$ in the holomorphic cotangent bundle $(\T^*\sS,\omega_\C)$ with asymptotics dictated by the irregular classes at $\mkpts$. These are the spectral curves studied in the literature, including those in \cite{GMN13_Framed,GNMSN,GMN14_Snakes} and those featuring in the irregular Riemann-Hilbert correspondence, see e.g.~\cite{BiquardBoalch04_WildNAH,Boalch21_TopologyStokes}.\\
\end{itemize}

The theory of spectral networks in \cite{GMN13_Framed,GNMSN,GMN14_Snakes} is devoted to Betti Lagrangians of type $(ii)$, in Riemann surfaces $\sS$. The resulting spectral networks satisfy a number of additional properties, implied by the underlying holomorphicity: we refer to those spectral networks in \cite{GMN13_Framed,GNMSN,GMN14_Snakes} as WKB spectral networks. In order to develop the theory of spectral networks in the real setting of $(T^*S,\la_\std)$, so as to include the important class $(i)$ above, Section \ref{section_mnetwork} introduces a generalization of those WKB spectral networks, which we refer to as a Morse spectral networks. In a nutshell, a Morse spectral network $\snetwork\sse S$ compatible with $L$ is a set of gradient flow trajectories in $S$ given by the difference functions of the sheets of $L\sse T^*S$ above $S$ satisfying a set of non-trivial {\it interaction and asymptotic constraints}. These constraints exclude a number of pathological behaviors. For instance, it is proven in \cref{section_mnetwork} that Morse spectral networks do not allow for ill-behaved asymptotics at infinity, such as trajectories reaching $\mkpts$ in finite time, nor trajectories getting arbitrarily close to $\mkpts$ and then steering away from $\mkpts$. In fact, trajectories should converge to Reeb chords of $\La$, which generalize the points of maximal decay in Stokes diagrams. In the interior, Morse spectral networks also disallow for high-valence vertex interactions and tangencies between trajectories and sink points inside of $S$, among others. In general, these are all possible behaviors for such difference-function gradient trajectories that occur without a priori constraints on the geometry of $L$. In addition, a Morse spectral network must have a compatible counting function  $\mu$ on such trajectories, generalizing the count of framed 2d-4d BPS states.\footnote{This is in line with the 4d BPS invariants of \cite[Definition 2.1]{Bridgeland19_RH_from_DT}. Nevertheless, we do not consider a central charge in this general real setting; that might be the focus of future work.} These interaction constraints and the necessary properties of $\mu$, both crucial to the concept of a Morse spectral network, are specified in Section \ref{subsubsection:mnetwork}: a depiction of the key allowed local models for a Morse spectral network $\snetwork$ is in Figure \ref{fig:vertices_spec_net}.

\begin{center}
	\begin{figure}[h!]
		\centering
		\includegraphics[scale=1.1]{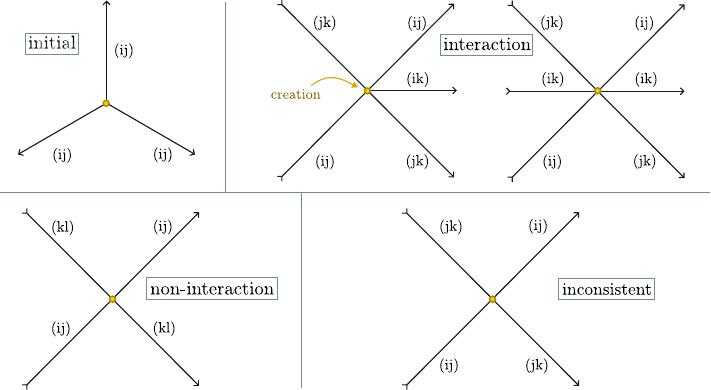}
		\caption{The local models for vertices in spectral networks: initial, interaction and non-interaction. There are two types of interaction vertices: creation and 6-valent type. The inconsistent vertex type is allowed for pre-spectral networks, but is {\it not} allowed for spectral networks.}\label{fig:vertices_spec_net}
    \end{figure}
\end{center}


{\bf A.} The first natural question that arises is the existence of such Morse spectral networks compatible with an exact Betti Lagrangian $L\sse(T^*S,\omega_\std)$. A main difficulty is that the gradient flow trajectories given by the sheets of $L$ over $S$ might not satisfy the necessary asymptotic and interaction constraints. Also, it might not be clear how to generally assign the counting function on trajectories given an arbitrary such Lagrangian $L\sse(T^*S,\omega_\std)$. Similarly, in the holomorphic setting, it has not been established that a finite-energy WKB spectral network compatible with a given a meromorphic Betti Lagrangian $\scurve\sse (\T^*\sS,\omega_\C)$ actually does exist. In practice, it is also important to guarantee that there are finitely many trajectories in a spectral network, given an energy cut-off (or equivalently, a mass cut-off in physics literature). In fact, due to its conceptual importance, we include this condition  into the definition of the WKB spectral network. The first result in this manuscript resolves the matter of existence, including the (energy)-finiteness constraint:

\begin{ftheo}[\color{blue} Existence of Spectral Networks\color{black}]\label{thm:existence} Let $S$ be a Betti surface.

\begin{itemize}
    \item[(i)] Let $L\sse(T^*S,\la_\std)$ be an exact Betti Lagrangian and $(S,g)$ a metric adapted to $L$.
    \color{black}Then there exists a finite Morse spectral network compatible with $L$ and $(S,g)$.\color{black}\\
   
    \item[(ii)] Let $\sS$ a Riemann surface structure for $S$, $(\T^*\sS,\omega_\C)$ its holomorphic cotangent bundle, and $\scurve\sse (\T^*\sS,\omega_\C)$ a meromorphic spectral curve with $O(-1)$ ends. Then\color{black}, for dense subset of phases $\theta\in S^1$,
    \color{black}there exists a generic WKB spectral network compatible with the real Lagrangian $e^{i\theta}\scurve\sse(T^*S,\la_\std)$. \color{black}
\end{itemize}

\end{ftheo}
\vspace{0.5cm}
\noindent In the statement of Theorem \ref{thm:existence}, the dense set of phases in $S^1$ is a countable intersection of open dense sets. The notation $e^{i\theta}\scurve$ stands for the real part of the holomorphic subvariety $\scurve$ after being acted by a rotation of phase $\theta$, see Section \ref{section_setup} for details. In the real case $(i)$, our proof of Theorem \ref{thm:existence} actually yields a compatible Morse spectral network whose only interaction vertices are creation vertices, which is in itself an interesting difference with the WKB case. For the proof of Theorem \ref{thm:existence}, presented in \cref{section_mnetwork}, both the trapping lemma in Section \ref{subsection:trappinglemma} and the preliminary transversality established in Section \ref{subsection:preliminarytransversalitycondition} are important technical aspects, controlling the behavior of flow trajectories and ensuring necessary asymptotics and interaction constraints. The proof of Theorem \ref{thm:existence} in the WKB case $(ii)$ nicely intertwines with the heuristic construction proposed in \cite{GNMSN}: indeed, we prove that their proposed procedure gives an algorithm that both terminates and outputs an object which is indeed a WKB spectral network, satisfying the energy-finiteness condition, or rather, a stronger condition that we call \textit{gapped} (c.f \cref{def:spectralnetwork}). In this latter WKB case, see also \cite[Theorem 1.2]{kuwagaki2024genericexistencewkbspectral}, which discusses the construction of certain WKB spectral networks via sheaf quantization. Note that in the WKB case $(ii)$, the real Lagrangian $e^{i\theta}\scurve\sse(T^*S,\la_\std)$ is typically not an exact Lagrangian, as the periods of $\scurve$ might not all have the same phase $\theta+\pi/2$.\\


{\bf B.} Floer theory, broadly understood as the study of symplectic topology through pseudo-holomorphic methods, infinite-dimensional variational analysis on path spaces and their homological incarnations, is a pillar of contact and symplectic topology. See for instance \cite{AbbondandoloSchlenk19_FloerSurvey,MR1826267,Floer88_MorseLagrangianIntersections,Floer89_SymplecticFixedPoints,Gromov85_Pseudoholomorphic,PSS96,Viterbo99_FunctorsFloerHomology} and the monographs \cite{FOOO1,FOOO2,McduffSalamon,SeidelZurich}, to name some key references. The context for Theorem \ref{thm:existence} certainly belongs to real symplectic topology: a real Lagrangian submanifold $L$ inside the cotangent bundle $(T^*S,\omega_\std)$. Yet, spectral networks $\snetwork\sse S$ are defined in terms of gradient trajectories on $S$: even if they are trajectories of functions describing $L$, there are no immediate Floer-theoretic aspects to the notion of a spectral network. Based on examples, e.g.~ when certain 4d BPS states correspond to holomorphic disks \cite[Section 6.2]{GNMSN}, it has been hoped by the mathematical physics community that spectral networks $\snetwork\sse S$ can in fact be given a characterization entirely in Floer-theoretic terms. Given the effectiveness of Floer theory and all its known functorial properties, establishing a rigorous connection between Floer theory and spectral networks seems particularly desirable from the viewpoint of pure mathematics as well. Our second result proves such characterization:

\begin{ftheo}[\color{blue}Floer-theoretic characterization of Spectral Networks\color{black}]\label{thm:characterization} Let $S$ be a Betti surface, $\scurve\sse(T^*S,\la_\std)$ a Betti Lagrangian and $\snetwork\sse S$ a spectral network compatible with $\scurve$. Then the following characterization holds:\\

\color{purple}
{\small
\noindent {\it A point $p\in S$ belongs to an active wall of $\snetwork$ $\Longleftrightarrow$ $\exists$ broken pseudo-holomorphic strip in $(T^*S,\la_\std)$ }
\begin{flushright}  {\it between the fiber $T^*_pS$ and $\scurve$ in the adiabatic limit.\\}\end{flushright}
}

\color{black}
\vspace{0.2cm}
\noindent Furthermore, the following two properties are satisfied:
\begin{itemize}
    \item[(i)] If $p\in\snetwork$, then the relative homology class in $\scurve$ associated any such pseudo-holomorphic strip is uniquely determined by $\snetwork$, and such strip must adiabatically degenerate to a subset of $\snetwork$.
    \item[(ii)] For $\scurve$ an exact Betti Lagrangian, the characterization holds for (genuine) pseudoholomorphic strips, without being potentially broken.
\end{itemize}
\end{ftheo}
\vspace{0.5cm}
When $\scurve$ is non-exact, we demand $\scurve$ to be meromorphic outside some compact subset of $T^{\ast}\obis$, for some technical reasons. All known interesting examples of non-exact Betti Lagrangians  fall into such a category.
A first important aspect of Theorem \ref{thm:characterization} is the presence of an adiabatic limit: it refers to the limit $\varepsilon\to 0$ when the Betti Lagrangian $\scurve\sse(T^*S,\la_\std)$ is rescaled by a factor of $\varepsilon\in\R_{+}$ under the fiberwise radial scaling action of the cotangent bundle.\footnote{In the literature in geometric analysis, adiabatic limit refers to the process of degenerating a Riemannian metric in certain directions. In symplectic topology, one typically starts with the symplectic 2-form and a compatible almost complex structure, from which a Riemannian metric is produced. The fiberwise rescaling action can be seen as a rescaling of the almost complex structure, which itself is then seen as rescaling the metric. In this case, the limit metrics when $\varepsilon\to 0$ degenerate to be supported in the zero section of $T^*S$.} Since even in the simplest cases pseudo-holomorphic strips are in bijective correspondence with flow-trees only in the adiabatic limit, the adiabatic hypothesis in \cref{thm:characterization} is needed in any such characterization. A second relevant fact is that, for exact Betti Lagrangians, all the walls of the spectral networks produced in \cref{thm:existence} are active. Therefore \cref{thm:characterization} gives a complete characterization of points in a spectral network, without even considering framed 2d-4d BPS indices.\\

Appropriately understood, Theorem \ref{thm:characterization} generalizes the analysis relating flow-trees and pseudo-holomorphic strips from \cite{Morseflowtree}. The result in ibid.~are nevertheless far from concluding \cref{thm:characterization}: a key new ingredient is establishing the convergence from pseudo-holomorphic strips to trees while satisfying the initial edge constraints, which is crucial to control the behavior of $\dfs$-trees near branch points. This convergence result is proven in \cref{section_adg}. In addition, the flowtree results in the literature do not apply to non-exact Lagrangians (and so would not be able to deal with any of \cite{GNMSN,GMN13_Framed,GMN14_Snakes}) and also require generic swallowtails front singularities. Since $D_4^-$-singularities are necessarily present in a branch cover and the study of spectral curves, we also further develop the machinery of Morse flowtrees for this context.\\

Theorem \ref{thm:characterization} is proven at the end of \cref{section_Floer}, after developing the necessary analytical results in the bulk of Sections \ref{section_adg} and \ref{section_Floer}. At core, it is the combination of two different of results. First, we establish no-go theorems in Section \ref{section_adg}, cf.~ Theorem \ref{thm:snetworkadg}, using adiabatic degeneration. We are able to use the adiabatic limit to show that certain pseudoholomorphic strips do {\it not} exist, as stated in Theorem \ref{thm:characterization}: it allows us to prove that points in the complement of the spectral networks cannot support such a pseudoholomorphic strip and that, for any point in the spectral network $\snetwork$, the only possible homology class for a potential pseudoholomorphic strip through that point must be the soliton class determined by $\snetwork$ and the point. Second, we prove that there exist such pseudo-holomorphic strips for points in $\snetwork$, as stated in Theorem \ref{thm:characterization}, through the explicit computations established in Section \ref{section_Floer}.\\



{\bf C.} In Theorems \ref{thm:existence} and \ref{thm:characterization}, the more foundational aspects of existence and characterizations are addressed. A central aspect of spectral networks in the supersymmetric theories of \cite{GNMSN} is the non-abelianization functor, cf.~\cite[Section 10]{GMN12_WallCrossCoupled} and \cite[Section 5]{GMN14_Snakes}. In the supersymmetric $N=4$ context, it is a construction from the infrared to the ultraviolet that carries $\GL_1(\C)$-local systems on the Seiberg-Witten curve to $\GL_n(\C)$-local system on the space of ultraviolet parameters. In topological terms, each spectral network $\snetwork\sse S$ partially defines a functor $\Phi_\snetwork:\loct(L)\dashrightarrow \loct(S)$
from rank-$n$ twisted local systems on $L$ to rank-$1$ twisted local systems in $S$,
i.e.~from a Betti Lagrangian $L\sse(T^*S,\la_\std)$ to the zero section $S$. The functor $\Phi_\snetwork$ is constructed combinatorially from $\snetwork$, introducing detour paths that act as instanton corrections and abide by formulas similar to the Hori-Vafa wall-crossing formulas. Technically, given $V\in\loct(\blag)$, the physics framework is such that $\Phi_\snetwork(V)$ has parallel transport only defined for paths transverse to $\snetwork$ and endpoints in the complement $\snetwork^c$, thus our use of the dashed arrows in $\Phi_\snetwork:\loct(L)\dashrightarrow \loct(S)$, cf.~\cref{subsubsection:pathdetourclasses}.\\

In this paper we develop Floer theory for exact Betti Lagrangians in the presence of a spectral network $\snetwork$, in \cref{section_Floer}, constructing Floer-type cochain complexes with differentials recording certain counts of (actual non-adiabatic) pseudo-holomorphic disks. As a consequence, we are able to construct an actual functor $\ff:\loct(L)\lr\loct(S)$ using Family Floer techniques. The functor $\ff$ is defined for all paths and built as a direct limit of functors $\ff_\e$ depending on the adiabatic parameter. This is presented in \cref{subsection:familyfloer}; here $\ff$ stands both for Family and for Floer. Then we establish the comparison between these two functors in \cref{ssec:detour_are_continuation_strips}. The results can be summarized as follows:\\

\begin{ftheo}[\color{blue}Family Floer and Non-abelianization\color{black}]\label{thm:FamilyFloer_NonAbelianization} Let $S$ be a Betti surface, $\blag\sse(T^*S,\la_\std)$ an exact Betti Lagrangian and $\snetwork\sse S$ a spectral network compatible with $\scurve$. Then:\\

\begin{enumerate}
    \item Family Floer with respect to $L$ can be defined and yields a functor
$$\ff:\loct(L)\lr\loct(S),$$
invariant under compactly supported Hamiltonian isotopies of $L$. Specifically, it is built from a type of Floer complexes $CF^\ast(T^\ast_z S,L)$ between cotangent fibers and $L$, which can be constructed and shown to be well-defined.\\

\item In the adiabatic limit $\e\to0$, the two functors
$$\ff,\Phi_\snetwork:\loct(\e L)\lr\loct(S),$$
are equivalent.
\end{enumerate}
\end{ftheo}
\vspace{0.5cm}

\noindent As said above, the Floer-theoretic functor $\ff$ in \cref{thm:FamilyFloer_NonAbelianization}.(1) is defined as a direct limit of Family Floer functors $\ff_\e$ that depends on the adiabatic parameter $\e$: at core, $\ff$ uses the spectral network $\snetwork$ in order to correct the technical issues that often obstruct when applying Family Floer methods, including caustics. The comparison in \cref{thm:FamilyFloer_NonAbelianization}.(2) does require scaling the Betti Lagrangian and thus the comparison $\ff=\Phi_\snetwork$ only makes sense in the adiabatic limit. \cref{thm:FamilyFloer_NonAbelianization} is proven in \cref{subsection:familyfloer} for Part (1) and \cref{ssec:detour_are_continuation_strips} for Part (2). Note that the case of quadratic differentials, with $\GL_2(\C)$, can be argued from \cite{nho2024familyfloertheorynonabelianization}, whereas \cref{thm:FamilyFloer_NonAbelianization} works for any higher rank exact Betti Lagrangians. We emphasize that the functor $\ff$ is well-defined whereas, technically, $\Phi_\snetwork$ cannot parallel transport starting at $\snetwork$: \cref{thm:FamilyFloer_NonAbelianization}.(2) is thus an equality where $\Phi_\snetwork$ is defined. More conceptually, it shows how the framework of Floer theory not only captures but generalizes that of spectral networks: the former being able to be used to define and extend the latter. An interesting question is whether \cref{thm:FamilyFloer_NonAbelianization} could be extended to general Betti Lagrangians. For such Lagrangians, we are currently only able to explicitly compute parallel transport maps for very short line segments. Such a computation is the key component behind the proof of \cref{thm:characterization}. We leave this question for the future research.\\

A corollary of \cref{thm:FamilyFloer_NonAbelianization} is that we can extract a symplectic invariant of the exact Betti Lagrangians $L\sse T^*S$ from $\snetwork$, up to compactly supported Hamiltonian isotopies. Indeed, \cref{thm:FamilyFloer_NonAbelianization} implies that the set of soliton classes in $\snetwork$ associated to Reeb chords of $\dd_\infty L$ is an invariant of $L$. This is proven in \cref{thm:pathdetourclassesareHaminvariant}.\footnote{It is an effective invariant, as it at least matches the information contained in the augmentation induced by $L$ for the Legendrian contact dg-algebra of $\dd_\infty L$, when the latter is defined. Thus it already distinguishes Lagrangians before and after a surgery.} Note that the cluster algebra structures built in \cite{casals2024clusterstructuresbraidvarieties}, corresponding to a Betti surface $\obis=S^2$ with one irregular singularity, have cluster variables defined by Floer parallel transport along relative cycles. In combination with that work, \cref{thm:FamilyFloer_NonAbelianization} then formalizes the cluster-like coordinates sketched in the supersymmetric context in \cite[Section 10]{GNMSN} and \cite{MR3263304}.\\

{\bf D.} Fukaya $A_\infty$-categories, in their many forms, are a central object in Floer theory and the study of symplectic topology via pseudo-holomorphic methods, see e.g.~\cite{FOOO1,GPSCV,SeidelZurich}. The Floer complexes we constructed for \cref{thm:FamilyFloer_NonAbelianization} and the results developed in Sections \ref{section_adg} and \ref{section_Floer} can be enhanced to an $A_\infty$-categorical level. This is achieved in \cref{section_wfc}, as follows:\\

\begin{itemize}
    \item[(i)] First, due to its asymptotic ends, an exact Betti Lagrangian $L\sse T^*S$ does not define an object of $\wfc$ or any partially wrapped version thereof. That said, we construct in \cref{subsection:WFC} an $A_\infty$-module $\ylagv$ of $\wfc$ that behaves as if it was the Yoneda module for such an object. More crucially, we then prove in \cref{subsec:prooffamilyfloerwrapped} that its $\mu_{1\vert1}$ $A_\infty$-operation precisely captures the non-abelian parallel transport.\\

    \item[(ii)] Second, in \cref{subsection:PWFC} we geometrically modify $L$ via cylindrization so as to obtain a Lagrangian $\blag_{\circ}$ which defines an object in the partially wrapped Fukaya category $\pwfc$. We also establish that its associated $\mu_{2}$ $A_\infty$-map recovers non-abelian parallel transport, now in the partially wrapped case.\\

    \item[(iii)] Third, we compare the $A_\infty$-modules presented in (i) and (ii) above under the inclusion functor $i_*:\wfc\to\pwfc$ and completely describe the Yoneda $A_\infty$-module of $\blag_{\circ}$. This description of this $A_\infty$-module of $\pwfc$ associated to the cylindrized Betti Lagrangian $\blag_\circ$ is achieved in \cref{subsection:partialyonneda} by proving a generation result for $\pwfc$, cf.~\cref{prop:gen_internal_infinity_fibers}, showing that its generated by internal and infinity fibers, without linking disks being in the generating set.
\end{itemize} 

\noindent These are all established in detail in \cref{section_wfc}. See \cref{prop:A-inftymodule}, \cref{thm:Familyfloerwrapped} and \cref{cor:familyfloerwrapped} for (i), \cref{sssec:cylindrize_BettiLag} and \cref{thm:pwfcyonneda} for (ii), and \cref{cor:pwfcyonnedamodule} and \cref{prop:gen_internal_infinity_fibers} for (iii). We summarize some of the key outcomes in the following result:

\begin{ftheo}[\color{blue} Spectral curves and 4d Partially Wrapped Fukaya Categories\color{black}]\label{thm:specnet_fuk} Let $L\sse (T^*S,\la_\std)$ be an exact Betti Lagrangian endowed with a local system $V\in\Loc(L)$ and $\snetwork\sse S$ a compatible Morse spectral network. Then:

\begin{itemize}
    \item[(i)] There exists an $A_{\infty}$-module
$\ylagv:\wfc\lr\kmod$ with cohomologies
$$H^{\ast}(\ylagv(\tlag))\cong HF^\ast(\tlag,(\blag,V)),$$
where $\tlag\sse T^*S$ is any exact cylindrical Lagrangian with compact horizontal support. Furthermore, the minimal geodesic generator $[\alpha_{zw}]\in \mbox{Hom}_{\wfc}(\fibre{z},\fibre{w})$ of an $\snetwork$-adapted pair $z,w\in S$ satisfies
\begin{equation*}\label{eq:mu11_paralleltrans}
\mu_{1\vert 1}([\alpha_{zw}],\cdot)=\tdfd(\alpha_{zw}).
\end{equation*}
In particular, there exists an $A_\infty$-quasi-equivalence $\wfc\mbox{-mod}\cong C_{-\ast}(\Omega_zS)$-mod,
$$\ylagv\simeq\Phi_\snetwork(V),$$
as $A_{\infty}$-modules over $\wfc$.\\

\item[(ii)] Consider the Yoneda $A_\infty$-module $\ycylv:\pwfc\lr\kmod$ of the cylindrized Betti Lagrangian $\blag_\circ\sse T^*S$ and its $\mu_2$ $A_\infty$-operation. Then the minimal geodesic $\alpha_{zw}\in\Omega_{z,w}$ satisfies
\begin{equation*}\label{eq:mu2_paralleltrans}
\mu_2(i_{\ast}[\alpha_{zw}],\cdot )=\tdfd(\alpha_{zw})
\end{equation*}
and there is a homotopy
$$\ycylv\circ i_{\ast}\simeq \ylagv$$
of $A_\infty$-modules over $\wfc$.
\end{itemize}

\end{ftheo}
\vspace{0.5cm}

At core, \cref{thm:specnet_fuk} establishes that the non-abelian parallel transport of Gaiotto-Moore-Neitzke studied in \cite{GMN12_WallCrossCoupled,GNMSN,GMN14_Snakes}, and the associated counts of framed 2d-4d BPS states, can be realized as parts of the $A_\infty$-operations in either wrapped or partially wrapped 4-dimensional Fukaya categories. \cref{thm:specnet_fuk}.(i) does so in the wrapped category $\wfc$ and \cref{thm:specnet_fuk}.(ii) in the partially wrapped category $\pwfc$, also establishing the comparison. At a technical level, we use in \cref{section_wfc} the modern construction of partially wrapped categories in \cite{GPSCV} and establish our results by indeed using their definitions in terms of homotopy colimits, carefully defining the class associated to a minimal geodesic $\alpha_{zw}$ by studying the necessary continuation maps, cf.~ Sections \ref{subsec:prooffamilyfloerwrapped} and \ref{sssec:proof_blag_cyl_mu2}.\\

{\bf E.} Theorems \ref{thm:existence} and \ref{thm:characterization} establish existence and characterizations of spectral networks, and Theorems \ref{thm:FamilyFloer_NonAbelianization} and \ref{thm:specnet_fuk} develop the Floer-theoretic results needed to rigorously define and establish non-abelianization in spectral networks, first analytically and then categorically, respectively. The last result we present is a new construction of spectral networks, providing both novel examples and crystallizing connections to cluster algebras.

The input of the construction is a Legendrian weave $\w\sse S$, as introduced in \cite{legendrianweaves}, specifically a Demazure weave, cf.~\cref{ssec:weaves_Betti_surfaces}, \cite[Section 4.1]{casals2024algebraicweavesbraidvarieties} or \cite[Section 4.1]{casals2024clusterstructuresbraidvarieties}. Such a weave $\w\sse S$ defines an exact Betti Lagrangian $L_\w\sse(T^*S,\la_\std)$ with asymptotics determined by a Legendrian $\La_{\dd\w}\sse(T^\infty S,\xi_\std)$, depending only on the boundary $\dd\w$ of the weave. In \cref{ssec:networks_Demazure_weaves} we define a certain object $\snetwork_\w\sse S$, associated to a weave $\w$, entirely in combinatorial terms: we refer to it as the {\it augmentation forest} of $\w$. These $\snetwork_\w\sse \R^2$ are built combinatorially from $\w$, in contrast to the intrinsically analytic nature of any spectral network associated to $L_\w$ and its associated augmentation $\e_\ww:\SA(\La_{\dd\w})\lr k[H_1(L_\ww)]$ of the Legendrian contact dg-algebra $\SA(\La_{\dd\w})$. Figure \ref{fig:intro_specnet_weaves} illustrates two examples of weaves and their associated spectral networks.
\begin{center}
	\begin{figure}[h!]
		\centering
		\includegraphics[scale=1.3]{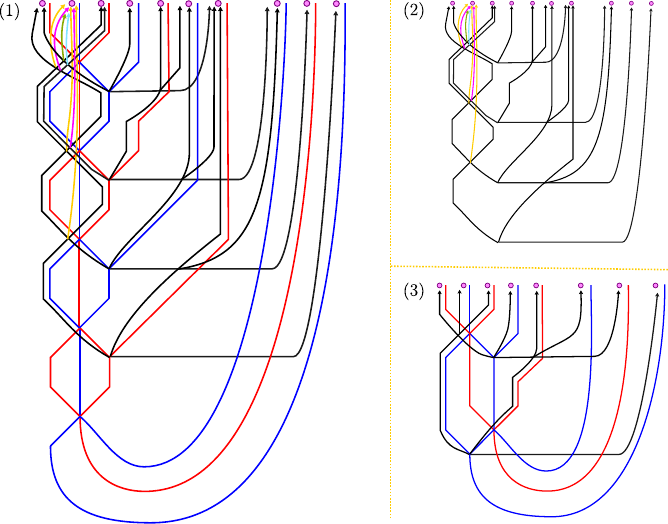}
		\caption{(1) A weave $\ww$ and the spectral network $\snetwork_\ww$ constructed in \cref{ssec:networks_Demazure_weaves}. (2) The spectral network $\snetwork_\ww$ in (1), drawn on its own, without the weave $\ww$. (3) Another weave with its associated spectral network. In this case, the spectral network recovers precisely the Berk-Nevins-Roberts network from \cite{BerkNevinsRoberts82_NewStokes} by adiabatically degenerating the rigid pseudoholomorphic strips contributing to the augmentation associated to the Lagrangian filling of the weave.}
        \label{fig:intro_specnet_weaves}
	\end{figure}
\end{center}
For simplicity, we focus on $S=\R^2$ and their $\snetwork_\w\sse \R^2$, as this already yields many interesting new examples; see ~\cite{CasalsNg22} for specifics on the Legendrian dg-algebra $\SA(\La_{\dd\w})$. In \cref{section_weave}, after providing the construction of the augmentation forest $\Theta_\w$, we conclude the following result and use it to explicitly compute several examples, cf.~\cref{ssec:explicit_comp}:

\begin{ftheo}[\color{blue} Augmentations, weaves and Spectral Networks\color{black}]\label{thm:aug_graphs} Let $\w\sse \R^2$ be a Demazure weave and $L_\w\sse (T^*\R^2,\la_\std)$ its associated exact Betti Lagrangian. Then:
\begin{enumerate}
    \item The augmentation forest $\snetwork_\w\sse \R^2$ is a Morse spectral network compatible with $L_\w$.\\
    In addition, it is a finite and creative spectral network.\\
    
    \item The flowlines of the spectral network $\snetwork_\w$ are exactly given by the union of $\dfs$-trees obtained by adiabatically degenerating the rigid pseudoholomorphic strips contributing to the augmentation $\e_\ww$ induced by $L_\ww$.
\end{enumerate}
\end{ftheo}
\vspace{0.5cm}
\noindent

Theorem \ref{thm:aug_graphs} has two advantages. First, there are many weaves with the same boundary $\dd\w$, e.g.~\cite[Section 2]{CasalsGao22} and \cite[Section 7.1]{legendrianweaves}, and thus Theorem \ref{thm:aug_graphs} provides many examples of spectral networks with the {\it same} asymptotic conditions: the spectral curves $L_\w$ for different such weaves $\w$ are typically {\it not} Hamiltonian isotopic, but they are all exact Lagrangian fillings of the same $\La_{\dd\w}$. The associated spectral networks $\Theta_\w$ are themselves also different: in fact, the notion of (Lagrangian disk) mutation for Betti Lagrangians aligns with the notion of mutation of spectral networks, corresponding to the appearance of a 4d BPS state in a degenerate spectral curve, cf.~\cref{sssec:2strands_examples_2}. Second, since the construction of $\Theta_\w$ is combinatorial, not requiring to solve any differential equation, Theorem \ref{thm:aug_graphs} is an effective tool for understanding which spectral networks are associated to irregular data. In particular, given a higher-order linear differential equation -- e.g.~with new Stokes curves as in \cite{BerkNevinsRoberts82_NewStokes} -- one can qualitatively draw spectral networks associated to it once the Legendrian associated to the Stokes data is computed, which only requires the asymptotic range. \cref{ssec:explicit_comp} gives several such examples and computations.\\

\noindent{\bf Acknowledgements}: We thank Merlin Christ, Ian Le, Andrew Neitzke and Daping Weng for useful discussions. Specifically, the construction in \cref{ssec:networks_Demazure_weaves} was first discussed by R.C.~with I.~Le and D.~Weng in relation to another project and we are thankful for their input and generosity. R.C.~is also particularly grateful to M.~Aganagi\'{c} and P.~Zhou for their interest and encouragement on this work during his visit to their String Theory-Math Seminar at UC Berkeley. Y.J.N.~is grateful to Yong-Geun Oh for his interest and hospitality at the Institute for Basic Science (IBS). R.C.~is supported by the NSF CAREER DMS-1942363, a Sloan Research Fellowship of the {\color{black} Alfred P. Sloan Foundation} and a UC Davis College of L\&S Dean's Fellowship.  \hfill$\Box$


\section{Betti Lagrangians and Flowline Trajectories}\label{section_setup}

This section introduces Betti surfaces in Section \ref{subsection:Betti_surface} and Betti Lagrangians in \ref{subsection:scurves}, where exact Betti Lagrangians and meromorphic spectral curves are also discussed. Real and holomorphic flowlines are discussed in Sections \ref{subsection:Morselines} and \ref{subsection:holMorse} respectively. Accordingly, the trapping lemma is proved in Section \ref{subsection:trappinglemma} and the asymptotics of WKB trajectories are studied in Section \ref{subsection:asymptoteWKB}. We start with a brief discussion on real and holomorphic cotangent bundles in \cref{subsection:prelim_cotangent}.


\subsection{Real and holomorphic cotangent bundles}\label{subsection:prelim_cotangent}

Let $S$ be a smooth real surface, we denote by $(T^*S,\la_\std)$ its real cotangent bundle endowed with the canonical Liouville 1-form $\la_\std\in\Omega^1(T^*S)$. It is the unique 1-form such that $\eta^*\la_\std=\eta$ for any $\eta\in\Omega^1(S)$, and its differential is denoted by $\omega_\std=d\la_\std$. The ideal contact boundary \cite[Prop.~2]{Giroux20_Ideal} of this Liouville domain is denoted $(T^\infty S,\xi_\std)$. In this article, we focus on real Lagrangian surfaces $L\sse (T^*S,\la_\std)$ with boundary on a specified Legendrian $\La\sse(T^\infty S,\xi_\std)$. The Legendrian boundary $\La$ will always be smooth, and $L$ will typically be embedded, though we also consider immersed, and more singular, Lagrangian subsets in certain parts. We refer to \cite{ArGi,ArnoldSing} or \cite{Geiges08,McDuffSalamon98_Intro} for initiating matters on contact and symplectic topology. \\

Let $S$ be endowed with a Riemann surface structure $\sS$, so that $\sS$ is a complex 1-dimensional manifold and $J_\sS:TS\lr TS$ is its defining (almost) complex structure. The holomorphic cotangent bundle will be denoted by $(\T^*\sS,\omega_\C)$, and its canonical (complex-valued) holomorphic symplectic form by $\omega_\C\in\Omega_\C^2(\T^*\sS)=\Omega^{2,0}(T^*S,J_\sS)$. Here the splitting of $\Omega^{\bullet}(T^*S)$ into its $\Omega^{i,j}(T^*S,J_\sS)$ parts is determined by the eigenspaces of $J_\sS$. As in the real case, this 2-form can be written as $d\omega_\C=\la_\C$, where the primitive $\la_\C\in\Omega_\C^1(\T^*\sS)=\Omega^{1,0}(T^*S,J_\sS)$ is the canonical holomorphic Liouville form. In this holomorphic context, holomorphic Lagrangian submanifolds will be denoted by $\scurve\sse (\T^*\sS,\omega_\C)$. Given $(\T^*\sS,\omega_\C)$, we can consider its underlying real rank-2 bundle $T^*S$ and the real-valued smooth symplectic 2-form $\Re{(e^{i\theta}{\omega_\C})}$, for a choice of $\theta\in S^1$. We refer to this pair $(T^*S,\Re{(e^{i\theta}\omega_\C)})$ over $S$ as the $\theta$-part $(\T^*\sS,\omega_\C)$; when $\theta=0$, this is also said to be the real part of $(\T^*\sS,\omega_\C)$.

\begin{remark} (i). Let $\theta\in S^1$ and $\scurve\sse (\T^*\sS,\omega_\C)$ be a holomorphic Lagrangian submanifold. Then, then underlying real submanifold $L\sse T^*S$ of $\scurve$ is a real Lagrangian submanifold of $(T^*S,\Re{(e^{i\theta}\omega_\C)})$, where we can identify $TL$ and $T\scurve$ as real rank-2 bundles. Indeed, $\omega_\C|_{T\scurve}\equiv 0$ if and only if $e^{i\theta}\omega_\C|_{T\scurve}\equiv 0$, which implies that $\Re{(e^{i\theta}\omega_\C|_{T\scurve})}\equiv 0$.\\

(ii) Studying the real Lagrangian $L$ in $(i)$ in the $\theta$-part $(T^*S,\Re{(e^{i\theta}\omega_\C)})$ is equivalent to studying the real Lagrangian underlying $e^{i\theta}\scurve$ for $(T^*S,\omega_\std)$, where $e^{i\theta}\scurve$ is the result of applying fiberwise multiplication to $\scurve$ by the scalar $e^{i\theta}$. We thus denote such real Lagrangians as $L=e^{i\theta}\scurve$.\\

(iii) For a generic $\theta\in S^1$, the real Lagrangian $e^{i\theta}\scurve$ submanifold of $(T^*S,\omega_\std)$ is not exact, i.e. $\la_\std$ defines a closed but non-exact real 1-form when restricted to $e^{i\theta}\scurve$. For instance, the real part of $\scurve$ is exact if and only if all the complex periods of $\scurve$ are purely imaginary. Nevertheless, note also that for generic $\theta\in S^1$, $e^{i\theta}\scurve\sse (T^*S,\omega_\std)$ does not bound any holomorphic disks either (in an appropriate adiabatic limit) and, in many aspects, behaves similarly to an exact Lagrangian.\hfill$\Box$
\end{remark}

\begin{center}
	\begin{figure}[h!]
		\centering
		\includegraphics[scale=0.7]{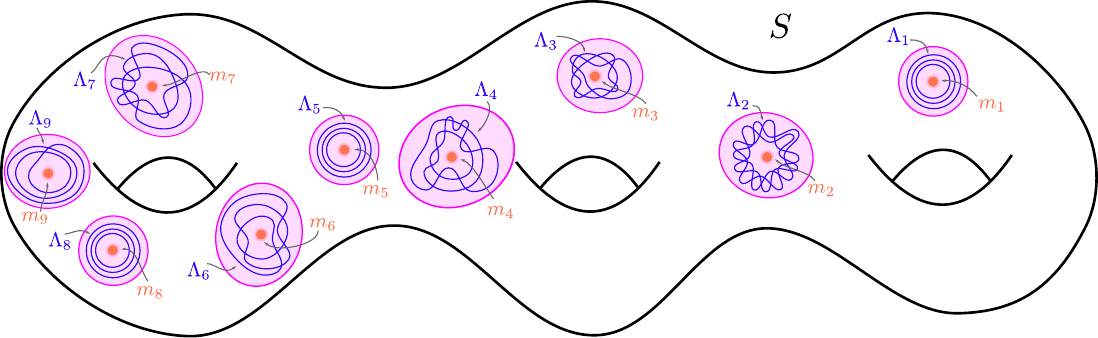}
		\caption{A smooth surface ${\bf S}$ of genus $3$ with 9 marked points $\mkpts=\{m_1,\ldots,m_9\}$, in orange, and fronts for their associated Legendrian links in ${\bf \lknot}=\{\La_1,\ldots,\La_9\}$, in blue. The annular neighborhoods $A_i\sse S$ for each marked point are shaded in pink. The positive braid words $\beta_1,\beta_5,\beta_8$ are the empty braid words in 3-strands and, for instance, we can choose $\beta_6=\s_2^2\s_1\s_2\s_1^2$ and $\beta_9=\s_1\s_2$.}
		\label{fig:BettiSurface_Setup}
	\end{figure}
\end{center}


\subsection{Betti surfaces}\label{subsection:Betti_surface} The asymptotics of Betti Lagrangians will be constrained by Legendrian links. Given a marked oriented closed surface $(\bf S,\mkpts)$, denote $S=\bf S\setminus\mkpts$ and consider a set $\{A_1,\ldots,A_{|\mkpts|}\}$ of small annular neighborhoods $A_i\sse S$, each obtained by considering a small disk neighborhood of the $i$th marked point in $S$ and removing such marked point. The required asymptotics of $L\sse(T^*S,\la_\std)$ will be prescribed by a set ${\bf \lknot}=\{\La_1,\ldots,\La_{|\mkpts|}\}$ of Legendrians, each in the ideal contact boundary $\La_i\sse(T^\infty A_i,\xi_\std)$. Since projection of the ideal contact boundary onto the zero section is a Legendrian fibration, the front for $\La_i$ can (and will) be drawn in $A_i\sse S$. See Figure \ref{fig:BettiSurface_Setup} for a depiction of this setup. Given a collection $\beta=\{\beta_1,\ldots,\beta_{|\mkpts|}\}$ of positive braid words $\beta_i\in\mbox{Br}_n^+$ in $n$-strands, $i\in[1,|\mkpts|]$, consider the Legendrian link $\La_\beta\sse(T^\infty S,\xi_\std)$ whose front is given by drawing the braid word $\beta_i$ circularly around $A_i$, according to the orientation of $S$; cf.~\cite[Section 2.2]{CasalsNg22} and see again Figure \ref{fig:BettiSurface_Setup}. The Legendrian isotopy class of $\La_\beta$ is independent of the braid word representative of its associated braid element in the braid group.

\begin{remark}\label{rem:positivebraid}
The braids in $\beta$ need not be algebraic braids, i.e.~they are not constrained to be certain iterated cables of the unknot. See the proof of \cite[Corollary 1.6]{CasalsGao22} for infinitely many examples of non-cable satellite knots and hyperbolic knots that are associated to positive braids (and necessarily not algebraic). In the meromorphic setting, as in the literature on spectral curves \cite{GMN13_Framed,GNMSN,GMN14_Snakes}, the braids are necessarily algebraic.\hfill$\Box$
\end{remark}

\noindent Formalizing this, a marked oriented closed surface $(\bf S,\mkpts)$ with such Legendrians links leads to the following:

\begin{definition}[Betti surfaces]\label{def:bis}
A Betti surface of rank $n$ is a triple $({\bf S},\mkpts;{\bf \lknot})$ such that ${\bf \lknot}$ is a collection of Legendrian links in the ideal contact boundary $(T^\infty S,\xi_\std)$, each of which is associated to an $n$-stranded positive braid circularly drawn around a marked point in $\mkpts$.\hfill$\Box$
\end{definition}

See Figure \ref{fig:BettiSurface_Setup} for an example of Definition \ref{def:bis}. For each $\La_{\beta_i}\sse(T^\infty A_i,\xi_\std)$, a Reeb chord is said to be positive if, when seen as a crossing in the Lagrangian projection, the upper strand travels from northwest to southeast with respect to the given orientations on the strands; else, we say it is \textit{negative}. For notation purposes, $\La_{\beta}$ is said to be Reeb-positive if all the Reeb chords of all $\La_i$ are positive. After a $C^\infty$-small Legendrian isotopy, we can and do assume that each of the Reeb chords and transverse double points (of the fronts in $A_i$) are at different angles. The angles in the core $S^1\sse A_i$ where there is a Reeb chord for $\La_{\beta_i}$ are said to be \textit{crossing points} for that core circle.

\begin{remark}
By \cite[Section 2.2]{CasalsNg22}, if $\beta_i$ contains a half-twist $w_0$ as a subword, then the braid for its front can be drawn such that Reeb chords are in bijection with the crossings of $\beta$. Specifically, the front can be drawn so that there is exactly one Reeb chord (arbitrarily close and) preceding each crossing, and these are all the Reeb chords.\hfill$\Box$
\end{remark}
For Floer-theoretic purposes, it is useful to regard the marked points $\mkpts$ as giving the \textit{horizontal infinity} of $\obis$, e.g.~as opposed to a 1-point compactification of $S$. In detail, for each $m_i\in\mkpts$, we identify the associated annular neighborhood $A_i\sse S$ with $A_i\cong S^1\times (1,\infty)$ with coordinates $(e^{i\theta},r)\in S^1\times (1,\infty)$, so that any point converges to the point $m_i$ in $\bis$ as $r\to \infty$. For a Reeb chord with a given crossing point $\theta_0$, we call the ray $(e^{i\theta_0},r)\sse A_i$ its \textit{Reeb chord ray}. 

\begin{ex}[Betti surfaces from Stokes data]\label{example:BettiSurfaces_StokesData}
As hinted in \cite[Remark 2.17]{MR2483750}, the data of the irregular singularities of a meromorphic connection on a Riemann surface $\sS$ leads to a Betti surface. This is beautifully crystallized in \cite{Boalch21_TopologyStokes}, specifically in the notion of an irregular curve $(\sS,\mkpts,\Theta)$, as defined in \cite[Definition 8.1]{BoalchStokesunramified} or \cite[Definition 5.5]{Boalch21_TopologyStokes}; cf.~also \cite[Part I]{Sabbah13_StokesBook}, \cite[Section 3.3]{STWZ}, \cite[Appendix B]{Su25_DualBoundary}. In a nutshell, an irregular curve is a marked Riemann surface $(\sS,\mkpts)$ with the data $\Theta$ of an irregular class at each marked point: the irregular class codifies the formal type of the pole of a meromorphic connection, up to formal gauge transformation. In practice, this can be written as a finite Puiseux series $Q$, representing an orbit of the Galois action of the profinite group $\hat{\mathbb{Z}}$ on $\mathbb{C}((z^{1/\infty}))/z^{-1}\mathbb{C}[[z^{1/\infty}]]:=\bigcup_{N\in \mathbb{N}}\mathbb{C}((z^{1/N}))/z^{-1}\mathbb{C}[[z^{1/N}]]$. Here we will work with representatives $Q$ of the form
\[\irreg=\sum_{k_j\in \mathbb{Q}_{>0}} c_{k_j}z^{-k_j},\quad c_{k_j}\in\mathfrak{t}_{reg}\sse\mathfrak{g}\mathfrak{l}_n,\] 
modulo the action of $c_{k_j}\to c_{k_j}e^{2\pi i n/\ram{\irreg}}, n\in[1,\ram{\irreg}]$, where $\ram{\irreg}$ is the smallest number such that $\irreg\in \mathbb{C}((z^{1/\ram{\irreg}}))$. The irregular part of the formal type of the meromorphic connection is then given by $d-dQ$. Since $Q$ is a finite Puiseux series with pole orders strictly greater than $0$, it can be seen as a multivalued function on $\C^*$. The Stokes diagram associated to such irregular data is given by the graphs of the multivalued functions $\Re( \irreg(re^{i\theta}))$ on $S_\theta^1$, for fixed $r$ large enough. For each marked point $m_i$, the Stokes diagram in $A_i$ is precisely the circular closure of a positive braid, and thus can be interpreted as a front for a Legendrian of the type $\La_{\beta_i}$ above.

\noindent In summary, the Betti surface associated to an irregular curve $(\sS,\mkpts,\Theta)$ is precisely the underlying smooth surface $({\bf S},\mkpts,{\bf\La}_\Theta)$, where ${\bf\La}_\Theta$ is the Legendrian link in $(T^\infty S,\xi_\std)$ defined by the Stokes diagrams of the irregular data $\Theta$, understood as fronts in $S$. In these examples, the points of maximal decay for the Stokes diagram of an irregular type give the Reeb rays for the associated Legendrian link; these are also referred to as singular directions or anti-Stokes directions of the Stokes diagram. By construction, the Stokes directions (a.k.a.~oscillatory directions) of the Stokes diagram are precisely the directions where there is a crossing in the front of the associated Legendrian link, i.e.~the angles at which we draw the crossings of the associated braid word.\hfill$\Box$
\end{ex}


\subsection{Betti Lagrangians}\label{subsection:scurves}
Given a Betti surface $(\bis,\mkpts,{\bf \La})$, we introduce Betti Lagrangians: the Lagrangian surfaces in $(T^{\ast}\obis,\la_\std)$ for which we construct and study spectral networks. These Lagrangians are our main object of study. There are two important classes, \textit{exact Betti Lagrangians} and \textit{meromorphic spectral curves}, discussed respectively in Subsections \ref{subsubsection:bLagandebLag} and \ref{subsubsection:iregcurvesandmeroscurves}. Many of the Lagrangian fillings of Legendrian links studied in the literature, e.g.~\cite{CasalsGao22,casals2023microlocal,legendrianweaves,EHKexactLagrangiancobordisms}, provide plenty of examples of this first class of exact Betti Lagrangians. The irregular curves studied in the context of wild character varieties and the irregular Riemann-Hilbert or non-Abelian Hodge correspondences give many examples of the second class, e.g.~cf.~\cite{BiquardBoalch04_WildNAH,BoalchStokesunramified,BoalchStokesunramified,MR4324961,GMN12_WallCrossCoupled,GNMSN,GMN14_Snakes,MR3322389,Sabbah13_StokesBook}.\\

To define Betti Lagrangians, we first discuss the notion of Lagrangian multigraphs on a smooth manifold $M$, as follows. By definition, a properly embedded Lagrangian submanifold $\multigraph\subset T^{\ast}M$ is said to be a \textit{Lagrangian multi-graph} if outside a codimension-$1$ properly closed subset $\caustL\sse\multigraph$, the projection $\pi: \multigraph\lr M$ is a smooth immersion outside $\caustL$ and it is a covering over $M-\pi(\caustL)$. 
Given a point $m\in M-\pi(\caustL)$, a neighborhood of the preimage $\pi^{-1}(m)\cap (\multigraph-\caustL)$ in $M$ consists of $n$ disjoint Lagrangian disks, the \textit{smooth sheets} of $\multigraph$ over $m$. Each such smooth sheet can be realized as the Lagrangian graph of the differential of a smooth function $\sheet_i:\lr\R$, $i\in[1,n]$, with each $f_i$ well-defined up to constant. 

\begin{definition}[Betti Lagrangians]\label{definition:tamespectralcurve}
Let $(\bis,\mkpts,{\bf \La})$ be a Betti surface. A Betti Lagrangian $L\subset T^{\ast}\obis$ is a weakly bounded Lagrangian multigraph $L\subset T^{\ast}\obis$ such that the restriction of the projection $\pi:L\to \obis$ onto the zero section $S$ is a degree-$n$ simple branched cover with finitely many branched points.\hfill$\Box$
 \end{definition}

In Definition \ref{definition:tamespectralcurve} we have used the notation {\it weakly bounded}, which we use to encode a number of Riemannian properties of the Lagrangian multigraph $L$. Its precise meaning is as follows:

\begin{definition}\label{eq:weaklyboundedmultigraph}
A Lagrangian multigraph $L\sse T^*M$ is said to be \textit{weakly bounded} if there exists a Riemannian metric $(M,g)$, constants $\injradius,\sheetgap\in\R_+$ and a precompact subset $K\subset M$ containing $\pi(\caustL)$ such that:
\begin{itemize}
\item[(i)] The Riemannian metric $g$ is complete, the $C^\infty$-norm of all the derivatives of the curvature tensor are uniformly bounded (a.k.a.~geometrically bounded), and the minimal injectivity radius of $g$ is bounded below by $\injradius>0$. 
\item[(ii)]  Given a point $m\in M-N_{2\injradius}(K)$, at distance at least $2r$ from $K$, and the functions $\sheet_i:M\lr\R$ describing the sheets of $\multigraph$ over $B_{\injradius}(m)$ as $\mbox{gr}(df_i)$, then we must have
$$\inf_{B_{\injradius}(m)}\abs{d\sheet_i-d\sheet_j}>\sheetgap,\quad \forall q\in B_{\injradius}(m).$$

\end{itemize}
By definition, a weakly bounded Lagrangian multigraph $L$ is said to be \emph{uniformly bounded} if $L$ lies in the $R$-disk bundle $D_{\heightR}T^{\ast}\obis\sse T^*S$ for some $\heightR\in\R_+$ and for the smooth sheets of its fiberwise $\varepsilon$-scaled image $\varepsilon L\sse T^*S$ over $B_{\injradius}(m)$, $m\in M-N_{2\injradius}(K)$, there exist uniform bounds $C_k$ such that $\|\nabla^kf_i\|_g\leq C_k$ for all $i\in[1,n]$, where $\nabla$ is Levi-Civita connection of $(M,g)$ and there is no dependence on $\varepsilon$.\hfill$\Box$
\end{definition}

\noindent Intuitively, the second condition for a uniformly bounded Lagrangian multigraph in Definition \ref{eq:weaklyboundedmultigraph} is that away from the preimage of the locus $K$, the sheets of the rescaled Lagrangian $\varepsilon L\sse T^*S$ uniformly $C^{\infty}$-converge with respect to $g$ to the zero section as $\varepsilon\to 0$.\\\\
It is worth providing a local description of the Betti Lagrangian near its branch point. The germ $\{w^2-z=0\}$ in $\mathbb{C}^2=T^{\ast}\mathbb{C}$ is called the  holomorphic cusp singularity, and we call its real part the $D_4^{-}$-singularity. Near its branch points, $L$ is always locally equivalent to the $D_4^{-}$-singularity germ. For the rest of the paper, We will assume that given a branch point $b$, there exists some local conformal coordinate $z$ near $b$, and some constant holomorphic covector $c$ such that near the ramification locus $\pi^{-1}(b)$, $L$ is equal to the real part of $ \{(\la_{\C}-c)^2=zdz^2\}\subset T^{\ast}_{\C}\mathbb{C}$ near $z=0$. We will furthermore assume that the smooth sheets of $L$ are all holomorphic. 


\subsubsection{Exact Betti Lagrangians}\label{subsubsection:bLagandebLag} For exact Betti Lagrangians, the analysis uses the conical ends, as with the framework of Lagrangian fillings from \cite[Section 2.2]{EHKexactLagrangiancobordisms}. Consider the Legendrian fiber in $(T^\infty {\bf S},\xi_\std)$ above a marked point $m_i\in\mkpts$ and identify a neighborhood of it with $(J^1S^1,\xi_\std)$, where the fiber is mapped to the zero section. Suppose that the Legendrian link $\La_i\sse (T^\infty S,\xi_\std)$ is parametrically given by $(x(\theta),y(\theta),z(\theta))$ where $x(\theta)=\theta$ and the contact 1-form is $dz-ydx$, $(x,y)\in T^*S^1$ and $z\in\R$. Then a conical end is defined as follows:

\begin{definition}[Conical ends]\label{def:conicalend}
Let $\Laggerm\sse (T^*D^2,\la_\std)$ be the germ of a Lagrangian surface and $\La\sse (J^1S_\theta^1,\xi_\std)$ be parametrized by $(x(\theta),y(\theta),z(\theta))$, where $S^1$ is identified with the Legendrian fiber $T^\infty_0 D^2$ of $(T^\infty D^2,\xi_\std)$ above $0$. By definition, $\Laggerm$ has a conical end of type $\lknot$ if $\Laggerm$ admits a parametrization of the form
	\begin{align}\label{eq:LCend}
		(r,\theta)\longmapsto(x(\theta),f(\radial) y(\theta),\radial, f'(\radial)z(\theta))\in T^*D^2,
	\end{align}
 for some strictly increasing positive real function $f(r):[1,\infty)\longrightarrow \R$ which is linear at infinity. In other words, $L$ is the multigraph associated to the multi-valued function $z(\theta)f(\radial):S^1\times[1,\infty)\longrightarrow\R$.\hfill$\Box$ 
\end{definition}

\noindent The conical ends in Definition \ref{def:conicalend} allow us to introduce the following:

\begin{definition}[Exact Betti Lagrangians]\label{definition:topologicalexactspectralcurve}
Let $(\bis,\mkpts,{\bf \La})$ be a Betti surface. An \emph{exact Betti Lagrangian} $L\subset (T^{\ast}\obis,\la_\std)$ of rank $n$ is an exact Lagrangian submanifold $L\subset (T^{\ast}\obis,\la_\std)$ such that
	\begin{itemize}
		\item[(i)] The restriction $\pi:L\to \obis$ of the projection $T^*S\lr S$ onto the zero section is a degree-$n$ simple branched cover, with finitely many branch points.
		\item[(ii)] $L$ has conical ends near each of the marked points in $\mkpts$.\hfill$\Box$
	\end{itemize}
\end{definition}

\noindent Let $(\obis,g)$ be a Riemannian metric such that $g$ can be written as $g=d\radial^2+\radial^2d\theta^2$ near each of the horizontal infinities of the Betti surface $S$. Then the conical condition in Definition \ref{definition:topologicalexactspectralcurve}.(ii) implies that exact Betti Lagrangians are uniformly bounded with respect to such Riemannian metric $g$. Therefore, exact Betti Lagrangians are Betti Lagrangians, according to Definition \ref{definition:tamespectralcurve}.


\subsubsection{Meromorphic spectral curves}\label{subsubsection:iregcurvesandmeroscurves}
Let us introduce a second important class of Betti Lagrangians, meromorphic spectral curves, associated to an irregular curve $(\sS,\mkpts,\Theta)$; see \cite[Definition 5.5]{Boalch21_TopologyStokes} and Example \ref{example:BettiSurfaces_StokesData} above for irregular curves. Intuitively, these spectral curves are given by meromorphic multi-graphs on $\sS$ with their behavior at a pole in $\mkpts$ specified by the irregular data $\Theta$. Locally, the behavior at poles can be described as follows.\\

\noindent Consider a germ of a holomorphic Lagrangian multi-graph $\scurve\sse T^{\ast}\mathbb{C}^{\ast}$ over $\C^*$ without singularities. Since the projection of $\scurve$ to the base $\C^*$ defines an $n$-sheeted covering, we can consider the elementary symmetric polynomial $\esym_i$, $i\in[1,n]$, over the distinct sheets $\lambda_1,\ldots,\lambda_n$ of $\scurve$ and obtain holomorphic functions $\esym_i(\lambda_1,\ldots,\lambda_n)$ on $\mathbb{C}^{\ast}$, $i\in[1,n]$. By definition, $\scurve$ is said to extend to a \textit{meromorphic multi-graph} over $\C$ if the elementary symmetric polynomials extend over $\mathbb{C}$ as meromorphic functions with poles over the origin. After possibly passing to a ramified cover, under $z\to z^r$ for some $r\in\N$, we regard the sheets $\lambda_1,\ldots,\lambda_n$ of $\scurve$ as giving holomorphic functions on $\mathbb{C}^{\ast}$. By Lagrange's upper bound $\displaystyle\max\{1,\sum_{i=1}^n \abs{\frac{{\esym_i}}{{\esym_n}}}\}$ on the norm of complex roots, cf.~\cite[Chapter IV]{MR439547}, there exists $N\in\N$ large enough such that the norm of $z^N\lambda_i(z^r)$ is uniformly bounded for all $i\in[1,n]$. Therefore, the functions $\lambda_i(z^r):\C^*\lr\C$ all extend to germs of meromorphic functions on $\C$ near the origin and we can regard the sheets of $\scurve$ as the Lagrangian graphs of Puiseux series with finite polar parts.\\

Given a meromorphic multi-graph over $\C$, as above, we consider the differential operator 
$$\dd^n_z-\sum_{i=1}^{n}\esym_i(\lambda_1,\ldots,\lambda_n)\dd^{n-i}_z,\quad z\in\C.$$
By reducing this operator to a system of order $1$ differential operators, it defines a meromorphic connection on the rank-$n$ trivial bundle over $\mathbb{C}$ with a single pole at the origin. The irregular data is then given by considering the irregular class of such meromorphic connection. Intuitively, these Galois orbits 
are given by a local primitive of the holomorphic Liouville form $\lambda_\std$ over the distinct components of the multi-graph $\scurve$, after dropping the $o(\log(z))$ terms.

\begin{definition}[Meromorphic Spectral Curves]\label{def:meromorphicspectralcurves}
Let $\big(\sS,\mkpts, \iregc\big)$ be an irregular curve. A holomorphic submanifold $\scurve\subset T^{\ast}S$ is said to be a \emph{meromorphic spectral curve} if
\begin{itemize}
    \item[(i)] the germ of $\scurve$ is a meromorphic multi-graph with poles on $\mkpts$,
    \item[(ii)] the corresponding formal Puiseux series has the same orbits as $\iregc$, with the same multiplicities.
\end{itemize}
In addition, a meromorphic spectral curve is said to have $O(-1)$-ends if the sheets $\lambda_1,\ldots, \lambda_n$ and their difference functions have growth bounded below by $z^{-1}$, locally in $z\in\C^*$ with a pole at $z=0$.\hfill$\Box$
\end{definition}

For the purposes of our results, including Theorems \ref{thm:existence}, \ref{thm:characterization}, \ref{thm:aug_graphs} and \ref{thm:specnet_fuk}, we always consider meromorphic spectral curve with $O(-1)$-ends. First, $O(-1)$-ends are necessary because, without such hypothesis, one could add a holomorphic perturbation to force the difference between the sheets to blow-up slower than $z^{-1}$ as $z\to 0$. Second, having $O(-1)$-ends implies that the $\theta$-real part of a meromorphic spectral curve is a Betti Lagrangian as in Definition \ref{definition:tamespectralcurve}, as it implies that the underlying multi-graph is weakly bounded. In practice, here and in all situations we shall choose a K\"ahler metric on $\sS$ which coincides with $|z|^{-2}dz\otimes dz$ near the marked points.

\begin{remark} (i) If an irregular class in the irregular data $\Theta$ has multiplicity, the associated components of this same type coincide in $\mathbb{C}((z^{1/\infty}))/z^{-1}\mathbb{C}[[z^{1/\infty}]]$ and thus differ by a finite Puiseux series in $z^{-1}\mathbb{C}[[z^{1/\infty}]]$. In contact topological terms, the Legendrian link associated to an irregular class with multiplicity $m\in\N$ corresponds to the $m$-copy Reeb push-off of the Legendrian link associated to the irregular class (without multiplicity). This choice guarantees that the $O(-1)$-condition can be satisfied.\\

\noindent (ii) It follows from the construction of Stokes diagrams, see Example \ref{example:BettiSurfaces_StokesData}, that the condition that the difference of the sheets grows faster or equal than $z^{-1}$ is equivalent to requiring that the order of the poles of the multi-valued 1-form $dQ_i-dQ_j$, $i,j\in[1,n]$, are all greater or equal than $1$. That is, the (difference of the) polar terms of $Q$ themselves are of degree greater than $0$, e.g.~ no holomorphic parts.\\

(iii) An alternative viewpoint of meromorphic multi-graph is as follows, cf.~\cite{ionita2021spectralnetworksnonabelianization,kuwagaki2024genericexistencewkbspectral}. Given a rank $n$ meromorphic multi-graph, its cameral cover is given by taking all the possible permutations of the sheets. Then, each pair $i,j\in[1,n]$, $i<j$, induces a covering of $\mathbb{C}^{\ast}$ over which the germ of the cameral cover is realized as the germ of a rank $2$ quadratic differential near its pole. Then the condition that the difference of the sheets grow faster or equal than $z^{-1}$ is equivalent to requiring that the order of the poles of this quadratic differential are all greater or equal than $2$.\hfill$\Box$\\
\end{remark}

\begin{ex}\label{ex:irregularclass_Legendrian}
(1) {\it Airy Equation}. As an explicit instance of Example \ref{example:BettiSurfaces_StokesData}, consider the trivial $\C^2$-bundle over the Riemann surface $\sS=\C$ associated to the Airy differential equation $f''(x)=xf(x)$, $x\in\C$. It has an irregular singularity at $x=\infty$, which we take to be the unique marked point of the associated irregular curve $\CP^1=\sS\cup\{\infty\}$; this is the equation originally studied by G.~Stokes \cite{Stokes_2009}. In the $x$-coordinate, the associated meromorphic connection can be written as
$$\nabla_m:=d- \begin{bmatrix}
0 &  1\\
x &  0\\
\end{bmatrix}dx=d-dQ,\quad Q=\begin{bmatrix}
0 &  x\\
x^{2}/2 &  0\\
\end{bmatrix}, \quad x\in\C.$$
The formal irregular type of $\nabla_m$ at $x=\infty$ is given by $\langle x^{3/2}\rangle$, as the eigenvalues of $Q$ are $e^{\pm x^{3/2}/\sqrt{2}}$. Thus we have the irregular curve $(\sS,\mkpts,\Theta)=(\CP^1,\infty,\langle x^{3/2}\rangle)$. Its associated Stokes diagram at $x=\infty$ is described by the braid word $\beta=\sigma_1^3$, so the Betti surface is $(S,\mkpts,\lknot)=(S^2,\infty,\La_{\s_1^3})$. After satelliting the Legendrian zero section of $(J^1S^1,\xi_\std)$ to the max-tb Legendrian unknot in $(\R^3,\xi_\std)$, where $S^1$ is the Legendrian fiber of $T^\infty S^2\lr S^2$ at infinity, the Legendrian knot $\La_{\s_1^3}$ satellites to the max-tb unknot itself, presented in the front as the $(-1)$-closure of $\beta=\sigma_1^3$. Any $\theta$-part of the associated meromorphic spectral curve $\scurve=\{(x,\eta)\in T^*\C:\eta^2=x\}\sse T^*\C$, $\scurve\cong\C$, thus gives a Betti Lagrangian disk filling of the max-tb unknot. In terms of weaves, giving the front of the Legendrian lift of the $\theta$-part of $\scurve$, this particular meromorphic spectral curve gives a 2-weave with a unique trivalent vertex.

For coherence with previous notation, we can rewrite the above in terms of the coordinate $z=x^{-1}$, locally around the pole, as the coordinate $z$ is the one used in Example \ref{example:BettiSurfaces_StokesData} and previously in this subsection. Then the Airy equation transforms to $z^4f''(z)+2z^3f'(z)=f(z)z^{-1}$: at $z\neq0$, this is equivalent to $f''(z)+2z^{-1}f'(z)=f(z)z^{-5}$, which has an irregular singularity at $z=0$ because the coefficient $z^{-5}$ of $f(z)$ has a pole greater than 2 at $z=0$. Its analysis of the irregular type at $z=0$ coincides with that of the Airy equation at $z^{-1}=x=\infty$.\\


\noindent (2) {\it Irreducible isolated plane curve singularities}. The discussion in (1) readily generalizes for differential operators $P(x,\dd_x)$ on $\CP^1\setminus\{\infty\}$, $x\in\C$ whose spectral curve $\scurve=\{(x,\eta)\in T^*\C:P(x,\eta)=0\}$ has an irreducible isolated singularity at the origin $(x,\eta)=0$. In these cases, the irregular curve $(\sS,\mkpts,\Theta)=(\CP^1,\infty, \Theta_P)$ has as irregular data $\Theta_P$ a Stokes diagram given by a braid $\beta(P)$ that represents -- upon satelliting along the unknot -- the link of the singularity $P:\C^2\lr\C$. The Betti surface is thus of the form $(S,\mkpts,\lknot)=(S^2,\infty,\La_{\beta(P)})$, where the smooth type of $\La_{\beta(P)}$, as a link in $S^3=\dd(T^*\C)$, is that of the link of the singularity. The theory of algebraic links allows to readily compute such smooth type using a Puiseux series for $P$ (via Newton exponents), expressing this link as an iterated cable link with positive enough $(p_i,q_i)$-coefficients; see e.g.~\cite[Chapter I \& II]{EisenbudNeumann85_Book} or \cite[Chapters 17 \& 18]{Ghys17_Promenade}. By \cite[Prop.~2.2]{Casals22_PlaneCurveSing}, algebraic links have a unique max-tb Legendrian representative, and thus the Legendrian isotopy type of $\La_{\beta(P)}$ is uniquely determined by the irregular data $\Theta_P$.

This construction leads to a large class of Betti Lagrangians in $(\R^4,\omega_\std)$ which are Lagrangian fillings of max-tb Legendrian representatives of algebraic links.\footnote{Technically, as given, they are asymptotically Lagrangian fillings, but this can be corrected as in \cite[Section 5.1]{MR2580429}.} At the same time, their associated meromorphic connections are also important objects of study: these include the differential equations of Airy, Bessel, Clifford, Weber, Whittaker and general differential operators of Airy type (cf.~\cite{HohlJakob22_AiryOperators,Katz87_AiryOperators}), among others.\footnote{See 48min in P.~Boalch's talk ``First Steps in Global Lie Theory'' at the Simons Center during the program ``The Stokes Phenomenon and its applications in Mathematics and Physics'' as well as his {\it spirograph Stokes diagrams} applet in his website.} By construction, these spectral curves are singular. In order to obtain smooth (embedded) Lagrangian fillings for such links $\La_{\beta(P)}$ one must morsify the singularity $P$: as explained in \cite{Casals22_PlaneCurveSing}, any real morsification will give rise to an exact embedded Lagrangian filling. Furthermore, since the front singularities of holomorphic Legendrians are real codimension-2, these Lagrangian fillings can all be described by weaves \cite{legendrianweaves}. Building on (1) above, a simple instance is that, for generic $\theta$, the $\theta$-part of the meromorphic spectral curve associated to a morsification of $f''(x)=x^3f(x)$ gives Betti Lagrangians for the Betti surface $(S,\mkpts,\lknot)=(S^2,\infty,\La_{\s_1^5})$: as we vary $\theta\in S^1$, this actually recovers the 5 known Lagrangian fillings of the max-tb trefoil. In the general case, there is a moduli for the space of meromorphic connections on $\C$ with such fixed irregular data at $x=\infty$, including such morsifications, and it is isomorphic to the moduli of sheaves with singular support on these Legendrian links. See e.g.~\cite{casals2023microlocal,Su25_DualBoundary}. The $\theta$-parts of different such meromorphic spectral curves give rise to different Lagrangian fillings: both varying $\theta$ and varying the meromorphic connection typically changes the Hamiltonian isotopy type of the Betti Lagrangian, see e.g.~\cite{MR4651703} for how varying $\theta\in S^1$ for a fixed morsification of $f''(x)=x^mf(x)$ leads to different orbits of Lagrangian fillings for $(2,m)$-torus links.\\

\noindent (3) {\it BNR Equation}. Consider the Berk-Nevins-Roberts differential equation, cf.~\cite{BerkNevinsRoberts82_NewStokes}. This is the order 3 differential equation
$$\dd_x^3f(x)-3\dd_xf(x)+xf(x)=0,\quad x\in\C,$$
known for being one of the first studied examples with higher rank Stokes phenomenon. In the coordinate $z=x^{-1}$, the equation is transformed to
$$z^6\dd_z^3f(z)+6z^5\dd_z^2f(z)+(6z^4-3z^2)\dd_zf(z)-z^{-1}f(z)=0,\quad z\in\C$$
which has an irregular singularity at $z=0$, e.g.~because the coefficient of $f(z)$ has a pole of order $7$. A direct computation of the irregular data gives that the irregular curve is $$(\sS,\mkpts,\Theta)=(\CP^1,\mkpts,\langle x^{4/3}\rangle).$$
The positive 3-stranded braid word associated to this irregular data is $\beta=(\s_1\s_2)^2$ and thus the Betti surface is $(S,\mkpts,\lknot)=(S^2,\infty,\La_{(\s_1\s_2)^2})$. The Betti Lagrangian $L\sse (T^*\R^2,\la_\std)$ associated to this meromorphic spectral curve $\scurve\sse T^*\C$ consists of a unique embedded Lagrangian disk, as it is graphical in $x\in\C$. Indeed, it is a 3-weave that can be parametrized by taking the real part of the holomorphic Legendrian lift of
$$\scurve=\{(x,\eta)\in T^*\C:\eta^3-3\eta+x=0\}\sse T^*\C,$$
understood as a holomorphic Lagrangian. Such $\scurve$ is a morsification of the function $x(\eta)=\eta^3$, so it fits within the description of (2) above, for a smooth germ in this case. In real coordinates $x=u+iv$, $u,v\in\R^2$, the front for this real Legendrian lift is parametrized by
$$\sigma:\R^2\lr\R^3,\quad \sigma(u,v)=\left(u^3-3uv^2,3u^2v-v^3,3(u^3v-uv^3-uv)\right),$$
which readily exhibits the boundary braid $\beta=(\s_1\s_2)^2$ and the embedded Lagrangian disk, as there are no Reeb chords in this front. The contact topological proof that the Berk-Nevins-Roberts equation has no moduli is then simply the observation that $\La_{(\s_1\s_2)^2}$, understood as a Legendrian link in $(S^3,\xi_\std)$ after satelliting the Legendrian (unit) cotangent fiber along the max-tb Legendrian unknot, is actually Legendrian isotopic to the max-tb Legendrian unknot. Note that, in accordance to this, taking different $\theta$-parts leads to a rotation of the 3-weave and the Hamiltonian isotopy type of the Betti Lagrangian remains the same.
\hfill$\Box$
\end{ex}


\subsection{Real Morse flowlines}\label{subsection:Morselines} Let $(M,g)$ be a Riemannian manifold and $L\sse T^*M$ a Betti Lagrangian. Consider $m\in M-\pi(\caustL)$ and let $\{\sheet_1,\ldots,\sheet_n\}$ define the smooth sheets of $\multigraph$ over $m$, via the graphs of $d\sheet _i$.

\begin{definition}\label{def:flowline}
A smooth map
$\gamma:(-\epsilon,\epsilon)\lr M-\caustL$ is said to be an $ij$-gradient flowline for $L$ if it satisfies
\begin{equation}\label{eq:ij-gradientflow}
	\gamma'(s)+\nabla_g(\sheet_{i}-\sheet_{j})(\gamma(s))=0,\quad \gamma(0)=m,\quad s\in(-\epsilon,\epsilon),
\end{equation}
for the two local sheets $\sheet_{i},\sheet_j$ of $\multigraph$ along $\gamma$, $i,j\in[1,n]$. The domain of definition of a flowline can be extended as long as it does not converge to a point in $\pi(\caustL)$: a flowlines obtained by extending the domain of definition are called \textit{continuations}. Maximal continuations of solutions of the gradient flowline equation (\ref{eq:ij-gradientflow}) are said to be \textit{trajectories}.\hfill$\Box$
\end{definition} 

\noindent Given an $ij$-gradient flowline $\gamma:(-\epsilon,\epsilon)\lr M-\caustL$, consider the lift
$$\bar{\gamma}^{i}:(-\epsilon,\epsilon)\lr L,\quad \bar{\gamma}^{i}(s)=(\gamma(s),d(\sheet_i\circ\gamma)(s))$$
of $\gamma$ to the $i$th sheet of $\multigraph$, and note the following energy inequality:

\begin{lemma}\label{lemma:flow-energy} Let $(M,g)$ be a Riemannian manifold, $L\sse (T^*M,d\lambda)$ a Betti Lagrangian with $\la$ the Liouville form, and $\gamma:(a,b)\lr M$ an $ij$-flowline for $L$. Then
	\[\int_{\bar{\gamma}^{i}-\bar{\gamma}^{j}} \lambda=-\int_{a}^{b}\abs{\nabla(\sheet_{i}-\sheet_{j})}^2=(\sheet_{i}-\sheet_{j})(\gamma(b))-(\sheet_{i}-\sheet_{j})(\gamma(a))\]
\end{lemma}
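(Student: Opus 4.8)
The plan is to reduce both equalities to elementary one-variable calculus along the flowline $\gamma$, using only the gradient flow equation \eqref{eq:ij-gradientflow} and the fact that each smooth sheet of a Betti Lagrangian is a Lagrangian graph $\mbox{gr}(d\sheet_i)$. Write $h := \sheet_i - \sheet_j$ for the difference function, which is a well-defined smooth function on a neighborhood of the image of $\gamma$ in $M - \pi(\caustL)$ (the individual $\sheet_i$ are defined only up to a constant, but the difference is honest, and in any case only its differential enters). Then \eqref{eq:ij-gradientflow} reads $\gamma'(s) = -\nabla_g h(\gamma(s))$.

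First I would compute the integral of $\la$ over the lifted $1$-chain $\bar\gamma^i - \bar\gamma^j$. Since $\bar\gamma^i(s) = (\gamma(s), (d\sheet_i)_{\gamma(s)})$ lies on the Lagrangian graph of $d\sheet_i$, the pullback of the tautological form $\la_\std$ (here $\la$) along the section $x \mapsto (x, (d\sheet_i)_x)$ is exactly $d\sheet_i$ by the defining property of $\la_\std$ recalled in \cref{subsection:prelim_cotangent} ($\eta^*\la_\std = \eta$ for $\eta \in \Omega^1(S)$). Hence $(\bar\gamma^i)^*\la = (\sheet_i \circ \gamma)' \, ds$ and likewise for $j$, so
\[
\int_{\bar\gamma^i - \bar\gamma^j} \la = \int_a^b \big( (\sheet_i \circ \gamma)'(s) - (\sheet_j\circ\gamma)'(s) \big)\, ds = \int_a^b (h\circ\gamma)'(s)\, ds = (h\circ\gamma)(b) - (h\circ\gamma)(a),
\]
which is precisely the rightmost expression $(\sheet_i - \sheet_j)(\gamma(b)) - (\sheet_i - \sheet_j)(\gamma(a))$. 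This already gives the second equality in the Lemma.

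Next I would identify this with the middle, negative-energy term. By the chain rule and \eqref{eq:ij-gradientflow},
\[
(h\circ\gamma)'(s) = \big\langle (\nabla_g h)(\gamma(s)), \gamma'(s)\big\rangle_g = \big\langle (\nabla_g h)(\gamma(s)), -(\nabla_g h)(\gamma(s))\big\rangle_g = -\,\big|\nabla_g(\sheet_i - \sheet_j)(\gamma(s))\big|^2,
\]
and integrating over $[a,b]$ yields $\int_a^b (h\circ\gamma)' = -\int_a^b |\nabla(\sheet_i-\sheet_j)|^2$, matching the middle term. Stringing the three computations together proves all stated equalities. I do not anticipate a genuine obstacle here: the only points requiring minor care are (i) that $\gamma$ stays in $M - \pi(\caustL)$ so the sheets $\sheet_i, \sheet_j$ are defined along all of $\gamma|_{[a,b]}$ and can be chosen consistently (immediate from the definition of a flowline, \cref{def:flowline}, which lives on $M-\caustL$), and (ii) that the orientation/sign conventions on the $1$-chain $\bar\gamma^i - \bar\gamma^j$ are the ones making $\int_{\bar\gamma^i-\bar\gamma^j}\la = \int (\bar\gamma^i)^*\la - \int(\bar\gamma^j)^*\la$; both are bookkeeping rather than substantive. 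If one prefers to avoid choosing primitives $\sheet_i$ at all, the same argument runs verbatim with $h$ in place of $\sheet_i - \sheet_j$ throughout, since $h$ is globally defined near $\mathrm{im}(\gamma)$ and $dh = d\sheet_i - d\sheet_j$ is all that the tautological-form identity requires.
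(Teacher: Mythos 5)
Your proposal is correct and follows essentially the same route as the paper's one-line proof: pull back $\la$ along the cotangent lifts using the tautological-form identity $(\bar\gamma^i-\bar\gamma^j)^*\la = d(\sheet_i-\sheet_j)(\gamma'(s))\,ds$, then apply the gradient flow equation \eqref{eq:ij-gradientflow} and the fundamental theorem of calculus. You have simply written out, carefully and correctly, the small steps the paper compresses into a single sentence.
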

\begin{proof}
This follows from $(\bar{\gamma}^i-\bar{\gamma}^j)^{\ast}\lambda=d(\sheet_{i}-\sheet_{j})(\gamma'(s))=\partial_s f$ and Equation (\ref{eq:ij-gradientflow}).
\end{proof}

\noindent Lemma \ref{lemma:flow-energy} implies that the relative homology class $[\bar{\gamma}^{j}-\bar{\gamma}^{i}]\in H_1(L,\tt)$ has positive $\lambda$-length, where $\tt$ is the appropriate set of points in $L$ giving the boundaries of the lifts $\bar{\gamma}^{i}$. This relative homology class $\bar{\gamma}^{i}-\bar{\gamma}^j$ is said to be the \textit{cotangent $ij$-lift} of $\gamma$, and its $\lambda$-length above is said to be its \textit{flow-energy}, a.k.a.~its symplectic area. Note that, by Definition \ref{def:flowline}, the difference $\sheet_{i}-\sheet_{j}$ strictly \textit{decreases} along the flowline. There are two important classes of flowlines in our context:

\begin{enumerate}
    \item The flowline equation in $(S^1,g_{flat})$ for an immersed Betti Lagrangian $L=L_\La\sse T^*S^1$ given by the Lagrangian projection of a Legendrian link $\lknot:=\{(\theta,y(\theta),z(\theta)\}\subset J^1S^1$ whose front in $S^1\times\R$ has no cusps.\\

    \item The flowline equation in a disk $\R^2$ for a Betti Lagrangian $L\sse T^*\R^2$ near a $D_4^{-}$-singularity. That is, near a branch point of the projection $L\to \R^2$ that, in the spatial wavefront in $\R^2\times\R$ of the Legendrian lift of $L$, is a non-generic $D_4^-$ singularity.
\end{enumerate} 

\noindent {\bf For case (1)}, we choose a base coordinate $\theta\in S^1$ and the $ij$-flowline equation for $\gamma:(-\epsilon,\epsilon)\lr S^1$ reads
\begin{equation}\label{eq:braidflowline}
	\gamma'(s)+ \left[(y_i(\theta)-y_j(\theta))\partial_\theta\right]_{|_{\gamma(s)}} =0.
\end{equation}
Here we denoted $\gamma'(s)=\gamma_*(\dd_s)$, so writing $\gamma'(s)=\theta'(s)\dd_\theta$ expresses (\ref{eq:braidflowline}) as $\theta'(s)+ (y_i(\theta(s))-y_j(\theta(s)))=0$. Near a Reeb chord of the Legendrian link $\La$, i.e.~an immersed point of $L_\La$, the local configuration of sheets reduces to that of the two Lagrangian strands
$$L_1:=(x,-x)\in T^*(-1,1)\mbox{ and } L_2:=(x,x)\in T^*(-1,1)\quad\mbox{meeting at }x=0.$$
In these local coordinates, the difference $y_1(x)-y_2(x)=2x$ of the sheets $y_1,y_2$ gives the $12$-flowline equation $x'(s)-2x(s)=0$ and the $21$-flowline equation, which is $x'(s)+2x(s)=0$. These have solutions $e^{2s}$ and $e^{-2s}$, respectively, and it follows that the $12$-flowline equation has $x=0$ as its unstable manifold, and the $21$-flowline equation has $x=0$ as its stable manifold. Assuming that $L_1$ and $L_2$ are all oriented in the direction $\partial_x$, the Reeb chord is positive if $L_1$ is above $L_2$, and negative otherwise. This settles the local behavoir near a Reeb chord. Globally on $L_\La\sse T^*S^1$, the trajectories must begin and end at some Reeb chord: Lemma \ref{lemma:flow-energy} then implies that the difference function $z_{i}(\theta)-z_{j}(\theta)$ decreases along the trajectory. Here the \textit{sign} of the difference function \textit{cannot be} both negative at $s=-\infty$ and positive at $s=+\infty$. This describes the qualitative behavior of flowlines for case (1), which we summarize in the following:
\begin{lemma}\label{lemma:minimalescapetime}
	Let $L\sse T^*S^1$ be an immersed Betti Lagrangian $L_\La\sse T^*S^1$ given by the Lagrangian projection of a Legendrian link $\lknot:=\{(\theta,y(\theta),z(\theta)\}\subset J^1S^1$ whose front in $S^1_\theta\times\R_z$ has no cusps. Consider a pair of points $c,c'\in S^1$ above each of which lies one crossing of $L_\La$. Then:
    \begin{enumerate}
        \item There exist finitely many gradient trajectories beginning at $c$ and terminating at $c'$, up to translation. 
        \item Suppose each such $ij$-gradient trajectory is maximally well-defined an open interval $(a,b)=(a(c,c'),b(c,c'))\sse\R$ in the universal cover $\mathbb{R}\lr S^1$, up to $\mathbb{Z}$-translation. Then $\exists\epsilon=\epsilon(c,c')\in\R_+$ such that
        $$z_{i}-z_{j}\neq 0 \text{ if }\theta\in(a,a+\epsilon)\cup (b-\epsilon,b).$$
        
        \item $\exists T=T(\epsilon)\in\R_+$ a minimal escape time such that for any point $u\in (a+\frac{\epsilon}{2},b-\frac{\epsilon}{2})$, an $ij$-trajectory $\gamma$ with $\gamma(0)=u$ maps
        $$\gamma((-\infty,-T))=(a,a+{\epsilon}),\quad \gamma((T,\infty))=(b-\epsilon,b),$$
        if its orientation coincides with that of $S^1$. Else, the same holds exchanging the two target intervals above. Here the constants $\epsilon,T\in\R_+$ do not depend on the choice of the lift of the trajectory.\hfill$\Box$
    \end{enumerate}
\end{lemma}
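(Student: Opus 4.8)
The plan is to analyze the flowline equation $\theta'(s) + (y_i(\theta(s)) - y_j(\theta(s))) = 0$ on $S^1$ (lifted to $\R$), exploiting that the cusp-free front hypothesis means the difference function $d_{ij}(\theta) := y_i(\theta) - y_j(\theta)$ is a smooth, globally defined function on $S^1$ with finitely many zeros (the crossing points), each of which is simple after a $C^\infty$-small Legendrian perturbation. First I would set up the local picture at a crossing point $c$: as shown in the excerpt, near an immersed point the sheets are locally $y_1 = -x$, $y_2 = x$ with difference $2x$, so $c$ is a hyperbolic zero of the flow; the $ij$-flowline equation has $\{c\}$ as either its unstable or stable manifold depending on whether the crossing is positive or negative for the pair $(i,j)$. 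Between consecutive crossing points of a given pair $(i,j)$ along $S^1$, the function $d_{ij}$ has a constant sign, so the flow is strictly monotone there, and Lemma \ref{lemma:flow-energy} shows that $z_i - z_j$ (the relevant primitive difference) is strictly monotone decreasing along the trajectory; this prevents a trajectory from looping around $S^1$ and returning, forcing each nonconstant $ij$-trajectory to emanate from a crossing (in backward time) and limit to a crossing (in forward time). This is the content of claim (1): since there are finitely many crossing points and finitely many pairs $(i,j)$, and a trajectory is determined by its starting crossing, its ending crossing, and the pair, there are finitely many up to the $\R$-translation in the flow parameter $s$. Concretely, the trajectory from $c$ to $c'$, if it exists for the pair $(i,j)$, traverses the arc of $S^1$ from $c$ to $c'$ on which $d_{ij}$ has no zero and the correct sign; monotonicity of the ODE gives uniqueness of this arc-trajectory up to reparametrization.

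For claim (2): fix one such maximally-defined $ij$-trajectory $\gamma$, which in the universal cover $\R \to S^1$ has image an open interval $(a,b)$ with $a, b$ the lifts of the endpoint crossings $c, c'$ (or consecutive crossings of the pair). Because these are simple zeros of $d_{ij}$, the function $d_{ij}$ is nonzero on punctured neighborhoods $(a, a+\epsilon) \cup (b - \epsilon, b)$ for small $\epsilon > 0$; and since $z_i - z_j$ is a primitive whose derivative along $\gamma$ is $-|d_{ij}|^2$ (again Lemma \ref{lemma:flow-energy}), the strict monotonicity together with the fact that $z_i - z_j$ is continuous and can vanish at most at isolated points on the closed arc means, after shrinking $\epsilon$, that $z_i - z_j \neq 0$ on those two end-subintervals as well. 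The point here is that the sign of $z_i - z_j$ is \emph{forced} — it cannot change from negative at $s = -\infty$ to positive at $s = +\infty$ — because $z_i - z_j$ is decreasing, so it is negative near $s = +\infty$ (if it vanishes anywhere) and positive near $s = -\infty$, or else has a single constant sign throughout; in either case it is nonzero on the end-subintervals.

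For claim (3), the minimal escape time: this is a standard consequence of hyperbolicity of the fixed points combined with compactness. Near each endpoint $a$, $b$ the linearization of the ODE $\theta' = -d_{ij}(\theta)$ has nonzero eigenvalue $-d_{ij}'(a) \neq 0$ (resp. $-d_{ij}'(b) \neq 0$), so there is a uniform exponential rate of approach/escape; the flow enters the $\epsilon$-neighborhood of an endpoint in finite time from any starting point in the compact middle interval $[a + \epsilon/2, b - \epsilon/2]$, and by compactness of that interval and continuity of the flow there is a uniform bound $T = T(\epsilon)$ on the time needed to get from the middle into $(a, a+\epsilon)$ in backward time and into $(b - \epsilon, b)$ in forward time. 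The direction statement — whether one exits toward $(b-\epsilon, b)$ or $(a, a+\epsilon)$ as $s \to +\infty$ — is dictated by whether the chosen orientation of $\gamma$ agrees with the $\partial_\theta$ orientation of $S^1$, i.e. by the sign of $d_{ij}$ on the arc. Since $\epsilon$ and $T$ are extracted purely from $d_{ij}$ and its derivatives at the crossing points and from the (closed) interval $[a,b]$, they do not depend on which $\Z$-lift of the trajectory we picked. The main obstacle I anticipate is purely bookkeeping: making sure the perturbation making all crossings simple zeros of the relevant difference functions can be done simultaneously for all pairs $(i,j)$ and is compatible with the Legendrian-link structure (no cusps introduced), and tracking which of $z_i - z_j$ versus $z_j - z_i$ is the decreasing one for a given oriented trajectory; the analytic content (hyperbolic ODE, exponential estimates, compactness) is routine.
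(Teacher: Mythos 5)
The paper states Lemma \ref{lemma:minimalescapetime} with a terminal $\hfill\Box$ and no displayed proof; it is presented as a summary of the qualitative discussion immediately preceding it (the local $\pm x$ model at a Reeb chord, the strict decrease of $z_i - z_j$ along a trajectory via \cref{lemma:flow-energy}, and the resulting sign rigidity). Your proposal fills in those details with the same underlying mechanism, so it is the same approach made explicit.

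A few small notes, none of which break the argument. First, your phrase \emph{``$d_{ij}$ is a smooth, globally defined function on $S^1$''} is imprecise: for a nontrivial braid the sheets $y_i$ permute when going around $S^1$, so $d_{ij}$ is only well defined on arcs between crossings (or after lifting to the universal cover $\R$, which is where the lemma actually works, on the maximal interval $(a,b)$). Since the trajectory is confined to one such arc this has no effect on your analysis, but the statement as written is false. Second, in part (2) you should say explicitly that $z_i - z_j$ is \emph{nonzero at the endpoints} $a,b$ — this is because these are Reeb chords, i.e.\ the two sheets of the Legendrian lift have distinct $z$-values there (otherwise $\La$ would be singular) — and then strict monotonicity of $z_i - z_j$ on $(a,b)$ (its $\theta$-derivative is $d_{ij}$, which has constant sign on $(a,b)$) forces at most one interior zero, giving the $\epsilon$. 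You wave at ``isolated zeros'' which is weaker than what monotonicity gives; the cleaner statement avoids ambiguity. Third, your appeal to hyperbolicity of the endpoints in part (3) is unnecessary: the uniform bound $T(\epsilon)$ follows purely from compactness of $[a+\epsilon/2,\, b-\epsilon]$ and the fact that $|d_{ij}|$ is bounded below there, together with monotonicity of $\gamma$ and the forward-invariance of $(b-\epsilon, b)$. Hyperbolicity would give an exponential rate, but no rate is claimed. Finally, you correctly read the set equalities in part (3) as inclusions; a literal set equality with $T$ independent of $u$ would be impossible, so this is the intended reading.
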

\noindent {\bf For case (2)} above, studying flowlines near a $D_4^{-}$-singularity, we proceed as follows. The base surface is $S=\C$ with coordinate $z\in\C$ and the Betti Lagrangian is $\scurve=\Re\{w^2-z=0\}\sse T^*\C$. Note that this is the real part of the Lagrangian projection of the holomorphic Legendrian disk
$$\La=\{(z,w;v)\in T^*\C\times\C: z=w^2, v=2w^3/3\}\sse(J^1\C,\xi_{hol}).$$
Here $\La$ is the Legendrian lift of the holomorphic simple cusp front $\{(w^2,2w^3/3):w\in\C\}\in \C_z\times\C_v$, where $\xi_{hol}=\ker\{dv-wdz\}$. The projection $L\lr\C_z$ is a 2-fold branched cover, branched at $z=0$, and thus there are only two sheets and 12 or 21 flowlines. For such $\scurve$, there are three gradient flowlines converging to $z=0$, given by the three rays $\mathbb{R}_{\geq 0}, e^{\frac{2\pi}{3}}\mathbb{R}_{\geq 0}$ and $e^{-\frac{2\pi}{3}}\mathbb{R}_{\geq 0}$. For $|z|$ large, these are respectively (rays over) the three Reeb chords at the Legendrian boundary, which is given by $\beta=\sigma_1^3$, cyclically understood. These three rays near a $D^4_-$ singularity are said to be the \textit{initial trivalent rays}. In general, given be a Betti Lagrangian $L\sse T^*S$ and $b\in S$ a branched point of the projection $L\lr S$, $L$ decomposes into its germ near the ramification point over $b$ and the smooth sheets. Since the former germ comes from the real part of the holomorphic cusp, we refer to that connected component as the \textit{cusp} component. We also refer to the gradient flowlines between the two sheets of the cusp component as cusp-cusp flowlines, and if the gradient flowlines are between the cusp component and a smooth sheet, we call them smooth-cusp flowlines. Thus, near a ramification point, only a cusp-cusp flowline in $S$ is allowed to terminate at the associated branched point.


\subsection{Asymptotics for conical ends (trapping lemma)}\label{subsection:trappinglemma} Let us now explain how conical ends, as introduced in Definition \ref{def:conicalend}, allow us to control the asymptotic behavior of flowlines as the marked points $\mkpts\sse S$. Specifically, given the germ of a Lagrangian conical end, we prove a trapping property in Lemma \ref{lemma:morsetrappinglemma} below: there exists a neighborhood of marked points such that the gradient trajectories in $S$ cannot leave once inside. Technically, we first derive the trapping lemma for the conical metric $g=d\theta^2+d\radial^2$ near the marked points, instead of the polar metric $d\radial^2+\radial^2 d\theta^2$ used for the exact Betti Lagrangians in Definition \ref{definition:topologicalexactspectralcurve}.(ii). We then argue via Lemma \ref{lem:flowlineidentification} that there is a bijective correspondence between flowlines for the conical metric and those for the polar metric.\\ 

Consider the germ of a Lagrangian conical end $L=S^1\times[1,\infty)\sse T^*D^2$ as in Definition \ref{def:conicalend}, where we use the same notation $(\theta,r)\in S^1\times[1,\infty)$ and $(x(\theta),y(\theta),z(\theta))\in J^1S^1_\theta$ for its conical type; here the origin of $D^2$ is given by $r\to\infty$. Then the gradient flowline equation (\ref{eq:ij-gradientflow}) takes the form
\begin{equation}\label{eq:LCendflowline}
	\gamma'(s)+\left[ (y_i(\theta)-y_j(\theta)) \radial\frac{d}{d\theta}+ (z_{i}(\theta)-z_{j}(\theta))\frac{d}{dr}\right]_{|_{\gamma(s)}}=0,\quad (\theta,r)\in S^1\times[1,\infty).
\end{equation}

To analyze (\ref{eq:LCendflowline}), we first observe that the projection of equation (\ref{eq:LCendflowline}) to the $S^1_\theta$ component becomes
$$\gamma'(s)+\left[ (y_i(\theta)-y_j(\theta)) \radial\frac{d}{d\theta}\right]_{|_{\gamma(s)}}=0,\quad (\theta,r)\in S^1\times[1,\infty),$$
which coincides with equation (\ref{eq:braidflowline}), up to the boosting $r$-coefficient: for a fixed $r\in\R$, its solutions behave as the gradient flowline equation for the Legendrian link $\Lambda=(s,y(s),z(s))\sse J^1S^1$ studied above. At an angle $\theta=\theta(s)$ with a Reeb chord, i.e.~ $y_i(\theta)=y_j(\theta)$, the corresponding solution of \eqref{eq:LCendflowline}  becomes stationary in this $S_\theta^1$-component, and thus gives us a Reeb chord ray, cf.~\cref{subsection:Betti_surface}. This determines the behavior in terms of the $y_i$-variables.\\

In particular, using \cref{eq:braidflowline} we can characterize the Reeb-positive cusp-free Legendrian links as follows.

\begin{lemma}\label{lemma:Reebpositivity}
Let $\lknot\sse (T^\infty S,\xi_\std)$ be a Legendrian braid link. Then $\lknot$ is Reeb-positive if and only if given any $ij$-trajectory, the difference function $z_i-z_j$ is positive near $s=-\infty$, and negative near $s=\infty$.
\end{lemma}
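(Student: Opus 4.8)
The plan is to unwind the definitions and reduce the statement to the local picture near a Reeb chord, where everything is explicit. Recall from \cref{subsection:Morselines} that near a Reeb chord the two relevant sheets of $L_\La$ reduce to the strands $L_1 = (x,-x)$ and $L_2 = (x,x)$ (in suitable local coordinates $x \in (-1,1)$), meeting at $x=0$, and that $\La$ being Reeb-positive at that chord means $L_1$ lies above $L_2$, i.e.\ $z_1(0) > z_2(0)$ after normalizing, equivalently $y_1 - y_2 = 2x$ with the ordering of sheets fixed by the orientation $\partial_x$. The first step is therefore to record precisely how ``positive Reeb chord'' (upper strand travels NW to SE) translates, in the local model, into the statement that along the unstable direction $x = 0$ the difference $z_i - z_j$ has a definite sign, and along the stable direction the opposite sign; this is essentially the computation already done in \cref{subsection:Morselines} showing the $12$-flowline has $x=0$ as unstable manifold and the $21$-flowline has $x=0$ as stable manifold, combined with the sign of $z_i - z_j$ at the crossing.

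Next I would assemble the global picture using \cref{lemma:minimalescapetime}, or rather its underlying mechanism: any $ij$-trajectory $\gamma$ for $L_\La \subset T^*S^1$ with no cusps in the front must, by \cref{lemma:flow-energy}, have $\sheet_i - \sheet_j = z_i - z_j$ strictly decreasing along $\gamma$, and (again by \cref{lemma:flow-energy} and the escape-time analysis of \cref{lemma:minimalescapetime}) must converge at $s \to -\infty$ and $s \to +\infty$ to crossing points $c, c' \in S^1$, i.e.\ to Reeb chord rays. So the trajectory emanates from a crossing at $s=-\infty$ and limits onto a crossing at $s=+\infty$. At the source crossing, the trajectory leaves along the unstable manifold $x=0$ of the local model; at the target crossing it arrives along the stable manifold $x=0$. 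Using the local translation from step one: if every Reeb chord is positive, then at the source the outgoing unstable direction carries $z_i - z_j > 0$, so $z_i - z_j$ is positive near $s=-\infty$; at the target the incoming stable direction carries $z_i - z_j < 0$, so $z_i - z_j$ is negative near $s=+\infty$. Since $z_i - z_j$ is monotone decreasing in between, this is consistent and proves the ``only if'' direction: Reeb-positivity forces the claimed sign behavior on every $ij$-trajectory.

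For the converse, I would argue contrapositively. Suppose some Reeb chord is negative. Then in its local model $L_1$ lies below $L_2$, and the sign of $z_i - z_j$ along the unstable direction $x=0$ is reversed: there is an $ij$-trajectory leaving that crossing whose difference function $z_i - z_j$ is \emph{negative} near $s = -\infty$ (namely, take $i,j$ so that the $ij$-equation sees $x=0$ as its unstable manifold, and read off the sign from the negative-crossing configuration). This already violates the asserted property ``$z_i - z_j$ positive near $s=-\infty$'' for that trajectory. One must be slightly careful that such a trajectory actually exists as a genuine (maximally extended) trajectory and is not instantly obstructed — but this follows because the unstable manifold of the hyperbolic fixed point $x=0$ is $1$-dimensional and nonempty, and the flow can be extended until it either reaches a point of $\pi(\caustL)$ (impossible here, there are no cusps) or limits onto another crossing; in either case it is defined on a half-line $(-\infty, b)$ and exhibits the sign $z_i - z_j < 0$ near $-\infty$. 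A symmetric argument at a negative chord viewed as a target, using the stable direction, shows the sign near $s = +\infty$ can fail to be negative. Hence failure of Reeb-positivity forces failure of the sign property, completing the equivalence.

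The main obstacle I anticipate is purely bookkeeping: getting the sign conventions to line up consistently. There are several interlocking orientation choices — the orientation of the strands $L_1, L_2$ by $\partial_x$, the NW-to-SE convention defining a positive crossing, the identification of which of $\sheet_i - \sheet_j$ versus $\sheet_j - \sheet_i$ decreases along the flow, and the labeling of sheets by height $z_i$ versus by the front crossing data. Each of these is individually trivial, but a single sign slip propagates into a wrong statement, so the real work is a careful, once-and-for-all normalization (best done by fixing the local model $L_1 = (x,-x)$, $L_2 = (x,x)$ exactly as in \cref{subsection:Morselines}, declaring $i=1$, $j=2$, and checking that ``positive crossing'' $\Leftrightarrow$ $z_1 - z_2 > 0$ at $x = 0$ $\Leftrightarrow$ $x=0$ is the unstable manifold of the $12$-flow with $z_1 - z_2$ positive just downstream). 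Everything else — monotonicity from \cref{lemma:flow-energy}, convergence to crossings from \cref{lemma:minimalescapetime} — is already available in the excerpt and can be invoked directly.
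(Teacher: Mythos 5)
Your proposal is correct and essentially coincides with the paper's argument: both translate ``positive crossing at an $ij$-chord'' into ``$z_i-z_j$ has Morse index $1$ there'' via the local model from \cref{subsection:Morselines}, combine this with the observation that an $ij$-trajectory can only emanate from an index-$1$ critical point of $z_i-z_j$ and terminate at an index-$0$ one, and use the monotonicity from \cref{lemma:flow-energy} together with \cref{lemma:minimalescapetime} for the asymptotics. The one point you add beyond the paper's write-up is the explicit check that the trajectory leaving along the unstable manifold of an alleged negative crossing actually continues to a genuine maximal trajectory, a step the paper leaves tacit.
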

\begin{proof} By definition, the Lagrangian projection of $\lknot$ has a positive crossing at an $ij$-Reeb chord if and only if  the height difference function $z_i-z_j$ has Morse index $1$. For $(\Rightarrow)$, suppose $\lknot$ is Reeb positive. If an $ij$-trajectory begins at $c$, since the neighborhood around $c$ must be the unstable manifold of $z_i-z_j$, we must have $z_i>z_j$. Similarly, if an $ij$-trajectory terminates at a Reeb chord $c$, as $\lknot$ is Reeb positive, the stable manifold of $c$ must be a point, and so we have $z_i<z_j$ at $c$. 
For $(\Leftarrow)$, by contradiction: assume that there exists a negative crossing at $c$ with $z_i>z_j$. In that case, the stable manifold of $z_j-z_i$ near $c$ is necessarily the point $c$. Therefore, there exists an $ji$-trajectory terminating at $c$ and with $z_j>z_i$.
\end{proof}

\noindent To understand the effect of the difference factor $(z^{i}-z^{j})$ in equation \eqref{eq:LCendflowline}, let us write $\gamma(s)=(\theta(s),\radial(s))\in S^1\times[1,\infty)$ and observe that the projection onto $S^1_\theta$ can be maximally extended to a solution of the $ij$-gradient flow-equation on $S^1_{\theta}$, after orientation-preserving reparametrization. Denote by $P_\gamma(\theta)$ this projected $ij$-flowline on $S^1_\theta$, traveling from a Reeb chord $c'$ to a Reeb chord $c$, both Reeb chords of $\La$. The sign of the derivative $\radial'(s)$ is equal to the sign of the difference $-(z^{i}-z^{j})(\gamma(s))$. In particular, since the difference function $(z^{i}-z^{j})$ decreases along the projected trajectory $P_{\gamma(s)}$, $(z^{i}-z^{j})$ decreases along the trajectory $\gamma(s)$ as well. Figure \ref{fig:flowasymptotics} provides a schematic picture on the behavior of the flowlines $\gamma(s)$.
\begin{center}
	\begin{figure}[h!]
		\centering
		\includegraphics[scale=1.4]{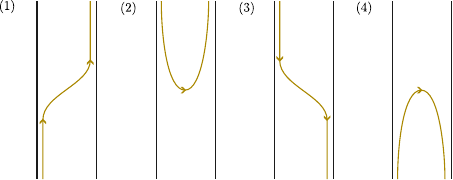}
		\caption{The asymptotics of an $ij$-flowline $\gamma(s)=(r(s),\theta(s))$ based on the sign of $(z^{i}-z^{j})$ at the Reeb chords. The case depicted in (1) occurs if $(z^{i}-z^{j})$ goes from negative to negative. Case (2) occurs if $(z^{i}-z^{j})$ goes from positive to negative. Case (3) if $(z^{i}-z^{j})$ goes from positive to positive. Case (4) never occurs, which would correspond to $(z^{i}-z^{j})$ going from negative to positive. The fact that the fourth case is excluded is the key ingredient behind the proof of \cref{lemma:trappinglemma} (the Trapping Lemma).} 
		\label{fig:flowasymptotics}
	\end{figure}
\end{center}

This leads to the following useful observation:
\begin{lemma}\label{lem:first_noescape}
Let $\gamma(s)=(\radial(s),\theta(s)),s\in [0,\infty)$ be an $ij$-flowline on $S^1\times [1,\infty)$ solving equation \eqref{eq:LCendflowline} and suppose that $\radial'(0)>0$. Then $\radial(s)$ strictly increases and it asymptotes to a Reeb chord ray. 
\end{lemma}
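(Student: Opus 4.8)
The plan is to analyze the sign of $\radial'(s)$ along the flowline using the already-established monotonicity of the difference function $z_i - z_j$. Recall that from \eqref{eq:LCendflowline} the $r$-component of the flow satisfies $\radial'(s) = -(z_i - z_j)(\gamma(s))$, so the sign of $\radial'(s)$ is the sign of $-(z_i-z_j)$ evaluated along the trajectory. The hypothesis $\radial'(0) > 0$ means exactly that $(z_i - z_j)(\gamma(0)) < 0$. First I would invoke the discussion preceding the statement: the projection $P_\gamma$ of $\gamma$ onto $S^1_\theta$ is (after reparametrization) a maximally-extended $ij$-gradient flowline on $S^1_\theta$, and along it the difference function $z_i - z_j$ is non-increasing (this is Lemma \ref{lemma:flow-energy} applied to the base-circle flow, together with the fact that $z_i - z_j$ strictly decreases along a flowline by Definition \ref{def:flowline}). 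Consequently $(z_i - z_j)(\gamma(s)) \le (z_i - z_j)(\gamma(0)) < 0$ for all $s \ge 0$, hence $\radial'(s) = -(z_i - z_j)(\gamma(s)) > 0$ for all $s \ge 0$: the radial coordinate is strictly increasing.

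Next I would address convergence to a Reeb chord ray. Since $\radial(s)$ is strictly increasing it has a limit $\radial_\infty \in (1, \infty]$. For the $\theta$-component, the projected flowline $P_\gamma$ on $S^1_\theta$ is a trajectory of the base-circle flow; by Lemma \ref{lemma:minimalescapetime} (case (1) of \cref{subsection:Morselines}), such a trajectory, being maximally extended, must limit as $s \to +\infty$ to one of the finitely many crossing points — i.e.\ to an angle $\theta_0$ where $y_i(\theta_0) = y_j(\theta_0)$, a Reeb chord of $\La$. At such an angle the $\theta$-component of \eqref{eq:LCendflowline} becomes stationary, so $\theta(s) \to \theta_0$. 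It remains to rule out $\radial_\infty < \infty$, i.e.\ to show the flowline does not stall at a finite radius. If $\radial_\infty < \infty$ then $\gamma(s) \to (\theta_0, \radial_\infty)$, an interior point of $S^1 \times [1,\infty)$; by the ODE \eqref{eq:LCendflowline} this would force the right-hand side to vanish at the limit, so in particular $(z_i - z_j)(\theta_0, \radial_\infty) = 0$. But by Lemma \ref{lemma:Reebpositivity} and the sign analysis summarized in Figure \ref{fig:flowasymptotics}, along an $ij$-trajectory the situation "$(z_i-z_j)$ negative then zero" with the flow continuing is precisely the case where the trajectory is trapped and flows all the way out; more directly, $(z_i-z_j)(\gamma(s))$ is bounded above by a strictly negative constant $(z_i-z_j)(\gamma(0))$ for all $s$, so it cannot approach $0$, contradicting continuity at the would-be limit point. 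Hence $\radial_\infty = \infty$, and $\gamma$ asymptotes to the Reeb chord ray $\{(\theta_0, r) : r \ge \radial(0)\}$.

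The main obstacle is the last step — excluding a finite-radius limit point — and care is needed about what "asymptotes to a Reeb chord ray" means quantitatively: one wants not just $\radial(s) \to \infty$ and $\theta(s) \to \theta_0$ but genuine $C^0$ (and ideally exponential) convergence of $\gamma(s)$ to the ray. For the latter refinement I would linearize \eqref{eq:LCendflowline} transverse to the ray near the crossing angle $\theta_0$: writing $\theta = \theta_0 + u$, the $\theta$-equation reads $u'(s) + (y_i-y_j)(\theta_0+u)\,\radial(s) = 0$ with $(y_i - y_j)(\theta_0+u) = \partial_\theta(y_i-y_j)(\theta_0)\, u + O(u^2)$, and since $\theta_0$ is a transverse crossing this linear coefficient is nonzero; because $\radial(s)$ is increasing, this boosts the contraction (or, in the unstable direction, one uses that the trajectory is already tending to $\theta_0$ so it sits on the stable side) and yields the desired convergence rate. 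The boundedness and completeness hypotheses packaged into "weakly bounded" in Definition \ref{eq:weaklyboundedmultigraph}, together with the conical form of $g$, guarantee the coefficients $y_i(\theta), z_i(\theta)$ and their derivatives are uniformly controlled, so no blow-up in finite flow-time can occur. I expect the clean write-up to proceed by: (1) the sign argument giving $\radial' > 0$ everywhere; (2) existence of limits $\radial_\infty, \theta_0$; (3) the contradiction ruling out $\radial_\infty < \infty$ via the strict negative upper bound on $z_i - z_j$; and (4) the linearization near $\theta_0$ for the convergence statement.
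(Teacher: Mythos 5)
Your proposal is correct and its core is exactly the paper's argument: read off from \eqref{eq:LCendflowline} that $\radial'(s)=-(z_i-z_j)(\theta(s))$, observe that $(z_i-z_j)$ is non-increasing along the trajectory (so once it starts negative it stays negative), and conclude $\radial'(s)>0$ for all $s$. The paper's own proof stops there and leaves the asymptotic claim implicit; you spell it out, which is fine. One small cleanup worth making: rather than passing through "the ODE forces the right-hand side to vanish at the limit," it is simpler and sharper to note that $(z_i-z_j)(\theta(s))\le (z_i-z_j)(\theta(0))<0$ gives a uniform lower bound $\radial'(s)\ge -(z_i-z_j)(\theta(0))>0$, whence $\radial(s)\ge \radial(0)+cs\to\infty$ directly, with $\theta(s)\to\theta_0$ a crossing angle by the $S^1$-gradient dynamics. (Also, $z_i-z_j$ depends only on $\theta$, so writing $(z_i-z_j)(\theta_0,\radial_\infty)$ is a slight abuse.) The linearization at the end is extra and not needed for the lemma as stated, but it is not wrong.
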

\begin{proof}
Since $\gamma(s)$ solves $\eqref{eq:LCendflowline}$, the component $\theta(s)$ solves the $\radial(s)$-boosted gradient flowline equation for the Legendrian link $\lknot$. Therefore, the difference $(z_i-z_j)$ strictly decreases along $\gamma(s)$. By equation $\eqref{eq:LCendflowline}$, we have $r'(0)>0$ if and only if $(z_i-z_j)(0)<0$. Therefore, the hypothesis $\radial'(0)>0$ implies that $\radial'(s)>0$ for all $s\in\R_+$, since $(z_i-z_j)(s)$ is negative at $s=0$ and strictly decreases as a function of $s\in\R_+$. 
\end{proof}
\noindent Visually, Lemma \ref{lem:first_noescape} implies that, near a small enough neighborhood of a marked point, once a flowline start traveling up towards the marked point then it will not reverse course. The following trapping lemma builds on \cref{lem:first_noescape} to provide the more complete description that we need:

\begin{lemma}[Trapping Lemma]\label{lemma:morsetrappinglemma}
	Let $\Laggerm\sse T^*D^2$ be the germ of a Lagrangian conical end over a generic cusp-free Legendrian link $\lknot\sse(J^1S^1,\xi_\std)$, where the zero-section of the 1-jet space is identified with $T_0^\infty S^1$. Then there exist positive constants $T,R\in\R_+$ such that:
	\begin{enumerate}

		\item For the annular neighborhood $S^1\times (R, 4R)\sse D^2$ of the origin, any trajectory $\gamma(s)$ passing through $S^1\times [2R,3R]$ at $s=0$ must stay inside $S^1\times [R, 4R]$ after time $\pm T$.\\
		\item The domain of definition of either $\gamma(s)$ or $\gamma(-s)$ can be extended to $[0,\infty)$.\\
		\item In the former case, $\radial(s)$ strictly increases on $[T,\infty)$ and asymptotes to a Reeb chord ray. In the latter case, $\radial(s)$ strictly decreases on $[-\infty,T)$.
	\end{enumerate}
    That is, any trajectory $\gamma$ passing through $S^1\times [2R,3R]$ must stay inside $S^1\times [R,\infty)$ in at least one direction.
\end{lemma}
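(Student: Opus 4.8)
\emph{Proof proposal.} The plan is to control the radial coordinate of a trajectory $\gamma(s)$, with angular component $\theta(s)$ and radial component $\radial(s)$ in $S^1\times[1,\infty)$, by exploiting a convexity property. From \eqref{eq:LCendflowline} one reads off $\radial'(s)=-(z_i-z_j)(\theta(s))$ and $\theta'(s)=-(y_i-y_j)(\theta(s))\,\radial(s)$. Differentiating the first identity, using the Legendrian relation $z_k'(\theta)=y_k(\theta)$ (so $z_i'-z_j'=y_i-y_j$) and substituting the second, yields
\begin{equation*}
\radial''(s)=(y_i-y_j)^2(\theta(s))\,\radial(s)\ \geq\ 0 ,
\end{equation*}
so $\radial(s)$ is convex in $s$, strictly so away from crossing angles of $\lknot$; this is the pointwise form of \cref{lemma:flow-energy}, which records that $(z_i-z_j)(\theta(s))$ decreases. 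Hence $\radial'$ is monotone non-decreasing, and, writing $c$ for the forward crossing of the $S^1$-flowline underlying $\theta(s)$ (which joins two crossings $c',c$ of $\lknot$ by \cref{lemma:minimalescapetime}), one has $\radial'(s)\to-(z_i-z_j)(c)$, a nonzero constant since $(z_i-z_j)(c)$ is the height of a Reeb chord. So the qualitative behavior of $\radial$ is dictated by $\sgn\,(z_i-z_j)(c)$: eventually strictly increasing if this is negative, strictly decreasing if it is positive.

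Next I would fix the constants. Put $M:=\max\{\,|(z_i-z_j)(\theta)| : \theta\in S^1,\ i\neq j\,\}$, so $|\radial'|\leq M$ wherever $\gamma$ is defined, and let $T_1$ and $\epsilon>0$ be an upper bound for the minimal escape times and a lower bound for the widths of the end intervals supplied by \cref{lemma:minimalescapetime} over the finitely many $S^1$-flowlines of $\lknot$. Choose $R>\max\{2,\sqrt{MT_1}\}$ and set $T:=R/M$. For a trajectory with $\radial(0)\in[2R,3R]$, the bound $|\radial'|\leq M$ gives $\radial(s)\in[\,\radial(0)-R,\radial(0)+R\,]\subseteq[R,4R]$ for $|s|\leq T$; since $\radial\geq R>1$ there and a conical end has no branch points, $\gamma$ is defined on all of $[-T,T]$ and stays in $S^1\times[R,4R]$, which is part (1). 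The same estimate shows such a trajectory needs time $>(2R-1)/M>T$ to reach $\radial=1$.

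Now I would run the case analysis on $\sgn\,(z_i-z_j)(c)$. On $[0,T]$ we have $\radial\geq R$ and $\theta$ solves the $\radial(s)$-boosted form of \eqref{eq:braidflowline}, so the elapsed $S^1$-flow time by boosted time $T$ is at least $RT=R^2/M>T_1$; hence $\theta(s)$ lies in the $\epsilon$-end interval about $c$ for all $s\in[T,\infty)$ in the domain of $\gamma$, where $z_i-z_j$ has constant sign $\sgn\,(z_i-z_j)(c)$ (if $\theta(0)$ already lies in an end interval, is a crossing, or has $z_i(\theta(0))=z_j(\theta(0))$, the same holds with $T$ replaced by $0$). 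If $(z_i-z_j)(c)<0$, then $\radial'>0$ on $[T,\infty)$, so $\radial$ is strictly increasing there, stays $\geq R>1$, and since there are no branch points the forward continuation is defined on all of $[0,\infty)$ and asymptotes to a Reeb chord ray as in \cref{lem:first_noescape}: the ``former'' outcome of (2) and (3). If $(z_i-z_j)(c)>0$, then $\radial'<0$ on $[T,\infty)$, and since $\radial'$ is non-decreasing, $\radial'<0$ on the whole domain of $\gamma$; so $\radial$ strictly decreases, $\gamma$ extends backward to $s=-\infty$ asymptoting to the Reeb chord ray over $c'$, and reaches $\radial=1$ only at a forward time $>T$, whence $\gamma(-s)$ extends to $[0,\infty)$ and $\radial$ strictly decreases on $[-\infty,T)$: the ``latter'' outcome. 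Finally, in the former outcome $\radial\geq R$ on $[0,T]$ and increases afterward, so $\gamma|_{[0,\infty)}\subset S^1\times[R,\infty)$; in the latter outcome $\radial(s)\geq\radial(0)\geq 2R$ for $s\leq 0$, so $\gamma(-s)|_{[0,\infty)}\subset S^1\times[R,\infty)$.

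The hard part will be the transfer of the minimal-escape-time control through the $\radial(s)$-boost and the bookkeeping that makes the single $T=R/M$ simultaneously too short for a trajectory through $[2R,3R]$ to cross the collar $[R,4R]$ or reach $\radial=1$, yet long enough for $\theta$ to settle near its forward crossing; the convexity of $\radial$ is the glue, upgrading ``$\radial'>0$ once'' to ``$\radial'>0$ forever after''. Given \cref{lem:first_noescape} and the absence of $D_4^{-}$ behavior in a conical end, the remaining verifications (the boundary configurations above, and the identification of the limit with a Reeb chord ray) are routine.
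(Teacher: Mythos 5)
Your proposal is correct and follows essentially the same route as the paper's proof: both rest on monotonicity of $s\mapsto(z_i-z_j)(\theta(s))$ along the trajectory, both invoke \cref{lemma:minimalescapetime} to pin down the angular component past a finite time, and both run the same dichotomy on the sign of $z_i-z_j$ at the forward crossing, finishing via \cref{lem:first_noescape}. What you do differently is substitute the paper's appeal to \cref{lemma:flow-energy} (the difference function decreases along the projected $S^1$-flow) by the direct second-order identity $\radial''(s)=(y_i-y_j)^2(\theta(s))\,\radial(s)\geq0$, obtained from the Legendrian relation $z_k'=y_k$; this is the same monotonicity of $\radial'=-(z_i-z_j)$, just packaged infinitesimally. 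You also pick constants in the opposite order — the paper fixes the escape time $T$ first and then chooses radii $R_0<R_1$ compatible with it, whereas you fix $R>\max\{2,\sqrt{MT_1}\}$ first and take $T=R/M$, using the condition $R^2/M>T_1$ so that this single $T$ simultaneously does the trapping estimate of part (1) and the boosted-time estimate needed for part (3). That is a cleaner bookkeeping of identical ideas, and the corner cases you flag ($\theta(0)$ at a crossing, in an end interval, or at a zero of $z_i-z_j$) are the same ones the paper handles by cases in its last paragraph. No gap.
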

\begin{proof}
Consider the constants $\epsilon(c,c')\in\R_+$ for all the pairs $c,c'\in S^1$ associated to Reeb chords of the end Legendrian link $\lknot$, as in Lemma \ref{lemma:minimalescapetime}. Let $\epsilon\in\R_+$ be their minimum and we choose $T$ to be the corresponding minimal escape time. Choose some radii $R_0,R_1\in\R_+$, $R_1>R_0\gg 1$ such that for any gradient trajectory passing through $S^1\times [R_1,\infty)$, the time $\pm T$ image of the trajectory remains inside $S^1\times [R_0,\infty)$. Such radii exist as there is a finite uniform upper bound on $\abs{z_{i}-z_{j}}$ and trajectories satisfy equation \eqref{eq:LCendflowline}. Now given an $ij$-trajectory $\gamma$ passing through $S^1\times [R_1,\infty)$ at time $s=0$, we consider $P_\gamma(\theta)$ the trajectory on $S^1_\theta$ obtained by reparametrizing the $S^1_\theta$-component of ${\gamma}$ and consider $c\in S^1$ be the (angle associated to the) limiting Reeb chord at $s=-\infty$, while $c'$ is the limiting Reeb chord at $s=+\infty$. This establishes (1), as the required $R$ can be chosen from $R_1$, and also shows (2).

For (3), since $z_{i}-z_{j}$ has to decrease along $\gamma$, it follows that both $(z_{i}-z_{j})(c)<0$ and $(z_{i}-z_{j})(c')>0$ cannot happen. By the choice of $R_0$, the gradient flow $\gamma$ still remains in $S^1\times [R_0,\infty)$ after time $\pm T$. If $(z^{i}-z^{j})(\theta(T))<0$, \cref{lem:first_noescape} implies that $\radial(s)$ strictly increases, i.e.~that $\gamma$ never goes back. If else $(z^{i}-z^{j})(\theta(T))>0$, it must be that $(z^{i}-z^{j})>0$ along $P_\gamma:(-\infty,T]\to S_\theta^1$ and then the backward $ji$-gradient flow will have $\radial'(\gamma(-s))>0$. To analyze the cases in the other intervals, we proceed similarly: e.g.~ if $P_\gamma(0)\in (a+\epsilon,b-\epsilon)$ then \cref{lemma:minimalescapetime} implies that $(z^{i}-z^{j})(\theta(T))\neq 0$ and the same argument above applies. If $P_\gamma(0)\in (a,a+\epsilon]$, then there are two cases. Either $(z_i-z_j)(c')<0$ and then $(z_i-z_j)(c')$ must stay negative, so $\gamma(s)$ will travel up as $s\to +\infty$. Or $(z_i-z_j)(c')>0$, in which case the backward flow satisfies $(z_j-z_i)(c)<0$ along it and so the backward flow will always travel up. The cases with $P_\gamma(0)\in(b-\epsilon,b]$ are deduced in the same way and (3) follows.
\end{proof}

The characterization of asymptotics of trajectories for exact Betti Lagrangians is thus:
\begin{proposition}\label{prop:asymptoticoftrajectories}
Let $L\sse (T^*S,\la_\std)$ be an exact Betti Lagrangian and $\gamma:(a,b)\to \obis$ a gradient trajectory for $L$, $(a,b)\sse\R$ the maximal domain of definition of $\gamma$. Then:
	\begin{enumerate}
        \item The trajectory $\gamma$ cannot be periodic.
		\item If $a$ is finite, then $\gamma(a)\in S$ must be at a $D_4^{-}$-branch point of the projection $L\lr S$. If $b$ is finite, the same applies to $\gamma(b)\in S$.
		\item If $a=-\infty$, then $\gamma(s)$ asymptotes to a Reeb chord ray as $s\to-\infty$. Similarly, if $b=\infty$, then $\gamma(s)$ asymptotes to a Reeb chord ray as $s\to\infty$.
	\end{enumerate}
\end{proposition}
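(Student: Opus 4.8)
The plan is to prove the three claims in turn, using the structure results and the trapping lemma already established. The key local inputs are: the analysis of flowlines near a $D_4^-$-singularity in \cref{subsection:Morselines} (case (2)), the asymptotic analysis near conical ends in Lemmas \ref{lemma:minimalescapetime}, \ref{lem:first_noescape} and the Trapping Lemma \ref{lemma:morsetrappinglemma}, and the energy identity of \cref{lemma:flow-energy}. I would split $S$ into the compact core $S\setminus\bigcup_i A_i$ (where flowlines behave like ordinary gradient trajectories of a Morse function, away from branch points) and the annular ends $A_i\cong S^1\times(1,\infty)$ near each marked point (governed by \eqref{eq:LCendflowline}). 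The conical-end hypothesis of \cref{definition:topologicalexactspectralcurve}.(ii) places us exactly in the setting of the trapping lemma, after invoking the bijective correspondence (cf.~\cref{lem:flowlineidentification}, referenced in \cref{subsection:trappinglemma}) between flowlines for the conical metric $d\theta^2+dr^2$ and those for the polar metric $dr^2+r^2d\theta^2$ used in the definition.

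\textbf{Step 1 (no periodic trajectories).} Suppose $\gamma$ is periodic, so $\gamma$ is defined on all of $\R$ with $\gamma(s+L_0)=\gamma(s)$ for some period $L_0>0$, and its image is a compact loop in $S$ disjoint from $\pi(\caustL)$. Apply \cref{lemma:flow-energy} on $[0,L_0]$: the left-hand side is $-\int_0^{L_0}|\nabla(f_i-f_j)|^2\,ds\le 0$, while the right-hand side is $(f_i-f_j)(\gamma(L_0))-(f_i-f_j)(\gamma(0))=0$ by periodicity. Hence $\nabla(f_i-f_j)\equiv 0$ along $\gamma$, so $\gamma'\equiv 0$ and $\gamma$ is constant — but a constant solution sits at a critical point of $f_i-f_j$, which in the interior away from branch points and Reeb chords does not happen for a Betti Lagrangian (the sheets are locally distinct graphs, so $d f_i\ne d f_j$); more to the point, a genuine trajectory by \cref{def:flowline} is a maximal \emph{non-constant} solution, and a constant is excluded. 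This contradiction proves (1). (One can phrase this more robustly: $f_i-f_j$ is strictly decreasing along any non-constant flowline by \cref{def:flowline} and the remark following \cref{lemma:flow-energy}, which is already incompatible with periodicity.)

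\textbf{Step 2 (finite endpoints are $D_4^-$-branch points).} Suppose $a>-\infty$. By maximality of the domain (\cref{def:flowline}), a trajectory can only fail to extend past $a$ if $\gamma(s)$ converges to a point of $\pi(\caustL)$ as $s\downarrow a$; since the only singular values of the branched cover $\pi:L\to S$ are the finitely many $D_4^-$-branch points, it suffices to show $\lim_{s\downarrow a}\gamma(s)$ exists and lies in $\pi(\caustL)$. First, $\gamma$ cannot escape to a marked point in finite time: on each end $A_i$ the Trapping Lemma \ref{lemma:morsetrappinglemma} shows that once a trajectory enters the region $S^1\times[2R,3R]$ it stays in $S^1\times[R,\infty)$ in one time direction, and moreover in that direction $r(s)$ is eventually monotone and asymptotes to a Reeb chord ray — so reaching $r=\infty$ requires infinite time. (This uses the no-reversal statement \cref{lem:first_noescape} and the finite uniform bound on $|z_i-z_j|$.) Therefore, if $a$ is finite, $\gamma$ stays in a fixed compact subset $K_0\subset S$ for $s$ near $a$. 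On $K_0$ the vector field $-\nabla(f_i-f_j)$ is smooth and bounded on the open set where the two sheets are distinct, so $|\gamma'|$ is bounded and $\gamma(s)$ is Cauchy as $s\downarrow a$, hence converges to some $p_0\in K_0$. If $p_0\notin\pi(\caustL)$, the flow extends smoothly past $a$ (standard ODE existence), contradicting maximality; so $p_0\in\pi(\caustL)$ is a $D_4^-$-branch point. The local model of \cref{subsection:Morselines}, case (2) — three initial trivalent rays $\R_{\ge0}$, $e^{2\pi i/3}\R_{\ge0}$, $e^{-2\pi i/3}\R_{\ge0}$ for $\Re\{w^2-z=0\}$, and the observation that only cusp-cusp flowlines terminate at the branch point — confirms that such convergence in finite time does occur and only for the appropriate sheet-pair; this also records which $ij$ can occur. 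The argument at a finite $b$ is identical with $s\uparrow b$.

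\textbf{Step 3 (infinite endpoints asymptote to Reeb chord rays).} Suppose $a=-\infty$. By Step 2's dichotomy, the trajectory either stays in the compact core for all $s\le s_0$ or eventually enters some end $A_i$. In the first case, $f_i-f_j$ is bounded and strictly decreasing on $(-\infty,s_0]$, so $\int_{-\infty}^{s_0}|\nabla(f_i-f_j)|^2<\infty$ by \cref{lemma:flow-energy}; combined with compactness and the Morse-type structure (after a generic perturbation of the metric, $f_i-f_j$ has isolated non-degenerate critical points away from branch points, cf.~the preliminary transversality of Section \ref{subsection:preliminarytransversalitycondition}), $\gamma$ would have to converge to an interior critical point — but an interior critical point of $f_i-f_j$ corresponds to a \emph{Reeb chord} of $\partial_\infty L$ only at the ideal boundary, not in the compact interior, so a genuine interior sink is excluded by the no-interior-sink constraint, forcing $\gamma$ into an end; this reduces case one to case two. (Alternatively: an interior trajectory confined for all negative time with finite energy must limit onto a critical point, and the Betti Lagrangian has no interior critical points of difference functions by genericity, so this case is vacuous.) In the second case, the Trapping Lemma \ref{lemma:morsetrappinglemma}(3) directly gives that $r(s)$ strictly increases on $[T,\infty)$ in the backward direction and asymptotes to a Reeb chord ray $(e^{i\theta_0},r)$; translating to $s\to-\infty$ via reversing orientation, $\gamma(s)$ asymptotes to a Reeb chord ray of $\La$. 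The statement for $b=\infty$ is symmetric.

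\textbf{Main obstacle.} The delicate point is Step 2's claim that a finite-time endpoint \emph{must} be a branch point and not, say, a point where $\gamma$ oscillates without converging, or where the trajectory spirals — i.e.~establishing that $\lim_{s\downarrow a}\gamma(s)$ exists. This is where one genuinely needs the local $D_4^-$-model and the conical-end control together: the Trapping Lemma rules out the ``finite-time-escape-to-$\mkpts$'' pathology, and the explicit holomorphic cusp germ $\{(\la_\C-c)^2=z\,dz^2\}$ (with all smooth sheets holomorphic, as assumed at the end of \cref{subsection:scurves}) lets one compute that near a branch point the difference function behaves like $\Re(z^{3/2})$, whose gradient flow converges to $0$ in finite time along exactly the three trivalent rays — so the limit exists and is the branch point. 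Making the ``$\gamma$ is Cauchy near $a$'' argument airtight requires knowing $|\gamma'|=|\nabla(f_i-f_j)|$ stays bounded as $s\downarrow a$, which fails naively near the branch point (where $|\nabla\Re(z^{3/2})|\to\infty$ is false — it $\to 0$ — actually it is fine here), but one must be careful that $\gamma$ doesn't accumulate on a \emph{positive-length} arc of $\pi(\caustL)$; since branch points are isolated this cannot happen, but the write-up should say so explicitly. I expect Steps 1 and 3 to be short once the Trapping Lemma is in hand; Step 2 carries the real content.
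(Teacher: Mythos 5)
Your Steps~2 and~3 follow the paper's proof closely: finite endpoints are handled by the dichotomy ``terminate at a $D_4^-$-branch point or leave $K$,'' and infinite endpoints by combining the Trapping Lemma with an energy-quantization argument to rule out staying in the compact core for infinite backward time. Those parts are sound (the paper's treatment of the compact-core subcase in Step~3 is slightly tighter---energy quantization forces $\gamma$ into a small neighbourhood of a branch point, where the $D_4^-$ model only permits finite flow-time, giving a clean contradiction---whereas your ``must converge to an interior critical point; there are none; hence escape'' is a bit loose, but it is morally the same and fixable in a line).

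Step~1, however, has a genuine gap. You assert $(f_i-f_j)(\gamma(L_0))-(f_i-f_j)(\gamma(0))=0$ ``by periodicity,'' but $f_i-f_j$ is only a \emph{locally} defined function: it is a primitive of the closed $1$-form $(\lambda^i-\lambda^j)|_\gamma$, and analytic continuation of $f_i-f_j$ once around the loop $\gamma$ changes its value by the period $\int_\gamma(\lambda^i-\lambda^j)$, which is \emph{not} zero for a general Betti Lagrangian. Your argument would conclude that no Lagrangian multigraph admits periodic trajectories, which is false (graphs of non-exact closed $1$-forms on $S^1$, and meromorphic spectral curves at non-generic phases, do bound periodic trajectories --- this is precisely why the WKB analogue, \cref{prop:asymptoticofWKBtrajectories}, carries the caveat ``for generic $\theta$''). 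The exactness hypothesis is exactly what kills the period, and you never invoke it. The paper's proof avoids this entirely: \cref{lemma:flow-energy} identifies $\int_{\bar\gamma^i-\bar\gamma^j}\lambda$ with $-\int_0^{L_0}|\nabla(f_i-f_j)|^2<0$, and for a periodic $\gamma$ the difference of cotangent lifts is a \emph{closed} curve $\tau\subset L$; exactness of $L$ says $\int_\tau\lambda=0$ for every such $\tau$, contradiction. To repair your version you would need to say explicitly that exactness makes $f_i-f_j$ single-valued along $\gamma$; but at that point you have reconstructed the paper's argument, so you should simply state it that way.
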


\begin{proof}
For (1), \cref{lemma:flow-energy} implies that a closed trajectory would yield a smooth closed curve $\tau\sse L$ such that integral of the Liouville form along $\tau$ does not vanish. Since $L$ is exact, this is a contradiction and thus there are no closed trajectories. For (2), showing that $\gamma(a)$ is a $D_4^{-}$-branch point if $a$ is finite, we fix a large radius $R\in\R_+$ as in \cref{lemma:morsetrappinglemma} and consider the complement $K\sse S$ of the union of all the cylinders ${S^1\times (R,\infty)}$ near the marked points of $S$. We claim that a trajectory must either terminate at a finite end or leave the compact set $K$.

Indeed, suppose that image of the trajectory $\gamma$ stays contained in $K$ as $s\to a$. If $a$ is finite, the flowline equation \eqref{eq:ij-gradientflow} and since $L$ is embedded, it follows that $\lim \gamma(a)$ must be branch point of $D_4^{-}$-singularity. The flowtime needed to approach the $D_4^{-}$-singularity is finite, and thus we have a finite end, as required. If else $a=-\infty$, we need to argue that the trajectory escapes $K$, and \cref{lemma:morsetrappinglemma} then determines its asymptotics. By the flowline energy formula of \cref{lemma:flow-energy}, it must be that $\gamma(s)$ lies in a small neighborhood of a $D_4^{-}$-branched point for $s\in\R_+$ large enough. Indeed, by energy quantization: the difference of the sheets have norm uniformly bounded below outside such neighborhood, and there is a minimal flow-time needed to travel from one boundary to another, but the flow-energy must be finite and so eventually the flowline cannot leave the neighborhood of the branch points. This implies that $\gamma(s)$ is a cusp-smooth flowline that either ends at the branch point or coincides with one of the trivalent rays coming out of the $D_4^{-}$-singularity. The flowtimes for either of these events are finite, and thus this is a contradiction with $a=-\infty$ and staying within $K$. Thus the trajectory must leave the compact set $K$ if $a=-\infty$, from which (3) follows after applying \cref{lemma:morsetrappinglemma}.
\end{proof}

Finally, let us compare the behavior of trajectories for the polar metric $g=r^2d\theta^2+d\radial^2$ and the conical metric $g=d\theta^2+d\radial^2$ near the marked points $\mkpts\sse S$. By the flowline equation \eqref{eq:LCendflowline}, trajectories are directed by the vector field of the form $\nabla f \dd_\theta+(f/r)\dd_r$ where $f=z^i-z^j$. We then have:

\begin{lemma}\label{lem:flowlineidentification}
The diffeomorphism $\phi: S^1\times [0,\infty)\to S^1\times [1,\infty)$ given by $(\theta,\tau)=\phi(\theta,\radial)=(\theta,e^{\radial^2/{2}})$ identifies the gradient flowlines for the metric $d\radial^2+d\theta^2$ and the gradient flowlines for the metric $\tau^2d\theta^2+d\tau^2$.
\end{lemma}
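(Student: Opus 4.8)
The plan is to compare the two $ij$-gradient flowline equations in their respective coordinates and to check that $\phi$ carries the vector field directing one onto an everywhere-positive multiple of the vector field directing the other; the maximal trajectories then correspond under $\phi$, up to orientation-preserving reparametrization, which is precisely the asserted identification.

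First I would write both equations near a marked point, where the conical profile of Definition~\ref{def:conicalend} is linear; normalizing it to equal the radial coordinate (as is already done in \eqref{eq:LCendflowline}), the conical end of type $\lknot$ is the multigraph of the sheet functions $\radial\, z_i(\theta)$ in the $\radial$-model and of $\tau\, z_i(\theta)$ in the $\tau$-model. Then, on $S^1\times[0,\infty)$ with conical metric $g_c=d\theta^2+d\radial^2$, equation \eqref{eq:ij-gradientflow} for the pair $i,j$ is exactly \eqref{eq:LCendflowline}, i.e.\ $\gamma'=-V_c$ with
\[
V_c=\radial\,(y_i-y_j)(\theta)\,\partial_\theta+(z_i-z_j)(\theta)\,\partial_\radial ,
\]
while on $S^1\times[1,\infty)$ with polar metric $g_p=\tau^2 d\theta^2+d\tau^2$, a one-line computation of $\nabla_{g_p}\big(\tau(z_i-z_j)(\theta)\big)$ gives $\gamma'=-V_p$ with
\[
V_p=\tfrac{1}{\tau}\,(y_i-y_j)(\theta)\,\partial_\theta+(z_i-z_j)(\theta)\,\partial_\tau ,
\]
the vector field alluded to, informally, in the paragraph preceding the statement.

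Next I would push forward along $\phi$. Since $\phi$ fixes $\theta$ and sends $\radial\mapsto\tau=e^{\radial^2/2}$, one has $\phi_*\partial_\theta=\partial_\theta$ and $\phi_*\partial_\radial=(\partial_\radial\tau)\,\partial_\tau=\radial e^{\radial^2/2}\,\partial_\tau=\radial\tau\,\partial_\tau$, whence
\[
\phi_*V_c=\radial(y_i-y_j)\,\partial_\theta+\radial\tau\,(z_i-z_j)\,\partial_\tau=\radial\tau\Big(\tfrac{1}{\tau}(y_i-y_j)\,\partial_\theta+(z_i-z_j)\,\partial_\tau\Big)=\radial\tau\; V_p .
\]
Because $\radial\tau>0$ on $S^1\times(0,\infty)$, the fields $\phi_*V_c$ and $V_p$ share the same oriented integral curves up to reparametrization, so $\phi$ carries the maximal $ij$-trajectories of \eqref{eq:LCendflowline} bijectively and orientation-preservingly onto those of the polar flowline equation. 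Moreover $\phi$ preserves every angular slice $\{\theta=\theta_0\}$ and is strictly increasing in the radial variable, so Reeb chord rays correspond to Reeb chord rays and radial monotonicity is preserved; hence the trapping and asymptotic conclusions of Lemma~\ref{lemma:morsetrappinglemma}, established for $g_c$, transfer verbatim to the polar metric of Definition~\ref{definition:topologicalexactspectralcurve}, which is the only use we make of this comparison.

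I do not expect a real obstacle here. The single point demanding care is that $\partial_\radial\tau=\radial\tau$ vanishes at $\radial=0$, so $\phi$ is a diffeomorphism only on $S^1\times(0,\infty)$ (its inverse is not smooth at $\tau=1$). This is immaterial: every use of $\phi$ concerns a collar $\{\radial\ge R_0\}$ of a marked point, on which the conical profile is genuinely linear and $\phi$ is an honest diffeomorphism onto $\{\tau\ge e^{R_0^2/2}\}$; the complementary compact region is treated directly, as in the proof of Proposition~\ref{prop:asymptoticoftrajectories}. I would accordingly state and apply the lemma on such a collar, which is all the downstream arguments require.
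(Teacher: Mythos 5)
Your proof is correct and takes essentially the same route as the paper's: both compute the gradient vector fields for the linear conical profile in the two metrics and verify that $\phi_*$ carries one onto a positive multiple of the other (the paper normalizes both fields by a scalar so that the pushforward is an exact equality, whereas you carry the factor $r\tau>0$ explicitly — same content). Your observation that $\phi$ fails to be a local diffeomorphism at $\radial=0$ is a genuine point of care that the paper's proof glosses over, and your remedy (work on a collar $\{\radial\geq R_0\}$) is the right one.
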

\begin{proof}
The flowlines for the metric $\tau^2 d\theta^2+d\tau$ are directed by $(\nabla f,\tau f)=\nabla f\dd_\theta+\tau f \dd_\tau$. Since $\phi_*(\dd_\theta)=\dd_\theta$ and $\phi_*(\dd_r)=re^{r^2/2}\dd_\tau$, it follows that $\phi_{\ast}(\nabla f, f/r)=(\nabla f, e^{\radial^2/{2}} f)=(\nabla f, \tau f)$, as claimed. 
\end{proof}

\noindent For context, the conical metric $g=d\theta^2+d\radial^2$ is better suited when cylindrizing the Betti Lagrangian $\scurve$ to a cylindrical Lagrangian filling, cf.~\cref{definition:topologicalexactspectralcurve}. This is, arguably, more natural from a geometric viewpoint and it fits better with the study of augmentations of the Legendrian contact dg-algebra, but the analysis is a bit more elaborate. If $L\sse T^{\ast}\obis$ is an exact Betti Lagrangian, with conical ends, then the polar metric is better suited as it makes $L$ uniformly bounded nears its asymptotic ends. Lemma \ref{lem:flowlineidentification} implies that we can switch between these two metrics when it comes to describing the qualitative behavior of of asymptotics of flowlines.


\subsection{Holomorphic Morse flowlines for meromorphic spectral curves}\label{subsection:holMorse}
The analysis in \cref{subsection:trappinglemma} applies to conical ends, and thus to the exact Betti Lagrangians in \cref{subsubsection:bLagandebLag}. For the meromorphic spectral curves $\scurve\sse(\T^*\sS,\omega_\C)$ in \cref{subsubsection:iregcurvesandmeroscurves}, with $O(-1)$-ends, a modification of the arguments in \cref{subsection:trappinglemma} is needed to study the Betti Lagrangians $e^{i\theta}\scurve\sse (T^*S,\la_\std)$ given by their $\theta$-parts. Consider the following holomorphic version of the flowline equation:

\begin{definition}
Let $(\sS,g)$ be Riemann surface endowed with a K\"ahler metric, $\theta\in S^1$, and write $\lambda_1,\ldots,\lambda_n$ for the holomorphic 1-forms on $\sS$ whose graphs give the multigraph $\scurve\sse (T^*\sS,\omega_\C)$. By definition, a smooth map $\gamma:(-\epsilon,\epsilon)\lr \sS$ is said to be a holomorphic $ij$-gradient flowline for $\scurve$ if it satisfies
\begin{equation}\label{eq:complexgradientflow}
	\gamma'(s)+\overline{e^{-i\theta}g^{-1}(\lambda_i-\lambda_j)}=0,\quad i,j\in [1,n],
\end{equation}
which is said to be the $ij$-\textit{holomorphic} gradient flow equation at phase $\theta$.\hfill$\Box$
\end{definition}

The holomorphic flowline equation \cref{eq:complexgradientflow} is the real gradient flowline equation for the real parts of holomorphic functions $e^{i\theta}q_1,\ldots,e^{i\theta}q_n$, for some choices of local holomorphic primitives $q_i$ of the holomorphic 1-forms $\lambda_{i}$, $i\in[1,n]$. In relation to the flow-energy, as in \cref{lemma:flow-energy}, note that $e^{i\theta}$ times the flow-energy equals the integral of the holomorphic Liouville 1-form $\la_\C\in\Omega^{1,0}(\sS)$ over the relative cycle $[\bar{\gamma}^{j}-\bar{\gamma}^{i}]$ in $\scurve\sse T^*\sS$.\\

In contrast to trajectories solving the gradient flowline equation in \cref{def:flowline}, solutions of the $ij$-holomorphic flowline equations only depend on the conformal class of the metric $g$, not $g$ itself. Indeed, given $\gamma:(-\epsilon,\epsilon)\lr \sS$, consider the following alternative differential equation for $\gamma$:
\begin{equation}\label{eq:1formgradientflow}
	\Im(e^{-i\theta}(\lambda_{i}-\lambda_{j})(\gamma'(s)))=0,\qquad \Re(e^{-i\theta}(\lambda_i-\lambda_j)(\gamma'(s)))\leq 0
\end{equation}
\cref{eq:1formgradientflow} is said to be the $ij$-phase $\theta$ WKB equation at phase $\theta$. Solutions to its first part, the equality, give rise to a line field on $\sS$ whose integral curves are known as WKB lines in the literature. The second condition, the inequality, specifies an orientation for its parametrization, so the line field acquires a direction.
\begin{remark}
By replacing $\scurve$ with $e^{i\theta}\scurve$, the WKB lines at phase $\theta$ become WKB phase at phase $0$; phase $\theta=0$ lines are known as horizontal WKB lines. Whenever we replace $\iregc$ with $e^{i\theta}\iregc$, the scalar factor $e^{i\theta}$ will rotate the anti-Stokes rays, for small enough $\theta$. We implicitly assume onwards that whenever there is a rotation of the phase $\theta$, we also rotate the anti-Stokes rays of the irregular class.\hfill$\Box$
\end{remark}

\noindent Since the solutions of \eqref{eq:complexgradientflow} solve \eqref{eq:1formgradientflow}, it follows that the image of the trajectories for the holomorphic gradient flow equation do not depend on the choice of the K\"{a}hler metric but only on the conformal structure, i.e.~ conformally changing the K\"ahler metric only changes the reparametrization of the solutions, not the integral curves themselves.

\begin{exmp}\label{ex:d4singularity} (1) Solutions of \cref{eq:1formgradientflow} admit rather explicit forms in the local conformal coordinates $q_{ij}=q_i-q_j$ in a sectorial neighborhood of $\gamma$: in the $q_{ij}$-coordinate, $\lambda_i-\lambda_j=dq_{ij}$, and the equation transforms to $\Im(e^{-i\theta}\gamma'(s))=0$, which is the straight line equation in the $q_{ij}$-plane.\\

(2) ($D_4^-$-singularity) Consider a neighborhood of a ramification point $\scurve\lr\sS$, which we model as $\scurve=\{(z,w)\in T^*\C:w^2-z=0\}$ and locally there are only two indices $i,j\in[1,2]$, with $\la_1=\sqrt{z}dz$ and $\la_2=-\sqrt{z}dz$. Near the branch point $z=0$ in $\C$ at phase $\theta=0$, the line field solving \cref{eq:1formgradientflow} must be in $\ker \Im(\sqrt{z}dz)$. This line field coincides with that given by the real gradient $ \nabla(\Re{z^{3/2}})$, and thus the WKB line equation of phase $0$ defines the same singular foliation as in the case of real gradient flowlines, which we studied in \cref{subsection:Morselines}. The leaves of the foliation are spanned by the maximal geodesics of $\abs{z}{dz}^2$, and the critical locus of the foliation is given as before.\\

\noindent Varying the phase $\theta$ does not qualitatively affect this picture: the WKB singular line field is given by $\ker I(e^{i\theta}\sqrt{z}dz)$, which coincides with the line field directed by $\nabla(e^{-i\theta}\Re{z^{3/2}})$. Therefore, changing the phase $\theta$ rotates the initial trivalent rays in the anti-clockwise direction until it goes back to itself at $\theta=\pi$. By convention, we order the indices along the rays so that the flowlines are directed \textit{outward}. Away from the branch point, as in (1), the analysis above can just be done by considering the locally defined coordinate $W=\int \sqrt{z}dz$, so that the 1-forms $\la_i=\pm\sqrt{z}{dz}$ reduce to $\pm dW$ and the WKB lines are geodesics in the $W$-plane.\hfill$\Box$
\end{exmp}


\subsection{Asymptotics for WKB trajectories}\label{subsection:asymptoteWKB}
Let us study the asymptotics of the WKB trajectories, whose behavior is need to understand spectral networks for Betti Lagrangians associated to meromorphic spectral curves. In the case of conical ends (\cref{subsection:trappinglemma}), we had the Legendrian link at infinity parametrized by $(x(\theta),y(\theta),z(\theta))$. In the case of the $\theta$-part of a meromorphic spectral curve $\scurve\sse T^*\sS$, the Legendrian link near the $O(-1)$-ends is given by the Stokes diagram. We now need to compare the holomorphic flowline equation near the poles to the Morse flowline equation associated to the cone over the Legendrian given by the Stokes diagram.\\

Consider a small neighborhood of a marked point $\mkpts\sse\sS$ parametrized by $z\in U\sse\C$, $U$ an open disk near the origin, with the marked point given by $z=0$. Let the associated irregular data $\Theta$  (cf.~\cref{example:BettiSurfaces_StokesData}) at that marked point be given by
\[\irreg=\sum_{k_j\in \mathbb{Q}_{>0}} \frac{c_{k_j}}{k_j}z^{-k_j}\in \mathbb{C}((z^{1/\infty}))/z^{-1}\mathbb{C}[[z^{1/\infty}]],\quad c_{k_j}\in\mathfrak{t}_{reg}\sse\mathfrak{g}\mathfrak{l}_n.\] In general, given a Puiseux series $\irreg$ with finite polar part and convergent holomorphic part, its \emph{meromorphic multigraph} $\Gamma_{d\irreg}\sse T^*\C$ is the union of the Lagrangian graphs of ${d\irregsheet_1},\ldots,{d\irregsheet_n}$, where $\irregsheet_1,\ldots,\irregsheet_n$ are branches of $\irreg$. Near a pole $z=0$ for $Q$ as above, there are sectors $\theta\in(\theta_0,\theta_1)$ for the phase of $z=re^{i\theta}$, such that $\Gamma_{\irreg}$ equals:
\begin{align}\label{eq:normalLCform}
    \Gamma_{d\irreg}=\big(\theta,-\sum_{k_j\in \mathbb{Q}}  {\radial}^{-k_j}\Im(c_{k_j}e^{i\theta k_j}),\radial, -\sum_{k_j\in \mathbb{Q}} {\radial}^{-k_j-1}\Re(c_{k_j}e^{i\theta k_j})\big)\sse T^*\C,\quad (r,\theta)\in\R\times (\theta_0,\theta_1).
\end{align}
\noindent Note that \eqref{eq:normalLCform} is a generalization of the conical ends condition \eqref{eq:LCend}, arising naturally in the setting of asymptotics given by irregular classes.\footnote{Note that in order to represent irregular data with multiplicity, we would need to introduce additional terms of the form $d \log(z)$, for $t\in \mathfrak{t}_{reg}$, and some holomorphic terms.}

\begin{exmp}[$D^4_-$-multigraph]\label{ex:D4polarform} This example continues \cref{ex:d4singularity} and aligns with \cref{ex:irregularclass_Legendrian} for the irregular class $\langle x^{3/2}\rangle$, which is the Stokes data for the $D^4_-$-singularity. For the Puiseux series $Q$ associated to $\Sigma=\{w^2-z=0\}\sse T^*\C$, the meromorphic multi-graph is parametrized as
	\begin{align}\label{eq:D_4form}
			(r,\theta)\lr\left(\theta,\pm \frac{3}{2}f(\radial)\cos(\frac{3\theta}{2}),\radial,\pm f'(\radial)\sin(\frac{3\theta}{2})\right)\in T^*\R^2_{r,\theta},\quad \mbox{where }f(\radial)=\frac{2}{3}\radial^{3/2},\quad (r,\theta)\in\R\times S^1.
		\end{align}
In fact, the Hamiltonian isotopy class of this Lagrangian is independent of the specific choice of function $f(r)$, as long as it is positive and strictly increasing. Deforming $f(r)$ to be linear at infinity recovers the conical end form.\hfill$\Box$
\end{exmp}
Let us analyze the gradient equations near the $O(-1)$ ends, i.e.~a pole at $z=0$. Consider the inversion $\abs{z}^{-1}$ from $\mathbb{C}^{\ast}\simeq S^1\times (0,\infty)$ to send $+\infty$ to $0$ and pushforward the conical metric $\abs{dz}^2$ to $\abs{z}^{-2}\abs{dz}^2$; the latter metric is $d\theta^2+\radial^{-2} d\radial^2$ in polar coordinates. For the difference of the sheets to have growth at least $z^{-1}$, we need the following condition:
\begin{align}\label{eq:logdifference}
(d\irregsheet_i-d\irregsheet_j)-\frac{C}{z}=\frac{c_{ij}}{z^{k_{ij}}}+ o(z^{-k_{ij}})+\mbox{(holomorphic terms)},
\end{align}
for some constants $C,c_{ij}$ where $C\neq 0$ only when $c_{ij}=0$. In the case $C=0$, we require that $c_{ij}\neq 0$ and $k_{ij}>0$. By using the parametrization \cref{eq:normalLCform}, the $ij$-gradient flow equations for the $O(-1)$-ends given by the irregular data $Q$ read as follows:
\begin{align}\label{eq:flowline_irregclass}
&\frac{dr}{ds}+\radial \Re{\left(\frac{C}{z}\right)}-\Re(c_{ij}e^{-ik_{ij}\theta})\radial^{-k_{ij}+1}+o(\radial^{-k_{ij}+1})+o({\radial})=0\\
&\frac{d\theta}{ds}-\Im(C/z)-\Im(c_{ij}e^{-ik_{ij}\theta})r^{-k_{ij}}+o(r^{-k_{ij}})=0.
\end{align}
For instance, in the case $d\irreg(z)=z^{-1}dz$, we obtain straight rays going into the origin for $\Im(c)=0$, periodic circles for $\Re(c)=0$, and logarithmic spirals otherwise. In general, for $c_{ij}\neq 0$, the dominating contribution as $r\to 0$ comes from $c_{ij}e^{-ik_{ij}\theta}$ only and $\theta$ varies slowly while $r$ increases fast as $r$ is sufficiently small and the angle $\theta$ is close to the anti-Stokes rays. These are the qualitative features that we have established for the conical ends. Thus, for $O(-1)$-ends, we can effectively repeat the previous arguments in \cref{lemma:morsetrappinglemma} on minimal escape time and conclude that the $ij$-flowlines satisfy the trapping property (cf.~\cref{lemma:trappinglemma}), and trajectories also asymptote to the corresponding anti-Stokes rays, which coincide with the angles of Reeb chords.

Furthermore, the Stokes Legendrians without multiplicities further enjoy the following Reeb-positivity property.

\begin{proposition}[Stokes Legendrians are Reeb-positive]\label{prop:reebpositivestokes}
Let $\iregc$ be an irregular class without multiplicity, and $\lknot_{\iregc}$ be the corresponding Stokes Legendrian. Then for $\radial$ small enough, $\lknot_{\iregc}$ is Reeb-positive.  
    \end{proposition}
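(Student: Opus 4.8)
The plan is to reduce the positivity of Reeb chords of $\lknot_\iregc$ to the sign of the Morse index of the relevant difference function $\Re(q_i - q_j)$ at its critical points, and then to compute that index directly from the explicit polar parametrization \eqref{eq:normalLCform} of the meromorphic multigraph near the pole. Recall from \cref{subsection:Betti_surface} that a Reeb chord of $\lknot_\iregc$, seen as a crossing in the Lagrangian projection, occurs precisely at an angle $\theta_0$ where two sheets $y_i(\theta) = y_j(\theta)$ coincide, i.e.\ where $\Im\big((d\irregsheet_i - d\irregsheet_j)(re^{i\theta_0})\big)$ vanishes in the $\dd_\theta$-component; these are exactly the anti-Stokes (singular) directions of $Q$. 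By \cref{lemma:Reebpositivity} and the discussion following it, the chord is positive exactly when the height difference function $z_i - z_j$ has Morse index $1$ at $\theta_0$ along the core circle $S^1 \times \{r\}$ — equivalently, when $\theta_0$ is a local maximum of $z_i - z_j = -r^{-k_{ij}-1}\Re(c_{ij}e^{i\theta k_{ij}}) + (\text{lower order})$ restricted to the core circle, in the sense that the $12$-flowline equation has $\theta_0$ as unstable manifold. So the whole statement is: for $r$ small, every anti-Stokes direction of $\lknot_\iregc$ is a local maximum of the corresponding height difference.

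The key computation is the following. Fix a pair $i<j$ and let $k_{ij}$ be the leading pole order of $d\irregsheet_i - d\irregsheet_j$, with nonzero coefficient $c_{ij} \in \C^\ast$ (using that $\iregc$ has no multiplicity, so $c_{ij} \ne 0$ and $k_{ij} > 0$ — this is where the no-multiplicity hypothesis enters, guaranteeing $C = 0$ in \eqref{eq:logdifference} and a genuine polar term rather than a logarithmic one). The $ij$-flowline on the core circle, read off from \eqref{eq:flowline_irregclass}, is governed to leading order by $\theta'(s) = \Im(c_{ij}e^{-ik_{ij}\theta})\,r^{-k_{ij}} + o(r^{-k_{ij}})$, while the height difference behaves like $\Re(c_{ij}e^{i k_{ij}\theta})$ up to a negative power of $r$ and lower-order corrections. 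Writing $c_{ij} = |c_{ij}|e^{i\phi}$, the anti-Stokes directions (zeros of the $\theta$-component, i.e.\ where $\Im(c_{ij}e^{ik_{ij}\theta}) = 0$) are $\theta = (\ell\pi - \phi)/k_{ij}$, and the height function $\propto -\Re(c_{ij}e^{ik_{ij}\theta}) = -|c_{ij}|\cos(k_{ij}\theta + \phi)$ alternates between strict local maxima (at $\cos = -1$) and strict local minima (at $\cos = +1$) along the circle. One then checks, exactly as in the local model of case (1) of \cref{subsection:Morselines} — where $y_1(x) - y_2(x) = 2x$ gives the $12$-flowline equation $x'(s) - 2x(s) = 0$ with $x=0$ the unstable manifold — that the sheet ordering induced by the covering and the outward convention forces the crossing at each such direction to be of positive (Morse index $1$) type: the relevant branch difference is the one whose height function has a local max there. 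Concretely, I would argue that the anti-Stokes directions come in pairs of "decay" vs.\ "growth", and by the convention ordering indices along rays so flowlines are directed outward (as in \cref{ex:d4singularity}), the chord between the labeled pair $i,j$ is always the one realizing the index-$1$ configuration. For $r$ small enough the lower-order terms $o(r^{-k_{ij}})$, $o(r^{-k_{ij}-1})$ in \eqref{eq:flowline_irregclass} and \eqref{eq:normalLCform} cannot change the sign of the second derivative at a nondegenerate critical point, so the Morse index is stable; this "$r$ small enough" is uniform over the finitely many pairs $(i,j)$ and finitely many anti-Stokes directions.

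The main obstacle I anticipate is the bookkeeping of sheet labels and orientations: \cref{lemma:Reebpositivity} is phrased in terms of an $ij$-trajectory and the sign of $z_i - z_j$ near $s = \pm\infty$, so one must make sure that the index pair $(i,j)$ attached to a given anti-Stokes direction by the global covering structure of $\Gamma_{d\iregc}$ is consistently the one for which the difference is positive before and negative after — rather than, e.g., the unordered pair for which only the *absolute* difference matters. Resolving this cleanly requires fixing, once and for all near the pole, the cyclic order of the sheets $\irregsheet_1, \ldots, \irregsheet_n$ given by their asymptotic dominance ordering in each Stokes sector (the standard device in the irregular Riemann–Hilbert picture), observing that two adjacent sheets in this order produce exactly one dominance exchange — hence one Reeb chord — per anti-Stokes direction between them, and that the exchange is always from "$i$ dominates $j$" to "$j$ dominates $i$", which is precisely the statement that $z_i - z_j > 0$ before and $< 0$ after, i.e.\ positivity in the sense of \cref{lemma:Reebpositivity}. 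Once this dominance-ordering dictionary is set up, the positivity follows, and the remaining content is the elementary trigonometric fact above together with the perturbative stability of the Morse index for $r$ small. I would organize the write-up as: (a) recall the dominance ordering and the identification of Reeb chords with anti-Stokes directions via \eqref{eq:normalLCform}; (b) the trigonometric computation of the index of the leading term; (c) the $o(\cdot)$-stability argument, uniform over the finite data; (d) conclude via \cref{lemma:Reebpositivity}.
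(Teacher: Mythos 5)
Your proposal follows essentially the same route as the paper's proof: reduce via \cref{lemma:Reebpositivity} to a sign condition at the anti-Stokes directions, then compute from the polar parametrization \eqref{eq:normalLCform} that the leading height difference is proportional to $\Re(c_{ij}e^{-ik_{ij}\theta})$, whose second $\theta$-derivative at a critical point is a negative multiple of the function value itself, so the unstable ($s=-\infty$) anti-Stokes direction of an $ij$-trajectory automatically has $z_i-z_j>0$ and the stable one has $z_i-z_j<0$. The dominance-ordering bookkeeping you anticipate as an obstacle is not actually needed: the ordered pair $(i,j)$ is fixed once the trajectory is chosen, and recognizing the $s=-\infty$ limit as the unstable manifold (hence a local maximum of $z_i-z_j$) together with the second-derivative computation already pins down the sign directly, which is all the paper does.
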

\begin{proof}
Let $\iregc$ be an irregular class and let $\irregsheet_i$ denote the branches of $\iregc$. The difference function is given by the real part of $\irregsheet_i-\irregsheet_j=\frac{1}{k_{ij}}c_{ij}z^{-k_{ij}}+o(z^{-k_{ij}})$. For $\radial$ small enough, the dominating term comes from $\frac{c_{ij}}{k_{ij}}z^{-k_{ij}}$ and so the local gradient flow equation \cref{eq:LCendflowline} reads
\begin{align}
\frac{d\theta}{ds}-r^{-k_{ij}}\Im(c_{ij}e^{-i\theta k_{ij}})+o(r^{-k_{ij}+1})=0.
\end{align}

Near $s=-\infty$, $\partial^2_{\theta}\Re{(\irregsheet_i-\irregsheet_j)}$ over the limiting anti-Stokes direction has to be \textit{negative}, because the limiting anti-Stokes direction has to be the \textit{unstable} manifold of the difference function.  However, the second derivative is computed to be equal to $k_{ij} r^{-k_{ij}}\Re{(c_{ij}e^{-i\theta}k_{ij})}$. So we see that the second derivative is negative if and only if the difference function is \textit{positive}. Near $s=+\infty$, we can argue the same by replacing $\theta(s)$ with $\theta(-s)$. So by \cref{lemma:Reebpositivity}, $\lknot_{\iregc}$ is Reeb positive. 
\end{proof}

In this meromorphic case, there is also the following alternative approach to prove the necessary trapping lemma, using the cameral cover construction from \cite{ionita2021spectralnetworksnonabelianization,kuwagaki2024genericexistencewkbspectral}. Given  $\scurve\to \obic$ be a meromorphic spectral curve, its \textit{cameral cover} is defined to be the Riemann surface
\begin{align}\label{def:cameralcover}
\scurve^{cam}:=\{(\holsheet_{\sigma(1)}(z)dz,...,\holsheet_{\sigma(n)}(z)dz):z\in \bic,\sigma\in S_n\}.
\end{align}
For $1\leq i<j\leq n$, its \textit{intermediate cover} $\scurve^{ij}$ is the quotient of $\scurve^{cam}$ by the relation
\begin{align}\label{def:intermediatecover}
(\holsheet_{\sigma(1)},\ldots,\holsheet_{\sigma(i)},\ldots,\holsheet_{\sigma(j)},\ldots,\holsheet_{\sigma(n)})\sim (\holsheet_{\sigma(1)},\ldots,\holsheet_{\sigma(j)},\ldots,\holsheet_{\sigma(i)},\ldots,\holsheet_{\sigma(n)})
\end{align}
over all permutations $\sigma$ satisfying $\sigma(i)<\sigma(j)$. Note that the quotient map $\scurve^{cam}\to \scurve^{ij}$ can be realized as the spectral cover of the quadratic differential defined by $(\holsheet_{\sigma(i)}-\holsheet_{\sigma(j)})^{\otimes 2}$, and that $ij$-trajectories of phase $\theta$ lift to trajectories of phase $\theta$ for the rank $2$ spectral curve $\scurve^{cam}\to \scurve^{ij}$. Employing a cameral cover, we readily obtain the following:

\begin{lemma}\label{lemma:trappinglemma}
Let $\scurve\sse T^*\sS$ be a meromorphic spectral curve with $O(-1)$ ends. Then, for generic $\theta$, there exists a neighborhood of the marked points $\mkpts\sse\sS$ such that any WKB $\theta$-trajectory entering this neighborhood cannot leave, and such trajectory asymptotes to an anti-Stokes ray.
\end{lemma}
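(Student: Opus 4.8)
The plan is to reduce the rank-$n$ statement to the classical local analysis of the horizontal foliation of a meromorphic quadratic differential near a pole of order $\ge2$, through the cameral cover construction set up just before the statement (as in \cite{ionita2021spectralnetworksnonabelianization,kuwagaki2024genericexistencewkbspectral}). First I would fix a marked point $m\in\mkpts$ and a pair $1\le i<j\le n$, and pass to the intermediate cover $\scurve^{ij}$ of the cameral cover $\scurve^{cam}$, cf.\ \eqref{def:cameralcover}--\eqref{def:intermediatecover}. As recorded above, the quotient $\scurve^{cam}\to\scurve^{ij}$ is the spectral cover of the meromorphic quadratic differential $\phi_{ij}:=(\lambda_i-\lambda_j)^{\otimes2}$ on $\sS$, and an $ij$-WKB $\theta$-trajectory of $\scurve$ is exactly the image of a horizontal trajectory of $e^{2i\theta}\phi_{ij}$; by the $O(-1)$-ends hypothesis, cf.\ \eqref{eq:logdifference}, $\phi_{ij}$ has a pole of order $\ge2$ at $m$.

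Next I would invoke the local structure theory near such a pole. In the conformal coordinate $z$ with $m=\{z=0\}$ and the fixed metric $|z|^{-2}|dz|^{2}$, the locally defined primitive $W_{ij}:=\int e^{i\theta}(\lambda_i-\lambda_j)$ straightens the horizontal trajectories of $e^{2i\theta}\phi_{ij}$ to line segments in the $W_{ij}$-plane, and since the pole has order $\ge2$ the map $z\mapsto W_{ij}$ carries a punctured-disk sector at $z=0$ onto a neighborhood of $\infty$ of half-plane type. The standard description of trajectories of a quadratic differential near such a pole --- Strebel's local analysis, equivalently the $\SL_2$/quadratic-differential picture recalled in the introduction --- then produces a radius $\rho_{ij}>0$ such that every horizontal trajectory of $e^{2i\theta}\phi_{ij}$ meeting $\{0<|z|<\rho_{ij}\}$ stays inside it in at least one time direction and, along that half-line, converges to $z=0$ tangentially to one of the finitely many critical directions of $e^{2i\theta}\phi_{ij}$. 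Comparing this with the parametrization \eqref{eq:normalLCform} --- or with the flow equations \eqref{eq:flowline_irregclass}, where near a critical angle the $\theta$-component of the flow is stationary while the $r$-component is monotone to leading order --- identifies these critical directions with the singular (anti-Stokes) directions of $\irreg$ at $m$, i.e.\ with the Reeb rays of $\lknot_\irreg$. Taking $\rho$ to be the minimum of the finitely many $\rho_{ij}$, over all pairs $(i,j)$ and all marked points, gives the neighborhood required by the statement.

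The phrase \emph{generic $\theta$} enters only to discard the single degenerate possibility in this picture, namely a double pole of some $\phi_{ij}$ whose leading coefficient becomes a negative real number after the rotation $\phi_{ij}\mapsto e^{2i\theta}\phi_{ij}$, producing a ring domain of closed trajectories that are trapped but not asymptotic to a ray. This is avoided for all but finitely many phases, and in fact the multiplicity normalization fixed in \cref{subsubsection:iregcurvesandmeroscurves} already forces the poles of the $\phi_{ij}$ to have order $>2$, so the degeneracy never occurs and genericity is used only to keep the anti-Stokes rays at each marked point pairwise distinct, matching the global genericity imposed on the spectral network elsewhere.

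The step I expect to be the main obstacle is making the local comparison fully precise: verifying that, under the $O(-1)$-ends normal form \eqref{eq:normalLCform}, the critical directions of $e^{2i\theta}\phi_{ij}$ at the pole agree on the nose with the anti-Stokes rays of $\lknot_\irreg$ --- so that a trapped trajectory genuinely asymptotes to one such ray rather than merely limiting onto the puncture --- together with the bookkeeping of the finitely many angular sectors of \eqref{eq:normalLCform} needed to pick $\rho_{ij}$ uniformly in $(i,j)$ and locally uniformly in $\theta$. An alternative route that bypasses the cameral cover is to transcribe the proof of \cref{lemma:morsetrappinglemma} directly: from \eqref{eq:flowline_irregclass} the radial coordinate $r$ is eventually monotone along any trajectory that comes close enough to the pole (the difference functions blow up there, and the finite-flow-energy bound of \cref{lemma:flow-energy} confines the trajectory), while the $\theta$-component solves an $r$-boosted copy of the one-dimensional flowline equation, to which the minimal-escape-time estimate of \cref{lemma:minimalescapetime} transfers verbatim; the limiting anti-Stokes ray is then pinned down by the Reeb-positivity of \cref{prop:reebpositivestokes}.
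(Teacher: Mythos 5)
Your proposal takes essentially the same route the paper does: reduce via the cameral cover $\scurve^{cam}\to\scurve^{ij}$ to the horizontal foliation of the quadratic differential $\phi_{ij}=(\lambda_i-\lambda_j)^{\otimes2}$, observe that the $O(-1)$-ends hypothesis forces a pole of order $\ge2$ at each marked point, and then invoke the classical local structure theory of such differentials (Strebel/Jenkins) to trap trajectories and identify the limiting directions with the anti-Stokes rays. You even flag the same alternative that the paper records, namely transcribing the conical-end trapping argument of \cref{lemma:morsetrappinglemma} directly using the flow equations \eqref{eq:flowline_irregclass}.

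The one small blemish is an internal inconsistency in your treatment of double poles. You first correctly observe that a double pole of $\phi_{ij}$ with purely imaginary rotated residue produces a ring domain of closed trajectories and that this is killed for generic $\theta$; but you then claim that the normalization in \cref{subsubsection:iregcurvesandmeroscurves} forces the poles of $\phi_{ij}$ to have order strictly greater than $2$, so the degenerate case ``never occurs.'' That second claim overreaches: the $O(-1)$ condition, as encoded in \eqref{eq:logdifference}, explicitly allows the case $C\neq0$, $c_{ij}=0$, which is exactly a double pole of $\phi_{ij}$. The paper handles this by citing the standard classification near a second-order pole (straight rays, circles, or spirals, as spelled out just after \eqref{eq:flowline_irregclass}), precisely the case you analyse in your first pass. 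Since your earlier remark already deals with it via genericity of $\theta$, the argument stands once you drop the erroneous ``never occurs'' clause.
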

\begin{proof}
From the $O(-1)$ ends condition, it follows that the cover $\scurve^{cam}\to \scurve^{ij}$ has poles of order greater equal than $2$. For quadratic differentials, the claimed statement follows from the literature, cf.~\cite{MR523212,Strebel84_QuadDiffBook}. Alternatively, the arguments in the proof of \cref{lemma:morsetrappinglemma} work for poles of order $\ell\geq 3$ if one uses the Legendrian link defined by the function $\pm \cos((\ell-2)\pi \theta/2)$. The case of poles of order $2$ follows from the standard classification of trajectories near it, as discussed after \cref{eq:flowline_irregclass} or see \cite[Chapter 7]{Strebel84_QuadDiffBook}.
\end{proof}

\noindent In line with \cref{prop:asymptoticoftrajectories}, the asymptotics of WKB trajectories are summarized by:
\begin{proposition}\label{prop:asymptoticofWKBtrajectories}
Let $L\sse (T^*S,\la_\std)$ be the $\theta$-part of a meromorphic spectral curve $e^{i\theta}\scurve\sse T^*\sS$ with $O(-1)$ ends at the marked point $\mkpts\sse S$, and $\gamma:(a,b)\lr \obis$ a WKB trajectory for $e^{i\theta}\scurve$. Then, for generic $\theta$, we have that:
	\begin{enumerate}
 \item The trajectory $\gamma$ cannot be periodic.
		\item If $a$ is finite, then $\gamma(a)\in S$ must be at a $D_4^{-}$-branch point of the projection $L\lr S$.\\
        If $b$ is finite, the same applies to $\gamma(b)\in S$.
		\item If $a=-\infty$, then $\gamma(s)$ asymptotes to an anti-Stokes ray of $e^{i\theta}\scurve$ as $s\to-\infty$. (Similarly for $b=\infty$.)
	\end{enumerate}
\end{proposition}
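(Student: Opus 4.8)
The plan is to run the argument in close parallel to the proof of \cref{prop:asymptoticoftrajectories} for exact Betti Lagrangians, substituting the holomorphic/WKB ingredients for their real counterparts at each step. First, for assertion~(1), I would invoke the flow-energy identity: by the remark following \cref{eq:complexgradientflow}, $e^{i\theta}$ times the flow-energy of an $ij$-trajectory equals $\int_{[\bar\gamma^j-\bar\gamma^i]}\la_\C$ over the corresponding relative cycle in $\scurve\sse T^*\sS$. If $\gamma$ were periodic, the lift would close up to a genuine (homologically nontrivial) cycle in $\scurve$, and the $\la_\C$-period along it would have to vanish. But the $\la_\std$-length is strictly positive (equivalently, $\Re(e^{-i\theta}(\la_i-\la_j))<0$ strictly along the flow wherever $\la_i\neq\la_j$, from \cref{eq:1formgradientflow}), so this is impossible; hence no periodic trajectory. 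Note this is exactly where the genericity of $\theta$ enters indirectly — one needs to know the relevant periods of $\scurve$ are not purely of phase $\theta+\pi/2$, which holds off a countable set of phases.

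For assertions~(2) and~(3), I would set up the same dichotomy as in \cref{prop:asymptoticoftrajectories}: fix the radius $R\in\R_+$ from the trapping lemma \cref{lemma:trappinglemma}, let $K\sse S$ be the complement of the union of the punctured neighborhoods $\{|z|<1/R\}$ (in the inverted coordinate) around the marked points, and argue that as $s\to a$ (resp.\ $s\to b$) the trajectory either leaves $K$ or limits onto a $D_4^-$-branch point in finite time. The finite-time case is handled exactly as before: inside $K$, the local model near a branch point is the holomorphic cusp of \cref{ex:d4singularity}(2), whose WKB line field coincides with the real gradient foliation of $|z||dz|^2$; a trajectory approaching $z=0$ does so in finite flow-time, giving a finite end with $\gamma(a)$ at the branch point, and only a cusp--cusp flowline may actually terminate there. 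For the infinite-time case $a=-\infty$: by the flow-energy bound (now complex-valued but with the same positivity of real part), the total $\la_\std$-length is finite, while outside any fixed neighborhood of the branch points the difference $|\la_i-\la_j|$ is uniformly bounded below, so after finite flow-time the trajectory cannot cross between the boundary components of such a neighborhood; consequently it must either be eventually trapped near a branch point (forcing it onto one of the initial trivalent rays or terminating, both in finite time — a contradiction with $a=-\infty$ inside $K$) or escape $K$. Once the trajectory has escaped into a neighborhood of a marked point, \cref{lemma:trappinglemma} shows it cannot return and must asymptote to an anti-Stokes ray of $e^{i\theta}\scurve$; the same argument applied to $\gamma(-s)$ handles $b=\infty$.

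The main obstacle — and the one point genuinely new relative to the conical-end case — is the $O(-1)$-end analysis near the poles: I must verify that the WKB trajectories for $e^{i\theta}\scurve$ near a marked point genuinely have the ``no-return'' (trapping) behavior, since a priori a WKB line could spiral toward the pole and then steer away (the analogue of case~(4) in Figure~\ref{fig:flowasymptotics}). This is precisely the content of \cref{lemma:trappinglemma}, whose proof either reduces, via the intermediate cover $\scurve^{cam}\to\scurve^{ij}$, to the classification of trajectories of quadratic differentials with poles of order $\geq 2$ (\cite{Strebel84_QuadDiffBook}), or reruns the minimal-escape-time argument of \cref{lemma:morsetrappinglemma} using the model Legendrian link $\pm\cos((\ell-2)\pi\theta/2)$ for a pole of order $\ell\geq 3$ together with \cref{eq:flowline_irregclass}. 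In the latter, the delicate point is that for $|z|$ small the dominant term $c_{ij}e^{-ik_{ij}\theta}$ makes $\theta$ vary slowly while $r$ grows, so the Reeb-positivity established in \cref{prop:reebpositivestokes} forces the sign pattern of $\Re(\la_i-\la_j)$ that excludes the forbidden asymptotic configuration. Granting \cref{lemma:trappinglemma} (as we may, it being proven above), the remaining bookkeeping is routine and mirrors \cref{prop:asymptoticoftrajectories} verbatim.
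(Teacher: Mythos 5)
Your argument for Part (1) is a valid alternative to the paper's approach. Where the paper lifts a putative periodic trajectory to the cameral cover $\scurve^{cam}\to\scurve^{ij}$ and invokes Strebel's classification of trajectories of saddle-free quadratic differentials, you argue directly via the flow-energy identity $e^{i\theta}\cdot(\text{flow-energy})=\int_{[\bar\gamma^{j}-\bar\gamma^{i}]}\la_\C$: closing up imposes a phase constraint on a period of $\la_\C$ on $\scurve$, and generic $\theta$ avoids the countable set of phases so constrained. This is correct in substance, but your write-up garbles the constraint in two ways. The period does not ``have to vanish''; rather, since the flow-energy is a negative real number, the period must have phase exactly $\theta+\pi$ (modulo orientation conventions), not $\theta+\pi/2$. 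Generic $\theta$ misses this aligned phase, and separately the null-homologous case is ruled out because it forces zero flow-energy, contradicting strict monotonicity. Cleaned up, this is a somewhat more elementary route than Strebel for the periodic case.

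For Parts (2) and (3), however, there is a genuine gap. You assert that along a trajectory with $a=-\infty$ confined to the compact set $K$, ``the total $\la_\std$-length is finite'' and deduce the dichotomy ``eventually trapped near a $\dfs$-branch point or escapes $K$.'' In the exact case this finiteness comes from the globally defined primitive $W$ of $\la_\std\vert_L$, bounded on $K$: that is the mechanism used in the proof of \cref{prop:asymptoticoftrajectories}. In the meromorphic case $W$ is multivalued, and a trajectory winding around $K$ indefinitely can accumulate arbitrary flow-energy (it adds $\Re(e^{-i\theta}\int_c\la_\C)$ each time it traverses a cycle $c$). Without ruling out this recurrent behavior your dichotomy is false: the missing third alternative is a non-closed recurrent trajectory with unbounded flow-energy that never escapes $K$ and never stabilizes near a branch point. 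The paper's proof explicitly supplies the missing step: it passes to the cameral cover lift $\scurve^{cam}\to\scurve^{ij}$ and cites Strebel's theorem that, for generic $\theta$, complete saddle-free quadratic differentials admit no recurrent trajectories; only then does the exact-case argument transfer. Your appeal to $\Re(e^{-i\theta}(\la_i-\la_j))<0$ gives monotonicity but not boundedness of the flow-energy, and the trapping lemma \cref{lemma:trappinglemma} only governs behavior near the marked points, not in the interior $K$, so it cannot be used to patch this.
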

\begin{proof}
Observe that any trajectory will lift to trajectory of some trajectory $\scurve^{cam}\to \scurve^{ij}$. For (2) and (3), the argument is the same as for \cref{prop:asymptoticoftrajectories}, applied to the cameral cover lift, given that there are no recurrent trajectories (\cite[Chapter 10.2]{Strebel84_QuadDiffBook}). To show that there are no periodic trajectories, note that such a periodic trajectories would lift to periodic trajectories of some $\scurve^{cam}\to \scurve^{ij}$. That said, for generic $\theta$, complete quadratic differentials that are saddle-free do not admit periodic or recurrent trajectories. This completes the statement.
\end{proof}
\section{Morse spectral networks}\label{section_mnetwork}
This section introduces and develops first results on Morse spectral networks. The ingredients needed on flowtrees are discussed in \cref{subsec:Morseflowtreeandsolitons} and the definition of Morse spectral networks is presented in \cref{subsubsection:mnetwork}. The proof of \cref{thm:existence} is then established in \cref{subsection:constructionofexactnetwork}, for the exact case, and in \cref{subsection:constructionofWKBnetwork} for the meromorphic case. The section concludes with a discussion on BPS indices, in \cref{subsec:snetworkandpathdetours2_BPS_index}, and the local study of rigid flowtrees near a $\dfs$-singularity, in \cref{ssec:flowtrees_specnet}.


\subsection{Morse flowtrees and soliton classes}\label{subsec:Morseflowtreeandsolitons}

Flow trees arise in the study of gradient trajectories of tuples of Morse functions, see e.g.~\cite{Fukaya97_Morseflow,FukayaOh97_MorseHomotopy}, \cite[Section 6]{ruan2006fukaya} and \cite[Section 2]{Abouzaid11_MorseCategory}. In particular, \cite{ruan2006fukaya} shows that certain counts of rigid holomorphic discs agree with counts of gradient flow trees in the context of Lagrangian Floer theory. In the framework of Legendrian submanifolds, \cite{Morseflowtree} established a correspondence between the counts of rigid flow trees and boundary-punctured rigid pseudo-holomorphic disks governed by a Legendrian submanifold whose front has cusp-edge singularities. In the context of Betti Lagrangians, we use \cref{section_setup} to define and study Morse spectral networks in terms of flow trees. We also establish their existence, proving \cref{thm:existence}. Note that the fronts for the Legendrian lifts of Betti Lagrangians have $D_-^4$-singularities, which are non-generic, and the techniques of \cite{Morseflowtree} need to be modified appropriately. From the viewpoint of studying the BPS states in \cite{GNMSN,GMN13_Framed,GMN14_Snakes}, it will also be important to study relative homology classes associated to flow trees.\\

\noindent A tree will be a properly embedded planar graph $\treegraph\sse\R^2$ with possibly semi-infinite edges such that any two vertices are connected by exactly one finite path. Vertices of valence one are allowed and referred to as finite leaves. We consider trees endowed with a cyclic ordering of the edges at each vertex, as surfaces will be naturally oriented, and an orientation for each edge. By definition, a rooted tree will be an oriented tree with a choice of either a univalent vertex or a semi-infinite edge (but not both) such that the all edges of the tree point outward from the root. Following \cite[Definition 2.10]{Morseflowtree}, we consider the following:

\begin{definition}\label{def:Morseflowtree}
	Let $(M,g)$ be a Riemannian manifold, $\multigraph\sse T^*M$ a Lagrangian multigraph over rank $n$ over $(M,g)$, and $\treegraph\sse\R^2$ a connected tree. By definition, for a Morse flowtree for $L$ is a continuous map $\treemap:\treegraph\lr M$ such that:
	\begin{enumerate}

		\item {\it Flow-edge condition}. The restriction $\treemap|_e$ to each oriented edge $e\in E(\treegraph)$ is an injectively parametrized $\ell_1\ell_2$-flowline for $L$, for some $\ell_1,\ell_2\in[1,n]$. If the domain of definition of such trajectory is infinite, then we require $e$ to be a semi-infinite edge.\\
        
        \noindent We denote by $\overline{\gamma}_e^{\ell_i}$ its $\ell_i$-cotangent lift, $i=1,2$, whose image lies in $L$, and write $\overline{\gamma}_e^{\ell_i}(v)$ for the image of a vertex $v\in\dd e$ as a limit point for $\overline{\gamma}_e^i(e)$.\\

		\item {\it Balancing condition}. Consider a vertex $v\in\treegraph$ with cyclically ordered edges $(e_1,\ldots,e_s)$ and denote $\overline{\gamma}_\ell=\overline{\gamma}_{e_\ell}$. Then we require
        $$\overline{\gamma}_\ell^{\ell_2}(v)=\overline{\gamma}_{\ell+1}^{\ell_1}(v),$$
        cyclically in $\ell\in[1,s]$, and $\overline{\gamma}_\ell^2$ is oriented towards the point $\overline{\gamma}_\ell^{\ell_2}(v)$ whereas $\overline{\gamma}_{\ell+1}^{\ell_1}(v)$ is directed away from it.\\
        
		\item {\it Relative cycle condition}. The relative cycles $\overline{\gamma}_e^{\ell_1}-\overline{\gamma}_e^{\ell_2}$, where $e\in E(\treegraph)$ ranges over all edges, piece together to give an oriented relative cycle with endpoints above the univalent vertices.\\
        
        \item {\it Minimality}. There are no 2-valent vertices $v\in\treegraph$ with edges $e_{1},e_2$ such that $F(v)\not\in\pi(\caustL)$ and both are $ij$-flowlines with same set $\{i,j\}$. Similarly, there are no 2-valent vertices $v\in\treegraph$ such that $F(v)\in\pi(\caustL)$ and both the $i$th and $j$th sheets extend smoothly over $v$.\footnote{2-valent vertices $v\in\treegraph$ such that $F(v)\in\pi(\caustL)$ are allowed, but we force them to be switch-vertices in the terminology of \cite{Morseflowtree}.}\\
        
        \item {\it Special vertices}. There is a chosen collection $S(\treegraph)$ of univalent vertices.\hfill$\Box$
        
	\end{enumerate}
\end{definition}
\noindent\cref{def:Morseflowtree}.(5) is in line with partial flowtrees: the univalent vertices above are called special punctured in \cite[Section 2.2.C]{Morseflowtree}. Special punctures are used in the proofs of our arguments, but the type of trees relevant for the study of spectral networks, namely $\dfs$-trees as in \cref{def:d4tree}, do not have any special punctures. The integral of $\lambda$ over the oriented relative homology class in \cref{def:Morseflowtree}.(3) is called the \textit{flow-energy} of $\treemap$. In the case of exact Betti Lagrangians, it is possible to discuss {\it positive} and {\it negative} punctures, as in \cite{Morseflowtree}. That is, a vertex $v\in\treegraph$ is a positive puncture, resp.~negative, if the height difference between the two Legendrian lifts of
$$\overline{\gamma}_{\ell+1}^{(\ell+1)_1}(v)=\overline{\gamma}_{\ell}^{(\ell)_2}(v)$$ is positive, resp.~negative. By Stokes' theorem, the flow-energy of a tree is given by adding such difference of heights for all punctures of the tree. Lemma \ref{lemma:flow-energy} implies that such quantity is positive, thus $\treemap$ contains at least one positive puncture, and the $\treemap$ travels \textit{from} the positive puncture to the negative punctures.


\subsubsection{$D_4^{-}$ flowtrees}{\label{subsubsection:D_4flowtrees}} The type of Morse flowtrees that appear in the study of spectral networks, as related to Betti Lagrangians, are rather particular Morse flow trees. We now introduce this class of flowtrees in \cref{def:d4tree} below.\\

\noindent The branch points of the projection $\blag\lr \obis$ from the Betti Lagrangian to the base will also sometimes referred to as \textit{$D_4^{-}$-singularities} in $S$. Technically, the $D_4^-$-singularity is a front singularity for the projection of the Legendrian lift of a neighborhood of the ramification point in $L$: so this is a slight abuse of notation, but it is sometimes convenient. In the base surface $(S,g)$, we assume that in a (small enough) neighborhood of the branch points in $S$, $(S,g)$ is endowed with a local holomorphic coordinate, the metric $g$ is locally K\"{a}hler, and the multigraph $\blag$ above it is locally holomorphic. In the case of exact Betti Lagrangians, we also assume that the metric is of the polar form $\radial^2d\theta^2+d\radial^2$ in a neighborhood of the marked points $\mkpts$.

 \begin{definition}[$\dfs$-trees]\label{def:d4tree}
 Let $(\bis,\mkpts;{\bf \lknot})$ be a Betti surface, $\blag$ a Betti Lagrangian over $(\bis,g)$ for some Riemannian metric $g$ and $z\in\bis$ a point. By definition, a {\it $D_4^{-}$ flowtree} for $\blag$ is a rooted Morse flowtree $\treemap:\treegraph\lr\bis$ for $\blag$ such that:

 \begin{enumerate}
     \item If the root of $\treegraph$ is a vertex, then it must map to either a branch point, corresponding to a $D_4^-$-singularity, or to the fiber $T^*_z\bis$ above $z$. In the former case, the unique edge adjacent to the root vertex is mapped out of one of the three initial ray flow-lines associated to the $D_4^-$-singularity in the outward direction.\\
     \item If the root edge is a semi-infinite edge, then it must be asymptotic to a Reeb chord of a Legendrian link in ${\bf \lknot}$ (See \cref{prop:asymptoticoftrajectories} and \cref{prop:asymptoticofWKBtrajectories})\\
     \item If an internal vertex $v$ of $\treegraph$ maps to a branch point, then there are no adjacent edges whose image near $v$ is a cusp-cusp flowlines for that $D_4^-$ singularity.\\
     \item Non-root univalent vertices of $\treegraph$ must map to branch points (at the $D_4^{-}$-singularities), and each corresponding adjacent edge maps to one of the three initial ray flow-lines associated to the $D_4^-$-singularity in the inward direction.
 \end{enumerate}
 Given a $D_4^{-}$-flowtree, the associated relative cycle in $L$ is said to be its \textit{soliton class}. A $D_4^{-}$-flowtree is \textit{rigid} if $\treegraph$ if all non-univalent vertices are trivalent and none of them map to a $D_4^{-}$-singularity.
 \end{definition}
\noindent We often refer to $\dfs$-flowtrees as $\dfs$-trees, implicitly understanding that there are gradient flowline conditions as in \cref{def:d4tree} and it is not just a combinatorial graph object.
\begin{remark}\label{rmk:homotopy_homology}
By construction, a $\dfs$-tree in fact defines a regular homotopy class, representing its soliton class. It is possible to develop the theory for relative homotopy classes on $\blag$, and not just relative homology classes.\hfill$\Box$
\end{remark}

\noindent In the physics literature \cite{GNMSN,GMN13_Framed,GMN14_Snakes}, soliton classes of $\dfs$-trees have the following nomenclature:\\
\begin{itemize}
    \item[(i)] For a $\dfs$-tree with a root vertex mapping to a branch point in $S$, its soliton class is said to be a \color{purple}4d BPS state\color{black}.\\
    
    \item[(ii)] For a $\dfs$-tree with a root vertex mapping to a fiber $T^*_z\bis$, its soliton class is said to be a \color{purple}vanilla 2d-4d BPS state\color{black}. The point $z\in\bis$ above which the $\dfs$ starts is said to be the ultra-violet parameter for a certain 1/2-BPS surface defect. This surface defect is assumed to be massive in the infrared and have finitely many vacua: in our setting, this is equivalent to the puncture being positive and there existing finitely many intersections $L\cap T^*_z\bis$, so-called vacua, above $z\in\bis$.\\
    
    \item[(iii)] For a $\dfs$-tree with a root edge, which must be semi-infinite and asymptote to a Reeb chord, we are not aware of any specific terminology in the physics context. They are certainly fundamental objects in the study of spectral networks, and produce Hamiltonian invariants for exact Betti Lagrangians: we refer to them as {\it augmented} $\dfs$-trees.  This notation is chosen due to its relation to the augmentation of the Legendrian contact dg-algebra, cf.~\cref{section_weave}.\\

    \item[(iv)] The \color{purple}framed 2d-4d BPS states \color{black} in \cite[Section 3.3]{GNMSN} do not correspond to rigid $\dfs$-trees. These are associated to two points $z_1,z_2\in\bis$ and a path $\rho$ between them, the latter defining a supersymmetric interface between the surface defects associated to $z_1,z_2\in\bis$. In our context, this is captured by the continuation strips from $L\cap T^*_z\bis$ to $L\cap T^*_z\bis$, where the path $\rho$ is interpreted as a morphism between the fibers, cf.~\cref{section_Floer}. Note that in \cite[Section 3.3]{GNMSN} it is written ``It is generally believed that the defect [$\ldots$] does not depend on the precise path $\rho$ but only on its homotopy class. In the present paper we will take this as an assumption.''. In our formalization, the properties of partially wrapped Fukaya categories readily imply that the (cobordism classes of) moduli of continuation strips are indeed only depending on the homotopy class of $\rho$, not the representative $\rho$ itself, for any compactly supported deformation keeping the Floer datum and $L$ fixed at ends.
    
\end{itemize}

\noindent As will be proven in Sections \ref{section_adg} and \ref{section_Floer}, a $\dfs$-tree will correspond to the adiabatic limit of a punctured pseudoholomorphic disk bounded by the Betti Lagrangian $L\sse(T^*S,\la_\std)$. In the case that the root vertex maps to a branch point, there are no punctures and the absolute cycle in $L$ defined by the boundary of such pseudoholomorphic disk coincides the soliton class of the tree. This case can occur  if $L\sse(T^*S,\la_\std)$ is not exact and {\it does} occur for certain meromorphic Betti Lagrangians. The case that the root of a $\dfs$-tree is a semi-infinite edge does occur for exact Betti Lagrangians: the $\dfs$-tree is the adiabatic limit of a pseudoholomorphic disk with {\it one} positive puncture at the Reeb chord associated to the semi-infinite edge. The boundary of such pseudoholomorphic disk, understood as a relative cycle in $(L,\dd L)$, coincides with the soliton class of such $\dfs$-tree. See Sections \ref{section_adg} and \ref{section_Floer} for more details.

\begin{remark}
In the physics context of theories of class $S$, general 4d BPS states also include graphs which are not necessarily $\dfs$-trees, cf.~\cite[Section 3.1]{GNMSN}. There is a natural generalization of \cref{def:d4tree} to the notion of $\dfs$-graph, but it will not be used in this manuscript. In the framework of Floer theory, such general $\dfs$-graph would correspond to a configuration of several pseudoholomorphic disks (with no punctured) bounded by the Betti Lagrangian, whose boundaries intersect each other.\hfill$\Box$
\end{remark}


\subsection{Definitions on spectral networks} \label{subsubsection:mnetwork} Let us introduce the definition of a spectral network associated to a Betti Lagrangian $\blag\sse(T^*\obis,\la_\std)$, where a Riemannian metric $(\obis,g)$ has been fixed. First we introduce {\it pre}-spectral networks in \cref{def:flownetwork}, discuss coherent extensions and the associated soliton classes. Then we define spectral networks in \cref{def:spectralnetwork}.


\begin{definition}[Pre-spectral networks]\label{def:flownetwork}
Let $(S,g)$ be a Betti surface endowed with a Riemannian metric, and $\blag\sse(T^*\obis,\la_\std)$ a compatible Betti Lagrangian of rank $n$. By definition, a {\it pre-spectral network} $\fnetwork$ compatible with $(\blag,\obis,g)$ is a properly embedded finite directed graph $\fnetwork\sse S$ such that:

\begin{enumerate}
    \item Each edge of $\fnetwork$ lies outside $\caustL$ and is decorated with an ordered pair $(ij)$, $i,j\in[1,n]$ distinct.
    \item Each directed edge decorated with $(ij)$ is an $(ij)$-gradient flowline for the multigraph $L\sse (T^*S,\la_\std)$.
    \item There are four types of vertices allowed in $\fnetwork$: initial, interaction, non-interaction and inconsistent. These are depicted in Figure \ref{fig:vertices_spec_net2}; note that this classification requires the decoration on the adjacent edges.
    \item All branch points of the projection $\blag\to \obis$ are initial vertices of $\fnetwork$. At an initial vertex, all adjacent edges are decorated with $(ij)$, where $i,j$ index the sheets of $L$ for the ramification point above the branch point.
    \item The network $\fnetwork$ is flow-acyclic, in the sense that it does not admit a directed cycle of edges $(\wall_1,\ldots,\wall_n)$  in $\fnetwork$, satisfying the following two conditions. 
    \begin{enumerate}
    \item For $1\leq m\leq n-1$, if $\wall_m$ and $\wall_{m+1}$ meet at an inconsistent or a non-interaction vertex, then the two decorations agree.
    \item If $\wall_m$ and $\wall_{m+1}$ meet at an interaction vertex, then either the two decorations agree, or $\wall_{m+1}$ is the outgoing edge decorated with $(ik)$ in  \cref{fig:vertices_spec_net2}.
\end{enumerate}
        \end{enumerate}
    
\noindent Edges of $\fnetwork$ are also said to be {\it walls} for the pre-spectral network $\fnetwork$. An edge decorated with $(ij)$ is often said to be an $(ij)$-wall. A {\it wall} which (asymptotically) ends at a marked point $\mkpts\sse\bis$ is said to be semi-infinite. Finally, in the meromorphic case, where $(\sS,\scurve,g)$ has a Riemann structure, a WKB pre-spectral network $\fnetwork_{\theta}$ of phase $\theta$ is defined to be a pre-spectral network compatible with $(\Re(e^{i\theta}\scurve),\obis,g)$.\hfill$\Box$
\end{definition}

\begin{center}
	\begin{figure}[h!]
		\centering
		\includegraphics[scale=1.1]{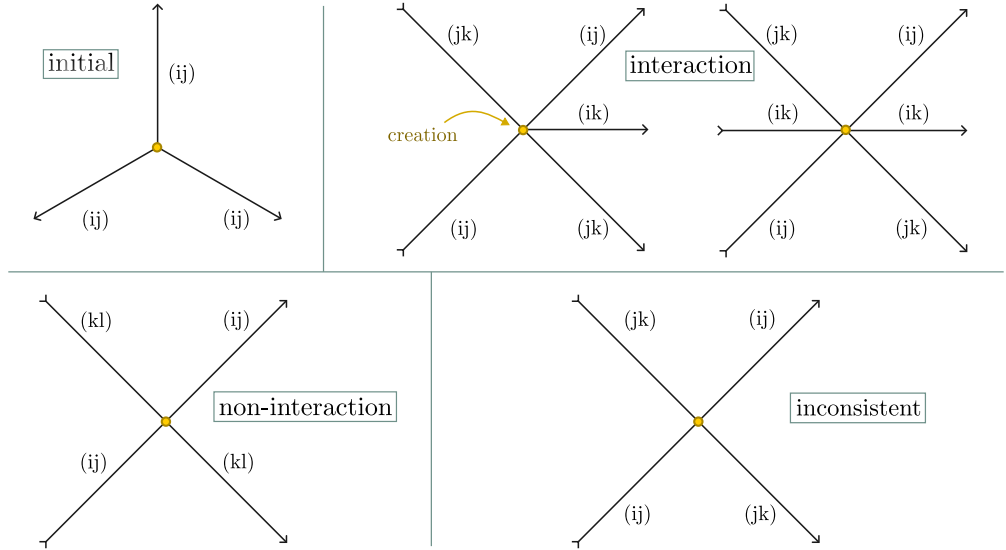}
		\caption{The four local models for vertices in pre-spectral networks in \cref{def:flownetwork}: initial, interaction, non-interaction and inconsistent. We refer to the two types of interaction vertices as creation and 6-valent. In all these models $i,j,k,l\in[1,n]$ are all different.
        }\label{fig:vertices_spec_net2}
    \end{figure}
\end{center}

\noindent Vertices of $\fnetwork$ are also referred to as joints, and a pre-spectral network is said to be {\it creative} if all the interaction joints are creation joints. Observe that creative pre-spectral networks are automatically flow-acyclic. For notational ease, we will always assume that a given WKB pre-spectral network has phase $0$.\footnote{Because a WKB pre-spectral network of phase $\theta$ for $\scurve$ is the same object as the WKB pre-spectral network of phase $0$ for $e^{i\theta}\scurve$.} Also, we will often omit explicitly writing the data of the Riemannian metric $g$ in $(S,g)$ and a Betti Lagrangian $\blag\sse(T^*\obis,\la_\std)$ will always implicitly have a compatible choice of such $g$.\\

\begin{remark}\label{rmk:trees_vs_specnet_orientations} (1) Note that there are two different orientations on $\snetwork$, either seen as a spectral network or as a flowline graph. The orientation as a spectral network is outgoing to infinity: e.g.~the three rays emerge out of a $\dfs$-singularity. The orientation when seen as a flowtree is opposite, incoming from infinity: the three flowlines near a $\dfs$-singularity convergence towards it.\\

\noindent (2) In the physics literature, annihilation type joints, opposite to creation joints, are also allowed as interaction vertices. This is because the BPS index vanishes and non-active walls are not drawn. These will not appear in our context, as flowlines canceling each other are accounted by themselves, e.g.~cf.~\cref{ssec:explicit_comp} and Figure \ref{fig:weaves_example_cancelation_pair2} therein.
\hfill$\Box$
\end{remark}

\noindent The following result allows us to describe the pre-spectral networks in \cref{def:flownetwork} as unions of $D_4^{-}$ flow-trees, as in \cref{def:d4tree}.
\begin{proposition}[$\dfs$-trees from pre-spectral networks]\label{prop:soliton}
Let $\blag\sse (T^*S,\la_\std)$ be a Betti Lagrangian and $\fnetwork$ a compatible pre-spectral network. Consider a point $z\in\wall$ on a wall $\wall\in\fnetwork$. Then:
\begin{enumerate}
    \item There exists a $D_4^{-}$-tree whose root vertex maps to $z$ and the flowline for its adjacent edge is mapped to $\wall$.
    \item The entire image of such $\dfs$-tree is contained in that of $\fnetwork$.
    \item The set of such flow-trees is finite and, if $\fnetwork$ is creative, it contains a unique element\footnote{We can drop flow-acyclicity at the cost of changing the statement to the set of flow-trees being finite \textit{up} to some energy cut-off.}. 
\end{enumerate}
\end{proposition}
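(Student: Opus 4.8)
The plan is to build the $\dfs$-tree in (1) by following flowlines backwards from the chosen point $z\in\wall$, using the asymptotic and interaction structure built into a pre-spectral network to guarantee the construction terminates. First I would start the root edge at $z$, oriented against the spectral-network direction (so, as a $\dfs$-tree, incoming from the endpoint of $\wall$ towards $z$): by \cref{def:flownetwork}.(2) the wall $\wall$ is an $(ij)$-gradient flowline, which is exactly the flow-edge condition of \cref{def:Morseflowtree}.(1). Tracing $\wall$ back from $z$ along its negative-time direction, there are only three possibilities dictated by \cref{def:flownetwork}: either the trajectory is semi-infinite and, by \cref{prop:asymptoticoftrajectories} (resp.~\cref{prop:asymptoticofWKBtrajectories} in the meromorphic case), asymptotes to a Reeb chord ray — giving a root edge of augmented type as in \cref{def:d4tree}.(2); or it reaches an initial vertex, which by \cref{def:flownetwork}.(4) is a branch point, where by the local $\dfs$-model (\cref{subsection:Morselines}, case (2)) the wall is one of the three initial trivalent rays, giving a univalent leaf mapping to a $D_4^-$-singularity as required by \cref{def:d4tree}.(4); or it reaches an interaction or non-interaction vertex $v$, at which point I attach to $v$ the other incoming walls according to the local models of \cref{fig:vertices_spec_net2} and recurse on each. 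At a non-interaction vertex the two $(ij)$-decorations agree and the balancing condition \cref{def:Morseflowtree}.(2) is automatic; at a creation joint the outgoing $(ik)$-wall has, by the geometry of the vertex, cotangent lifts on sheets $i$ and $k$ matching the lifts of the two incoming walls labelled $(ij)$ and $(jk)$, which is precisely what balancing at a trivalent vertex demands — here I would verify the sheet-matching and orientation bookkeeping carefully, as this is where the combinatorial data of the vertex meets the Floer-theoretic lift conditions. The relative-cycle condition \cref{def:Morseflowtree}.(3) then follows because at each vertex the $\ell_1$-lift of the outgoing edge equals the $\ell_2$-lift of an incoming edge, so the pieces $\overline\gamma_e^{\ell_1}-\overline\gamma_e^{\ell_2}$ telescope; minimality \cref{def:Morseflowtree}.(4) is arranged by contracting any spurious $2$-valent non-interaction vertices (whose decorations agree) into a single edge before recording the tree.

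The key structural point — and I expect this to be the main obstacle — is \textbf{termination}: a priori the backwards-tracing procedure could continue forever, producing an infinite graph rather than a finite tree. This is exactly where \cref{def:flownetwork}.(5), flow-acyclicity, is used. I would argue that the graph $T\subseteq\fnetwork$ swept out by the backwards procedure contains no directed cycle: any cycle in $T$ would, edge by edge, satisfy conditions (a) and (b) of \cref{def:flownetwork}.(5) — at non-interaction and inconsistent vertices consecutive edges carry the same decoration, and at a creation joint the only way to pass ``through'' the vertex while tracing walls is via the outgoing $(ik)$-edge — and hence would be a forbidden cycle. Since $\fnetwork$ is a \emph{finite} graph (part of the definition) with no directed cycles among the relevant edges, the backwards procedure exhausts finitely many edges and halts. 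Each connected component of $T$ is then a finite rooted directed graph all of whose non-root terminal data is either a Reeb-chord asymptote or a branch point, i.e.~a $\dfs$-tree; this proves (1). Statement (2) is immediate, since by construction every edge of the resulting tree is the image of a wall of $\fnetwork$ and every vertex is a vertex of $\fnetwork$.

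For (3), finiteness of the set of such $\dfs$-trees rooted at $z$ follows from the same flow-acyclicity plus finiteness of $\fnetwork$: there are only finitely many walls $\wall\ni z$ to start from, and for each the backwards-tracing graph is a finite subgraph, so only finitely many trees arise (the footnote's caveat — dropping flow-acyclicity forces an energy cut-off via \cref{lemma:flow-energy}, which bounds the flow-energy hence the combinatorial size — I would mention but not need). For the uniqueness claim when $\fnetwork$ is creative: in that case every interaction vertex is a creation joint, and at a creation joint there is a \emph{unique} pair of incoming walls (the $(ij)$- and $(jk)$-walls) determining any given outgoing $(ik)$-wall, while non-interaction and initial vertices have no branching ambiguity at all when traced backwards along a fixed decoration. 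Hence the backwards procedure is deterministic: starting from a given wall $\wall\ni z$, the tree is uniquely determined, and since a creative pre-spectral network through $z$ (for the purposes of this statement, with the wall through $z$ fixed) yields exactly one such tree, the set is a singleton. I would close by remarking that the cyclic ordering of edges at each vertex, needed to make the output an honest planar tree in the sense of \cref{def:Morseflowtree}, is inherited from the planar embedding $\fnetwork\sse S$ together with the orientation of $S$.
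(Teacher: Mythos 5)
Your proposal is essentially the same argument as the paper's proof: the paper also builds the $\dfs$-tree by an iterative backward-tracing procedure starting from the flowline connecting $z$ to the start of $\wall$, extending at each non-root univalent vertex according to the vertex type (initial / non-interaction / interaction / inconsistent), using finiteness of $\fnetwork$ together with flow-acyclicity to guarantee termination, and observing that creativity forces a unique extension at every step. The one place you are a bit fuzzier than the paper is the finiteness claim in (3): the paper explicitly notes that at a hexavalent interaction joint the backward step can produce \emph{two} distinct extensions for the outgoing $(ik)$-wall (either the pair $\wall_{ij},\wall_{jk}$ or the single wall $\wall_{ik}$), which is precisely the source of multiplicity that creativity eliminates — your argument treats the backward-tracing graph as if it were a single subgraph per starting wall, whereas it is more accurate to say the procedure is a finitely-branching search over a finite acyclic graph. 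This does not affect correctness, but it is the detail the paper singles out.
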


\begin{proof}
Consider a rooted (partial) flowtree $\treegraph$ whose root edge maps to a wall of $\fnetwork$ and the non-root univalent vertices map to the vertices of $\fnetwork$. Let us now explain an iterative procedure that extends $\treegraph$ to a larger partial flow-tree $\treegraph'$ by adding walls of $\fnetwork$ to each of the non-root univalent vertices of $\treegraph$. This algorithm produces finitely many such extensions and the $\dfs$-flowtree in the statement of \cref{prop:soliton} is then declared to be the set of maximal extensions of the partial flow-tree given by the flowline connecting $z$ to the starting point of $\wall$.\\

The procedure is described as follows. Let $v$ be a univalent vertex and $\wall$ a wall of $\fnetwork$ having $v$ as the starting vertex:
\begin{enumerate}
    \item If $v$ maps to an initial vertex, then the procedure ends without doing anything.
    \item If $v$ maps to a non-interaction joint, add the ingoing wall labeled with the same pair of sheets as $\wall$. 
    \item If $v$ maps to a hexavalent interaction joint. Let $\wall_{ij},\wall_{jk},\wall_{ik}$ be the ingoing edges and $\wall_{ij}',\wall_{jk}',\wall_{ik}'$ the outgoing edges. In the cases $\wall=\wall_{ij}'$ or $\wall=\wall_{jk}'$, we add the ingoing wall labeled with the same pair of sheets as $\wall$. If $\wall=\wall_{ik}'$, then we can either add the two edges $\wall_{ij}$ and $\wall_{jk}$, or we add the single wall $\wall_{ik}$. 
    \item If $v$ maps to a creation joint, and $\wall=\wall_{ij}'$ or $\wall=\wall_{jk}'$, do the same as above. In the case $\wall=\wall_{ik}'$, we add the two walls $\wall_{ij}$ and $\wall_{jk}$. 
    \item Finally, if $v$ maps to an inconsistent joint, either $\wall=\wall_{ij}'$ or $\wall=\wall_{jk}'$, and so we can proceed on as above.
\end{enumerate}
Since the spectral network $\fnetwork$ is finite, and a vertex cannot reappear with the same flowline edge by the flow-acyclicity of $\fnetwork$, given any $\treegraph$ the process will stop after finitely many steps.  Since each step can produce more than one trees if and only if there are some hexavalent vertices involved, $\fnetwork$ being creative implies that it can produce exactly one extension.
\end{proof}
\color{black}

\noindent An example of a $\dfs$-tree as \cref{prop:soliton} is depicted in Figure \ref{fig:SpecNet_D4TreeWalls} (Center). We can use \cref{prop:soliton} to associate a $\dfs$-tree to a wall $\wall\in\fnetwork$ without specifying a point $z\in\wall$. Indeed, since $\wall$ is directed, we choose $z$ to be the unique boundary point of $\wall$ reached by flowing backwards along $\wall$. This would be the ``negative end'' of $\wall$, and note that it might be a marked point $\mkpts\sse\bis$, in which case the $\dfs$-tree is interpreted to have a root semi-infinite edge. In either case, the argument for \cref{prop:soliton} thus shows that there is a well-defined $\dfs$-tree for $\wall$. By definition, a $D_4^{-}$-tree obtained from \cref{prop:soliton} is said to be a $D_4^{-}$-tree\textit{ on the pre-spectral network} $\fnetwork$.\\
\color{black}

\begin{center}
	\begin{figure}[h!]
		\centering
		\includegraphics[scale=1.1]{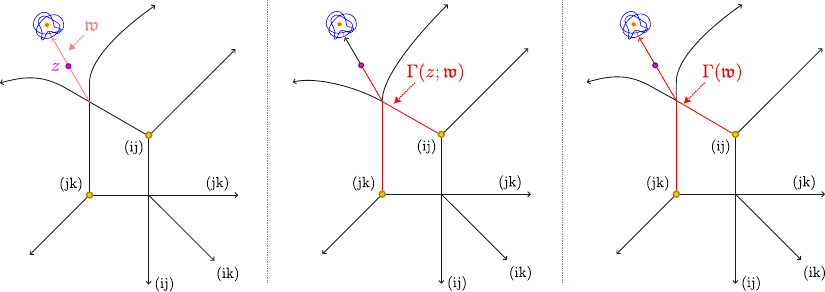}
		\caption{(Left) A spectral network $\fnetwork$ with a wall $\wall$, in clear orange, and a point $z\in\wall$, in pink. This wall $\wall$ is a semi-infinite edge. (Center) The $\dfs$-tree $\Gamma(z;\wall)$ with a root vertex at $z$, in red. (Right) The $\dfs$-tree $\Gamma(\wall)$ in solid red, with a semi-infinite root edge asymptotic to a Reeb chord (in dashed red). In this example, one might take $i=1,j=2$ and $k=3$. As with the above figures, yellow vertices are initial vertices of $\fnetwork$, the marked point in $\mkpts\sse\bis$ is drawn in orange and its associated Legendrian front in blue.}\label{fig:SpecNet_D4TreeWalls}
    \end{figure}
\end{center}

\begin{definition}\label{def:D4tree_from_walls}
Let $\fnetwork$ be a pre-spectral network, $\wall$ a wall and $z\in \wall$ a point. Each $\dfs$-tree obtained as in \cref{prop:soliton} is denoted by $\Gamma(z;\wall,\fnetwork)$ and its soliton class is denoted by $\soliton{z;\wall,\fnetwork}$. 
\hfill$\Box$
\end{definition}

\begin{remark} If $L\sse(T^*\obis,\la_\std)$ is an exact Betti Lagrangian, then the set of $\dfs$-trees associated to a wall in \cref{def:D4tree_from_walls} contains a unique element for each wall $\wall$.
\hfill$\Box$
\end{remark}

\noindent Note that creative pre-networks are rather robust objects, i.e.~invariant under smooth isotopies of $\bis$ and $C^\infty$-small perturbations of the base metric $(S,g)$. This is the content of the following:\\

The existence of {\it new Stokes' lines}, in the nomenclature of \cite{BerkNevinsRoberts82_NewStokes}, a.k.a.~higher-order Stokes phenomenon in \cite[Section 2]{HowlsLangmanOlde04_HigherOrderStokes} (see also \cite[Section 4]{AokiKawaiTakei01_StokesCurves}), implies that the pre-spectral networks in \cref{def:flownetwork} do not contain sufficient information to study spectral curves with irregular singularities, or Stokes local systems on a Betti surface, for rank three or higher. Newer walls for the pre-spectral networks must be introduced to account for all the possible $\dfs$-trees dictated by a Betti Lagrangian. From this perspective, the role of inconsistent vertices is to give birth to the new Stokes' lines, producing interaction vertices. Inspired by the terminology in \cite[Definition 2.26]{GrossHackingKeel15_MSLogCYSurfaceI}, we introduce the following notion to capture the appearance of new Stokes' lines:

\begin{definition}\label{def:simpleextensionflownetwork}
Let $\fnetwork,\fnetwork'$ be two pre-spectral networks compatible with $(L,S,g)$ such that $\fnetwork\sse\fnetwork'$ is an inclusion\footnote{Note that this is different than a graph inclusion, as there might be more vertices in $\fnetwork'$ than in $\fnetwork$.} of stratified $1$-manifolds. By definition, $\fnetwork'$ is said to be a consistent extension of $\fnetwork$ if:
\begin{enumerate}
\item The vertex set of $\fnetwork$ is contained in the vertex set of $\fnetwork'$,
\item the inconsistent vertices in $\fnetwork$ are interaction joints in $\fnetwork'$.
\end{enumerate}
In such a situation, we denote $\fnetwork\to \fnetwork'$ for a consistent extension of $\fnetwork$.\hfill$\Box$
\end{definition}

\noindent \cref{def:simpleextensionflownetwork} is one step in an iterative process, as a consistent extension of a pre-spectral network might itself have inconsistent vertices. A first idea is to iterate this process ad infinitum. The only issue is that the flow-energy of the newly created $\dfs$-trees might remain bounded, potentially leading to infinitely many $\dfs$-trees below a finite flow-energy cut-off. For that, we introduce the following notion of a {\it gapped} sequence of consistent extensions, a terminology inspired by the notion of gapped $A_\infty$-algebras from \cite{FOOO1}. 

\begin{definition}\label{def:energystable}
Let $\fnetwork_1\to \fnetwork_2\to \ldots$ be a sequence of consistent extensions of pre-spectral networks. Given a point $z\in \fnetwork_{m+1}-\fnetwork_{m}$, consider the minimum $\min_{m+1}(\energy(\soliton{z}))$ of the flow-energies for all $\dfs$-trees $\Gamma(z;\wall,\fnetwork_{m+1})$, for $\wall$ a wall in the pre-spectral network $\fnetwork_{m+1}$ containing $z$. By definition, the sequence is said to be \textit{gapped} if there exist positive constants $\hbar,M\in\R_+$ such that
$$\min_{m+1} \big(\energy{(\soliton{z})}\big)>\floor{m/M}\hbar$$
for all $z\in \fnetwork_{m+1}-\fnetwork_{m}$ and $m\geq M$, $m\in\N$.\hfill$\Box$
\end{definition}
Definitions \ref{def:flownetwork}, \ref{def:simpleextensionflownetwork} and \ref{def:energystable} allow us to introduce the definition of a spectral network:
\begin{definition}[Spectral networks]\label{def:spectralnetwork}
Let $\blag\sse(T^*\bis,\la_\std)$ be a Betti Lagrangian with $(S,g)$ a chosen Riemannian metric. By definition, the Betti Lagrangian $\blag$ is said to admit a \textit{Morse spectral network} $\snetwork$ if there exists a gapped sequence
$\snetwork_1\to \snetwork_2\to \ldots$
of consistent extensions of pre-spectral networks $\snetwork_i,i\in \N$. By definition, a Morse spectral network is \textit{finite} if $\snetwork_i=\snetwork_{i+1}$ for sufficiently large $i\in\N$. In the meromorphic case, $\scurve$ is said to admit a WKB spectral network $\snetwork_{\theta}$ of phase $\theta$ if it admits a Morse spectral network consisting of WKB pre-spectral networks of phase $\theta$. \hfill$\Box$
\end{definition}
\noindent \cref{def:spectralnetwork} is more general than the notion of spectral networks in the physics literature \cite{GNMSN,GMN12_WallCrossCoupled,GMN13_Framed,GMN14_Snakes}, thus the terminology Morse spectral network. That said, for notational ease, we often refer to Morse spectral networks in \cref{def:spectralnetwork} as {\it spectral networks} and refer to the spectral networks in the physics literature as WKB spectral networks, as they are always associated to meromorphic spectral curves.\\

In the introduction, as stated in \cref{thm:existence}, we claimed the existence of spectral networks compatible with a Betti Lagrangian $\blag\sse(T^*\bis,\la_\std)$. The next two sections are devoted to the proof of \cref{thm:existence}. Specifically, \cref{thm:existence}.(i), for exact Betti Lagrangians, is established in \cref{subsection:constructionofexactnetwork}. \cref{thm:existence}.(ii), for WKB spectral networks, is proven in \cref{subsection:constructionofWKBnetwork}.\\

\noindent Note that, thanks to the gapped condition, we can ensure that for a given energy level $E\in\R_+$, there exists $m\in\N$ such that any $z\in \snetwork_{m+1}-\snetwork_m$, satisfies $\min_{m+1}(\energy(\soliton{z}))>E$. This allows us to define the energy-filtration on spectral network.

\begin{definition}[Energy filtration]\label{def:energyfiltration}
Let $\fnetwork$ be a spectral pre-network and $E\in\R_+$. By definition, $\fnetwork(E)$ is the set of points in $\fnetwork$ such that $\min_{\fnetwork}(\energy(\soliton{z}))\leq E$. Similarly, for a spectral network $\snetwork=(\snetwork_1\to \snetwork_2\to \ldots)$ we define $\snetwork(E)$ to be $\snetwork_m(E)$, where $m\in\N$ is the smallest number such that any $z\in \snetwork_{m+1}-\snetwork_m$ satisfies $\min_{m+1}(\energy(\soliton{z}))>E$.\hfill$\Box$ 
 \end{definition}

\begin{remark}
By construction, for any $z\in \snetwork(E)$, any $\dfs$ flowtrees that end at $z$ with energy less than $E$ must be contained in $\snetwork(E)$. By scanning according to the energy filtration in \cref{def:energyfiltration}, it follows that a Morse spectral network is the union of all possible $\dfs$-trees.\hfill$\Box$
\end{remark}


\subsection{Proof of \cref{thm:existence}.(i)}\label{subsection:constructionofexactnetwork} Let $L\sse(T^*S,\la_\std)$ be an exact Betti Lagrangian and $(S,g)$ a metric adapted to $L$. The goal is to show that there exists a finite Morse spectral network $\snetwork$ compatible with $L$ and $(S,g)$, as in \cref{def:spectralnetwork}. In the local model for Betti Lagrangians near the $\dfs$-ramification points of the projection $\pi:L\lr S$, the Lagrangian multigraph $L$ can be (and is) assumed to be holomorphic over a neighborhood of the $\dfs$-branch point in $S$, and the Riemannian metric $(S,g)$ is taken to be K\"ahler near such neighborhood. Thus we can apply the local model for the three initial flowline rays near a $\dfs$-singularity studied in \cref{subsection:Morselines}, and consider the associated trajectory, maximally extended, for each of these three rays out of each $\dfs$-branch point in $S$, using the asymptotics of trajectories (\cref{prop:asymptoticoftrajectories} and \cref{prop:asymptoticofWKBtrajectories}).

Let $\snetwork_1$ be the union of all these trajectories, i.e.~for each $\dfs$-branch point, consider the union of all the trajectories in $S$ obtained by flowing out of the branch point from each of the three initial rays. This graph $\snetwork_1$ might nevertheless not define a pre-spectral network due to the following phenomena:
\begin{enumerate}
    \item A trajectory from a $\dfs$-branch point might pass through another $\dfs$-branch point. Note that a trajectory might do that in a way that it flows towards a $\dfs$-branch point entering it via its cusp-cusp sheets, or it might merely pass through it while having two smooth sheets, or a cusp-smooth sheet, above it. By exactness, such trajectories cannot enter a $\dfs$-branch point via a cusp-cusp edge, so that case we have discarded, but the other two cases are still possible.
    \item The trajectories forming $\snetwork_1$ might intersect each other in a non-transverse manner, e.g.~tangentially.
\end{enumerate}
\noindent In order to construct a pre-spectral network from $\snetwork_1$, we must perturb the initial Riemannian metric $(S,g)$ so that these two issues above do not occur. In this case of an exact Betti Lagrangian, such perturbation exists by \cite[Section 3.2]{Morseflowtree}, which itself uses \cite{Smale61_GradientDynamical}; the possible perturbations form an open dense set within the corresponding space of Riemannian metrics. We thus apply such perturbation to $(S,g)$ and obtain a graph in $S$, which we still denote by $\snetwork_1$. This graph $\snetwork_1$, after perturbing $g$, does define a pre-spectral network which satisfies:
\begin{enumerate}
    \item $\snetwork_1$ has only initial, non-interaction and inconsistent vertices. That is, $\snetwork_1$ has no interaction vertices. The initial vertices of $\snetwork_1$ are in bijection with the $\dfs$-branch points in $S$.
    \item the edges of $\snetwork_1$ adjacent to an initial vertices are given by the three possible trajectories out of the $\dfs$-branch point.
\end{enumerate}
\noindent The edges of $\snetwork_1$ are decorated automatically by the condition on the initial vertex in \cref{def:flownetwork}: the three trajectories out of a $\dfs$-branch point are decorated with $(ij)$, where $i,j\in[1,n]$ are the two sheets joined at the ramification point above it. This decoration extends uniquely to all edges of $\snetwork_1$ because the incoming decorations at any of the vertices determine the outgoing ones, as in Figure \ref{fig:vertices_spec_net2}. This $\snetwork_1$ is the first of the (to be) sequence of pre-spectral networks leading to the spectral network $\snetwork$.\\

Let us construct a first consistent extension $\snetwork_1\to\snetwork_2$, as follows. For each inconsistent vertex of $\snetwork_1$, consider the unique flowline that starts at the inconsistent vertex: this exists and it is well-defined due to the local model of the three sheets for the multigraph $L\sse(T^*S,\la_\std)$ above such an inconsistent vertex. Let $\snetwork_2$ be the graph in $S$ defined by the union of all such flowlines; the decoration on the new edges of this graph are uniquely determined by the decorations in $\snetwork_1$ and Figure \ref{fig:vertices_spec_net2}. Similar to the case of $\snetwork_1$, the resulting graph in $S$ might have non-transverse intersections and trajectories passing through a $\dfs$-branch point which are not initial rays for that branch point. As before, we further perturb the Riemannian metric data so that such behavior is not present: the space of such perturbation is the intersection of two dense open sets, open and non-empty. Let us still denote by $\snetwork_2$ the result of such perturbation. As a graph, $\snetwork_2$ might appear to be a pre-spectral network: it is nevertheless possible that some of the new trajectories being introduced are periodic or, in general, violate the acyclicity condition \cref{def:flownetwork}.(4). That is, there might be a trajectory in $\snetwork_2$ that starts at an inconsistent vertex of $\snetwork_1$ and ends at the same interaction vertex; e.g.~entering it via one of the two existing incoming flowlines from $\snetwork_1$. By \cref{prop:asymptoticoftrajectories}, exactness of $L\sse(T^*S,\la_\std)$ implies that this cannot occur and thus $\snetwork_2$ defines a pre-spectral network with no periodic trajectories.\\

We iterate the construction in the previous paragraph, thus constructing a consistent extension $\snetwork_2\to \snetwork_3$ whose new trajectories start or end at the inconsistent vertices of $\snetwork_2$. In order to ensure that $\snetwork_3$ is a pre-spectral network, we perturb the metric data again and use \cref{prop:asymptoticoftrajectories}. In particular, $\snetwork_3$ is flow-acyclic, because by our construction, $\snetwork_3$ network is creative. This argument proves flow-acyclicity of $\fnetwork_3$. as above. Iterating this construction for $i\in\N$, we obtain a sequence $\snetwork_1\to \snetwork_2\to \ldots$
of consistent extensions of pre-spectral networks. Our task is now to ensure that this sequence is gapped. In the exact Betti Lagrangian case, we will establish an a priori bounds on the energy to prove finiteness.
It is in line with the a priori inequalities used in Gromov-compactness results for pseudo-holomorphic strips. The following subsection is devoted to proving gapped and finite, thus establishing the necessary results to conclude the proof of \cref{thm:existence}.(i).

\begin{center}
	\begin{figure}[h!]
		\centering
		\includegraphics[scale=1.2]{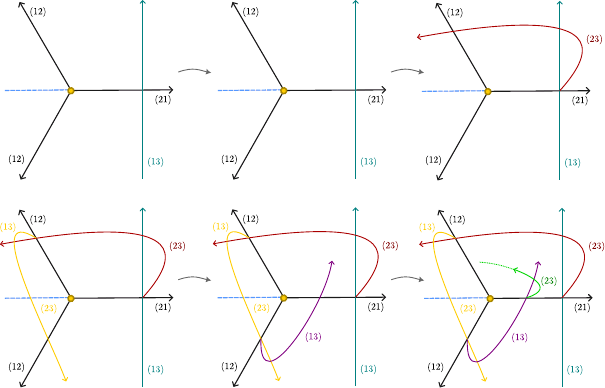}
		\caption{(Upper left) An initial vertex in a pre-spectral network of rank 3 with its adjacent three trajectories and a $(13)$-trajectory (from another vertex) interaction with a wall of this initial vertex. Notice that we have denoted the sheets as in \cite{GNMSN,GMN13_Framed,GMN14_Snakes} and added a branch cut (in dashed blue) for the first and second sheets. (Upper center) A consistent extension of the upper left piece obtained by adding the $(23)$-wall, in red: its behavior is that of spiraling around the initial $\dfs$ vertex, approaching it. (Upper right onward) A sequence of consistent extensions where the newly created walls from inconsistent vertices spiral around the initial $\dfs$-vertex, infinitely approaching it. As with the above figures, the yellow vertex is the initial vertex of $\fnetwork$ and each new wall for a consistent extension is drawn in a different color: red, yellow, purple and green, in order of appearance.}\label{fig:SpecNet_Spirals2}
    \end{figure}
\end{center}

\begin{remark}\label{rmk:spiraling_behavior} A type of behavior that might a priori occur is that of a spiraling sequence of creation vertices approaching a $\dfs$-vertex, violating the finiteness condition for $\snetwork$. Figure \ref{fig:SpecNet_Spirals2} is an example of such a sequence.\hfill$\Box$
\end{remark}


\subsubsection{Energy gaps in consistent extensions}\label{subsection:preliminarytransversalitycondition}
Near each $\dfs$-singularity $b\in L$, we have a holomorphic neighborhood where both the sheets of the Betti Lagrangian and a neighborhood $U_b\sse S$ of $\pi(b)$ are endowed with holomorphic coordinates. Let $z\in U_b$ be a local conformal coordinate and $\holsheet_1,\ldots,\holsheet_n$ define the smooth sheets of $\blag$ over $U_b$. As in \cref{example:BettiSurfaces_StokesData}, consider the associated front given by the set of functions
$$\{\pm \Re(z^{3/2}),\Re(\abs{z} \holsheet_1(0)e^{i\theta}),\ldots,\Re(\abs{z}\holsheet_n(0)e^{i\theta}),\},\quad z\in U_b.$$
\noindent By a $C^\infty$-small perturbation $L\sse (T^*S,\la_\std)$, compactly supported near the ramification point, we can assume that this front is generic. That is, all the associated Reeb chords occur at different values of $\theta$ and it contains only double transverse points. We always choose a local model near a $\dfs$-singularity satisfying these conditions and, inspired by \cite[Section 3.1]{Morseflowtree}, we refer to this as satisfying preliminary transversality conditions at the $\dfs$-singularity.\\

\noindent The spiraling behavior from \cref{rmk:spiraling_behavior}, which we want to ensure does not occur in an infinite manner, can be formalized as follows:
\begin{definition}[Chain interactions]\label{def:chainofinterractions}
Locally in a neighborhood $U_b\sse S$ of a $\dfs$-branch point, a \textit{chain interaction tree} is a flow-tree $\treemap:\treegraph\to U_b$ such that:
\begin{itemize}
    \item[-] The distinguished out-going edge is a smooth-cusp flowline.
    \item[-] All the internal vertices are trivalent and lie outside the origin. Furthermore, given an internal vertex, there is a unique out-going edge which is a smooth-cusp flowline, and there is exactly one in-going edge which is given by an initial ray at $\pi(b)$.\hfill$\Box$
\end{itemize}
\end{definition}
Let us show that chain interaction trees have uniformly bounded number of internal vertices.
\begin{proposition}\label{prop:positivesingularityinterractionchain}
In the notation above, let $z\in U_b\sse S$ be a local coordinate centered the $\dfs$-branch point $\pi(b)$. Then, there exists two positive finite constants $\delta\in\R_+$ and $N\in\N$ such that any chain interaction trees with image contained in $\{z:\abs{z}\leq \delta\}\sse U_b$ has its number of internal vertices bounded above by $N$.
\end{proposition}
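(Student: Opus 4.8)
The plan is to analyze the structure of a chain interaction tree near a $\dfs$-branch point using the explicit local model and the flow-energy formula from \cref{lemma:flow-energy}. Work in the local conformal coordinate $z\in U_b$ with the two cusp sheets given by $\pm\tfrac{2}{3}z^{3/2}$ (real parts of $\pm z^{3/2}$) and the $n-2$ smooth sheets approximated to first order by $\Re(|z|\,\holsheet_k(0)e^{i\theta})$, where $\holsheet_k(0)$ are distinct. The three initial rays emanate from $\pi(b)$ at angles $0,\tfrac{2\pi}{3},\tfrac{4\pi}{3}$ (up to the phase convention in \cref{ex:d4singularity}). The key observation is that a chain interaction tree, by \cref{def:chainofinterractions}, is an iterated concatenation of ``building blocks'': each internal vertex has one incoming initial ray from $\pi(b)$, one incoming smooth-cusp edge from the previous block, and one outgoing smooth-cusp edge. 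We want to bound how many such blocks can fit in $\{|z|\le\delta\}$.

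\textbf{Key steps.} First, I would record the energy bookkeeping. Each internal trivalent vertex of a chain interaction tree is (up to relabeling of sheets) a creation vertex in which a cusp-cusp flowline (along an initial ray, carrying sheets $\{1,2\}$ say) and an incoming smooth-cusp flowline with sheets $\{k,1\}$ (or $\{k,2\}$) merge into an outgoing smooth-cusp flowline with sheets $\{k,2\}$ (resp.\ $\{k,1\}$). By \cref{lemma:flow-energy}, the difference function $\sheet_i-\sheet_j$ strictly decreases along each edge, and at each vertex the balancing/positivity at the joint forces the out-going difference function to have value equal to the \emph{sum} of the two in-coming ones at the vertex. The incoming initial-ray edge contributes a positive amount of flow-energy, since it runs from $\pi(b)$ (where the cusp-cusp difference $\tfrac{4}{3}\Re(z^{3/2})$ vanishes) along a ray where $\Re(z^{3/2})$ is monotone, up to the vertex; this contributed energy is bounded below by a quantity comparable to $|z_v|^{3/2}$ where $z_v$ is the vertex location, using the explicit form of the cusp sheets. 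Second, since each block's outgoing smooth-cusp edge is the incoming edge of the next block, the value of the difference function at successive vertices is strictly monotone along the chain and each step increases (in absolute value) the ``smooth-cusp'' difference function by the positive initial-ray contribution of that block. Third, I would get a \emph{uniform lower bound} on this per-block increment: shrinking $\delta$, the smooth sheets $\holsheet_k$ are $C^1$-close to their linear models, so the difference functions $\Re(|z|(\holsheet_k(0)-\holsheet_\ell(0))e^{i\theta})$ have gradients of comparable size, bounded above and below, on $\{0<|z|\le\delta\}$; meanwhile the total variation of any single smooth-cusp difference function over the whole disk $\{|z|\le\delta\}$ is bounded above by $O(\delta)$ (it is $C^1$ up to $z=0$). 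If a block lives in an annulus $\{c\cdot 2^{-m-1}\le |z|\le c\cdot 2^{-m}\}$, its initial-ray edge contributes energy $\gtrsim (c\,2^{-m})^{3/2}$; so the number of blocks whose vertices sit in a fixed dyadic annulus is bounded by (total available variation $O(\delta)$)$/$(per-block lower bound), hence the number of blocks in each annulus is bounded, and moreover — because the difference function is monotone along the chain and bounded by $O(\delta)$ — the \emph{total} number of blocks over all annuli is bounded by a fixed $N$ depending only on the local model (the $\holsheet_k(0)$, the metric, $\delta$), not on the tree.

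\textbf{Main obstacle.} The delicate point is the bookkeeping at the creation joints: I must verify that, along a chain interaction tree, the outgoing smooth-cusp difference function genuinely \emph{accumulates} the positive contributions of all the previous initial rays rather than cancelling them, i.e.\ that the signs work out so the relevant difference function is strictly monotone in $m$ with a uniform step. This uses the precise local combinatorics of the creation vertex in \cref{fig:vertices_spec_net2} together with Reeb-positivity and the orientation conventions — I would argue it by tracking which pair of sheets labels the outgoing edge and using that at a creation joint the outgoing $(ik)$-wall's difference function equals $(\sheet_i-\sheet_j)+(\sheet_j-\sheet_k)$, both summands being consumed positively. A secondary technical nuisance is that the ``distinguished outgoing edge is a smooth-cusp flowline'' condition allows the very last edge to exit $\{|z|\le\delta\}$, so the variation budget argument must be applied only to the portion of each edge inside the disk; since all edges are flowlines of a fixed gradient vector field on a fixed disk this causes no real trouble. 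Once these sign issues are pinned down, choosing $\delta$ small enough that the smooth sheets are uniformly $C^1$-controlled and setting $N := \lceil C\delta / \hbar_0 \rceil$ for the resulting uniform per-block energy gap $\hbar_0$ completes the proof.
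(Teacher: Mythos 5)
Your approach is genuinely different from the paper's, and unfortunately it has a gap that I don't think can be patched without importing the main idea of the paper's argument.

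The paper projects the chain interaction tree from the disk to the circle $S^1_\theta$ around the branch point: each initial-ray edge is stationary in the angular variable, so it collapses and each creation joint becomes a $2$-valent vertex with a \emph{negative} puncture of a circle flowtree. The resulting object is a (partial) flowtree for the cusp-free, generic Legendrian link obtained by slicing the $D_4^-$ local model along $S^1_\theta$ (this is precisely the preliminary transversality setup of \cref{subsection:preliminarytransversalitycondition}), and it has at most two positive punctures. The rigid-flowtree dimension count of Ekholm then gives a bound on internal vertices that is \emph{scale-free}: it only sees the combinatorics on the circle, not the radius, so it rules out spiraling towards the branch point automatically.

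Your budget argument cannot do that, and the problem is more serious than the sign bookkeeping you flag. The per-block increment you want to use comes from the initial-ray (cusp-cusp) edge, whose flow-energy is $\sim |z_v|^{3/2}$ where $z_v$ is the vertex location. This tends to zero much faster than the $O(\delta)$ total variation budget of the smooth-cusp difference function, with no uniform lower bound as $|z_v|\to 0$. Even granting monotonicity, the only inequality you get is of the form $\sum_m k_m\, 2^{-3m/2}\lesssim \delta$ for the number of blocks $k_m$ in the $m$-th dyadic annulus, and this is compatible with $\sum_m k_m=\infty$ (e.g.\ $k_m\sim 2^m$). So the energy argument does not exclude a chain of creation vertices spiraling into the branch point — which is exactly the scenario of \cref{rmk:spiraling_behavior} and \cref{fig:SpecNet_Spirals2}. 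Your uniform $C^1$ control on the \emph{smooth-smooth} difference functions is real, but the quantity that must be bounded below is the \emph{cusp-cusp} energy, which degenerates like $r^{3/2}$ at the branch point; these two got conflated. Any fix would need an input that sees the angular combinatorics independently of the radial scale, which is what the paper's circle projection provides.
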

\begin{proof} Consider a chain interaction tree in $U_b$ and a sufficiently small circle $S^1_\theta$ giving the base for the front of the Legendrian at the $\dfs$-singularity. We project the chain interaction tree to a flowtree on this circle $S_\theta^1$, as follows. Given the tree, we totally order the vertices of $\treegraph$ by the distance from its unique root vertex: then the in-going external edge coming from a smooth-singular flowline projects to a flowline of the same index on $S_\theta^1$. In addition, each external edge coming from the Reeb chord ray meeting the smooth-singular flowline gives a \textit{two}-valent internal vertex with a \textit{negative puncture}. (For instance, if the incoming smooth-singular flowline $ij$ interacts with a singular-singular flowline $jk$ to create a smooth-singular flowline $ik$, then the projected flowtree is \textit{two}-valent, with the in-going flowline $ij$ and the out-going flowline $ik$.) Therefore, each such external edge gives rises to an internal \textit{negative puncture}. The unique out-going external edge and the remaining unique in-going singular-smooth flowline edge may be either positive or negative. By \cite[Section 3]{Morseflowtree} and the preliminary transversality condition, a cusp-free generic Legendrian link has an upper bound on the number of internal vertices and edges, given a finite bound on the number of positive punctures, which in our case is at most $2$. In conclusion, there is a uniform upper bound on the length of the chain interaction tree, as required.
\end{proof}

\begin{proposition}\label{thm:gromovcompactness}
Let $\snetwork=(\snetwork_1\to \snetwork_2\to \ldots)$ be a spectral network constructed as in \cref{subsection:constructionofexactnetwork}. Then the spectral network $\snetwork$ is gapped and  finite.
\end{proposition}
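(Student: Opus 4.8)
\textbf{Proof strategy for Proposition \ref{thm:gromovcompactness}.} The plan is to show two things: first, that the sequence $\snetwork_1\to\snetwork_2\to\ldots$ eventually stabilizes (finiteness), and second, that along the way the minimal soliton flow-energies of newly created walls are bounded below by a linear function of the step number (gapped). Both follow from a single a priori energy estimate combined with the local analysis near $\dfs$-singularities established in the preceding subsection. The key point is that every wall $\wall$ in any $\snetwork_m$ has an associated $\dfs$-tree $\Gamma(\wall)$ (by \cref{prop:soliton} and the remark following \cref{def:D4tree_from_walls}, since $L$ is exact this tree is unique), and by Lemma \ref{lemma:flow-energy} and the Stokes'-theorem computation in \cref{subsubsection:D_4flowtrees} its flow-energy equals the sum of height differences at its punctures, which is strictly positive and bounded below in terms of the Reeb chord it asymptotes to (using Reeb-positivity, \cref{lemma:Reebpositivity}).

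\textbf{Step 1: An a priori energy lower bound for newly created walls.} A wall $\wall\in\snetwork_{m+1}\setminus\snetwork_m$ is born at an inconsistent vertex $v$ of $\snetwork_m$, where two walls $\wall_{ij},\wall_{jk}$ of $\snetwork_m$ meet. Its $\dfs$-tree $\Gamma(\wall)$ contains, as subtrees, the $\dfs$-trees $\Gamma(\wall_{ij})$ and $\Gamma(\wall_{jk})$ attached at $v$, plus possibly further pieces. Hence $\energy(\soliton{\wall}) \geq \energy(\soliton{\wall_{ij}}) + \energy(\soliton{\wall_{jk}})$ by additivity of flow-energy over the relative cycle in \cref{def:Morseflowtree}.(3). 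I would first establish, using the trapping lemma (\cref{lemma:morsetrappinglemma}) and the asymptotics of \cref{prop:asymptoticoftrajectories}, that there is a uniform positive lower bound $\hbar_0>0$ on the flow-energy of the $\dfs$-trees in $\snetwork_1$ (each such tree is a single flowline from a $\dfs$-branch point to a Reeb chord, and exactness forces a positive height drop bounded below because the Legendrian ends are Reeb-positive with finitely many chords). Then induction on $m$ gives that any wall in $\snetwork_{m+1}$ arising from genuinely iterated interactions accrues additional energy $\geq\hbar_0$ each time the interaction chain is ``reset,'' except that one must be careful about the spiraling behavior of \cref{rmk:spiraling_behavior}, where interactions accumulate near a $\dfs$-vertex without the flow-energy increasing.

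\textbf{Step 2: Ruling out infinite spiraling via Proposition \ref{prop:positivesingularityinterractionchain}.} This is where the main obstacle lies. The naive inductive energy bound can fail precisely for the chain-interaction trees near a $\dfs$-branch point, where many trivalent vertices pile up with only bounded total energy contribution (the smooth-cusp flowlines have small length near $z=0$). Here I would invoke \cref{prop:positivesingularityinterractionchain}: there are constants $\delta>0$ and $N\in\N$ so that any chain interaction tree supported in $\{|z|\leq\delta\}$ has at most $N$ internal vertices. Cover the finitely many $\dfs$-branch points with such $\delta$-disks; outside the union of these disks, the difference functions $|\nabla(f_i-f_j)|$ are uniformly bounded below on the (pre)compact complement, so any wall segment traversing it accrues flow-energy bounded below by a fixed $\hbar_1>0$ (minimal escape time times minimal speed, as in \cref{lemma:minimalescapetime}). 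Therefore in $M$ steps (where $M$ counts the maximal number of interaction vertices a $\dfs$-tree can have while staying inside one $\delta$-disk, which is $\leq N$ times the number of branch points), any new wall must have left and re-entered the ``fat'' region, picking up at least $\hbar_1$ of flow-energy. Setting $\hbar=\hbar_1$ and this $M$, the estimate $\min_{m+1}(\energy(\soliton{z}))>\floor{m/M}\hbar$ follows, which is exactly the gapped condition of \cref{def:energystable}.

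\textbf{Step 3: Finiteness.} Once the sequence is gapped, finiteness is a counting argument. Each wall of $\snetwork$ is, via \cref{prop:soliton}, a union of edges of $\dfs$-trees, and each such tree is determined up to the relevant equivalence by its soliton class in $H_1(L,\tt)$ together with a bounded combinatorial type. By the gapped estimate, below any fixed energy cutoff $E$ only finitely many steps $m$ contribute new walls (namely $m\leq ME/\hbar$), and each step adds only finitely many walls since $\snetwork_m$ is a finite graph with finitely many inconsistent vertices and each contributes one new outgoing flowline. To upgrade ``finite below every cutoff'' to ``honestly finite,'' I would argue that for an exact Betti Lagrangian there is in fact a \emph{global} a priori energy bound: the flow-energy of any $\dfs$-tree on $\snetwork$ is bounded by the total height variation of the sheets of $L$, which is finite because $L$ is a compact-up-to-conical-ends exact Lagrangian and the primitive of $\lambda_\std|_L$ is globally bounded (the conical ends contribute only finite height spread thanks to the trapping lemma forcing convergence to Reeb chords). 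Hence there is a cutoff $E_0$ beyond which no $\dfs$-tree exists, so $\snetwork_m=\snetwork_{m+1}$ for $m>ME_0/\hbar$, giving finiteness. The one delicate point to verify carefully is that no new wall can be created ``for free'' — i.e.\ that a consistent extension at an inconsistent vertex genuinely strictly increases the minimal soliton energy unless it occurs inside a $\delta$-disk, which is precisely controlled by Step 2; this interplay between the local chain bound and the global energy bound is the crux of the argument.
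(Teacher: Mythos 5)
Your Step~2 (the gapped estimate) follows essentially the same route as the paper: invoke \cref{prop:positivesingularityinterractionchain} to bound chain interaction trees near the $\dfs$-singularities by $N$ vertices, then use a uniform lower bound on the flow-energy of any segment traversing the annulus between $\delta$- and $2\delta$-disks (via weak boundedness and \cref{lemma:flow-energy}) to conclude that a rigid $\dfs$-tree with $|V(\Gamma)|$ internal vertices carries energy at least $\lfloor |V(\Gamma)|/N\rfloor \smallcst$. Since new walls at step $m{+}1$ have $\dfs$-trees with at least $m{+}1$ interaction vertices, this is exactly the gapped bound. Good.

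The finiteness argument in Step~3, however, has a genuine gap. You assert that the primitive of $\la_\std|_L$ is \emph{globally} bounded, and therefore the flow-energy of any $\dfs$-tree is uniformly bounded by the ``total height variation of the sheets of~$L$.'' This is false for exact Betti Lagrangians with conical ends: in the coordinates of \cref{def:conicalend}, the primitive on~$L$ near a marked point is $W = f(r)\, z(\theta)$ with $f$ strictly increasing and \emph{linear at infinity}, so $W$ is unbounded as $r\to\infty$. In particular the height variation of the sheets blows up along any Reeb chord ray (where $z_i(\theta)-z_j(\theta)\neq 0$), and there is no global a priori energy bound. The trapping lemma does \emph{not} give you a finite height spread at the ends; it only prevents trajectories from escaping once inside.

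The paper handles finiteness in two pieces precisely for this reason. On any precompact subset (complement of both the $\dfs$-neighborhoods and the annular ends), the primitive is bounded, so combined with the gapped estimate one gets finitely many $\dfs$-trees there. Near the marked points, \cref{lem:first_noescape} is invoked to show that once a wall crosses the trapping circle inward it never returns, and any wall created at an interaction joint inside the trapping region inherits the same property. This reduces what happens near each puncture to counting flowtrees for the end-Legendrian inside $J^1S^1$ with a fixed number of negative punctures, which is finite by the generic finiteness of such trees (cf.\ \cref{lemma:minimalescapetime}). Your proposal never separates off this second argument, and without it the finiteness claim does not follow.

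One smaller remark: in Step~1 you refer to a ``uniform positive lower bound $\hbar_0$ on the flow-energy of the $\dfs$-trees in $\snetwork_1$.'' The trees relevant to the gapped condition (and to \cref{prop:soliton}) have a root vertex at a point $z$ on a wall and univalent non-root vertices at $\dfs$-branch points, not at Reeb chords; for $z$ close to a branch point their energy tends to zero, so no such uniform bound exists. This confusion about which trees are being measured is harmless here (since the gapped condition is vacuous for $m < M$), but it feeds into the same misconception as Step~3.
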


\begin{proof}
Let us show $\snetwork$ is gapped. First, apply \cref{prop:positivesingularityinterractionchain} to obtain two positive constants $\delta\in\R^+$ and $N\in\N$ such that chain interaction trees contained in $\nbh_{2\delta}$ neighborhoods of the $\dfs$-singularities have lengths bounded above by $N$. By choosing $\delta\in\R^+$ small enough, we can and do assume that the neighborhoods $\nbh_{2\delta}$ are a disjoint union of small balls containing all the $\dfs$ singularities. Second, choose a positive constant $\smallcst\in\R_+$ smaller than the minimum of the flow-energies for all flowlines traveling from the boundary $\partial \nbh_{\delta}$ to the boundary $\partial \nbh_{2\delta}$. By \cref{eq:weaklyboundedmultigraph}, Betti Lagrangians are weakly bounded and thus we can choose this minimal energy $\smallcst$ to be smaller than the minimal flow-energy needed to travel between the distinct neighborhoods $\partial \nbh_{2\delta}$ of the $\dfs$-singularities in $S$.\\ 

Let $\treegraph$ be a rigid $\dfs$ tree with flow-energy $\leq E$. Suppose the non-univalent internal vertices avoid the $\dfs$-singularities. Then we will show that $E>\floor{\left(|V(\treegraph)|/N\right)}\smallcst,$ where $N$ is the upper bound from \cref{prop:positivesingularityinterractionchain}. From this, it follows that $\snetwork$ must be gapped, since $\dfs$-trees in in $\snetwork_i-\snetwork_{i-1}$ are all rigid, and none of the non-univalent internal vertices get mapped to the branch points, and these trees have at least $i$ internal vertices. To prove the claim, we make the obesrvation that there are at most $\floor{\left(|V(\treegraph)|/N\right)}$  \textit{maximal} interaction chain trees contained in $\treegraph$, since by \cref{def:chainofinterractions}, such trees have at most $N$ internal vertices. Furthermore, by our choice of $\smallcst\in\R_+$, the unique external in-going smooth-cusp edge in the interaction chain tree must carry flow-energy at least $\smallcst$ and thus a maximal chain interaction tree must also have flow-energy at least $\smallcst$. Therefore, $E>\floor{\left(|V(\treegraph)|/N\right)}\smallcst$, as claimed. 

To argue that $\snetwork$ is finite, note that the primitive of $\la_\std$ restricted to the exact Lagrangian $L\sse(T^*S,\la_\std)$ is a smooth function on the compact domain given by the complement of the $U_{2\delta}$-neighborhoods in $S$. It is therefore a bounded primitive and, by \cref{lemma:flow-energy}, the total energy of any $\dfs$-tree must be uniformly bounded as well. Since $\snetwork$ is gapped, only finitely many $\dfs$-trees can appear and thus $\snetwork$ is finite, on any precompact subset of $\obis$. Finally, in order to control finiteness of $\snetwork$ at infinity, we use \cref{lem:first_noescape} implies that the walls that intersect the boundary transversely, in the inward direction, cannot leave, and this condition holds for any other wall that is created at interaction joints. Therefore, we are reduced to the finiteness of flow-trees in $J^1S^1$ with a fixed number of negative punctures. Again, such flow-trees have finitely many internal vertices. Therefore, $\snetwork$ is finite everywhere. 
\end{proof}


\subsection{Proof of \cref{thm:existence}.(ii)}\label{subsection:constructionofWKBnetwork} The structure of the argument is similar to the exact Betti case, as presented in \cref{subsection:constructionofexactnetwork}. The only significant difference is that the only perturbations that we are allowed are those performed in the holomorphic setting and varying the phase $\theta$. In particular, we can no longer guarantee that $\snetwork$ is creative, since three flowlines may intersect at a single point.  The sequence $\snetwork=(\snetwork_1\to \snetwork_2\to \ldots)$ of consistent extensions of pre-spectral networks is built in the same manner. As before, the issues to be addressed are non-transverse intersections and flow-acyclicity. For removing periodic trajectories,  we use \cref{prop:asymptoticofWKBtrajectories} in order to argue that the sequence of consistent extensions of pre-spectral networks produces no periodic trajectories (instead of \cref{prop:asymptoticoftrajectories}). For the former, and the more general flow-acyclicity, we need a different argument, as \cite[Section 3.2]{Morseflowtree} is not a holomorphic perturbation. \cref{lem:tangenciesareremovable} below proves that tangencies can indeed be removed. Assuming that result, we apply \cref{thm:gromovcompactness}, which also works in this setting, and conclude \cref{thm:existence}.(ii).\\


To argue that we can remove non-tranverse intersections, consider $z\in\sS$ a point in the base and let $\gamma(s)$ be an $ij$-flowline of phase $0$ starting at $\gamma(0)=z$ and let $C\in \mathbb{R}_{+}$ be a constant. 
Define the charge map of $\gamma$ at time $s$ with initial charge $C$ to be the constant $C+\int_0^s(\lambda^i-\lambda^j)(\gamma'(t))dt$. By definition, the flat coordinate\footnote{The map $W$ is a flat conformal coordinate map in the sense that the pull-back of $\holsheet^i-\holsheet^j$ becomes $dW$ in the $W$-coordinate.} $W$ on a neighborhood of $\gamma$ in $\sS$ is the analytic continuation of the charge map over a small sectorial neighborhood of $\gamma$ avoiding the $ij$-branch points, so we have $W=C+\int(\lambda^i-\lambda^j)$ in such a neighborhood. The constant term $C$ matters in the case $\gamma$ is a new-born edge at an interaction joint. Consider the $\theta$-deformation of $\gamma$ given by the curve 
\begin{equation}\label{eq:theta_defn}
s\to W^{-1}((C+s)e^{i\theta}),
\end{equation}
so the $\theta$-deformation has the same energy (the absolute value of charge) as $\gamma$ at time $s$. By differentiating \cref{eq:theta_defn} with respect to $\theta$, we deduce that the normal deformation vector at time $s$ is given by $(C+s)/(\lambda^i-\lambda^j)$. Here the appearance of $(C+s)$ in the expression in \cref{eq:theta_defn} is because charge-preserving deformations necessarily have their normal deformation affected by the energy. Let us now show that via $\theta$-deformations we can indeed ensure transversality:


\begin{lemma}\label{lem:tangenciesareremovable}
Let $\gamma_1,\gamma_2$ be a $(ij)$-flowline and $(kl)$-flowline of phase $\theta=0$, $i,j,k,l\in[1,n]$ distinct, and suppose that $\gamma_1,\gamma_2$ have an isolated tangential intersection at a point $w\in\sS$. Then, for small enough generic $\theta\neq 0$, their $\theta$-deformations intersect transversely.
\end{lemma}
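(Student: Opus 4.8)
The plan is to exploit the flat coordinates introduced just before the statement. Near the isolated tangential intersection point $w\in\sS$, choose a sectorial neighborhood avoiding all relevant branch points on which both the $(ij)$-flat coordinate $W_1=C_1+\int(\lambda^i-\lambda^j)$ and the $(kl)$-flat coordinate $W_2=C_2+\int(\lambda^k-\lambda^l)$ are defined. In the $W_1$-plane the curve $\gamma_1$ becomes the straight horizontal line $\Im W_1=\mathrm{const}$, and after the $\theta$-deformation \eqref{eq:theta_defn} it becomes the line $\arg(W_1-(\text{base point}))=\theta$, i.e.\ a straight line of slope $\tan\theta$ through the same reference point. The composite $\Psi=W_2\circ W_1^{-1}$ is a holomorphic (in fact locally biholomorphic, since $i,j,k,l$ are distinct so $\lambda^k-\lambda^l$ does not vanish there) change of coordinate, carrying $\gamma_2$ to a horizontal line in the $W_2$-plane. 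Thus the whole problem is reduced to: two real-analytic curves in a disk, one a straight line, the other the image under a fixed biholomorphism $\Psi$ of a straight line, tangent at one point; show that tilting both lines by a common small angle $\theta$ makes the images meet transversely.

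First I would set up the local picture precisely. Put coordinates so that $w$ corresponds to $W_1=0$, with $\gamma_1=\{\Im W_1=0\}$ and $\gamma_2=\Psi^{-1}(\{\Im W_2=0\})$ passing through $0$ with the same tangent direction (the real axis), where $\Psi$ is holomorphic with $\Psi'(0)\neq 0$ and $\Psi(0)$ real; tangency of $\gamma_1,\gamma_2$ at $0$ is exactly the condition $\Psi'(0)\in\R_{>0}$ (after normalizing $\Psi$ so $\Psi(0)=0$). The deformed curves are $\gamma_1^\theta=\{W_1=(s_1+a_1)e^{i\theta}\}$ and $\gamma_2^\theta=\Psi^{-1}(\{W_2=(s_2+a_2)e^{i\theta}\})$ for the appropriate real shifts $a_1,a_2$ coming from the charges. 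A point of intersection is a pair $(s_1,s_2)$ with $\Psi\big((s_1+a_1)e^{i\theta}\big)=(s_2+a_2)e^{i\theta}$; transversality of the intersection is the nondegeneracy of the differential of this equation in $(s_1,s_2)$. Working in the $W_2$-plane: $\gamma_2^\theta$ is the straight line through $0$ of argument $\theta$, while $\Psi(\gamma_1^\theta)$ is a real-analytic arc; tangency of these two arcs at a common point $p$ forces the tangent of $\Psi(\gamma_1^\theta)$ at $p$ to have argument $\theta \pmod\pi$, i.e.\ $\arg\!\big(\Psi'(W_1)\,e^{i\theta}\big)\equiv \theta\pmod\pi$, hence $\arg \Psi'(W_1)\equiv 0\pmod\pi$, i.e.\ $\Psi'(W_1)\in\R\setminus\{0\}$. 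So a tangential intersection of the $\theta$-deformed curves can only occur at a point $W_1$ on $\gamma_1^\theta$ where $\Psi'(W_1)$ is real.

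The key step, then, is to show that for generic small $\theta$ the line $\gamma_1^\theta=\{(s+a_1)e^{i\theta}\}$ avoids the set $Z=\{\zeta:\Psi'(\zeta)\in\R\}$ except possibly where the corresponding $W_2$-values do not coincide. Now $\Psi'$ is holomorphic and not identically real on an open set (a holomorphic function with real values on an open set is constant, and $\Psi'$ is nonconstant unless $\Psi$ is affine — and if $\Psi$ is affine with real positive derivative then $\gamma_1,\gamma_2$ would coincide, not merely be tangent, contradicting the isolatedness hypothesis; the genuinely affine-coincident case is excluded). Hence $Z=(\Im\Psi')^{-1}(0)$ is a real-analytic curve, a proper real-analytic subset of the disk, in fact the union of finitely many arcs through the isolated critical points of $\Im\Psi'$. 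The family of lines $\theta\mapsto\{(s+a_1)e^{i\theta}:s\in\R\}$ sweeps out a neighborhood of $0$, and for all but finitely many $\theta$ in any small interval the line is not tangent to the finitely many arcs of $Z$ and meets each of them transversely at finitely many points. At such a transverse crossing point $\zeta_0\in\gamma_1^\theta\cap Z$, I would argue that the intersection of $\Psi(\gamma_1^\theta)$ with the line of argument $\theta$ is still transverse \emph{as an intersection}: even though the tangent directions match there to first order, the second-order comparison is governed by $\Psi''(\zeta_0)$ and the fact that $\zeta_0$ is a transverse (not tangential) crossing of $\gamma_1^\theta$ with $Z$ — one computes that the curvature mismatch is nonzero, so $\Psi(\gamma_1^\theta)$ crosses the line cleanly rather than osculating it. More simply, I would instead avoid $Z$ entirely by noting that the original tangency at $w$ was \emph{isolated}, so $0\notin Z$ unless $\Psi'(0)\in\R$; but $\Psi'(0)\in\R_{>0}$ was precisely our tangency condition. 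So $0\in Z$, and after the deformation the deformed tangency point migrates; the cleanest route is the dimension count: the bad set of $\theta$ for which $\gamma_1^\theta$ remains tangent to $\gamma_2^\theta$ at \emph{some} point is cut out by the simultaneous vanishing of the intersection equation and its $(s_1,s_2)$-Jacobian, which is a real-analytic condition on $(\theta,s_1,s_2)$ whose solution set projects to a proper real-analytic — hence finite, in a small interval — subset of $\theta$-values, unless it is all of the interval. That last degenerate possibility would mean the family is tangent for all $\theta$, which by analyticity would force $\Psi$ to be affine with positive real slope, i.e.\ $\gamma_1$ and $\gamma_2$ equal, contradicting the isolated-tangency hypothesis.

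\textbf{Main obstacle.} The technical heart — and the step I expect to be most delicate — is ruling out the degenerate scenario in which \emph{every} small $\theta$-deformation still produces a tangency: one must show that "tangent for all $\theta$" forces an identity ($\Psi$ affine, curves coincident) that is excluded by hypothesis. This requires carefully translating the geometric tangency condition into the vanishing of an explicit real-analytic function of $(\theta, s_1, s_2)$ built from $\Psi$, $\Psi'$, $\Psi''$ and the charge shifts, and then invoking the identity theorem for real-analytic functions in the $\theta$-variable together with the holomorphicity of $\Psi$ to propagate a one-parameter family of tangencies into a rigidity statement. Everything else (setting up flat coordinates, reducing to a fixed biholomorphism, the finiteness of the exceptional $\theta$) is routine once this rigidity dichotomy is in place.
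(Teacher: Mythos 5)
Your proposal follows the same conceptual route as the paper's proof: both pass to the flat coordinates $W_{ij},W_{kl}$, identify the biholomorphism $\Psi=W_{kl}\circ W_{ij}^{-1}$ (with $\Psi'=\phi$ in the paper's notation), locate tangencies of the $\theta$-deformations inside the real-analytic curve $Z=\{\Im\Psi'=0\}=\{\Im\phi=0\}$, and then invoke a genericity argument in $\theta$. Your observation that tangencies can only occur over $Z$ is exactly the paper's Step~1.

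There is, however, a genuine gap at precisely the step you flag as the main obstacle, and the issue starts earlier than you acknowledge. Your dimension count claims that the bad set, cut out by the intersection equation plus the vanishing of its $(s_1,s_2)$-Jacobian, projects to ``a proper real-analytic --- hence finite --- subset of $\theta$-values, unless it is all of the interval.'' That dichotomy is not correct as stated: a one-dimensional real-analytic degenerate locus in $(\theta,s_1,s_2)$-space can project onto a full $\theta$-interval without the locus being two- or three-dimensional (picture a curve whose $\theta$-coordinate is nonconstant), so ``proper subset of three-space'' does not yield ``finite in $\theta$.'' The paper avoids this by a computation that exposes hidden holomorphicity: it writes down the $4\times4$ matrix spanning $\operatorname{im}(dF)+T\cdot\operatorname{diag}$, row-reduces it (carefully tracking that the charges $l_1,l_2$ \emph{are} functions of $\theta$, a dependence your $a_1,a_2$ suppress), and finds that the determinant vanishes precisely on the zero set of the \emph{holomorphic} function $z-W(z)/\phi(z)$. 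Holomorphicity upgrades your real-analytic dichotomy to the correct one: discrete, or identically zero. The identity case is then killed by differentiating $W(z)=z\phi(z)$ to get $\phi'\equiv 0$ along the one-dimensional stratum of $Z$, forcing $\phi$ constant and contradicting the isolatedness of the original tangency. Your ``affine $\Psi$'' rigidity statement is this conclusion, but you have not produced the holomorphic equation that drives it, and your curvature-mismatch fallback is too coarse to substitute for it. A minor further inaccuracy: if $\Psi$ is affine with real positive slope but nonreal constant term, $\gamma_1,\gamma_2$ are parallel and disjoint rather than coincident; this still contradicts the hypothesis (they are supposed to meet), but ``would coincide'' overstates what follows.
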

\begin{proof}

Let $W_{ij}$ and $W_{kl}$ denote the charge maps with initial charges $l_1$ and $l_2$, associated to $\gamma_1$ and $\gamma_2$ near $z$. The goal is to show that the variation vectors of $\gamma_1$ and $\gamma_2$ are $W_{ij}/(\holsheet_i-\holsheet_j)$ and $W_{kl}/(\holsheet_k-\holsheet_l)$ and that tangencies are stable if and only if the variation vectors agree.


We begin the proof. In the $z=W_{ij}$-coordinate, we can write $$ \holsheet_i-\holsheet_j=dz,\quad \holsheet_k-\holsheet_l=\phi(z) dz,$$ for some holomorphic function $\phi$ defined in this $z$-domain. Then $\phi(z)$ is not constant because $\holsheet_k-\holsheet_l$ cannot be a constant multiple of $\holsheet_i-\holsheet_j$ everywhere, as $w$ is an isolated tangential intersection point. As before, $W^{-1}((l_2+s)e^{i\theta})$ gives rise to the $\theta$-deformation of $\gamma_2$. As we change $\theta$, the initial points of $\gamma_1$ and $\gamma_2$ typically change too (unless they are initial flowlines themselves) and so the quantities $l_1=l_1(\theta),l_2=l_2(\theta)$ also depend on $\theta$. Thus, our aim is to show that the map
\begin{equation}
F(s_1,s_2,\theta)=(s_1,s_2,\theta)\to ((l_1(\theta)+s_1)e^{i\theta},W^{-1}((l_2(\theta)+s_2)e^{i\theta}))\in \sS\times\sS
\end{equation}
is transverse to the diagonal for generic $\theta$. This is achieved in two steps, as follows.\\

\paragraph{{\it Step 1}}
Let us analyze the locus of points where the $\theta$-deformations intersect tangentially. We claim that the $\theta$-deformations can be only tangential over the tangency locus, where the \textit{tangency locus} is defined to be the set where the imaginary part $\Im(\phi)=0$ vanishes. Indeed, suppose the $\theta$-deformations are tangential, with the common tangency given by the vector $v$, and note that
$$v\in \ker \Im(e^{i\theta})\Leftrightarrow v\equiv_{\mathbb{R}}e^{-i\theta},\quad \mbox{ and so }\quad v\in \ker(\Im(e^{i\theta}\phi))\Leftrightarrow v\equiv_{\mathbb{R}}e^{-i\theta}/\phi,$$
where $v\equiv_{\mathbb{R}}w$ denotes $v,w$ being real colinear. Therefore, in this case, $1$ and $1/\phi$ should be real colinear at the point of the tangency which is if and only if $\Im(\phi)=0$, as claimed. Therefore tangencies can only appear along $\Im(\phi)=0$.

Now, the tangency locus is a locally finite stratified manifold, see e.g.~\cite{Wall75_RegularStratifications}. Let us show that the tangency locus is smooth. For that, choose a smooth parametrized $1$-dimensional stratum $\eta(t)$ of the tangency locus and note that differentiating the condition $\Im(\phi)=0$ leads to $\Im(\phi' \eta'(t))=0$. This latter question is that of the flowline of $\phi'$ passing through $l_1$. Then Picard–Lindel\"of's uniqueness of solutions of differential equations implies that the tangency locus must be the entire flowline of $\phi'$ passing through $l_1$ and thus it is smooth.\\

\paragraph{{\it Step 2}} Let us compute the set of points in the tangency locus where the transversality of the map $F$ also fails. By construction, the vector space given by $\text{im}(dF)+T\cdot\mbox{diag}$ is generated by the rows of the $4\times 4$ matrix:
$$\begin{pmatrix}
e^{i\theta} & 0 \\
0 & e^{i\theta}/\phi \\
1 & 1 \\
\dd_\theta l_1e^{i\theta}+i(l_1+s_1)e^{i\theta} & (e^{i\theta}/\phi)\dd_\theta l_2+i(l_2+s_2)e^{i\theta}/{\phi}
\end{pmatrix}.$$

\noindent After an elementary transformation eliminate the terms containing $\dd_\theta l_1$ and $\dd_\theta l_2$, the determinant of the matrix is equal to $$(1/\phi)((l_2+s_2)/\phi-(l_1+s_1)).$$
Therefore the matrix is not invertible, and so transversality fails, if and only if 
\begin{align}\label{eq:transversalityfailure}
l_1+s_1= (l_2+s_2)/\phi 
\end{align}
\noindent This set is discrete. Indeed, since $z=(l_1+s_1)e^{i\theta}$ and $W(z)=(l_2+s_2)e^{i\theta}$, this set belongs to the zero set of the holomorphic function
\[z=W(z)/\phi(z)\]
which is discrete unless $W(z)=z\phi(z)$. This latter equality cannot occur though: differentiating both sides gives $\phi(z)dz=dW=(\phi(z)+z\phi'(z))dz$ and so $\phi'(z)=0$ along the 1-stratum $\eta(t)$. This is a contradiction since $\phi(z)$ is holomorphic and not a constant function. Therefore, since the set of points where \eqref{eq:transversalityfailure} holds (and so transversality fails) is finite, standard parametric transversality applies and we conclude that, for generic $\theta$, the $\theta$-deformations do not intersect tangentially.
\end{proof}

\noindent As argued above, at the beginning of \cref{subsection:constructionofWKBnetwork}, \cref{thm:existence}.(ii) then follows now that \cref{lem:tangenciesareremovable} is proven, after we sort out flow-acyclicity. To see that flow-acyclicity holds, we impose the genericity condition that for each inductive step, $\snetwork_i$ consists only of  vertices that are stable under generic infinitesimal perturbations of $\theta$, and that there does not exist an interaction vertex such that the out-going $ik$-wall is an ancestor to one of the in-going walls (in other words, some of the in-going walls have multiplicities). Such vertices are not generic, since the normal deformation vector for the new WKB flowline at $v$ has its norm greater than that of the out-going wall at $v$. We now proceed on by induction. Suppose the acyclicity condition holds for $\snetwork_k$ but fails for $\snetwork_{k+1}$ and let $(\wall_1,\ldots,\wall_n)$ be the directed cycle that satisfies conditions (a) and (b) in \cref{def:flownetwork}, for which the decorations of $\wall_1$ and $\wall_n$ agree. Since there are no periodic trajectories, this is possible only if $\wall_1$ is a wall contained in $\snetwork_k$, and so we see that there is some $i$ such that $\wall_i\subset \snetwork_k$ is an ancestor to $\wall_1$ in $\snetwork_k$. However, then the terminal vertex of $\wall_i$ fails to satisfy the genericity condition. \cref{thm:existence} is thus established, as \cref{thm:existence}.(i) was concluded in \cref{subsection:constructionofexactnetwork}.


\subsection{Spectral networks and 2d-4d BPS indices}\label{subsec:snetworkandpathdetours2_BPS_index} In the context of the supersymmetric field theories in \cite{GNMSN,GMN12_WallCrossCoupled,GMN13_Framed,GMN14_Snakes}, BPS states are certain irreducible unitary representations of a fixed super Lie algebra acting on a given Hilbert space. In this framework, the counts of BPS states are achieved by considering super traces of certain operators on this Hilbert space. In Floer theory, counts of pseudo-holomorphic disks are given by considering the (oriented) cobordism type of their moduli spaces. For $\dfs$-trees, we have the choice of counting them either way: by counting the pseudo-holomorphic strips that limit to them, or by treating them combinatorially via the spectral network $\snetwork$ and using supertraces; the results of this article show that these counts coincide, cf.~\cref{ssec:detour_are_continuation_strips}. The Floer theoretic count is discussed in \cref{section_Floer}, and we now discuss the combinatorial count.\\

Let $\sphb,\sphsc$ be the unit tangent bundles of $\obis$ and $\blag$, respectively. Let $H$ be the homotopy class representing the sphere bundle fiber. Given $z\in \sphb$, we write ${z_i}, {i\in [1,n]}$ for the lifts of $z$ to $\sphsc$. We denote by $\mathbb{C}[\pi_1\big(\sphsc)]$ the group algebra of the fundamental group of (relative) homotopy classes in $\sphsc$: as further discussed in \cref{section_Floer}, we do not explicitly include the set of endpoints in the notation, as it is implicitly understood by context. For instance, if we consider a soliton class $\soliton{z;\wall}$, it is implicitly understood that it is a relative homotopy class with endpoints in the lifts above $z\in S$.\\

\noindent Let $\fnetwork$ be a pre-spectral network and $\wall$ a wall. We denote by $t(\wall)$, resp.~$s(\wall)$, the sphere bundle lift of the terminal point of $\wall$, resp.~initial, given by the unit velocity vector of $\fnetwork(\wall)$ at the terminal point, resp.~initial. Regard $\mathbb{Z}$ as a category with a single object, with $\hom$ identified with $\mathbb{Z}$ (as a set, not as an additive group), and the set of walls $W(F)$ as a category with trivial morphisms.

\begin{definition}\label{def:bpsindex}
Let $\fnetwork$ be a finite pre-spectral network and $W(\fnetwork)$ its set of walls. By definition, the vanilla 2d4d BPS index on $\fnetwork$ is the functor
\begin{align}
\mu:\mathbb{C}[\pi_1\big(\sphsc)]\times W(\fnetwork)\to  \mathbb{Z}
\end{align}
uniquely defined by the following constraints:

\begin{enumerate}
\item $\mu([\ccc(z;\wall)];\wall)=1$ for $\wall$ an initial wall.
\item $\mu(\cdot ;\wall)$ vanishes on all classes, except for the $H$-orbits of $\ccc{(z;\wall)}\in \{\soliton{z;\wall}\}$ for $z\in \wall$.
\item 
$\mu(H\rho;\wall)=-\mu(\rho;\wall).$

\item $\mu$ is independent of the basepoint $z\in \wall$ for the classes in $\{\soliton{z;\wall}\}$.
\item Given a non-interaction vertex $v$ with ingoing walls $\wall_{ij}, \wall_{kl}$ and outgoing walls $\wall_{ij}', \wall_{kl}'$, let $v(\wall_{ij})$ denote the unit velocity lift of $v$ at $\wall_{ij}$. Then 
$$\mu(\ppp;\wall_{ij})=\mu(\ppp;\wall'_{ij}),$$
for all $\ppp\in \hom(v(\wall_{ij})_j,v(\wall_{ij})_i)$, and similarly for $\mu(\cdot;\wall_{kl})$ and $\mu(\cdot;\wall_{kl}')$.\\

\item Let $v$ be an interaction vertex with ingoing walls $\wall_{ij},\wall_{jk},\wall_{kl}$, outgoing walls $\wall_{ij}',\wall_{jk}',\wall_{kl}'$. and $v(\wall_{ij}), v(\wall_{jk}), v(\wall_{ik})$ the corresponding unit velocity lifts. Then 
\begin{align}\label{eq:wcf}
&\mu(\ppp;\wall'_{ij})=\mu(\ppp;\wall_{ij}),\\
&\mu(\ppp;\wall'_{jk})=\mu(\ppp;\wall_{jk}),\\ 
&\mu(\ppp;\wall'_{ik})=\mu(\ppp;\wall_{ik})+\sum_{[\ppp_1]\circ [\ppp_2]=[\ppp]} \mu(\ppp_1;\wall_{ij})\mu(\ppp_2;\wall_{jk}), 
\label{eq:horivafa}
\end{align}
\end{enumerate}
where $\rho$ is a path such that the end points of the projection all lie at $v$, and the sum is over the mod $2H$-orbits.\hfill$\Box$
\end{definition}

\noindent By definition, a wall of $\snetwork$ is said to be active if its BPS index is non-zero.\\

\begin{remark}
(1) \cref{eq:horivafa} is known as the 2d Hori-Vafa wall-crossing formula. In the framework of spectral networks, for generic phase, the counts as captured by \cref{def:bpsindex} are given by a vanilla 2d-4d BPS index which is truly a 4d BPS index modified to accout for the 2d interactions. In the above we are able to restrict to pure 2d BPS indices because in our context they do coincide with the vanilla 2d4d BPS indices.\\

\noindent (2) In \cref{section_Floer} it will be proven that \cref{def:bpsindex} is actually a count of pseudo-holomorphic strips for a fiber near that wall: it counts continuation strips oriented by the wall along a $\snetwork$-adapted path. \hfill$\Box$
\end{remark}


\subsection{Spectral networks and rigid flowtrees}\label{ssec:flowtrees_specnet} Flowtrees are often used in the context of Floer theory as a limiting model for configurations of pseudoholomorphic curves. To wit, \cite[Chapter 1]{Fukaya93_MorseHomotopyAinfty} studies such trees as a first model of the Fukaya $A_\infty$-category, see also \cite[Theorem 2.3]{FukayaOh97_MorseHomotopy} and \cite[Chapter 1]{Fukaya97_Morseflow}. In the framework of Legendrian submanifolds, rigid flowtrees are studied in \cite{Morseflowtree}, compared to pseudo-holomorphic disks and used to provide certain computational models for the Legendrian contact dg-algebra, see also \cite{RutherfordSullivan20_CellularDGA1}. A first aspect in the Floer-theoretic understanding of spectral networks is its relation to such flowtrees, which we analyze in this subsection. Note that this is far from sufficient in order to prove our main results or compare to \cite{GMN12_WallCrossCoupled,GMN13_Framed,GMN14_Snakes}, and Sections \ref{section_adg}, \ref{section_Floer} and \ref{section_wfc} proceed beyond these initial techniques. For now, we use the notation and concepts from \cite{Morseflowtree} and establish the following result:

\begin{proposition}[Augmented $\dfs$-trees and rigid flowtrees]\label{theorem:spectralnetworkandrigidflowtrees} Let $(S,\mkpts,{\bf\lknot})$ be a Betti surface, $\blag\sse(T^*S,\la_\std)$ an exact Betti Lagrangian, $(\obis,g)$ an adapted metric and $\snetwork\sse S$ a compatible creative Morse spectral network. Then, there exists a front-generic exact Lagrangian $\pneck\sse(T^*S,\la_\std)$ Hamiltonian isotopic to $\blag$, and a perturbed metric $(S,\neckg)$ such that:
\begin{enumerate}
\item $\pneck=\blag$ and $\neckg=g$ outside a compact subset, and otherwise $\pneck\stackrel{C^\infty}{\approx}\blag$ and $\neckg\stackrel{C^\infty}{\approx}g$.
\item There is a bijective correspondence between augmented $\dfs$-trees on $\snetwork$ and the rigid flowtrees of $(S,\neckg)$ with a single positive puncture on $\dd_\infty\pneck$. 
\end{enumerate}
\end{proposition}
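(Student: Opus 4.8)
The plan is to reduce \cref{theorem:spectralnetworkandrigidflowtrees} to the flowtree machinery of \cite{Morseflowtree}, the only genuinely new input being the local analysis near the $\dfs$-singularities (which is non-generic in the sense of ibid., where only $A_2$/swallowtail front singularities are treated). First I would pass to the cylindrized model: by the conical-end discussion of \cref{subsubsection:bLagandebLag} and \cref{lem:flowlineidentification}, replace $\blag$ by a $C^\infty$-close, Hamiltonian isotopic $\pneck$ that agrees with $\blag$ outside a compact set and whose Legendrian lift has a front that is generic in the sense of \cite[Section 3.1]{Morseflowtree} \emph{away from} the $\dfs$-branch points, and which near each branch point is exactly the standard real cusp model $\{w^2-z=0\}$ with the K\"ahler metric. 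The perturbed metric $\neckg$ is produced by applying the Smale-type genericity of \cite[Section 3.2]{Morseflowtree} (as already used in \cref{subsection:constructionofexactnetwork}) together with the preliminary transversality conditions of \cref{subsection:preliminarytransversalitycondition} at each $\dfs$-singularity; this gives part (1) and ensures all rigid flowtrees and all walls of $\snetwork$ are cut out transversely. Because $\blag$ is exact, \cref{prop:asymptoticoftrajectories} guarantees the relevant trajectories are well-behaved at infinity (asymptoting to Reeb chord rays, never periodic), so all flowtrees in question are compact-supported combinatorial objects.

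\textbf{The correspondence.} For the bijection in (2), the map from augmented $\dfs$-trees on $\snetwork$ to rigid flowtrees is essentially the identity once one reconciles the two orientation conventions (cf.\ \cref{rmk:trees_vs_specnet_orientations}): a $\dfs$-tree as in \cref{def:d4tree} is already a Morse flowtree in the sense of \cref{def:Morseflowtree} with the edges reversed, its unique root semi-infinite edge becoming the single positive puncture at $\dd_\infty\pneck$, and its non-root univalent vertices mapping to branch points becoming the $\dfs$-singularity leaves. Rigidity of the $\dfs$-tree (all non-univalent vertices trivalent, none mapping to a branch point) matches rigidity in the flowtree sense. Conversely, given a rigid flowtree with a single positive puncture on $\dd_\infty\pneck$, I would argue it is an augmented $\dfs$-tree: its unique positive puncture forces, by the energy/height bookkeeping of \cref{def:Morseflowtree}.(3) and \cref{lemma:flow-energy}, that it has no special punctures and that every other univalent vertex is a negative puncture, which near a $\dfs$-branch point can only be one of the three initial trivalent rays in the inward direction (by the explicit cusp-model analysis of \cref{subsection:Morselines}: only cusp--cusp flowlines terminate at the branch point, and a rigid tree forbids internal vertices at branch points, so external cusp--smooth edges must be precisely the trivalent rays). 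The remaining internal trivalent non-branch vertices are exactly the vertex local models (initial, non-interaction, creation) allowed in a creative pre-spectral network, \cref{fig:vertices_spec_net2}. Then \cref{prop:soliton} (together with the uniqueness-of-extension clause for creative networks and exact Lagrangians) shows that the image of such a tree must lie inside $\snetwork$ and that it is, uniquely, one of the $\Gamma(\wall,\snetwork)$.

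\textbf{Main obstacle.} The hard part will be the local statement near the $\dfs$-singularities, i.e.\ showing that the flowtree count of \cite{Morseflowtree} is unaffected by the presence of the non-generic $D_4^-$ front: one must verify that no additional rigid flowtrees can "hide" at or near a branch point and that the expected-dimension/transversality formalism of \cite{Morseflowtree} extends across this singularity. The key tool here is \cref{prop:positivesingularityinterractionchain}: chain-interaction trees in a small neighborhood of a branch point have uniformly bounded length, and — combined with the fact that, with at most one positive puncture, the Legendrian-link flowtree count in $J^1S^1$ has a finite uniform bound on internal vertices (as in the proof of \cref{thm:gromovcompactness}) — this rules out rigid trees accumulating at a $\dfs$-singularity and pins down the local contribution to be exactly the three initial rays. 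I would also need to check that the index/rigidity computation for a trivalent vertex adjacent to a $\dfs$-leaf agrees with the spectral-network local model, which follows from the explicit holomorphic cusp parametrization and the computation that the three rays emanate at angles $0, \pm 2\pi/3$ with the correct sheet decorations. Once this local matching is in place, the global bijection is assembled by induction on the energy filtration \cref{def:energyfiltration}, exactly as $\snetwork$ itself was built in \cref{subsection:constructionofexactnetwork}, and \cref{theorem:spectralnetworkandrigidflowtrees} follows.
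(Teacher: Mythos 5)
The key gap is in your local analysis at the $D_4^-$-singularities, which you correctly identify as the hard part but then short-circuit. You propose to keep $\pneck$ equal to the standard real cusp model $\{w^2-z=0\}$ near each branch point; but this front is \emph{not} generic in the sense required by \cite{Morseflowtree} (whose transversality, dimension formula, and tree--disk bijection are all stated only for fronts with $A_2$ cusp edges and $A_3$ swallowtails), and moreover the proposition itself explicitly requires $\pneck$ to be front-generic. So Ekholm's machinery literally cannot be applied to your cylindrized model near the branch points, and your $\pneck$ does not satisfy the stated conclusion (1).

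The paper resolves this via \cref{lemma:sneaky_trees} (the ``sneaky trees'' lemma), which is the genuinely new ingredient you are missing. One perturbs the $D_4^-$-front by a compactly supported Legendrian isotopy, symmetric under the $\frac{2\pi}{3}$-rotation, to a generic front with three swallowtails (\cref{fig:D4Generic}); then one explicitly identifies, via the front-slicing movie of \cref{fig:D4SneakyTreeMovie}, that the only rigid flowtrees in this local model are three trees $T_0,T_1,T_2$ each with exactly one \emph{switch} and one \emph{end}, each $C^\infty$-close to an initial ray $R_i$. Rigidity is verified by Ekholm's dimension formula $\dim(\Gamma)=-2+I_u(\rho^+)+e(\Gamma)-s(\Gamma)=-2+2+1-1=0$. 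Your attempt to instead derive the local count from \cref{prop:positivesingularityinterractionchain} does not work: that is a uniform bound on the number of internal vertices of chain interaction trees, used for the energy-gap in \cref{thm:gromovcompactness}; it says rigid trees cannot accumulate near a branch point, but it does not identify which rigid trees actually exist there, nor does it supply the transversality and index bookkeeping that the sneaky-tree computation provides. Once \cref{lemma:sneaky_trees} is in hand, the rest of your outline (cutting off flowtrees at neighborhoods $U_b(\delta)$, extending the partial flowtrees via \cref{lem:first_noescape}, and invoking the bijection of \cite[Theorem~1.1.(b)]{Morseflowtree}) matches the paper's argument.
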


\noindent The relation to spectral networks is \cref{theorem:spectralnetworkandrigidflowtrees}.(2), whereas \cref{theorem:spectralnetworkandrigidflowtrees}.(1) describe the type of perturbation that it is used, i.e.~the Betti Lagrangian and the metric essentially only need to be modified near the branch points, equiv.~the vertices of $\snetwork$.

\noindent Note that the results of \cite{Morseflowtree} only apply to Legendrians with generic front singularities. The Legendrian lift of an exact Betti Lagrangian is {\it not} in that class because $\dfs$-singularities are not generic real Legendrian singularities. Thus, to establish \cref{theorem:spectralnetworkandrigidflowtrees}, we first need to understand rigid flowtrees near a $\dfs$-singularity.

\subsubsection{Trees near $\dfs$-singularities}\label{subsubsection:localstudy} Consider the $\dfs$-front singularity, as discussed in Case (2) of \cref{subsection:Morselines}, cf.~also \cite[Section 2.2.3]{legendrianweaves}. Its germ can be parametrized as
$$\delta_4^-:\R^2_{u,v}\lr\R^3_{x,y,z},\quad \delta_4^-(u,v)=\left(u^2-v^2,2uv,\frac{2}{3}(u^3-3uv^2)\right).$$
The singularity itself is at the origin of $\R^3$ and the front is invariant under $(2\pi/3)$-rotation along the $z$-axis. Since $\delta_4^-(0,v)=\delta_4^-(0,-v)$, the three edges of $A_1^2$-singularities are located at
$$E_0:=\{(x,y,z)\in\R^3:x<0,y=z=0\}=\delta_4^-(0,v)=\R_{<0},\quad v\in\R\setminus\{0\}$$
and its two additional images under this rotation, which are
$$E_1:=\{(x,y,z)\in\R^3:\sqrt{3}x=2y,z=0\}=\delta_4^-(\sqrt{3}v,v)=e^{-2\pi i/3}\R_{<0}$$
$$E_2:=\{(x,y,z)\in\R^3:\sqrt{3}x=-2y,z=0\}=\delta_4^-(-\sqrt{3}v,v)=e^{2\pi i/3}\R_{<0},$$
also with $v\in\R\setminus\{0\}$. The three initial rays discussed in Case (2) of \cref{subsection:Morselines} are
$$R_0:=\{(x,y,z)\in\R^3:x>0,y=z=0\}=\R_{>0},\quad R_1:=e^{-2\pi i/3}\R_{>0},\quad R_2:=e^{2\pi i/3}\R_{>0},$$
which lie exactly in between these three edges $E_0,E_1,E_1$, cyclically alternating as $R_0,E_1,R_2,E_0,R_1,E_2$. The local lifts for these three non-generic flowtrees are depicted in Figure \ref{fig:FlowTreesNearD4}.

\begin{center}
	\begin{figure}[h!]
		\centering
		\includegraphics[scale=0.6]{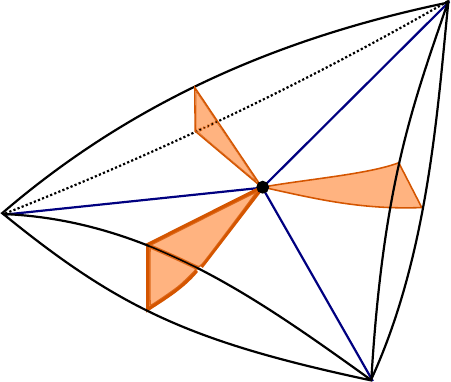}
		\caption{A non-generic model for the flowtrees near the $D_4^-$-singularity.}
		\label{fig:FlowTreesNearD4}
	\end{figure}
\end{center}

In a nutshell, the claim is that a generic $C^\infty$-small compactly supported Hamiltonian perturbation of such non-generic $\dfs$-front gives rise to a generic Legendrian front whose only rigid flowtrees are $C^\infty$-close to the rays $R_0,R_1,R_2$ in the support of such perturbation, and otherwise coincide with them. This is the content of the following:

\begin{lemma}[Sneaky trees]\label{lemma:sneaky_trees}
Let $\Pi\sse\R^3$ be the germ of a $\dfs$-singularity, modeled as above, and $\La(\Pi)$ its Legendrian lift. There exists a compactly supported $C^\infty$-small Legendrian isotopy of $\La(\Pi)$ such that the resulting Legendrian $\La^\dagger$ has a front $\Pi^\dagger$ such that:
\begin{enumerate}
    \item The Lagrangian projection of $\La^\dagger$ is embedded.
    \item there are exactly three rigid flowtrees $T_0,T_1,T_2$ in $(\R^2_{x,y},g_\std)$ for the generic Legendrian $\La^\dagger$,
    \item Each $T_i$ is $C^\infty$-close to $R_i$ and, outside a compact set, $T_i=R_i$, $i=1,2,3$.
    \item The compact support of the Legendrian isotopy can be taken to be arbitrarily small near the domain of the $\dfs$-singularity.
\end{enumerate}
\end{lemma}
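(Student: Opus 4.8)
The plan is to reduce the statement to an application of the results of \cite{Morseflowtree} after explicitly perturbing the non-generic $\dfs$-front into a generic one. First I would produce the perturbed front $\Pi^\dagger$ concretely. Recall that the $\dfs$-singularity germ is the real part of the holomorphic cusp $\{(\la_\C)^2 = z\,dz^2\}$; its Legendrian lift has a front with a single non-generic cuspidal edge vertex at the origin, and the three $A_1^2$ cuspidal edges $E_0,E_1,E_2$ emanating from it. A generic swallowtail perturbation of a holomorphic cusp germ has codimension, in the space of fronts, that is accounted for by the real codimension-$2$ nature of holomorphic front singularities; but here we do not want to go fully generic in the holomorphic sense — we want to break just enough so that the standard flowtree transversality of \cite[Section 3]{Morseflowtree} applies. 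Concretely I would take a compactly supported Hamiltonian function $h$ on $T^*\R^2$, supported in a small ball around the ramification point, and chosen so that (a) the perturbed front $\Pi^\dagger$ has only generic (cuspidal edge, swallowtail, transverse double point) singularities, (b) the Reeb chords of the perturbed Legendrian $\La^\dagger$ occur at distinct $\theta$-values and are transverse, and (c) the Lagrangian projection stays embedded. Property (c) is automatic for $C^\infty$-small perturbations since the unperturbed $\dfs$-Lagrangian (the real part of the holomorphic cusp) has embedded Lagrangian projection — it has no Reeb chords at all near the singularity, only at the conical boundary braid $\sigma_1^3$, and those are isolated and transverse, so a small perturbation keeps embeddedness. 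This establishes conclusion (1).

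Next I would analyze the flowtrees. Away from the support of the perturbation the Betti Lagrangian is unchanged, so the gradient flow is unchanged and, by the analysis in Case (2) of \cref{subsection:Morselines}, the only flowlines converging to / emanating from the $\dfs$-point are the three cusp-cusp rays, which exit along $R_0,R_1,R_2$; all other flowlines are cusp-smooth or smooth-smooth and cannot terminate at the branch point. Inside the support of the perturbation, I would invoke the local rigidity count of \cite[Section 3]{Morseflowtree}: for a generic cusp-free (or generic-front) Legendrian, the number of rigid flowtrees with a fixed number of positive punctures is finite and, crucially, these trees are obtained by a small deformation of the limiting objects of the unperturbed picture. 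The unperturbed limiting objects here are exactly the three half-rays $R_0,R_1,R_2$ (each a single cusp-cusp flowline with one end at the branch point), so by the continuity/gluing statements of \cite{Morseflowtree} the perturbed rigid flowtrees $T_0,T_1,T_2$ are in bijection with $R_0,R_1,R_2$ and each $T_i$ is $C^\infty$-close to $R_i$, coinciding with $R_i$ outside the support of $h$. A Morse-index bookkeeping — each $R_i$ is a single edge with trivial tree topology, hence rigid and with index matching a $1$-dimensional moduli — confirms that no new rigid trees of positive dimension or with extra vertices can appear for small enough perturbation; this gives conclusions (2) and (3). Conclusion (4) is immediate since the whole construction only used a perturbation supported in an arbitrarily small ball around the ramification point.

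The main obstacle I anticipate is step two's rigidity argument: the results of \cite{Morseflowtree} are stated for Legendrians with \emph{generic} front singularities, and the $\dfs$-front is precisely the prototypical non-generic real front ($D_4^-$ rather than a swallowtail), so I cannot apply their theorems verbatim to the unperturbed object. The resolution is to apply their theory to the already-perturbed $\La^\dagger$ (which \emph{is} generic) and separately argue the ``no new trees sneak in'' claim by a direct energy/index estimate in the small perturbation ball — this is why the lemma is nicknamed ``sneaky trees.'' The key point to nail down carefully is that a rigid flowtree for $\La^\dagger$ with one positive puncture that is not $C^\infty$-close to one of the $R_i$ would have to have an edge of definite positive flow-energy crossing the annular region between the boundary of the perturbation ball and its interior, and by choosing the perturbation $C^\infty$-small this forces its total energy below the minimal energy of any genuinely new tree topology; combined with the balancing condition (\cref{def:Morseflowtree}.(2)) and the fact that the only branch-point-incident flowlines are the three cusp-cusp rays, this rules out the sneaky trees. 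I would also need to check that the perturbation does not create flowtrees with a \emph{smooth}-cusp edge terminating at the branch point (forbidden by \cref{def:d4tree}.(4) and by exactness), but this again follows because smooth sheets do not limit onto the ramification locus. Once these local estimates are in place the lemma follows, and it feeds directly into the proof of \cref{theorem:spectralnetworkandrigidflowtrees} by gluing the local model at each branch point to the global flowtree analysis of \cite{Morseflowtree} away from the branch points.
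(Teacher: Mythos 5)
Your proposal contains a genuinely misleading picture of what the rigid flowtrees $T_i$ actually are, and the uniqueness argument is also incorrect. Let me explain both.

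First, on the structure of $T_i$. You treat each $T_i$ as a "small deformation" of the single-edge flowline $R_i$, as if the deformation merely nudges an edge that terminates at the branch point. That cannot be right: after the perturbation, $\Pi^\dagger$ has no $D_4^-$-singularity at all — instead it has three swallowtails — and so there is no longer a branch point for a cusp-cusp flowline to terminate at. The rigid flowtrees for the \emph{generic} $\La^\dagger$ must terminate at cusp edges or swallowtails via the vertex types classified in \cite[Section 3]{Morseflowtree}: ends, switches, $Y_0$, $Y_1$, etc. Consequently, each $T_i$ is not a deformed edge but a tree with nontrivial combinatorics: concretely, the paper constructs $\Gamma(D_4^-)$ explicitly (this is why the lemma is called "sneaky trees") as a tree with one positive puncture $\rho$, no negative punctures, one \emph{switch} vertex, and one \emph{end} vertex. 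The rigidity is then verified by the dimension formula of \cite[Definition 3.4]{Morseflowtree},
\begin{equation*}
\dim(\Gamma) = -2 + I_u(\rho) + e(\Gamma) - s(\Gamma) = -2 + 2 + 1 - 1 = 0,
\end{equation*}
and this explicit construction is the content of \cref{fig:D4TreeHomology} and \cref{fig:D4SneakyTreeMovie}. There is no way to appeal to "continuity/gluing statements" of \cite{Morseflowtree} because, as you yourself flag, the unperturbed side of such a statement is the non-generic $D_4^-$-front to which that machinery simply does not apply. You must do the hard part — exhibit the tree — and you skip it.

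Second, on uniqueness. Your proposed energy estimate — that a "sneaky tree" not close to one of the $R_i$ would have to cross the annulus between the interior and boundary of the perturbation ball and hence have energy bounded below — does not preclude rigid flowtrees that live \emph{entirely inside} the perturbation ball, which is precisely the danger when you have replaced a $D_4^-$-germ by three swallowtails. The paper's uniqueness argument is combinatorial, not energetic: since the local germ admits a front with no Reeb chords, there are no negative punctures available, so the only rigid trees allowed by the dimension formula have exactly one end and one switch; inspection of the possible switch types (using the sheet combinatorics visible in \cref{fig:D4Generic} and the $(2\pi/3)$-symmetric perturbation) then shows that the only possibility is $\Gamma(D_4^-)$ and its two rotates, and that no two of them can interact via a $Y_0$-vertex. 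This dimension-count-plus-vertex-classification is the argument you need; the energy bound you propose cannot substitute for it.

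Conclusion (1) in your proposal is fine, and conclusion (4) is immediate as you say. But steps (2) and (3) are not established: you must exhibit the rigid tree with its switch and end vertices and verify rigidity via the index formula, and you must replace the energy-based uniqueness argument by the combinatorial one based on no-negative-punctures and vertex classification.
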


\begin{center}
	\begin{figure}[h!]
		\centering
		\includegraphics[scale=0.5]{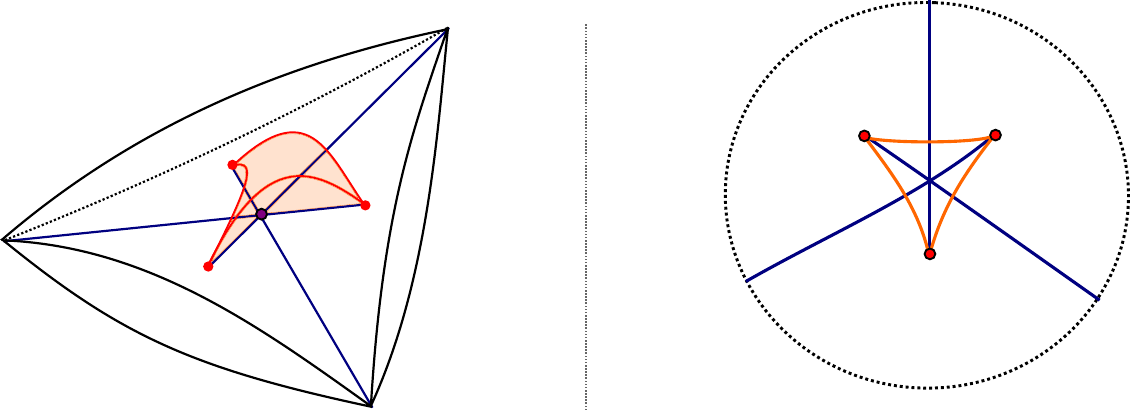}
		\caption{(Left) The spatial front for a generic $C^\infty$-small perturbation of the $\dfs$-front singularity. (Right) The view for this front from above, with orange encoding $A_2$-cusp edges and dark blue edges indicating $A_1^2$-crossing edges. Each of the three orange dots indicates a $A_3$-swallowtail singularity.}
		\label{fig:D4Generic}
	\end{figure}
\end{center}

\begin{proof}
Choose a $C^\infty$-small Legendrian isotopy of $\La(\Pi)$ which preserves the $(2\pi/3)$-rotation symmetry of the front and let $\Pi^\dagger$ be its front. Due to this symmetric choice and the fact that a flowtree between two sheets cannot pass through a crossing of those sheets, it suffices to analyze rigid flow trees in the sectorial region above $E_1$ and $E_2$. A $C^\infty$-small perturbation of $\dfs$-singularity gives a front $\Pi$ with three swallowtails as depicted in Figure \ref{fig:D4Generic}. In Figure \ref{fig:D4Generic} (right) we choose this region betwee $E_1,E_2$ to be the bottom sector, so that $R_0$ is a vertical semiray from the center downwards. Since the perturbation is compactly supported and the gradient condition is local, any rigid flowtrees for $\Pi$ coincide with those of $\Pi^\dagger$ away from a compact set, and thus must coincide with $R_0,R_1,R_2$. The argument now contains two parts:

\begin{enumerate}
    \item the construction of a rigid Morse flow tree $\Gamma(D_4^-)$ coinciding with $R_0$ at the boundary,
    
    \item showing that no other rigid trees exist in this local model.
\end{enumerate}

\begin{center}
	\begin{figure}[h!]
		\centering
		\includegraphics[scale=0.5]{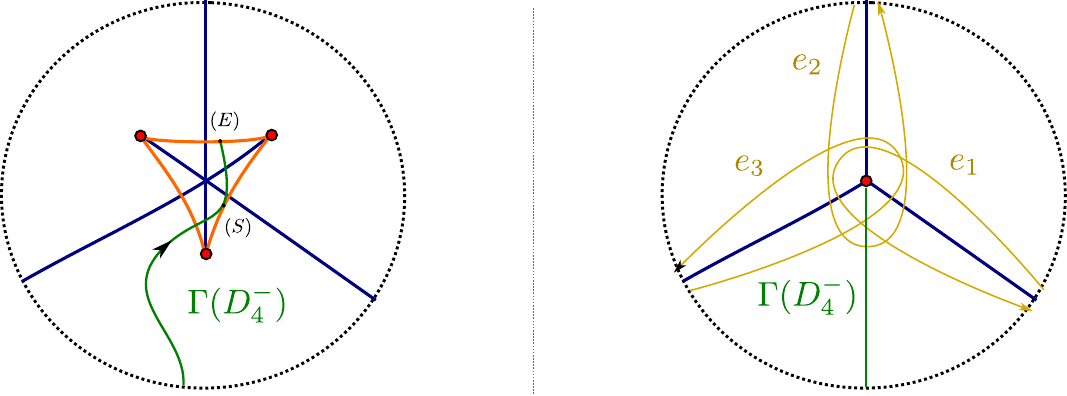}
		\caption{(Left) The flowtree $\Gamma(D_4^-)$ as seen from above, with two black dots indicating its unique switch (S) and end (E). (Right) Three homology classes $e_1,e_2,e_3$ in the (Legendrian lift of the) front and the flowtree $\Gamma(D_4^-)$ seen from far above, so that the three swallowtails are contained in the central red dot.}
		\label{fig:D4TreeHomology}
	\end{figure}
\end{center}

For Part (1), a flowtree $\Gamma(D_4^-)$ is constructed following Figures \ref{fig:D4TreeHomology} and \ref{fig:D4SneakyTreeMovie}. In Figure \ref{fig:D4TreeHomology} (left) the flowtree is depicted in green, as seen from above, and Figure \ref{fig:D4TreeHomology} (right) determines the relative homology class of its boundary by declaring that it geometrically intersects each relative cycles $e_1,e_2,e_3$ once, $e_1,e_3$ positively and $e_2$ negatively. We have also described $\Gamma(D_4^-)$ in Figure \ref{fig:D4SneakyTreeMovie} as a movie, slicing the front in Figure \ref{fig:D4TreeHomology} (left) with horizontal slices, from bottom to top. This flowtree $\Gamma(D_4^-)$ has an asymptotic positive puncture $\rho$ at the boundary condition given by $R_0$, no negative punctures, one {\it switch} (S) and one {\it end} (E). This is a legitimate flowtree by \cite[Section 2]{Morseflowtree}. We choose the perturbation of $\La(\Pi)$ and perturb the metric $g$ to $g^\dagger$ such that the tree is transversely cut out, which can be done by \cite[Prop. 3.14]{Morseflowtree}. The dimension of the moduli space for a general tree $\Gamma$ is given in \cite[Definition 3.4]{Morseflowtree} as:
		$$\dim(\Gamma)=-2+I_u(\rho^+)-\left(\sum_{k=1}^{|\Gamma^-|}(I_s(\gamma_k^-)-n+1)\right)+e(\Gamma)-s(\Gamma)-y_1(\Gamma),$$
where $I_u(\rho^+)=2$ is the index at the positive puncture $\rho^+$, i.e. the dimension $\dim(W^u(\rho^+))$ of the unstable manifold, $I_s(\gamma^-)=2$ is the coindex at the negative puncture $\gamma^-$, i.e. the dimension $\dim(W^s(\gamma^-)$) of the stable manifold, $e(\Gamma)$ is the number of ends in $\Gamma$, $s(\Gamma)$ is the number of switches and $y_1(\Gamma)$ the number of $Y_1$-type vertices. In the case of $\Gamma(D_4^-)$, the formula for the dimension yields
	$$\dim(\Gamma)=-2+I_u(\rho)+e(\Gamma)-s(\Gamma)=-2+2+1-1=0,$$
as there are no negative punctures nor $Y_1$-vertices. This proves $\Gamma(D_4^-)$ is a {\it rigid} flowtree.
	
	\begin{center}
		\begin{figure}[h!]
			\centering
			\includegraphics[scale=0.8]{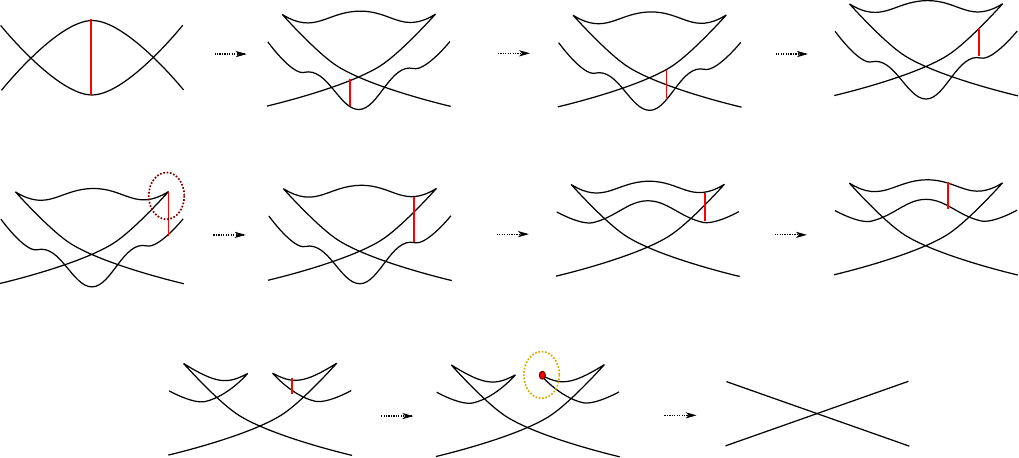}
			\caption{\color{black} Slices of the rigid flowtree $\Gamma(\dfs)$, drawn in red, in the germ of a $D_4^-$-front singularity. The front is sliced horizontally, bottom to top. A {\it switch} (S) occurs at the leftmost diagram in the second row and an {\it end} (E) occurs at the center diagram in the third row.\color{black}}
			\label{fig:D4SneakyTreeMovie}
		\end{figure}
	\end{center}

For Part (2), let us argue that no additional rigid flowtrees exist, which we deduce using the dimension formula for $\dim(\Gamma)$ above. First, for any germ of a Legendrian singularity, there exists a front with no Reeb chords. Hence, given that $\rho$ is the only allowed boundary condition away from the singularity and in the sectorial region between $E_1$ and $E_2$, there can be no negative punctures. By the classification of vertices in \cite[Section 3]{Morseflowtree} and the combinatorics from Figure \ref{fig:D4SneakyTreeMovie}, the rigid flowtrees must have one end, and therefore one switch. Since the switch can only occur as in $\Gamma(D_4^-)$, any rigid flowtree that matches $R_0$ actually coincides with $\Gamma(D_4^-)$. Finally, the combinatorics of the sheets imply that no two of the three rigid flowtrees, given by $\Gamma(D_4^-)$ and its images under the $(2\pi/3)$-rotation, can interact with a $Y_0$-vertex. Thus these are the only possible rigid flowtrees.
\end{proof}

\begin{remark}
These trees in the proof of \cref{lemma:sneaky_trees} were first introduced by the first author in his work \cite{CasalsDGAcubic}, and referred to as sneaky trees, due to their shape.\footnote{A coarse chalkboard explanation of these trees can be found in the talk ``Differential Algebra of Cubic Graphs'' at the Harvard Center of Mathematical Sciences for ``A Celebration of Symplectic Geometry 2017: 15 Years of JSG.'', around minute 53.}\hfill$\Box$
\end{remark}

\subsubsection{Proof of \cref{theorem:spectralnetworkandrigidflowtrees}} This is now a consequence of \cref{lemma:sneaky_trees} and the bijection established in \cite{Morseflowtree}. The only point left is appropriately choosing the neighborhoods of the $\dfs$-singularities, i.e.~the initial vertices of $\snetwork$, cut off the part of the rigid flow-trees that begin at such neighbourhoods, and arguing that the resulting partial flowtrees can be extended. For each $\dfs$-singularity $b\in S$, we choose a neighborhood $U_b(\delta)\sse S$ whose only initial vertex is $b$, there are no non-initial vertices of $\snetwork$, and $\dd U_b(\delta)$ transversely intersects $\snetwork$ at the three initial rays associated to $b$. The perturbation needed is now obtained by locally perturbing according to \cref{lemma:sneaky_trees} at each such $U_b(\delta)$. Since the rigid flow-trees must begin at ends, which are located at the cusp-edges, we see that we can cut-off rigid flow-trees along edges that first leave the $U_b(\delta)$. Note that the resulting partial flow tree in $U_b(\delta)$ can be extended to a global flowtree using \cref{lem:first_noescape}.
Such extension is rigid, as rigidity depends only on the vertices of the flowtree, and so it must have been the sneaky trees constructed in \cref{lemma:sneaky_trees}. Then the property in Part (1) holds by construction, and Part (2) holds by \cite[Theorem 1.1.(b)]{Morseflowtree}.\hfill$\Box$

\begin{remark}
(i) Technically, the perturbation in \cite[Theorem 1.1]{Morseflowtree} is global, and thus the spectral network $\snetwork$ is perturbed by a $C^\infty$-small smooth isotopy because of transversality. We implicitly absorb such smooth isotopy in our notation, as spectral networks are invariant under such isotopies.

(ii) We are implicitly using that the exact Betti Lagrangian $\blag$ is Maslov $0$, which will be shown in \cref{lem:Maslov0}, which tells us that the topological index of the boundary must be zero. The topological dimension of such a disk (with single positive puncture) is zero only if the limiting Reeb chord has index $1$ (and so the index of the limiting puncture is $2$). This is if and only if the Reeb chord corresponds to a positive crossing, by the argument used in \cref{subsection:trappinglemma} (see also \cref{fig:flowasymptotics}  for a heuristic explanation). 
\hfill$\Box$
\end{remark}
\section{Adiabatic convergence for $\dfs$-trees}\label{section_adg}
The main object of this section is to prove the direction $(\Longleftarrow)$ of the characterization in \cref{thm:characterization}. Namely, we show the contrapositive statement: points in the complement of the spectral network $\snetwork$ do {\it not} have such pseudo-holomorphic strips through them and, for \cref{thm:characterization}.(i), that for a point in the spectral network there are no such pseudo-holomorphic strips in any relative homology class except for possibly the soliton class. The precise statement is \cref{thm:snetworkadg}. The direction $(\Longrightarrow)$ is proven in \cref{section_Floer}.\\

\noindent The image of a Betti Lagrangian $L\sse T^*S$ under the $\e$-scaling action on $(T^*S,\omega_\std)$ is denoted $\e L\sse T^*S$. Throughout the manuscript, the expression {\it adiabatic limit} refers to the limit $\e\to0$ and for this section we assume that $L$ has either conical ends or is meromorphic with $O(-1)$-ends. The following notions formalize the scenarios with non-existence of pseudo-holomorphic strips in the adiabatic limit:
\begin{definition}\label{def:diskterm}
Let $(\bis,\mkpts,\lknot)$ be a Betti surface, $\blag\sse T^*\obis$ a Betti Lagrangian adapted to $(S,g)$ and $J_g$ the almost complex structure associated to $g$. Consider a spectral network $\snetwork$ compatible with $L$, as in \cref{def:spectralnetwork}. For a point $z\in\obis$ and the cotangent fiber $F_z=T^*_z\obis$:
\begin{enumerate}
\item The pair of Lagrangians $(F_z,\blag)$ is said to be {\it uniformly disk-free} in the adiabatic limit if for any given energy $E\in\R_+$, there is some neighborhood $U_z(E)\sse S$ of $z$ and a positive constant $\e_0(E,z)\in\R_+$ such that there is no non-constant $J_g$-holomorphic disk of energy below $E$ between the Lagrangians $\e \blag$ and $F_w$, for all $w\in U_z(E)$ and $\e\in\R_+$ with $\e\in (0, \e_0(E,z)]$.\\

\item If $z\in \snetwork$ belongs to a wall, the pair of Lagrangians $(F_z,\blag)$ is said to be {\it uniformly Stokes} in the adiabatic limit if for any given energy $E\in\R_+$, there is some neighborhood $U_z(E)\sse S$ of $z$ and a positive constant $\e_0(E,z)\in\R_+$ such that there is no non-constant $J_g$-holomorphic disk of energy below $E$ bounded between the Lagrangians $\e \blag$ and $F_w$ whose relative homology class is not of the parallel transport of the soliton class associated to $z$, for all $w\in U_z(E)$ and $\e\in\R_+$ with $\e\in (0,\e_0(E,z)]$.\hfill$\Box$
\end{enumerate}
\end{definition}

\noindent That is, informally, $(L,F_z)$ is uniformly disk-free if there are no pseudo-holomorphic strips bounded by $(\e L,F_w)$ for $\e\ll 1$ small enough and $w$ close to $z$, for a given energy cut-off $E\in\R_+$. Similarly for uniformly Stokes. The use of the word {\it uniformly} in \cref{def:diskterm} is to emphasize that the neighborhood $U_z(E)$ is independent of the adiabatic parameter $\e$. Since every statement in this section will be in the adiabatic limit as $\e\to0$ we often drop the cue {\it in the adiabatic limit}, e.g.~we simply refer to being uniformly disk-free or uniformly Stokes. The main result, implying the first part of \cref{thm:characterization}, reads as follows:

\begin{theorem}[Pseudo-holomorphic strips in spectral networks]\label{thm:snetworkadg}
Let $(\bis,\mkpts,\lknot)$ be a Betti surface, $\blag\sse T^*\obis$ a Betti Lagrangian adapted to $(S,g)$ and $\snetwork$ a compatible spectral network. Then
    \begin{enumerate}
        \item If $z\in \snetwork^c$, then $(F_z,\blag)$ is uniformly disk-free.
        \item If $z\in \snetwork$, then $(F_z,\blag)$ is uniformly Stokes.
    \end{enumerate}
\end{theorem}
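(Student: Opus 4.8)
The proof proceeds by the standard adiabatic-degeneration machinery for pseudo-holomorphic strips with cotangent-fiber boundary conditions, adapted to the $\dfs$-singularities present in a Betti Lagrangian. First I would set up the $\e$-rescaling: under fiberwise scaling $L\mapsto \e L$, a $J_g$-holomorphic strip $u_\e$ bounded by $(F_w,\e L)$ of energy $\leq E$ gives, after the change of variables rescaling the target fiber direction by $\e^{-1}$, a sequence of maps whose images concentrate over the zero section $S$ as $\e\to0$. The key compactness input is the $C^0_{loc}$-convergence of the (appropriately rescaled) strips to a configuration of gradient flow trajectories of the difference functions $f_i-f_j$ of the sheets of $L$, i.e.\ to a $\dfs$-tree in the sense of \cref{def:d4tree}. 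This is the content of the adiabatic convergence theorem proved in the bulk of this section (the analogue of the flowtree convergence in \cite{Morseflowtree}, but now allowing the non-generic $\dfs$-front singularities); I would invoke it as the principal technical black box. One then argues: (a) every limiting $\dfs$-tree obtained this way must be \emph{rooted at $w$}, i.e.\ have a root vertex mapping to $T^*_wS$, because the boundary condition $F_w$ forces a puncture/leaf there, and its root edge must be a genuine flowline emanating from a point arbitrarily close to $z$; and (b) the relative homology class carried by $u_\e$ is, for $\e$ small, the class of the limiting tree's soliton class (the boundary of $u_\e$ degenerates onto the tree, so $[\dd u_\e]$ is determined by the combinatorics of the tree by \cref{prop:soliton}).

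Granting these two facts, the theorem is essentially a dichotomy on whether $z$ lies on a wall. For part (1), suppose $z\in\snetwork^c$. If for every neighborhood $U_z(E)$ and every $\e_0$ there were a non-constant holomorphic disk of energy $\leq E$ bounded by $(F_w,\e L)$ with $w\in U_z(E)$ and $\e\leq\e_0$, then taking a diagonal sequence $w_k\to z$, $\e_k\to 0$ and applying the adiabatic convergence theorem produces a nonconstant $\dfs$-tree $\Gamma$ rooted at $z$ (the limit of the $w_k$) with energy $\leq E$. But by the remark following \cref{def:energyfiltration}, the union of all $\dfs$-trees of energy $\leq E$ is exactly $\snetwork(E)\subseteq\snetwork$; since $\Gamma$ is nonconstant its root edge is a nondegenerate flowline through $z$, hence $z\in\snetwork$, a contradiction. (The degenerate case where the limit tree is a constant map is excluded by the energy lower bound: a nonconstant holomorphic disk has energy bounded below away from $0$ uniformly in $\e$ once rescaled, by the monotonicity/isoperimetric inequality together with the uniform bounds coming from weak boundedness of $L$ in \cref{eq:weaklyboundedmultigraph}, so in the limit the tree carries positive flow-energy and cannot be constant.) This contradiction over the finitely many relevant energy levels $E$, together with compactness of the parameter space, yields the uniform statement: a single $\e_0(E,z)$ and $U_z(E)$ work.

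For part (2), suppose $z\in\snetwork$ lies on a wall $\wall$. Run the same argument: any sequence of holomorphic disks bounded by $(F_{w_k},\e_k L)$ with $w_k\to z$, $\e_k\to 0$, energy $\leq E$, converges to a $\dfs$-tree $\Gamma$ rooted at $z$ with energy $\leq E$. By \cref{prop:soliton} (and the uniqueness/finiteness statement therein, using that $\snetwork$ is gapped so there are only finitely many $\dfs$-trees of energy $\leq E$ through $z$), the relative homology class of any such $\Gamma$ is one of the soliton classes $\soliton{z;\wall',\snetwork}$ for walls $\wall'$ through $z$; and for the points $z$ in the interior of a single wall, the only tree of minimal energy (the one actually achieved as an adiabatic limit of a disk, by the no-go part of the convergence theorem forcing the strip to degenerate onto $\snetwork$ itself) is the one along $\wall$, with class the parallel transport of the soliton class associated to $z$. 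Hence any disk whose class is \emph{not} that soliton class cannot exist for $\e$ small and $w$ near $z$, which is precisely the assertion that $(F_z,L)$ is uniformly Stokes. As in part (1), compactness upgrades this to uniformity in $\e$.

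\textbf{Main obstacle.} The hard part is the adiabatic convergence theorem itself — establishing $C^0$ (and enough $C^1$) convergence of the rescaled strips to $\dfs$-trees \emph{while controlling the behavior near the $\dfs$-branch points}, since the sheets $f_i,f_j$ collide there and the difference function degenerates, so the usual flowtree estimates of \cite{Morseflowtree} (which assume generic swallowtail fronts) do not directly apply; one must show no energy escapes to the branch points in an uncontrolled way and that the limiting tree satisfies the $\dfs$-tree constraints of \cref{def:d4tree} (in particular the initial-ray condition at branch points). The second delicate point is the \emph{uniformity} over the neighborhood $U_z(E)$ independent of $\e$: this requires the convergence estimates to be uniform in the base-point $w$ ranging over a small ball around $z$, which is where the weak-boundedness hypotheses on $L$ (geometrically bounded metric, uniform injectivity radius, uniform sheet separation away from $K$ in \cref{eq:weaklyboundedmultigraph}) are essential — they guarantee the a priori energy and gradient bounds needed for a compactness argument that does not see $w$ or $\e$. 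Both of these are carried out in detail in the body of this section; the proof of \cref{thm:snetworkadg} as stated is then the formal deduction sketched above.
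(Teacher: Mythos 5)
Your proposal is correct and follows essentially the same route as the paper: reduce the theorem to a formal deduction from the adiabatic convergence result (the paper's \cref{prop:convergence_strips_D4tree}), which shows that a sequence of low-energy strips bounded by $(\e L, F_{w})$ with $w\to z$, $\e\to 0$ subconverges to a (trimmed) $\dfs$-tree through $z$, necessarily contained in $\snetwork(E)$; this contradicts $z\in\snetwork^c$ for Part (1), and for Part (2) the homology class of the strip is pinned down to the soliton class of the limit tree via \cref{prop:soliton}. You also correctly identify the genuine technical difficulty as the convergence theorem itself — in particular the control near $\dfs$-branch points where the usual flowtree estimates of \cite{Morseflowtree} break down — and that the weak-boundedness hypotheses are what make the compactness argument uniform in $w$ and $\e$. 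The paper packages the diagonal-sequence extraction you describe directly into the notion of an $\e$-strip sequence (\cref{def:eps_strip_sequence}), and uses a short lemma (\cref{lem:externaledge}) in place of your monotonicity/isoperimetric argument to rule out the degenerate/constant limit, but these are presentational rather than substantive differences.
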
    

\noindent The purpose of this section is to prove \cref{thm:snetworkadg}. The intuition for Part (1) is summarized as follows. If $z\in\snetwork^c$ was such that $(\e L,F_{z_\e})$ bounded a pseudo-holomorphic strip in $(T^*S,J_g)$, then we want to have such sequences of strips converge, as $\e\to0$, to a $\dfs$-tree in $S$ passing through $z$. If such convergence statement held, then the results in \cref{section_mnetwork} would force the $\dfs$-tree to be contained in $\snetwork$ and thus $z\in\snetwork$, reaching a contradiction. We remark that all the results we prove in this section rely on the relevant disks having uniformly bounded diameter (c.f. \cref{lem:diametercontrol}), which follows from the geometric boundedness of $\blag$ (\cref{prop:geombounded}).  

Of course the technical part is establishing convergence results for such pseudo-holomorphic strips with boundary conditions on $(\e L,F_z)$ in a manner that the limit is indeed a $\dfs$-tree, or at least it is a $\dfs$-tree away from a fixed small neighborhood of the branch points. Note that this is a much stronger result than just stating that the strips would converge to a flowtree, as such a weaker statement would provide no control on the trajectories associated to leaves and semi-infinite edges of the tree, and thus the resulting tree would typically {\it not} be a $\dfs$-tree nor there would be any understanding of their behavior with regards to the initial rays at the branch points. In particular, a new aspect in the proof of \cref{thm:snetworkadg} is that we must have control on the behavior of the degeneration of such pseudo-holomorphic strips as it approaches the boundary of a neighborhood of the branch points, even if $z\in\snetwork^c$ is typically far from any branch points. The precise convergence result that we establish reads as follows:

\begin{proposition}[$\e$-strips converge to trimmed $\dfs$-trees]\label{prop:convergence_strips_D4tree}
Let $(\bis,\mkpts,\lknot)$ be a Betti surface and $\blag\sse T^*\obis$ a Betti Lagrangian adapted to $(S,g)$ with a compatible spectral network $\snetwork$. Suppose that $(u_\e,z_\e,\Delta_m(\e),E)$ is an $\e$-strip sequence $u_\e:\Delta_2\lr S$, with boundary of $(\e L,T^*_{z_\e}S)$. Then, after possibly considering a subsequence and a reparametrization of the domain $\Delta_2$, its limit
$$\displaystyle\lim_{\e\to0} (u_\e,z_\e,\Delta_2,E):\Gamma\lr S$$
exists and it is a (broken) trimmed $\dfs$-tree. The tree is unbroken if $z$ lies in the interior of $\snetwork(E)$.
\end{proposition}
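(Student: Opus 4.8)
\textbf{Proof plan for \cref{prop:convergence_strips_D4tree}.}

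The plan is to run an adiabatic-degeneration argument in the spirit of \cite{Morseflowtree,ruan2006fukaya}, adapted to allow $D_4^-$-singularities and the boundary condition on a moving cotangent fiber $T^*_{z_\e}S$. First I would set up the a priori estimates: since $\blag$ is weakly bounded and has either conical or $O(-1)$-ends, by \cref{prop:geombounded} the rescaled Lagrangians $\e\blag$ are uniformly geometrically bounded, and $T^*_{z_\e}S$ are uniformly geometrically bounded cotangent fibers. Together with the energy cut-off $E$ this gives, via \cref{lem:diametercontrol}, a uniform diameter bound on the images $u_\e(\Delta_2)\sse S$, so the images stay in a fixed compact subset $K\sse S$ once we restrict the marked points; this precompactness is what lets us extract a Gromov-type limit. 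I would then separate the domain $\Delta_2$ into the ``thick'' part, where the metric $\e$-scaled energy density of $u_\e$ is bounded away from zero, and the ``thin'' necks, where it decays. On the thick part, standard elliptic bootstrapping (after rescaling so $\e\blag$ looks like a graph over the zero section, using the uniform $C^k$-bounds from uniform boundedness) shows $u_\e$ $C^\infty_{loc}$-converges to a constant on each component, while the thin necks converge, after arclength reparametrization, to gradient flowlines of difference functions $\sheet_i-\sheet_j$ — this is the mechanism by which a pseudo-holomorphic strip degenerates to a flowtree. The balancing and relative-cycle conditions of \cref{def:Morseflowtree} are inherited in the limit from the fact that $u_\e$ has a well-defined boundary homology class and the two Lagrangian boundary arcs track the sheet labels across each neck.

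The crucial and most delicate point — and the main obstacle — is controlling the limit \emph{near the branch points}, i.e.\ establishing that the limiting tree is a \emph{trimmed $\dfs$-tree} in the precise sense (univalent non-root vertices mapping to branch points along the three initial rays, no cusp-cusp edges at internal vertices, and the correct behavior of semi-infinite edges). The difficulty is that near a $D_4^-$-singularity the difference function $\sqrt{z}\,dz$ degenerates, so the naive rescaling breaks down and one cannot directly bootstrap; a strip whose boundary wanders into a neighborhood of a ramification point could, a priori, degenerate to something that is only a flowtree but whose edges near the branch point are arbitrary flowlines rather than the three initial rays. To handle this I would fix, uniformly in $\e$, small disjoint neighborhoods $U_b(\delta)$ of the branch points as in \cref{subsection:preliminarytransversalitycondition}, and prove a \emph{local} adiabatic model statement: inside $U_b(\delta)$, after the rescaling $w\mapsto \e^{-2/3}w$ appropriate to the $D_4^-$ local model $\scurve=\Re\{w^2-z=0\}$, the restricted maps $u_\e$ converge to a pseudo-holomorphic curve for the \emph{unscaled} cusp Lagrangian, which by the explicit analysis of \cref{ex:d4singularity,ex:D4polarform} and \cref{lemma:sneaky_trees} must limit to one of the three initial rays (or to a cusp-cusp flowline only when the root is at the branch point). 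Matching this local picture to the global flowtree across $\dd U_b(\delta)$ — using that $\dd U_b(\delta)$ meets the limit transversely at the three initial rays — yields the ``trimmed'' structure. The trapping lemmas (\cref{lemma:morsetrappinglemma}, \cref{lemma:trappinglemma}) and the asymptotic descriptions (\cref{prop:asymptoticoftrajectories}, \cref{prop:asymptoticofWKBtrajectories}) then pin down the ends of the limiting tree: semi-infinite edges asymptote to Reeb chord rays / anti-Stokes rays, and no edge escapes to $\mkpts$ in finite time, so the limit is a genuine (possibly broken) trimmed $\dfs$-tree.

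Finally, for the last sentence of the statement, I would argue the ``unbroken'' clause: breaking of the limit tree occurs exactly when a thin neck carries an amount of flow-energy that does not go to zero but whose flowline escapes the region where the energy filtration $\snetwork(E)$ is controlled, i.e.\ when $z$ lies on the topological boundary of $\snetwork(E)$ (equivalently, when the soliton class through $z$ has energy exactly $E$, so additional subtrees of positive energy can attach). If $z$ lies in the \emph{interior} of $\snetwork(E)$ then the gapped condition of \cref{def:energystable} together with \cref{lemma:flow-energy} forces every flowline in the limit to have flow-energy bounded uniformly below by $\hbar$ except for finitely many, and the total is $\le E$; an index/dimension count as in \cref{thm:gromovcompactness} (each breaking would consume at least $\hbar$ of energy and there is room for only finitely many) shows no breaking can occur under the strict interior hypothesis, so the limit is a single unbroken trimmed $\dfs$-tree. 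I expect the gluing-type transversality needed to identify the limit tree with an element of the (creative, and hence rigid) spectral network $\snetwork$ to follow from \cref{prop:soliton} once the convergence is established, so the real work is concentrated in the branch-point analysis described above. \hfill$\square$
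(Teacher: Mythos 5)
Your overall architecture — thick/thin domain decomposition, diameter control from geometric boundedness and energy cut-off, thin necks converging to gradient flowlines, and the correct identification that the real difficulty is the branch-point behavior — is aligned with the paper. But the mechanism you propose for the branch-point step is genuinely different from the paper's and, as written, has a gap.

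The paper's argument (cf.\ \cref{lem:initialedgelemma}) does \emph{not} blow up near the $D_4^-$-singularity and does \emph{not} invoke the sneaky trees of \cref{lemma:sneaky_trees}. It works with the \emph{unperturbed holomorphic} local model $\scurve=\Re\{w^2-z=0\}$ with the flat K\"ahler metric, and the key step is a Stokes'-theorem computation: after introducing a vertical cut $v_\e$ converging to the point $v(z)$ where the limiting flowline first enters $\dd U_b$, the $\e$-scaled symplectic area of the right-adjacent piece is
\begin{equation*}
\e^{-1}\int_{\triangle_{v_\e}}\omega_\C^\ast u_\e \;=\; \pm\big(z(v_\e(\text{top}))^{3/2}-z(v_\e(\text{bottom}))^{3/2}\big)-\e^{-1}\int_{v_\e}\lambda_\C^\ast u_\e\vert_{v_\e},
\end{equation*}
the second term goes to $0$ by the gradient estimate $|\nabla z_\e|=O(\e)$, and since this area is real and positive, the limit $\pm z(v)^{3/2}$ is forced to be real and positive — which is the definition of $v$ lying on an initial ray. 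This positivity argument is intrinsic, lives at the fixed scale $\delta$ (exactly where the ``trimmed'' condition is tested), and is the reason the paper insists the front be \emph{unperturbed} (holomorphic) near the ramification point.

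Your proposal instead suggests a local conformal rescaling near $b$ and then appeals to \cref{lemma:sneaky_trees}. This has two problems. First, \cref{lemma:sneaky_trees} describes rigid flowtrees for a generic $C^\infty$-\emph{small perturbation} of the $D_4^-$-front, not for the unperturbed holomorphic germ; using it here is incompatible with the fact that the adiabatic-limit argument needs the exact holomorphic local model (otherwise you lose the positivity of $\omega_\C^\ast u_\e$ that the paper exploits). Second, even granting some rescaled limit near the branch point, a blow-up at scale $\e^{a}$ only sees the curve at infinitesimal distance from $b$ in the adiabatic limit and does not by itself control where the flowline component meets $\dd U_b(\delta)$, which is precisely what the trimmed condition asks for. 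You would need to bridge the inner rescaled picture to the outer $\delta$-scale picture, and your sketch does not do this. (Incidentally, your suggested exponent $\e^{-2/3}$ is not the one that restores the $D_4^-$ model: a symplectomorphism $(z,w)\mapsto(\e^{-2}z,\e^2 w)$ carries $\e\scurve$ back to $\scurve$ for $\scurve=\{w^2=z\}$.)

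For the final ``unbroken'' clause the paper uses a short homological argument — in the interior of $\snetwork(E)$ the soliton classes through $z$ cannot concatenate, so the limit cannot be a disjoint union of nontrivial trimmed trees. Your gapped-energy/index-count argument is a different (and in this setting unnecessary) route; it is not obviously wrong, but you would need to be careful that ``broken'' here means several components sharing the point $z$, which the concatenation argument handles more directly.
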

\noindent The concepts needed to state \cref{prop:convergence_strips_D4tree}, including the notion of $\e$-strip sequences, their limits and broken trimmed $\dfs$-trees, are introduced in \cref{subsection:adg}  and \cref{prop:convergence_strips_D4tree} is then proven in \cref{ssec:convergence_trimmed_trees}. \cref{prop:convergence_strips_D4tree} is used to conclude \cref{thm:characterization} in \cref{ssec:convergence_trimmed_trees2}.\footnote{Note that it is nevertheless possible to upgrade \cref{prop:convergence_strips_D4tree} to show convergence to an actual (broken) $\dfs$-tree. This would add even more technicality and pages to the manuscript and, for the sake of balance, we have not pursued this technical enhancement.}


\subsection{$\e$-strips and their limits}\label{subsection:adg} Let us first set up the geometric ingredients to discuss $\e$-strips and their limits. We endow its cotangent bundle $(T^*S,\omega)$ of $(S,g)$ with the compatible almost-complex structure $J_g$ associated to $\omega$ and $g$ via the Levi-Civita (Ehresmann) connection of the latter; $J_g$ is also known as the Sasaki almost-complex structure.\\

\noindent We use the following type of domains for pseudo-holomorphic strips: fix $\slit\in(0,1)$, $\slit\ll1$, and consider a point $c=(c_1,...,c_{d})\in \mathbb{R}^{d}$. By definition, $\triangle_{d+1}(c_1,\ldots,c_d)$ is the  subdomain of $(-\infty,\infty)\times[0,d+1]\sse\C$ given by removing $d$ horizontal slits of vertical width $\slit$, each starting at the point $(c_j,j)$ in the direction of $+\infty$, for $j\in[1,d]$. Each such horizontal slit is thus centered around the semi-infinite ray $[c_j,\infty)\times\{j\}$, see e.g.~ Figure \ref{fig:domains}. If the input position $c\in\R^d$ depends on a parameter $\varepsilon\in\R^+$ but $c$ is implicitly understood (or arbitrary and not particularly relevant), then we denote such subdomain by $\triangle_{d+1}(\e)$. Such domains $\triangle_{d+1}(\e)$ are given the conformal structure inherited as subdomains of $\C$. \\

\begin{definition}\label{def:eps_strip_sequence}
Let $(\bis,\mkpts,{\bf \lknot})$ be a Betti surface and $\blag\sse T^*\obis$ a Betti Lagrangian adapted to $(S,g)$. By definition, an $\e$-strip sequence $(u_\e,z_\e,\triangle_m(\e),E)$ is a collection of $J_g$-holomorphic maps
$$u_\e:\triangle_m(\e)\lr T^*\obis,$$
defined for all $\e\in\R_+$ small enough and such that:
\begin{enumerate}
    \item the image of $u_\e$ is bounded by $L$ and $F_{z_\e}$,
    \item $\mbox{area}(u_\e)\leq\e E$.\hfill$\Box$
\end{enumerate}
\end{definition}

\noindent An additional piece of notation on such domains $\triangle_{d+1}(c)$: given a boundary component of $\dd\triangle_{d+1}(c)$ with both of its ends at $+\infty$, its \textit{boundary minimum} of $I$ the unique point with the smallest real part. Also, a subdomain in $\triangle_{d+1}$ is said to be \textit{horizontal} if it is of the form $[x_1,x_2]\times [y_1,y_2]$ with $[x_1,x_2]\times \{y_1\}$ and $[x_1,x_2]\times \{y_2\}$ lying on the boundary of $\triangle_{d+1}$. See Figure \ref{fig:domains}.

\begin{center}
	\begin{figure}[h!]
		\centering
		\includegraphics[scale=1.8]{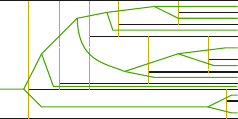}
		\caption{A depiction of the type of domain $\triangle_{d+1}(c)$ being used to describe the conformal structure. The domain is drawn in the strip with the slits highlighted in black, and the conformal structure in inherited from $\C$. The $x$-coordinate of the boundary minima is highlighted with a yellow vertical dashed line and the tree associated to the domain is depicted in green.}
        \label{fig:domains}
	\end{figure}
\end{center}

\color{black}


\subsubsection{Combinatorial trees associated to domains $\triangle_{d+1}$}\label{subsubsection:conformalstructures}
There is a (combinatorial) tree associated to each of the domains $\triangle_{d+1}(c_1,\ldots,c_d)$, constructed as follows.

\begin{definition}[Trees associated to $\triangle_{d+1}$]\label{def:conformaltree}
Let $\triangle_{d+1}(c_1,\ldots,c_d)\sse\C$ be a domain, its associated tree $T:=T(\triangle(c))$ is the rooted $d$-leaved tree whose interior vertices have valence at least three and satisfies:
\begin{enumerate}
    \item The vertices of $T$ are in bijection with the values of $c\in\R^{d+1}$. These can be visually described by vertical rays passing through the boundary minima of $\triangle_{d+1}(c_1,\ldots,c_d)$.
\item The edges of $T$ are in (natural) bijection with the connected components of the complement of such vertical rays in $\triangle_{d+1}(c_1,\ldots,c_d)$.
\item By definition, two edges of $T$ share a vertex if and only if the corresponding components of those two edges are adjacent along the vertical ray corresponding to that vertex.\hfill$\Box$
\end{enumerate}
\end{definition}
\noindent See Figure \ref{fig:domains} for an example of \cref{def:conformaltree}. By definition, an open subdomain $D\sse\triangle_{d+1}$ is a \textit{subdisk} if the part $\partial D\setminus(\partial \triangle_{d+1})$ of its boundary consist of vertical rays avoiding the boundary minima.\footnote{These subdisks are often disconnected, each connected component being topologically a disk.} As in \cref{def:conformaltree}, subdisks $D\sse\triangle_{d+1}$ yield rooted $(d+1)$-leaved subtrees $T_{D}\subset T$ by introducing vertices at each of the edges whose corresponding components contain the vertical boundaries of $\partial D$. We denote by $T/T_D$ the rooted $(d+1)$-leaved tree obtained by contracting the subtree $T_D\subset T$ associated to a subdisk $D\sse\triangle$.\\

\noindent We shall cover these domains $\triangle$ by subdisk domains. We always implicitly assume that the vertical boundary segments of the components of such covers avoid the boundary minima, are all disjoint, and their intersections are also given by subdisk domains. In the same manner that a subdisk gave a subtree, an open cover $\triangle_m=D_0\cup D_1$ by two subdisk domains $D_0,D_1$ gives an open cover of the associated tree: $T(\triangle)=T(D_0)\cup T(D_1)$. Note that $D_0,D_1$ are typically each disconnected.


\subsubsection{Limits of $\e$-strip sequences}\label{subsubsection:weakadg}
The goal of this subsection is to rigorously explain what it means for an $\e$-strip sequence, as in \cref{def:eps_strip_sequence}, to converge to a trimmed $\dfs$-tree. In particular, we refine the type of domain subdivision introduced in \cite[Section 5.2]{Morseflowtree}, introduce adapted covers in \cref{def:reparametrizationsequence} and limits of $\e$-strip sequences in \cref{def:flowlineconvergence}. \\

For each branch point $b\in\obis$, we have fixed a neighborhood $U_b\sse S$ as in \cref{subsection:preliminarytransversalitycondition}. We can and do choose such $U_b$ to be disks of a certain radius $r$, denoting that by $U_b=U_b(r)$. Specifically, we fix two truncation parameters $\delta,\eta\in\R_+$ such that the disks $U_b(2\delta+\eta\delta)$ are disjoint and over each subdisk $U_b(2\delta)$, the piece of the Betti Lagrangian $\blag\cap T^*U_b(2\delta)\sse T^*S$ is holomorphic and the metric is the standard flat metric restricted from $\mathbb{C}$. We denote the union of such neighborhoods by
$$B_{\eta,\trunparam}:=\bigcup_{b\in \caustL} U_{b}(2\delta+\eta\delta).$$
By definition, a broken trimmed $\dfs$-tree is the intersection of a disjoint union of $\dfs$-trees in $S$ with the domain $S\setminus B_{\eta,\trunparam}$ such that the flowlines entering each disk $U_b(2\delta+\eta\delta)$, if such flowlines exist, coincide with initial rays at $b\in S$. The tree is unbroken if there is a single $\dfs$-tree component.\\

From now on, whenever we discuss the metric and the covariant derivative, we will assume that we are using the Sasaki metric, and the associated Levi-Civita connection. Given an $\e$-strip sequence $(u_{\e})_\e$, as in \cref{def:eps_strip_sequence}, we want to understand its behavior in the adiabatic limit, i.e.~as $\e\to0$. As observed in \cite[Lemma 5.6]{Morseflowtree}, $\abs{\nabla u_{\e}}=O(\e)$ when restricted to a half-disk subdomain of fixed radius, that maps outside some fixed neighborhood of the caustics. However, near the caustics, we will not be guaranteed such a control. The idea is to subdivide the domains of each $u_{\e}$ into open sets where we have some control of either $\abs{\nabla u_{\e}}$ or the image of $u_\e$. As per usual application of gradient estimates, convergence for points where $\abs{\nabla u_{\e}}$ remains uniformly $O(\epsilon)$-bounded follows (c.f. \cite[Theorem 2]{FloerWittenInfinite}, \cite[Proposition 9.7, Proposition 9.8]{FukayaOh97_MorseHomotopy}). The challenge is to find such subdivisions and control the behavior of the image when $\abs{\nabla u_{\e}}$ blows up.


\noindent The initial $J_g$-holomorphic strip, before performing any $\e$-scaling, has domain $\triangle_2$ and maps to $(T^*S,J_g)$, with two punctures, as the boundary lies in $L$ and a cotangent fiber $F_z$, for some $z\in\obis$. In order to obtain the subdivision, we will reparametrize that domain $\triangle_2$, as we $\e$-scale, by a sequence of $\e$-depending domains $\triangle_m(\e):=\triangle_m(c(\e))$, $m\geq3$, where the parameters $c(\e)$ depend on the adiabatic parameter $\e$ but the number $m\in\N$ does not. A first useful open cover for these domains is described as follows:

\begin{definition}[Adapted covers for $\e$-strip sequences]\label{def:reparametrizationsequence}
Let $(u_\e,z_\e,\triangle_m(\e),E)$ be an $\e$-strip sequence with $z_{\e}$ converging to a point in $\obis\setminus B({2\trunparam+\eta\trunparam})$ as $\e\to0$. By definition, an $\e$-sequence of open covers $\stdm(\e)=D_0(\e)\cup D_1(\e)$ is said to be adapted to $(u_\e,z_\e,\triangle_m(\e),E)$ if the following holds for $\e\in\R_+$ small enough:
\begin{enumerate}
    \item Each $D_i(\e)$ are subdisks, $i=1,2$.
    \item The topology associated to the domains $D_i(\e)$ is independent of $\e$, $i=1,2$. That is, the order of the parameters $c(\e)$, the topology of the components of $D_i(\e)$, and the topology of their intersections (eventually) remains constant as $\e\to0$.
    \item $u_\e(D_0(\e))$ lies outside $B(\trunparam)$ and $\abs{\nabla u_{\e}}=O(\e)$ for points in $D_0(\e)$.
    \item $u_\e(D_1(\e))$ lies inside $B(2\trunparam)$.
\end{enumerate}
In addition, a subdisk $W_0(\e)\sse D_0(\e)$ is said to give an adapted horizontal decomposition of $D_0(\e)$ if
\begin{itemize}
    \item[(i)] $W_0$ has width $O(\log(\e^{-1}))$ and contains all the boundary minima contained in $D_0$. 
    \item[(ii)] The vertical boundary part of $\dd W_0$ is disjoint from the vertical boundaries of $D_i(\e)$, $i=1,2$, and its topology remains constant as $\e\to0$.
\end{itemize}
A triple $(D_0(\e),D_1(\e),W_0(\e))$ is said to be adapted to $(u_\e,z_\e,\triangle_m(\e),E)$ if $\Delta_m(\e)=D_0(\e)\cup D_1(\e)$ is an adapted cover and $W_0(\e)$ is an adapted horizontal decomposition of $D_0(\e)$.\hfill$\Box$
\end{definition}

\noindent The geometric conditions on $u_\e$ are \cref{def:reparametrizationsequence}.(3),(4) and (i): on $D_0(\e)$ the norm of the derivative $\abs{\nabla u_{\e}}=O(\e)$ remains controlled (constant with respect to the scaling measure), and the width condition $O(\log(\e^{-1}))$ on its subdomains $W_0(\e)$ ensures that the images $u_\e(W_0)$ converge to a point. In particular, the associated trees $T(\e)/T_{W_0(\e)}$ give models for the domain of a map that contracts the connected subdisks in $W_0$ to points.

\begin{remark}\label{rmk:tree_welldefined_adaptedcover}
The conditions \cref{def:reparametrizationsequence}.(2) and (ii) imply that there is a fixed number of components for $D_0(\e),D_1(\e)$ and $W_0(\e)$ in an adapted cover, independent of $\e$. In fact, the combinatorial trees associated to each such domains, as in \cref{subsubsection:conformalstructures}, are well-defined and independent of $\e$. In particular, the combinatorial type of the tree $\Gamma$ associated to $\triangle_m(\e)$ is independent of $\e$ and so are the corresponding induced subtrees.\hfill$\Box$
\end{remark}

The adapted covers of \cref{def:reparametrizationsequence} are still not sufficient to conclude \cref{prop:convergence_strips_D4tree}, i.e.~that the $\e$-strip sequence converges to a trimmed $\dfs$-tree. The remaining additional condition that we must require (and we will prove can be achieved) reads as follows:

\begin{definition}[Limits of $\e$-strip sequences]\label{def:flowlineconvergence}
The limit of an $\e$-strip sequence $(u_\e,z_\e,\triangle_m(\e),E)$ is said to exist if there exists an adapted cover $(D_0(\e),D_1(\e),W_0(\e))$ such that the following two types of conditions hold:
\begin{enumerate}
    \item There are three types of components of $D_0\setminus W_0$: fiber, ghost or flowline. They are classified depending on their possible images under $u_\e$, as follows:\\
            \begin{enumerate}[label=(\roman*)]
                \item (Fiber components) Some horizontal boundary components map to the cotangent fiber, with projections $C^{\infty}$-converging to the point $\lim z_{\e}$ as $\e\to0$.

                \item (Ghost components) Horizontal boundaries map to the same sheets of $\blag$, with projections $C^{\infty}$-converging to points.
                
                \item (Flowline components) Horizontal boundaries map to different sheets of $\blag$ if the domain has widths of size $O(\e^{-1})$ and the (projection of the) rescaling $u\circ(\e^{-1})$ $C^{\infty}$-converges to a flowline. The flowlines are said to have zero length if the resulting limiting flowline is a point. \\
            \end{enumerate}
    \item Let $T$ be the tree associated to $\triangle_m(\e)$ and consider the (reduced) tree $\treegraph^{red}$ obtained by removing the subtrees of $T/T_{W_0(\e)}$ corresponding to fiber components of $D_0\setminus W_0$, contracting the subtrees corresponding to the ghost components of $D_0\setminus W_0$, removing the subtrees corresponding to components in $D_1$, and removing any 2-valent vertices that connect the two flow-edges of the same index. Then, we require that $\treegraph^{red}$ admits a set of edges such that:\\

            \begin{itemize}
                \item[(i)] (Edges map to initial $\dfs$-rays) Each edge is such that there exists a vertical cut of the associated flowline component subdomain of $\triangle_m(\e)$ with $u_\e$ mapping its right adjacent component inside $T^*B_{\eta,\trunparam}$, and the edge converges to one of the initial rays associated to a branch point, with the \textit{inward} orientation.\footnote{Note that the spectral network $\snetwork$ is oriented opposite to the flowline direction.}

                \item[(ii)] (Maximality) For any flowline component $\Theta\sse D_0\setminus W_0$ with non-zero flow, there exists an edge as in ($i$) that belongs to the right adjacent component of $\Theta$.\hfill$\Box$
            \end{itemize}
\end{enumerate}
\end{definition}

In essence, proving \cref{prop:convergence_strips_D4tree} is tantamount to showing that, given the $\e$-strip sequence, there exists an adapted cover satisfying the conditions in \cref{def:flowlineconvergence}. The limiting object for the $\e$-strip sequence is technically not a flow-tree $F:\treegraph\lr S$ due to the presence of the ghost components and flowlines components with zero lengths in $\triangle_m(\e)$, cf.~\cref{def:flowlineconvergence}.($1.ii$) and ($1.iii$). For instance, an entire ghost component might map to a point in the adiabatic limit; these are said to be ghost edges of $\treegraph$. This matter is resolved combinatorially by contracting the edges of the tree $\treegraph$ associated to $\triangle_m(\e)$ that correspond to these two types of components. The resulting tree is often referred to as the {\it reduced} tree, and one can effectively work with the reduced tree instead of the original $\treegraph$; this is what is happening in \cref{def:flowlineconvergence}.(2).\\

If the limit of an $\e$-strip sequence exists, as in \cref{def:flowlineconvergence}, it is denoted by
$$\displaystyle\lim_{\e\to0} (u_\e,z_\e,\Delta_m(\e),E):\Gamma\lr S.$$
It is implicitly understood as a flow-tree with ghost components, so the map factors through its reduction $\treegraph^{red}$, as explicitly described in \cref{def:flowlineconvergence}.(2). In particular, the flowline edges of this limit are given by the adiabatic limit of the rescaling $u\circ(\e^{-1})$ along each flowline component of non-zero length.

\begin{remark}
The limit of an $\e$-strip sequence depends on the truncation parameters $(\delta,\eta)$ fixed initially and the corresponding trees $\treegraph=\treegraph_{\delta,\eta}$ might not be nested if one decreases the parameters $\eta,\delta\to0$. That said, the associated reduced trees $\treegraph^{red}$ can (and we will show will) be nested, in that $\treegraph^{red}_{(\delta,\eta)}\subset \treegraph^{red}_{(\delta',\eta')}$ for smaller truncation parameters $(\delta',\eta')$. This is another technical reason for reduction.\hfill$\Box$
\end{remark}


\subsection{Proof of \cref{prop:convergence_strips_D4tree}}\label{ssec:convergence_trimmed_trees} In this subsection we prove the convergence of $\e$-strip sequences to trimmed $\dfs$-trees. In the initial setup, there are a number of technical steps which are mirroring \cite[Section 5]{Morseflowtree} and \cite[Section 5]{nho2024familyfloertheorynonabelianization}; we thus simply refer the necessary parts of those manuscripts and continue focusing on the new part of the argument. Specifically, we must build an adapted cover as in \cref{def:flowlineconvergence}. 

First, in order to build such cover satisfying the component constraints from \cref{def:flowlineconvergence}.(1), we proceed as follows. By Lemmata 5.7, 5.13, 5.16 and 5.17 in \cite{Morseflowtree}, the domain $\triangle_m(\e)$ itself admits an adapted cover $(D_0(\e),D_1(\e),W_0(\e))$. Here the horizontal decomposition subdisk $W_0(\e)$ is obtained by considering subdisks which are neighborhoods of the boundary minima of the required width $O(\log(\e^{-1}))$.

\begin{remark}
(i) As far as we know, the construction of the domain-adapted cover in \cite[Section 5]{Morseflowtree} has a gap in the proof of \cite[Lemma 5.4]{Morseflowtree}, in the claim that such balls of radius $\delta\e$ exist (his $\lambda$ is our $\e$). It must be justified that there is a uniform upper bound on the boundary length of the images of $u_\e$ outside of the union of the domains $T^*U_b(2\delta)$. This upper bound nevertheless exists, as it follows from the truncated reverse isoperimetric inequality that we prove in the appendix, cf.~\cref{thm:truncatedreverseiso}.\\
(ii) Lemmatas 5.6 and 5.7 in \cite{Morseflowtree} are stated only for exact Lagrangians. However, the only place the proof uses exactnesss is to establish the $O(\e)$-estimate on energy. Since this condition is part of the definition of an $\e$-strip sequence, the proof goes through verbatim.
\hfill$\Box$
\end{remark}

The components of $D_0(\e)\setminus W_0(\e)$ consist of horizontal subdomains of $\stdm$ of the three types required in \cref{def:flowlineconvergence}.(1). By \cite[Lemma 5.15]{nho2024familyfloertheorynonabelianization}, the fiber components $C^{\infty}$-converge to the point $\lim z_{\e}$. By \cite[Lemma 5.17]{nho2024familyfloertheorynonabelianization}, the ghost components also $C^{\infty}$-converge to a point. By \cite[Lemma 5.18]{Morseflowtree} (see also \cite[Lemma 5.17]{nho2024familyfloertheorynonabelianization}), the flowline components $C^{\infty}$-converges to flowlines, after taking the $\e$-dependent reparametrization $u_{\e}(\e^{-1}s,\e^{-1}t)$. Therefore, the new contribution at this point is to construct the adapted cover such that \cref{def:flowlineconvergence}.(2) holds: controlling the behavior of the limit flowtree near the branch points.\\


\noindent Given the $\e$-strip sequence $(u_\e,z_\e,\triangle_m(\e),E)$, denote by $\treegraph$  the tree associated to the domain $\triangle_m(\e)$, which is well-defined by the adapted cover constructed above, cf.~\cref{rmk:tree_welldefined_adaptedcover}. Note that, as in \cite[Lemma 5.18]{nho2024familyfloertheorynonabelianization}, the $\e$-finiteness of energy implies that there cannot be flowline edges of infinite length. Thus, given the bound on the covariant derivative, any leaf in (the subtree associated to) the $D_0(\e)$-region of $\treegraph$ corresponds to either a ghost component or a fiber component. In particular, there can be no leaves that are flowline edges.Furthermore, the following lemma tells us that there exists at least one external $D_1$-edge of $\treegraph$:
\begin{lemma}\label{lem:externaledge}
There exists at least one external edge of $\treegraph$ that corresponds to a horizontal component in $D_1(\e)$.
\end{lemma}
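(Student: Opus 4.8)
The plan is to argue by contradiction, assuming that \emph{every} external edge of $\treegraph$ corresponds to a horizontal component in $D_0(\e)$, and then deriving an energy contradiction from the global structure of the strip. Recall that the domain $\triangle_m(\e)$ has exactly two punctures at $\pm\infty$: the two semi-infinite boundary strips of width roughly $1$ corresponding to the boundary arcs on $L$ near $z_\e$ and, at the other end, the cotangent fiber $F_{z_\e}$. The subdivision of $\triangle_m(\e)$ into $D_0(\e)\cup D_1(\e)$ is by subdisks, so each external edge of $\treegraph$ is either a semi-infinite ``neck'' edge going off to $\pm\infty$ or it is a finite leaf of $T$ lying entirely in one of $D_0(\e)$ or $D_1(\e)$; the two neck edges, being pieces of the boundary of $\triangle_m(\e)$ near $\pm\infty$, map with boundary on $L$ and $F_{z_\e}$ and are thus always recorded as lying in the $D_0(\e)$ region (they are of fiber or flowline type, not $D_1$). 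So the only way a $D_1(\e)$-external edge can fail to exist is if $D_1(\e)$ contains no leaf subdisk of $\triangle_m(\e)$ at all, i.e.\ every connected component of $D_1(\e)$ is an \emph{interior} piece of the domain with at least two vertical boundary cuts separating it from the rest of $\triangle_m(\e)$.

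The first step is therefore to examine the situation $D_1(\e)\neq\emptyset$ but with no leaf component. In that case, take any connected component $\Xi$ of $D_1(\e)$; by hypothesis $u_\e(\Xi)\subset T^*B(2\trunparam)$, so $\Xi$ is a subdisk that maps entirely into the union of the small balls $T^*U_b(2\delta)$ around the branch points. Now I would use the fact that, since $\Xi$ is not a leaf, its boundary in $\triangle_m(\e)$ consists only of the horizontal parts (lying on $\dd\triangle_m$, hence mapping to $L$ and/or $F_{z_\e}$) and of at least two vertical cuts, along each of which $u_\e$ agrees with the adjacent $D_0(\e)$-components. But by construction $u_\e(D_0(\e))$ lies \emph{outside} $B(\trunparam)$, while $u_\e(\Xi)$ lies inside $B(2\trunparam)$. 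Along the vertical cut separating $\Xi$ from a $D_0$-component, $u_\e$ must therefore move from the region $\{$outside $B(\trunparam)\}$ to the region $\{$inside $B(2\trunparam)\}$, crossing the shell $B(2\trunparam)\setminus B(\trunparam)$; hence there is a definite lower bound $\smallcst_0>0$ on the area $u_\e$ spends near that cut — this is exactly the kind of energy-quantization estimate already used in \cref{prop:asymptoticoftrajectories} and available here because $\blag$ is weakly bounded, so that the sheets are uniformly separated on $B(2\trunparam)\setminus B(\trunparam)$ and the truncated reverse isoperimetric inequality (\cref{thm:truncatedreverseiso}) bounds boundary length. Wait — one must be careful: the area bound is $\mbox{area}(u_\e)\le\e E$, which degenerates as $\e\to0$, so the naive estimate needs to be run \emph{after} the appropriate rescaling $u_\e(\e^{-1}\cdot)$, exactly as in the flowline-component analysis, where the rescaled map has bounded energy. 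After rescaling, crossing the fixed shell costs a fixed positive amount of rescaled energy, and since the total rescaled energy on any single flowline component is finite, this is not yet a contradiction — it only bounds how many such crossings can occur.

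The heart of the argument is then a \emph{global} accounting: the domain $\triangle_m(\e)$ is a tree of subdisks, and $D_0(\e)$ is nonempty (it contains the two neck strips near $\pm\infty$, where $\abs{\nabla u_\e}=O(\e)$ by the gradient estimate \cite[Lemma 5.6]{Morseflowtree} since the image is near $L$ and far from the branch points for $z_\e\to z\notin B(2\trunparam+\eta\trunparam)$). If $D_1(\e)$ had no leaf component, then since the underlying tree $T$ is connected and $D_1(\e)$ sits ``between'' pieces of $D_0(\e)$, one could walk along $T$ from one $D_0$-neck to the other and be forced to pass \emph{through} $D_1(\e)$ and back out again — i.e.\ $u_\e$ would have to leave the branch-point balls, re-enter them, and leave again. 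But the trapping lemma \cref{lemma:morsetrappinglemma} (equivalently \cref{lem:first_noescape}) for flowlines, combined with the fact that the limiting flowline edges in $D_0\setminus W_0$ converge to genuine gradient flowlines, says precisely that once a trajectory enters a small neighborhood of a branch point along an initial ray it cannot reverse course and leave again in the adiabatic limit. Hence in the limit $\e\to0$ such an ``in–out–in'' profile is impossible for a flowline edge, and a $D_1$ component that is squeezed between two $D_0$ components would force exactly such a forbidden profile on the adjacent flowline edges. This is the contradiction. I expect the main obstacle to be making the ``walk along the tree'' argument precise at finite $\e$ rather than only in the limit: one has to show that the topology of the cover (which components of $D_1(\e)$ are leaves) stabilizes as $\e\to0$ (this is \cref{rmk:tree_welldefined_adaptedcover}, i.e.\ \cref{def:reparametrizationsequence}.(2)), and then transport the limiting non-reversal statement back to a statement about the fixed combinatorial tree $\treegraph$. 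Once that is in place, the conclusion is that $D_1(\e)$, being nonempty whenever the image of $u_\e$ meets the branch-point balls (which it must, since otherwise the whole limit is a trivial constant and $z\notin\snetwork$ gives nothing to prove — in the relevant case $z\in\snetwork^c$ the strip, if it exists at all, necessarily interacts with a branch point), must contain at least one leaf subdisk, and the corresponding external edge of $\treegraph$ is the desired $D_1$-edge. $\hfill\square$
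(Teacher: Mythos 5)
Your proposal takes a genuinely different route from the paper, and it has a gap. The paper's proof is short and direct: if no external edge lies in $D_1(\e)$, then (by the preceding observation that all $D_0$-leaves are ghost or fiber components, never flowline components) the image of $u_\e$ stays away from the branch points entirely, so no puncture removal was required, $\abs{\nabla u_\e}=O(\e)$ holds on the whole domain, the strip reduces to a single fiber component, its image collapses to $\lim z_\e$, and the geometric energy is zero — a contradiction with the strip being non-constant. The paper never needs to analyze an ``interior $D_1$ component squeezed between $D_0$ pieces''; it shows that the contradiction hypothesis already rules out $D_1\neq\emptyset$ in any form.

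The weak link in your version is the appeal to the trapping lemma. \cref{lemma:morsetrappinglemma} and \cref{lem:first_noescape} describe flowline behavior in the annular cylinders around the \emph{marked points} $\mkpts$ (the punctured ends with conical or $O(-1)$ structure), not around branch points. There is no analogous trapping at a $D_4^-$-branch-point neighborhood: a smooth-smooth or smooth-cusp flowline can traverse such a ball and exit again — only cusp-cusp flowlines are confined, and even those escape along initial rays or terminate in finite time rather than being ``trapped.'' So the ``in--out--in is forbidden'' claim you lean on near branch points is not what those lemmas give you, and without it the argument excluding a purely interior $D_1$ component is incomplete. You also acknowledge, but do not close, the $D_1(\e)=\emptyset$ case — the remark that ``the strip, if it exists, necessarily interacts with a branch point'' is essentially the content being proved here, not an input one can assume. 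Both issues are circumvented by the paper's approach, which never forks into cases at all.
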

\begin{proof}
Suppose by contradiction that there is no such an external edge. By construction, this is only if the image of $u_{\e}$ lies outside some fixed neighbourhood of the branch points, for small enough $\e$. It follows that for the given subsequence, the puncture removal step was unnecessary and $\abs{\nabla u_{\e}}=O(\e)$. However, there is then a single horizontal strip component which necessarily must be a fiber component. Therefore, the image of $u_{\e}$ converges to the point $\lim z_{\e}$ and that implies that the geometric energy must have been zero, which a contradiction.
\end{proof}

Let us cut the domain $\stdm=\stdm(\e)$ along the vertical rays through the boundary minima of $\stdm$ that connect to a horizontal boundary component mapping to the fiber. We denote by $\bar{\triangle}_{m_1},...,\bar{\triangle}_{m_n}$ the resulting right-adjacent components and by $\dcgraph_{m_i}$ the induced tree in $\treegraph$ associated to each such components. By the above, the $D_0(\e)$-leaves of $\treegraph_{m_i}$ are all ghost components.\\

Consider the set of vertices contained in $\treegraph_{m_i}$ that are terminal points of a flowline edge of non-zero length; such a vertex is not $1$-valent. These vertices are ordered by their $x$-coordinate, and a \textit{maximal vertex} (for a given $m_i$) refers to such a vertex with highest $x$-coordinate within $\treegraph_{m_i}$. In particular, given two maximal vertices $v,w$, their right-adjacent components are necessarily disjoint. Let $v$ be a maximal vertex, and let $\Theta_v=\Theta_v(\e)$ be the corresponding horizontal domain in $D_0(\e)\setminus W_0(\e)$. Then the component $\Theta_v$ associated to a maximal vertex satisfies the following:

\begin{lemma}\label{lem:nogetout}
Let $v\in\treegraph$ be a maximal vertex as above. Then there exists a branch point $b(v)\in S$ such that the right-adjacent components of $\Theta_v(\e)$ are mapped into the neighborhood $T^{\ast}U_{b(v)}((2+\eta)\delta)$. 
 \end{lemma}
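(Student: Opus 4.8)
The statement asserts that a maximal vertex $v$ (the terminal point of a non-zero-length flowline edge with largest $x$-coordinate in $\treegraph_{m_i}$) has its right-adjacent domain $\Theta_v(\e)$ pushed, under $u_\e$, into a small neighborhood $T^*U_{b(v)}((2+\eta)\delta)$ of a single branch point. The plan is to argue by the structure of the adapted cover already constructed: since $v$ is maximal, every flowline edge to its right has zero length, so all of the components of $D_0(\e)\setminus W_0(\e)$ lying in the right-adjacent part of $\Theta_v(\e)$ are ghost components (or fiber components already cut off) by \cref{def:flowlineconvergence}.(1). First I would observe that along the horizontal boundary pieces of these ghost components, the two boundary arcs map to the \emph{same} sheet of $\blag$ and their projections $C^\infty$-converge to points, by \cite[Lemma 5.17]{nho2024familyfloertheorynonabelianization}. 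Hence the image of the right-adjacent piece of $\Theta_v(\e)$ has boundary projecting to a shrinking set in $S$, and by the monotonicity/area estimate $\mbox{area}(u_\e)\leq \e E$ together with the diameter control (\cref{lem:diametercontrol}, which rests on geometric boundedness of $\blag$, \cref{prop:geombounded}), the whole image has diameter $\to 0$.

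The next step is to locate \emph{where} this shrinking image sits. The vertex $v$ is the terminal endpoint of a flowline edge of non-zero length; by the flowline-convergence statement, the rescaled maps $u_\e(\e^{-1}s,\e^{-1}t)$ along that edge $C^\infty$-converge to an honest $ij$-gradient flowline $\gamma$ of $\blag$. The terminal point of this flowline must be a limit point of a gradient trajectory, and by the asymptotics results for Betti Lagrangians (\cref{prop:asymptoticoftrajectories} in the exact case, \cref{prop:asymptoticofWKBtrajectories} in the meromorphic case), the only place a finite-length maximal trajectory for a weakly bounded Betti Lagrangian can terminate is at a $D_4^-$-branch point --- the other alternative, terminating at a Reeb chord ray, is excluded because that would require an infinite-length flowline edge, and $\e$-finiteness of energy forbids infinite flowline edges (this is \cite[Lemma 5.18]{nho2024familyfloertheorynonabelianization}, cited just above). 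Therefore the limit point $F(v)$ is a branch point $b(v)\in S$. Combining this with the diameter-$\to 0$ conclusion of the previous paragraph, the image $u_\e(\text{right-adjacent part of }\Theta_v(\e))$ is contained in an arbitrarily small neighborhood of $b(v)$ for $\e$ small; in particular, once $\e$ is small enough it lies inside $T^*U_{b(v)}((2+\eta)\delta)$, which is the assertion.

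The main obstacle I anticipate is making the passage ``ghost components to the right $\Rightarrow$ the entire right-adjacent image has shrinking diameter'' fully rigorous: a ghost component's \emph{boundary} projecting to points does not automatically force the \emph{interior} to stay near those points without an isoperimetric/monotonicity input, and one must also handle the possibility that several ghost components are chained together or that fiber components were interleaved. I would control this by invoking the truncated reverse isoperimetric inequality (\cref{thm:truncatedreverseiso}) to bound the total boundary length of $u_\e$ outside the $T^*U_b(2\delta)$ neighborhoods, then a monotonicity argument for $J_g$-holomorphic curves of small area with boundary on Lagrangians to convert ``small area $+$ boundary near a point'' into ``image near a point''; the geometric boundedness of $(T^*S, J_g)$ near infinity (which holds because $\blag$ is weakly bounded / uniformly bounded with respect to the chosen metric, cf.\ \cref{eq:weaklyboundedmultigraph}) supplies the uniform constants needed for monotonicity. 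A secondary technical point is confirming that a \emph{single} branch point $b(v)$ is involved rather than a cluster: this follows from having chosen $\delta$ small enough that the disks $U_b(2\delta+\eta\delta)$ are pairwise disjoint (as arranged in \cref{subsection:preliminarytransversalitycondition}), so a connected shrinking image can meet at most one of them.
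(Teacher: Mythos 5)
There is a genuine gap, and it is not quite the one you anticipate. Your argument aims to show that the whole image of the right-adjacent part of $\Theta_v(\e)$ has diameter tending to $0$, and then to locate that shrinking point at a branch point via \cref{prop:asymptoticoftrajectories}. Neither step goes through. The right-adjacent region is built not only from $D_0(\e)\setminus W_0(\e)$ pieces (ghosts, fibers, zero-length flowlines) and $W_0(\e)$ pieces, but also from $D_1(\e)$ pieces. By \cref{def:reparametrizationsequence}.(4), $u_\e$ maps $D_1(\e)$ into $T^*B(2\delta)$, a region of \emph{fixed} radius $\sim\delta$ that does not shrink as $\e\to 0$; these are precisely the pieces where $|\nabla u_\e|$ has no $O(\e)$ estimate, so nothing forces their image to collapse. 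Your proposed monotonicity/reverse-isoperimetric argument would control the interior given a shrinking boundary, but the boundary of the $D_1$ pieces lies on $\e L$ inside $T^*B(2\delta)$ and is not a shrinking set, so the conclusion ``diameter $\to 0$'' is simply false, not merely hard to justify. Separately, your step identifying $F(v)$ as a branch point via \cref{prop:asymptoticoftrajectories} (or \cref{prop:asymptoticofWKBtrajectories}) does not apply: those results govern where a \emph{maximal} trajectory can terminate at a finite time, whereas the flowline edge terminating at $v$ is just a piece of a trajectory that stops wherever the tree vertex is --- the underlying gradient trajectory can perfectly well continue past $F(v)$. Establishing that the degeneration actually happens \emph{at} an initial ray of a $D_4^-$ point is precisely the harder content of the subsequent \cref{lem:initialedgelemma}, which requires the Stokes-theorem energy computation, not a soft asymptotics appeal.

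The paper's own proof is much softer and avoids both pitfalls. It starts from the defining property of the adapted cover: the $D_1(\e)$ components already map into $T^*B_{\eta,\delta}$ by construction. It then notes (as you did) that by maximality of $v$, the $D_0\setminus W_0$ pieces of the right-adjacent region are ghosts, fibers, or zero-length flowlines, all of which $C^\infty$-converge to points, and likewise for $W_0$. The right-adjacent component is connected and the neighborhoods $U_b(2\delta+\eta\delta)$ of distinct branch points are pairwise disjoint, so for $\e$ small the whole image is confined to a single $T^*U_{b}((2+\eta)\delta)$, which defines $b(v)$. No diameter-shrinking estimate and no identification of $F(v)$ with the branch point are needed or claimed at this stage. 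If you want to rescue your approach, you must take the $D_1$ pieces as the anchor (they are what carries the branch-point information) rather than trying to suppress them; the lemma's content is essentially that connectedness transports this anchor's location to the whole right-adjacent region.
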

 \begin{proof}
Since $\stdm=D_0(\e)\cup D_1(\e)$ is adapted, the components in $D_1(\e)$ map into the region $T^{\ast}B_{\eta,\trunparam}\sse T^*S$. By maximality, it also follows that the horizontal domains of $D_0(\e)\setminus W_0(\e)$ right-adjacent to $\Theta_v$ are either ghosts, fiber components or flowline components of zero length. Since there are finitely many such components, their images $C^{\infty}$-converge to points, and by construction the subdisk  $W_0$ also converges to points. Since the neighborhoods $U_b(2\delta+\eta\delta)$ are all disjoint and the right-adjacent component to $\Theta_v$ is connected, it must map into one connected component $T^{\ast}U_b(2\delta+\eta\delta)$ of $T^{\ast}B_{\eta,\trunparam}$, which determines the branch point $b(v)=b\in\obis$.
\end{proof}

Now we arrive at the crux of the argument: we must show that $\Theta_v(\e)$, possibly after refining with vertical cuts, is such that its right-adjacent component not only maps to $T^{\ast}U_{b(v)}(2\delta+\eta\delta)$ but does so converging near $T^{\ast}\dd (U_{b(v)}(2\delta+\eta\delta))$ to an initial flowline ray of the $\dfs$-branch point $b(v)\in\obis$ in the adiabatic limit. This is the content of the following:

\begin{proposition}[Behavior near $\dfs$-points]\label{lem:initialedgelemma}
Let $(u_\e,z_\e,\triangle_m(\e),E)$ be an $\e$-strip sequence, $v\in\treegraph$ a maximal vertex and $b(v)\in\obis$ its associated branch point. Then, after possibly taking a subsequence, there exists a sequence of vertical cuts $v_{\e}$ for the domain $\Theta_v(\e)$ such that the restriction $u_{\e}\vert_{\Theta^{\rightarrow}_v}$ of $u_\e$ converges to one of the initial flowline rays of $b(v)$, where $\Theta^{\rightarrow}_v$ denotes the right adjacent component of $\Theta_v(\e)$ after the cuts. 
\end{proposition}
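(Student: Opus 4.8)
The plan is to combine the gradient estimate on $D_0(\e)$ with a blow-up (rescaling) analysis adapted to the $\dfs$-branch point $b=b(v)$, together with an energy-quantization argument in the region $T^*U_b(2\delta)$ where $\blag$ is holomorphic and the metric is flat. First I would recall the setup: by \cref{lem:nogetout}, the right-adjacent component $\Theta_v^{\to}$ of $\Theta_v(\e)$ maps into $T^*U_b((2+\eta)\delta)$, and on the flowline component $\Theta_v$ itself the rescaled maps $u_\e(\e^{-1}s,\e^{-1}t)$ $C^\infty$-converge to a (possibly zero-length) flowline $\gamma_v$ between two sheets of $\blag$; since $v$ is a maximal vertex, $\gamma_v$ has non-zero length and exits toward $b$. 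The claim to prove is that, after vertical cuts, $u_\e\vert_{\Theta_v^{\to}}$ converges to one of the three initial rays $R_0,R_1,R_2$ of $b$.

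\textbf{Key steps.} (1) \emph{Localize to the holomorphic model.} Using that $\blag\cap T^*U_b(2\delta)$ is holomorphic, equivalently the real part of $\{(\la_\C-c)^2 = z\,dz^2\}$, and the metric is the flat metric from $\C$, reduce the problem to the standard $D_4^-$-germ $\Sigma=\Re\{w^2-z=0\}$ studied in \cref{subsection:Morselines} and \cref{ex:d4singularity}. There are only two sheets over $U_b\setminus\{b\}$, with difference function $\pm\tfrac{2}{3}\Re(z^{3/2})$, whose gradient flowlines are exactly the leaves of the quadratic-differential foliation of $|z|\,dz^2$, with the three critical leaves $E_0,E_1,E_2$ (stable) and three initial rays $R_0,R_1,R_2$ (unstable/outgoing). (2) \emph{Boundary-minimum refinement.} The component $\Theta_v^{\to}$ may itself contain further boundary minima and non-trivial subtree structure inside $T^*U_b((2+\eta)\delta)$. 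By the adapted-cover construction (Lemmata 5.7, 5.13, 5.16, 5.17 of \cite{Morseflowtree}), together with \cref{lem:diametercontrol} giving uniformly bounded diameter, I would take a further sequence of vertical cuts $v_\e$ positioned so that the left-most slice of $\Theta_v^{\to}$ lands in a fixed annulus $U_b((2+\eta)\delta)\setminus U_b((2+\tfrac{\eta}{2})\delta)$ where $\Theta_v$-limit flowline $\gamma_v$ enters; this exists by the width estimates of \cite{Morseflowtree} controlling where $|\nabla u_\e|$ transitions from $O(\e)$-bounded to unbounded. (3) \emph{Blow-up near $b$.} Rescale $u_\e$ fiberwise and in the base around $b$ at the appropriate scale (matching the $\e^{-1}$-reparametrization of the flowline and the $z^{3/2}$-homogeneity of the $D_4^-$ front). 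Use geometric boundedness of $\blag$ (\cref{prop:geombounded}) and the truncated reverse isoperimetric inequality (\cref{thm:truncatedreverseiso}) to get a uniform bound on the boundary length of $u_\e\vert_{\Theta_v^{\to}}$, hence a uniform diameter bound; then Gromov compactness (in the holomorphic, flat-metric model, where exactness is not needed since the $\e$-energy bound is built into \cref{def:eps_strip_sequence}) produces a limit. (4) \emph{Identify the limit.} The limit is a finite-energy $J_g$-holomorphic curve with boundary on the $D_4^-$-germ, one outgoing boundary condition matching the limit flowline $\gamma_v$ (hence on a specified pair of sheets / the cusp component), and energy which the $O(\e)$-estimate forces to zero in the base direction away from $b$. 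By the same argument as in \cref{prop:asymptoticoftrajectories}.(2)–(3) — energy quantization plus the classification of flowlines near a $D_4^-$-point in \cref{subsection:Morselines} — the base projection of the limit must be a cusp-smooth flowline that coincides with one of the initial rays $R_i$ of $b$, with the inward orientation (since the spectral network is oriented oppositely). This is exactly the assertion.

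\textbf{Main obstacle.} The hard part will be step (3)–(4): controlling the blow-up \emph{uniformly in $\e$} near the $D_4^-$-singularity, where $|\nabla u_\e|$ genuinely blows up and the usual exact-Lagrangian monotonicity arguments of \cite{Morseflowtree} do not directly apply. The key technical inputs are the truncated reverse isoperimetric inequality (to bound boundary length of the escaping piece, closing the gap noted after \cref{def:reparametrizationsequence} regarding \cite[Lemma 5.4]{Morseflowtree}) and the fact that, inside $T^*U_b(2\delta)$, we are in a holomorphic flat model where we may run Gromov compactness cleanly; the non-trivial point is that the limit curve, a priori a nodal configuration, has no bubbling in the base off of $b$ because the $\e$-energy bound forces all base-area to concentrate at $b$, and the only flowtree configurations near $b$ compatible with the balancing and the cusp-component constraint of \cref{def:d4tree}.(3)–(4) are the initial rays. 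Once this is established, the vertical cut $v_\e$ is chosen to excise exactly the part of $\Theta_v^{\to}$ that has not yet converged, and $u_\e\vert_{\Theta_v^{\to}}$ after the cut converges in $C^\infty_{loc}$ to the initial ray, completing the proof. $\hfill\square$
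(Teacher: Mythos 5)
Your approach is genuinely different from the paper's, and it contains a gap precisely at the step where the real work is done. Steps (1) and (2) of your plan — localizing to the holomorphic flat model near $b$ and cutting vertically so that the right-adjacent piece of $\Theta_v$ lands in a fixed annulus (using \cref{lem:diametercontrol} and \cref{thm:truncatedreverseiso} for diameter/length control) — are consistent with the paper, which does choose a cut $v_\e$ converging to the last point $v(z)$ where $\gamma_v$ enters $\partial B_{\eta',\delta}$, and which does establish that the horizontal boundary $u_\e\vert_{\partial^h(\triangle_{v_\e})}$ lands on the cusp component (so $\gamma_v$ is cusp-cusp). But steps (3)–(4) replace the paper's argument with a blow-up plus Gromov compactness plus an appeal to ``energy quantization and the classification of flowlines near a $\dfs$-point,'' and that appeal does not close the argument.

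The gap: nothing in your proof distinguishes the three initial rays $R_0,R_1,R_2$ from the other leaves of the singular foliation $\ker\Im(\sqrt{z}\,dz)$. Cusp-cusp flowlines near $b$ form a continuum of leaves; almost all of them pass arbitrarily close to $b$ without being initial rays, and there is nothing in ``energy quantization'' that rules this out for the limit of $\Theta_v$. Your citation of \cref{prop:asymptoticoftrajectories}.(2)–(3) does not supply the missing implication either — that proposition controls the asymptotics of a single trajectory (finite ends are at branch points, infinite ends asymptote to Reeb rays), but it says nothing about flowline components of $\e$-strip sequences converging to an \emph{initial ray} rather than a generic leaf. The conclusion you want is genuinely false without an additional input, and your blow-up does not supply it.

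The paper's mechanism, which your proposal never invokes, is a positivity argument via Stokes' theorem run directly on the finite-$\e$ maps, with no Gromov limit near $b$ needed. Concretely, one shows first via the $O(\e)$-gradient estimate and the uniform bound $|\e^{-1} w_\e|\le R$ that the vertical-cut integral $\e^{-1}\int_{v_\e}\la_\C^\ast u_\e$ vanishes in the adiabatic limit. Then Stokes' theorem on $\triangle_{v_\e}$ gives
\begin{equation*}
\e^{-1}\int_{\triangle_{v_\e}}\omega_\C^\ast u_\e \;=\; \pm\Big(z(v_\e(\text{top}))^{3/2}-z(v_\e(\text{bottom}))^{3/2}\Big)-\e^{-1}\int_{v_\e}\la_\C^\ast u_\e\vert_{v_\e},
\end{equation*}
where the horizontal-boundary contribution is a difference of values of the primitive $\pm\tfrac{2}{3}z^{3/2}$ on the cusp. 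The left side is real and positive because $u_\e$ is $J_g$-holomorphic in the flat K\"ahler model on $T^\ast U_b(2\delta)$. Passing to the limit, the right side converges to $\pm z(v)^{3/2}$ (the vertical contribution vanishes). Reality and positivity of $\pm z(v)^{3/2}$ is exactly the condition that $v(z)$ lie on one of the three initial rays, and since the leaf of the foliation through a point on $R_i$ is $R_i$ itself, this pins down $\gamma_v$. So the decisive input is not a compactness limit but the sign of the holomorphic area pairing against the $z^{3/2}$-primitive; your proof should incorporate this (or an equivalent positivity argument) to be complete.
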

 \begin{proof}
By construction, the open set $U_{b(v)}({2\delta+\eta\delta})$ is such that the metric $g$ is flat, the associated Sasaki metric coincides with the standard flat metric on $\mathbb{C}^2\cong T^*\C$, and the Betti Lagrangian $L\sse T^*S$ above this open set splits into the holomorphic cusp component and the smooth sheet components. Consider the limiting flowline $\gamma_v$ associated to the component $\Theta_v$. Since it is not a ghost edge, fiber component or zero-length flowline, it must (eventually) enter $B_{\eta,\trunparam}$, specifically the connected component containing $b(v)\in\obis$. Choose $\eta'\in(\eta/2,\eta)$ such that $\gamma_v$ is transverse to the boundary $\partial B_{\eta',\delta}$ and let $v(z)$ be the final intersection point at which $\gamma_v$ is oriented inward. After passing to a subsequence, there is a sequence of vertical rays $v_{\e}$ for $\Theta_v$ converging to the point $v(z)$; fix such a sequence of rays. We claim that the energy $\int \la_{\C}^{\ast}(\e^{-1}(u_{\e}\vert_{v_{\e}}))$ of the $\e$-scaled $u_\e$ converges to zero along these vertical cuts.\\

\noindent Indeed, write the restriction $u_{\e}\vert_{v_{\e}}=(z_{\e},w_{\e})$ in coordinates and apply the gradient estimate $\abs{\nabla z_{\e}}=O(\e)$ to obtain
\begin{align}\label{eq:verticalrayintegral}
\abs{\int_{v_{\e}}\la_{\C}^{\ast}({\e}^{-1}u_{\e})\vert_{v_{\e}}}\leq m\cdot \sup\abs{\e^{-1}w_{\e}}\cdot \abs{\nabla z_{\e}}.
\end{align}
Here the quantity $m$ appears in the upper bound because the height of $\Theta_v$ is at most $m$. Since $\abs{\e^{-1}w_{\e}}$ is uniformly bounded and $\abs{\nabla z_{\e}}$ is of order $O(\e)$, the integral on the left converges to zero in the adiabatic limit, as claimed.\\

Let $v_{\e}(\text{top})$ and $v_{\e}(\text{bottom})$ denote the top and the bottom end-points of each vertical cut $v_{\e}$ in $\Theta_v$, $\triangle_{v_{\e}}$ the right-adjacent components to $v_{\e}$ and $\partial^h(\triangle_{v_\e})$ the horizontal boundary of $\triangle_{v_\e}$. Since the restriction $u_{\e}\vert_{\triangle_{v_{\e}}}$ is $J_{g}$-holomorphic, with $g=g_{\std}$, the pointwise symplectic area $\omega_{\C}^{\ast}u_{\e}$ over ${\triangle_{v_{\e}}}$ is real and positive, except possibly at finitely many points. Therefore, the symplectic area
\begin{equation}\label{eq:sympl_area_positive}
\int_{\triangle_{v_{\e}}} \omega_{\C}^{\ast}u_{\e}\in\R_+
\end{equation}
is real and positive, which will be used momentarily in \cref{eq:energycomputation}. Independently, the restriction $u\vert_{\partial^h(\triangle_v)}$ extends to a connected curve in $L$ and, since $\Theta_v$ is a flowline component, $\partial^h(\Theta_v)$ maps to distinct sheets of the Betti Lagrangian $\blag$. Given that the smooth sheets of $\blag$ are disjoint over $B_{\eta,\delta}$, \cref{lem:nogetout} implies that $u\vert_{\partial^h(\triangle_v)}$ must map to the holomorphic cusp component. Thus the flowline $\gamma_v$ is a cusp-cusp flowline.


Let us finally deduce that $\gamma_v$ coincides with an initial trivalent ray at $b$, as follows. By applying Stokes' theorem, the $\e$-scaled energy reads
\footnotesize
\begin{align}\label{eq:energycomputation}
\e^{-1}\int_{\triangle_{v_{\e}}} \omega_{\C}^{\ast}u_{\e}=\e^{-1}\int_{\partial^h \triangle_{\e}}\la_{\C}^{\ast}\partial u_{\e}-\e^{-1}\int_{v_{\e}}\la_{\C}^{\ast}u_{\e}\vert_{v_{\e}}=\pm \Big((z(v_{\e}(\text{top}))^{3/2}-(z(v_{\e}(\text{bottom}))^{3/2}\Big)-\e^{-1}\int_{v_{\e}}\la_{\C}^{\ast}u\vert_{v_{\e}}.
\end{align}
\normalsize
By \cref{eq:verticalrayintegral}, the second term on the right hand side of \cref{eq:energycomputation} converges to zero in the adiabatic limit $\e\to 0$. Since $(z(v_{\e}(\text{top}))$ and $(z(v_{\e}(\text{bottom}))$ converge to $z(v)$, the right hand side of \cref{eq:energycomputation} converges to $\pm z(v)^{3/2}$. By \cref{eq:sympl_area_positive}, the left hand side of \cref{eq:energycomputation} is real and positive and thus $\pm z(v)^{3/2}$ is also real and positive. This can only occur if the (image of the) point $v$ lied on one of the initial rays for its associated branch point $b(v)$. Since the cusp-cusp flowlines form a singular foliation, the leaves of which lie outside $b(v)$, except for the three trivalent rays, we see that the restriction $u_{\e}\vert \Theta_v$ must converge to one of the initial rays coming out of $b(v)$ in the adiabatic limit. 
 \end{proof}

\noindent Let us apply \cref{lem:initialedgelemma} to each maximal vertex $v^i$ associated to the subdomains $\triangle_{m_i}$ for the initial adapted cover $(D_0(\e),D_1(\e),W_0(\e)$ at the beginning of the argument. \cref{lem:initialedgelemma} implies that these adapted covers can be refined to satisfy the condition in \cref{def:flowlineconvergence}.(2.i). The maximality condition \cref{def:flowlineconvergence}.(2.ii) follows from our choice of domains $\triangle_{m_i}$ and maximal vertices. In consequence, there exists an adapted cover for the given $\e$-sequence such that its limit exists. 

Let $\{\treegraph\}$ be the set of all $\dfs$-trees in $\snetwork$ with the univalent edges given by the initial edges of $(u_{\e},z_{\e},\triangle(\e),E)$, with energy less than $E$. The energy and the endpoint constraint imply that the set of such trees must be finite. By construction, each component $\lim_{\e\to 0}(u_{\e},z_{\e},\triangle(\e),E)$ must be a trimmed $\dfs$-tree in $\{\treegraph\}$, since the initial edges in \cref{lem:initialedgelemma} are directed inward. Furthermore, the internal vertices of such trees can only appear at the joints of $\snetwork(E)$. It follows that the $\dfs$-tree is contained entirely in $\snetwork(E)$. Furthermore, in the case $z$ lies in the interior of $\snetwork(E)$, the tree cannot be broken because their soliton classes cannot concatenate. Thus we have established \cref{prop:convergence_strips_D4tree}. \hfill$\Box$

\subsection{Proof of \cref{thm:snetworkadg}}\label{ssec:convergence_trimmed_trees2}
For Part (1), suppose that the point $z\in \snetwork^c$ is not uniformly disk-free. By definition, there exists an $\e$-strip sequence $(u_\e,z_\e,\triangle_m(\e),E)$ with $z=\lim z_\e$. Consider the spectral subnetwork $\snetwork(E)\sse\snetwork$ with energy at most $E$ and fix the truncation parameters $\delta,\eta\in\R_+$ as in the proof of \cref{prop:convergence_strips_D4tree}, so that the only vertices of $\snetwork(E)$ that lie in $B_{\eta,\delta}$ are the initial trivalent vertices. \cref{prop:convergence_strips_D4tree} implies that, after possibly considering a subsequence, the limit $\lim (u_\e,z_\e,\triangle_m(\e),E)$ exists and it is a trimmed $\dfs$-tree through $z$ with energy below $E$. Furthermore, such a $\dfs$ tree must be entirely contained in $\snetwork(E)$. It thus follows that $z\in\snetwork(E)$, which contradicts the initial hypothesis $z\in \snetwork^c$. Part (2) follows analogously.\hfill$\Box$
\section{Family Floer and Non-abelianized Local Systems}\label{section_Floer}

Let $(S,\mkpts,{\bf \lknot})$ be a Betti surface and $L\sse(T^*S,\omega_\std)$ a Betti Lagrangian of rank $n$. The object of this section is two-fold. First, in the exact case, to construct two functors $\ff$ and $\Phi_\snetwork$ that carry $\GL_1$-local systems on the Betti Lagrangian to $\GL_n$-local systems on the base $S$ and show that they are equivalent. Second, finish the proof of \cref{thm:characterization}, which concerns both exact and meromorphic Betti Lagrangians. The construction of the functors depends on the existence of a compatible spectral network $\snetwork\sse S$. In a nutshell:

\begin{enumerate}
    \item The functor $\Phi_\snetwork:\loct_1(L)\lr\loct_n(S)$
    is constructed by considering a variant of $\dfs$-trees, with the parallel transport of image local system being modified according to the interaction of a given path in $S$ with the spectral network $\snetwork$. It is constructed in \cref{subsubsection:pathdetourclasses}.\footnote{Technically, $\Phi_\snetwork(V)$ has only parallel transport defined for $\snetwork$-paths. In contrast, $\ff(V)$ has parallel transport for all paths.} \\

    \item The functor $\ff:\loct_1(L)\lr\loct_n(S)$ is constructed by considering continuation maps between the Family Floer cohomology groups of the Lagrangian $L$ with the cotangent fibres of $T^*S$. It is constructed in \cref{subsection:familyfloer}.\\ 

    \item A natural transformation between these two functors is established in \cref{ssec:detour_are_continuation_strips}. The proof of \cref{thm:characterization} is then completed in \cref{ssec:characterization_endproof}.
\end{enumerate}

\noindent Note that the projection $L\lr S$, given by restricting to $L$ the projection $T^*S\lr S$ to the zero section, is a branch cover and thus the push-foward of a local system on $L$ does {\it not} give a local system on $S$, just a constructible sheaf on $S$. The point of these two functors above is that they conceptually explain how to correct such constructible sheaf on $S$ (coming from the push-forward of a local system on $L$) back to a local system on $S$. Each approach presents a different type of correction: $\Phi_\snetwork$ uses the detour paths coming from $\snetwork$ to correct the lack of flatness, in line with \cite{GNMSN,GMN13_Framed,GMN14_Snakes}, whereas $\ff$ uses pseudo-holomorphic strip counts. As one of the main themes of this article is the comparison between spectral networks and Floer theory, we shall explain how the former is an expression of the adiabatic limit of the latter.\\ 

In the above, $\Loc_k(M)$ denotes the dg-category of rank-$k$ local systems of $\C$-modules on a smooth manifold $M$, and $\loct_k(M)$ denotes the dg-category of rank-$k$ twisted local systems of $\C$-modules on a smooth manifold $M$. We recall that a twisted local system on $M$ is a local system on the unit cotangent bundle $T^\infty M$ of $M$ whose monodromy around the spherical cotangent fiber is given by $-\mbox{Id}$. 
The unit cotangent bundle will also be denoted $\mathring{M}$ and its projection to the base by $\pi:\mathring{M}\lr M$. There are equivalent descriptions of local systems of $\C$-modules, e.g.~as locally constant sheaves, as modules $\mbox{Fun}(\Pi(M),\C\mbox{-mod})$ over the fundamental $\infty$-groupoid $\Pi(M)$ of $M$, or as $\C$-modules over chains $C_{-*}(\Omega M;\C)$ on the base loop space; e.g.~cf.~\cite{higher_algebra}. For an open surface $M=K(\pi_1(M),1)$ and $\mathring{M}\cong M\times S^1$, so (twisted) local systems effectively translates to representations $\pi_1(M,m)\lr\mbox{GL}(V)$ of the fundamental group of $M$, where $V$ is a $\C$-vector space and $m\in M$ a base point, which will be implicitly understood. Such representations are equivalent to modules over the group ring $\C[\pi_1(M)]$, and the functors $\ff$ and $\Phi_\snetwork$ will essentially be described as
$$\ff,\Phi_{\snetwork}:\C[\pi_1(\mathring{L})]\mbox{-mod}\lr\C[\pi_1(\mathring{S})]\mbox{-mod}.$$
In fact, both $\ff,\Phi_\snetwork$ will be described locally (for short paths) and thus we shall be considering paths with varying endpoints. So our precise choice for describing $\Loc_k(M)$ will be that of the fundamental $\infty$-groupoid. That is, for this manuscript, a local system $\mathcal{L}\in\Loc_k(M)$ consists of the data of a stalk $\mathcal{L}_m$ at every point $m\in M$, where $\mathcal{L}_m$ is a $k$-dimensional $\C$-vector space, and an isomorphism $\mathcal{L}(\gamma):\mathcal{L}_x\lr \mathcal{L}_y$ for any continuous path $\gamma:[0,1]\lr M$, $\gamma(0)=x,\gamma(1)=y$, which only depends on the homotopy class of $\gamma$. The structure group of the associated bundles for these local systems will always be $\GL_n(\C)$, if we are in a given rank $n$.

\begin{remark}\label{rmk:loc_vs_twloc} (1) The branch cover $\pi:L\lr S$ lifts to a branch cover $\mathring{\pi}:\mathring{L}\lr\mathring{S}$. In this case, there are identifications $\mathring{L}\cong L\times S^1$, $\mathring{S}\cong S\times S^1$, and the latter map reads $\mathring{\pi}=(\pi,\mbox{id})$.\\

(2) Intuitively, the functors $\ff,\Phi_{\snetwork}:\C[\pi_1(\mathring{L})]\mbox{-mod}\lr\C[\pi_1(\mathring{S})]\mbox{-mod}$ will be described by giving a group algebra morphism $\C[\pi_1(\mathring{S})]\lr\C[\pi_1(\mathring{L})]$. That is, they are described by explaining how to lift paths in $\mathring{S}$ to paths in $\mathring{L}$. Rigorously, a path in $\mathring{S}$ with two endpoints $z,z'\in \mathring{S}$ will lift to a collection of paths in $\mathring{L}$ whose endpoints are allowed to be in a set of $2n$ points (the pre-images of $z,z'$). Such paths are not morphisms in the fundamental groupoid of $\mathring{L}$: they would roughly be morphisms between direct sums of objects, which only make sense once we consider $\C$-modules. This is a technical reason to describe the functors between the module categories.\\

(3) For an open surface $M$, there is a dg-equivalence $\Loc_k(M)\simeq \loct_k(M)$ once a spin structure is chosen; cf.~\cref{ssec:lsys_vs_tlsys} or \cite[Appendix B.2]{casals2022conjugate}. Since protected spin characters are not the scope of this article, this effectively implies that $\ff,\Phi_\snetwork$ can be understood as functors carrying a $\GL_1$-local system on the Betti Lagrangian $L\sse T^*S$ to a rank $\GL_n$-local system on the base $S$.\hfill$\Box$
\end{remark}

\noindent Due to \cref{rmk:loc_vs_twloc}.(2), we adopt the following notational conventions. First, $\pi_1(\mathring{S})$ will stand for the relative homotopy classes $\pi_1(\mathring{S},\mathfrak{t}_S)$, where $\mathfrak{t}_S$ is a collection of points in $S$, often understood implicitly by context. Second, consider the functor $f^*:\loct(L)\lr\loct(S)$ that is the pull-back of a group algebra morphism $f:\C[\pi_1(\mathring{S})]\lr\C[\pi_1(\mathring{L})]$ and $V\in\loct(L)$. Given a path $\gamma\sse S$ with
$$f(\gamma)=\sum_{i=1}^ga_i\tau_i,\quad \tau_i\in\pi_1(\mathring{L},\mathfrak{t}_L), a_i\in\C$$
a linear combination of relative homotopy classes in $\pi_1(\mathring{L},\mathfrak{t}_L)$, with the points in $\mathfrak{t}_L$ projecting to those in $\mathfrak{t}_S$, we denote
$$f^*(V)(\gamma)=\sum_{i=1}^g a_iV(\tau_i)[\tau_i]$$
to indicate that the parallel transport $f^*(V)(\gamma)$ of $f^*(V)$ along $\gamma$ is given by
$$\displaystyle\sum_{i=1}^g a_iV(\tau_i)$$
and zero otherwise, and that the map $f^*$ in fact admitted the homotopy refinement $f$ which (further) keeps track of the relative homotopy classes $[\tau_i]$, a linear combination of which gives the image of $\gamma$. This notation is used in the following subsections, c.f.~e.g.~\cref{def:detourpath} and \cref{cor:homotopy_refinement_continuation_map}.

\begin{remark}
Alternatively, we can add the dependence on base points $\tt_S$ and $\tt_L$ explicitly in the notation, as in \cref{subsec:snetworkandpathdetours2_BPS_index}. Such notation becomes rather excessive and we have leaned towards having such endpoints of relative homotopy classes implicitly understood. This is not a particular challenge: the sets are always given a point in $S$, or $\sphb$, where we will consider the cotangent fiber, and its lifts in the Betti Lagrangian $L\sse T^*S$, or $\sphsc$.\hfill$\Box$
\end{remark}


\subsection{The functor $\Phi_\snetwork$: non-abelianization of local systems} \label{subsubsection:pathdetourclasses} The goal of this section is to define the functor
$$\Phi_\snetwork:\loct_1(L)\lr\loct_n(S),$$
using a finite spectral network $\snetwork$ compatible with a Betti Lagrangian $L\sse T^*S$. It formalizes the constructions in \cite{GNMSN,GMN13_Framed,GMN14_Snakes}, where such types of local systems were studied in the context of providing coordinates for the moduli of flat connections on Riemann surfaces, see \cite[Section 10.1]{GNMSN}. In particular, the images $\Phi_\snetwork(V)$ of (twisted) local systems $V\in\loct_1(L)$ encode the formal generating functions of framed $2d-4d$ states from \cite{GNMSN} after evaluating the spin character at $y=1$. The local systems $\Phi_\snetwork(V)$ will be described by giving their stalks $\Phi_\snetwork(V)_z$ at $z\in\mathring{S}$ and the associated parallel transport isomorphisms $\Phi_\snetwork(V)(\rho):\Phi_\snetwork(V)_z\lr\Phi_\snetwork(V)_{z'}$ along certain paths $\rho:[0,1]\lr\sphb$ with $\rho(0)=z,\rho(1)=z'$.

\begin{remark}\label{rmk:notation_points_in_sphb}
For notational ease, we often denote by $z\in\sphb$ points in $\sphb$, even if $z$ also denoted points in $S$. Given a regularly parametrized path in $S$ that starts at $\rho(0)=z\in S$, the unit cotangent lift via the velocity vector $\rho'(0)$ at $z$ gives a unique point in $\sphb$ projecting to $z$. That is the type of point we also denote $z\in\sphb$, as it is uniquely determined by and determines $z\in S$ once the path is chosen.\hfill$\Box$
\end{remark}

The stalks $\Phi_\snetwork(V)_z$ at $z\in\sphb$ are simple to describe for points $z\in\sphb$ with $\pi(z)\not\in K_L$ not a branch point. By definition, we choose
$$\Phi_\snetwork(V)_z:=V_{z_1}\oplus\cdots\oplus V_{z_n},$$
where $z_1,\ldots,z_n\in\sphsc$ are the lifts to $\sphsc$ of the $n$-preimages of $\pi(z)\in\obis$ via $\pi:\blag\lr\obis$.\footnote{Since $V$ is a twisted local system, all choices of lifts are isomorphic.} The requirement that $\Phi_\snetwork(V)\in\loct(S)$ determines the stalks at all $z\in\sphb$. If $V\in\loct_1(S)$, $V_{z_i}$ is a rank-$1$ $\C$-module and $\Phi_\snetwork(V)_z$ is therefore a rank-$n$ $\C$-module.\\

The parallel transport isomorphisms $\Phi_\snetwork(V)(\rho):\Phi_\snetwork(V)_z\lr\Phi_\snetwork(V)_{z'}$ are more interesting and depend on the spectral network $\snetwork$. In order to describe them, we shall define parallel transport for an open dense set of paths $\rho$ that interact generically with $\snetwork$, as follows.

\begin{definition}[$\snetwork$-adapted paths]\label{def:networkadapated}  
Let $I\sse\R$ be a finite closed interval. An immersed path $\ppp:I\to \sphb$ is:
\begin{itemize}
    \item[(i)] $\snetwork$-adapted, if its projection $\pi\big(\ppp):I\lr\obis$ is immersed and transverse to $\snetwork\sse\obis$.
    \item[(ii)] free, if the image of $\pi\big(\ppp)$ is embedded and disjoint from $\snetwork$.
    \item[(iii)] short, if its the unit-velocity lift of an embedded path in $S$ intersecting $\snetwork$ transversely and exactly once.\hfill$\Box$
\end{itemize}
\end{definition}

\noindent Any relative homotopy class with endpoints in $\snetwork^c$ is represented by a $\snetwork$-adapted path, as in \cref{def:networkadapated}. Given a $\snetwork$-adapted path, we can trivialize the sheets of $\blag$ along the path $\pi(\ppp)$: we denote by $\ppp^i$ be the lifts of $\ppp$ to $\sphsc$, $i\in [1,n]$; they are given by lifting $\pi(\rho)$ to $n$ lifts in $L$ and then considering the unit-velocity lifts to $\sphsc$.\\

Let $\ppp:[-\e,\e]\to \sphb$ be a short $\snetwork$-adapted path (with its projection) intersecting $\snetwork$ at a point $z=\rho(0)$ in an $(ij)$-wall $\wall$. Let $\mathfrak{S}(z;\wall):=\{\soliton{z;\wall}\}$ be the set of soliton classes at $z$: each $\soliton{z;\wall}$ defines a regular homotopy class with relative endpoints $z^{j}$ and $z^{i}$, oriented from the former to the latter.
We denote by $\ccc{(z)}$ be the unit-velocity lift of $\soliton{z;\wall}$ to $\sphsc$ and set the signs $\sgn(\ppp)(z)=1$ if the frame $\left<\snetwork'(z),\ppp'(0)\right>$ is positively oriented, and $-1$ otherwise. We need the following deformations for short paths. If $\sgn(\ppp)(z)=1$, then we deform $\pi(\ppp)\vert_{[-\e,0]}$ near $0$ so that it becomes tangential to $\snetwork$ at $0$ and directed against the orientation on $\snetwork$: denote by $\ppp_{-}$ the lift to $\sphb$ of such resulting path. Similarly, let $\ppp_{+}$ be the lift to $\sphb$ of a deformation of $\pi(\ppp)\vert_{[0,\e]}$ near $0$ so that it becomes tangential to $\snetwork$ at $0$, now in the same direction as that of $\snetwork$. The following definition formalizes the {\it detour paths} in the physics literature, cf.~e.g.~\cite[Section 3.2]{MR3395151} or \cite{MR3769247}.

\begin{definition}[Detour paths]\label{def:solitondetour}
Let $\ppp:[-\e,\e]$ be a short $\snetwork$-adapted path with $\sgn(z)=+1$ and $\ppp_{\pm}$ its two deformations. By definition, the regular homotopy class $[\ccc{(\ppp)}]$, with relative endpoints $\ppp^i(-\e)$ to $\ppp^j(\e)$, is given by
\[[\ccc{(\ppp)}]:=\big[\big(\ppp_{+}\big)^i\circ \ccc{(z)}\circ \big(\ppp_{-})^j\big].\]
For $\sgn(z)=-1$, the regular homotopy class $[\ccc{(\ppp)}]$ is defined to be $[\ccc{(\ppp)}]:=\big[\overline{\ppp}_{-}^i\circ \ccc{(z)}\circ \overline{\ppp}_{+}^i\big]$, where $\bar{\rho}$ denotes the inverse path. \hfill$\Box$
\end{definition}
Intuitively, the representative $\big(\ppp_{+}\big)^i\circ \ccc{(z)}\circ \big(\ppp_{-})^j$ of $[\ccc{(\ppp)}]$ in \cref{def:solitondetour} is (the lift of) a path that starts as $\rho$, when it hits $z$ it takes a detour going around the soliton $\soliton{z;\wall}$, and then goes back to $z$ and continues the path $\rho$. These detour paths are the crucial correction so as to define $\Phi_\snetwork$ in a manner that $\Phi_\snetwork(V)$ is indeed a local system on $\sphb$, and not merely a constructible sheaf. For instance, at a $\dfs$-singularity, these detour paths correct the non-trivial monodromy around a branch point to be the identity. Specifically, the parallel transport isomorphisms for $\Phi_\snetwork(V)$ are defined as follows:\\
\begin{definition}[Parallel transport for $\Phi_\snetwork$]\label{def:detourpath}
Let $V\in\loct_1(L)$, $(\snetwork,\mu)$ and indexed spectral network, and $\ppp:I\lr\sphb$ a $\snetwork$-adapted path. By definition, the parallel transport isomorphisms $\tdfd(\ppp)$ are defined as follows:\\
\begin{enumerate}[label=(\roman*)]
\item If $\ppp$ is free, then
 \[\tdfd(\ppp):=\sum_{i=1}^n V(\ppp^i)[\ppp^i],\]
 where the sum runs over the $n$ lifts $\ppp^i$ of $\ppp$ to $\sphsc$, $i\in [1,n]$.\\
 
 \item If $\ppp$ is short, with its projection intersecting $\snetwork$ at $z$, then
 \[\tdfd(\ppp):=\sum_{i=1}^n V(\ppp^i)[\ppp^i]+\sum_{\ccc{(z)}\in \mathfrak{S}(z;\wall)} \mu(\ccc{(z)})[\ccc{(\ppp)}]V\big(\ccc{(\ppp)}\big),\]
 where the second sum runs over all the soliton classes $\soliton{z;\wall}$ associated to $z\in\snetwork$.\\

\item If $\ppp$ is given as a composition $\ppp=\ppp_1\circ \ppp_2 \circ\ldots \circ \ppp_n$, each $\ppp_i$ either short or free, then
 \[\tdfd(\ppp)=\tdfd(\ppp_1)\circ \tdfd(\ppp_2)\circ \ldots \circ \tdfd(\ppp_k).\]
\end{enumerate}
\hfill$\Box$
\end{definition}
\noindent \cref{def:detourpath} allows us to define parallel transport for all $\snetwork$-adapted paths, by declaring parallel transport along a homotopy class to be independent of a representative. For instance, if a class is represented by a free path, then \cref{def:detourpath}.(i) applies. The contributions from the second summand of \cref{def:detourpath}.(ii) are said to come from {\it detour paths}, in line with \cref{def:solitondetour}. It is possible to verify combinatorially by hand that \cref{def:detourpath} yields a well-defined local system, independent of the representative of a given (twisted) homotopy class of a path, see e.g.~\cite[Section 5.6]{GNMSN}. Given the Floer-theoretic framework we develop, this property will be rather a consequence of interpreting $\snetwork$ in terms of pseudo-holomorphic strips. 

\begin{remark} A brief explanation of the homotopy invariance from a Floer-theoretic viewpoint is as follows. Given a $\snetwork$-adapted path $\ppp$, we can consider the exact Lagrangian isotopy $\fibre{\pi(\ppp(t))}$ given by moving the cotangent fibres along the path $\ppp$. After an appropriate choice of (a family of) almost complex structures, we consider continuation strips with boundary on $\blag$ and $\fibre{\pi(\ppp(t))}$ and, using the data of $\ppp(t)$ and the (twisted) local system $V$, we orient the moduli space of such strips. The cobordism type of such moduli space depends only on the twisted isotopy class of $\ppp$, and thus invariance follows once one shows that the formal sum of the boundaries of such continuation strips coincides with $\tdfd(\ppp)$.\hfill$\Box$
\end{remark}

\noindent Note that \cref{def:detourpath} suffices to define parallel transport for paths with endpoints in the dense complement $\snetwork^c$. This is in line with the current understand of wall-crossing phenomena and related constructions in the literature. In contrast to see, the Floer theoretic parallel transports are well-defined for paths with endpoints on $\snetwork$.\\ 
Due to the presence of Reeb chords at infinity, given by the data ${\bf \lknot}$ of the Betti surface, we also have an {\it infinite} version of $\snetwork$-adapted paths. By definition, an infinite path $\ppp:(-\infty,\infty)\to \sphb$ is said to be $\snetwork$-adapted if $\ppp$ is $\snetwork$-adapted over any interval $[-N,N]$, $N\in \mathbb{N}$, with a uniformly finite number of intersection points with $\snetwork$, and its projection $\pi(\ppp(t)),t\to \pm \infty$ asymptotes to radial Reeb chord rays near a subset of marked points. The discussion above all apply to these types of paths as well.

\noindent Given a Reeb chord $\mathfrak{r}$ of a Legendrian link in $\bf{\lknot}$ at a marked point $m\in\mkpts$. By \cref{prop:asymptoticoftrajectories}, there exists a small sectorial neighborhood
$$S_{\e}(\mathfrak{r})\cong\{(e^{i\theta},r)\in S^1\times\R:\theta\in [\theta_0-\e,\theta_0+\e],r>R\},$$
with $R\in\R_+$ large enough, such that the Reeb chord $\mathfrak{r}$ corresponds to $\theta_0$ and $r\to\infty$ and $S_{\e}(\mathfrak{r})$ contains only the edges of $\snetwork$ that are asymptotic to $\mathfrak{r}$. Let $\ppp(\mathfrak{r}):(-\infty,\infty)\lr S$ be the infinite path given by smooth the boundary $\dd S_{\e}(\mathfrak{r})$, with $\ppp(\infty)=\ppp(-\infty)=m$, which we can assume intersects transversely $\snetwork$. 
Such infinite paths $\ppp(\mathfrak{r})$ are said to be the Reeb paths of $\mathfrak{r}$. The following result establishes that the morphism of the functor $\Phi_\snetwork$ over a Reeb path, even with its homotopy refinement, is an invariant of the Hamiltonian isotopy class of $L\sse(T^*S,\la_\std)$, up to compactly supported Hamiltonian isotopy.

\begin{theorem}[Invariance of soliton classes for Reeb paths] \label{thm:pathdetourclassesareHaminvariant}
Let $(S,\mkpts,{\bf\lknot})$ be a Betti surface, $\reeb$ a Reeb chord of a component of ${\bf\lknot}$ near a marked point with Reeb path $\ppp(\reeb)$, and $\blag\sse(T^*S,\la_\std)$ an exact Betti Lagrangian endowed with a compatible spectral network $\snetwork\sse S$. Then $\Phi_\snetwork(V)(\ppp(\reeb))$ is an invariant of $\blag$, for any $V\in\loct(L)$ and up to compactly supported Hamiltonian isotopy.
\end{theorem}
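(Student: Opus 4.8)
The plan is to deduce this invariance statement as a corollary of the Floer-theoretic comparison in Theorem \ref{thm:FamilyFloer_NonAbelianization}. The key point is that $\Phi_\snetwork(V)$ is equivalent, in the adiabatic limit, to the Family Floer functor $\ff(V)$, and the latter is manifestly invariant under compactly supported Hamiltonian isotopies of $\blag$ because its construction is via counts of pseudo-holomorphic strips between $L$ and cotangent fibers, whose cobordism-class moduli are preserved under Hamiltonian deformations (with a fixed Floer datum). So the skeleton of the argument is: (1) reduce the statement about the Reeb path $\ppp(\reeb)$ to a statement about $\ff$; (2) invoke Hamiltonian invariance of $\ff$; (3) handle the subtlety that $\ppp(\reeb)$ is an \emph{infinite} path asymptotic to Reeb rays, so one must argue that the relevant continuation strips remain in a compact region and that the count is unaffected by the Hamiltonian isotopy, which is compactly supported and hence does not touch the conical ends.

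\textbf{Step 1: the object $\Phi_\snetwork(V)(\ppp(\reeb))$ as a soliton count.} First I would spell out precisely what $\Phi_\snetwork(V)(\ppp(\reeb))$ records: by \cref{def:detourpath}, decomposing $\ppp(\reeb)$ into short and free segments, the parallel transport along $\ppp(\reeb)$ is built from the identity transports along sheet lifts plus the detour contributions $\mu(\ccc{(z)})[\ccc{(\ppp)}]V(\ccc{(\ppp)})$ at each intersection of $\ppp(\reeb)$ with $\snetwork$. Since $\ppp(\reeb)$ traces the boundary of the sectorial neighborhood $S_\e(\reeb)$, which by construction contains only the edges of $\snetwork$ asymptotic to $\reeb$, the only solitons that enter are precisely those whose soliton classes are the augmented $\dfs$-trees asymptotic to $\reeb$, i.e.\ the classes appearing in \cref{prop:asymptoticoftrajectories} and the discussion of augmented $\dfs$-trees in \cref{ssec:flowtrees_specnet}. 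Thus $\Phi_\snetwork(V)(\ppp(\reeb))$ is a generating function, weighted by $V$-holonomies, over the set of soliton classes associated to Reeb chords of $\La$; schematically it packages the data $\{\mu(\soliton{z;\wall})\}$ for all walls asymptotic to $\reeb$.

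\textbf{Step 2: invoke the Floer comparison and Hamiltonian invariance of $\ff$.} Next I would apply Theorem \ref{thm:FamilyFloer_NonAbelianization}.(2): in the adiabatic limit $\e\to 0$, $\Phi_\snetwork$ and $\ff$ are equivalent functors $\loct(\e L)\lr\loct(S)$. Hence $\Phi_\snetwork(V)(\ppp(\reeb))$ agrees with $\ff(V)(\ppp(\reeb))$ (more precisely with the homotopy-refined transport $\ff(V)(\ppp(\reeb))$ of \cref{cor:homotopy_refinement_continuation_map}), which by Theorem \ref{thm:FamilyFloer_NonAbelianization}.(1) is built from the Floer complexes $CF^\ast(T^\ast_zS,L)$ and continuation maps between them. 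These Floer-theoretic data are invariant under compactly supported Hamiltonian isotopies of $L$: a compactly supported isotopy $\varphi_t$ induces a continuation quasi-isomorphism on $CF^\ast(T^\ast_zS,\cdot)$ intertwining the continuation-map structure, hence an isomorphism of the resulting local systems, and this isomorphism is the identity outside the support of $\varphi_t$. The one genuinely technical point — and I expect this to be the main obstacle — is the behavior at the conical ends: $\ppp(\reeb)$ is infinite and asymptotic to Reeb rays, so one must show that the continuation strips contributing to $\ff(V)(\ppp(\reeb))$ are confined to a compact subset of $T^*S$ (using the trapping lemma \cref{lemma:morsetrappinglemma} and the $O(\e)$-energy bounds, as in \cref{section_adg}), and that since the Hamiltonian isotopy is compactly supported it does not alter the asymptotic boundary data along $\reeb$; one then checks the continuation isomorphism is compatible with the asymptotic marking. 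Once this confinement is in place, the count of (cobordism classes of) continuation strips is literally unchanged, so $\ff(V)(\ppp(\reeb))$, and therefore $\Phi_\snetwork(V)(\ppp(\reeb))$, depends only on the Hamiltonian isotopy class of $\blag$. Finally I would note that the homotopy refinement (keeping track of the relative classes $[\ccc{(\ppp)}]$ in $\pi_1(\mathring L)$) is likewise preserved, since a compactly supported Hamiltonian isotopy acts trivially on $\pi_1(\mathring L)$ and the continuation cobordisms carry the relative-homotopy data functorially; this yields the stronger statement that even the homotopy-refined $\Phi_\snetwork(V)(\ppp(\reeb))$ is an invariant.
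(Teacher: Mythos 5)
Your high-level plan — reduce to the Floer side via Theorem \ref{thm:FamilyFloer_NonAbelianization} and then invoke Hamiltonian invariance — matches the opening move of the paper's proof. But the mechanism you propose for the invariance step is different from the paper's and, as written, has a genuine gap.

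The paper does \emph{not} argue via continuation quasi-isomorphisms of $CF^\ast(T^\ast_z S,\cdot)$. Instead, having identified $\Phi_\snetwork(V)(\ppp(\reeb))$ with the adiabatic count of $\dfs$-trees with a semi-infinite root edge asymptotic to $\reeb$, it invokes the Ekholm--Honda--K\'alm\'an correspondence (\cite[Lemma 5.12, Theorem 1.5]{EHKexactLagrangiancobordisms}) to identify these rigid trees with rigid adjusted index-$0$ pseudo-holomorphic disks asymptotic to $\reeb$, and then confronts head on the standard obstruction: the cobordism type of such a moduli space is \emph{not} in general a Hamiltonian invariant, because index~$-1$ disks can bifurcate in a $1$-parameter family (\cite[Lemma 3.13]{EHKexactLagrangiancobordisms}). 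The paper rules these out using $\Lambda$Cref{lem:Maslov0}: Betti Lagrangians have Maslov index $0$, so index-$(-1)$ disks do not occur and the count is an honest invariant. This is the substantive step, and it does not appear anywhere in your proposal.

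Your continuation argument has two problems. First, what it establishes is that the local system $\ff(V)$ is invariant up to a (not canonical) isomorphism under compactly supported Hamiltonian isotopy; turning that into invariance of the specific morphism $\ff(V)(\ppp(\reeb))$ requires showing that the continuation isomorphism is the identity at the asymptotic endpoints of $\ppp(\reeb)$, which you flag but do not carry out — and that step is nontrivial precisely because those endpoints are at a marked point of the Betti surface, where the stalks of $\ff(V)$ as defined in Definition \ref{def:sbchaincomplex} do not exist: $T^\ast_m\obis$ is not a fiber over $S$. Second, and more fundamentally, even if you address the asymptotic markings, the quantity you are computing in the adiabatic limit degenerates to a count of augmentation-type disks with a positive puncture at $\reeb$; the invariance of such a count under Hamiltonian isotopy is not a formal consequence of continuation, for exactly the bifurcation reason the paper flags. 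Without the Maslov-$0$ input your argument either breaks or has to reproduce it in disguise. So the gap is not merely a technicality to be filled in; it is the missing central ingredient.
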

\cref{thm:pathdetourclassesareHaminvariant} will be a consequence of the results we establish in \cref{subsection:familyfloer} and \cref{ssec:detour_are_continuation_strips}, c.f.~\cref{thm:familyFloer}.


\subsection{The functor $\ff$: Family Floer and local systems}\label{subsection:familyfloer} The goal of this section is to define a functor
$$\ff:\loct_1(L)\lr\loct_n(S)$$
using Floer-theoretic methods, and independently of a choice of a spectral network in $S$. For that, we use a version of Family Floer cohomology. Family Floer cohomology was initially pioneered by K.~Fukaya, see e.g.~\cite{Fukaya_FamilyFloer}, along with \cite{Abouzaid_ICM2014} and references therein. In brief, adapting this framework to exact Betti Lagrangians allows us to define a type of Floer cochain complexes $\CF(\fibre{z},L)$, recording Lagrangian intersections between $L\sse T^*S$ and the cotangent fiber $\fibre{z}\sse T^*S$, and continuation maps between these as $z\in S$ varies. Such maps are defined by studying certain (cobordism type of) moduli spaces of pseudo-holomorphic strips. In particular, given a path between $z,z'\in S$, it is possible to construct a parallel transport from $\CF(\fibre{z},L)$ to $\CF(\fibre{z'},L)$, which is what ultimately yields the functor $\ff$. This section presents the necessary details to construct Family Floer cohomology for Betti Lagrangians and use it to define $\ff$. For the rest of the section, we will assume that  $\blag$ is an exact Betti Lagrangian.


\subsubsection{Framework for Floer cohomology}\label{sssec:framework_floer}
Let $\blag\sse(T^*S,\omega_\std)$ be a Betti Lagrangian. The difference between Betti Lagrangian and other types of Lagrangians previously studied in the literature (e.g.~closed cases, or partial wrappings in Liouville sectors) is the existence of horizontal ends for the former, c.f.~the conical and $O(-1)$-ends in ~\cref{section_setup}. These ends naturally appear when studying spectral curves, and thus we now develop the Floer-theoretic techniques for them. In this context, the type of Hamiltonian isotopies that we employ is described as follows:

\begin{definition}\label{def:Hamiltonian}
A Hamiltonian $H:T^{\ast}\obis\to \mathbb{R}$ is said to be \textit{cylindrical} if it is of the form $H=h\cdot \radial$ outside the radius-$R$ disk bundle $D_R^{\ast}\obis$, for some sufficiently large $R\in\R_+$, where $h:\mathring{\obis}\to \mathbb{R}$ is a smooth function on the unit sphere bundle and $r:T^*S\to S$ is the (radial) distance coordinate to the zero section. Such a Hamiltonian $H$ is said to have compact \textit{horizontal support} if the projection $\pi(\text{supp}(dH))$ lies in a pre-compact subset of $\obis$. Families of Hamiltonians parametrized over a compact manifold are said to be \textit{uniformly horizontally supported} if these Hamiltonians are themselves horizontally supported over a compact set and are uniformly cylindrical.\hfill$\Box$\end{definition}

\noindent Let $Z$ be the Liouville vector field on $T^{\ast}\obis$ uniquely determined by $\iota_Z \omega_{st}=\la_{st}$. By definition, an $\omega_{st}$-compatible almost complex structure $J$ is \textit{cylindrical} if $L_Z J=J$ for large enough $r$. As in \cite{knotcontact}, we deform the Sasaki almost complex structure to make it cylindrical via
\begin{align}\label{eq:defac}\defac:=
		\begin{bmatrix}
			0 & \beta(\radial)^{-1}g^{-1} \\ 
			-\beta(\radial) g & 0
		\end{bmatrix}
	\end{align}  
where $\beta:[1,\infty)\to \mathbb{R}_+$ is a smooth increasing positive function such that $\rho(r)\equiv1$ near $\radial=1$ and $\rho\equiv\radial$ near infinity. We onwards implicitly assume that the chosen (families of) cylindrical almost complex structure $J$ agree with $\defac$ outside the cotangent bundle over a pre-compact subset of $\obis$.\\

\begin{remark}
For simplicity, we provide the construction of $\ff$ for exact Betti Lagrangians. The key result obtained (e.g. \cref{thm:characterization}) for the WKB case is similar: the presence of $O(-1)$-ends, rather than conical or cylindrical ends, requires modifying some of the arguments but the qualitative descriptions remains the same, as illustrated by several of the results proven above in both settings. However, in the current paper, we will not construct the full functor $\ff$ for the non-exact case, though we essentially compute it for infinitesimally short line segments. \hfill$\Box$
\end{remark}

Let $\mathcal{J}(T^{\ast}\obis)$ denote the space of compatible almost complex structures on $(T^{\ast}\obis,\omega_{\std})$. To introduce the precise Floer data needed to construct Floer cochain complexes, we use the following interplay between the certain Lagrangians $P\sse T^*S$ and almost complex structures:

\begin{definition}\label{def:geombounddef}
Let $J_s\in \mathcal{J}(T^{\ast}\obis)$ be a 1-dimensional family of almost complex structures, with real parameter $s\in\R\sse\C$. By definition, a Lagrangian $\tlag\sse T^*S$ is said to be \textit{uniformly geometrically bounded} with respect to $J_s$ if it is geometrically bounded with respect to the family of metrics induced by $J_s$ and $\omega_\std$. A 1-dimensional family of Lagrangians $\tlag_s$ is said to be uniformly geometrically bounded if given the associated movie totally real submanifold $\{(\tlag_s,s)\}\sse (T^{\ast}\obis\times \mathbb{C},\omega_\std\oplus\omega_\std)$, there exists a constant $C\in\R_+$ and a $J_s$-compatible symplectic form $\omega_{\tlag_s}$ such that $(\tlag_s,s)$ is Lagrangian and the triple $(\omega_{\tlag_s},J_s,(\tlag_s,s))$ is geometrically bounded.\hfill$\Box$
\end{definition}

A summary result of the needed outcome from the literature on Lagrangian Floer theory reads as follows:

\begin{proposition}[Floer Complexes]\label{prop:standardfloertheory}
Let $P,K\sse (T^*S,\omega_\std)$ be a transverse pair of spin Lagrangians, each endowed local systems $\locsys_{P},\locsys_{K}$ of $\C$-modules and with chosen spin structures. The following holds:
\begin{enumerate}
\item Let $J_{P,K}:[0,1]\to \mathcal{J}(T^{\ast}\obis)$ be a generic time-dependent family of cylindrical almost complex structures\footnote{This is sometimes known as the Floer data.} such that $P$ and $K$ are uniformly geometrically bounded. Then, there exists a cochain complex
$$(CF^{\ast}((P,\locsys_{P}),(K,\locsys_{K});J_{P,K}),\dd),$$
known as the Lagrangian Floer complex, generated by the intersection points in $P\cap K$ and whose differential is obtained by counting the $0$-dimensional part of the moduli space of $J_{P,K}$-holomorphic strips bounded by $P$ and $K$.\\

\item The chain-homotopy type of the complex in (1) is independent on the choice of $J_{P,K}$: in particular, a homotopy from $J_{P,K}$ to $J'_{P,K}$, for which $P$ and $K$ are uniformly geometrically bounded, induces a quasi-isomorphism
$$(CF^{\ast}((P,\locsys_{P}),(K,\locsys_{K});J_{P,K}),\dd)\lr(CF^{\ast}((P,\locsys_{P}),(K,\locsys_{K});J'_{P,K}),\dd).$$
\item Let $K_s:K\times [0,1]\to T^{\ast}\obis$ be a Lagrangian isotopy, with associated local systems $\locsys_{K_s}$, such that either its movie is uniformly geometrically bounded, or the isotopy generating $K_s$ is a composition of a compactly supported Hamiltonian isotopy with a global Hamiltonian isotopy fixing $P$. Then $\{K_s\}_s$ induces a continuation quasi-isomorphism
$$c:(CF^{\ast}((P,\locsys_{P}),(K_0,\locsys_{K_0});J_{P,K}),\dd)\lr(CF^{\ast}((P,\locsys_{P}),(K_1,\locsys_{K_1});J_{P,K}),\dd),$$
which is compatible with concatenation, and depends on $\{K_s,\locsys_{K_s}\}$ only up to chain homotopy.\\

\item If $P,K$ are both Maslov zero Lagrangians, then the differential of (1) vanishes and thus
$$(CF^{\ast}((P,\locsys_{P}),(K,\locsys_{K});J_{P,K}),\dd)\cong \displaystyle\bigoplus_{x\in P\pitchfork K}(\hom(\locsys_{P}(x), \locsys_{K}(x))).$$
is the $\C$-module generated by the intersection points, endowed with the zero differential.\hfill$\Box$
\end{enumerate}
\end{proposition}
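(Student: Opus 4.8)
\textbf{Proof strategy for Proposition~\ref{prop:standardfloertheory}.} The plan is to reduce every item to the standard machinery of Lagrangian Floer theory (as in \cite{FOOO1,SeidelZurich,McduffSalamon,Floer88_MorseLagrangianIntersections}), the only nonstandard point being the presence of horizontal ends, which is precisely what the \emph{uniform geometric boundedness} hypothesis is designed to control. First I would recall that once one has a pair $(P,K)$ of transverse Lagrangians which are uniformly geometrically bounded with respect to a cylindrical almost complex structure $J_{P,K}$, the Gromov-type compactness argument of \cite{Groman15_FloerWithout} (or \cite[Section~7]{McduffSalamon} adapted to the open setting) applies: the geometrically bounded symplectic form $\omega_{\tlag}$ together with the $C^\infty$-bounds on the metric provide an a~priori $C^0$-bound and monotonicity estimates for $J_{P,K}$-holomorphic strips of bounded energy, so the moduli spaces $\mathcal{M}(x,y;J_{P,K})$ are compact up to breaking. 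Moreover, cylindricity of $J_{P,K}$ at infinity together with the asymptotic behavior of the ends (conical for exact Betti Lagrangians, as established in \cref{subsection:trappinglemma}) forces holomorphic strips to stay in a fixed compact region by the usual integrated maximum principle for the function $r$; this is where we use that $P=L$ and $K=T^*_zS$ (or a Hamiltonian perturbation thereof) have controlled behavior near the marked points. Given this compactness, transversality for generic time-dependent $J_{P,K}$ is the standard Sard--Smale argument of Floer--Hofer--Salamon, and orientations of the moduli spaces are fixed by the chosen spin structures on $P$ and $K$, with the local systems $\locsys_P,\locsys_K$ twisting the differential in the usual way (replacing $\pm 1$ coefficients by $\mathrm{Hom}$'s of stalks transported along the strip boundary). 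Counting rigid elements and checking $d^2=0$ via the description of the boundary of one-dimensional moduli spaces gives item~(1).

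For item~(2), I would use that a homotopy $J^t_{P,K}$ between two admissible Floer data, along which $P,K$ remain uniformly geometrically bounded, gives a moduli space of parametrized holomorphic strips; the same compactness/transversality package yields a chain map, and a homotopy-of-homotopies argument shows it is a quasi-isomorphism independent of the path up to chain homotopy --- this is verbatim \cite[Section~10]{SeidelZurich}, again with the caveat that all estimates need the uniform boundedness hypothesis, which is built into the statement. For item~(3), the continuation map associated to an isotopy $K_s$ is obtained by counting holomorphic strips with moving boundary condition $(P,\{K_s\})$; when the movie $\{(K_s,s)\}\subset T^*S\times\C$ is itself uniformly geometrically bounded the same compactness applies, and when instead $K_s$ is a compactly supported Hamiltonian isotopy composed with a global Hamiltonian isotopy fixing $P$, one reduces to the compactly supported case (where compactness is automatic) after an energy estimate using that the global piece fixes $P$ so contributes no new ends. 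Compatibility with concatenation and homotopy-invariance in $\{K_s,\locsys_{K_s}\}$ are the standard gluing arguments.

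The last item~(4) is the only genuinely ``computational'' point: if $P$ and $K$ are both Maslov-zero then any holomorphic strip $u$ contributing to the differential has Fredholm index $0$ determined by the Maslov index of the boundary loop, and a positivity/energy argument shows a rigid strip in a nonconstant class would have strictly positive symplectic area while its Maslov contribution to the index vanishes, forcing the index to be negative --- hence no such strips exist, the differential is zero, and the complex is freely generated by $P\pitchfork K$ with coefficients $\mathrm{Hom}(\locsys_P(x),\locsys_K(x))$. For Betti Lagrangians the Maslov-zero property is exactly \cref{lem:Maslov0} (referenced in the excerpt), and for a cotangent fiber $K=T^*_zS$ it is immediate since fibers are graphical. \textbf{The main obstacle} I anticipate is not any single item but the uniform control of holomorphic strips near the horizontal ends: one must verify that cylindrical $J$ together with the conical (resp.\ $O(-1)$) asymptotics genuinely yields the maximum principle and the monotonicity estimate that feed Gromov compactness, ruling out strips escaping to the marked points or picking up unbounded energy. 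This is the content of the geometric boundedness results (\cref{prop:geombounded} and the trapping analysis of \cref{subsection:trappinglemma}), and in the write-up I would organize the proof so that items (1)--(4) are quick once this ``no escape to infinity'' input is cleanly stated; the rest is bookkeeping of signs and local systems, which I would not grind through.
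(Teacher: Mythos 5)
Items (1)--(3) follow the paper's proof essentially verbatim: defer to standard Lagrangian Floer theory (Floer, Oh, Seidel, K.\ Fukaya et al.) with local systems, and handle non-compactness via geometric boundedness together with a monotonicity inequality (the paper cites Audin--Lafontaine and \cite[Prop.~3.19]{GPSCV} rather than Groman, but the idea is the same). The one genuine defect is your argument for item~(4). You claim that, because the Maslov class of $\partial u$ vanishes while a nonconstant rigid strip has strictly positive symplectic area, the Fredholm index is "forced to be negative". This is not a valid inference: Maslov-zero boundary conditions do not couple index to energy. For two Maslov-zero Lagrangians the Fredholm index of a strip between intersection points $x$ and $y$ equals the cohomological degree difference $|x|-|y|$, independent of the homotopy class of $u$; it can perfectly well equal $1$, and such index-one strips contribute to Floer differentials of Maslov-zero pairs in completely standard situations (e.g.\ the zero section vs.\ the graph of $dh$ in $T^*S^1$, where the differential is the Morse differential of $h$, computed by nonconstant rigid strips). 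Vanishing of the Maslov class only renders the index independent of the homotopy class of the strip; it says nothing about the sign or vanishing of the index itself.

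The paper's argument, while terse, is the right one: in the geometric situation where this proposition is applied -- a multi-graphical Betti Lagrangian paired with a cotangent fiber or wrapped fiber, with the local holomorphic models near branch points fixed -- every intersection point sits in a single Floer degree, so the moduli space of nonconstant strips contributing to the differential has virtual dimension $-1$ and is therefore empty for regular Floer data. If you want to argue abstractly from "$P,K$ Maslov zero" you would additionally need to establish that the absolute grading is constant across $P\pitchfork K$; that is an extra geometric input (which the paper also leaves implicit, but which its quoted index count is at least consistent with), not a formal consequence of Maslov-zero. In the write-up you should either make that grading normalization explicit or restrict to the concrete pair of Lagrangians under consideration, and drop the area/index coupling, which has no content here.
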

\begin{proof}
For $P,K$ closed Lagrangians and $\locsys_P,\locsys_K$, the Lagrangian Floer complex and its invariance were established in \cite{Floer88_MorseLagrangianIntersections}. A more contemporary treatment which includes the construction and invariance of such complexes for exact and monotone Lagrangians is \cite[Chapter 16]{Ohsymplectictopology}. The case were the local systems $\locsys_P,\locsys_K$ are non-trivial is addressed in \cite{kontsevichhms}. This addresses Parts (1) and (2) in these cases. The method of moving Lagrangians, for Part (3), was introduced in \cite{ohfloercohomology}, with \cite[Chapter 14.4]{Ohsymplectictopology} providing the necessary energy estimates. For $P,K$ (and their movies) not necessarily compact Lagrangians, but geometrically bounded, the statement follows by using the monotonicty inequalities introduced in \cite[Section 5]{Audin1994HolomorphicCI}, cf.~also the arguments surrounding the proof of \cite[Proposition 3.19]{GPSCV}. For Part (4), with $P,K$ Maslov $0$, the vanishing of the differential is a consequence of the virtual dimension of the moduli space of strips between $P$ and $K$: it necessarily has to be $-1$ and so, for regular Floer data, it must be empty. 
\end{proof}

\color{black}
\noindent The case of interest in this manuscript is $P=\fibre{z}$ a cotangent fiber transverse to $\blag$, and $K=L$ a Betti Lagrangian.  For \cref{prop:standardfloertheory}.(4) to apply, we must ensure the hypothesis of Maslov $0$ for $\blag$. It is verified as follows:
 \begin{lemma}\label{lem:Maslov0}
A Betti Lagrangian $\blag\sse T^*S$ is Maslov $0$.
\end{lemma}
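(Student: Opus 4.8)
The plan is to compute the Maslov class of $\blag$ directly from its description as a branched cover of the zero section, using the fact that $\blag$ is glued from two kinds of local pieces: the smooth sheets, which are Lagrangian graphs $\mathrm{gr}(df_i)$, and the cusp component near each $\dfs$-branch point. First I would recall that for any Lagrangian of the form $\mathrm{gr}(df)\sse T^*S$ the Maslov class vanishes identically, since it is Hamiltonian isotopic to the zero section through the path $\mathrm{gr}(t\,df)$; in particular the Maslov class of $\blag$ is supported near the ramification locus $\pi^{-1}(\caustL)$ and is detected by loops $\tau\sse\blag$ that encircle a branch point. So the entire computation reduces to the local model: the real part of the holomorphic cusp $\{(\la_\C-c)^2=z\,dz^2\}\sse T^*\C$ near $z=0$, and I would show that the Maslov index of the generator of $H_1$ of a small punctured disk in this local piece is zero.

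The key step is the local index calculation at a $\dfs$-point. Here I would use that near a branch point $\blag$ is, by our standing assumption in \cref{subsection:scurves}, the real part of a \emph{holomorphic} Lagrangian curve $\scurve\sse(\T^*\sS,\omega_\C)$, and that the Maslov class of the real locus of a holomorphic Lagrangian is computed by the square of the holomorphic phase. Concretely, parametrizing $\scurve$ by $w\in\C$ via $z=w^2$, $\la_\C = \tfrac23 w^3\,dw$ (the holomorphic cusp of \cref{subsection:Morselines}), the holomorphic Lagrangian plane field along $\scurve$ is $T\scurve = \C\cdot\partial_w$ pushed into $T^*\C\cong\C^2$, and its Grassmannian phase is governed by a holomorphic function of $w$ that is nowhere zero on the punctured disk; traversing the loop $w\mapsto e^{i\phi}w_0$, $\phi\in[0,2\pi)$, the relevant phase winds by an integer that one checks to be $0$ (equivalently: the standard fact that the $D_4^-$/simple-cusp Legendrian front singularity is the front of a Maslov-$0$ Legendrian, see e.g.~the rotation-number bookkeeping in \cite{legendrianweaves}). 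Assembling, any loop in $\blag$ is homologous to a sum of loops each either contained in a graphical piece (index $0$) or equal to a small loop around a branch point (index $0$), hence the Maslov class of $\blag$ vanishes.

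I expect the main obstacle to be making the local cusp computation clean: one must be careful that the ambient identification of $T^*\C$ with $\C^2$ used to measure the Lagrangian Grassmannian phase is the one compatible with the Sasaki/standard complex structure, and that passing to the branched double cover $w\mapsto w^2$ does not introduce a spurious half-integer winding — i.e.~one must verify that the loop around $z=0$ lifts to an \emph{honest} loop (not a half-loop) in the $w$-parameter of the cusp component, which it does precisely because the cusp component is a two-sheeted cover branched at one point so the deck loop in $z$ corresponds to $w\mapsto -w$ composed with returning along the other sheet, yielding a full $w$-circle. Once that bookkeeping is pinned down the index is a short residue/winding-number computation. Alternatively, and perhaps more cleanly, one can invoke that $\blag$ is (a $\theta$-part or exact model of) a weave Lagrangian and cite that weaves produce Maslov-$0$ Lagrangians \cite{legendrianweaves}; I would present the direct argument as the primary route and mention this as a cross-check.
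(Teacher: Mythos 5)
Your approach is correct but genuinely different from the paper's, and it is worth spelling out the contrast.

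The paper argues via the adiabatic rescaling $\e L$, $\e\to0$: away from the branch points the rescaled sheets $C^\infty$-converge to the zero section, which is $J_g$-holomorphic, and near the branch points $L$ is holomorphic by the standing hypothesis in \cref{subsection:scurves}. Hence $\e L$ is arbitrarily close to being holomorphic everywhere, so its Lagrangian phase function is arbitrarily close to a constant, which forces the Maslov class (an $\e$-invariant) to vanish. This is global and "analytic" in flavor, and it fits the adiabatic philosophy that drives Sections \ref{section_adg}--\ref{section_Floer}.

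You instead decompose $L$ into graphical pieces plus $\dfs$-cusp germs and compute locally. This is legitimate, and one can even make it cleaner than you have: because $\pi:L\to S$ is a simple branched cover, the caustic $\caustL$ is an isolated set of points, so it is real codimension $2$ in the surface $L$. On $L\setminus\caustL$ the Gauss map is valued in the contractible Schubert cell of Lagrangian planes transverse to the vertical distribution (the fact that $\mathrm{gr}(df)$ is Maslov $0$ is exactly this observation applied to one sheet), hence $\mu_L$ restricts to $0$ on $L\setminus\caustL$. Since $H^1(L)\hookrightarrow H^1(L\setminus\caustL)$ by codimension $2$, this alone already gives $\mu_L=0$, without any local computation at the $\dfs$-point. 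Your local cusp computation is therefore redundant but serves as a useful cross-check; it is also correct: the $\dfs$-germ is the real part of an $I$-holomorphic Lagrangian curve, which is special Lagrangian of constant phase for the Sasaki $J_g$ (in the hyperkähler picture $\omega_J|_\scurve=\omega_K|_\scurve=0$ while $\omega_I|_\scurve$ is the area form), so the winding around a meridian is literally zero. Your worry about a spurious half-integer from the branched cover is unfounded: the meridian in $L$ around the ramification point is a genuine loop $w\mapsto e^{i\phi}w_0$, $\phi\in[0,2\pi)$, double-covering the base meridian; there is no half-loop to patch.

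In short: both arguments are valid. Yours is more elementary and purely topological (it uses only that the caustic is isolated and that the complement is graphical), and in fact the local cusp analysis you proposed as the crux is unnecessary once the codimension-$2$ observation is made. The paper's adiabatic argument, by contrast, does not rely on the caustic being isolated and so extends to Lagrangians whose caustics contain fold lines, at the cost of invoking the rescaling machinery.
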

\begin{proof}
By definition, the metric $g$ induces a complex structure on $\obis$, and $\blag$ is holomorphic over some neighborhood of the $\dfs$-singularities. Now, over any pre-compact subset, the smooth sheets of the rescaled $\e \blag\sse T^*S$ uniformly $C^{\infty}$-converge to the zero section, outside of some fixed neighborhood of the $\dfs$-singularities. Since the zero section is holomorphic and $\blag$ is holomorphic near the branch points, $\e\blag$ is arbitrarily close to being holomorphic over any precompact subset of $\obis$ as $\e\to0$. Therefore the phase function of $\e\blag$ is arbitrarily close to zero over any pre-compact subset of $\obis$ and thus $\blag$ is Maslov $0$ (cf.~ also \cite[Section 6.2.1]{nho2024familyfloertheorynonabelianization}).
\end{proof}

\noindent The computation of the Floer complexes in \cref{prop:standardfloertheory}.(1) and the continuation maps in \cref{prop:standardfloertheory}.(3) is typically challenging: for reasonably generic Lagrangians $P,K$, there is no explicit description of the continuation maps beyond their definition. This in part is caused by a difficulty in explicitly computing the (cobordism type of the moduli spaces of) pseudo-holomorphic strips involved in these maps. An important conceptual point is that in our setting we are able to use a spectral network $\snetwork$ adapted to a Betti Lagrangian $L\sse(T^*S,\omega_\std)$ in order to succeed at explicitly computing such pseudo-holomorphic strips, at least in the adiabatic limit. For instance, the arguments in \cref{section_adg} ensure that, adiabatically, there are no pseudo-holomorphic strips bounded by $\e\blag$ and $\fibre{z}$ for $z\in\snetwork^c$, cf.~\cref{thm:snetworkadg}.(1), which is the key ingredient behind showing that the continuation strips for $(CF^\ast(\fibre{z},\blag),\dd)$ with $z\in\rho$ moving along a $\snetwork$-free path $\rho$ are ``trivial" (we will make this notion of ``trivial strips" precise using \cref{lem:gaugesmallconstant} and the discussion afterward).

\begin{remark}(1) The dependence on spin structures in \cref{prop:standardfloertheory} can be more compactly repackaged by passing to sphere bundles, after choosing the Floer data. This is related to the map $\Phi_\snetwork$ in \cref{subsubsection:pathdetourclasses} being defined on homotopy classes of paths on the unit sphere bundle; see also \cref{rmk:loc_vs_twloc}.(3).

(2) We often write $CF^{\ast}(P,K)$ for the Lagrangian Floer complex, due to the invariance \cref{prop:standardfloertheory}.(2), and implicitly understanding $\dd$ and the local systems if the context allows for that. For instance, if $\tlag=\fibre{z}$ is a cotangent fibre, the local system $\locsys_{K}$ will always be trivial.\hfill$\Box$
\end{remark}



\subsubsection{The stalks of $\ff(V)$}\label{sssec:stalks_floer_locsys} A naive application of \cref{prop:standardfloertheory} leads to the following ansatz for constructing $\ff:\loct_1(L)\lr\loct_n(S)$. Given a local system $V\in \loct_1(L)$ of rank-1 $\C$-modules, we define the stalk $\ff(V)_z$ of $\ff(V)$ at a point $z\in\obis$ to be $\C$-module underlying $CF(\fibre{z},\blag;V)$. The following first considerations then appear:

\begin{enumerate}
    \item $\ff(V)_z$ is well-defined as a cochain complex. Thus considering just its underlying $\C$-module is not invariant unless the differential vanishes.\footnote{In which case the $\C$-module coincides with its cohomology and is invariant.}

    \item $\ff(V)_z$ is a rank-$n$ $\C$-module if $L\sse T^*S$ is a Betti Lagrangian only if $z\not\in K_L$ is not a branch point.

    \item The stalk $\ff(V)_z$ and the resulting local system $\ff(V)$ are independent of the adiabatic parameter $\e$, up to isomorphism. However, $\Phi_\snetwork$ and the comparison to spectral network certainly requires passing to the adiabatic limit $\e\to0$.
\end{enumerate}


\noindent The three consideration are intertwined, and they are all solved at once by considering $z\in\snetwork^c$ and introducing an adiabatic parameter $\e\to0$. This is achieved by considering the stalk $\ff(V)_z$ to be $\C$-module underlying $CF(\fibre{z},\e L;V)$ for $\e$ small enough. This is well-defined and addresses the above thanks to the following fact, which allows us to choose the Floer datum to be $\defac$.

\begin{proposition}\label{prop:regulardata}
Let $z\in \snetwork^c$ and $U_z\sse S$ be the disk-free neighborhood. Then, there exists $\e(z)\in\R_+$ the moduli space of non-constant $\defac$-holomorphic strips between $\e \blag$ and $\fibre{w}$ is empty for any $w\in U_z$.
\end{proposition}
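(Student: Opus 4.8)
The statement asserts a uniform absence of non-constant $\defac$-holomorphic strips with boundary on $\e\blag$ and a cotangent fiber $\fibre{w}$, for $w$ near $z$ and $\e$ small. The plan is to deduce this directly from \cref{thm:snetworkadg}.(1), which is the analytic heart of the matter, once we reconcile two discrepancies between that theorem and the present statement: (a) \cref{thm:snetworkadg} is phrased in terms of the Sasaki-type complex structure $J_g$ adapted to the metric $g$, whereas here we want the cylindrical Floer datum $\defac$ from \cref{eq:defac}; and (b) \cref{thm:snetworkadg} only rules out disks below a fixed energy cut-off $E$, so we must separately control high-energy strips. I would organize the argument in three steps.

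\textbf{Step 1: reduction to the energy cut-off case.} First I would observe that any $\defac$-holomorphic strip $u$ between $\e\blag$ and $\fibre{w}$ has symplectic area equal to $\e$ times the difference of the primitive of $\la_\std$ on $\blag$ evaluated at the two intersection points determining $u$ (using exactness of $\blag$ — \cref{lem:Maslov0} and Definition \ref{definition:topologicalexactspectralcurve} — and that $\fibre{w}$ has $\la_\std$ vanishing on it), hence its energy is bounded by $\e \cdot C$ where $C$ depends only on the oscillation of the (bounded, because of the conical ends) primitive of $\la_\std|_\blag$ over the relevant compact region together with the controlled contribution near the marked points. In particular the energy of any such strip is below $\e E$ for a fixed $E$ independent of $\e$ and $w$ (for $w$ ranging in a precompact neighborhood of $z$). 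This is exactly the $\e$-strip sequence bound of \cref{def:eps_strip_sequence}.(2), so no high-energy phenomenon can occur and we are always in the regime covered by \cref{thm:snetworkadg}.

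\textbf{Step 2: replacing $\defac$ by $J_g$.} Since $z\in\snetwork^c$ is an interior point of $S$, I would choose the disk-free neighborhood $U_z = U_z(E)$ small enough that its closure is precompact and disjoint from the region $r\geq R$ where $\defac$ differs from the Sasaki structure $J_g$; by the monotonicity/confinement argument (the truncated reverse isoperimetric inequality, or more simply the a priori diameter bound \cref{lem:diametercontrol} coming from geometric boundedness, \cref{prop:geombounded}) any strip of energy $\leq \e E$ through a point over $U_z$ stays, for $\e$ small, inside $T^*K$ for a fixed precompact $K$ on which we may arrange $\defac = J_g$. On that region the two almost complex structures literally agree, so a $\defac$-holomorphic strip is a $J_g$-holomorphic strip, and \cref{thm:snetworkadg}.(1) applies verbatim: there is a neighborhood $U_z(E)$ and $\e_0(E,z)$ such that no non-constant $J_g$-holomorphic disk of energy below $E$ exists between $\e\blag$ and $\fibre{w}$ for $w\in U_z(E)$, $\e\in(0,\e_0]$. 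Setting $\e(z):=\e_0(E,z)$ and shrinking $U_z$ if necessary yields the claim.

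\textbf{Main obstacle.} The one genuinely delicate point is the confinement claim in Step 2 — that strips of small energy with one boundary on the cotangent fiber over a point far from $\mkpts$ cannot escape to the region where $\defac\neq J_g$. This requires the uniform diameter bound for pseudoholomorphic strips of bounded energy, which in turn rests on the geometric boundedness of $\blag$ (and of the movie of fibers) with respect to the relevant almost complex structures; I would cite \cref{lem:diametercontrol} and \cref{prop:geombounded}, noting that $\defac$ is cylindrical and hence the induced metric is geometrically bounded at infinity, so the monotonicity inequality of \cite[Section 5]{Audin1994HolomorphicCI} gives the needed bound with constants independent of $\e$ and $w$. Everything else — the energy computation of Step 1 and the final invocation of \cref{thm:snetworkadg} — is routine given the results already established.
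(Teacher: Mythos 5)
Your overall strategy — reduce to $J_g$ and invoke \cref{thm:snetworkadg} — is the right one and matches the paper's, and Step~1 (the a priori $O(\e)$ energy bound from exactness) is fine. But Step~2 has a genuine gap, and it is the crux of the statement.

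The discrepancy between $\defac$ and $J_g$ is \emph{vertical}, not horizontal: by the construction in \cref{eq:defac}, $\defac$ and $J_g$ coincide precisely on the region $\{r\le 1\}$ (where $\beta\equiv 1$) and differ for large $r$, \emph{over every point of the base} $\obis$. So the issue is not to keep the strips over a precompact $K\sse \obis$; it is to keep them inside the unit disk bundle in the fiber direction. Two things in your write-up go wrong because of this confusion. First, ``choose $U_z$ small enough that its closure is $\ldots$ disjoint from the region $r\ge R$'' is not a meaningful requirement: $U_z\sse\obis$ and $\{r\ge R\}\sse T^*\obis$ live in different spaces. Second, ``inside $T^*K$ $\ldots$ on which we may arrange $\defac=J_g$'' is false as stated: for a precompact $K\sse\obis$, the set $T^*K$ still contains all of $\{r\ge R\}$ over $K$, and $\defac\neq J_g$ there no matter what $K$ you take.

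Moreover, the lemmas you cite do not supply the missing vertical confinement. \cref{lem:diametercontrol} is about $J_g$-holomorphic half-disks whose domain is \emph{already assumed} to lie in a disk bundle, and the conclusion is horizontal ($D^\ast N_{CE}(K)$); invoking it here would be circular, since whether the strips stay in the disk bundle is exactly what needs proving. \cref{prop:geombounded} is about geometric bounds for meromorphic spectral curves and, at best, is an ingredient toward monotonicity, not a confinement statement for the cylindrical $\defac$. The tool the paper actually uses is the \emph{integrated maximum principle} for the cylindrical almost complex structure $\defac$ (cf.\ \cite[Lemma~3.28]{nho2024familyfloertheorynonabelianization}): because $\defac$ is of contact type at infinity, a suitable radial function is $\defac$-plurisubharmonic and forces any $\defac$-holomorphic strip with boundary on $\e\blag\cup\fibre{w}\sse\{r\le 1\}$ to remain inside $\{r\le 1\}$. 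On that sub-level set $\defac\equiv J_g$, and \cref{thm:snetworkadg} concludes. That single ingredient is what your proof is missing; once you replace your Step~2 confinement by the integrated maximum principle for $\defac$, the remainder of your argument goes through.
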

\begin{proof}
Choose $\e\in\R_+$ small enough such that $\e\blag$ lies in the unit disk bundle. Then, the integrated maximum principle argument (see e.g.~\cite[Lemma 3.28]{nho2024familyfloertheorynonabelianization}) implies that there exists a bijective correspondence between $\defac$-holomorphic strips bounded by $\e\blag$ and $\fibre{z}$, and $J_g$-holomorphic strips bounded $\e\blag$ and $\fibre{z}$. Indeed, the integrated maximum principle guarantees that the images of the $\defac$ and $J_g$-holomorphic strips lie within the unit disk bundle, and $\defac\equiv J_g$ coincide inside this disk bundle. Then \cref{thm:snetworkadg} implies the statement.
\end{proof} 

\begin{remark}\label{eq:salamon-weber equvialence}
The dependence of such putative stalk $\ff(V)_z$ (more precisely, the Lagrangian $\blag$) on the adiabatic parameter $\e$ can also effectively be removed, at the cost of introducing $\e$-dependence on the Floer datum instance. Indeed, it follows from \cite[Remark 1.3]{MR2276534} that the rescaling $v(s,t)=\e^{-1}\cdot u(s, t)$ gives a bijective correspondence between $J_g$-holomorphic discs bounded by $\e\blag$ and $\fibre{z}$, and $J_{\e^{-1}g}$-holomorphic discs bounded by $\blag$ and $\fibre{z}$.\hfill$\Box$
\end{remark}

\noindent Let $V\in\loct(L)$, then the above discussion, including \cref{prop:regulardata} and \cref{eq:salamon-weber equvialence}, leads to the following definition for the stalks of $\ff(V)\in\loct(S)$:

\begin{definition}[Stalks for $\ff(V)$]\label{def:sbchaincomplex}
Let $V\in\loct(L)$ and $z\in\sphb$ be such that $\pi(z)\in \snetwork^c$ and $\e\in\R_+$. The stalk $\ff(V)_z$ is defined to be
\begin{align}
\ff(V)_z:=CF(\fibre{z},\e\blag)=\bigoplus_{i=1}^n V_{z^i.}
\end{align}
where $z^i\in\sphsc$ are the lifts of $z$. That is, the stalk at $z$ is the Lagrangian Floer complex defined by the rescaled Betti Lagrangian $\e\blag$ and the cotangent fiber $\fibre{\pi(z)}\sse T^*\obis$, where all the elements are concentrated in degree $0$ and the differential is trivial.\hfill$\Box$
\end{definition}
Two comments on \cref{def:sbchaincomplex}:\\
\begin{enumerate}
    \item We use of the sphere bundles $\sphb$ and $\sphsc$ in order to have signs over $\mathbb{Z}$ (and thus $\C$) well-defined. As said above, spin structures could be used instead when working with the Betti Lagrangian $L$ and cotangent fibres in $T^*S$. An argument in favor of using sphere bundles is that one can refine this framework to quantum local systems, where the $q$-variable, keeping track of the protected spin character $y$ in \cite[Section 3.2]{GMN13_Framed} (see also \cite[Section 4.2]{GMN12_WallCrossCoupled}), is now the value of the monodromy of the local system along the sphere fiber. This restricts to twisted local systems when $q\to-1$, which is our setup, but it is also valuable to develop the theory without specializing the $q$-variable and thus we setup the groundwork for such generalization to be rather natural.\\

    \item The dependence on $\e$ is emphasized as a reminder that computations take place in the adiabatic limit $\e\to0$, where $\snetwork$ can be characterized Floer-theoretically. It is nevertheless not strictly necessary, which can be argued as follows. First, $\blag$ is a uniformly bounded multi-graph and thus there exists a radius $\heightR\in\R_+$ such that the Betti Lagrangian $\blag\subset D^{\ast}_{\heightR} \obis$ lies within the disk bundle of $T^*S$ of radius $R$. Second, consider a smooth family $\beta_s(\radial), s\in[\e,1]$, of deformation functions such that\\
    \begin{itemize}
        \item[(i)] Each $\beta_s(r)$ is constant for $\radial\leq \heightR$ and are uniformly linear at infinity $r\to\infty$,
        \item[(ii)] $\beta_{\e}=\e$ for $r\in[0,2\heightR)$ and $\beta_\e=\radial$ for sufficiently large $\radial$,
        \item[(iii)] $\beta_1=1$ for $\radial\in[0,3\heightR)$.\\
    \end{itemize}

\noindent If $\defac^s=J_g^{\beta_s}$ denotes the associated family of deformed Sasaki almost complex structures, then $\defac^s$ is such that $\defac^{\e}=J_{\e^{-1}g}$ for $\radial\leq R$ and $\defac^{1}=J_{g}$ for $\radial \leq R$. Also, the family is uniformly geometrically bounded and of generalized contact type, in the sense that $\radial^2 \beta^{-1}_s$ are all $\defac^s$-plurisubharmonic. Therefore, we can apply \cref{prop:regulardata} and  \cref{eq:salamon-weber equvialence} and identify $CF(\fibre{z},(\e\blag,\locsys_{\e\blag}),\defac)\cong CF(\fibre{z},(\blag,\locsys_{\blag}),\defac^{\e})$, the latter being defined using $\defac^{\e}$. Note that interpolating $CF(\fibre{z},\blag;\defac^s)$ from $s=\e$ to $s=1$ can be done using standard techniques because there are no moving boundaries involved. \\

\item The stalks for $\ff(V)_z$ are defined for points in $z\in\snetwork$. Similarly, the parallel transport in \cref{sssec:holonomy_floer_locsys} is also defined for paths with endpoints on $\snetwork$. The choices of Floer data, which are done homotopy-coherently, allow for such well-definedness on the entirety of $S$.\\
\end{enumerate}

The stalks for $\ff(V)$ given as in \cref{def:sbchaincomplex}, we now move to the parallel transport for the twisted local systems $\ff(V)$. As with \cref{subsubsection:pathdetourclasses}, the parallel transport isomorphisms contain the more subtle and interesting information, in this case that of continuation strips.\\


\subsubsection{The parallel transport of $\ff(V)$}\label{sssec:holonomy_floer_locsys} Let $V\in\loct_1(L)$ be a twisted $\GL_1$-local system. In \cref{sssec:stalks_floer_locsys}, we explained the choice of stalks for $\ff(V)$, which consisted of $\ff(V)_z=CF(\fibre{z},\e\blag)=\bigoplus_{i=1}^n V_{z^i}$ for a point $z\in \sphb$ whose projection to $\obis$ is not a branch point of the projection $L\lr\obis$. Given a path $\rho:[0,1]\lr \mathring{S}$, with $z=\rho(0)$ and $z'=\rho(1)$, we now want to construct a parallel transport isomorphism $\ff(V)(\rho):\ff(V)_z\lr\ff(V)_{z'}$. This will uniquely determine the image twisted local system $\ff(V)\in\loct_n(S)$. We will first construct a local system $\ff_{\epsilon}(V)$ for each adiabatic parameter $\epsilon>0$ with the stalks as in \cref{def:sbchaincomplex}, for points contained outside some $\e$-dependent nested neighbourhood of $\snetwork$, that converges to $\snetwork^c$ as $\e\to 0$. These local systems will come with the property that for all $0<\e'<\e$, there is a unique natural isomorphism $\ff_{\epsilon}(V)$ and $\ff_{\epsilon'}(V)$, which stabilizes to the identity map over morphisms with end-points on $\snetwork^c$. We then define $\ff(V)$ as the direct limit local system. Intuitively, the parallel transport for $\ff_{\epsilon}(V)$ is a generalization of the continuation maps in \cref{prop:standardfloertheory}.(3). The continuations maps in \cref{prop:standardfloertheory}.(3) cannot be applied in our case because the (moving) fibers $\fibre{z}$ are not geometrically bounded. We will consider instead deformed continuations strips, which introduce correction terms to the standard continuation maps that account for the failure of geometric boundedness of the moving fiber. Such parallel transport is constructed as follows.\\

Let $H_s:T^*S\times[0,1]\lr\R$ be a uniformly horizontally supported time-dependent Hamiltonian isotopy generating the fiber transport along the \textit{inverse} path of $\inva$ of $\alpha$, $X_s$ its Hamiltonian vector field and $\psi_s$ its flow, and consider the infinite strip $\strip:=\mathbb{R}_s\times [0,1]_t\sse\C$. Choose a smooth, increasing function $\elon:(-\infty,\infty)\to [0,\infty)$ such that $\elon=0$ near $-\infty$ and $\elon=1$ near $+\infty$. We call such functions elongation functions. Let $J_0(t)$ and $J_1(t)$ be a family of almost complex structures obtained by compactly supported perturbations of $\defac$, and let $J(s,t)$ be a family of almost complex structures such that
\begin{itemize}
    \item[(i)] $J(s,t)=J_0(t)$ for $s\ll 1$ and $J(s,t)=J_1(t)$ for $s\gg 1$,
    \item[(ii)] $J(s,t)$ is also given as a compactly supported perturbation of $\defac$.
\end{itemize}
\noindent Such family of almost complex structures is said to be \textit{admissible}. 

\begin{definition}\label{def:continuation_strip}
Let $\alpha:[0,1]\lr S$ be a path, $L\sse T^*S$ a Betti Lagrangian, and $J(s,t)$ be an admissible family of almost complex structures. A map  $u:\mathcal{Z}\to T^{\ast}\obis$ is said to be a \textit{deformed continuation strip} for $\alpha$ and $L$ if it solves the following partial differential equation:
\begin{align}\label{eq:perturbedpassive}
	\begin{cases}
		\dd_s u+{J}(s,t)\dd_t u-l'(s)X_{l(s)}(u)=0 &  \\ 
		u(s,1)\subset \blag&\\
		u(s,0)\subset \fibre{\inva (l(s))}&\\
		\displaystyle\lim_{s\to \infty} u(s,t)\in \fibre{\alpha(1)}\cap \blag\\
		\displaystyle\lim_{s\to -\infty} u(s,t)\in \fibre{\alpha(0)}\cap \blag.
	\end{cases}
\end{align}
For paths $\alpha:[a,b]\to \obis$, the deformed continuation strip for $\alpha$ will mean \cref{{eq:perturbedpassive}} using the path $\alpha(bt+(1-t)a)$ instead.
\hfill$\Box$
\end{definition}

In \cref{eq:perturbedpassive}, the condition $u(s,0)\subset \fibre{\inva (l(s)))}$ is a moving boundary condition and, in a sense, the term $l'(s)X_{l(s)}(u)$ accounts for that. Indeed, up to composing with the Hamiltonian flow $\psi$ of $H_s$, the deformed continuation strips in \cref{def:continuation_strip} are essentially pseudo-holomorphic strips: taking the gauge transform $\tilde{u}:=\psi^{-1}_{l(s)}u$ and $\tilde{J}:=\psi^{\ast}J(s,t)$, we recover the homogeneous pseudo-holomorphic curve equation $\partial_{\tilde{J}}\tilde{u}=0$ with moving boundary condition on $L$ (instead of the fibers). By the exponential estimate \cite[Proposition 14.1.5]{Ohsymplectictopology}, the solutions of \eqref{eq:perturbedpassive} are asymptotic to the intersection points of $\blag$ and $\fibre{\alpha(1)}$, as $s\to -\infty$, and of of $\blag$ and $\fibre{\alpha(0)}$, for $s\to\infty$. By \cite[Proposition 3.37]{nho2024familyfloertheorynonabelianization}, the solutions of the gauge transforms have bounded diameter, and this diameter can be chosen uniformly for a uniform family of Hamiltonian isotopies. Thus the moduli space of solutions of \eqref{eq:perturbedpassive} admits a Gromov compactification. By the transversality arguments in \cite[Chapter 2]{SeidelZurich}, the solutions of \eqref{eq:perturbedpassive} are transversely cut-out for a generic choice of $J$ and the 0-dimensional part of the moduli space of solutions of \eqref{eq:perturbedpassive} gives a finite set. Given a generic family $J(s,t)$ of admissible almost complex structures, we denote by $M(\alpha,J(s,t))$ the moduli space of solutions of \cref{eq:perturbedpassive}. The cobordism type of $M(\alpha,J(s,t))$ is invariant under deformations of $J(s,t)$, $l(s,t)$ and $\alpha$. Observe that since we are in cohomological grading, the induced continuation map sends a generator in $L\pitchfork \fibre{\alpha(0)}$ at $s=+\infty$ to $L\pitchfork \fibre{\alpha(1)}$ at $s=-\infty$. This explains why we are using $\inva$, and the inverse of the Hamiltonian isotopy instead. \\

The parallel transports for $\wcflm$ are defined by counting the solutions of \eqref{eq:perturbedpassive}: intuitively, it is the number of points in $M(\ppp,J(s,t))$. Since only the cobordism type of $M(\ppp,J(s,t))$ is invariant, the count must be such that a canceling pair of points contributes zero to the count. Specifically, to count solutions of \eqref{eq:perturbedpassive} we must invariantly orient $M(\ppp,J(s,t))$.\\


For that, we first use the following fact, often used in Floer theory: given a bordered\footnote{For our purposes, it is sufficient to consider $C$ being the disk, the strip $\mathcal{Z}$, or the half plane.} Riemann surface $C\sse\C$ and a Lagrangian sub-bundle $E\lr\dd C$ of a bundle $\C^2\lr C$, a boundary condition on the Lagrangian sub-bundle $E$ induces the Cauchy-Riemann operator
\[\overline{\partial}:C^{\infty}(C,\mathbb{C}^2,E)\to C^{\infty}(C,\Omega_C^{0,1}\otimes \mathbb{C}^2),\] 
where $C^{\infty}(C,\mathbb{C}^2,E)$ denotes the space of $C^{\infty}$-sections of $\mathbb{C}^2$ whose restriction to the boundary lies in $E$.

\begin{remark}[On orientations]\label{rmk:orientations}
1. Let $U_2/O_2$ be the Lagrangian Grassmannian of $\mathbb{C}^2$, $\rho:S^1\to U_2/O_2$ a loop of index $0$, and $E_{\rho}$ be the Lagrangian sub-bundle of $\mathbb{C}^2$ over $S^1$ given by $\rho$. If the index of the induced Cauchy-Riemann operator $\bar{\partial}$ on $(D^2,S^1)$ operator is $2$, then $\ker(\bar{\partial})=\rho(1)$ and the homotopy class of the trivialization of $(\mathbb{C}^2,E)\simeq (\mathbb{C}^2,\mathbb{R}^2)$ determines the orientation on $\det \bar{\partial}$; cf.~e.g.~\cite[Chapter 6]{FOOO2}.\\

2. The homotopy class of the trivialization is detected by $\pi_2(U_2/O_2)$: the second Whitney class defines a map $w_2:\pi_2(U_2/O_2)\to \mathbb{Z}_2$ and we can identify $\pi_2(U_2/O_2)\simeq \mathbb{Z}\langle H\rangle$. Under the natural homomorphism $\pi_2(U_2/O_2)\to \pi_1(O_2)$, $H$ is the generator that gets mapped to the family of orthonormal frames on $\mathbb{R}^2$, whose lift to the sphere bundle $\mathbb{R}^2\times S^1$ winds around the the unit sphere fiber once. The orientation of $\bar{\partial}$ depends only on the mod $2H$ class; declaring the orientation induced by the trivial loop to be $+1$, the orientation induced by $H$ is the opposite one \cite[Chapter 6.2]{FOOO2}. \hfill$\Box$
\end{remark}

\noindent Technically, the stalks as defined in \cref{def:sbchaincomplex} implicitly use an orientation of the determinant line bundle of the associated linearized Cauchy-Riemann operator, which would themselves be tensoring the stalks $V_{z^i}$ in the Floer complex. Let us precise on the choice of these orientations, which thus trivialize these line bundles and legitimize our choice of stalk in \cref{def:sbchaincomplex}. Near each intersection point in $\blag\pitchfork\fibre{z}$, we use a specific linearized Cauchy-Riemann operator of the constant half-disk at each $z^{i}\in \blag\pitchfork \fibre{z}$, described as follows. The configuration tuple $(T^{\ast}\obis;\blag,\fibre{z},\omega_{st})$ is locally isomorphic to 
$(\mathbb{C}^2;\left< u_1,u_2\right>,i\mathbb{R}_{(x,y)}^2, \omega_{std})$, where $u_1,u_2$ is a unitary frame. Consider $U$ the unitary matrix with columns $u_1,u_2$ and the path of unitary matrices
$$\phi:[0,1]_t\lr U_2,\quad \phi(t):=i\exp(-t\log(U)),$$
each of which acts on $\mathbb{C}^2$ as an isometry by left multiplication. Consider the path of linear Lagrangians
$$L_{t}:=\phi_t(\left< u_1,u_2\right>)\sse T^{\ast}\mathbb{C}^2$$
from $\langle u_1,u_2\rangle$ to $i\mathbb{R}^2$. The path $\phi_t$ takes an orthonormal basis on $\left< u_1,u_2\right>$ to an orthonormal basis on $i\mathbb{R}^2$ and the square phase of this as a path in the Lagrangian Grassmannian $U_2/O_2$ is $e^{-2\pi t}$ and so the grading of the path becomes $-2t, t\in [0,1].$ The path $L_t$ defines a linearized Cauchy-Riemann operator of the constant half-disk at each $z^{i}\in \blag\pitchfork \fibre{z}$ by the moving boundary condition on $L_t$. 

We now discuss the specific trivialization to choose on the moving boundary part $\fibre{\alpha(s)}$.  Given a $\snetwork$-adapted path $\ppp$, the metric $g$ on $\obis$ defines a complex structure $I$ and we can consider the frame $F=\left<\ppp(a),I\ppp(a)\right>$ at $T_{\pi(\ppp(a))}\obis$. Choose the frame on the tangent space of $\fibre{z}\pitchfork \blag$ to be the one given by the image of $\exp(\log(U))$ on the dual frame $F^{\ast}=\left<\ppp^{\ast}(a),I^{\ast}\ppp^{\ast}(a)\right>$. The trivial spin cover over the frames $(\phi_t\circ\phi_1^{-1})(F)$ gives an orientation on the linearized Cauchy Riemann operator, which is then identified with the trivial line bundle. This is our trivialization, and thus we conclude the discussion on orienting the moduli spaces $M(\ppp,J(s,t))$ of deformed continuation strips, solving \cref{eq:perturbedpassive}.

We can now construct the continuation maps without the assumption that the Lagrangian $K$ in \cref{prop:standardfloertheory}.(3) is geometrically bounded, which is needed in this framework of spectral networks as we must apply it to $P=\fibre{z}$ a cotangent fiber. The result reads as follows:

\begin{proposition}[Continuation of $CF^\ast$ under moving fiber]\label{prop:twistedcontinuation}
Let $(S,\mkpts,{\bf \lknot})$ be a Betti surface, $L\sse(T^*S,\la_\std)$ a Betti Lagrangian, $\snetwork$ a compatible spectral network and $V\in\loct(L)$. Then:
\begin{enumerate}
    \item(Well-defined) Given $\ppp:[0,1]\lr\sphb$ be a $\snetwork$-adapted path from $z_0=\ppp(0)$ and $z_1=\ppp(1)$, the signed count of points in the (oriented cobordism type of the) moduli spaces $M(\ppp,J(s,t))$ of solutions to \cref{eq:perturbedpassive} determines a continuation map
    \begin{align}\label{eq:spcontmap}
    \wcfle{V}(\rho):CF^\ast(\fibre{\pi(z_0)},(\e\scurve,V))\to CF^\ast(\fibre{\pi(z_1)},(\e\scurve,V))
    \end{align} 
    which is independent of the representative of the smooth homotopy class $\ppp$.
    
    \item(Twisted equivariance) For any $\ppp:[0,1]\lr\sphb$ be a $\snetwork$-adapted path, the continuation map in $(1)$ satisfies
    $$\wcfle{V}(H{\ppp})=-\wcfle{V}(\ppp),$$
    where $H$ is the homotopy class of the cotangent circle.\\
    
    \item(Concatenation morphism) Given two $\snetwork-$adapted path $\ppp_1,\ppp_2$, the continuation map in $(1)$ satisfies
    $$\wcfle{V}(\ppp_1\circ\ppp_2)=\wcfle{V}(\ppp_1)\circ \wcfle{V}(\ppp_2).$$
\end{enumerate}
\end{proposition}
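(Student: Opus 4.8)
\textbf{Proof proposal for \cref{prop:twistedcontinuation}.} The plan is to deduce all three statements from the standard package of Floer-theoretic arguments (transversality, Gromov compactness, gluing) applied to the moduli spaces $M(\ppp,J(s,t))$ of solutions of \cref{eq:perturbedpassive}, after reducing to the case of genuine (non-deformed) pseudo-holomorphic strips by the gauge transform $\tilde u=\psi^{-1}_{l(s)}u$ already described in the text. The key point that distinguishes this setting from \cref{prop:standardfloertheory}.(3) is that the moving boundary $\fibre{\inva(l(s))}$ is not geometrically bounded; this is handled by transporting the non-compactness into $L$ via the gauge transform, where uniform geometric boundedness of $L$ (\cref{prop:geombounded}) together with the uniformly horizontal support of $H_s$ (\cref{def:Hamiltonian}) yields the required uniform diameter bounds, as invoked via \cite[Proposition 3.37]{nho2024familyfloertheorynonabelianization}. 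I would also first record that for $\e$ small enough $\e L$ lies in the unit disk bundle and the integrated maximum principle (cf.~\cite[Lemma 3.28]{nho2024familyfloertheorynonabelianization}) confines all solutions to a compact region, so that compactness of $M(\ppp,J(s,t))$ and the finiteness of its zero-dimensional part follow. Combined with the transversality statements of \cite[Chapter 2]{SeidelZurich} for generic admissible $J(s,t)$, and with the orientation conventions fixed in the paragraphs preceding the statement (the path $L_t$ of linearized operators at each $z^i$ and the trivialization over $\fibre{\alpha(s)}$ built from the frame $F=\langle\ppp(a),I\ppp(a)\rangle$), we obtain a well-defined signed count, hence a linear map $\wcfle{V}(\ppp)$ as in \cref{eq:spcontmap}.

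For Part (1), independence of the representative of the homotopy class of $\ppp$ is the statement that the oriented cobordism type of $M(\ppp,J(s,t))$ is invariant under deformations of the triple $(\ppp,l(s),J(s,t))$ through admissible data; this is the one-parameter version of the same compactness/transversality argument, using that a generic path of such data gives a one-dimensional cobordism whose boundary is the difference of the two counts. Here it is essential that the deformation of $\ppp$ keep the $\snetwork$-adapted property only generically: at the finitely many parameters where $\pi(\ppp)$ becomes tangent to $\snetwork$, no wall-crossing of the moduli space occurs because the relevant convergence results of \cref{section_adg} (in particular \cref{thm:snetworkadg}) control the pseudo-holomorphic strips through such points in the adiabatic limit --- for $\e$ small the count is locally constant across such tangencies. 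The twisted local system $V$ enters only as a coefficient: each rigid solution $u$ contributes $V$ of its boundary relative homotopy class, and since $V$ depends only on homotopy, the count transforms correctly. Part (2) is the observation, via \cref{rmk:orientations}, that replacing $\ppp$ by $H\ppp$ multiplies the chosen trivialization of the determinant line over each relevant boundary loop by the class $H\in\pi_2(U_2/O_2)$, which flips every orientation sign; hence $\wcfle{V}(H\ppp)=-\wcfle{V}(\ppp)$. Part (3) is the standard gluing argument: concatenating the elongation functions for $\ppp_1$ and $\ppp_2$ with a long neck, the moduli space $M(\ppp_1\circ\ppp_2,J(s,t))$ for the glued data is canonically identified (as oriented manifolds, for the neck length large) with the fiber product $M(\ppp_2,J(s,t))\times_{CF^\ast(\fibre{\pi(z_1)},\e L)}M(\ppp_1,J(s,t))$, whose signed count is $\wcfle{V}(\ppp_1)\circ\wcfle{V}(\ppp_2)$; invariance of the cobordism type under varying the neck length, from Part (1), gives the equality on the nose.

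The main obstacle I expect is Part (1), specifically ensuring that the cobordism type of $M(\ppp,J(s,t))$ genuinely does not jump as $\pi(\ppp)$ is isotoped across $\snetwork$ (including through non-transverse tangential moments), since the map $\Phi_\snetwork$ is only defined on homotopy classes with endpoints in $\snetwork^c$ while $\ff$ must be defined for all paths; reconciling these requires the adiabatic no-go results of \cref{thm:snetworkadg} to rule out the birth of new strips along walls, and a careful check that the orientations of the surviving strips are continuous through the tangency. I would also be careful that the orientation trivialization over the moving fiber $\fibre{\inva(l(s))}$, built from the frame $F$ along $\ppp$, is compatible with concatenation and with the $H$-equivariance simultaneously; this bookkeeping, while routine in principle, is where sign errors hide and is the place the argument needs to be written out in full detail.
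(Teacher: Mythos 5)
Your proof follows the same overall structure as the paper's: reduce to orienting the moduli spaces $M(\ppp,J(s,t))$ so that the signed count becomes cobordism-invariant, specify the orientation scheme via the glued constant half-disks $z^i_0\sharp u\sharp z^j_1$ and the framing of $\pi^*T\obis$ along $\ppp$, and then deduce (1) and (3) from the standard Floer cobordism arguments (parametric moduli spaces for homotopy invariance, gluing for concatenation) and (2) from the observation that twisting by $H$ flips the orientation of the determinant line, as in \cref{rmk:orientations}.

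One point of your argument is unnecessary and slightly misframed: in Part (1) you invoke the adiabatic no-go results of \cref{thm:snetworkadg} to argue that ``no wall-crossing occurs'' as $\pi(\ppp)$ is isotoped across $\snetwork$. This is not needed for the present proposition. The map $\wcfle{V}(\ppp)$ is a genuinely Floer-theoretic count: the appropriate geometric boundedness and diameter bounds (via the gauge transform and uniformly horizontal support) give compactness; transversality and one-parameter families of data give cobordisms; and invariance under smooth homotopies of $\ppp$ with fixed endpoints follows directly, independently of the spectral network. The ``wall-crossing'' phenomenon you worry about is not a defect of the Floer count but a feature that manifests in the non-trivial parallel transport $\Phi_\snetwork(V)$, and it is only confronted when one compares $\ff$ to $\Phi_\snetwork$ in \cref{thm:familyFloer}. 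For this proposition, that comparison plays no role, and the paper does not appeal to it. The rest of your argument, including the attention you flag to compatibility of the trivialization over $\fibre{\inva(l(s))}$ with concatenation and $H$-equivariance, matches the paper's treatment.
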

\color{black}

\begin{proof}(of \cref{prop:twistedcontinuation})
It suffices to orient the moduli $M(\ppp,J(s,t))$ using the twisted local system $V\in\loct(L)$ such that it becomes cobordism invariant with respect to $\ppp$. Indeed, once that is established, the cobordism argument in Floer theory, cf.~e.g.~\cite{SeidelZurich}, concludes invariance (1) and the concatenation property (3).
To orient, consider a deformed continuation strip $u$ traveling between two lifts $z^i_0$ to $z^j_1$ in $\sphsc$, i.e.~with positive punctures on these two points, the lower boundary mapped to $\blag$ and the upper boundary to the moving fibers along $\rho$. Choose a sphere bundle lift of the path $u(s,1)$, $s\in (-\infty,\infty)$, so that it agrees with $\phi_1^{-1}(\ppp(a))$ at $z^i_a$, for all $a=0$ and $a=1$. The induced metric on $\obis$ gives a framing of $\obis$ along $u(s,1)$. Considering the two points $z^i_0,z^j_1\in\sphsc$ as constant half-disks, we can glue them to the ends of the strip $u$ and obtain the glued disk $z^i_0 \sharp u \sharp z^j_1$. As discussed above, this data defines a linearized Cauchy Riemann operator over the disk $z^i_0 \sharp u \sharp z^j_1$ which has index $2$. Its orientation depends on the choice of the trivialization of the pull-back of the tangent bundles of the Lagrangian spaces: our choice for orientation is to concatenate the sphere bundle lift of $u(s,1)$ traveling through $\blag$, the paths $(\phi_t\circ\phi_1^{-1})(F)$ and then (the dual frame given by) the sphere bundle path $\ppp$ traveling through the moving fibers. This choice is independent of the sphere bundle lift of $u(s,1)$ and, by construction, twisting the path $\ppp$ by $H$ reverses the sign of the orientation, which established (2).
\end{proof}

We now construct the local systems $\ff_{\e}$ and $\ff$. Firstly, make a choice of the regular Floer datum\footnote{This datum must also contain a time-dependent Hamiltonian term in the case that $z$ is a branch point, c.f.~ \cite[Section 8(e)]{SeidelZurich}.} for each pair $(\fibre{z},\blag)$, $z\in \obis$, so that we get the stalk given by the (abstract) Floer cohomology groups $HF(\fibre{z},\blag)$. Furthermore, the parallel transport is well-defined once the Floer datum has been chosen at each intersection point, since the moduli space of solutions of \cref{def:continuation_strip} is well-defined, up to cobordism. Thus, regardless of the choice of the regular Floer datum, the family Floer twisted local system, regarded as a functor $\Pi_1(\mathring{\obis})\to \text{Vect}_n$, is well-defined, up to unique natural isomorphism.\\

\noindent The dependence on the adiabatic parameter $\e$ is incorporated as follows. By \cref{thm:snetworkadg}, for any precompact subset $U \subset \obis-\snetwork$ there exists $\e_U>0$ such that for all $0<\e<\e_{U}$, the Floer datum $\defac^{\e}$ for $(\fibre{z},\blag,\defac), z\in \overline{U}$, is regular. Choosing $\defac^{\e}$ to be the regular Floer datum for $(\fibre{z},\blag)$ for $z$ outside $\overline{U}$, we obtain a local system, which we consider of as a functor $\wcfle{V}:\Pi_1(\mathring{S})\to Vect_k$. Furthermore, by our description of $\defac^{\e}$, given any $U\subset U'$, and scaling stability (\cref{lem:scalestability}) the induced natural isomorphism between $\wcfle{V}$ and $\ff_{\e'}(V)$ acts as the identity on morphisms that represent paths that are contained fully in $U$, for $\e'<\min(\e_{U'},\e)$. 

\begin{definition}\label{def:fflocsys}
The family Floer cohomology local system $\ff$ is the direct limit local system $\varinjlim\ff_{\e}$.\hfill$\Box$
\end{definition}

\noindent The discussion above implies that $\ff$ is well-defined up to unique isomorphism. By scaling stability, whenever we compute parallel transport maps of $\ff$, we may compute it for $\e$ small enough. The results of the current section then can be regarded as showing that the local system $\ff$ coincides with $\tdfd$, over objects that lie outside $\snetwork^c$. \\

\color{black}
We conclude this section with a discussion on how the continuation maps in \cref{prop:twistedcontinuation} can be enhanced to record (twisted) homotopy classes, while remaining invariant. Specifically, let $M(\ppp;J(s,t),V)$ denote the moduli space $M(\ppp,J(s,t))$ equipped with the orientation (from $V$) as in the proof of \cref{prop:twistedcontinuation}. For each deformed continuation strip $u\in M(\ppp;J(s,t),V)$, we denote by $\ccc{u}$ the sphere bundle lift  of $u(s,0)$ and by $\sigma(\ccc{u},\ppp)$ the sign of ${\det (D \bar{\partial}_{J} u)}_{\ccc{u},\ppp}$, the oriented determinant line of the linearized Cauchy-Riemann operator obtained using $\ccc{u}$ and $\ppp$.

\begin{corollary}[Homotopy refinement]\label{cor:homotopy_refinement_continuation_map} In the notation above, the homotopy refinement of the deformed continuation map in \cref{prop:twistedcontinuation} is
    \begin{align}\label{def:formalsumpseudostrips}
        \wcfl{V}(\ppp)= \sum_{u\in M(\ppp;J(s,t),V)} \sigma(\ccc{u},\ppp) V(\ccc{u})[u(-s,1)],
    \end{align}\\
    and it only depends on the homotopy class of $\rho$, and is compatible with concatenation.
\end{corollary}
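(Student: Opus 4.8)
The plan is to promote the signed-count continuation map of \cref{prop:twistedcontinuation} to a map valued in the group algebra $\C[\pi_1(\sphsc)]$ by tagging each deformed continuation strip $u$ with the relative homotopy class $[u(-s,1)]$ of its sphere-bundle-lifted upper boundary arc, and to show that this refinement is still a cobordism invariant of $\ppp$. First I would note that, since $\sphsc \cong L\times S^1$ and $\sphb\cong S\times S^1$ are aspherical (for $S$ an open surface) and the upper boundary of a deformed continuation strip is a path in $\sphsc$ from the chosen lift $z_0^i$ to the chosen lift $z_1^j$, the relative homotopy class $[u(-s,1)]\in\pi_1(\sphsc,\tt_L)$ is well-defined given fixed endpoint sets $\tt_L$ projecting to the relevant points of $S$, and it is locally constant on the moduli space $M(\ppp,J(s,t))$ because the boundary map varies continuously. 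Likewise $[\ccc{u}]$, the sphere-bundle lift of $u(s,0)$ along the moving fibers, is locally constant. Hence \cref{def:formalsumpseudostrips} is a finite sum over the (oriented, zero-dimensional) moduli space, with each term weighted by the sign $\sigma(\ccc{u},\ppp)$ and the local-system value $V(\ccc{u})$ and decorated by $[u(-s,1)]$.

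The main step is invariance under the homotopy class of $\ppp$. This proceeds exactly as the cobordism argument already invoked for \cref{prop:twistedcontinuation}: a homotopy $\ppp_\tau$ of $\snetwork$-adapted paths (together with a generic homotopy of the admissible almost complex structures $J(s,t)$ and elongation data) gives rise to a parametrized moduli space $\bigcup_\tau M(\ppp_\tau, J_\tau(s,t))$ whose one-dimensional part is an oriented cobordism between $M(\ppp_0,\cdot)$ and $M(\ppp_1,\cdot)$; the boundary of each one-dimensional component consists either of two strips in the $\tau=0,1$ slices, or of a strip-breaking configuration. One must check three things: (i) the two endpoints of a cobordism arc carry the \emph{same} decorating homotopy class $[u(-s,1)]$ and the same $[\ccc{u}]$ — this holds because $[u(-s,1)]$ is locally constant along the arc and endpoints/boundary conditions vary within their homotopy class; (ii) the signs $\sigma(\ccc{u},\ppp_\tau)$ at the two ends are opposite, which is the orientation computation already carried out in the proof of \cref{prop:twistedcontinuation} (the glued index-$2$ disk $z_0^i\sharp u\sharp z_1^j$ and the trivialization via $(\phi_t\circ\phi_1^{-1})(F)$), so the two endpoint terms cancel in $\C[\pi_1(\sphsc)]$; and (iii) there are no strip-breakings, because the intersection points lie between a cotangent fiber and a Maslov-zero Lagrangian (\cref{lem:Maslov0}) so the relevant Floer differentials vanish by index reasons (\cref{prop:standardfloertheory}.(4)), and bubbling is excluded by exactness together with the uniform diameter bounds of \cite[Proposition 3.37]{nho2024familyfloertheorynonabelianization} and the geometric boundedness of $\blag$ (\cref{prop:geombounded}, via the gauge transform $\tilde u=\psi^{-1}_{l(s)}u$). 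Consequently the decorated sum in \cref{def:formalsumpseudostrips} is unchanged along the homotopy.

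Compatibility with concatenation is then the standard gluing/neck-stretching argument: for $\snetwork$-adapted paths $\ppp_1,\ppp_2$ composable at a point, stretching the neck in the middle of the deformed continuation strip for $\ppp_1\circ\ppp_2$ identifies the corresponding moduli space with the fiber product $M(\ppp_1;J,V)\times_{\blag\pitchfork\fibre{\cdot}} M(\ppp_2;J,V)$, and under this identification the homotopy class $[u(-s,1)]$ of a glued strip is the composition of the classes of the two pieces while $V(\ccc{u})$ multiplies accordingly; the signs multiply by the usual orientation-of-fiber-product convention, chosen compatibly in the trivialization fixed in \cref{sssec:holonomy_floer_locsys}. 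This gives $\wcfl{V}(\ppp_1\circ\ppp_2)=\wcfl{V}(\ppp_1)\circ\wcfl{V}(\ppp_2)$ at the refined level, and it is precisely the statement that $\wcfl{V}$ underlies a group-algebra morphism $\C[\pi_1(\sphb)]\to\C[\pi_1(\sphsc)]$ whose pull-back recovers the continuation map of \cref{prop:twistedcontinuation} in the sense of the notational convention $f^\ast(V)(\gamma)=\sum a_i V(\tau_i)[\tau_i]$. The twisted equivariance $\wcfl{V}(H\ppp)=-\wcfl{V}(\ppp)$ is inherited verbatim from \cref{prop:twistedcontinuation}.(2), now also recording that the decorating class is multiplied by the fiber class $H$.

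The step I expect to be the main obstacle is (i) above: making rigorous that the decorating homotopy class is genuinely constant along a cobordism arc, which requires care with the choice of basepoint/reference lifts $\tt_L$ and with the fact that the \emph{endpoints} $u(-s,1)$ sweep out a continuous family of arcs in $\sphsc$ — in particular verifying that at a potential boundary-degeneration the truncated/limiting configuration still has a well-defined limiting homotopy class matching both incoming ends, rather than jumping. This is resolved by working throughout on the sphere bundle (so that all signs and classes are honest, not merely $\Z/2$-graded up to spin structure) and by the no-breaking fact from \cref{prop:standardfloertheory}.(4); modulo that bookkeeping, the statement follows.
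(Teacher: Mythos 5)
The paper gives no explicit proof of this corollary, presenting it as an immediate consequence of \cref{prop:twistedcontinuation} together with the setup in the paragraph that precedes it; your argument is correct and is precisely the routine cobordism-and-gluing bookkeeping the paper leaves implicit. In particular, you correctly identify that the decoration $[u(-s,1)]$ is locally constant on the parametrized moduli space and that no strip-breaking or bubbling can occur because $\blag$ is Maslov-0 (so the Floer differentials vanish by \cref{prop:standardfloertheory}.(4)) and diameters are a priori controlled, which is exactly what makes the homotopy refinement well-defined.
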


\noindent An advantage of \cref{def:formalsumpseudostrips} over \cref{eq:spcontmap} is that the former is invariant under the $H$-action, as an $H$-twist in the sphere bundle lift $\ccc{u}$ is canceled out by the parallel transport $V(\ccc{u})$. We shall momentarily show in \cref{ssec:detour_are_continuation_strips} that the parallel transport $\ff(V)$ as in \cref{def:formalsumpseudostrips} yields, in the adiabatic limit, the homotopy refinement of $\Phi_\snetwork(V)$. 

\subsubsection{Ancillary discussion on spin structures}\label{sssec:spin_structures} The (twisted) Floer complexes constructed above used the sphere bundles $\sphb,\sphsc$ instead of working directly with $S,L$, each endowed with spin structures. Two reasons are that, with this groundwork, it is reasonably direct to improve the above discussion to include the protected spin characters $y$-variable in \cite{MR3395151} and, more technically, it allows us to package the discussion on the dependence on spin structures directly in the sphere bundle classes. The former is achieved by keeping track of the homotopy class on the sphere bundle, without forcing the twisted evaluation $y\to -1$. Indeed, it is possible to show that the solutions of \cref{eq:perturbedpassive} are immersed and thus have canonical lifts to the unit sphere bundles, where their (relative) homotopy classes can be recorded.

To compare the Floer complexes constructed above using the sphere bundles, cf.~\cref{prop:twistedcontinuation}, with the Floer complexes one would define using $S,L$ and their spin structures, we proceed as follows. First, suppose that $\spins,\spinb,\beta$ are all trivial near $\blag\pitchfork \fibre{z}$, and that the frame bundle of $\blag$ has its torsor at $\overline{z}\in \fibre{z}\pitchfork \blag$ so that it is equal to the one given by the $\phi_t^{-1}$ image of some chosen frame $\left< e_1,e_2\right>$ on $i\mathbb{R}^2$. Taking the trivial spin cover over the framing $(\phi_t\circ \phi^{-1}_{1})\left< e_1,e_2\right>$ gives a path of spin structures on $L_t$ and the induced orientation is the trivial one. Then consider the sphere bundle lift of the path $\alpha$ and the sphere bundle lift of the path $\blag_t$, as prescribed by $\sphb$ and $\spins$. The orientation on the induced linearized Cauchy-Riemann operator then determines the sign and it coincides with that of the holonomy of $\ff(V)$ along $u(s,0)$. As a consequence of \cref{thm:familyFloer} and this discussion, we can summarize the outcome as follows:
\begin{corollary}\label{cor:spin}
Let $(\obis,\spinb)$ and $(\blag,\spins)$ be endowed with spin structures, and consider $\locsys\in\Loc(L)$ and $V\in\loct(L)$ such that $\spins^{\ast}V=\locsys$. Then the family Floer local system obtained using $\locsys$ is the $\spinb^{\ast}$-pullback of $\ff(V)$.\hfill$\Box$
\end{corollary}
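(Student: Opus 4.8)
\textbf{Proof proposal for Corollary \ref{cor:spin}.}

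The plan is to reduce the statement to a local, pointwise comparison of orientation data, exploiting that all of the global Floer-theoretic structure (the moduli spaces $M(\ppp,J(s,t))$, their Gromov compactifications, their cobordism invariance) has already been set up independently of whether one records homotopy classes in the sphere bundles or twists line bundles via spin structures. Concretely, both constructions produce a functor from $\Pi_1$ of the base to $\mathrm{Vect}_n$; since a functor out of a groupoid is determined up to natural isomorphism by its value on stalks together with its parallel transport along a generating set of morphisms, and since both functors have the same stalks (the underlying $\C$-modules $\bigoplus_i V_{z^i}$, resp.\ $\bigoplus_i \locsys_{z^i}$, which agree because $\spins^\ast V = \locsys$), it suffices to check that the two parallel transports agree along $\snetwork$-adapted short and free paths. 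By \cref{prop:twistedcontinuation}.(3) and \cref{cor:homotopy_refinement_continuation_map}, this in turn reduces to comparing, deformed-continuation-strip by deformed-continuation-strip, the sign $\sigma(\ccc{u},\ppp)$ coming from the sphere-bundle orientation convention against the sign coming from the spin-structure orientation convention of \cite[Chapter 6.2]{FOOO2}.

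First I would fix, exactly as in \cref{sssec:spin_structures}, a local model near an intersection point $\overline{z}\in \fibre{z}\pitchfork\blag$ in which $\spinb$, $\spins$ and the deformation function $\beta$ are all trivial, the configuration $(T^*\obis;\blag,\fibre{z})$ is identified with $(\C^2;\langle u_1,u_2\rangle, i\R^2)$, and the torsor of the frame bundle of $\blag$ at $\overline{z}$ is the $\phi_t^{-1}$-image of a chosen frame $\langle e_1,e_2\rangle$ on $i\R^2$, where $\phi(t)=i\exp(-t\log U)$ is the path of unitary matrices from \cref{sssec:holonomy_floer_locsys}. The key observation, already essentially recorded in the text, is that the path of linear Lagrangians $L_t=\phi_t(\langle u_1,u_2\rangle)$ has square-phase $e^{-2\pi t}$, hence grading $-2t$, hence the induced Cauchy--Riemann operator on the constant half-disk has index $2$, and its orientation is governed by the homotopy class (mod $2H$) of the trivialization of $(\C^2,E)\simeq(\C^2,\R^2)$, which is detected by $\pi_2(U_2/O_2)\simeq\Z\langle H\rangle$ via $w_2$ (Remark \ref{rmk:orientations}). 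So the content to verify is: the trivial spin cover over the framing $(\phi_t\circ\phi_1^{-1})\langle e_1,e_2\rangle$ gives the \emph{same} element of $\pi_2(U_2/O_2)/2H$ as the trivial sphere-bundle lift over the same framing. This is a direct homotopy-theoretic check using the natural map $\pi_2(U_2/O_2)\to\pi_1(O_2)$ under which $H$ maps to the generating loop of orthonormal frames, whose lift to $\R^2\times S^1$ winds once around the fiber: the sphere-bundle bookkeeping and the spin bookkeeping differ, a priori, only by this $\Z/2$, and the chosen normalizations (trivial loop $\mapsto +1$ in both conventions, cf.\ Remark \ref{rmk:orientations}.2) are arranged precisely so that they coincide.

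Next I would assemble the comparison at the level of a whole deformed continuation strip $u\in M(\ppp;J(s,t),V)$ by gluing the two constant half-disks at $z^i_0$ and $z^j_1$ to the ends of $u$ to form the index-$2$ disk $z^i_0\sharp u\sharp z^j_1$, exactly as in the proof of \cref{prop:twistedcontinuation}. On this glued disk the orientation is the concatenation of: the sphere-bundle (resp.\ spin-structure) lift of $u(s,1)$ through $\blag$; the standard paths $(\phi_t\circ\phi_1^{-1})(F)$ (resp.\ $(\phi_t\circ\phi_1^{-1})\langle e_1,e_2\rangle$); and the dual-frame sphere-bundle (resp.\ spin) lift of $\ppp$ through the moving fibers. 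Each of the three segments has been matched locally by the previous paragraph, so by additivity of the determinant-line orientation under gluing the total signs agree; hence $\sigma(\ccc{u},\ppp)$ equals the spin-theoretic sign of $u$, and the parallel transport $V(\ccc{u})$ along the sphere-bundle lift $\ccc{u}$ of $u(s,0)$ equals the parallel transport $\locsys(u(s,0))$ since $\spins^\ast V=\locsys$ and $\ccc{u}$ is the $\spins$-prescribed lift of $u(s,0)$. Summing over $u\in M(\ppp;J(s,t),V)$ and invoking \cref{cor:homotopy_refinement_continuation_map} gives that the spin-structure continuation map equals the $\spinb^\ast$-pullback of $\wcfl{V}(\ppp)$ on every generating morphism; since $\spinb^\ast$ is itself a functor and both sides are functors on $\Pi_1(\obis)$ agreeing on stalks and generators, they are naturally isomorphic. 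Passing to the direct limit over $\e\to 0$ (\cref{def:fflocsys}) and invoking the identification $\ff\simeq\tdfd$ established in \cref{thm:familyFloer} yields the statement.

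I expect the main obstacle to be the bookkeeping in the second paragraph: making the sign-matching genuinely canonical rather than ``true up to a global sign I fixed by convention.'' One has to be careful that the normalization choices (which loop counts as $+1$, which orientation of $\det\bar\partial$ is declared standard, the mod-$2H$ reduction) are the \emph{same} ones used both in the sphere-bundle setup of \cref{sssec:holonomy_floer_locsys} and in the FOOO spin-structure formalism of \cite[Chapter 6]{FOOO2}, and that the isomorphism $\pi_2(U_2/O_2)\simeq\Z$ is used with the same generator $H$ throughout; otherwise one risks an overall sign discrepancy that would only be detectable on a generator of $\pi_1$. Everything else — the reduction to stalks-plus-generators, the gluing additivity, the identification $V(\ccc u)=\locsys(u(s,0))$ — is formal once this local normalization is pinned down. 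I would also note in passing (as the text does) that the same argument, run without specializing the monodromy around the sphere fiber to $-1$, upgrades the comparison to one between quantum local systems tracking the protected spin character $y$, but since that is outside the scope here I would only remark on it.
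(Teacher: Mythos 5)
Your proposal is correct and follows essentially the same route as the paper's own argument: the paper's preceding discussion in \cref{sssec:spin_structures} fixes precisely the same local model near the intersection points, compares the spin-structure--induced trivialization against the sphere-bundle lift via the path $(\phi_t\circ\phi_1^{-1})\langle e_1,e_2\rangle$, asserts the induced orientations on the linearized Cauchy--Riemann operator coincide, and then invokes \cref{thm:familyFloer} to conclude. Your version usefully elaborates the homotopy-theoretic step ($\pi_2(U_2/O_2)\to\pi_1(O_2)$ with the mod-$2H$ bookkeeping from \cref{rmk:orientations}) and the gluing/additivity argument along the strip, and you correctly flag that the nontrivial content is pinning down the normalization conventions consistently across the FOOO spin formalism and the sphere-bundle formalism, which the paper's discussion treats as given.
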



\subsection{Equivalence between $\ff$ and $\Phi_\snetwork$}\label{ssec:detour_are_continuation_strips} The aim of this section is to prove the following result, comparing the functors $\ff$ and $\Phi_\snetwork$, i.e.~the twisted local systems obtained from Family Floer and those obtained via the combinatorics of spectral networks:

\begin{theorem}[$\ff$ and $\Phi_\snetwork$ are equivalent]\label{thm:familyFloer}
Let $(\obis,\mkpts,{\bf \lknot})$ be a Betti surface, $\blag\sse(T^*S,\la_\std)$ an exact Betti Lagrangian, $(S,g)$ a compatible metric and $\snetwork$ an adapted Morse spectral network. Then the two functors
$$\ff,\Phi_\snetwork:\loct_1(L)\lr\loct_n(S)$$
are equivalent. In particular, the combinatorial corrections from detour paths on $\snetwork$ coincide with the Floer-theoretic corrections from deformed continuations strips.
\end{theorem}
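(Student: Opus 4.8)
The strategy is to reduce the comparison of the two functors $\ff$ and $\Phi_\snetwork$ to a local statement about $\snetwork$-adapted paths, and then verify that local statement by matching detour paths with the boundaries of deformed continuation strips in the adiabatic limit. Both functors are described (cf.~\cref{subsubsection:pathdetourclasses} and \cref{subsection:familyfloer}) via their stalks and their parallel transport along $\snetwork$-adapted paths, and both have stalks $\bigoplus_{i=1}^n V_{z^i}$ at a non-branch point $z$. Moreover, both parallel transports are compatible with concatenation (\cref{def:detourpath}.(iii) and \cref{prop:twistedcontinuation}.(3)) and are equivariant under the $H$-action (\cref{prop:twistedcontinuation}.(2) and the corresponding property of $\Phi_\snetwork$). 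Since every $\snetwork$-adapted path decomposes as a composition of free and short paths, it suffices to establish the equality $\ff(V)(\rho)=\Phi_\snetwork(V)(\rho)$, with homotopy refinements, for $\rho$ free and for $\rho$ short, and then to check naturality of the resulting identification of stalks under change of Floer data (which holds up to unique isomorphism by the direct-limit construction of \cref{def:fflocsys}).

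\textbf{The free case.} For $\rho$ free, i.e.~embedded and disjoint from $\snetwork$, \cref{thm:snetworkadg}.(1) (via \cref{prop:regulardata}) implies that for $\e$ small enough there are no non-constant $\defac$-holomorphic strips bounded by $\e\blag$ and $F_w$ for $w$ in a neighborhood of the image of $\rho$. Thus the only deformed continuation strips solving \cref{eq:perturbedpassive} are the ``trivial'' ones, i.e.~gauge transforms of constant strips; these are in bijection with the $n$ lifts $\rho^i$ of $\rho$ to $\sphsc$, and each is regular with the orientation prescribed in \cref{sssec:holonomy_floer_locsys}. Hence by \cref{cor:homotopy_refinement_continuation_map} we get $\wcfl{V}(\rho)=\sum_{i=1}^n V(\rho^i)[\rho^i]$, which is exactly \cref{def:detourpath}.(i). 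Here one should be slightly careful: ``trivial'' needs to be made precise using the gauge transform $\tilde u=\psi_{l(s)}^{-1}u$ and the fact that there are no genuine pseudo-holomorphic strips (this is the point of \cref{lem:gaugesmallconstant} and the discussion around it referenced in \cref{sssec:framework_floer}).

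\textbf{The short case --- the main obstacle.} The crux is the short case, where $\rho$ crosses $\snetwork$ transversely once at a point $z$ on an $(ij)$-wall $\wall$. Here one must show that, in the adiabatic limit, the deformed continuation strips over $\rho$ come in two families: the $n$ trivial ones as above, plus, for each soliton class $\soliton{z;\wall}\in\mathfrak{S}(z;\wall)$, a family of strips whose count equals $\mu(\soliton{z;\wall})$ and whose limiting boundary data realizes the detour path $[\ccc{(\rho)}]$ of \cref{def:solitondetour}. The existence half of this requires a gluing argument: one takes the rigid $\dfs$-tree $\Gamma(z;\wall)$ through $z$, together with the sneaky trees near each $\dfs$-singularity from \cref{lemma:sneaky_trees}, and glues in the local pseudo-holomorphic strips computed in \cref{section_Floer} (the explicit computations of continuation strips near branch points and along short line segments) to produce honest $\defac^\e$-holomorphic strips for $\e\ll 1$; one must check the glued strips are regular and count them with the correct signs, which is where the orientation conventions from \cref{rmk:orientations} and the proof of \cref{prop:twistedcontinuation} enter and must be matched against the sign conventions in \cref{def:bpsindex}. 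The non-existence half --- that there are no \emph{other} strips --- is \cref{thm:snetworkadg}.(2) (uniformly Stokes): any strip of bounded energy through $z$ must have relative homology class a parallel transport of a soliton class, and \cref{prop:convergence_strips_D4tree} forces its adiabatic limit to be the trimmed $\dfs$-tree attached to $\wall$. The bookkeeping that the combinatorial wall-crossing recursion for $\mu$ in \cref{def:bpsindex} (in particular the Hori--Vafa formula \cref{eq:horivafa}) matches the degeneration of strips at interaction joints of $\snetwork$ --- where a strip can break along the soliton tree into pieces counted by products $\mu(\rho_1)\mu(\rho_2)$ --- is the most delicate part, and I would handle it by induction on the energy filtration of \cref{def:energyfiltration}: at each energy level the new walls are created at joints, and the recursion defining $\mu$ on the new walls is identified termwise with the boundary-of-moduli-space (i.e.~broken-strip) contributions, using that below a given energy cutoff only finitely many $\dfs$-trees appear. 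Once this is done for short paths, concatenation and the homotopy-invariance statements in \cref{prop:twistedcontinuation}.(1) upgrade the pointwise equality of parallel transports to an equivalence of functors $\ff\simeq\Phi_\snetwork$, and the ``in particular'' clause is just the content of the short case; \cref{thm:pathdetourclassesareHaminvariant} then follows since $\ff$ is manifestly invariant under compactly supported Hamiltonian isotopies of $L$ by \cref{prop:standardfloertheory}.(3).
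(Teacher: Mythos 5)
Your global reduction to free and short $\snetwork$-adapted paths, the free case via \cref{thm:snetworkadg}.(1), \cref{lem:gaugesmallconstant} and trivial-strip cobordisms, and the non-existence half of the short case via \cref{prop:continuationstripclassification}.(2), all match the paper's argument. Where you genuinely diverge is the existence half of the short case: you propose to produce the detour strips by an explicit gluing of the rigid $\dfs$-tree through $z$, the sneaky trees of \cref{lemma:sneaky_trees}, and local strip models into honest $\defac^{\e}$-holomorphic strips, in the style of \cite{Morseflowtree}. This is plausible, but in the present setting of non-generic $D_4^-$ fronts and moving fiber boundary conditions it requires a gluing and transversality package that the paper never develops. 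The paper instead uses a consistency-and-uniqueness argument that sidesteps gluing entirely: once the homotopy class of any large continuation strip is constrained to be a detour class, the Floer count $\sigma(\ccc{u},\alpha_v(s))$ is shown to obey precisely the axioms of \cref{def:bpsindex} --- the Hori--Vafa relation \cref{eq:horivafa} at an interaction joint is extracted from the contractibility of the small loop encircling that joint, along which the composite of parallel transports must be the identity up to cobordism of continuation strips --- and $\sigma=1$ on initial walls is checked by a single local computation. Since those axioms determine $\mu$ uniquely by induction on the energy filtration, $\sigma=\mu$ follows, giving existence and the correct count simultaneously without ever manufacturing a strip. Your framework and division of cases are correct; the difference is that you take the harder constructive route where the paper uses the indirect one, so pursuing your version would require building the $D_4^-$ gluing theory essentially from scratch, whereas the paper's version only needs the no-go results of \cref{section_adg} together with the formal properties of \cref{prop:twistedcontinuation}.
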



\noindent To compare these two functors it suffices to focus on the objects, as morphisms naturally commute with the construction. The statement of \cref{thm:familyFloer} also holds for the enhancements of those functors that keep track of specific (relative) homotopy classes in the unit sphere bundles. \cref{thm:familyFloer} is proven by analyzing the behavior of $\ff$ in the adiabatic limit $\e\to0$. Namely, understanding the deformed continuation strips from \cref{eq:perturbedpassive} with one boundary condition on $\e\blag$ and the other (moving) boundary condition on cotangent fibers as $\e\to0$. Since $\snetwork$ controls the adiabatic behavior of pseudo-holomorphic strips, there are two regions to consider:

\begin{enumerate}
    \item In the complement $\snetwork^c$ of the spectral network, the parallel transport of $\Phi_\snetwork$ is trivial. Therefore, we need to show that the deformed continuation strips defining the parallel transport for $\ff$ are also trivial if the path lies in $\snetwork^c$.\\

    \item For $\snetwork$-adapted short paths, we must show that the adiabatic limit of deformed continuation strips across a wall of $\snetwork$ is exactly given by the trivial strips {\it and} strips whose flowtree limit coincides with the detour paths introduced in \cref{subsubsection:pathdetourclasses}.
\end{enumerate}

In order to prove these facts above, we start with a quantitative refinement of \cref{eq:perturbedpassive}, estimating the behaviors of deformed pseudo-holomorphic strips in terms of their length. Specifically, consider $z\in \obis$ such that $\fibre{z}$ is transverse to the Betti Lagrangian $\blag$ and fix a unit tangent vector $v\in\sphb$ base at $z$. Employing the Riemannian metric $(S,g)$, consider the path $\alpha_v(s)=\exp_z(sv)$ starting at $z$ in the direction of $v$. For $s$ sufficiently small, $\alpha_v$ is an embedding and $\fibre{\alpha_v(s)}$ remains transverse to $\blag$. This latter condition implies in particular that there are no $\dfs$-singularities in a small enough neighborhood of the image of $\alpha_v$. 
We model such a path by considering local coordinates $(x_1,x_2)$ near $z=x_1+ix_2$ so that $\alpha_v(s)=(s,0)$, and then the (local) Hamiltonian $H^v:T^*\R^2_{x_1,x_2}\lr\R$, $H^v(x_1,x_2;p_1,p_2)=-p_1$ generates the transport of $\fibre{z}$ along $\bar{\alpha}_v(s)$. To achieve compactly supported behavior, localizing this fiber transport along $\bar{\alpha}_v(s)$, let us cut-off $H^v$ to a Hamiltonian $b(x_1,x_2)H^v$ with $b(x_1,x_2)$ a bump function concentrated at $\alpha_v(s)$, i.e.~ $b(x_1,x_2)$ is constant equal to $1$ in a neighborhood of $\alpha_v(s)$ and vanishes outside a slightly larger neighborhood. To simplify, we still denote by $H^v$ the cut-off Hamiltonian, denote by $X_v$ its Hamiltonian vector field and by $\psi_v$ its flow. The system \cref{eq:perturbedpassive} defining deformed continuation strips can now be specialized to paths $\alpha_v(s)$, where dependence on the unit tangent vector $v$ is acquired and $H^v$ is fixed.

\begin{definition}
Let $\e\in\R_+$ and $J(s,t)$ be an $\e$-admissible family of almost complex structures on $\strip$, and consider an elongation function $\elon(s)$ and a length $d\in\R_{+}$. By definition, a map $u_d:\strip\lr T^{\ast}\obis$ is said to be a continuation strip along $\alpha_v(s):[0,\dist]_s\to \bis$  if it solves the system
\begin{align}\label{eq:distperturbedpassive}
	\begin{cases}
		\dd_s u_d+{J}(s,t)\dd_t u_d-\dist\cdot l'(s)X_{v}(u_{\dist})=0 &  \\ 
		u_d(s,1)\subset \blag&\\
		u_d(s,0)\subset \fibre{\bar{\alpha}_v(\dist\cdot l(s))}&\\
		\displaystyle\lim_{s\to - \infty} u_d(s,t)\in \fibre{\alpha_v(\dist )}\cap \blag\\
		\displaystyle\lim_{s\to +\infty} u_d(s,t)\in \fibre{z}\cap \blag.
	\end{cases}
    \end{align}
We often denote $u(s,t)=u_d(s,t)$, understanding the dependence on $d$ implicitly.\hfill$\Box$
\end{definition}

\noindent To ease notation, given a value for the adiabatic parameter $\e\in\R_+$, we assume that we have fixed an $s$-dependent deformation function $\rho_s(\radial)$, as in \cref{sssec:framework_floer}, and an $\e$-admissible family of almost complex structures refers to an admissible family of almost complex structures obtained by deforming $\defac^{\rho_\e}$, instead of $\defac$.  Our first task is choosing the elongation function $l(s)$ such that solutions to \cref{eq:distperturbedpassive} of small enough length have controlled behavior. This is the content of the following:

\begin{proposition}\label{prop:fibreparalleltransport} In the notation for \cref{eq:distperturbedpassive}, there exists a choice of an elongation function $\elon(s)$ and a constant $\dist_v\in\R_+$ such that the following holds: 
\begin{enumerate}

\item The continuation strips $u_{\dist}(s,t)$ along $\alpha_v(s)$ of length $\dist\in(0,d_v)$ whose limiting endpoints lie in the same smooth sheet of $\blag$ cannot leave the $d_v$-neighborhood of their endpoints.\\

\item For the continuation strips $u_{\dist}(s,t)$ along $\alpha_v(s)$ of length $\dist\in(0,d_v)$ whose limiting endpoints lie in different smooth sheets of $\blag$, there exists a sequence of lengths $d_n\to0$ such that the associated family $u_{\dist_n}$ of strips Gromov converges to a broken $J(s,t)$-pseudo-holomorphic strip bounded between $\blag$ and $\fibre{z}$. 
\end{enumerate}
\end{proposition}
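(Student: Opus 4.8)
The statement is a local analytic estimate for continuation strips along the short embedded path $\alpha_v(s)$, so the argument is essentially a $C^0$-estimate / Gromov-compactness package localized near the image of $\alpha_v$. The first step is to fix the elongation function $\elon(s)$: I would take $\elon$ to transition from $0$ to $1$ on an interval of $s$-length comparable to $\log(d^{-1})$ or to a fixed constant, so that the forcing term $d\cdot\elon'(s)X_v(u_d)$ has $L^\infty$-norm of order $d/\log(d^{-1})$ (or $d$), hence its total contribution to the energy is bounded by a constant multiple of $d$. Combined with the standard energy identity for the gauge-transformed map $\tilde u_d = \psi_{v,\,d\,\elon(s)}^{-1}\circ u_d$ (which, as noted after \cref{eq:perturbedpassive} in the excerpt, satisfies a genuine $\tilde J$-holomorphic equation with moving boundary on $\blag$), this shows that the geometric energy of $u_d$ is $O(d)$. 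The key input from earlier is that $\fibre{z}$ is transverse to $\blag$ and there are no $\dfs$-branch points near the image of $\alpha_v$, so over a fixed compact neighborhood the sheets of $\blag$ are disjoint graphs, separated by a fixed gap.

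\textbf{Step for Part (1).} When the two limiting intersection points $\overline z^{\,i}(d)$ and $\overline z^{\,i}(0)$ lie on the \emph{same} smooth sheet, the $O(d)$ energy bound together with the monotonicity inequality for geometrically bounded almost complex structures (the $\defac^{\rho_\e}$-family is uniformly geometrically bounded, cf.~\cref{def:geombounddef} and \cref{prop:standardfloertheory}) gives an a priori diameter bound on the image of $u_d$: a $J$-holomorphic curve with one boundary on a fixed Lagrangian, passing through a point at distance $\geq r_0$ from a neighborhood of its endpoints, must carry energy bounded below by a fixed $\hbar(r_0)>0$; since our energy is $O(d)\to 0$, the image must eventually stay within the $d_v$-neighborhood of the endpoints once $d<d_v$. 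This is exactly the ``bounded diameter'' mechanism used in \cite[Proposition 3.37]{nho2024familyfloertheorynonabelianization} and the isoperimetric control; I would invoke it rather than re-derive it. One subtlety is that the forcing term makes $u_d$ not literally $J$-holomorphic, so the monotonicity estimate must be applied to the gauge transform $\tilde u_d$, whose endpoints have moved by $O(d)$ as well, and then translated back — this introduces only $O(d)$ errors, harmless for $d$ small.

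\textbf{Step for Part (2).} When the endpoints lie on different sheets, the difference-of-primitives computation (Stokes' theorem, as in \cref{lemma:flow-energy} and the energy formula \cref{eq:energycomputation} in \cref{section_adg}) shows that the energy of $u_d$ is bounded below by the difference $(\sheet_i-\sheet_j)(\alpha_v(0))-(\sheet_i-\sheet_j)(\alpha_v(d))$, which is $O(d)$ but \emph{nonzero}; crucially the gauge-transformed strips $\tilde u_{d}$ then have energy bounded above \emph{and} below by positive constants times $d$. Rescaling by $d^{-1}$ (i.e.\ passing to $u_{d_n}(d_n^{-1}s, d_n^{-1}t)$ in the spirit of \cref{def:flowlineconvergence} and \cref{lem:initialedgelemma}) normalizes the energy to be bounded and uniformly positive, and the uniform geometric boundedness gives Gromov compactness for the sequence $d_n\to 0$: after passing to a subsequence one obtains a limiting broken $J(s,t)$-holomorphic strip bounded by $\blag$ and $\fibre{z}$, with no bubbling in the interior (fiber and $\blag$ are both honest Lagrangians, and the $O(d)$ energy forbids sphere or disk bubbles absorbing a fixed quantum of energy). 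The asymptotic convergence to intersection points of $\blag$ and $\fibre{z}$ (resp.\ $\fibre{\alpha_v(d)}$) follows from the exponential estimate \cite[Proposition 14.1.5]{Ohsymplectictopology}, which survives the gauge transform.

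\textbf{Main obstacle.} The delicate point is the \emph{choice of elongation function} in relation to the forcing term: one needs $\elon$ slow enough that the forcing term never dominates (so the $C^0$ and energy estimates hold uniformly in $d$), yet the limiting object as $d\to 0$ must still see the moving-boundary contribution that produces the nontrivial detour — if $\elon$ is chosen badly the limit degenerates to a constant or fails to converge. Concretely, the hard part is proving that the rescaled strips in Part (2) do \emph{not} Gromov-converge to a trivial (constant or ghost) configuration, i.e.\ that the positive lower bound on energy genuinely passes to the limit and is carried by a single nonconstant broken strip rather than being lost to a neck. This requires the quantitative lower energy bound from the primitive-difference computation to be matched against the monotonicity constant, exactly as in the adiabatic analysis of \cref{section_adg}; I expect this bookkeeping — tracking the gauge transform through the rescaling and verifying no energy escapes — to be the technical core of the proof.
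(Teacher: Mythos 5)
Your Part (1) is on track in outline (gauge transform, $O(d)$ energy estimate, then monotonicity via~\cite[Proposition~3.37]{nho2024familyfloertheorynonabelianization}), but your stated criterion for choosing $\elon$ is wrong. You motivate the choice by wanting the forcing term $d\,\elon'(s)X_v$ to have small $L^\infty$ norm (hence the $\log(d^{-1})$ option), but the paper's criterion is that $\elon'$ be supported on $[0,\ell]$ where $\ell$ is the \emph{monotonicity radius}: the scale below which a $\defac$-half-disk of energy $\leq \ell/2$ intersecting $\cup_i B_{h/4}(z^i)$ cannot escape $\cup_i B_{h/2}(z^i)$. The point is not to make the forcing small --- that is automatic for any fixed-length window --- but to confine the moving boundary to a window where monotonicity can be applied to the graph of $\psi_{d\,\elon(s)}^{-1}(u(s,t))$, since (as the remark following the proposition stresses) the Hamiltonian does not fix the endpoints, and this is exactly what makes the elongation choice delicate.

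There is a genuine gap in your Part (2). You claim the energy of a different-sheet continuation strip is $O(d)$, quoting the quantity $(\sheet_i-\sheet_j)(\alpha_v(0))-(\sheet_i-\sheet_j)(\alpha_v(d))$ as the lower bound. That quantity is the flow-energy of a gradient flowline (\cref{lemma:flow-energy}), not the action of a Floer strip. For a strip with endpoints on distinct sheets $i\neq j$, the correct energy is, via~\cref{eq:energy_transformed_strip}, the primitive difference between a point on sheet $i$ over $\alpha_v(d)$ and a point on sheet $j$ over $z$, plus an $O(d)$ Hamiltonian correction; because the sheets are uniformly separated near $\alpha_v$, this difference is bounded away from zero \emph{uniformly} as $d\to 0$ (indeed, the boundary arc $u_d(s,1)\subset\blag$ must leave any fixed neighborhood of $\alpha_v$ to change sheets, there being no branch points nearby). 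This is what the paper means by ``the symplectic area ... admits a uniform positive lower bound.'' Once you know the energy is pinched between fixed positive constants independent of $d$, Gromov compactness gives the broken non-trivial limit directly; the rescaling $u_{d_n}(d_n^{-1}s,d_n^{-1}t)$ you propose is both unnecessary and inert (reparametrizing the domain does not change $\int u^*\omega$), and the ``main obstacle'' you identify --- that the rescaled sequence might converge to a trivial configuration --- is a red herring created by the incorrect $O(d)$ estimate. The technical core here is actually Part (1) and the elongation choice, not Part (2).
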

\begin{proof}


For Part (1), let $u=u_d(s,t)$ be a solution of \cref{eq:distperturbedpassive} and consider its gauge transform $\tilde{u}=\psi^{-1}_{l(s)}u$. Then the symplectic area of $\tilde{u}$ is
\begin{align}\label{eq:energy_transformed_strip}
\int_\mathcal{Z} \tilde{u}^{\ast}\omega=-\int_{u(s,1)}\la_{st}-\dist\cdot \int H^v(u(s,1))ds=W((\psi^{-1}_1 \circ u)(-\infty,0))-W(u(+\infty,0))-\dist\cdot\int H^v(u(s,1))ds,
\end{align}
where $dW=\la_\std|_L$ is a primitive of the Liouville form along $L$. Along $\alpha_v(s)$, let $h>0$ be such that over some small neighbourhood of $\alpha_v(s)$, the smooth sheets of $\epsilon \blag$ have distance at least $h>0$ apart, with respect to the Sasaki metric. For each $z\in \alpha_v(s)$, let $\cup B_{h/2}(z^i)\subset T^{\ast}\obis$ denote the small neighbourhood of the intersection points between $\fibre{z}$ and $\blag$. Let $\ell\in \R_{+}$ be a constant so that $\defac$-holomorphic half-disks with energy less than $\ell/2$ intersecting $\cup_{i} B_{h/4}(z^i)$ cannot leave the larger neighbourhood $\cup_{i} B_{h/2}(z^i)$, for all $z\in \alpha_v(s)$. 

Choose an elongation function $\elon(s)$ so that $\elon'(s)$ is supported on $[0,\ell]$. By the hypothesis that the endpoints lie on the same sheet, the limiting endpoints of the gauged transformed strips have to be the unique intersection point of $\fibre{z}$ and $\blag$ lying on the same sheet as the endpoints of $u_{\dist}$. By \cref{eq:energy_transformed_strip}, the symplectic area (a.k.a.~energy) of the gauge-transformed strip is of the form $O(d)$ for for small enough. Therefore, the graph of $\psi_{d\cdot l(s)}^{-1}(u(s,t))$ on the moving part has energy $O(d)+l$.

In addition, for $d$ small enough, the symplectic 2-form making the movie $(\psi_{d\cdot l(s)}^{-1}(\blag),s)$ Lagrangian can be made arbitrarily close to $\omega_{st}\oplus \omega_{st}$ and the metric induced by $\psi_{d\cdot l(s)}^{\ast}J(s,t)$ can be made arbitrarily close to the correspondingly standard metric. Thus, \cite[Proposition 3.37]{nho2024familyfloertheorynonabelianization} implies that the graph of $\psi_{l(s)}^{-1}(u(s,t))$ on the moving part cannot leave the $(3\ell)/2$-neighborhood of the intersection points between $\blag$ and $\fibre{z}$ for small enough $\dist$, which concludes Part (1). For Part (2), it suffices to verify that the symplectic area of the family of solutions \eqref{eq:distperturbedpassive} admits a uniform positive lower bound. This follows from the fact that $u_d(s,t)$ travels between distinct sheets of $\blag$  (see \cite[Eq.6.1.10]{nho2024familyfloertheorynonabelianization}). Therefore, the Gromov compactness argument developed in \cite[Section 6.1.5]{nho2024familyfloertheorynonabelianization} applies and $u_{\dist}(s,t)$ Gromov converges to a broken $J(s,t)$-pseudo-holomorphic strips bounded between $\blag$ and $\fibre{z}$ for a subsequence of lengths $d_n\to0$.
\end{proof}


By definition, a continuation strip $u$ is said to be {\it small} if it is as in \cref{prop:fibreparalleltransport}.(1).

\begin{remark}
Note that there is a difference between \cref{prop:fibreparalleltransport} and \cite[Proposition 6.11]{nho2024familyfloertheorynonabelianization}: in the result above, the Hamiltonian isotopy does not necessarily fix the neighborhood of the intersection points. In order to correct this matter, we need to be very careful with the choice of the  elongation function $l$.\hfill$\Box$
\end{remark}

After a small perturbation of $J(s,t)$, we can and do assume that the moduli space of same sheet continuation strips consist of small continuation strips, and that it is transversely cut-out. 
\begin{lemma}\label{lem:gaugesmallconstant}
In the notation above, for small enough $\dist\in\R_+$, the moduli space of small continuation strips of length $d$ is cobordant to a singleton moduli space, consisting of a single strip whose boundary on $\blag$ may be identified with a lift of the arc $\alpha_v(s),s\in (0,d_v)$.  
\end{lemma}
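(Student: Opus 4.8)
\textbf{Proof proposal for \cref{lem:gaugesmallconstant}.}

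The plan is to upgrade the a priori control established in \cref{prop:fibreparalleltransport}.(1) into an exact count, by running the standard ``local Floer theory near a transverse intersection point'' argument inside the Sasaki-type geometry at hand. First I would fix, as in the proof of \cref{prop:fibreparalleltransport}, the separation constant $h>0$ of the smooth sheets of $\e\blag$ along $\alpha_v(s)$, the energy threshold $\ell$ controlling escape from the balls $\cup_i B_{h/2}(z^i)$, and an elongation function $\elon(s)$ with $\elon'$ supported on $[0,\ell]$; this is precisely the datum that guarantees a small continuation strip of length $d\in(0,d_v)$ stays in the $(3\ell/2)$-neighborhood of its endpoints $z^i$. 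Inside such a neighborhood the configuration $(T^*\obis;\e\blag,\fibre{\alpha_v(s)},\omega_\std)$ together with the $\e$-admissible family $J(s,t)$ is, after the gauge transform $\tilde u=\psi^{-1}_{d\cdot\elon(s)}u$ of \cref{eq:energy_transformed_strip}, a $C^\infty$-small perturbation (uniformly as $d\to 0$, by the closeness to $\omega_\std\oplus\omega_\std$ noted in that proof) of the model problem: a strip in $(\C^2,\text{std})$ with lower boundary on a moving Lagrangian $\psi^{-1}_{d\cdot\elon(s)}(\R^2)$ interpolating between two parallel copies, upper boundary on the fixed totally real $i\R^2$, and both asymptotics at the same intersection point.

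The key step is therefore the computation of this model moduli space. For $d$ sufficiently small I would show it is regular and consists of exactly one element, an embedded strip whose lower boundary traces a lift of the arc $\alpha_v(s)$, $s\in(0,d_v)$. Concretely: (i) the asymptotic intersection point is the same at both ends and the moving-boundary Lagrangian is a genuine (small) Hamiltonian isotopy fixing that point's neighborhood after gauge transform, so the only broken configurations in the Gromov limit would involve an index-$1$ strip from the point to itself, which by exactness and \cref{lemma:flow-energy} has zero energy and hence is constant — ruling out breaking; (ii) by the monotonicity/diameter estimates invoked in \cref{prop:fibreparalleltransport}.(1) (via \cite[Proposition 3.37]{nho2024familyfloertheorynonabelianization}), the entire moduli space is compact and contained in the small neighborhood; (iii) in the local model the problem is equivalent to counting holomorphic sections of a trivial bundle over the strip with a pair of transverse totally real boundary conditions whose relative Maslov index is $0$, for which the linearized operator is an isomorphism and the unique solution is the ``obvious'' one sweeping out the region between the two Lagrangian copies — i.e.\ a neighborhood of the lifted arc. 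Transversality of the general (non-model) problem is then obtained by a further $C^\infty$-small perturbation of $J(s,t)$ supported away from the fixed asymptotic points, using the standard arguments of \cite[Chapter 2]{SeidelZurich}. Invariance of the (oriented) cobordism type under shrinking $d$ — hence the assertion that the moduli space is cobordant to this singleton — follows from the usual parametrized moduli space over $d\in(0,d_v)$, whose compactness is again \cref{prop:fibreparalleltransport}.(1) together with the no-breaking argument in (i).

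The main obstacle I expect is step (i)--(iii) carried out with full rigor in the \emph{moving-boundary, $\e$-dependent} setting: one must make precise that the gauge-transformed family $\psi^{*}_{d\cdot\elon(s)}J(s,t)$ and the associated symplectic form $\omega_{\psi^{-1}_{d\cdot\elon(s)}(\e\blag)}$ converge in $C^\infty$ to the flat model \emph{uniformly} on the relevant compact region as $d\to0$, so that the uniqueness and regularity of the model solution persist for small $d$; this is where the careful choice of $\elon$ (so that $\elon'$ is supported on $[0,\ell]$, away from where the Hamiltonian might otherwise fail to fix the intersection points) is essential, and it is exactly the subtlety flagged in the remark following \cref{prop:fibreparalleltransport} distinguishing this situation from \cite[Proposition 6.11]{nho2024familyfloertheorynonabelianization}. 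Once the uniform convergence is in hand, the count and its cobordism-invariance are routine, and the identification of the boundary of the unique strip with a lift of $\alpha_v(s)$ is immediate from the explicit model.
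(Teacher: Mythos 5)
Your approach misses the key mechanism the paper uses, and the local model you propose in step~(i) does not hold as stated. After the gauge transform $\tilde u=\psi^{-1}_{d\cdot\elon(s)}u$, the $\blag$-boundary becomes the \emph{moving} Lagrangian $\psi^{-1}_{d\cdot\elon(s)}(\blag)$, and because $\psi_t$ is (near the intersection point, where the bump function equals $1$) the time-$t$ translation in the base $x_1$-direction generated by $H^v=-p_1$, it does \emph{not} preserve $\blag$ near the intersection point: $\blag$ there is the graph of $df_i$ for some sheet function $f_i$, and a base translation carries a graph of $df_i$ to a graph of a shifted $df_i$, not to a parallel copy, unless $df_i$ is constant. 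Consequently the asymptotic intersection points at $s=\pm\infty$ are not the same point, and your claim that ``both asymptotics [are] at the same intersection point'' with the moving Lagrangian ``interpolating between two parallel copies'' is false. This is exactly the subtlety you flag yourself at the end, but you resolve it only by appealing to ``the careful choice of $\elon$,'' which is what \cref{prop:fibreparalleltransport} already uses to get a priori smallness and does nothing to make the fiber-transport Hamiltonian fix the $\blag$-boundary.

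The paper's proof uses a different and essential idea that is absent from your proposal: write the relevant smooth sheet of $\blag$ near $z$ as $\mathrm{gr}(df)$, regard $f$ as a (locally defined) Hamiltonian, and let $\Psi_\e$ be its time-$\e$ flow, which is a \emph{fibre-preserving, vertical} Hamiltonian isotopy carrying the zero section to $\e\cdot\mathrm{gr}(f)$. Applying $\Psi_\e^{-1}$ transforms the configuration $(\e\blag,\fibre{z},\defac)$ into $(Z,\fibre{z},\Psi_\e^{-1}(\defac))$ and transforms the perturbation term $l'(s)X_v$ into $(d\Psi^{-1}_\e)_{\ast}X_v$; because $\Psi_\e$ is fibre-preserving, the moving-fiber boundary condition is untouched, and because $X_v$ is tangent to the zero section, the transformed boundary Lagrangian (the zero section) \emph{is} fixed by the flow of $H^v$. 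This puts the problem squarely into the situation of \cite[Proposition 6.11]{nho2024familyfloertheorynonabelianization}, where after a further gauge transform the strips become constant; the interpolation from $(d\Psi^{-1}_\e)_{\ast}X_v$ to $l'(s)X_v$, controlled via the small-strip monotonicity bound, then yields the cobordism to a singleton. Without the $\Psi_\e$-trick, steps~(i)--(iii) of your proposal do not close: the no-breaking argument via ``same asymptotic point'' and the Maslov-index-$0$ uniqueness of the model solution are both set up inside a local model that is not actually the one you obtain.
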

\begin{proof}
Near $z$, a smooth sheet of $\blag$ can be described a graph of a smooth function $f$. As observed in \cite{FloerWittenInfinite}, we may regard $f$ as a (locally-defined) Hamiltonian on $T^{\ast}\obis
$ and write $\e\cdot \text{gr}(f)$ as the image of the zero section under the time $\e$-flow $\Psi$ of $f$. Taking the inverse of the fibre-preserving \textit{global} Hamiltonian isotopy $\Psi^{-1}_{\e}$, the configuration $(\e\blag, \fibre{z},\defac)$ transforms into $(Z,\fibre{z},\Psi_{\e}^{-1}(\defac))$. Under this transformation, the perturbation term in \cref{eq:perturbedpassive} turns into  $(d\Psi^{-1}_{\e})_{\ast}(X_v(u_d))$. Since we are only interested in small continuation strips, whose diameter we can a priori control, using the monotonicity argument in the proof of \cite[Proposition 6.11]{nho2024familyfloertheorynonabelianization}, we are free to interpolate the perturbation term from $(d\Psi^{-1}_{\e})_{\ast}X_v$ to $l'(s)X_v$. We claim that the solutions of the new equation are all constant, up to gauge-transformation. This leads to a cobordism of the moduli space of small continuation strips, whose other end consists of a single strip.

To verify the claim, consider the solutions of \cref{eq:perturbedpassive} with $J(s,t)$ replaced by $\psi^{\ast}_{\e}J(s,t)$ and the boundary condition $\blag$ replaced by the zero section. Since the Hamiltonian fixes the zero section, we are back to the situation of \cite[Proposition 6.11]{nho2024familyfloertheorynonabelianization} where the Hamiltonian fixes the Lagrangians near the intersection points. Therefore, after taking the gauge-transform $\Psi_s^{-1}u(s,t)$, the strip becomes \textit{constant}. As claimed, we may identify its boundary on $\blag$ with a lift of the arc $\alpha_v(s),s\in (0,d_v)$ to the smooth sheet $\text{gr}(f)$.
\end{proof}
\noindent Strips as in \cref{lem:gaugesmallconstant} are said to be \textit{trivial strips}.\\

\noindent We introduce the following notions, analogous to \cref{def:diskterm}:

\begin{definition}\label{def:contdiskterm}
Let $\blag\sse(T^*S,\la_\std)$ be a Betti Lagrangian and $\snetwork$ an adapted finite creative Morse spectral network.

\begin{enumerate}[label=(\roman*)]
\item A point $z\in \obis$ is said to be {\it continuation-free} in the adiabatic limit if there exists $\e(z)\in\R_+$ and a neighbourhood $U_z$ such that for any $\e\in(0,\e(z))$, $(\fibre{w},\blag)$ is $\defac^{\e}$ disk-free for $w\in U_z$, and there exists a length $\dist(z,\e)\in\R_+$ and an elongation function $l(s,\epsilon)$ such that for all points $w\in U_z$ there are \textit{no} $\defac^{\e}$-holomorphic continuation strips that are not small along arcs of the form $\alpha_v:[0,d]_s\to \obis$, for $d\in (0,\dist(z,\e))$, $w\subset U_z$ and $v\in T_w\obis$ a unit tangent vector at $w$\\

\item A point $z\in \snetwork$ different from a vertex is said to have {\it Stokes transport in the adiabatic limit} if there exists $\e(z)\in\R_+$ and a length $d(z)\in \R_{+}$ such that for any $\e\in(0,\e(z))$, there exists an elongation function $l(s,\e)$ for which there are \textit{no} large $\defac^{\e}$-holomorphic strips for $\blag$ along the arc $\alpha_v(s):[-\dist(z),\dist(z)]_s\to \obis$, that do not belong to the path detour class along $\alpha_v(s)$, where $v$ denotes the outward normal vector to $\snetwork$ at $z$. 

\end{enumerate}
\end{definition}

\noindent Note that vertices of $\snetwork$ do not need to be considered in \cref{def:contdiskterm} because the behavior near them has been fixed by the local models. We must now establish the analogue to \cref{thm:snetworkadg} for continuation strips, which reads as follows:


\begin{proposition}\label{prop:continuationstripclassification}
Let $\blag\sse(T^*S,\la_\std)$ be a Betti Lagrangian and $\snetwork$ an adapted finite creative Morse spectral network. Then the following holds:

\begin{enumerate}[label=(\roman*)]
    \item If $z\in \snetwork^c$, then $z$ is continuation-free in the adiabatic limit.\label{prop:continuationstripclassification1}
    \item If $z\in \snetwork$ and not a vertex, then $z$ has Stokes transport in the adiabatic limit. \label{prop:continuationstripclassification2}  
    \end{enumerate}
\end{proposition}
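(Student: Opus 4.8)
The strategy is to reduce both statements to the corresponding no-go/classification results already established for genuine pseudo-holomorphic strips, namely \cref{thm:snetworkadg}, by exploiting the gauge transformation that turns deformed continuation strips into honest pseudo-holomorphic strips with moving boundary on $\blag$. For Part \ref{prop:continuationstripclassification1}, let $z\in \snetwork^c$ and fix the disk-free neighborhood $U_z$ and the constant $\e(z)$ supplied by \cref{thm:snetworkadg}.(1) (more precisely by \cref{prop:regulardata}, which identifies the relevant $\defac^\e$-strips with $J_g$-strips). The pseudo-holomorphic disk-freeness is then immediate; what must be added is the absence of continuation strips along short arcs $\alpha_v$ that are not small. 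Suppose, for contradiction, that for a sequence $\e_k\to 0$ and a sequence of arcs $\alpha_{v_k}$ of lengths $d_k\to 0$ (we shrink $d_k$ as $\e_k$ shrinks, choosing $d(z,\e)$ along the way) there were continuation strips $u_{d_k}$ that are not small. By \cref{prop:fibreparalleltransport}.(1) a continuation strip whose limiting endpoints lie on the same sheet and which is not small is impossible once $d$ is small enough, so the endpoints must lie on different sheets; then \cref{prop:fibreparalleltransport}.(2) produces, after passing to a subsequence of lengths, a broken $J(s,t)$-pseudo-holomorphic strip bounded between $\blag$ and $\fibre{z}$ of bounded energy. Taking also $\e_k\to 0$ and applying the adiabatic convergence \cref{prop:convergence_strips_D4tree} (the elongation functions and bump data being uniform over the compact neighborhood), this sequence degenerates to a (broken) trimmed $\dfs$-tree through a point of $U_z$, which by the arguments of \cref{ssec:convergence_trimmed_trees2} must be contained in $\snetwork(E)$. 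Hence a point arbitrarily close to $z$ lies on $\snetwork$, contradicting $z\in\snetwork^c$ (which is open). This yields continuation-freeness; the choice of $d(z,\e)$ and the elongation function $l(s,\e)$ is precisely the one from \cref{prop:fibreparalleltransport} applied uniformly on $\overline{U_z}$.

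\textbf{Part (ii).} For $z\in\snetwork$ on an $(ij)$-wall $\wall$ (not a vertex), fix the outward normal direction $v$ and work with arcs $\alpha_v:[-d,d]_s\to S$ crossing $\wall$ transversally once at $s=0$. Again decompose the continuation strips by whether the limiting endpoints lie on the same or different sheets. Same-sheet strips are, by \cref{lem:gaugesmallconstant} together with \cref{prop:fibreparalleltransport}.(1), cobordant to the trivial strips, which contribute the diagonal summands $\sum_i V(\ppp^i)[\ppp^i]$ of \cref{def:detourpath}; these are exactly the ``path detour class'' contributions of weight the identity, so they cause no obstruction. Different-sheet strips of small length must — by \cref{prop:fibreparalleltransport}.(2) and the uniform Gromov compactness of \cite[Section 6.1.5]{nho2024familyfloertheorynonabelianization} — Gromov converge, along a subsequence of lengths $d_n\to 0$, to broken $J(s,t)$-pseudo-holomorphic strips between $\blag$ and $\fibre{z}$; pushing $\e\to 0$ and invoking \cref{thm:snetworkadg}.(2) (the uniformly-Stokes conclusion), the relative homology class of any such limiting strip through $z$ is forced to be the soliton class $\soliton{z;\wall}$, and by \cref{prop:convergence_strips_D4tree} the strip adiabatically degenerates onto the associated $\dfs$-tree. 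Combining the flowtree degeneration with the near-branch-point control of \cref{prop:convergence_strips_D4tree} and the detour construction of \cref{def:solitondetour}, the boundary of such a strip, read on the unit sphere bundle, is precisely the detour path $[\ccc{(\ppp)}]$; hence every large continuation strip along $\alpha_v$ represents a path detour class along $\alpha_v(s)$, and no others are possible. Choosing $d(z)$ and $\e(z)$ small enough that all of the above compactness and degeneration statements apply, and the elongation function $l(s,\e)$ as in \cref{prop:fibreparalleltransport}, gives that $z$ has Stokes transport in the adiabatic limit.

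\textbf{Main obstacle.} The delicate point is not the abstract Gromov-compactness but the \emph{uniformity} of all estimates in the two simultaneous limits $\e\to 0$ and $d\to 0$: one must choose the elongation function $l(s,\e)$, the bump function $b$ localizing $H^v$, and the length scale $d(z,\e)$ so that \cref{prop:fibreparalleltransport} and \cref{lem:gaugesmallconstant} hold uniformly over the compact neighborhood $\overline{U_z}$ and for all sufficiently small $\e$, \emph{and} so that the gauge-transformed strips remain close enough to the standard model that the monotonicity inequality of \cite[Proposition 3.37]{nho2024familyfloertheorynonabelianization} applies with $\e$-independent constants. In particular one must rule out energy escaping to the horizontal ends during the double limit; this is where the weak boundedness of $\blag$, the $O(\e)$-energy bound built into the notion of $\e$-strip sequence, and the first-no-escape lemma \cref{lem:first_noescape} must be combined to confine the strips to a fixed compact region before passing to the flowtree limit. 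Once this uniform bookkeeping is set up, the identification of the limiting objects with $\dfs$-trees and detour classes is a direct application of \cref{prop:convergence_strips_D4tree} and \cref{thm:snetworkadg}, and both parts of \cref{prop:continuationstripclassification} follow. \hfill$\Box$
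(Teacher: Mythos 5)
The definition of ``Stokes transport in the adiabatic limit'' (\cref{def:contdiskterm}.(ii)) requires a classification of $\defac^{\e}$-continuation strips along an arc $\alpha_v(s)\colon [-\dist(z),\dist(z)]_s\to\obis$ of \emph{fixed positive} length $2\dist(z)$, not along arcs of shrinking length. Your argument only classifies continuation strips along arcs of lengths $d_n\to 0$: \cref{prop:fibreparalleltransport}.(2) extracts a Gromov-limit strip as $d_n\to 0$, and your conclusion is drawn from that limit. You then assert that choosing $d(z)$ ``small enough'' suffices, but this conflates a fixed-length arc with an infinitesimal one — you have established nothing for arcs of length bounded away from zero. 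The paper closes this gap by an explicit compactness-and-gluing step: it partitions $[-d_z,d_z]$ into finitely many short subsegments (via compactness of the interval), shows on each subsegment that large strips lie in the detour class (by the infinitesimal argument you do give), and then glues across segments, invoking the ``solitons cannot concatenate'' argument together with Gromov compactness and a small perturbation of $J(s,t)$ to control the homotopy class of the glued strips. Without some version of this gluing step your proof of Part (ii) does not establish the statement.

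\textbf{On Part (i): correct, but a more complicated route.} You run a simultaneous double limit $\e_k\to 0$, $d_k\to 0$ and invoke the adiabatic degeneration to trimmed $\dfs$-trees (\cref{prop:convergence_strips_D4tree}), which is why you then spend your ``Main obstacle'' paragraph worrying about uniformity of the elongation function, bump function and monotonicity constants across both parameters. The paper avoids this entirely: it fixes $\e$ small enough (so that $(\fibre{w},\e\blag)$ is disk-free for $w$ in a precompact neighborhood, by \cref{prop:regulardata}), takes only the single limit $d_n\to 0$, extracts a broken $\defac^\e$-holomorphic strip via \cref{prop:fibreparalleltransport}, and rules that strip out immediately by the integrated maximum principle, the Salamon--Weber rescaling identification (\cref{eq:salamon-weber equvialence}) and \cref{thm:snetworkadg}. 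No flowtree limit and no double-parameter uniformity are needed. Your route works in principle, but carrying it out rigorously would require re-proving the adiabatic-convergence result with the moving Hamiltonian term present throughout the double limit, which is precisely the extra bookkeeping the paper's single-limit argument is designed to avoid.
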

\begin{proof}
For Part (1), fix a precompact open subset of $\snetwork^c$ whose closure does not intersect $\snetwork$. We argue by contradiction: suppose that there is a sequence of points $z_n$, unit tangent vectors $v_n$ of $z_n$, with lengths $\dist(z_n)$ converging to zero but such that there are $\defac^{\e}$-holomorphic continuation strips which are not small; for $\blag$ and along arcs $\alpha_{v_n}(z_n)$. By \cref{prop:fibreparalleltransport} such a sequence has a subsequence that converges to a broken $\defac$-holomorphic strip. By the integrated maximum principle and the equality $\defac^{\e}=J_{\e^{-1}g}$ along the disk bundle containing $\blag$, $\defac^{\e}$-holomorphic strips have to necessarily be $J_{\e^{-1}g}$-holomorphic strips. Then \cref{eq:salamon-weber equvialence} and \cref{thm:snetworkadg} together imply that such broken $J_{\e^{-1}g}$-holomorphic strips cannot exist, thus a contradiction. This establishes Part (1).

For part (2), let $U_z$ be the Stokes neighbourhood of $z$. By \cref{thm:snetworkadg}, we see that there exists some $\e(z)>0$ such that for all $\e\in (0,\e(z))$, the only disks in $(\fibre{w},\e\blag)$ must be parallel transports of the soliton classes. We claim that the maximal arc $\alpha_v(s):[-d_z,d_z]_s\to \obis$ contained in such a neighbourhood is the desired one. To see this, shrink $\e(z)$ so that $(\fibre{\alpha_v(0)},\blag)$ and $(\fibre{\alpha_v(1)},\blag)$ are all $\defac^{\e}$ disk-free, for all $\e\in (0,\e(z))$. Then we can combine \cref{prop:fibreparalleltransport} and the previous argument to conclude that there exists an elongation function $l(s)$ such that for each $t\in [-d_v(s),d_v(s)]$, there exists $d_t>0$ such that along $\alpha_v(t+s):[0,d]_s\to \obis$, for $d\in (0,d_t)$, the large $\defac^{\e}$-continuation strips must have the homotopy class of the detour path across $z$ (deformed to have the end-points of $\alpha_v(t+s)$, in a straightforward way). Indeed, this follows from \cref{thm:snetworkadg} because the path homotopy class of its boundary must converge, as $d\to 0$, to broken chains of $\soliton{z}$, none of which can concatenate. Then, using the compactness of $\alpha_v(s)$, we can split $\alpha_v(s)$ into finitely such segments. We can now perturb slightly to make sure the continuation strips are all transversely cut-out. By Gromov compactness, having no large continuation strips of certain homotopy class is a condition invariant under small enough perturbations of $J(s,t)$. We then apply the "solitons cannot concatenate" argument again to conclude that the glued continuation strips must be of the path detour class. We now turn off the perturbation, and argue by Gromov compactness again, to conclude that for some elongation function, the homotopy class must also be the path detour class, concluding (2). 

\end{proof}
\noindent In addition, a simpler argument leads to the following type of stability under $\e$-scaling:
\begin{lemma}[Scaling stability]\label{lem:scalestability}
For $z\in \snetwork^c$, there exists $\e(z)\in\R_+$ such that for all $\e',\e\in(0,\e(z))$, $\e'<\e$, the continuation map $CF(\fibre{z},\blag;\defac^{e})\to CF(\fibre{z},\blag;\defac^{e'})$ is the identity map.
\end{lemma}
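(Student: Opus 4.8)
The statement asserts that, for a point $z\in\snetwork^c$ in the complement of the spectral network and for two sufficiently small adiabatic parameters $\e'<\e$, the continuation map $CF(\fibre{z},\blag;\defac^{\e})\to CF(\fibre{z},\blag;\defac^{\e'})$ is the identity. The plan is to realize this continuation map by a one-parameter family of Floer data interpolating $\defac^{\e}$ to $\defac^{\e'}$ that fixes all geometric boundary conditions, and then show that all the continuation strips involved are constant, up to gauge. First I would invoke \cref{thm:snetworkadg}.(1): since $z\in\snetwork^c$, the pair $(\fibre{z},\blag)$ is uniformly disk-free, so there is a disk-free neighbourhood $U_z$ and a threshold $\e(z)$ such that for all $w\in U_z$ and $\e\in(0,\e(z)]$, there are \emph{no} non-constant $\defac^{\e}$-holomorphic disks between $\e\blag$ and $\fibre{w}$ of any fixed bounded energy; equivalently, via \cref{prop:regulardata} and the integrated maximum principle, there are no non-constant $J_g$-holomorphic strips between $\e\blag$ and $\fibre{w}$. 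I would shrink $\e(z)$ so that both $\e$ and $\e'$ lie in $(0,\e(z)]$.

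Next I would set up the continuation-map data explicitly. Unlike in \cref{prop:twistedcontinuation}, here the two cotangent fibers at the two ends coincide ($\fibre{z}$ at both $s=\pm\infty$), and there is \emph{no moving boundary}: the path $\ppp$ is constant at $z$. Thus the continuation map is governed by the homogeneous Cauchy--Riemann equation $\dd_s u + J(s,t)\dd_t u = 0$ with both boundary components on fixed Lagrangians $\e\blag$ (actually $\blag$, after rescaling) and $\fibre{z}$, where $J(s,t)$ interpolates from $\defac^{\e}$ near $s=-\infty$ to $\defac^{\e'}$ near $s=+\infty$. By the discussion after \cref{def:sbchaincomplex} (items (i)--(iii) on the deformation functions $\beta_s$), the family $\defac^{s}$ can be taken uniformly geometrically bounded and of generalized contact type, so the relevant Gromov compactness and the integrated maximum principle apply uniformly along the interpolation: every such strip has image inside a fixed compact disk bundle, on which all the $\defac^{s}$ agree with $J_g$ (after the rescaling $v(s,t)=\e^{-1}u(s,t)$, as in \cref{eq:salamon-weber equvialence}, this becomes $J_{\e^{-1}g}$-holomorphic). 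Therefore the only possible continuation strips are constant: a non-constant one would, after rescaling, be a non-constant $J_{\e^{-1}g}$-holomorphic strip between $\blag$ and $\fibre{z}$ of bounded energy, contradicting \cref{thm:snetworkadg}.(1) together with \cref{eq:salamon-weber equvialence}.

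It remains to compute the signed count of the constant strips. For each intersection point $z^i\in\blag\pitchfork\fibre{z}$ there is exactly one constant continuation strip; its linearized operator is the glued operator $z^i\sharp(\text{const})\sharp z^i$ built from the constant path of Lagrangians, which carries the trivial relative homotopy class (no $H$-twist) and whose orientation — with the boundary framings prescribed exactly as in \cref{sssec:holonomy_floer_locsys}, namely via $\phi_t\circ\phi_1^{-1}$ on the fiber side and via $\ppp$ (here constant) on the moving side — agrees with the chosen trivialization of the stalk $CF(\fibre{z},\blag)$ in \cref{def:sbchaincomplex}. Hence the coefficient is $+1$ with the local system contributing $V(\ccc{u})=\mathrm{id}$ since $\ccc{u}$ is constant. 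So the matrix of the continuation map in the standard basis is the identity. The main obstacle is bookkeeping the orientations: one must check that the two Floer data $\defac^{\e}$ and $\defac^{\e'}$ induce the \emph{same} choice of trivialization of the determinant lines at each $z^i$, so that the constant strip is counted with sign $+1$ rather than $-1$; this follows because both data restrict to $J_{\e^{-1}g}$, resp.\ $J_{\e'^{-1}g}$, on the fixed disk bundle and the relevant phase paths $\phi_t$ are determined only by the Lagrangian configuration $(\blag,\fibre{z})$ at $z$, which is independent of $\e$, so the interpolation of almost complex structures induces a homotopy of Cauchy--Riemann operators through index-$2$ operators with the trivial relative class, hence an orientation-preserving identification. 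I would conclude by noting that this identification is the one built into \cref{def:fflocsys}, so the natural isomorphism $\ff_{\e}(V)\to\ff_{\e'}(V)$ restricts to the identity on stalks at $z\in\snetwork^c$, as claimed.
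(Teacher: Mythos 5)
Your overall strategy (homogeneous continuation equation, disk-freedom, constant strips give the identity matrix, orientation bookkeeping) matches the paper's, but the step in which you rule out non-constant continuation strips in a single pass from $\e$ to $\e'$ contains a genuine gap. You assert that on the fixed disk bundle ``all the $\defac^{s}$ agree with $J_g$'' and that the Salamon--Weber rescaling $v=\e^{-1}u$ converts a continuation strip for the interpolating family into a $J_{\e^{-1}g}$-holomorphic strip, which would then contradict \cref{thm:snetworkadg}. Both parts of this are incorrect: on the disk bundle, $\defac^{s}$ coincides with $J_{s^{-1}g}$, which genuinely \emph{varies} with $s$; and the rescaling correspondence of \cref{eq:salamon-weber equvialence} applies to a single autonomous almost complex structure (a single scaling factor $\e$), not to an $s$-dependent family $\defac^{l(s)\e+(1-l(s))\e'}$, for which there is no single rescaling that normalizes all slices simultaneously. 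So there is no direct reduction of a continuation strip for this moving family to a strip excluded by \cref{thm:snetworkadg}, and you have not ruled out the non-constant strips joining distinct lifts of $\fibre{z}$ (whose energy is bounded below but which could a priori exist for the interpolating data).

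The paper avoids this by a two-step argument that your proof is missing. For $\abs{a-a'}$ small, the interpolating family $\defac^{l(s)a+(1-l(s))a'}$ is $C^0$-close to a fixed $\defac^{a}$, so a sequence of non-constant continuation strips as $\abs{a-a'}\to 0$ would Gromov-converge to a non-constant $\defac^{a}$-holomorphic strip between $\blag$ and $\fibre{z}$, contradicting disk-freedom exactly as in \cref{prop:continuationstripclassification}. This excludes non-constant strips for sufficiently small parameter steps. The full conclusion over $(0,\e(z))$ then follows from concatenation compatibility of continuation maps: a composition of identity maps on complexes with vanishing differential (by Maslov-$0$, \cref{lem:Maslov0}, via \cref{prop:standardfloertheory}.(4)) is the identity. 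To repair your proof, you need to replace the single-pass rescaling argument with this small-step Gromov compactness argument followed by concatenation.

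Your discussion of orientations and the sign $+1$ for the constant strip is consistent with the paper's conventions in \cref{sssec:holonomy_floer_locsys} and is not where the issue lies.
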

\begin{proof}
The proof is similar to Part (1) in \cref{prop:continuationstripclassification}. Take $\e(z)$ as in \cref{thm:snetworkadg} so that $(\epsilon\blag,\fibre{z})$ is disk-free for all $\e\in(0,\e(z))$. By the previous discussion, we obtain a family $\defac^{s}$ such that $\defac^{s}=J_{s^{-1}g}$ in the disk bundle for $\e'\leq s\leq \e$.
Fix an elongation function $l$, then for each $\e'<a'<a<e$, the continuation map from $CF(\fibre{z},\blag;\defac^{a})$ to $CF(\fibre{z},\blag;\defac^{a'})$ is given by counting pseudo-holomorphic strips with the boundary conditions
\begin{align}\label{eq:scalecont}
	\begin{cases}
		\dd_s u+{J}(s,t)\dd_t u=0 &  \\ 
		u(s,1)\subset \blag&\\
		u(s,0)\subset \fibre{z}&\\
		\displaystyle\lim_{s\to \pm\infty} u(s,t)\in \fibre{z}\cap \blag,\\
	\end{cases}
\end{align}
using the family almost complex structures $J(s,t)$ that are given by small perturbations of the family $\defac^{l(s)a+[1-l(s)]a'}$. Since the equation is homogeneous, the strips that travel between the same lifts of $\fibre{z}$ must all be constant, and the strips that travel between distinct lifts of $\fibre{z}$ must have energy a priori bounded below by some small $h\in\R_+$. The same argument as in \cref{prop:continuationstripclassification} implies that for $\abs{a'-a}$ small enough, the continuation strips are all constant, and so they induce the identity map. The rest follows from concatenation compatibility, from which we conclude that the continuation map from $s=\e$ to $s=\e'$ must be the identity map.
\end{proof}
\color{black}

\noindent Let us finally establish the equivalence between $\ff$ and $\Phi_\snetwork$:

\begin{proof}[Proof of \cref{thm:familyFloer}] Let us first compare the parallel transports for a path of the form $\rho(s)=\alpha_v(s)$ starting at $z\in\snetwork^c$ and $s$ small enough. Since any $\Phi_\snetwork(V)$ has trivial parallel transport for $V\in\loct(L)$, c.f.~\cref{def:detourpath}.(i), we must argue that the same holds for $\ff(V)$, for $\e$ small enough. Fix a small neighbourhood $U_{\rho}\subset \snetwork^c$ of $\rho$. By \cref{thm:snetworkadg}, there exists some $\e(\rho)>0$ such that for all $0<\e<\e(\rho)$, $(\fibre{z},\e\blag),z\in U_{\rho}$ is disk-free. Fix $0<\e<\e({\rho})$, then by \cref{sssec:holonomy_floer_locsys}, the parallel transport of the latter is given by the moduli spaces of deformed continuation strips, cf.~\cref{prop:twistedcontinuation}. By \cref{lem:gaugesmallconstant} and \cref{prop:continuationstripclassification} the corresponding moduli spaces of continuation strips for such a path away from $\snetwork$ is cobordant to the moduli space of trivial strips. For the orientations, it suffices to note that the sphere bundle counts of the trivial strips give the required orientation, see e.g.~\cite[Lemma 6.14]{nho2024familyfloertheorynonabelianization}. This implies that the count of deformed continuation strips is that of trivial strips, and therefore the parallel transport of $\ff(V)$ along such $\alpha_v(s)\sse\snetwork^c$ equals that of $\Phi_\snetwork(V)$. By concatenation compatibility, we conclude that the parallel transports agree over any free $\snetwork$-adapted path.\\

For each $z\in \snetwork$, we can consider a $\snetwork$-adapted short path of the form $\rho(s)=\alpha_v(s)$, intersecting $\snetwork$ at $z$, just as in \cref{def:contdiskterm}. By Gromov compactness, we can perturb by a small amount so that the moduli space of large continuation strips is transversely cut-out, and so that the boundary homotopy classes belong to the path detour class. 
Let $u(s,t)$ be the resulting continuation strip along such path: we must show that the assignment $\soliton{z;\wall}\to \sigma(\ccc{u},\alpha_v(s))$ (where the count is defined using the homotopy refinement \cref{cor:homotopy_refinement_continuation_map}) gives a functor with the properties as in \cref{def:bpsindex}. For each vertex $v\in\snetwork$, introduce nearby points $b\in\snetwork$ on each edge adjacent to $v$ and for each such $b$, consider the corresponding short path arcs $\alpha_{\nu}^{(b)}(s)$, intersecting $\snetwork$ in the positive direction. We now proceed on inductively, by starting with the points close to the initial walls, and then following the birth of new walls along $\snetwork$. First, for these points $b$ close enough to the initial rays of the $\dfs$-vertex $v$, direct computation gives the count $\sigma(\ccc{u},\alpha_v(s))=1$, e.g.~c.f.~\cite[Section 5.6, Figure 21]{GNMSN}. It thus suffices to show that $\sigma(\ccc{u},\alpha_v(s))$ satisfies the same properties as in \cref{def:bpsindex} near the creation joints of $\snetwork$. At the same time, that these properties are satisfied is a consequence of \cref{prop:twistedcontinuation} and the fact that the trivial loop encircling the vertex of $\snetwork$ is contractible, and thus the associated (cobordism class of) moduli space of continuation strips consists of constant strips (one per intersection point), up to cobordism. The cancellation of the concatenation of non-trivial continuation strips along the trivial loop is encoded in the 2d Hori-vafa formula \cref{def:bpsindex}. Therefore, since the counting functions are the same, the two local systems $\tdfd$ and $\wcflm(V)$ must also be isomorphic.
\end{proof}

\noindent \cref{thm:pathdetourclassesareHaminvariant} now follows from \cref{thm:familyFloer}:

\begin{proof}[Proof of \cref{thm:pathdetourclassesareHaminvariant}]
By \cref{thm:familyFloer}, the parallel transport $\Phi_\snetwork(V)(\ppp(\reeb))$ is equivalent to that of $\ff(V)(\ppp(\reeb))$. In the adiabatic limit, the deformed continuation strips contributing to this latter parallel transport converge to $\dfs$-flowtrees with semi-infinite root edges asymptotic to $\reeb$, and the counts are in fact equal. Indeed, by \cite[Lemma 5.12]{EHKexactLagrangiancobordisms} and \cite[Theorem 1.5]{EHKexactLagrangiancobordisms}, there exists a bijective correspondence between rigid flow-trees for (a perturbed and neck-stretched version of)\footnote{This modification of $\blag$ is denoted by $\pneck$ in \cite{Morseflowtree,EHKexactLagrangiancobordisms}.} $\blag$ and rigid index-$0$ pseudo-holomorphic disks (adjusted, in the sense of \cite[Section 3.2]{EHKexactLagrangiancobordisms}) with boundary on that exact Betti Lagrangian, all constrained to be asymptotic to $\reeb$. In general, the cobordism type of the latter moduli space of pseudo-holomorphic disks is not an invariant because of bifurcations given by index $-1$ disks, see e.g.~ \cite[Lemma 3.13]{EHKexactLagrangiancobordisms}. That said, \cref{lem:Maslov0} shows that Betti Lagrangians $\blag\sse(T^*S,\la_\std)$ have Maslov index $0$ and thus such index-$(-1)$ disks cannot appear. Therefore, the cobordism type of this moduli space is an invariant up to compactly supported Hamiltonian isotopy and, consequently, so is the count of $\dfs$-trees asymptotic to $\reeb$. 
\end{proof}


\subsection{Proof of \cref{thm:characterization}}\label{ssec:characterization_endproof} The object of this subsection is to conclude the proof of \cref{thm:characterization}. The direction $(\Longleftarrow)$ of the characterization in \cref{thm:characterization} was established in \cref{section_adg}, c.f.~\cref{thm:snetworkadg}. We must thus show the implication $(\Longrightarrow)$. This will be first done for exact Betti Lagrangians in \cref{sssec:proof_characterization_exact}, using the results of this section (c.f.~\cref{prop:continuationstripclassification}) and then \cref{sssec:proof_characterization_mero} is devoted to the meromorphic WKB case. To align with \cref{def:diskterm}, the existence of a pseudo-holomorphic disk in the adiabatic limit can be made precise as follows:
\begin{definition}
The pair of Lagrangians $(\fibre{z},\blag)$ bounds a pseudo-holomorphic disk in the adiabatic limit if there exists a constant $\e_0\in\R_+$ such that for all $\e\in(0,\e_0)$ there exists a neighborhood $U_{\e}(z)\sse S$ and a point $w\in U_{\e}(z)$ in that neighborhood for which $(\e \blag,\fibre{w})$ bounds a non-constant pseudo-holomorphic disk.\hfill$\Box$
\end{definition}

The results established thus far, specifically \cref{thm:snetworkadg} and \cref{prop:continuationstripclassification}, now suffice to conclude \cref{thm:characterization} for exact Betti Lagrangians.

\subsubsection{Proof of \cref{thm:characterization} for exact Betti Lagrangians}\label{sssec:proof_characterization_exact}
Let $z\in \snetwork$ be a point in a wall of the spectral network, we must argue that there exists a pseudo-holomorphic strip between $T_z^*S$ and $L$ in the adiabatic limit. By \cref{thm:snetworkadg}, given any neighborhood $U\sse S$ of $z$, we can find a nested neighborhood $W\subset U$ and a constant $\e_0\in\R_+$, depending on $U$, such that the pair of Lagrangians $(\fibre{w},\e\blag)$ is disk-free for all points $w\in U\setminus W$ and $\e\in(0,\e_0)$. Consider a $\snetwork$-adapted short path $\alpha_v$ with its endpoints in $U\setminus W$ and intersecting $\snetwork$ and $z$. By the same argument as in the proof of \cref{thm:familyFloer}, there exists some $\e_0'\in\R_+$ such that for all $\e\in(0,\e_0')$ the Floer-theoretic parallel transport $\ff(V)(\mathring{\alpha}_v(s))$ equals that of $\tdfd(\mathring{\alpha}_v(s))$, where $\mathring{\alpha}_v(s)$ denotes the lift to the unit cotangent bundle $\sphb$. The latter parallel transport is non-trivial, because the 2d-4d BPS counting indices from \cref{subsec:snetworkandpathdetours2_BPS_index} are non-zero at all walls of $\snetwork$, as $\snetwork$ is creative. Thus the former parallel transport $\ff(V)(\mathring{\alpha}_v(s))$ is non-trivial. This implies the existence of a point $w\in W$ such that $(\fibre{w},\e \blag)$ is not disk-free, as otherwise \cref{prop:continuationstripclassification} would imply $\tdfd{(\mathring{\alpha}_v(s)))}$ was trivial. In conclusion, as the adiabatic parameter $\e\to0$ tends to zero, this concludes the existence of a pseudo-holomorphic strip between $L$ and $T^*_z\obis$ in the adiabatic limit, as required.

\subsubsection{Proof of \cref{thm:characterization} for meromorphic spectral curves}\label{ssec:characterization_meromorphic_scurves}

There are a few differences between the case of a meromorphic spectral curve $\scurve\sse(T^*S,\omega_\C)$ and an exact Betti Lagrangian $L\sse(T^*S,\la_\std)$. That said, when studying pseudo-holomorphic curves with respect to a generic meromorphic spectral curve $\scurve\sse(T^*S,\omega_\C)$, we will now establish results showing that the behaviors that occur are {\it as if} we were under an exactness assumption. In that sense, the proof follows closely that of the exact case and we only highlight the key ingredients. First, we have the following strengthening of the Stokes condition in \cref{def:diskterm}. 

\begin{lemma}\label{lem:stokesonstokesline}
Let $z\in \snetwork$ be a non-initial point. Then, for every $E\in\R_+$, there exists a neighbourhood $U_z(E)\sse S$ of $z$ and $\e_0(E,z)\in\R_+$ such that there are no non-constant $J_g$-holomorphic strips bounded between $\e \scurve$ and $\fibre{w}$ of energy less than $E$, for $w\in \snetwork^c(E)$. 
\end{lemma}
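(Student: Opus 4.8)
The statement is the meromorphic analogue of the ``no-go'' half of \cref{thm:snetworkadg}.(2), specialized to points $w$ lying in the complement $\snetwork^c(E)$ of the energy-$E$ truncation of the spectral network. The plan is to run the adiabatic degeneration argument of \cref{section_adg} in the WKB setting, using the cameral cover construction and the asymptotic control established in \cref{prop:asymptoticofWKBtrajectories} and \cref{lemma:trappinglemma} to play the role that exactness played in the exact case. Concretely, suppose for contradiction that there is a sequence $\e_n\to 0$, points $w_n\in\snetwork^c(E)$ with $w_n\to w$ for some $w$ in a prescribed small neighbourhood of $z$, and non-constant $J_g$-holomorphic strips $u_n$ bounded by $\e_n\scurve$ and $\fibre{w_n}$ with $\mathrm{area}(u_n)\le \e_n E$. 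After the usual puncture-removal and reparametrization this yields an $\e$-strip sequence in the sense of \cref{def:eps_strip_sequence}, to which \cref{prop:convergence_strips_D4tree} applies: a subsequence converges to a (broken) trimmed $\dfs$-tree $\Gamma$ through $w$ of flow-energy at most $E$.

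\textbf{Key steps, in order.} First I would verify that \cref{prop:convergence_strips_D4tree} is genuinely available here: the only hypothesis used in its proof that referenced exactness was the $O(\e)$-energy bound (itself built into the definition of an $\e$-strip sequence) and the no-recurrent-trajectories / trapping statements, and the latter are supplied in the meromorphic case by \cref{lemma:trappinglemma} and \cref{prop:asymptoticofWKBtrajectories} for generic $\theta$; the geometric boundedness of $\scurve$ with $O(-1)$-ends (so that \cref{lem:diametercontrol} and the reverse isoperimetric inequality \cref{thm:truncatedreverseiso} apply) gives the diameter control. Second, since all internal vertices of $\Gamma$ must occur at joints of $\snetwork(E)$ and its edges must converge to initial $\dfs$-rays with the inward orientation, $\Gamma$ is entirely contained in $\snetwork(E)$, exactly as in the proof of \cref{thm:snetworkadg}. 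Third, because $w\in\snetwork^c(E)$ lies in the interior of the complement, after shrinking the neighbourhood $U_z(E)$ we may assume $w_n\in\snetwork^c(E)$ as well; then $w_n$ lies on some flow edge of $\Gamma\subset\snetwork(E)$, contradicting $w_n\notin\snetwork(E)$. This contradiction yields the asserted $\e_0(E,z)$ and $U_z(E)$.

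\textbf{Main obstacle.} The delicate point, as in the exact case, is not the soft Gromov-compactness but controlling the degeneration \emph{near the branch points}: one must ensure that the limiting tree's edges entering the truncation balls $B_{\eta,\delta}$ coincide with initial trivalent rays, i.e.\ that \cref{def:flowlineconvergence}.(2) can be arranged. In the meromorphic case the metric is Kähler near the branch points and $\scurve$ is holomorphic there, so the energy computation \cref{eq:energycomputation} and the positivity \cref{eq:sympl_area_positive} go through verbatim — this is exactly why \cref{subsection:scurves} imposes that the sheets be holomorphic near ramification and the metric be $|z|^{-2}dz\otimes dz$ near the marked points. The one genuinely new check is that the behaviour at the $O(-1)$-ends (rather than conical ends) does not allow strips to escape to the marked points: this is handled by the cameral-cover reduction, passing to the rank-$2$ quadratic differential covers $\scurve^{cam}\to\scurve^{ij}$ with poles of order $\ge 2$, for which \cref{lemma:trappinglemma} and the classical theory of \cite{Strebel84_QuadDiffBook} forbid recurrent trajectories for generic $\theta$; one lifts each $\e$-strip to such a cover and applies the exact-case machinery there. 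I would therefore structure the write-up as: (i) reduce to a cameral cover, (ii) invoke \cref{prop:convergence_strips_D4tree} on the reduction, (iii) transport the containment $\Gamma\subset\snetwork(E)$ back down and derive the contradiction.
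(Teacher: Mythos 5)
There is a genuine gap in your final contradiction step. You write that ``$w_n$ lies on some flow edge of $\Gamma\sse\snetwork(E)$, contradicting $w_n\notin\snetwork(E)$,'' but \cref{prop:convergence_strips_D4tree} produces a trimmed $\dfs$-tree through $\lim_n w_n$, not through the individual $w_n$, so this membership simply does not follow. Openness of $\snetwork^c(E)$ does not rescue the argument either: a sequence in the open complement can perfectly well limit onto $\snetwork(E)$, and in the relevant scenario where the contradiction hypothesis forces $w_n\to z$ with $z\in\snetwork(E)$, your chain of deductions only recovers $z\in\Gamma\sse\snetwork(E)$, which is consistent rather than contradictory.

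The ingredient you are missing is the one the paper's two-sentence proof hinges on, and it is specific to the meromorphic/K\"ahler setting. The paper invokes \cref{thm:snetworkadg} as a black box to conclude that the strip's boundary class is the parallel transport of the soliton class of $z$, and then it uses positivity of the \emph{holomorphic} symplectic area: for a $J_g$-holomorphic strip $u$ bounded by the holomorphic Lagrangian $\e\scurve$ and a fibre, the quantity $\int u^{\ast}\omega_\C$ is real and positive, owing to the pointwise identity $\omega_\C(X,J_gX)=\omega_\std(X,J_gX)>0$ coming from the almost-hyperk\"ahler structure on $T^{\ast}\sS$ (this is exactly the role of \cref{eq:sympl_area_positive} in the proof of \cref{lem:initialedgelemma}). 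Via Stokes' theorem this equates the strip's real symplectic area, which is bounded by the energy cut-off, with the full complex charge $\int_{\partial u}\la_\C$ of the transported soliton class; that charge is therefore real, positive, and at most $E$ after $\e$-rescaling. Realness of the charge is precisely the condition that $w$ itself sits on an active wall, and the energy bound places it in $\snetwork(E)$ — a per-$w$ contradiction, not a limit statement. Your proposal essentially re-derives the adiabatic degeneration already packaged into \cref{thm:snetworkadg}, and it also proposes to ``lift each $\e$-strip to the cameral cover,'' which is not what the cameral cover is used for (it controls flowline asymptotics via \cref{lemma:trappinglemma}, not strips); neither of these supplies the holomorphic positivity input, so the intended contradiction does not close.
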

\begin{proof}
From \cref{thm:snetworkadg}, such pseudo-holomophic strips must have the homology class of the parallel transport of the solitons with energy less than $E$. 
Since the integral of the holomorphic Liouville 1-form along the boundary of such a strip must be real and positive, $z$ must lie on $\snetwork(E)$.
\end{proof}

The following notation is useful:

\begin{definition}
Let $\blag\sse T^*S$ be a Betti Lagrangian. By definition, $\blag$ is said to be disk-free in the adiabatic limit, if for every $E\in\R_+$ there exists $\e_0(E)\in\R_+$ such that for all $\e\in(0,\e_0(E))$, the scaled Lagrangian $\e\blag$ does not bound a pseudo-holomorphic disk of energy less than equal to $E$.\hfill$\Box$
\end{definition}

Then the same argument used for \cref{thm:snetworkadg} implies the following variation, which tells us that $\scurve$ is tautologically unobstructed, up to some energy. In other words, we can run the usual Floer theoretic machinery (see c.f \cite[Section 8(c)]{QuarticHMS2003}) up to some energy cut-off.  

\begin{proposition}\label{prop:nodisk}
Let $\scurve$ be a meromorphic spectral curve with a WKB spectral network $\snetwork$. Then $\scurve$ is disk-free in the adiabatic limit. 
\end{proposition}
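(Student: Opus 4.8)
The plan is to mimic the proof of \cref{thm:snetworkadg}.(1), the ``uniformly disk-free'' statement for exact Betti Lagrangians, but to replace the use of exactness (which, for an exact Lagrangian, forces all disk areas to be positive and all flowtrees to be non-periodic) by the positivity that is already available for meromorphic spectral curves. The key observation is that a meromorphic spectral curve $\scurve\sse T^*S$ is itself Lagrangian for $\Re(e^{i\theta}\omega_\C)$ for every phase $\theta$, and the holomorphic Liouville form $\la_\C$ restricted to $\scurve$ is a closed (generally non-exact) $1$-form whose periods all lie on a common ray only after rotation; what we actually need is not global exactness but the \emph{local} positivity of the pointwise symplectic area of a $J_g$-holomorphic strip together with the statement that $e^{i\theta}$ times the flow-energy of any $\dfs$-tree equals the integral of $\la_\C$ over the associated relative cycle (the meromorphic analogue of \cref{lemma:flow-energy}, recorded in \cref{subsection:holMorse}). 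This gives a lower bound on the energy of any non-trivial configuration in terms of the soliton classes of $\snetwork$.

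First I would fix an energy cut-off $E\in\R_+$ and suppose, for contradiction, that $\e\scurve$ bounds a non-constant $J_g$-holomorphic disk $u_\e$ of area $\le \e E$ for a sequence $\e\to0$. I would then appeal to the adiabatic convergence machinery developed in \cref{section_adg}: the $\e$-strip/disk sequence has uniformly bounded diameter (by geometric boundedness of $\scurve$, which holds for meromorphic spectral curves with $O(-1)$-ends, cf.~the discussion around \cref{prop:geombounded} and \cref{lem:diametercontrol}), so after passing to a subsequence and a reparametrization it Gromov-converges. The same trimming argument as in \cref{prop:convergence_strips_D4tree} applies — the only place exactness entered there was to rule out periodic trajectories and flowline edges of infinite length, and \cref{prop:asymptoticofWKBtrajectories} together with \cref{lemma:trappinglemma} supplies exactly these facts for generic $\theta$ in the meromorphic setting. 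Hence the limit is a (broken) trimmed $\dfs$-tree contained in $\snetwork(E)$, whose soliton class $\soliton{z}$ has, by construction, $e^{i\theta}$ times its flow-energy equal to $\int_{\soliton z}\la_\C$, a quantity with strictly positive real part when $\theta$ is chosen so the periods are saddle-free. But a closed disk has no boundary punctures, so the boundary of $u_\e$ represents an \emph{absolute} cycle on $\scurve$, and Stokes' theorem forces $\int_{[\dd u_\e]}\la_\C=0$; in the limit this would give a non-trivial $\dfs$-configuration with vanishing $\la_\C$-integral, contradicting the positivity just noted. This is essentially the argument already used inside the proof of \cref{lem:initialedgelemma}, where positivity of $\int_{\triangle_{v_\e}}\omega_\C^*u_\e$ was the crux; here it is applied to the whole disk rather than to a sub-triangle.

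The main obstacle I expect is the bookkeeping around the \emph{broken} trees and the energy filtration: unlike the exact case, where a single uniform primitive of $\la_\std$ on the compact part of $L$ bounds all flow-energies at once, here we only control configurations up to a chosen cut-off $E$, so the statement and its proof must be phrased ``for every $E$'' and one must check that the neighborhood $U_z(E)$ and the threshold $\e_0(E)$ can be extracted uniformly in $w\in\snetwork^c(E)$ (respectively, that no disk of energy $\le E$ survives). This is handled by fixing the truncation parameters $(\delta,\eta)$ as in \cref{ssec:convergence_trimmed_trees} so that only the initial trivalent vertices of $\snetwork(E)$ lie inside $B_{\eta,\delta}$, and then noting that the trimmed tree produced by the limit is forced to lie in $\snetwork(E)$ with internal vertices only at joints of $\snetwork(E)$ — the same finiteness of $\{\treegraph\}$ used at the end of \cref{ssec:convergence_trimmed_trees} applies. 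A secondary technical point is that \cref{prop:standardfloertheory}.(4), the Maslov-$0$ vanishing, was stated for exact Betti Lagrangians via \cref{lem:Maslov0}; the phase-function argument in that proof is purely local near the $\dfs$-branch points and the uniform $C^\infty$-convergence of the rescaled smooth sheets to the zero section, both of which hold verbatim for meromorphic spectral curves with $O(-1)$-ends, so \cref{prop:nodisk} also records (implicitly) that one may run the standard Floer machinery up to the given energy cut-off as claimed.
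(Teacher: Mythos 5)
Your proposal and the paper's proof agree through the convergence step: both appeal to geometric boundedness and the adiabatic degeneration machinery of \cref{section_adg} (with the WKB asymptotics of \cref{prop:asymptoticofWKBtrajectories} and the trapping lemma substituting for exactness) to produce a (broken) trimmed $\dfs$-tree in $\snetwork(E)$ from a sequence of somewhere-injective disks. After that, the routes genuinely diverge, and yours as written has a gap.

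The paper's argument after convergence is purely combinatorial: it introduces three boundary marked points to kill the automorphisms of the disk, invokes the initial-edge condition (\cref{lem:initialedgelemma}) so that every external edge of the limit tree must enter a branch point along an initial $\dfs$-ray, and then observes that by \cref{subsection:asymptoteWKB} the walls emanating from branch points asymptote to anti-Stokes rays and never return to a branch point; therefore the tree cannot close up, and the disk sequence cannot exist. No period or phase argument is used.

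Your route instead aims at an energy/positivity contradiction, and here the crucial step fails. The claim that ``Stokes' theorem forces $\int_{[\dd u_\e]}\la_\C=0$'' is not correct: Stokes gives $\int_{\dd u_\e}\la_\C=\int_{u_\e}d\la_\C=\int_{u_\e}\omega_\C$, which is the (phase-rotated) geometric area of the disk and is \emph{nonzero} precisely because the disk is non-constant. The statement $\int_{\mathrm{cycle}}\la=0$ is an exactness statement; for a meromorphic spectral curve $\scurve$, $\la_\C$ restricted to $\scurve$ is closed but not exact, and the integral over a cycle is a genuine period. What you actually get from positivity is that $e^{-i\theta}\int_{\dd u_\e}\la_\C\in\R_+$, i.e.\ the period $\int_{[\dd u_\e]}\la_\C$ has phase $\theta\pmod{\pi}$; the contradiction would then come from the genericity (saddle-free) choice of $\theta$, under which no nonzero period of $\scurve$ has that phase. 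That is a legitimate alternative argument, but it is not what you wrote, and the version you wrote conflates exactness of $\la_\std|_L$ (available for exact Betti Lagrangians) with the non-exact meromorphic case. If you replace the Stokes-gives-zero line with the phase-genericity line, your proof becomes a correct alternative to the paper's combinatorial one; as it stands, the crux step is false.
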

\begin{proof}
Let $u_{\e}$ be a sequence of somewhere injective disks of energy less than $E$ with boundary on $\e\scurve$. Introduce three punctures on the boundary to fix the conformal structure and apply the same adiabatic degeneration argument as in \cref{thm:snetworkadg}. By \cref{subsection:asymptoteWKB}, the walls of $\snetwork$ never return to the branch points and thus the initial edge condition implies that the disks cannot close up. Thus $\e\scurve$ must be disk-free for small enough $\e$.
\end{proof}

\noindent The scaling relations is as follows: the adiabatic cut-off $\e_0(E)$ tends to zero as the energy $E$ is sent to infinity, i.e. $\e_0(E)\to0$ for $E\to \infty$. The necessary variant of \cref{prop:fibreparalleltransport} can be discussed as follows.

Consider an energy cut-off $E\in\R_+$. The gapped condition implies that there exists a small enough parameter $h\in\R_+$ such that $\snetwork(E)$ and $\snetwork(E+h)$ are isotopic as stratified 1-manifolds with boundary. For each vertex $v$ of the energy-truncated $\snetwork(E)$, choose a Stokes neighborhood and a bump function $\rho$. Fix a neighborhood of $v$ contained in that of \cref{prop:nodisk}, with diameter $C\delta$ for some constant $C\in\R_+$. From \cref{prop:nodisk}, the pair $(\fibre{z},\e\scurve)$ is disk-free for $z\in B_{C\delta}(v)\setminus\snetwork(E)$. Choose $C$ such that $C\delta$ is less than the injectivity radius of $v$ consider $\alpha(s)$ a geodesic line segment contained in $B_{C\delta}(v)$. \\

Let $\wall$ be an $(ij)$-wall of $\snetwork(E)$, $W:=\int \lambda^i-\lambda^j=x+iy$ be the conformal flat coordinate and $(p^x,p^y)$ dual coordinates to $(x,y)$; here $W$ is defined by a line integral from the given point to the end variable point. For $z\in \wall$, we write $\alpha_z(s)$ for the path given by $s\to z+is$ in the $W$-coordinate. Let $\sup_{\rho}\abs{\blag}$ be the supremum of the norm of the sheets of $\blag$, over a small neighbourhood of $\rho$ that contains the support of the generating Hamiltonian $H^v$. Then observe that the Hamiltonian term $H^v$ in \cref{eq:energy_transformed_strip} is less than $\epsilon \sup_{\rho}\abs{\blag}$. The analogue of \cref{prop:fibreparalleltransport} then reads:
\begin{proposition}\label{prop:merofibreparalleltransportvertex}
In the notation above, there exists $\e_0(z,E)\in\R_+$ such that for all $\e\in(0,\e_0(z,E))$ the following holds:
\begin{enumerate}
\item  There exists $\dist(z,\e)\in\R_+$ and an elongation function such that, given a free $\snetwork$-adapted line segment $\alpha(s)$ contained in $B_{C\delta}$ of length $\e$, the moduli of $J_g$-holomorphic continuation strips of energy less than $\e(E+\hbar)$ are all small.\\

\item For an interior point $z\in \snetwork(E)\cap B_{C\delta}$, there exists $\delta(z,\e)\in\R_+$ and an elongation function such that along the short path $\alpha_z(s)$, $s\in [-\delta(z,\e),\delta(z,\e)]$, any large $J_g$-holomorphic continuation strips of energy less than $\e(E+\hbar)$ must have the homotopy class of a soliton at $z$.
\end{enumerate}
\end{proposition}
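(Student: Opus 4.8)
The plan is to mirror the argument for the exact case in \cref{prop:fibreparalleltransport}, but carefully tracking the energy cut-off $E$ and using the disk-freeness up to energy $E$ from \cref{prop:nodisk} in place of literal exactness. First I would fix the energy cut-off $E$, invoke the gapped condition to find $h\in\R_+$ so that $\snetwork(E)$ and $\snetwork(E+h)$ are isotopic as stratified $1$-manifolds with boundary, and pick the Stokes neighborhoods, bump functions $\rho$, and the constant $C$ exactly as in the paragraph preceding the statement, so that $C\delta$ is smaller than the injectivity radius at each truncated vertex $v$ and $(\fibre{z},\e\scurve)$ is disk-free of energy $\le E+h$ for $z\in B_{C\delta}(v)\setminus\snetwork(E)$ and $\e$ small. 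The key input is the energy identity \cref{eq:energy_transformed_strip}: after the gauge transform $\tilde u=\psi^{-1}_{l(s)}u$, the symplectic area of the moving-boundary strip is $W((\psi^{-1}_1\circ u)(-\infty,0))-W(u(+\infty,0))-\dist\cdot\int H^v(u(s,1))\,ds$, where $dW=\la_\std|_{\scurve}$ is a local primitive of the Liouville form restricted to $\scurve$ (which exists locally regardless of global exactness). Since the Hamiltonian term is bounded by $\e\sup_\rho\abs{\blag}$, this area is $O(\e)$ for short paths.

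For Part (1), given a free $\snetwork$-adapted line segment $\alpha(s)\sse B_{C\delta}$ of length $\e$, its two endpoints lie off $\snetwork(E)$, so by disk-freeness up to energy $E+h$ any continuation strip of energy $\le\e(E+h)$ whose limiting endpoints lie on the same smooth sheet has, after gauge transform, its limiting intersection point forced to be the unique nearby point on that sheet. I would then run the monotonicity/diameter estimate of \cite[Proposition 3.37]{nho2024familyfloertheorynonabelianization} verbatim as in the proof of \cref{prop:fibreparalleltransport}.(1), choosing the elongation function $l(s)$ with $l'$ supported on a small interval whose length is governed by a constant $\ell$ below which $J_g$-holomorphic half-disks of small energy cannot escape the neighborhoods $\cup_i B_{h_\Sigma/2}(z^i)$ of the intersection points; here $h_\Sigma$ is the Sasaki distance between smooth sheets over the neighborhood of $\alpha$. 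This yields that all such strips are small. For Part (2), for an interior point $z\in\snetwork(E)\cap B_{C\delta}$, I would use \cref{lem:stokesonstokesline} together with \cref{thm:snetworkadg}.(2): any $J_g$-holomorphic strip bounded by $\e\scurve$ and $\fibre{w}$ of energy $\le\e(E+h)$ across the wall at $z$ must carry the relative homology class of a parallel transport of a soliton class at $z$, since the integral of the holomorphic Liouville $1$-form along its boundary must be real and positive. The flat coordinate $W=\int(\lambda^i-\lambda^j)$ turns the short path $\alpha_z(s)$, $s\to z+is$, into the straight transversal, and the same Gromov-compactness/``solitons cannot concatenate'' argument as in the proof of \cref{prop:continuationstripclassification}.(2) shows that along $\alpha_z(s)$ for $\dist(z,\e)$ small enough the large continuation strips have exactly the homotopy class of the detour path across $z$.

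The main obstacle, as in the exact case, is the choice of the elongation function and the interpolation of the perturbation term: since the Hamiltonian isotopy generating the fiber transport does not fix a neighborhood of the intersection points (unlike the situation in \cite[Proposition 6.11]{nho2024familyfloertheorynonabelianization}), one must argue via the local graph-Hamiltonian trick of \cref{lem:gaugesmallconstant} — writing a smooth sheet of $\scurve$ as $\e\cdot\mathrm{gr}(f)$, applying the global fiber-preserving Hamiltonian flow $\Psi_\e$ to move to the zero-section picture, and then freely interpolating the perturbation term $(d\Psi^{-1}_\e)_\ast X_v$ to $l'(s)X_v$ while keeping a priori control on the diameter via the monotonicity estimate. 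The new wrinkle relative to the exact case is purely bookkeeping: one must verify that all the relevant strips indeed have energy $\le\e(E+h)$, so that \cref{prop:nodisk} and \cref{lem:stokesonstokesline} apply; this is where the gapped condition and the isotopy of $\snetwork(E)$ with $\snetwork(E+h)$ are used, ensuring that no new walls appear in the relevant energy window and hence the adiabatic-limit behavior near $z$ is dictated entirely by the soliton classes below energy $E$. Once Parts (1) and (2) are in hand, the proof of \cref{thm:characterization} in the meromorphic case (\cref{ssec:characterization_meromorphic_scurves}) proceeds as in \cref{sssec:proof_characterization_exact}, scanning over the energy filtration and letting $E\to\infty$, $\e\to 0$ with $\e<\e_0(E)$.
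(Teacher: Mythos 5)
Your proposal is correct and takes essentially the same approach as the paper: both reduce to the argument of \cref{prop:fibreparalleltransport}, bound the Hamiltonian contribution in the gauge-transformed energy identity \cref{eq:energy_transformed_strip} by $\dist\cdot\e\sup_\rho\abs{\blag}$, invoke disk-freeness below the cut-off energy $\e(E+\hbar)$ (via \cref{prop:nodisk} and \cref{lem:stokesonstokesline}), and for Part (2) fix the homotopy class of large continuation strips using the positivity of the holomorphic Liouville integral and Gromov compactness. The only differences are in verbosity; the ingredients and their deployment match the paper's proof.
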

 \begin{proof}
 The argument follows that of \cref{prop:fibreparalleltransport}. For (1), choosing the elongation function as in \cref{prop:fibreparalleltransport}, since the Hamiltonian term contribution in the energy formula is of size $d\cdot  \abs{\sup_{\rho}{\blag}} \epsilon \cdot \int l'(s)ds $, for $d$ small enough any pseudo-holomorphic strip bounded by $(\fibre{z},\e \scurve)$ which comes from breaking off a continuation strip must have energy less than $\e E$. Therefore, for any $s_0\in [0,1]$, there exists $\eta(s_0)\in\R_+$ such that the moduli of $J_g$-continuation strips along $[s-\eta(s_0),s+\eta(s_0)]$ is cobordant to the moduli of trivial strips. For (2), a similar argument applies: the key point is that for $J_g$, we can fix the homotopy classes of continuation strips with energy bounded above by $\leq E+\delta$.  
 \end{proof}

Since there are finitely many vertices in $\snetwork(E)$, let us choose $\delta\in\R_+$ such that \cref{lem:stokesonstokesline} and \cref{prop:merofibreparalleltransportvertex} hold. Given an $(ij)$-wall $\wall$, which is compact, we choose a (small) neighborhood $U_{\wall}\sse S$ of $\wall$ so that \cref{lem:stokesonstokesline} holds for all $w\in U_{\wall}\setminus\snetwork(E)$. We also (slightly) truncate $\wall$ off near the starting ends if $\wall$ begins at the initial vertices of $\snetwork(E)$. The following proposition has exactly the same proof as \cref{prop:merofibreparalleltransportvertex}:

\begin{proposition}\label{prop:merofibreparalleltransportwall}
In the notation above, there exists $\e_0(v)\in\R_+$ such that, for all $\e\in(0,\e_0(v,E))$, there exists $\delta(\wall,\e)>0$ such that the following conditions hold for strips along the path $s\to z+is$, $s\in [-\delta(\wall,\e),\delta(\wall,\e)]$:
\begin{enumerate}
\item For any $z\in U_{\wall}\setminus\snetwork$, the $J_g$-holomorphic continuation strips of energy less than $\e(E+\hbar)$ are all small.\\

\item For any $z\in \wall\cap U_{\wall}$, then the large $J_g$-holomorphic continuation strips of energy less than $\e(E+\hbar)$ all have the homotopy class of a soliton at $z$.
\end{enumerate}
\end{proposition}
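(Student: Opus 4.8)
The statement of Proposition~\ref{prop:merofibreparalleltransportwall} is the wall-analogue of Proposition~\ref{prop:merofibreparalleltransportvertex}; its proof is obtained by running the same adiabatic compactness machinery one more time, now over the compact $(ij)$-wall $\wall$ rather than over a ball around a vertex. First I would fix the energy cutoff $E$, use the gapped hypothesis (as before \cref{prop:merofibreparalleltransportvertex}) to pass to $E+\hbar$ with $\snetwork(E)$ and $\snetwork(E+\hbar)$ isotopic as stratified $1$-manifolds, and fix $\delta$ small enough that both \cref{lem:stokesonstokesline} and \cref{prop:merofibreparalleltransportvertex} hold. Then, covering the compact wall $\wall$ by finitely many flat coordinate charts $W = \int(\lambda^i-\lambda^j) = x+iy$, with dual coordinates $(p^x,p^y)$, I would transport the two estimates of \cref{prop:merofibreparalleltransportvertex} along $\wall$ chart by chart: the local Hamiltonian $H^v$ generating fiber transport along $s\mapsto z+is$ in the $W$-coordinate has its contribution to the energy formula \eqref{eq:energy_transformed_strip} bounded above by $\e\sup_{\rho}\abs{\blag}\int l'(s)\,ds$, so for $\dist$ small enough any strip that breaks off a continuation strip of energy below $\e(E+\hbar)$ is forced to have energy below $\e E$, hence is governed by \cref{prop:nodisk} and \cref{lem:stokesonstokesline}.

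The argument for Part (1) is then the exact copy of the contradiction argument in \cref{prop:continuationstripclassification}.(i): if along $s\mapsto z+is$ there were non-small $J_g$-holomorphic continuation strips of energy $\leq \e(E+\hbar)$ for a sequence of shrinking lengths $\dist_n\to 0$ and points $z_n\in U_\wall\setminus\snetwork$, then by \cref{prop:fibreparalleltransport}.(2) (with the energy bound giving a uniform positive lower bound on the symplectic area of the transformed strips) a subsequence Gromov-converges to a broken $J_g$-holomorphic strip bounded by $\e\scurve$ and $\fibre{z}$ of energy $\leq \e E$; but \cref{lem:stokesonstokesline} forbids such strips for $z\in\snetwork^c(E)$, a contradiction. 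This shows that on $U_\wall\setminus\snetwork$ the moduli of continuation strips of bounded energy is cobordant to the moduli of trivial strips. For Part (2), for $z\in\wall\cap U_\wall$ one instead uses \cref{lem:stokesonstokesline} together with \cref{prop:merofibreparalleltransportvertex}.(2): as $\dist\to 0$ the homotopy class of the boundary of a large continuation strip must converge to a broken chain of soliton classes $\soliton{z;\wall}$, and since solitons cannot concatenate (the "solitons cannot concatenate" argument, using that the holomorphic Liouville $1$-form integrates to something real and positive along each soliton) the only surviving class is a single $\soliton{z;\wall}$ at $z$. Finiteness of $\wall$ (compactness) lets one cover $\wall\cap U_\wall$ by finitely many such segments and take the common $\delta(\wall,\e)$ and the common $\e_0(\wall,E)$; a final small generic perturbation of $J(s,t)$, invariant by Gromov compactness, makes all continuation strips transversely cut out without changing the homotopy classes.

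The one genuinely new bookkeeping point, compared with the vertex case, is that $\wall$ is not a single point: the flat coordinate $W$ is defined only by a line integral from a chosen base point to the variable endpoint, so the charges vary along $\wall$, and one must check that the choice of bump function $\rho$ supported near a neighborhood of $\wall$ containing the generating Hamiltonian $H^v$, together with the elongation function, can be made uniformly over the compact wall; but this is exactly the uniformity already built into \cref{def:Hamiltonian} (uniformly horizontally supported families) and into \cref{prop:fibreparalleltransport}, and there is no obstruction to choosing the data continuously and then using compactness of $\wall$ to extract uniform constants $\e_0(\wall,E)$ and $\dist(\wall,\e)$. I expect this uniformity step --- ensuring that $\e_0(z,E)$ and $\dist(z,\e)$ of \cref{prop:merofibreparalleltransportvertex} can be chosen locally constant in $z\in\wall$ and then glued --- to be the only place requiring care; everything else is a verbatim repetition of the arguments for \cref{prop:fibreparalleltransport}, \cref{prop:continuationstripclassification} and \cref{lem:stokesonstokesline}, which is why the excerpt asserts it "has exactly the same proof as \cref{prop:merofibreparalleltransportvertex}".
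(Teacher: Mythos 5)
Your proposal is correct and takes essentially the same approach as the paper: the paper's entire proof of Proposition~\ref{prop:merofibreparalleltransportwall} is the one-line assertion that it ``has exactly the same proof as \cref{prop:merofibreparalleltransportvertex},'' and your fleshed-out reconstruction (adiabatic compactness via \cref{prop:fibreparalleltransport}, energy cut-off $\e(E+\hbar)$ forcing broken strips into the regime of \cref{lem:stokesonstokesline}, solitons-cannot-concatenate for Part~(2), and extracting uniform constants by compactness of $\wall$) is exactly the repetition the authors have in mind.
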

In general, $J_g$-holomorphic continuation strips are not going to be regular in this content. Fortunately, given the energy bound, all the relevant moduli space of continuation strips are cobordant with the given energy cut-off (here the perturbation of the almost complex structure must be chosen in a way that it agrees with $J_g$ on the boundary mapping to $\blag$, to avoid boundary bubbling, c.f \cite[Section 8(c)]{QuarticHMS2003}). Indeed, the energy formula for short enough arc implies that the dominant term in the energy is $\int \la_{st}$ over the boundary mapping to $\e\blag$, the Hamiltonian term being proportionately small, which is invariant up to homotopy. Therefore, the energy of the perturbed continuation strip is determined by the boundary homotopy class.\footnote{See also the energy estimates in \cite[Section 5.3.5]{FOOO2}; in their terminology, the Hofer norm of the Hamiltonian isotopy as observed by the disk is small enough.} It follows that we can regularize the moduli space by choosing some unspecified perturbation of $J_g$ to make the strips regular. The following terminology is convenient:

\begin{definition}\label{def:disksupport}
Let $\wall$ be an active wall of $\snetwork(E)$ and $z\in \wall$. By definition, $z$ supports a disk in the adiabatic limit if for any $E\in\R_+$ there exists $\e(z)\in\R_+$ such that for all $\e\in(0,\e_0(z))$ there exists $\delta(z,\e)\in\R_+$ such that the moduli space of large $J_g$-holomorphic continuation strip along $\alpha_z(s)$, $\abs{s}<\delta(z,\e)$, of energy less than $\e E$ is non-empty and its regularization is not cobordant to the empty set.\\

\noindent By definition, a wall $\wall$ is said to support a non-constant disk in the adiabatic limit if the set of points $z\in \wall$ that support a disk in the adiabatic limit is non-empty.\hfill$\Box$
\end{definition}

\begin{proof}[Proof of \cref{thm:characterization} in meromorphic case]
First, consider a wall $\wall$ of $\snetwork(E)$. Let us show that the set of points on $\wall$ that support a disk in the adiabatic limit is open and closed. Indeed, openness is a consequence of Gromov compactness and regularity of strips being an open condition. For closedness, note that \cite[Proposition 4.4]{nho2024familyfloertheorynonabelianization} shows the set is non-empty. Consider then a sequence of points $z_n\to z$ such that each support a disk in the adiabatic limit. For large enough $n\in\N$, $z_n$ belongs to the Stokes neighborhood of $z$ and, by assumption, the moduli space of non-diagonal $J_g$-holomorphic continuation strips along $\alpha_{z_n}(s)$ is non-empty. Concatenate the horizontal arc coming from the top of $\alpha_z(s)$ to the arc $\alpha_{z_n}(s)$, and further concatenate it with the horizontal arc connecting to the bottom of $\alpha_z(s)$. The resulting path is isotopic to the arc $\alpha_z(s)$. Now, any two moduli space of $J_g$-holomorphic continuation strips with given energy less than $\e(E+\hbar)$ are cobordant, and since the moduli space of large $J_g$-holomorphic continuation strips along $\alpha_{z_n}(s)$ is not (virtually) null-cobordant, the same holds for $\alpha_z(s)$.\\

Second, we discuss vertices of the spectral network. Suppose that the active ingoing walls all support disks in the adiabatic limit. We claim that this is also the case for the active outgoing walls. Indeed, by \cref{prop:merofibreparalleltransportvertex}, for a small neighborhood of $v$ the moduli space of $J_g$-holomorphic continuation strips for the short paths across the outgoing walls is virtually cobordant to the gluing of the ingoing moduli spaces. The count of these gluings is controlled by the index in \cref{def:bpsindex}, and so we are reduced to the exact situation.
\end{proof}
\label{sssec:proof_characterization_mero}
\color{black}
\section{Spectral networks and 4d Fukaya Categories}\label{section_wfc}
In this section we relate spectral networks and Fukaya categories in 4-dimensions, in particular proving \cref{thm:specnet_fuk}. Specifically, we show how exact Betti Lagrangians can be understood as objects of a Fukaya category in two related ways, cf.~Sections \ref{subsection:WFC} and \ref{subsection:PWFC}, and how the parallel transport rules dictated by a spectral network $\snetwork$ are in fact encoding $\mu_2$-type morphisms for the underlying $A_\infty$-structures naturally appearing in Floer theory, cf.~\cref{thm:Familyfloerwrapped}, proven in \cref{subsec:prooffamilyfloerwrapped}, and \cref{thm:pwfcyonneda}, proven in \cref{subsection:PWFC}. An explicit description of the associated Yoneda $A_\infty$-module over $\pwfc$ is also provided in \cref{subsection:partialyonneda}. See \cite{GPSCV} for references and context on (partially) wrapped Fukaya $A_\infty$-categories. In brief, the present section enhances the Floer-theoretic characterization of spectral networks and non-abelian parallel transport, as presented in Sections \ref{section_adg} and \ref{section_Floer}, to the $A_\infty$-categorical framework. In this section, we will focus our attention on exact Betti Lagrangians.


\subsection{The Yoneda module of an exact Betti Lagrangian}\label{subsection:WFC} Let $\wfc$ denote the wrapped Fukaya $A_\infty$-category of the open Liouville sector $(T^*S,\la_\std)$, and $\kmod$ the dg-category of chain complexes of $k$-modules, understood as an $A_\infty$-category with higher $\mu_k$ vanishing, $k\geq2$. A Betti Lagrangian $L\sse(T^*S,\omega_\std)$ does {\it not} define an object in $\wfc$, due to the nature of its asymptotic ends near the marked points, cf.~\cref{section_setup}. Thus, even if we endow a Betti Lagrangian $L$ with a local system\footnote{Since \cite{GPSCV} uses local systems, not twisted, we use spin structures for Betti Lagrangians to orient. The relation between twisted local systems and spin structures is discussed in \cref{ssec:lsys_vs_tlsys}. We can use \cref{cor:spin} to pass from spin structure count to twisted count. Alternatively, one could develop \cite{GPSCV} for twisted local systems.} $V\in\Loc(L)$, it is a priori unclear how Betti Lagrangians and their spectral networks fits within the context of Fukaya categories. There are at least two solutions to address this, depending on whether we want to preserve the geometry of $L$ at infinity or we allow geometric modifications of $L$ at infinity. The first approach is presented in this section and the second in \cref{subsection:PWFC}.\\

For the first approach, the algebraic idea is simple: if $C\in\mbox{Ob}(\EuScript{C})$ is an object in a category, then $\mbox{Hom}_{\EuScript{C}}(\cdot,C)\in \mbox{Fun}(\EuScript{C}^{op},\kmod)$ defines a module over $\EuScript{C}$. This module is often known as the Yoneda module. Since a Betti Lagrangian $L\notin\mbox{Ob}(\wfc)$ is not an object, as it does not have compact horizontal support, we cannot merely construct a Yoneda module in this manner. Nevertheless, given a Betti Lagrangian $(L,V)$ endowed with a local system $V\in\Loc(L)$, we now construct an $A_\infty$-module over $\wfc$ which behaves {\it as if} it was the Yoneda module of an object given by $(L,V)$. Intuitively, its value on objects $\tlag_1,\ldots,\tlag_k\in\mbox{Ob}(\wfc)$ is given by counting pseudo-holomorphic polygons with boundary on the Lagrangians $\tlag_1,\ldots,\tlag_k$ and the Betti Lagrangian $\blag$. Consider the Floer cohomology groups $HF^\ast(P,(L,V))$ as in \cref{section_Floer}, the result reads as follows:

\begin{proposition}\label{prop:A-inftymodule}
Let $L\sse(T^*S,\omega_\std)$ be a Betti Lagrangian endowed with a local system $V\in\Loc(L)$. Then there exists an $A_{\infty}$-module
$\ylagv:\wfc\lr\kmod$ with cohomologies
$$H^{\ast}(\ylagv(\tlag))\cong HF^\ast(\tlag,\blag),$$
where $\tlag\sse T^*S$ is any exact cylindrical Lagrangian with compact horizontal support.
\end{proposition}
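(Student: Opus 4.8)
The plan is to construct $\ylagv$ as a concrete $A_\infty$-module over $\wfc$ by counting pseudo-holomorphic polygons with one boundary component on $\blag$ (decorated by $V$) and the rest on test Lagrangians, relying on the Floer-theoretic package already assembled in \cref{section_Floer}. The key geometric input, which replaces the usual compactness argument, is that $\blag$ is \emph{weakly bounded} (\cref{eq:weaklyboundedmultigraph}) and uniformly geometrically bounded with respect to the cylindrical almost complex structures used in \cref{prop:standardfloertheory}, so that the relevant moduli spaces of holomorphic disks between a compactly-supported cylindrical Lagrangian $\tlag$ and $\blag$ admit Gromov compactifications by the monotonicity-inequality arguments of \cite[Section 5]{Audin1994HolomorphicCI}, exactly as invoked in \cref{prop:standardfloertheory}.(3). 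First I would fix a coherent choice of Floer data (cylindrical almost complex structures agreeing with $\defac$ at infinity, perturbations making all moduli transverse) for the family of objects of $\wfc$; this is the standard homotopy-coherent choice as in \cite{GPSCV,SeidelZurich}, the only new point being to check it can be made compatibly with the fixed $\blag$ at the horizontal ends, which follows because $dH$ for cylindrical Hamiltonians has compact horizontal support while $\blag$ is cylindrical/conical there, so no disk can escape to horizontal infinity (here the relevant maximum-principle / trapping behavior is controlled by \cref{lemma:morsetrappinglemma} and \cref{prop:asymptoticoftrajectories}, or more directly by the integrated maximum principle used in \cref{prop:regulardata}).

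Second, I would define the module structure maps. For objects $\tlag_0,\dots,\tlag_{d}\in\Ob(\wfc)$ and a generator $x\in CF^\ast(\tlag_d,\blag)$, inputs $x_i\in CF^\ast(\tlag_{i-1},\tlag_i)$ for $i=1,\dots,d$, the map
$$
\mu^{\ylagv}_{d\mid 1}\colon CF^\ast(\tlag_{d-1},\tlag_d)\otimes\cdots\otimes CF^\ast(\tlag_0,\tlag_1)\otimes \ylagv(\tlag_0)\lr \ylagv(\tlag_d)[1-d]
$$
counts rigid elements of the moduli of holomorphic $(d+2)$-gons with boundary arcs on $\tlag_0,\dots,\tlag_d,\blag$ in cyclic order, with the $\blag$-boundary weighted by parallel transport in $V$ and signs from chosen spin/orientation data. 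One then takes $\ylagv(\tlag):=CF^\ast(\tlag,\blag)$ as a $k$-complex; this is well-defined since $\tlag$ is compactly supported and $\blag$ geometrically bounded, so $\tlag\pitchfork\blag$ is finite after a compactly supported perturbation. The $A_\infty$-module relations follow from the usual codimension-one boundary analysis of these moduli: a broken configuration either breaks off a disk contributing to the $\wfc$-composition $\mu^{\wfc}$ among the $\tlag_i$, or breaks off a lower module operation $\mu^{\ylagv}$; the signed count of such boundary points vanishes, giving the $A_\infty$-module equations. The cohomology statement $H^\ast(\ylagv(\tlag))\cong HF^\ast(\tlag,\blag)$ is then a tautology from the definition $\ylagv(\tlag)=CF^\ast(\tlag,\blag)$, invariance being \cref{prop:standardfloertheory}.(2).

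\textbf{Main obstacle.} The genuinely non-routine point, as in all constructions involving non-compact or ``half-infinite'' Lagrangians, is compactness and a priori energy control: one must rule out disk bubbling off $\blag$ and escape of polygons toward the horizontal infinity of $T^*S$ (the marked points $\mkpts$), uniformly over the homotopy-coherent family of Floer data indexed by tuples of objects of $\wfc$. Disk bubbling off $\blag$ is excluded because $\blag$ is exact (indeed Maslov $0$ by \cref{lem:Maslov0}), so there are no non-constant holomorphic disks with boundary on $\blag$; escape to horizontal infinity is controlled by the cylindrical/conical structure of $\blag$ at the ends together with the compact horizontal support of the Hamiltonian perturbations, via the integrated maximum principle (compare \cref{prop:regulardata} and \cite[Lemma 3.28]{nho2024familyfloertheorynonabelianization}) and the trapping behavior of \cref{lemma:morsetrappinglemma}. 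Once these uniform bounds are in place, the construction of $\ylagv$ and verification of its $A_\infty$-module axioms is formally identical to the standard construction of Yoneda modules in wrapped Floer theory, e.g.~\cite{GPSCV,SeidelZurich}, and I would present it at that level of detail, emphasizing only the modifications forced by the horizontal ends of $\blag$.
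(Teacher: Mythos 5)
Your overall strategy — define $\ylagv(\tlag)=CF^\ast(\tlag,\blag)$, build the module structure maps by counting pseudo-holomorphic polygons with one boundary arc on $\blag$ weighted by $V$, establish compactness via geometric boundedness and monotonicity, and exclude bubbling off $\blag$ by exactness — is correct and is the same underlying geometric mechanism as in the paper. You have correctly identified that the non-routine point is a priori compactness for polygons bounded by the non-compact $\blag$, and your sources for that control (integrated maximum principle, weak boundedness, and a Maslov-$0$/exactness argument to kill bubbling) match the paper's.

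There is, however, a gap in how you interface with the actual definition of $\wfc$. You write that you would ``fix a coherent choice of Floer data $\ldots$ for the family of objects of $\wfc$'' and count polygons on $(\tlag_0,\dots,\tlag_d,\blag)$, but in the \cite{GPSCV} construction used here the morphism complexes of $\wfc$ are not the naive Floer complexes $CF^\ast(\tlag_i,\tlag_j)$: $\wfc$ is defined as the localization of a strictly unital directed $A_\infty$-category $\mathcal{O}_{\overrightarrow{P}}$ (over a terminal decorated poset) at the continuation morphisms. So your $\mu^{\ylagv}_{d\mid 1}$ does not typecheck directly against objects of $\wfc$; one can only define the polygon-counting module on the non-localized directed poset category and must then verify that it descends to the localization. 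This is the step the paper addresses explicitly and which you omit entirely: one must check that for any positive isotopy $\{K_t\}$ of cylindrical Lagrangians the induced continuation map $CF(K_0,\blag)\to CF(K_1,\blag)$ is a quasi-isomorphism, so that the module inverts the morphisms being localized. The geometric reason this works — that $\blag$ lies in a finite-radius disk bundle, so $K_t$ and $\blag$ never intersect at infinity and wrapping does not change the intersection pattern (cf.\ \cite[Lemmas 3.21 \& 3.26]{GPSCV}) — is not automatic from compactness alone and is not mentioned in your proposal. Concretely: add the construction of a decorated poset $\dset(S)$ adapted to $\blag$, define $\ylagv^{pre}$ over the poset $A_\infty$-category, prove the continuation maps into $\blag$ are invertible, and only then pass to the localization and identify it with $\wfc$.
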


\begin{proof} The intuition is to formally add the Betti Lagrangian $\blag\sse T^*S$ as the most negative object in $\wfc$. We provide the necessary details, as follows. By \cite[Prop. 3.43]{GPSCV}, the wrapped Fukaya $A_\infty$-category $\wfc$ can be obtained as the strictly unital directed $A_\infty$-category of a certain (terminal) decorated poset $\dset(S)_{ter}$ for $(T^*S,\omega_\std)$ localized at the set of continuation morphisms, as discussed in Section 3.8 in ibid. To begin, consider a decorated poset $\dset(S)$, not necessarily terminal, which satisfies the following conditions:
\begin{itemize}
    \item[(a)] Each Lagrangian $\tlag_k$ in the poset is transverse to $\blag$, and each family of almost complex structure can be extended to an almost complex structure on $T^{\ast}\obis$, that agrees with $\defac$ outside some uniform compact base.\\
    
    \item[(b)] For each chain $p_0>p_1>\ldots>p_k$, the moduli space of pseudo-holomorphic polygons bounded by the (ordered) tuple of Lagrangians $(\tlag_{p_0},\tlag_{p_1},\ldots,\tlag_{p_k},\blag)$ is also transversely cut-out and compatible with gluing.
\end{itemize}
Such decorated poset exists after perturbing the almost complex structure. This requires the fact that the relevant pseudo-holomorphic polygons do not escape off to infinity. Namely, that given the exact Betti Lagrangian $\blag$, the (pairwise transverse) cylindrical Lagrangians $\tlag_1,\dots,\tlag_k\sse T^{\ast}S$ and $\{J_s\}$ a family of almost complex structures on $T^{\ast}\obis$, we need that any $J_s$-holomorphic polygon with boundary conditions given by $(\tlag_1,\dots, \tlag_k,\blag)$ has uniformly bounded diameter, and the same holds with moving Lagrangian boundary conditions on $\tlag_1,\ldots,\tlag_k$ given by positive isotopies. This geometric condition does hold in our case: $\blag$ is contained in a finite-radius disk bundle, it is uniformly geometrically bounded with respect to $J_s$ and, being exact, the energy of such polygons is a priori bounded by the action of the intersection points. Therefore, the monotonicity argument in \cite[Proposition 3.29]{nho2024familyfloertheorynonabelianization} applies to guarantee such condition.

We then construct an $A_\infty$-module $\ylagv^{pre}$ over the poset in $\dset(S)$ with its $A_{\infty}$-operations given by counting pseudo-holomorphic polygons between the Lagrangians $\{P_i\}$ in $\dset(S)$. For such chain of Lagrangians $\{P_i\}$, we define the map
$$\mu_{k-1\vert 1}:\mathcal{O}(\tlag_{1},\tlag_{2})\otimes\dots \mathcal{O}(\tlag_{k-1},\tlag_k) \otimes \ylagv^{pre}(\tlag_k)\lr \ylagv^{pre}(\tlag_1),\quad \tlag_1>\tlag_2...>\tlag_k,$$
by counting pseudo-holomorphic $(k+1)$-gons with boundary conditions given by $(\tlag_1,....,\tlag_k,\blag)$, with $\mathcal{O}$ as in Eq.~(3.52) in ibid. We set the $A_{\infty}$-module map to be equal to zero otherwise, except for the case $\mu_{1\vert1}:\ylagv^{pre}(\tlag)\lr \ylagv^{pre}(\tlag)$ where $\dset(\tlag,\tlag)=\mathbb{Z}$ acts as the identity. The above defines $\ylagv^{pre}$ as $A_\infty$-module over the poset in $\dset(S)$, we need to argue that it descends to the localization $\mathfrak{W}_{\dset(S)}$ by continuation morphisms. For that, note that given any positive isotopy of cylindrical Lagrangians $\{K_t\}$, $t\in[0,1]$, the continuation map $CF(K_0,\blag)\to CF(K_1,\blag)$ is invertible. Indeed, since $\blag$ lies in a finite disk bundle, $\blag$ and $K_t$ do not intersect at infinity, thus the conclusion follows from the argument in \cite[Lemma 3.21]{GPSCV} and the final assertion in \cite[Lemma 3.26]{GPSCV}.  It also follows that $C^\ast(\ylagv^{pre}(\tlag))\simeq CF^\ast(\tlag,\blag)$.

It now suffices to relate $\mathfrak{W}_{\dset(S)}$ to $\wfc$. For that, consider the category of decorated posets which satisfy $(a)$ and $(b)$ above, are downward-closed and duplicate-free. Denote its terminal object by $\dset(S,L)_{ter}$, cf.~\cite[Lemma 3.42]{GPSCV}. The inclusion $\dset(S,L)_{ter}\sse\dset(S)_{ter}$ of posets induces an inclusion functor between the associated localized categories, and thus $\mathfrak{W}_{\dset(S)}$ admits a tautological inclusion functor into $\wfc$, which is in fact an equivalence. By precomposing $\ylagv^{pre}$ with the inverse of this equivalence, we obtain the required $A_\infty$-module $\ylagv$.
\end{proof}

\begin{remark} We use the following convention for composition and wrapping: the $\mu_2$-maps are given as
$$\mu_2:CF(A,B)\otimes CF(B,C)\to CF(A,C)$$
and, when wrapping is involved, a morphism $A\to B$ has its domain $A$ more positive than its codomain $B$, i.e.~positive wrapping gives $CF(A^w,A)\otimes CF(A,B)\to CF(A^w,B)$. In particular, the Betti Lagrangian $L\sse T^*S$, e.g.~a spectral curve, is always taken on the right slot of $CF(\cdot,\cdot)$, as it is the most negative object. Similarly, in the partially wrapped setting, the positive push-off of the (cylindrized) Betti Lagrangian is the most negative object, in that small negative wrapping makes it fall into $\dd_\infty L$.\hfill$\Box$
\end{remark}

In the same line as the equivalence established in \cref{ssec:detour_are_continuation_strips}, cf.~\cref{thm:familyFloer}, the $A_\infty$-modules $\ylagv$ in \cref{prop:A-inftymodule} are closely related to the non-abelianized local systems, as follows. First, for technical reasons, we normalize the Riemannian metric $(S,g)$ so that $\mbox{inj}(g)>2$, which is achieved after a constant metric rescaling $g\to c^2g$; this is necessary for the proof of \cref{prop:fibrewrappingsgenerate} to estimate action. Now, by definition, two points $z,w\in(\bis,g)$ in a Betti surface with a spectral network $\snetwork\sse S$ are said to be $\snetwork$-adapted if $\mbox{dist}_g(z,w)<\mbox{inj}(g)/20$ and the minimal geodesic $\alpha_{zw}$ between $z,w$ is a $\snetwork$-adapted path (cf.~\cref{def:networkadapated}). Since the metric $(S,g)$ is complete, we can and do choose $w$ in terms of $z$ such that the energy functional on the path space $\Omega_{z,w}$ between $z$ and $w$ is a Morse functional, cf.~ \cite[Theorems 14.2, 18.1 \& 18.2]{MilnorMorse}. This ensures the transversality of intersections of cotangent fibres in $(T^*S,\omega_\std)$ after wrapping by the geodesic flow.\\

\noindent We implicitly identify $\mbox{Hom}_{\wfc}(\fibre{z},\fibre{w}):=CF^\ast(\fibre{z},\fibre{w})$ with chains on the path space $\Omega_{z,w}S$, see \cite{AbW}. In fact, $CF^\ast(\fibre{z},\fibre{z})$ is $A_\infty$-quasi-isomorphic to $C_{-\ast}(\Omega_{z}S)$ endowed with the $A_\infty$-refined Pontryagin product. The reason we focus on cotangent fibers is that \cite{AbC} shows that any cotangent fiber $\fibre{z}\sse(T^*S,\omega_\std)$ generates $\wfc$, and thus we have an $A_\infty$-quasi-equivalence $\mbox{Tw}(\wfc)\simeq \mbox{Tw}(CF^\ast(\fibre{z},\fibre{z}))$. We denote by
$$\mu_{1\vert 1}:\mbox{Hom}_{\wfc}(\fibre{z},\fibre{w})\otimes\ylagv\lr\ylagv$$
the structure map of the $A_\infty$-module $\ylagv$ in these degrees evaluated at two cotangent fibers. The categorical statement relating Floer theory to non-abelianized local system reads as follows:

\begin{theorem}\label{thm:Familyfloerwrapped}
Let $(S,g)$ be a Betti surface, $L\sse(T^*S,\omega_\std)$ a Betti Lagrangian, $V\in\Loc(L)$, $\snetwork\sse S$ an adapted spectral network and $z,w\in S$ an $\snetwork$-adapted pair. Then the minimal geodesic generator $[\alpha_{zw}]\in \mbox{Hom}_{\wfc}(\fibre{z},\fibre{w})$ satisfies
\begin{equation}\label{eq:mu11_paralleltrans}
\mu_{1\vert 1}([\alpha_{zw}],\cdot)=\tdfd(\alpha_{zw}).
\end{equation}
\hfill$\Box$
\end{theorem}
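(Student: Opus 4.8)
The plan is to reduce the identity \eqref{eq:mu11_paralleltrans} to the Floer-theoretic computation already established in \cref{thm:familyFloer}, transported through the standard dictionary between the wrapped category and chains on the based loop space. First I would recall that by \cite{AbC} the cotangent fiber $\fibre{z}$ generates $\wfc$, so that computing the $A_\infty$-module $\ylagv$ is entirely determined by its restriction to the full subcategory on cotangent fibers, and by \cite{AbW} we have the quasi-isomorphism $\mathrm{Hom}_{\wfc}(\fibre z,\fibre w)\simeq C_{-\ast}(\Omega_{z,w}S)$ under which the minimal geodesic $\alpha_{zw}$ becomes the degree-zero chain represented by the unique (for $z,w$ close enough, by the $\snetwork$-adapted hypothesis and the Morse condition on $\Omega_{z,w}$) critical point of the energy functional. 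Under this identification, $\mu_{1\vert1}([\alpha_{zw}],\cdot)$ is precisely the ``continuation-type'' operation: counting pseudo-holomorphic triangles with boundary on $\fibre z$, $\fibre w$ and $\blag$, with input the generator $\alpha_{zw}\in\fibre z\cap\fibre w^{w}$, which is the standard model (cf.\ \cite[Section 8]{SeidelZurich}) for the continuation map $CF^\ast(\fibre z,\blag)\lr CF^\ast(\fibre w,\blag)$ induced by the Hamiltonian isotopy generated by geodesic transport along $\alpha_{zw}$.

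\textbf{Key steps.} (1) Set up the comparison of moduli spaces: show that the moduli of pseudo-holomorphic triangles on $(\fibre z,\fibre w,\blag)$ with the distinguished input at $\alpha_{zw}$ is identified, after a neck-stretch / gauge transformation localizing the triangle near the thin part where $\fibre z$ and $\fibre w$ nearly coincide, with the moduli of deformed continuation strips $M(\alpha_{zw},J(s,t))$ solving \eqref{eq:perturbedpassive} for the path $\rho=\alpha_{zw}$; this is the usual ``degenerating a triangle to a strip with moving boundary'' argument, and here it is legitimate because $\blag$ is contained in a finite-radius disk bundle and uniformly geometrically bounded, so the relevant energy and diameter estimates (via \cite[Prop.~3.37]{nho2024familyfloertheorynonabelianization} and the monotonicity arguments of \cref{prop:standardfloertheory}) apply uniformly. (2) Match orientations: check that the sphere-bundle orientation convention used to orient $M(\alpha_{zw},J(s,t))$ in the proof of \cref{prop:twistedcontinuation} agrees, under the quasi-isomorphism $\Loc\simeq\loct$ of \cref{cor:spin}, with the spin-structure orientation governing the triangle count defining $\mu_{1\vert1}$; this is the content of the ancillary discussion in \cref{sssec:spin_structures}. (3) Apply \cref{thm:familyFloer} (and its homotopy-refined form \cref{cor:homotopy_refinement_continuation_map}): the Floer-theoretic parallel transport $\ff(V)(\alpha_{zw})$ obtained by counting these deformed continuation strips equals $\tdfd(\alpha_{zw})$, the non-abelianized transport dictated by the detour paths of $\snetwork$. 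Combining (1)--(3) gives $\mu_{1\vert1}([\alpha_{zw}],\cdot)=\ff(V)(\alpha_{zw})=\tdfd(\alpha_{zw})$, which is \eqref{eq:mu11_paralleltrans}. One should also note that the whole computation takes place in the adiabatic limit $\e\to0$, using scaling stability (\cref{lem:scalestability}) to see that the $\mu_{1\vert1}$ operation is insensitive to the rescaling of $\blag$, so the identification with $\tdfd$ is unambiguous.

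\textbf{Main obstacle.} The delicate point is Step (1): carefully justifying that the pseudo-holomorphic triangle operation $\mu_{1\vert1}([\alpha_{zw}],\cdot)$ — defined abstractly via the decorated-poset/localization model of $\wfc$ in \cref{prop:A-inftymodule} — is chain-homotopic to the geometric continuation map built from the moving-fiber equation \eqref{eq:perturbedpassive}, \emph{with the homotopy refinement intact} (i.e.\ tracking the relative homotopy classes in $\sphsc$, not merely the evaluation at $y=-1$). This requires a gluing/degeneration argument in which the ``long thin neck'' between $\fibre z$ and $\fibre w$ is replaced by the elongation-function deformation of \cref{sssec:holonomy_floer_locsys}, and one must ensure no extra strip or disk bubbling occurs during the interpolation — which is exactly where the disk-freeness and Stokes results (\cref{thm:snetworkadg}, \cref{prop:continuationstripclassification}) enter to control the adiabatic behavior near and away from $\snetwork$. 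Once this neck-degeneration/invariance lemma is in place, the rest is bookkeeping with orientations and an appeal to \cref{thm:familyFloer}.
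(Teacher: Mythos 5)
The high-level architecture of your proposal — reduce $\mu_{1\vert1}$ to a continuation-strip count, match orientations, and apply \cref{thm:familyFloer} — does track the paper's argument, and you correctly flag the identification of $\mu_{1\vert1}$ with a continuation map as the crux. But your Step~(1) as written has a genuine gap, and it sits exactly where the paper concentrates its technical work.

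In the GPS localization model of $\wfc$ used in \cref{prop:A-inftymodule}, $\mathrm{Hom}_{\wfc}(\fibre{z},\fibre{w})$ is a homotopy colimit over a cofinal family of wrappings, and the identity $\mu_2(c,\cdot)\simeq c(\cdot)$ relating the module structure map to a geometric continuation map (via \cite[Lemma 3.26]{GPSCV}, which is what you should cite here rather than Seidel's book) applies \emph{only when $c$ is a continuation element} for a positive isotopy. You invoke the Abouzaid dictionary to single out the minimal geodesic as a degree-zero chain in $C_{-\ast}(\Omega_{z,w}S)$, but you never establish that this chain, sitting in the direct limit, is represented by a continuation element whose associated positive isotopy can be used to build the moving-fiber equation. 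This is precisely \cref{prop:fibrewrappingsgenerate}: the paper constructs an explicit cofinal family of fibre wrappings $\{F_k\}$ generated by Hamiltonians $H_k$ that are quadratic on a large compact region and linear past a length threshold $\ell_k$, and then runs an action-filtration argument — using $A(\gamma)=-\ell^2/2$ for non-minimal geodesics and the normalization $\injradius>2$ — to show both that the wrapping continuation maps $CF(F_k,\fibre{y})\to CF(F_{k+1},\fibre{y})$ preserve the minimal geodesic generator and that the minimal geodesic is the image of the continuation elements in the colimit. Only then does the passage to continuation strips and the appeal to \cref{prop:wrappingfibrecont} and the \cref{thm:familyFloer}-argument become available. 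Your alternative ``degenerating a triangle to a strip via neck-stretch'' heuristic is not what the paper does, and carrying it out in the GPS framework would in any case require fixing a concrete geometric wrapping to degenerate from — which brings you back to constructing the cofinal family of \cref{prop:fibrewrappingsgenerate}, not around it.
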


\noindent In brief, \cref{eq:mu11_paralleltrans} equates a map given by Floer-theoretically counting pseudo-holomorphic strips, on its left, to the topological parallel transport given by the spectral network, on the right. Note that the left hand side of the isomorphism \cref{eq:mu11_paralleltrans} can be identified with $\ylagv([\alpha_{zw}])$, understanding $\ylagv$ as being applied to a morphism in $\wfc$. Thus the structure map $\mu_{1\vert 1}$ appearing in \cref{thm:Familyfloerwrapped} categorically (and cohomologically) describes the parallel transport map from $z$ to $w$ when its domain is evaluated at $\mbox{Hom}_{\wfc}(\fibre{z},\fibre{w})\otimes\ylagv(\fibre{z})$. \cref{thm:Familyfloerwrapped} will be proven in a moment, in \cref{subsec:prooffamilyfloerwrapped}, after we discuss a useful corollary.\\
Given the $A_\infty$-quasi-equivalence $\mbox{Tw}(\wfc)\simeq \mbox{Tw}(CF^\ast(\fibre{z},\fibre{z}))$, the right hand side is a model for $\Loc(\obis)\simeq C_{-\ast}(\Omega_zS)$-mod. Thus, a local system in $\Loc(\obis)$ can be understood as a $\wfc$-module. It is in general challenging to explicitly present $A_\infty$-modules over $\wfc$, as it involves solving partial differential equations, significant preliminary Floer data to be specified and homotopy (co)limits need to be computed. An advantageous consequence of \cref{thm:Familyfloerwrapped} is that we can explicitly describe the class of $A_\infty$-modules over $\wfc$ arising as non-abelianized local system via the $A_\infty$-quasi-equivalence above. Specifically, under these identifications, we have the following

\begin{cor}\label{cor:familyfloerwrapped}
Let $(S,g)$ be a Betti surface, $L\sse(T^*S,\omega_\std)$ a Betti Lagrangian, $V\in\Loc(L)$, and $\snetwork\sse S$ a spectral network adapted to $L$. Then, under the $A_\infty$-quasi-equivalence $\wfc\mbox{-mod}\cong C_{-\ast}(\Omega_zS)$-mod,
$$\ylagv\simeq\Phi_\snetwork(V),$$
as $A_{\infty}$-modules over $\wfc$. That is, the $A_{\infty}$-module
$\ylagv:\wfc\lr\kmod$ is homotopic to the $A_\infty$-module corresponding to $\Phi_\snetwork(V):\Loc(S)\lr\kmod$.\hfill$\Box$ 
\end{cor}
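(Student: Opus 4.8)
The statement is essentially a formal consequence of \cref{thm:Familyfloerwrapped} once the correct bookkeeping of the $A_\infty$-quasi-equivalences is set up, so the plan is to reduce everything to that theorem and standard facts about generation. First I would fix the cotangent fiber $\fibre{z}\sse(T^*S,\omega_\std)$ at the chosen basepoint $z\in S$ and invoke \cite{AbC} to obtain the $A_\infty$-quasi-equivalence $\perf(\wfc)\simeq \perf(CF^\ast(\fibre{z},\fibre{z}))$, and then \cite{AbW} to identify the endomorphism algebra $CF^\ast(\fibre{z},\fibre{z})$ with $C_{-\ast}(\Omega_z S)$ as $A_\infty$-algebras (with the Pontryagin product). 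Pulling back modules along these equivalences gives an $A_\infty$-equivalence of module categories $\wfc\text{-mod}\simeq C_{-\ast}(\Omega_z S)\text{-mod}$, and the latter is the standard dg-model for $\Loc(S)$ (e.g. via \cite{higher_algebra}). Under this chain of identifications, the $A_\infty$-module $\ylagv$ of \cref{prop:A-inftymodule} is sent to a module over $C_{-\ast}(\Omega_z S)$, i.e. to an object of $\Loc(S)$ (up to the usual spin/twisted-local-system dictionary of \cref{ssec:lsys_vs_tlsys} and \cref{cor:spin}); call it $M$. The claim to prove is $M\simeq\Phi_\snetwork(V)$ in $\Loc(S)$.

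\textbf{Key steps.} Step 1: Unwind what the module $M$ is concretely. Restricting $\ylagv$ to the full subcategory of $\wfc$ generated by $\fibre{z}$, the underlying chain complex $\ylagv(\fibre{z})$ has cohomology $HF^\ast(\fibre{z},\blag)\cong\bigoplus_{i=1}^n V_{z_i}$, which is exactly the stalk $\Phi_\snetwork(V)_z$ of \cref{subsubsection:pathdetourclasses}; so $M$ and $\Phi_\snetwork(V)$ have the same underlying graded vector space at $z$. Step 2: Identify the module structure with the parallel transport. A morphism in $\wfc$ of the form $[\alpha_{zw}]\in\Hom_{\wfc}(\fibre{z},\fibre{w})=CF^\ast(\fibre{z},\fibre{w})$ corresponds, under the Abbondandolo--Schwarz / Abouzaid identification with chains on the path space $\Omega_{z,w}S$, to the homotopy class of the minimal geodesic, and hence to an element of $\pi_1(S)$ (relative endpoints), which is exactly the data a module over $C_{-\ast}(\Omega_z S)$ must act on. By \cref{thm:Familyfloerwrapped}, $\mu_{1|1}([\alpha_{zw}],\cdot)=\tdfd(\alpha_{zw})$; the right-hand side is precisely the parallel transport of $\Phi_\snetwork(V)$ along $\alpha_{zw}$, including, via \cref{cor:homotopy_refinement_continuation_map} and the homotopy-refined statement in \cref{thm:familyFloer}, the refinement that records relative homotopy classes. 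Since the geodesics $\alpha_{zw}$ for $\snetwork$-adapted pairs $(z,w)$ generate all of $\pi_1(S,\tt_S)$ by concatenation (this is where completeness of $(S,g)$ and the genericity of the basepoint are used, together with $A_\infty$-functoriality: longer paths act by iterated $\mu_2$/$\mu_{2|1}$, which is handled by the concatenation compatibility in \cref{prop:twistedcontinuation}.(3) and \cref{thm:familyFloer}), the two module structures agree on a generating set of morphisms and hence are $A_\infty$-homotopic. Step 3: Assemble Steps 1 and 2 into an $A_\infty$-module homotopy $M\simeq\Phi_\snetwork(V)$; then transport this back along the equivalence $\wfc\text{-mod}\simeq C_{-\ast}(\Omega_z S)\text{-mod}$ to obtain $\ylagv\simeq\Phi_\snetwork(V)$ over $\wfc$, which is the statement.

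\textbf{Main obstacle.} The delicate point is not any single geometric input---\cref{thm:Familyfloerwrapped} does the heavy lifting---but the coherence of the higher $A_\infty$-module operations. \cref{thm:Familyfloerwrapped} only pins down $\mu_{1|1}$ at the cohomology level for pairs of cotangent fibers; to conclude an honest $A_\infty$-module homotopy one must check that the higher maps $\mu_{k|1}$ (counts of pseudo-holomorphic polygons with boundary on $\tlag_1,\dots,\tlag_k,\blag$) are compatible with the corresponding higher structure of $\Phi_\snetwork(V)$ viewed over $C_{-\ast}(\Omega_z S)$. The way I would handle this is to not compare the $\mu_{k|1}$ directly but to exploit formality-type rigidity: once the underlying complexes agree and $\mu_{1|1}$ realizes the correct $\pi_1(S)$-action (so that the induced module over $H_0(C_{-\ast}(\Omega_z S))=\C[\pi_1(S)]$ is the right one), an obstruction-theoretic argument over $\C[\pi_1(S)]$-modules---together with invariance of $\ylagv$ under the choices of Floer data, which is built into \cref{prop:A-inftymodule}, and Hamiltonian invariance from \cref{thm:familyFloer} / \cref{thm:pathdetourclassesareHaminvariant}---forces the whole $A_\infty$-module to be quasi-isomorphic to the one coming from the honest local system $\Phi_\snetwork(V)$. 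Making this obstruction argument clean is the part that requires care; everything else is routine repackaging of the results already established in Sections \ref{section_adg}, \ref{section_Floer} and \ref{subsection:WFC}.
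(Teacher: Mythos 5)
Your proposal takes essentially the same route as the paper: reduce to morphisms between cotangent fibres via generation, match the $\mu_{1|1}$ with $\tdfd$ via \cref{thm:Familyfloerwrapped}, and then invoke a formality/rigidity argument to promote this to an $A_\infty$-module homotopy. The one place you leave things as a gesture — the ``obstruction-theoretic argument over $\C[\pi_1(S)]$-modules'' that is supposed to control the higher $\mu_{k|1}$ — is exactly where the paper supplies the concrete mechanism: since $\blag$ is Maslov $0$ (\cref{lem:Maslov0}), $\ylagv(F)$ is cohomologically concentrated in degree $0$ for every wrapping fibre $F$, and likewise $C_{-\ast}(\Omega_z S)\simeq\C[\pi_1(S)]$ is concentrated in degree $0$ since $S$ is a $K(\pi,1)$; a degree count then kills all higher Hochschild obstruction groups, and homological perturbation transfers $\ylagv$ to a strict module over the cohomology subcategory of wrapping fibres, where it is forced to equal $\Phi_\snetwork(V)$ by \cref{thm:Familyfloerwrapped}. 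You actually compute the degree-$0$ concentration in your Step 1 (the stalk $\bigoplus_i V_{z_i}$), so the missing sentence is just the observation that this concentration is what makes the obstruction theory vanish. With that line added, your write-up matches the paper's proof.
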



\subsection{Proof of \cref{thm:Familyfloerwrapped} (Floer $\mu_{1|1}$ is spectral transport)}\label{subsec:prooffamilyfloerwrapped}
We need to compute the wrapped Floer complex using a model that is also suitable for computing the $\mu_{1\vert1}$-maps in the statement of \cref{thm:Familyfloerwrapped}. For that, we introduce such a model in \cref{sssec:good_model}, then compute the structure maps in \cref{sssec:computing_str_maps}, and conclude the argument in \cref{sssec:conclusion_familyfloerwrapped}.



\subsubsection{A model for computing the wrapped Floer complex}\label{sssec:good_model} Given two nearby points $z,w\in S$, the goal of this subsection is to establish that appropriately chosen positive isotopies applied to $\fibre{z}$ exactly lead to a degree-0 cycle in $\mbox{Hom}_{\wfc}(\fibre{z},\fibre{w})$ represented by the minimal geodesic $\alpha_{zw}$ between $z$ and $w$, with this morphism complex given by direct limits as in \cite{GPSCV}.\\

\noindent Recall that an exact isotopy $\{\tlag\}_{t\in[0,1]}$ of cylindrical Lagrangians is said to be positive (at infinity), denoted by $\tlag_0\wto\tlag_1$, if $\alpha(\partial_t(\partial_{\infty}\tlag_t))>0$, where $\partial_{\infty}\tlag_t$ denotes the boundary at infinity of $\tlag_t$ and $\alpha$ is a given contact form. Note that a positive isotopy $\tlag_0\wto\tlag_1$ induces an element $c\in CF(\tlag_1,\tlag_0)$, called the continuation element, cf.~\cite[Section 3.3]{GPSCV}. The relationship between continuation elements and continuation maps is as follows. If $\tlag_0\wto\tlag_1$ and $K\sse T^*S$ is a Lagrangian with $K,P_0,P_1$ pairwise transverse, then the continuation map
$$c(\tlag_t):CF(\tlag_0,K)\to CF(\tlag_1,K)$$
associated to the continuation element $c\in CF^0(\tlag_1,\tlag_0)$ is chain homotopic to the evaluation $\mu_2(c,\cdot)$ of the map
$$\mu_2:CF(\tlag_1,\tlag_0)\otimes CF(\tlag_0,K)\to CF(\tlag_1,K)$$
of the $A_\infty$-structure at the continuation element $c$. In fact, this even holds if we fix $\tlag$ and move $K$ by a negative isotopy. Indeed, \cite[Lemma 3.26]{GPSCV} proves this for $K,\tlag$ having compact horizontal support; for $K$ an exact Betti Lagrangian, the same argument works because of the existence of uniform diameter bounds, as discussed in the proof of \cref{prop:A-inftymodule}. Thus we can and do implicitly switch between continuation elements and continuation maps.\\

In general, the wrapped Floer cohomology groups defined in \cite[Section 3.4]{GPSCV} are given as the direct limit
$$\displaystyle HW^\ast(\tlag,K):=\varinjlim_{(P\wto P^w)^+} HF^\ast(P^w,K)$$
over (cofinal) sequences of positive isotopies. Our task is to choose a suitable such cofinal sequence, which we explicitly present as a sequence $\tlag_0\wto\tlag_1\wto\tlag_1\wto\cdots$ of Lagrangians. We often denote $HF^\ast(\tlag,K)=HW^\ast(\tlag,K)$ when it is clear by context that wrapping must be taken: we always consider a positive wrapping of $\tlag$, in the sense above, though one can equivalently negatively wrap $K$. To define our cofinal sequence, it is convenient to introduce the following notation:

\begin{definition}[Wrapped fibres and fibre wrappings]\label{def:wrappedfibre}
A cylindrical Lagrangian $F\sse(T^*S,\omega_\std)$ is said to be a wrapped fibre if there exists a point $z\in \obis$ and a positive isotopy $\fibre{z}\wto F$ supported on the complement of the unit disk bundle $D^{\ast}\obis$. In this case, the point $z\in\obis$ is said to be the base point of $F$. By definition, a positive isotopy $\{F_t\}_t$ is called a fibre wrapping if $F_t$ is a wrapped fibre for each $t$, and $\{F_t\}$ are uniformly cylindrical and have uniform horizontal support. In this case, the path of base points of $\{F_t\}$ in $S$ is said to be its support arc.\hfill$\Box$
\end{definition}

In order to prove \cref{thm:Familyfloerwrapped} we need a cofinal sequence that relates to minimal geodesics: intuitively, we will consider a cofinal sequence given by fibre wrapping such that the associated continuation elements map to the minimal geodesics. For that, we need to cover $T^{\ast}\obis$ by sectorial covers, as defined in \cite[Section 12.5]{GPSSD}, and we achieve that by modifying the given metric $(S,g)$, as follows. Given the spectral network $\snetwork\sse S$ adapted to $(S,g)$, we consider a finite good cover $\{U_{\alpha}\}$ of $\bis$ such that each $U_{\alpha}$ contains at most one vertex of $\snetwork$, and the open sets $U_{\alpha}$ have their closures contained in $\obis$, except for a distinguished open set for each puncture of $\bis$, which we choose to be contained in the region where the metric is conical. We call the open sets of the former type the  ``internal" open sets. Then, we choose a collar neighborhood of each $\partial U_{\alpha}\sse S$ with a diffeomorphism to $S^1\times (0,1)$ such that:
\begin{itemize}
    \item[-] $\partial U_{\alpha}$ is mapped to $S^1\times \{1/2\}$,
    \item[-] $\dd U_{\eta}$, $\eta\neq\alpha$, and the edges of $\snetwork$ intersect the neighborhood $S^1\times (0,1)$ of $\partial U_{\alpha}$ as an arc of the form $\{p\}\times (0,1)$, $p\in S^1$ a point.
\end{itemize}
With this cover chosen, we modify the metric $(S,g)$ near each such collar of $\partial U_{\alpha}$ so it is of the form $g_{S^1}+dt\otimes dt$, where $g_{S^1}$ is the standard round metric on $S^1$. This can be achieved by an arbitrarily $C^0$-small compactly supported perturbation of $g$ in a manner that $\snetwork\sse S$ stays invariant, up to smooth isotopies supported away from the boundaries of these disks $U_\alpha$. Note that for two $\snetwork$-adapted points $z,w\in \snetwork^c$, the length of the second shortest geodesic will be bounded below by the minimal injectivity radius $\injradius$ of this perturbed metric. For the rest of this section, we assume the metric $(S,g)$  has been modified as just described, and still denote this perturbed metric by $(S,g)$.\\

The sequence of fibre wrappings that we use to prove \cref{thm:Familyfloerwrapped}, whose continuation elements limit to the minimal geodesics, is constructed in (the proof of) the following result:

\begin{proposition}[Fibre wrappings and minimal geodesics]\label{prop:fibrewrappingsgenerate} 
Let $F_z,F_w\sse(T^*S,\omega_\std)$ be wrapped fibres based at $z,w\in S$, two close enough generic points.
Then, there exists a sequence of fibre wrappings
$$F_z\wto F_1\wto F_2\wto\ldots$$
with the base point of $F_1$ equal to $z$ such that:
\begin{enumerate}
    \item $\displaystyle HW^\ast(F_z,F_w)\cong\lim_{i\to \infty} HF(F_i,F_w),$
    \item the image in $HW^0(F_z,F_w)$ of the continuation elements in $HF^0(F_i,F_w)$ is the class $[\alpha_{zw}]\in C_0(\Omega_{z,w})$ represented by the minimal geodesic $\alpha_{zw}$ between $z,w$.
\end{enumerate}
\end{proposition}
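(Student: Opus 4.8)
The strategy is to build the cofinal sequence $F_z\wto F_1\wto F_2\wto\cdots$ \emph{explicitly} via geodesic flow wrapping, so that the continuation elements are represented by broken geodesics of increasing length, and then track which of these survive to the direct limit. Concretely, I would choose a cutoff function $\chi$ supported outside the unit disk bundle $D^*S$ and let $F_i$ be the image of $F_z$ under the time-$i$ flow of the Hamiltonian $\chi(r)\cdot r$ (with $r$ the fiberwise radial coordinate); by construction these are wrapped fibres with support arc the geodesic emanating from $z$, and the family is uniformly cylindrical with uniform horizontal support, hence a fibre wrapping in the sense of \cref{def:wrappedfibre}. Cofinality --- i.e.\ part (1) --- follows from the standard argument of \cite[Section 3.4]{GPSCV}: positive wrapping by any compactly-supported-at-infinity positive isotopy is dominated by a sufficiently large power of the geodesic flow, so $\displaystyle HW^\ast(F_z,F_w)\cong\lim_i HF^\ast(F_i,F_w)$. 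Here I use crucially that $F_w$ is a fibre, lying in a fixed compact piece near infinity only through its cylindrical end, so the transversality and Gromov-compactness inputs of \cite{GPSCV} apply verbatim; the metric modification near the $\partial U_\alpha$ collars ensures the geodesic flow is well-behaved and, because $\mathrm{inj}(g)>2$ after the rescaling $g\to c^2 g$, the second-shortest geodesic between the $\snetwork$-adapted pair $z,w$ has length bounded below by $\injradius>2\cdot(\text{dist}_g(z,w))$, keeping the minimal geodesic isolated.

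The heart of the argument is part (2): identifying the image of the continuation element under $HF^0(F_i,F_w)\to HW^0(F_z,F_w)$. I would use the Abbondandolo--Schwarz / Abouzaid identification $CF^\ast(F_z,F_w)\simeq C_{-\ast}(\Omega_{z,w}S)$ (cf.\ \cite{AbW}), under which intersection points of $F_i\pitchfork F_w$ correspond to geodesic chords from $z$ to $w$ of length $\le i$, graded by Morse index, and the continuation element of a positive isotopy is the chain represented by the newly-created generator of lowest action. Since $z,w$ were chosen generic so that the energy functional on $\Omega_{z,w}$ is Morse (via \cite[Theorems 14.2, 18.1, 18.2]{MilnorMorse}), and since $\mathrm{dist}_g(z,w)<\mathrm{inj}(g)/20$, the unique geodesic chord of minimal length is the minimal geodesic $\alpha_{zw}$, it has Morse index $0$, and it represents a degree-$0$ cycle. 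The continuation element of the first wrapping $F_z\wto F_1$, for $F_1$ chosen just long enough that the minimal geodesic appears as an intersection point, is precisely $[\alpha_{zw}]\in C_0(\Omega_{z,w})$; and for each subsequent $F_i\wto F_{i+1}$ the image of the continuation element in the limit is $[\alpha_{zw}]$ as well, because composition of continuation elements corresponds to concatenation with the trivial (constant-speed minimal) continuation and the newly created higher-length generators map to classes that are already in the image or are killed in the colimit by the Morse differential. The only subtlety I foresee here is a \emph{bookkeeping/action} one: one must verify that no lower-action generator than $\alpha_{zw}$ is ever created --- this is exactly where $\mathrm{inj}(g)>2$ and the $\snetwork$-adaptedness hypothesis $\mathrm{dist}_g(z,w)<\mathrm{inj}(g)/20$ enter, since they guarantee the minimal geodesic is strictly shorter (hence strictly lower action after the standard action--length dictionary) than any broken or second-homotopy-class chord, so it is genuinely the bottom generator and represents the canonical degree-$0$ class in $HW^0$.

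The main obstacle, then, is not conceptual but technical: carefully setting up the fibre-wrapping Hamiltonians so that (a) the family is genuinely uniformly cylindrical with uniform horizontal support --- which forces the wrapping to happen entirely outside $D^*S$ and requires a careful choice of the interpolating profile function, analogous to the $\beta(r)$ discussion in \cref{sssec:framework_floer} --- and (b) at each finite stage $F_i$ is transverse to $F_w$ and the resulting Floer chain complex is the Morse complex of the length functional truncated at length $i$, with continuation maps being inclusion of sublevel sets. For (b) one invokes the perturbation scheme of \cite[Chapter 2]{SeidelZurich} together with the genericity of $(z,w)$; the point requiring genuine care is that the identification of continuation elements with inclusion-induced maps on $C_{-\ast}(\Omega_{z,w})$ be compatible with the direct-limit structure, so that part (1)'s colimit is computed by the ascending union of sublevel sets of the length functional, whose homotopy colimit is $C_{-\ast}(\Omega_{z,w}S)$ --- this is where one finally reads off that $[\alpha_{zw}]$ is a well-defined degree-$0$ generator of $HW^0(F_z,F_w)$. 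Once \cref{prop:fibrewrappingsgenerate} is in hand, \cref{thm:Familyfloerwrapped} will follow by evaluating the $A_\infty$-module $\ylagv$ on this explicit morphism and matching the resulting polygon count against the deformed continuation strip count of \cref{prop:twistedcontinuation}, which by \cref{thm:familyFloer} equals $\tdfd(\alpha_{zw})$.
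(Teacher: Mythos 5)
Your overall strategy — cofinality via geodesic flow wrapping, plus an action argument to isolate the minimal geodesic — is the right one, but your Hamiltonian family is the wrong one for the key estimate, and part (2) as you state it is not proved.

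For part (1) you essentially reproduce the paper's cofinality lemma (the paper proves this as \cref{lem:cofinality} and \cref{lem:ind-cofinality}), and this part is fine. The problem is in part (2). You take $F_i$ to be the time-$i$ flow of the \emph{fixed} Hamiltonian $\chi(r)\cdot r$, which is the same as the time-$1$ flow of the linear-at-infinity Hamiltonian $i\cdot\chi(r)\cdot r$. For such essentially linear Hamiltonians the $F_i\cap F_w$ intersection points sit entirely in the transition region where $\chi$ is nonconstant, and there is no clean formula expressing their action purely in terms of the geodesic length $\ell$. You then invoke a ``standard action--length dictionary'' to deduce that the minimal geodesic is the lowest-action generator and that nothing of lower action is ever created, but with your Hamiltonian the action of a chord of length $\ell$ depends on where in the transition region the condition $H'(r)=\ell$ is met, not only on $\ell$ — so the dictionary you need is not available. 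You also assert that the higher-length generators ``are killed in the colimit by the Morse differential,'' but the wrapped complex here has vanishing differential (two Maslov-$0$ Lagrangians), so nothing is being killed; what must be shown is that the \emph{continuation chain maps} send the minimal geodesic generator to itself, not merely that its class survives cohomologically.

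The paper avoids both problems by using a different family: $F_k$ is the time-$1$ flow of a Hamiltonian $H_k$ that is \emph{quadratic}, $H_k(r)=\tfrac12 r^2$, on a compact region $[1+\tfrac{1}{40},\ell_k-\delta_k]$ growing with $k$, linear $\ell_k r$ at infinity, and — crucially — \emph{fixed} on the initial interval $[1,\ell_1]$ for all $k$. The quadratic profile gives the clean action formula $A(x^-)=-\tfrac12\ell^2$ from \cite[Section 3(c)]{biasedview} for a chord of length $\ell$; choosing $\ell_1>2$ and $\injradius>2$, every non-minimal geodesic has $\ell>\ell_1>2$, so its action is below $-2$, while the minimal geodesic (of length under $\injradius/20<1/10$) has tiny action. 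The energy inequality $A(x^-)\geq A(\gamma)$ for a continuation strip, combined with this gap, forces the only strip asymptotic to the minimal geodesic to be the constant one — which exists precisely because $H_k$ is fixed on $[1,\ell_1]$. This is the step your proposal skips: you need a Hamiltonian profile tailored so that the action spectrum separates the minimal geodesic from all others by a definite gap, and so that the Hamiltonian near the intersection point of interest does not move as $k$ increases. Your $\chi(r)r$ achieves neither property directly; if you insist on using it you would have to re-derive an analogous action--length lower bound in the transition region and separately arrange that $F_i\cap F_w$ is transverse (the paper arranges this by choosing $[\ell_k-\delta_k,\ell_k+\delta_k]$ disjoint from the length spectrum), which is essentially rebuilding the paper's $H_k$.
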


\noindent \cref{prop:fibrewrappingsgenerate}.(2) is the part that requires a new type of argument, whereas \cref{prop:fibrewrappingsgenerate}.(1) can be reasonably obtained by modified existing arguments in the literature, as the proof will show, cf.~Lemma \ref{lem:cofinality} and \ref{lem:ind-cofinality} below.


\subsubsection{Proof of \cref{prop:fibrewrappingsgenerate}}
First, we need to introduce the following notion of cofinality:

\begin{definition}\label{def:cofinality}
Let $\tlag_t\sse(T^*S,\omega_\std)$ be a positive isotopy of cylindrical Lagrangians, $t\in[0,\infty)$, and $\{\tbis_t\}$ a nested family of truncations of $\obis$ such that $\tlag_t\subset \tbis_t$ and $\cup \tbis_t=\obis$. By definition, a such a pair $(\tlag_t,\tbis_t)$ is said to be ind-cofinal if:

\begin{enumerate}
\item For any isotopy $\tlag_0\to \tlag^{w}$, there exists some $t\in\R_+$ and a positive isotopy $\tlag^{w}\wto \tlag_t$ such that the composition of $\tlag_0\to \tlag^w$ with $\tlag^w\wto \tlag_t$ is homotopic to a positive isotopy $\tlag_0\wto \tlag_t$. Furthermore, both the isotopy and the homotopy can be taken to be supported in the interior of $T^{\ast}\tbis_t$.\\
\item Any two positive isotopies $\tlag_0\wto \tlag_t$  coincide, up to homotopy, after composing with $\tlag_t\to \tlag_s$, for some large $s>t$. Furthermore, the homotopy (of positive isotopies) can be taken to be supported in the interior of $T^{\ast}\tbis_s$.
\end{enumerate}
A positive isotopy $\tlag_t$ is said to be ind-cofinal if there exists $\{\tbis_t\}$ such that $(P_t,\tbis_t)$ is ind-cofinal.\hfill$\Box$
\end{definition}

\noindent As in the non-limit case, the ind-cofinality in \cref{def:cofinality} of a positive isotopy $\tlag_t$ can be guaranteed by forcing the boundaries $\partial_{\infty}\tlag_t$ to move fast enough in a Reeb direction in the ideal contact boundary of $(T^*S,\omega_{\std})$. In precise terms:
\begin{lemma}\label{lem:cofinality}
Let $\tlag_t\sse(T^*S,\omega_\std)$ be a positive isotopy. Then, $\tlag_t$ is ind-cofinal if there exists a contact form $\alpha$ on the unit sphere bundle, coinciding with the restriction of $\la_{st}$ outside a compact set, such that
\begin{align}\label{eq:integral_infty}
\displaystyle\int_{0}^{\infty} \inf_{\partial_{\infty}\tlag_t} \alpha(\partial_t(\partial_{\infty}\tlag_t))dt=\infty.
\end{align}
In particular, the time-$t$ the geodesic flow at infinity applied to a Lagrangian $P\sse(T^*S,\omega_\std)$ gives an ind-cofinal family.
\end{lemma}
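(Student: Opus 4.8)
\textbf{Proof strategy for Lemma \ref{lem:cofinality}.}

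The plan is to reduce ind-cofinality (Definition \ref{def:cofinality}) to the corresponding statement in the literature on wrapped Floer theory, where cofinality of a positive isotopy is controlled by how far the boundary at infinity is pushed in a Reeb direction. Concretely, I would first unpack what the integral condition \eqref{eq:integral_infty} buys us: if $\phi_t^\alpha$ denotes the time-$t$ Reeb flow of the contact form $\alpha$ on the unit sphere bundle, then the hypothesis says that the ``accumulated speed'' of $\partial_\infty P_t$ measured against $\alpha$ is unbounded. The key geometric observation is that any positive isotopy $P_0 \wto P^w$ has its boundary $\partial_\infty P^w$ contained in the region swept out by $\phi_s^\alpha(\partial_\infty P_0)$ for $s$ in some bounded interval $[0,S_0]$ — this is because a single positive wrapping only moves $\partial_\infty$ a finite Reeb-time forward. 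I would make this precise using the standard comparison principle for positive contact isotopies (the analogue of \cite[Lemma 3.25, Lemma 3.29]{GPSCV}), which says that if two Legendrians are related by a positive isotopy and one is ``ahead'' of the other in the Reeb direction by a definite amount, then there is an interpolating positive isotopy, unique up to homotopy through positive isotopies.

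Next I would verify the two conditions of Definition \ref{def:cofinality} in turn. For (1): given $P_0 \to P^w$, choose $t$ large enough that $\int_0^t \inf \alpha(\partial_t(\partial_\infty P_t))\, dt$ exceeds the finite Reeb-displacement of $\partial_\infty P^w$ relative to $\partial_\infty P_0$; then $\partial_\infty P_t$ is strictly ``ahead'' of $\partial_\infty P^w$, so the comparison principle produces the required positive isotopy $P^w \wto P_t$ together with the homotopy identifying the composite with a positive isotopy $P_0 \wto P_t$. The support-in-$T^*\bis_t$ assertion follows because all the wrapping and the interpolating isotopies can be taken to coincide with the Reeb flow (hence leave the zero section and any fixed disk bundle fixed in the base direction) outside a compact set, and one enlarges $\bis_t$ accordingly — here one uses that $P_t \subset \bis_t$ with $\cup \bis_t = \bis$, which is arranged by taking $\bis_t$ to exhaust $\bis$ fast enough. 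For (2): any two positive isotopies $P_0 \wto P_t$ have boundaries $\partial_\infty P_t$ that agree exactly (both equal $\partial_\infty P_t$), and the space of positive isotopies between two fixed cylindrical Lagrangians with fixed endpoints, once we wrap far enough forward, becomes connected — this is again the uniqueness-up-to-homotopy part of the comparison principle, after composing with $P_t \to P_s$ to ``absorb'' the discrepancy. The containment of the homotopy in $T^*\bis_s$ is handled as in (1).

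Finally, for the last sentence: the time-$t$ geodesic flow at infinity is generated by the Reeb flow of $\la_{st}|_{\mathring S}$, which is exactly a contact form of the required type (it agrees with $\la_{st}$ everywhere, in particular outside a compact set), and $\alpha(\partial_t(\partial_\infty P_t)) \equiv 1$ along it, so $\int_0^\infty 1\, dt = \infty$ and \eqref{eq:integral_infty} holds trivially; thus the geodesic-flow family is ind-cofinal. The main obstacle I anticipate is the bookkeeping in the support conditions — ensuring that all interpolating isotopies and homotopies can be simultaneously confined to the nested truncations $\bis_t$ without destroying positivity, which requires care in choosing the exhaustion $\{\bis_t\}$ compatibly with the speed of the wrapping; but this is a routine (if tedious) adaptation of the sectorial arguments in \cite{GPSCV, GPSSD}, and the conceptual content is entirely carried by the contact comparison principle applied to $\partial_\infty P_t$.
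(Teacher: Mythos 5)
Your proposal follows essentially the same route as the paper's proof: both modify \cite[Lemma 3.29]{GPSCV}, use the integral hypothesis \eqref{eq:integral_infty} to ensure unbounded accumulated Reeb displacement of $\partial_\infty P_t$, and compare via the Reeb/geodesic flow to produce the required positive isotopies and homotopies, with the support confinement handled by exhausting $\bis$ fast enough. The paper makes the step you call the ``comparison principle'' and the support bookkeeping explicit — it reparametrizes $t$ so that $\alpha_{st}(\partial_t(\partial_\infty P_t))\geq 2$, shows $P_0\wto\phi_{-t}(P_t)$ stays positive after subtracting unit Reeb speed (hence $\phi_t(P_0)\wto P_t$ is positive), flows $P^w$ forward to make $P^w\to\phi_t(P_0)$ positive, and uses the conical estimate $\phi_t(T^*\bis_s)\subset T^*\bis_{s+t}$ to keep everything in the truncations — but these are precisely the details you correctly identified as the remaining bookkeeping.
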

\begin{proof}
It suffices to modify \cite[Lemma 3.29]{GPSCV}. Since the metric is conical at infinity, the associated geodesic flow is complete. By the hypothesis \cref{eq:integral_infty}, we can reparametrize the family $\tlag_t$ in the $t$-variable such that $\alpha_{st}(\partial_t(\partial_{\infty}P_t))\geq 2$, $\alpha_\std$ being the restriction of $\la_\std$ to the ideal contact boundary, and the horizontal support of $\tlag_t$ is contained in $\tbis{}_t=U_{\radial>f(t)}$\footnote{Here $r$ is the one over the distance to the puncture, so puncture corresponds to $r=\infty$.} for some function $f:\R_+\to\R_+$ such that $f(t)>3t$.\\

Since $\alpha_{st}(\partial_t(\partial_{\infty}P_t))\geq 2$, then
$$P^0\wto\phi_{-t}(\tlag^t)$$ is a positive isotopy because $\alpha_{st}(\partial_t(\partial_{\infty}(\phi_{-t}P_t)))\geq 1$, where $\phi_t$ is the geodesic flow. Independently, we can use the geodesic flow on the inverse isotopy $P^w\to P^0$ to also make it positive and, since the metric is conical at infinity, we can ensure $\phi_t(T^{\ast}\tbis_s)\subset \tbis_{s+t}$. Now that $P^w\to \phi_t(P^0)$ has been made positive, note that
$$\phi_t(\tlag^0)\wto(\phi_t\circ\phi_{-t})(\tlag^t)=P^t$$
is positive because $P^0\wto\phi_{-t}(\tlag^t)$ is positive. This establishes property \cref{def:cofinality}.(1). For \cref{def:cofinality}.(2) the argument is the same as GPS, with the truncation being preserved.
\end{proof}

\cref{lem:cofinality} gives an ind-cofinal sequence, as in \cref{def:cofinality}, and the first part \cref{prop:fibrewrappingsgenerate}.(1) is then a consequence of the following lemma:

\begin{lemma}\label{lem:ind-cofinality} Let $\tlag$ and $K$ be Lagrangians in $T^{\ast}\obis$ with compact horizontal support. Let $\{\tlag_j\}$ be an ind-cofinal sequence with $\tlag_0=\tlag$, then $$
HW(\tlag,K)=\lim_{j\to \infty} HF(\tlag_j,K).$$
\end{lemma}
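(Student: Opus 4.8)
The statement asserts that wrapped Floer cohomology $HW(\tlag,K)$ can be computed as the direct limit of ordinary Floer cohomology groups $HF(\tlag_j,K)$ along any ind-cofinal sequence $\{\tlag_j\}$ in the sense of \cref{def:cofinality}, for Lagrangians $\tlag, K$ with compact horizontal support. The plan is to mirror the cofinality argument of Ganatra--Pardon--Shende, \cite[Section 3.4]{GPSCV}, now carried out in the ind-category of truncations. The essential point is that the colimit defining $HW(\tlag,K) = \varinjlim_{(\tlag\wto\tlag^w)^+} HF(\tlag^w,K)$ is taken over the poset of \emph{all} positive isotopies of $\tlag$ (equivalently, over the category of continuation elements), and we must show that the subsequence $\{\tlag_j\}$ is cofinal in that poset, so that restricting the colimit to it does not change the value.

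First I would recall the setup: the directed system underlying $HW(\tlag,K)$ has objects given by positive wrappings $\tlag\wto\tlag^w$ and morphisms given by further positive isotopies, with the transition maps being the continuation maps $c : HF(\tlag^w,K)\to HF(\tlag^{w'},K)$ from \cite[Section 3.3]{GPSCV}; by \cite[Lemma 3.26]{GPSCV} (which applies here since $K$ has compact horizontal support, so no diameter issues arise beyond the standard ones) these continuation maps are compatible with composition and agree with $\mu_2$ against continuation elements. The two conditions in \cref{def:cofinality} are precisely designed so that $\{\tlag_j\}$, together with the transition maps $\tlag_j \to \tlag_{j+1}$, is a cofinal subdiagram: condition (1) says every positive wrapping $\tlag^w$ of $\tlag$ admits a positive isotopy to some $\tlag_t$ in the family (with the homotopy supported inside $T^*\tbis_t$), which gives cofinality of the \emph{objects}; condition (2) says any two positive isotopies $\tlag \wto \tlag_t$ become homotopic after composing with $\tlag_t \to \tlag_s$, which gives the \emph{uniqueness} of morphisms needed for the subdiagram to be cofinal in the categorical (not merely poset-theoretic) sense. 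Granting these, the general fact that a cofinal functor induces an isomorphism on colimits yields $HW(\tlag,K) = \varinjlim_j HF(\tlag_j,K)$.

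The step I expect to require the most care — and hence the main obstacle — is verifying that the homotopies and isotopies produced in conditions (1) and (2) can genuinely be taken \emph{supported in the interior of $T^*\tbis_t$ (resp.\ $T^*\tbis_s$)}, and that this support control is what makes the continuation-map diagram commute at the chain level without introducing spurious contributions near the horizontal infinity of $\obis$. Concretely, one must check that for $\tlag, K$ of compact horizontal support, all the relevant pseudo-holomorphic strips defining the continuation maps between $\tlag_j$ and $K$ remain inside the (growing) compact region $T^*\tbis_j$ — this is exactly where the hypothesis $f(t) > 3t$ from the proof of \cref{lem:cofinality} and the integrated maximum principle enter, guaranteeing no escape to infinity. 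The remaining pieces are then formal: invariance of Floer cohomology under the homotopies (\cref{prop:standardfloertheory}.(2)), compatibility of continuation maps with concatenation, and the standard fact that a colimit over a directed poset is unchanged under passage to a cofinal subsequence. I would organize the write-up as: (i) recall the colimit presentation of $HW$; (ii) invoke \cref{def:cofinality} to exhibit $\{\tlag_j\}$ as a cofinal subdiagram, checking objects via (1) and morphisms via (2); (iii) conclude by the cofinality principle for colimits, citing \cite[Lemma 3.29]{GPSCV} for the analogous non-limit statement as the model to adapt.
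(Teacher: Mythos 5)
Your proposal is correct and takes essentially the same approach as the paper: identify $\{\tlag_j\}$ as a cofinal subdiagram of the wrapping poset (objects via condition (1) of the ind-cofinality definition, morphisms via condition (2)) and conclude by the general cofinality principle for colimits. The paper's actual proof is a two-line version of (ii)--(iii) in your outline, deferring to \cite[Section 3.4]{GPSCV}; the support-control checks you flag as the "main obstacle" are indeed folded in silently there, as the compact horizontal support of both $\tlag$ and $K$ together with the choice of truncations $\tbis_t$ makes them routine.
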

\begin{proof} 
The sequence $\tlag_i$ regarded as a decorated poset is cofinal in the sub-poset of the final decorated poset $P_{final}$ given by those $\tlag_p$ that belong to the wrapping category of $\tlag_0$, c.f.~\cite[Section 3.4]{GPSCV}. Therefore, the natural map $HF(\tlag_j,K)\to HW(\tlag,K)$ is an isomorphism in the limit.
\end{proof}

Let us now focus on establishing \cref{prop:fibrewrappingsgenerate}.(2). The difficulty here is naming the necessary geometric objects in a homotopically coherent manner. To wit, we must be able to describe the geodesic generator $\alpha_{z,w}$ within the framework of sectorial coverings of \cite{GPSCV}, which is itself a highly non-explicit setting. For that, consider the finite good cover from \cref{sssec:good_model} and the correspondingly perturbed metric $(S,g)$. Then the Liouville sectors $\{T^{\ast}\overline{U}_{\alpha}\}_\alpha$ form a sectorial covering of $T^{\ast}\tbis$\footnote{Using the product decomposition near the boundary, we obtain the projection structure 
\begin{align}\label{eq:productdecomposition}
Nbh^Z(\partial T^{\ast}(U_{\alpha}))\simeq T^{\ast}(\partial U_{\alpha})\times T^{\ast}[0.5,1)
\end{align}
where we identify $T^{\ast}[0.5,1)\simeq \mathbb{C}_{0.5\leq Re(z)< 1}$. Unlike the setting in \cite{GPSCV}, here we equip $\mathbb{C}$ with the Liouville structure obtained from $\mathbb{C}\simeq T^{\ast}\mathbb{R}$. The argument \cite[Lemma 2.41]{GPSCV} that prevents pseudo-holomorphic curves from escaping off the boundary applies for almost complex structures on $\mathbb{C}$ obtained from the conically deformed Sasaki metric on $T^{\ast}{\mathbb{R}}$ coming from the Euclidean metic. Therefore, we may impose the constraint that the almost complex structure on $T^{\ast}U_{\alpha}$ coincides with $\defac$ near the boundary.} and we can consider the associated wrapped Fukaya categories $\mathfrak{W}(T^{\ast}\overline{U_{\alpha}})$ on each of these sectors. In this particular case, using the argument similar to \cite[Section 5.6]{GPSMS}, we can directly compute wrapped chain complexes as follows. For an internal open set $U_{\alpha}$, and each $z\in U_{\alpha}$, we consider a cofinal fibre wrapping $F_{z,t}$ of $\fibre{z}$ with the support arc given by $z$, such that for $t$ large enough, the ideal boundary of $F_{z,t}$ is given by some inward push-off of the conormal of $\partial U_{\alpha}$. These $F_{z,t}$ give an explicit description of $\mathfrak{W}(T^{\ast}\overline{U_{\alpha}})$ since
$$HF^\ast(\fibre{z},\fibre{z'})=HF^0(\fibre{z},\fibre{z'})\cong\mathbb{Z}[\gamma_{zz'}],\quad \forall z,z'\in U_{\alpha}.$$
In particular, we can identify $HF^\ast(\fibre{z},\fibre{z'})$ with the unique path homotopy class in $\pi_1(U_{\alpha};z,z')$. Furthermore, if $z,z',z''\in U_\alpha$, then the $A_\infty$-structure map is
$$\mu_2:HF^\ast(\fibre{z},\fibre{z'})\otimes H^\ast(\fibre{z'},\fibre{z''})\to H^\ast(\fibre{z},\fibre{z''}),\quad \mu_2([\gamma_{zz'}],[\gamma_{z'z''}])=[\gamma_{zz''}].$$

\noindent Note also that we have inclusion functors $(i_{\alpha})_{\ast}:\mathfrak{W}(T^{\ast}\overline{U_{\alpha}})\to \wfc $ and \cite[Theorem 1.35]{GPSSD} implies that the natural functor
\begin{equation}\label{eq:htypcolim}
\displaystyle\mbox{hocolim}(i_{\alpha})_{\ast}:\mbox{hocolim}_{\alpha_1,\dots,\alpha_k} \mathfrak{W}\big(T^{\ast}U_{\alpha_1\dots \alpha_k}\big)\to \wfc
\end{equation}
is a pre-triangulated equivalence, where $U_{\alpha_1,\dots,\alpha_k}:=U_{\alpha_1}\cap\ldots\cap U_{\alpha_k}$ and the homotopy category of the homotopy colimit is naturally identified with the fundamental groupoid of $\obis$. 

\begin{proof}[Proof of \cref{prop:fibrewrappingsgenerate}]

As said above, Part (1) follows from \cref{lem:cofinality}. For Part (2), let $x,y\in U_{\alpha}$ such that $y$ is within the distance $\frac{1}{20} \injradius$ from $x$. By the prior discussion, we have a generator $[\gamma_{xy}]$ that represents the minimal geodesic in the homotopy colimit \ref{eq:htypcolim}. We must now compute the image of $[\gamma_{xy}]$ under the functor $\mbox{hocolim}(i_{\alpha})_{\ast}$ and show that, for careful choices, the minimal geodesic generator indeed gets preserved. First, we claim that there are fibre wrappings $\{F_k\}_k$ of $\fibre{x}$ such that
\begin{itemize}
    \item[(i)] the generators of $CF(F_k,\fibre{y})$ correspond to geodesics between $\fibre{x}$ and $\fibre{y}$,
    \item[(ii)] the wrapping continuation map $CF(F_k,\fibre{y})\to CF(F_{k+1},\fibre{y})$ preserves the minimal geodesic element.
\end{itemize}
Intuitively, these can be obtained by considering globally positive Hamiltonians $H_k$ that are linear at infinity but quadratic in a large compact part of the cotangent bundle and using them to flow $\fibre{x}$. Part (2) follows from the existence of such fibre wrappings, so let us prove the claim, as follows.\\

Let $\{\ell_i\}\to +\infty$ be an increasing sequence of positive real numbers, with $\ell_1>2$, such that no $\ell_i$ is in the length spectrum of geodesics between $x$ and $y$ and the minimal geodesic is the only geodesic with length less than $\ell_1$. (This is possible because we normalized so that $\injradius>2$.) For each $k\geq 0$, consider $\delta_k\in\R_+$ such that $[\ell_k- \delta_k,\ell_k+\delta_k]$ is disjoint from the length spectrum and, for each $\delta_k$, choose a smooth increasing positive function
$H_k:[1,\infty)\to [0,\infty)$ such that
\begin{itemize}
    \item[-] $H'_k(r)=0$ on $[1,1+\frac{1}{80}]$,
    \item[-] $H_k(r)=\frac{1}{2}r^2$ on $r\in [1+\frac{1}{40},\ell_k-\delta_k]$,
    \item[-] $H_k(r)=\ell_k\cdot r$ for $r\in [\ell_k,\infty)$.
    \item[-] With respect to $k$, $H_k$ is fixed on $[1,\ell_1]$ and the derivatives of $H_k$ form an increasing sequence of functions.
\end{itemize}
These Hamiltonians $H_k$ are linear at infinity and quadratic on large compact part. We then declare the sequence of fibre wrappings $\{F_k\}$ to be time-$1$ image of $\fibre{x}$ under the Hamiltonian flow of $H_k$. The family $\{F_k\}$ is cofinal because under a (large) compactly supported deformation, it becomes the linear Hamiltonian $\ell_k r$ and thus the cofinality criterion \cref{lem:cofinality} applies. Let us argue that it satisfies $(i)$ and $(ii)$.\\

By the observation in \cite[Section 3(c)]{biasedview}, the Hamiltonian chords of $H_k$ between $\fibre{x}$ and $\fibre{y}$ are of the form $(r,x(\ell s))$, where $x$ is the sphere bundle lift of a unit-speed geodesic, $\ell$ is its length, and $r>0$ is such that $H_k'(r)=\ell$. By our condition $\injradius>2$, given a non-minimal geodesic of length $\ell$ we must have $\ell>1$. Since the interval $[\ell_k-\delta_k,\ell_k+\delta_k]$ is disjoint to the length spectrum, the Hamiltonian chords lie on the interval where $H_k$ is quadratic and are thus in one-to-one correspondence with non-minimal geodesics of length bounded above by $\ell_k$. Furthermore, there is a unique Hamiltonian chord $[\gamma_{xy}]$ representing the minimal geodesic in the region $[1,\ell_1]$. This concludes $(i)$. For $(ii)$, consider the continuation map $CF(F_{k},\fibre{y})\to CF(F_{k+1},\fibre{y})$. Since $H_k$ is fixed on $[1,\ell_1]$, the minimal geodesic element gives a constant continuation strip: let us argue that this is the only continuation strip. Indeed, let $u$ be a homogeneous continuation strip for $CF(F_k,\fibre{y})\to CF(F_{k+1},\fibre{y})$ mapping a geodesic $x^{-}$ of length $\ell$ to the minimal geodesic $\gamma$. By the geometric energy formula and global positivity of the Hamiltonians, we must have $A(x^{-})\geq A(\gamma)$. In contrast, \cite[Section 3(c), Eq.(3.12)]{biasedview} implies that the action of the first non-minimal geodesic is $-\frac{1}{2}\ell^2$, so $A(x^{-})=-L^2/2$, and we had $\abs{A(\gamma)}<L^2/2$. This is a contradiction and thus the only strip must be constant, hence the minimal geodesic $[\gamma_{xy}]$ is preserved under wrappings.
\end{proof}


\subsubsection{Continuation maps for fibre wrappings}\label{sssec:computing_str_maps}
In order to continue our proof of \cref{thm:Familyfloerwrapped}, we need to compute $\mu_{1\vert 1}(c,\cdot)$, where $c$ is a continuation element. To employ the family Floer techniques from \cref{subsection:familyfloer}, we use the following facts to relate the maps $\mu_{1\vert 1}(c,\cdot)$ with continuation maps (c.f \cite[Lemma 3.26]{GPSCV}). First, an analogue of \cref{thm:snetworkadg} for fibre wrappings:

\begin{lemma}\label{prop:wrappingfibreadg}
Let $F$ be a wrapped fibre whose base-point is contained in the complement of $\snetwork$. Then there exists $\e_0(F)\in\R_+$ such that there are no non-constant $\defac$-holomorphic strips between $F$ and $\e \blag$ if $\e\in(0,\e_0(F))$.
\end{lemma}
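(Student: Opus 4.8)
\textbf{Proof plan for Lemma \ref{prop:wrappingfibreadg}.}

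The plan is to reduce this statement to \cref{thm:snetworkadg}.(1) by showing that, after the geometric modifications already in place (the conical ends of $\blag$, the modified metric $(S,g)$, the deformed Sasaki almost complex structure $\defac$), a $\defac$-holomorphic strip between a wrapped fibre $F$ and $\e\blag$ cannot escape the compact region where $\defac$ and $J_g$ agree. First I would recall that $F$, being a wrapped fibre in the sense of \cref{def:wrappedfibre}, agrees with the honest cotangent fibre $F_{z_0}$ over its base point $z_0$ inside the unit disk bundle $D^*S$, and its positive isotopy away from $F_{z_0}$ is supported in $T^*S\setminus D^*S$; also $z_0\notin\snetwork$ by hypothesis. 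Since $\blag$ lies in the disk bundle $D^*_RS$ of some finite radius $R$, after $\e$-scaling $\e\blag$ lies in $D^*_{\e R}S$, which for $\e$ small is well inside $D^*S$. The boundary of any strip lies on $F$ and on $\e\blag$; the $\e\blag$-boundary is confined to $D^*_{\e R}S$, and the $F$-boundary, once we pick $F$ cylindrical and $\defac$ of generalized contact type outside the disk bundle, is controlled by the integrated maximum principle.

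The key step is the integrated maximum principle argument, exactly as used in \cref{prop:regulardata} (via \cite[Lemma 3.28]{nho2024familyfloertheorynonabelianization} and \cite[Lemma 2.41]{GPSCV}): because $\defac$ is cylindrical outside a precompact base, $r^2\beta^{-1}$ (or the appropriate plurisubharmonic function in the sectorial collars, as in the footnote to \cref{prop:fibrewrappingsgenerate}) is $\defac$-plurisubharmonic there, and a $\defac$-holomorphic strip whose Lagrangian boundary conditions are both confined to $\{r\le \text{const}\}$ cannot have interior points with $r$ larger than that constant. Hence the image of any such strip lies in $D^*S$ (enlarged by a fixed amount independent of $\e$), on which $\defac\equiv J_g$ by construction. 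Therefore every $\defac$-holomorphic strip between $F$ and $\e\blag$ is in fact a $J_g$-holomorphic strip between $F$ and $\e\blag$, and moreover, since the $F$-boundary there coincides with the fibre $F_{z_0}$-boundary pushed by a Hamiltonian isotopy supported away from the disk bundle, a further application of the integrated maximum principle (or a gauge transformation as in \cref{lem:gaugesmallconstant}) identifies these with $J_g$-holomorphic strips between the honest fibre $F_{z_0}$ and $\e\blag$.

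Once this reduction is complete, \cref{thm:snetworkadg}.(1) applies directly: since $z_0\in\snetwork^c$, the pair $(F_{z_0},\blag)$ is uniformly disk-free in the adiabatic limit, so there is a neighbourhood $U_{z_0}$ and an $\e_0(z_0)\in\R_+$ such that no non-constant $J_g$-holomorphic strip between $\e\blag$ and $F_w$ exists for $w\in U_{z_0}$, $\e\in(0,\e_0(z_0))$; setting $\e_0(F):=\e_0(z_0)$ (possibly shrinking it so that $\e R$ is small enough for the maximum principle steps above) finishes the proof. The main obstacle is the bookkeeping in the first two steps: one must ensure that the positive wrapping defining $F$, which moves the ideal boundary of the fibre, is genuinely supported outside the region where the maximum-principle comparison forces the strip to live, and that the plurisubharmonicity of the barrier function survives in the sectorial collar neighbourhoods $Nbh^Z(\partial T^*U_\alpha)$ with the $\C\cong T^*\R$ Liouville structure used there. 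Both points are already handled in the cited lemmas, so this is a matter of citing them in the right configuration rather than proving something genuinely new. \hfill$\Box$
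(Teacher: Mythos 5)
Your proof is correct, but it routes through a slightly different mechanism than the paper's.

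The paper's proof is a two-line monotonicity argument: since $F$ coincides with the honest fibre $F_z$ inside $D_1^*\obis$, the actions of the intersection points in $F\cap\e\blag$ are $O(\e)$, hence so is the energy of any strip; by a monotonicity estimate (as in \cite[Section 5]{Audin1994HolomorphicCI}, already invoked in \cref{prop:standardfloertheory} and \cref{prop:A-inftymodule}) a strip escaping $D_1^*\obis$ must carry a fixed amount of energy independent of $\e$, so for $\e$ small the strip stays in $D_1^*\obis$, where $\defac=J_g$ and $F=F_z$; then \cref{thm:snetworkadg}.(1) finishes. You reach the same confinement via the integrated maximum principle (i.e.\ the mechanism already used in the proof of \cref{prop:regulardata}, via \cite[Lemma 3.28]{nho2024familyfloertheorynonabelianization} and the GPS barrier argument), which does not need the $O(\e)$ energy bound at all — it uses only that $\e\blag$ sits in a finite disk bundle, $F$ is cylindrical outside $D^*\obis$, and $\defac$ is of contact type there. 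Both arguments are valid; the monotonicity argument is shorter here because the $O(\e)$ action observation is immediate, while the maximum-principle route is a uniform geometric statement that is independent of $\e$. One small overcomplication in your write-up: once the strip is confined to $D^*\obis$ you do not need the gauge-transformation step from \cref{lem:gaugesmallconstant} — that device is for deformed continuation strips with moving Lagrangian boundary, whereas here $F$ literally equals $F_{z_0}$ inside the disk bundle, so the identification with $J_g$-strips for the honest fibre is tautological.
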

\begin{proof}
Let $z$ be the base-point of $F$, then since $F=\fibre{z}$ on $D_1^{\ast}\obis$, the action of the intersection points are of size $O(\e)$. Therefore, monotonicity tells us that the $\defac$-holomorphic strips cannot escape $D_1^{\ast}\obis$. 
\end{proof}
\noindent The analogue of \cref{prop:continuationstripclassification} for fibre wrappings reads as follows. For brevity, we suppress the discussion on the necessity to make the right choice of the elongation functions.
 
\begin{proposition}\label{prop:wrappingfibrecont}
Let $\{F_t\}$, $t\in [0,1]$, be a fibre wrapping and $\snetwork\sse S$ a spectral network.

\begin{enumerate}
    \item If the support arc of $\{F_t\}$ lies outside $\snetwork$, then there exists a family $J(s,t)$ of almost complex structures such that the moduli space of $(F_t,\blag)$-continuation strips is cobordant to the moduli space of trivial strips.\\

    \item If the support arc of $\{F_t\}$ intersects $\snetwork$ at an $ij$-wall, then there exists a family $J(s,t)$ of almost complex structures such that the moduli space of $(F_t,\blag)$-continuation strips that do not travel from the $j$th sheet to the $i$th sheet is cobordant to that of trivial strips.
\end{enumerate}
 \end{proposition}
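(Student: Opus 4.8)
The plan is to reduce the two statements in \cref{prop:wrappingfibrecont} to the continuation-strip classification already obtained in \cref{prop:continuationstripclassification} and \cref{lem:gaugesmallconstant}, by passing from fibre wrappings to honest cotangent-fibre isotopies via a global Hamiltonian correction. First I would recall that a fibre wrapping $\{F_t\}$ is, by \cref{def:wrappedfibre}, generated by a uniformly cylindrical, uniformly horizontally supported Hamiltonian which agrees with a cotangent-fibre transport inside the unit disk bundle $D^{\ast}S$. Applying \cref{prop:wrappingfibreadg}, I would fix $\e<\e_0(\{F_t\})$ small enough that all $\defac$-holomorphic strips between $F_t$ and $\e L$ are confined to $D^{\ast}_1 S$, where $F_t=\fibre{z(t)}$ along the support arc $z(t)$ and $\defac=J_{\e^{-1}g}$ by construction; by \cref{eq:salamon-weber equvialence} these are in bijection with $J_g$-holomorphic strips bounded by $\e L$ and the moving cotangent fibre $\fibre{z(t)}$. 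Thus the moduli space of $(F_t,L)$-continuation strips is, after gauge transformation as in the proof of \cref{lem:gaugesmallconstant}, identified with a moduli space of deformed continuation strips along the support arc $z(t)$ in the sense of \cref{def:continuation_strip}, for an appropriate elongation function whose choice I would make exactly as in \cref{prop:fibreparalleltransport} (this is the point I am suppressing in the statement).

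Once this identification is in place, Part (1) follows from \cref{prop:continuationstripclassification}\ref{prop:continuationstripclassification1}: since the support arc lies in $\snetwork^c$, every point of it is continuation-free in the adiabatic limit, so for $\e$ small the only continuation strips are small, and by \cref{lem:gaugesmallconstant} the moduli space of small continuation strips is cobordant to the moduli space of trivial strips. Here I would cover the compact support arc by finitely many short subarcs on each of which \cref{prop:continuationstripclassification}\ref{prop:continuationstripclassification1} applies, and use concatenation compatibility of continuation maps (as in \cref{prop:twistedcontinuation}.(3)) together with Gromov compactness to glue the local cobordisms into a global one. For Part (2), the support arc crosses a single $(ij)$-wall $\wall$ of $\snetwork$ transversely at an interior point $z$, and \cref{prop:continuationstripclassification}\ref{prop:continuationstripclassification2} tells us that $z$ has Stokes transport in the adiabatic limit: the only large continuation strips across $z$ have the homotopy class of the detour path along $\alpha_v(s)$, which by \cref{def:solitondetour} is precisely the class of a strip traveling from the $j$th sheet to the $i$th sheet. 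Hence all continuation strips that are not of this type are small, and \cref{lem:gaugesmallconstant} again identifies the corresponding sub-moduli space with that of trivial strips, up to cobordism. The family $J(s,t)$ of almost complex structures is the one produced in \cref{prop:fibreparalleltransport} and \cref{prop:continuationstripclassification}, perturbed slightly (away from the boundary on $L$, to avoid disk bubbling) to achieve transversality while, by Gromov compactness, not creating new strips of the excluded homotopy types.

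The main obstacle I anticipate is bookkeeping the passage between the ``global'' fibre wrapping picture of \cite{GPSCV} and the ``local'' deformed-continuation-strip picture of \cref{subsection:familyfloer}: one must check that the Hamiltonian generating $\{F_t\}$, once restricted and gauge-transformed, genuinely produces the perturbation term $d\cdot l'(s)X_v(u_d)$ of \cref{eq:distperturbedpassive} up to an interpolation of admissible data that does not alter the cobordism class of the relevant (low-energy, a priori diameter-bounded) moduli spaces — this is exactly the maneuver carried out in the proof of \cref{lem:gaugesmallconstant}, and the subtlety flagged there (the Hamiltonian need not fix a neighborhood of the intersection points) reappears and forces the careful choice of elongation function. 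Everything else — confinement to $D^{\ast}_1 S$ by monotonicity, the equivalence $\defac=J_{\e^{-1}g}$ inside the disk bundle, and the final appeal to \cref{prop:continuationstripclassification} — is routine given the results already established in Sections \ref{section_adg} and \ref{section_Floer}.
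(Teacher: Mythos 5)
Your proposal is correct and follows essentially the same route as the paper: invoke disk-freeness along the support arc (via \cref{prop:wrappingfibreadg}/\cref{prop:continuationstripclassification}), then run the Gromov-compactness argument of \cref{prop:fibreparalleltransport} to reduce everything to trivial strips. The paper's printed proof is quite compressed and in effect only spells out Part (1); your unpacking of the covering-by-short-subarcs and concatenation step, and your explicit treatment of Part (2) via the Stokes-transport condition and the detour-path class, supply details that the paper leaves implicit, but they do not constitute a different argument.
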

 
\begin{proof}
\cref{prop:continuationstripclassification} implies that there exists $\e_0(\{F_t\})\in\R_+$ such that for all $\e\in(0,\e_0(\{F_t\}))$ there are no non-constant $\defac$-holomorhic strips between $\e\blag$ and $F_t$. We can then apply the Gromov compactness argument as in the proof of \cref{prop:fibreparalleltransport} to conclude that the moduli space of continuation strips is cobordant to the moduli of trivial strips. 
\end{proof}

\noindent Since the scaling parameter $\e$ in \cref{prop:wrappingfibreadg} only depends on the base point, a consequence of \cref{prop:wrappingfibrecont} is that wrappings at infinity do not affect $\blag$. That is,
if $F\wto F'$ is a fibre wrapping with the same base point, then the continuation map $CF(F,\blag)\to CF(F',\blag)$ is the identity map. This is an analogue of the invariance statement in \cref{prop:standardfloertheory}.


\subsubsection{Concluding \cref{thm:Familyfloerwrapped} and \cref{cor:familyfloerwrapped}}\label{sssec:conclusion_familyfloerwrapped}
For \cref{thm:Familyfloerwrapped}, it suffices to show that $\ylagv$ is the the non-abelianization module, as a module over the cohomology subcategory generated by the wrapped fibres, which we identify with the fundamental groupoid. For that, note that given any $\snetwork$-adapted minimal geodesic, \cref{prop:fibrewrappingsgenerate} and \cref{prop:wrappingfibrecont} guarantee the upper-triangular form of $\ylagv([\alpha_{zw}])$. Then the same argument as in the proof of \cref{thm:familyFloer} and the wall-crossing formula imply that the parallel transport map must be equal to that of $\tdfd$.\\

For \cref{cor:familyfloerwrapped}, \cref{lem:Maslov0} implies that $\blag$ is Maslov $0$ and thus the chain complex $\ylagv(F)$ for a wrapping fibre $F$ must be cohomologically concentrated in degree zero. By homological perturbation, comparing to the non-abelianization modules, we can perturb $\ylagv$ to be defined over the subcategory of the wrapping fibres. Since the latter subcategory generate the wrapped Fukaya category, the statement follows from \cref{thm:Familyfloerwrapped}.\hfill$\Box$
\color{black}

\subsection{Cylindrized Betti Lagrangians and spectral networks}\label{subsection:PWFC} As discussed in \cref{subsection:WFC}, a Betti Lagrangian $L\sse(T^*S,\omega_\std)$ does {\it not} give an element of the wrapped Fukaya category $\wfc$, nor its partially wrapped modification $\pwfc$. The reason is the asymptotic behavior of $L$ at infinity, which is not compatible with the definitions of these categories. In \cref{subsection:WFC}, we showed that a Betti Lagrangian, endowed with $V\in\Loc(L)$, does nevertheless give an $A_\infty$-module $\ylagv$ over $\wfc$ which behaves as if it was the Yoneda module of an object in $\wfc$, and in turn we related the $\mu_{1|1}$-maps of such $A_\infty$-modules to the non-abelianization framework of spectral networks, cf.~\cref{thm:Familyfloerwrapped}. An alternative approach to comparing Betti Lagrangians and Fukaya categories to spectral networks is to allow ourselves to modify the Betti Lagrangian $L\sse(T^*S,\omega_\std)$ at infinity so that in fact it defines an object in a Fukaya category. The present section develops this approach, defining such cylindrizations and showing that the non-abelianized parallel transport from spectral networks is captured by $\mu_2$-map of the $A_\infty$-structure associated to such cylindrization.\\

Specifically, \cref{sssec:cylindrize_BettiLag} explains how to deform $\blag$ to a cylindrical Lagrangian $\blag_\circ\sse (T^*S,\omega_\std)$ with boundary on the Legendrian links associated to the Betti surface $(S,\mkpts,{\bf\lknot})$. Denote by $\pwfc$ the partially wrapped Fukaya category of $(T^*S,\omega_\std)$ partially stopped at $\La$, as constructed in \cite{GPSSD}. By construction, the cylindrization $\blag_{\circ}$ defines an object in $\pwfc$ once endowed with a local system $V\in\Loc(L)$. We denote by
$$\ycyl:\pwfc\to \kmod$$
its associated Yoneda $A_\infty$-module and consider its component
$$\mu_2:HW(\fibre{z},\fibre{w})\otimes HW(\fibre{z},\bcyl)\to HW(\fibre{w},\bcyl),$$
where $HW(\fibre{z},\fibre{w})=\mbox{Hom}_{\pwfc}(\fibre{z},\fibre{w})$ are now morphisms in $\pwfc$, not $\wfc$. That said, given a $\snetwork$-adapted pair $z,w\in S$, we had the minimal geodesic $\alpha_{zw}$ which defined a class in $\mbox{Hom}_{\wfc}(\fibre{z},\fibre{w})$. In order to transfer it to a class in $\mbox{Hom}_{\pwfc}(\fibre{z},\fibre{w})$, we consider a subset $\obis_{int}\sse\obis$ such that the front projections of the Legendrian links in ${\bf\lknot}$ are all contained in $\obis\setminus \obis_{int}$ and $S$ retracts to $S_{int}$: we obtain $\obis_{int}$ by removing a sufficiently large disk neighborhood of each marked point. Then there a natural inclusion functor $i_{\ast}:\mathfrak{W}(T^*\obis_{int})\to \pwfc$, and we can consider the image $i_{\ast}[\alpha_{zw}]$ of (the class defined by) $\alpha_{zw}$ under this functor.\\

For the cylindrized Betti Lagrangian $\blag_\circ$, now in the context of the partially wrapped Fukaya category $\pwfc$, the relation between the $A_\infty$-structure and non-abelianized parallel transport for spectral networks is as follows:

\begin{theorem}\label{thm:pwfcyonneda}
Let $(S,g)$ be a Betti surface, $L\sse(T^*S,\omega_\std)$ a Betti Lagrangian, $V\in\Loc(L)$, $\snetwork\sse S$ an adapted spectral network and $z,w\in S$ an $\snetwork$-adapted pair. Then the minimal geodesic $\alpha_{zw}\in\Omega_{z,w}$ satisfies
\begin{equation}\label{eq:mu2_paralleltrans}
\mu_2(i_{\ast}[\alpha_{zw}],\cdot )=\tdfd(\alpha_{zw})
\end{equation}
\end{theorem}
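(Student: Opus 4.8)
\textbf{Proof proposal for \cref{thm:pwfcyonneda}.} The plan is to reduce the statement for the partially wrapped category $\pwfc$ to the wrapped statement \cref{thm:Familyfloerwrapped} that has already been proven, by exploiting the inclusion functor $i_{\ast}:\mathfrak{W}(T^*\obis_{int})\to \pwfc$ together with the geometric compatibility of the cylindrization $\blag_\circ$ with $\blag$ over the compact core $\obis_{int}$. First I would observe that, by the generation result \cref{prop:gen_internal_infinity_fibers}, the partially wrapped category $\pwfc$ is generated by internal and infinity fibers. Since the identity $\mu_2(i_{\ast}[\alpha_{zw}],\cdot)=\tdfd(\alpha_{zw})$ is a statement about the Yoneda $A_\infty$-module $\ycyl$ evaluated on a morphism between two cotangent fibers $\fibre{z},\fibre{w}$ with $z,w\in\obis_{int}$, it suffices to compute this $\mu_2$-operation in terms of pseudoholomorphic triangles with boundary on $(\fibre{z},\fibre{w},\blag_\circ)$, and to identify these triangles with the deformed continuation strips governing $\ff(V)=\Phi_\snetwork(V)$ from \cref{section_Floer}.

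The key steps, in order: (1) Set up the Yoneda $A_\infty$-module $\ycyl$ for $(\blag_\circ,V)$ following the same homotopy-colimit construction of $\pwfc$ as in \cite{GPSSD} used in the proof of \cref{prop:A-inftymodule}, verifying that the relevant moduli of triangles $(\fibre{z},\fibre{w},\blag_\circ)$ do not escape to infinity — here the argument is easier than in the $\wfc$ case since $\blag_\circ$ is now genuinely cylindrical, and the Liouville-sectorial no-escape argument of \cite{GPSCV} applies directly near the stop $\La$. (2) Use \cref{sssec:cylindrize_BettiLag} to arrange that $\blag_\circ=\blag$ over $T^*\obis_{int}$; then for the minimal-geodesic morphism $i_{\ast}[\alpha_{zw}]$, represented by an element supported in $T^*\obis_{int}$, the pseudoholomorphic triangles contributing to $\mu_2(i_{\ast}[\alpha_{zw}],\cdot)$ have small action in the adiabatic limit and, by the monotonicity/diameter estimates of \cite[Prop.~3.37]{nho2024familyfloertheorynonabelianization} as used in \cref{prop:fibreparalleltransport}, stay inside $T^*\obis_{int}$ where $\blag_\circ$ agrees with $\e\blag$. (3) Identify these triangles with the deformed continuation strips of \cref{def:continuation_strip} for the path $\alpha_{zw}$: this is the standard translation between $\mu_2$ with a continuation element and the continuation map, exactly as in \cite[Lemma 3.26]{GPSCV}, which is legitimate here because $\blag_\circ$ has compact horizontal support inside the partially wrapped sector. (4) Apply \cref{prop:continuationstripclassification} and \cref{thm:familyFloer} to conclude that the count of these strips equals $\tdfd(\alpha_{zw})$, with orientations handled by the sphere-bundle conventions of \cref{sssec:spin_structures} and \cref{cor:homotopy_refinement_continuation_map}.

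A cleaner alternative to steps (2)--(4) that I would pursue in parallel: invoke the homotopy $\ycylv\circ i_{\ast}\simeq\ylagv$ of $A_\infty$-modules over $\wfc$ (which is \cref{cor:pwfcyonnedamodule}, proven via the comparison of the $A_\infty$-modules of \cref{subsection:WFC} and \cref{subsection:PWFC} under $i_*:\wfc\to\pwfc$), and then transport \cref{thm:Familyfloerwrapped}'s identity $\mu_{1\vert1}([\alpha_{zw}],\cdot)=\tdfd(\alpha_{zw})$ across this homotopy. Since $i_{\ast}[\alpha_{zw}]\in\mbox{Hom}_{\pwfc}(\fibre{z},\fibre{w})$ is by construction the image of the class $[\alpha_{zw}]\in\mbox{Hom}_{\wfc}(\fibre{z},\fibre{w})$, and the $\mu_2$-operation of the Yoneda module $\ycylv$ restricts along $i_*$ to the $\mu_{1\vert1}$-operation of $\ycylv\circ i_*\simeq\ylagv$, the desired equality \cref{eq:mu2_paralleltrans} follows formally. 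I expect \textbf{the main obstacle} to be step (1)/(2): namely, verifying carefully that the cylindrization procedure of \cref{sssec:cylindrize_BettiLag} can be carried out so that $\blag_\circ$ is genuinely an object of $\pwfc$ with its boundary Legendrian equal to $\La_{\dd}$ while preserving $\blag$ exactly over $\obis_{int}$, and that the moduli spaces of triangles defining $\mu_2$ are transversely cut out and behave well in the adiabatic limit near the stop — in particular controlling the interaction between the adiabatic rescaling $\e\to0$ of the core and the wrapping near the conical ends, which is precisely the point flagged as not fully carried out for the non-exact case. For exact Betti Lagrangians, which is the standing assumption of \cref{section_wfc}, all the needed no-escape and monotonicity inputs are available, so this obstacle is surmountable but requires care.
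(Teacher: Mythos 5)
Your primary route (steps (1)--(4)) is essentially the paper's argument: reduce to the wrapped case over $T^{\ast}\obis_{int}$ via the cylindrization locality, translate the $\mu_2$ against a continuation element into (deformed) continuation strips, and then invoke the wall-crossing computation of \cref{thm:familyFloer}. What your sketch underemphasizes is the one genuinely nontrivial ingredient that the paper singles out as \cref{prop:inclusionfullyfaithful}: one must show that $i_{\ast}:HF^{\ast}(\fibre{z},\fibre{w})\to HF^{\ast}_{\Lambda}(\fibre{z},\fibre{w})$ is a cohomology isomorphism \emph{and} that it preserves the minimal geodesic generator under the partial wrapping. This is not ``by construction''; the paper needs an action-filtration argument comparing quadratic Hamiltonians built from $h_{\obis}$ and $h_{\mathrm{stop}}$ to establish that the continuation element survives the direct limit of push-offs (\cref{prop:cofinalsequence}), and you should not elide it. Without it, the identification of $i_{\ast}[\alpha_{zw}]$ with a nonzero generator that the $\mu_2$ sees is unjustified.

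The ``cleaner alternative'' you offer is \emph{circular}. You propose invoking the homotopy $\ycylv\circ i_{\ast}\simeq\ylagv$ of $A_{\infty}$-modules over $\wfc$, citing \cref{cor:pwfcyonnedamodule}, and transporting \cref{thm:Familyfloerwrapped} across it. But in the paper \cref{cor:pwfcyonnedamodule} is stated and proved as a consequence of \cref{thm:pwfcyonneda} (its proof opens with ``By \cref{thm:pwfcyonneda}, $\ycyl\circ i_{\ast}\simeq\ylag$ over the cohomology subcategory of cotangent fibres\ldots''), so you cannot use it as an input to \cref{thm:pwfcyonneda}. If you wanted Strategy B to be noncircular you would need to furnish an independent geometric proof of the module homotopy $\ycylv\circ i_{\ast}\simeq\ylagv$, which amounts precisely to re-doing steps (1)--(4) of Strategy A. So stick with Strategy A, fill in the action-filtration and cofinality steps, and drop the ``alternative.''
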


\noindent \cref{thm:pwfcyonneda} is momentarily proven in \cref{sssec:proof_blag_cyl_mu2}, after describing $\blag_\circ$ in \cref{sssec:cylindrize_BettiLag}. A consequence of \cref{cor:familyfloerwrapped} and \cref{thm:pwfcyonneda} is the relationship between $\ylagv$ and $\ycylv$, which reads as follows:
\begin{cor}\label{cor:pwfcyonnedamodule}
There is a homotopy
$$\ycylv\circ i_{\ast}\simeq \ylagv$$
of $A_\infty$-modules over $\wfc$.
\end{cor}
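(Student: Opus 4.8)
\textbf{Proof proposal for \cref{cor:pwfcyonnedamodule}.}

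The plan is to compare the two $A_\infty$-modules over $\wfc$ by exhibiting a common description via the fundamental groupoid generated by cotangent fibres, using that both modules are quasi-represented by the spectral network parallel transport functor $\tdfd$. First I would recall that, by \cite{AbC} and the discussion in \cref{subsection:WFC}, the cotangent fibres $\fibre{z}$ generate $\wfc$, so an $A_\infty$-module over $\wfc$ is determined up to homotopy by its restriction to the full $A_\infty$-subcategory $\mathcal{F}\subset\wfc$ on the objects $\{\fibre{z}\}_{z\in S}$, whose cohomology category is the (twisted) fundamental groupoid of $S$. Now the precomposition $\ycylv\circ i_\ast$ is the $A_\infty$-module over $\wfc$ whose value on $\fibre{z}$ is $\ycylv(i_\ast\fibre{z})$; here $i_\ast:\wfc\to\pwfc$ is the inclusion functor coming from the inclusion $\obis_{int}\hookrightarrow S$ (equivalently from localization/stop-removal), and $i_\ast\fibre{z}$ is again the cotangent fibre $\fibre{z}$ now regarded as an object of $\pwfc$. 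So on objects both $\ylagv$ and $\ycylv\circ i_\ast$ take $\fibre{z}$ to a complex cohomologically concentrated in degree zero with underlying $k$-module $\bigoplus_{i=1}^n V_{z^i}$ — for $\ylagv$ this is \cref{prop:A-inftymodule} together with \cref{lem:Maslov0} (Maslov $0$ forces concentration in degree $0$), and for $\ycylv\circ i_\ast$ this is $HW^\ast(\fibre{z},\blag_\circ)$, which near the internal region agrees with the same intersection count since $\blag_\circ=\blag$ over $\obis_{int}$ and the extra wrapping off to $\La$ does not create new intersections with the cotangent fibre based at an interior point.

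The key step is to identify the module structure maps on morphisms. By \cref{thm:Familyfloerwrapped}, the component $\mu_{1\vert1}([\alpha_{zw}],\cdot)$ of $\ylagv$ evaluated on a minimal geodesic generator $[\alpha_{zw}]\in\mbox{Hom}_{\wfc}(\fibre{z},\fibre{w})$ equals $\tdfd(\alpha_{zw})$; by \cref{thm:pwfcyonneda}, the component $\mu_2(i_\ast[\alpha_{zw}],\cdot)$ of the Yoneda module $\ycylv$ evaluated on the image $i_\ast[\alpha_{zw}]$ also equals $\tdfd(\alpha_{zw})$. Since the minimal geodesic classes $[\alpha_{zw}]$ for $\snetwork$-adapted pairs $z,w$ (together with their concatenations) generate the relevant morphism spaces of $\mathcal{F}$ — this is exactly the content of \cref{prop:fibrewrappingsgenerate}.(2) and the sectorial-descent identification \eqref{eq:htypcolim} of the cohomology category with the fundamental groupoid — both $\ylagv\vert_{\mathcal{F}}$ and $(\ycylv\circ i_\ast)\vert_{\mathcal{F}}$ are, on cohomology, the same functor, namely the non-abelianization $\tdfd(V)=\Phi_\snetwork(V)$ regarded as a representation of the fundamental groupoid, hence as a module over $C_{-\ast}(\Omega_z S)\simeq CF^\ast(\fibre{z},\fibre{z})$. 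I would then invoke homological perturbation theory / the homotopy-coherence of the localization $\wfc\simeq\mbox{Tw}(CF^\ast(\fibre{z},\fibre{z}))$ (the same argument used to conclude \cref{cor:familyfloerwrapped}) to upgrade this cohomology-level agreement to a homotopy $\ycylv\circ i_\ast\simeq\ylagv$ of $A_\infty$-modules over $\wfc$: two $A_\infty$-modules over a category generated by a single object, whose restrictions to that object are quasi-isomorphic as $A_\infty$-modules over the endomorphism algebra in a way compatible with the action of all geodesic generators, are quasi-isomorphic.

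The main obstacle I anticipate is the homotopy-coherence of the comparison, not the identification of individual structure maps. Concretely: \cref{thm:Familyfloerwrapped} and \cref{thm:pwfcyonneda} only pin down the leading components ($\mu_{1\vert1}$ and $\mu_2$ respectively) on the nose, whereas an $A_\infty$-module homotopy requires producing a compatible family of chain homotopies intertwining \emph{all} higher structure maps $\mu_{k\vert1}$. To handle this cleanly I would work on the side of $C_{-\ast}(\Omega_z S)$-modules, where both $\ylagv$ and $\ycylv\circ i_\ast$ become strict (honest) modules over the Pontryagin dg-algebra given by the vector space $\Phi_\snetwork(V)_z$ with its parallel-transport action — here the comparison is manifest because both are \emph{the same} module $\Phi_\snetwork(V)$ — and then transport the equivalence back along the $A_\infty$-quasi-equivalence $\wfc\mbox{-mod}\cong C_{-\ast}(\Omega_z S)\mbox{-mod}$ of \cref{cor:familyfloerwrapped}. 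The one genuinely new point to check, relative to \cref{cor:familyfloerwrapped}, is the compatibility of the inclusion functor $i_\ast:\mathfrak{W}(T^*\obis_{int})\to\pwfc$ with these identifications — i.e.\ that restricting $\ycylv$ along $i_\ast$ and then along the generation-by-fibres equivalence yields the module corresponding to $\Phi_\snetwork(V)$ and not some twist of it — which follows from \cref{thm:pwfcyonneda} and the fact that $\blag_\circ$ and $\blag$ literally coincide over $\obis_{int}$, so all the relevant pseudo-holomorphic strips contributing to $\mu_2(i_\ast[\alpha_{zw}],\cdot)$ are supported in $T^*\obis_{int}$ and agree with those computing $\mu_{1\vert1}([\alpha_{zw}],\cdot)$ for $\ylagv$.
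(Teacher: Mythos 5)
Your proposal follows essentially the same route as the paper's proof: invoke \cref{thm:pwfcyonneda} to identify the two modules over the cohomology subcategory of cotangent fibres, use generation of $\wfc$ by cotangent fibres and full faithfulness of $i_\ast$ (in the paper this is \cref{prop:inclusionfullyfaithful}, which you recover geometrically from $\blag_\circ=\blag$ over $\obis_{int}$), and then transfer via the same homological-perturbation argument used for \cref{thm:Familyfloerwrapped} and \cref{cor:familyfloerwrapped}. You spell out the homotopy-coherence step in more detail than the paper does, but the argument and the key lemmas invoked are the same.
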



\subsubsection{Cylindrization of Betti Lagrangians}\label{sssec:cylindrize_BettiLag}
Let $L\sse T^*S$ be a Betti Lagrangian on a Betti surface $(S,\mkpts,
{\bf\lknot})$, we want to modify $L$ near infinity (around the punctures of $S$) to obtain a Lagrangian $\lcyl\sse T^*S$ which is a Lagrangian filling of the Legendrian link in $(T^\infty S,\xi_\std)$ defined by $\bf\lknot$ and defines an object in $\pwfc$. The cylindrization procedure is in line with the modification for weave fillings in \cite{casals2022conjugate}.\\

From a symplectic topological viewpoint, without taking $\snetwork$ into account, the cylindrization process is as follows. Choose a collection $\EuScript{S}$ of circles in $S$, one circle per puncture chosen so that it encloses the puncture and is sufficiently close to it. The positive conormal lift of such collection $\EuScript{S}$ is a Lagrangian submanifold that cleanly intersects the zero section of $T^*S$. Let $C\sse T^*S$ be the Lagrangian surgery resolving that clean intersection, which is an exact Lagrangian that coincides with the zero section away from the punctures, and with the positive conormal lift of the circles near the punctures.  Since $C\sse T^*S$ is Lagrangian and cylindrical at infinity, there exists a Weinstein neighborhood $i_{cyl}:D^*C\to T^*S$ for a small enough disk bundle $D^*C$, which coincides with the disk bundle of $T^*S$ away from the punctures. Since $C$ is diffeomorphic to $S$, the Betti Lagrangian $L\sse T^*S$ can be identified with a homonymous Lagrangian $L\sse T^*C$ which can be assumed to belong to $D^*C$.

\begin{definition}\label{def:cylindrization}
The cylindrization $\lcyl$ of $L$ is defined to be $i_{cyl}(L)\sse (T^*S,\la_{\std})$.\hfill$\Box$
\end{definition}

\noindent To incorporate compatibility with a given spectral network $\snetwork$ for $L$, we require that the circles in $\EuScript{S}$ are chosen enclosing each puncture close enough such that:
\begin{enumerate}
    \item $\snetwork$ is transverse to the circles,
    \item each circle is contained in the trapping neighborhood, cf.~\cref{lemma:trappinglemma}, for the corresponding puncture,
    \item no vertex of $\snetwork$ gets mapped in the cylindrical region between each circle and its corresponding puncture. 
\end{enumerate}
This can be ensured by choosing the circles in $\EuScript{S}$ close enough to the punctures and generically. Note that \cref{lem:diametercontrol} implies that $\blag$ is uniformly bounded with respect to $g$, and so any $(F_z,\blag)$-disk for $z$ away from the circles must have its image contained in a similar region, truncated away from the punctures.

\begin{remark}
Note that we had to scale $L$ down so that $L\sse D^*C$ lied inside the Weinstein neighborhood. This is possible while being compatible with $\snetwork$ because the metric is of the form $r^2d\theta^2+dr^2$ near the punctures.\hfill$\Box$
\end{remark}

\noindent The cylindrization $\lcyl$ in \cref{def:cylindrization} defines an object in $\pwfc$ after applying a small positive push-off at infinity. We always implicitly understand a Lagrangian filling of a Legendrian $\La$ in $T^\infty S$ as giving an object in $\pwfc$ in this manner.


\subsubsection{Proof of \cref{thm:pwfcyonneda} $($Floer $\mu_2$ is spectral transport$)$}\label{sssec:proof_blag_cyl_mu2} The argument is in line with the family Floer techniques developed in \cref{section_Floer} and follows the same steps as \cref{subsec:prooffamilyfloerwrapped}, now suitably modified in the partially wrapped setting. Let $\tbis(\EuScript{S})$ denote the unique compact connected component of the complement in $S$ of the circles in $\EuScript{S}$. First, by geometric boundedness, the following locality lemma holds:
\begin{lemma}\label{lem:localitylemma}
Let $F_z$ be a wrapped fibre, with basepoint $z\in \tbis(\EuScript{S})$, whose boundary does not intersect $\La$. Then, any $(\blag_{\circ},F_z)$-disk belongs to a region of the form $T^{\ast}\tbis(\EuScript{S'})$, for $\EuScript{S'}$ another compatible collection of circles. Similarly, the same statement holds for any small enough negative push-offs of $\blag_{\circ}$ whose boundary stays disjoint to $\partial F_z$.\hfill$\Box$
\end{lemma}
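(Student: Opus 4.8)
The statement is a \emph{locality} result: a pseudo-holomorphic disk with boundary on the cylindrized Betti Lagrangian $\blag_\circ$ and a wrapped fibre $F_z$ cannot escape into the cylindrical region near the punctures (or past the stop $\La$), so it lives in a compact truncated region $T^*\obisi(\EuScript{S}')$. The argument should be a monotonicity/geometric-boundedness argument, essentially identical in spirit to the diameter controls already invoked in the proof of \cref{prop:A-inftymodule} and in \cite[Proposition 3.29]{nho2024familyfloertheorynonabelianization}, and to the no-escape arguments for sectorial coverings in \cite[Lemma 2.41]{GPSCV}. First I would recall that, by construction (\cref{sssec:cylindrize_BettiLag}), $\blag_\circ$ coincides with the zero section outside a union of small cylindrical neighborhoods of the punctures, and inside those neighborhoods it agrees with the positive conormal lift of the circles in $\EuScript{S}$; in particular $\blag_\circ$ is a cylindrical Lagrangian with compact horizontal support in the sense of \cref{section_Floer}, and it is uniformly geometrically bounded with respect to the chosen almost complex structures (which agree with $\defac$ outside a uniform compact base). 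Likewise $F_z$ has compact horizontal support since it is a wrapped fibre whose base point $z$ lies in $\obisi(\EuScript{S})$ and whose positive wrapping is supported outside the unit disk bundle.

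Second, I would set up the exactness/action bookkeeping. Both $\blag_\circ$ and $F_z$ are exact (the former because the Lagrangian surgery resolving a clean intersection of exact Lagrangians is exact, and the zero section and conormal lifts are exact; the latter being a cotangent-fibre push-off), so the energy of any $(\blag_\circ, F_z)$-disk is a priori bounded by the difference of the actions of its limiting intersection points, all of which lie over $\obisi(\EuScript{S})$ and hence have action of controlled size. Third — this is the technical heart — I would run the integrated maximum principle / monotonicity argument in two types of regions. (a) Near the punctures: in the cylindrical collar where $\blag_\circ$ is the conormal lift of a circle in $\EuScript{S}$, the local model is a product $T^*(\partial U) \times \mathbb{C}$ as in the sectorial-covering setup, and the standard barrier function ($r^2\beta^{-1}$ being plurisubharmonic for the conically deformed Sasaki complex structure) prevents the disk from crossing into the region between the circle and the puncture; the condition, guaranteed by the compatibility requirements on $\EuScript{S}$ in \cref{sssec:cylindrize_BettiLag}, that no vertex of $\snetwork$ and no intersection point lives in that collar is what makes the barrier argument available. (b) Near the stop $\La \subset T^\infty S$: since $\partial F_z$ is assumed disjoint from $\La$ and $\blag_\circ$ is a filling of $\La$ pushed off positively, the same sectorial no-escape mechanism from \cite[Section 12.5]{GPSSD} (or the generalized-contact-type plurisubharmonicity used throughout \cref{section_Floer}) bounds the disk away from a neighborhood of $\La$. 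Combining (a) and (b) with the energy bound and Gromov compactness yields a uniform bound on the diameter of the disk's image, hence its image is contained in a compact region which, after enlarging the collar neighborhoods slightly, is of the form $T^*\obisi(\EuScript{S}')$ for a collection $\EuScript{S}'$ of circles compatible with $\snetwork$ (one enlarges each circle of $\EuScript{S}$ to obtain $\EuScript{S}'$).

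Fourth, for the final sentence of the lemma, I would observe that a sufficiently small negative push-off $\blag_\circ^-$ of $\blag_\circ$ (with $\partial\blag_\circ^-$ still disjoint from $\partial F_z$) stays $C^\infty$-close to $\blag_\circ$ and remains uniformly geometrically bounded with uniform constants, so all the estimates above persist verbatim with the same constants; the only point to check is that the push-off does not create new intersection points or let the boundary slip past $\La$, which holds precisely because the push-off is small and $\partial\blag_\circ^- \cap \partial F_z = \varnothing$. I expect the main obstacle to be \textbf{making the barrier argument in region (a) fully rigorous in the presence of the Lagrangian-surgery corner of $\blag_\circ$}: the cylindrization is the surgery resolution of a clean intersection, and near the surgery locus one must verify that the plurisubharmonic barrier (adapted to the product Liouville structure $T^*(\partial U)\times\mathbb{C}$ as in footnote to \cref{eq:productdecomposition}) still dominates, i.e.\ that the surgery handle can be made $C^2$-small enough that no monotonicity is lost. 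This is exactly the type of estimate handled in \cite{casals2022conjugate} for weave fillings, so I would cite that and adapt it, but it is where the real work lies; everything else is a routine assembly of monotonicity, exactness, and Gromov compactness already available in the preceding sections.
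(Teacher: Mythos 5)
Your proposal is correct and takes essentially the same approach as the paper, whose entire justification for \cref{lem:localitylemma} is the clause ``First, by geometric boundedness, the following locality lemma holds'' — implicitly invoking \cref{prop:geombounded}, \cref{lem:diametercontrol}, and the diameter-control discussion in the proof of \cref{prop:A-inftymodule}, and leaving the assembly to the reader; your write-up is a faithful unpacking of exactly that assembly. One adjustment: the surgery-corner concern you flag as the ``main obstacle'' is not actually an obstacle, because the surgery locus of $\blag_\circ$ projects to the circles of $\EuScript{S}$, which lie in the interior of $\tbis(\EuScript{S}')$, so the barrier in your region (a) is set up near the circles of $\EuScript{S}'$ (which sit strictly closer to the punctures) and never sees the corner; moreover $\blag_\circ$ is a single fixed Lagrangian rather than a degenerating family, so its uniform second-fundamental-form and injectivity-radius bounds hold automatically and no $C^2$-smallness of the handle is needed.
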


\cref{lem:localitylemma} implies that the remainder of the proof of \cref{thm:pwfcyonneda} can be the same as the proof of \cref{thm:Familyfloerwrapped}, except for the fact that the cohomology module is now defined as the direct limit of $CF(\fibre{z},\bcyl^i)$ as $i\to \infty$. We thus need to take into consideration of wrapping morphisms
$$CF(\fibre{z},\bcyl^i)\to CF(\fibre{z}^{w},\bcyl^i),\quad CF(\fibre{z},\bcyl^i)\to CF(\fibre{z},\bcyl^{i+1}).$$
This does not affect the computations overall, as a combination of the argument in \cref{lem:localitylemma} with the Gromov compactness used in \cref{prop:wrappingfibrecont} implies the following fact:
\begin{lemma}\label{prop:cofinalsequence}
There exists a sequence of positive push-offs $\{\lcyl^i\}_i$, $\{F_z^j\}_j$ such that:
\begin{enumerate}
    \item For $z\in \tbis(\EuScript{S})$ and for $i-1>j$, the continuation maps
    $$CF_{\Lambda}(F^{j}_z,\lcyl^i)\to CF_{\Lambda}(F^{j+1}_z,\lcyl^i),\quad CF(F^{j}_z,\lcyl^i)\to CF_{\Lambda}(F^{j}_z,\,\lcyl^{i+1})$$
    are the identity map.\\

    \item $\displaystyle HF_{\Lambda}(F_z,\blag)=\lim_{j\to \infty}HF(F^j_z,\blag)=\lim_{i\to \infty}HF(F_z,\blag^i)$
\end{enumerate}
Here we denoted $CF_\La$ when morphisms are in $\pwfc$, i.e. partially stopped at $\La$.\hfill$\Box$
\end{lemma}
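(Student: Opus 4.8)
To prove \cref{prop:cofinalsequence}, the plan is to build the two cofinal systems by hand and then deduce the two identity claims in (1) from the triviality of continuation strips already established for the cylindrized Lagrangian, exactly as in the proof of \cref{thm:Familyfloerwrapped}; the colimit statement (2) then falls out formally. First I would normalize the metric so that $\injradius>2$, recall from \cref{sssec:cylindrize_BettiLag} that after the fibrewise scaling $\lcyl=\blag^0$ lies in a fixed finite-radius disk bundle over the internal region $\tbis(\EuScript{S})$, coinciding with the zero section outside the collars of the punctures and with $\dd_\infty\lcyl$ a small positive Reeb push-off of $\La$, and I would fix the data $\EuScript{S}$ compatibly with $\snetwork$ (transverse to $\snetwork$, inside the trapping neighbourhoods, no vertex of $\snetwork$ mapped into the collars). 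For the fibre side I would take $\{F_z^j\}$ to be the cofinal sequence of positive wrappings of $F_z$ produced exactly as in the proof of \cref{prop:fibrewrappingsgenerate} — time-one flows of Hamiltonians linear at infinity with slopes $\ell_j\to\infty$ avoiding the length spectrum of geodesics in $\tbis(\EuScript{S})$, quadratic on a large compact piece — noting that since $z\in\tbis(\EuScript{S})$ is internal and non-radial geodesics of the conical metric stay uniformly away from the punctures, $\dd_\infty F_z^j$ never reaches $\La$, so by \cref{lem:cofinality} and \cref{lem:ind-cofinality} this is an admissible cofinal family for $\pwfc$. For the Lagrangian side I would take $\blag^i$ to be the image of $\blag^{i-1}$ under a positive push-off at infinity supported in the collars between $\EuScript{S}$ and the punctures (where $\snetwork$ has no vertices), moving $\dd_\infty\blag^{i-1}$ in the Reeb direction at a rate whose time-integral diverges, so that $\{\blag^i\}$ is a cofinal system of push-offs of the filling $\lcyl$ by \cref{lem:cofinality}; I would choose these rates fast enough relative to the slopes $\ell_j$ so that the index region $\{(i,j):i-1>j\}$ stays cofinal in $\mathbb{N}^2$.

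Next I would check that for $i-1>j$ the whole configuration is in the stable regime. Because $\blag^i$ agrees with $\blag^0$ over $\tbis(\EuScript{S})$ and differs only by further wrapping inside the collars, while $F_z^j$ — having wrapped only $j<i-1$ times about an internal base point — has not propagated out into those collars, the intersection set $F_z^j\pitchfork\blag^i$ is constantly $F_z\pitchfork\blag^0$, the $n$ sheets over $z$, with constant Maslov grading. Moreover, by \cref{lem:localitylemma} (and its stated variant for small negative push-offs of the cylindrization, which is what the moving boundary conditions in the continuation equation see), every relevant continuation strip is confined to a region $T^*\tbis(\EuScript{S}')$ truncated away from the punctures, hence disjoint from the stop $\La$; in particular $CF_\La(F_z^j,\blag^i)=CF(F_z^j,\blag^i)$ for these pairs and no strip can escape into $\La$.

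With confinement in hand, both identity statements in (1) reduce to the argument that already proved ``wrappings at infinity do not affect $\blag$'' at the end of \cref{sssec:computing_str_maps}. For $CF_\La(F_z^j,\blag^i)\to CF_\La(F_z^{j+1},\blag^i)$: the continuation strips for the fibre wrapping $F_z^j\wto F_z^{j+1}$ with boundary on $\blag^i$ stay, by the previous paragraph, away from $\La$, so \cref{prop:wrappingfibrecont}(1) together with the Gromov compactness of \cref{prop:wrappingfibreadg} identifies their moduli space, up to cobordism, with the moduli of trivial strips, and the map is the identity. For $CF(F_z^j,\blag^i)\to CF_\La(F_z^j,\blag^{i+1})$: the push-off $\blag^i\to\blag^{i+1}$ is supported in the collars and thus fixes a neighbourhood of $F_z^j\pitchfork\blag^i$, and again by \cref{lem:localitylemma} no continuation strip reaches where the push-off is nontrivial, so the same Gromov-compactness-plus-triviality argument gives the identity.

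Finally, for (2): $\{F_z^j\}$ is cofinal in the positive wrapping category of $F_z$ relative to $\La$, so $HW_\La(F_z,\lcyl)=\varinjlim_j HF_\La(F_z^j,\lcyl)$; $\{\blag^i\}$ is a cofinal system of push-offs of the filling $\lcyl$, so — by the standard fact that $\Hom$ in a localized category is computed by such a colimit — $HW_\La(F_z,\lcyl)=\varinjlim_i HF(F_z,\blag^i)$; and the region $\{(i,j):i-1>j\}$ being cofinal in $\mathbb{N}^2$, the double colimit over it computes the same group. By (1) all transition maps in these colimits are the identity on the stabilized, differential-free complex $\bigoplus_{k=1}^n V_{z^k}$ (recall $\lcyl$ is Maslov $0$), so this finite complex is $\ycylv(F_z)$, the model used in \cref{sssec:proof_blag_cyl_mu2} to prove \cref{thm:pwfcyonneda}. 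The main obstacle is precisely the confinement input: one must ensure that, for both families of continuation maps, the pseudo-holomorphic strips stay away from the stop $\La$ and from the vertex and caustic locus of $\snetwork$, so that $CF_\La$ may be replaced by $CF$ and the triviality results \cref{prop:wrappingfibrecont}, \cref{prop:wrappingfibreadg} apply; this is what \cref{lem:localitylemma} and the compatibility of $\EuScript{S}$ with $\snetwork$ supply, but it forces the synchronization $i-1>j$ and a careful choice of the push-off and wrapping rates, and verifying that this still leaves a cofinal index set is the delicate bookkeeping step.
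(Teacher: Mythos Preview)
Your proposal is correct and follows precisely the approach the paper indicates: the paper itself gives no proof beyond the one-sentence remark that the lemma follows from ``a combination of the argument in \cref{lem:localitylemma} with the Gromov compactness used in \cref{prop:wrappingfibrecont}'', and you have correctly unpacked this into the confinement step (via \cref{lem:localitylemma} and the $\snetwork$-compatibility of $\EuScript{S}$) plus the triviality-of-strips step (via \cref{prop:wrappingfibreadg} and \cref{prop:wrappingfibrecont}), together with the cofinality bookkeeping from \cref{lem:cofinality} and \cref{lem:ind-cofinality}.
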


The remaining ingredient for the proof of \cref{thm:pwfcyonneda} is a description of the image morphisms $i_{\ast}(HW(\fibre{x},\fibre{y}))$ under the inclusion $i_*:\wfc\to\pwfc$. It is provided by the following proposition:

\begin{proposition}\label{prop:inclusionfullyfaithful}
Let $x,y\in \tbis(\EuScript{S})$ be two generic points at distance less than $\frac{1}{20}\injradius$. Then the inclusion map
$$i_{\ast}:HF^\ast(\fibre{x},\fibre{y})\to HF^\ast_{\Lambda}(\fibre{x},\fibre{y})$$
is an isomorphism. In fact, for $[\alpha_{xy}]$ the representative of the minimal geodesic, $i_{\ast}[\alpha_{xy}]=[\alpha_{xy}]$.
\end{proposition}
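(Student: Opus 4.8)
The plan is to prove \cref{prop:inclusionfullyfaithful} by comparing explicit cofinal wrapping sequences in $\wfc$ and in $\pwfc$, using the locality arguments already established. First I would observe that both Floer cohomology groups are computed as direct limits over positive wrappings of $\fibre{x}$. In $\wfc$, by \cref{prop:fibrewrappingsgenerate}, I have a cofinal sequence of fibre wrappings $\fibre{x}\wto F_1\wto F_2\wto\cdots$ whose continuation elements realize the minimal geodesic generator $[\alpha_{xy}]$, and (since $x,y$ lie in the internal region $\obis_{int}$ and $x,y$ are within $\frac{1}{20}\injradius$ of each other) the only Hamiltonian chord of length $<\ell_1$ between $\fibre{x}$ and $\fibre{y}$ is the minimal geodesic, so that $HF^\ast(\fibre{x},\fibre{y})\cong\mathbb{Z}[\alpha_{xy}]$ concentrated in degree $0$. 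In $\pwfc$, I would use the same Hamiltonians $H_k$ from the proof of \cref{prop:fibrewrappingsgenerate} to define the wrappings $F^j_x$: because these wrappings are supported, at the relevant truncation level, in a region $T^\ast\obic(\EuScript{S}')$ bounded away from the punctures (hence disjoint from the stop $\La$), a sufficiently fast wrapping that stays below length $\ell_1$ never interacts with $\La$. Consequently the Hamiltonian chords between $F^j_x$ and $\fibre{y}$ relevant to the partially wrapped complex at this level of the limit are exactly the geodesic chords in $\obic_{int}$, i.e.\ the same generators as in the fully wrapped case.

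\textbf{Key steps.} The steps, in order, would be: (1) fix a compatible collection $\EuScript{S}$ of circles close to the punctures as in \cref{sssec:cylindrize_BettiLag}, and note $x,y\in\obic(\EuScript{S})$; (2) invoke \cref{lem:localitylemma} (with the cotangent fibre $\fibre{y}$ playing the role of $\blag_\circ$ --- or, more directly, the locality argument behind it) to conclude that any $J$-holomorphic strip or continuation strip between a wrapping $F^j_x$ of controlled length and $\fibre{y}$ is contained in a region $T^\ast\obic(\EuScript{S}')$ disjoint from $\La$; (3) deduce that the sub-level morphism complexes $CF_\La(F^j_x,\fibre{y})$ and $CF(F^j_x,\fibre{y})$ coincide verbatim (same generators, same almost-complex data can be chosen, same differential), as long as $\ell_k<$ the length scale at which a chord would have to wind around a puncture and reach $\La$; (4) check that the continuation maps in the two directed systems agree, so that the induced map on direct limits $i_\ast:HF^\ast(\fibre{x},\fibre{y})\to HF^\ast_\La(\fibre{x},\fibre{y})$ is an isomorphism; (5) finally, since the minimal geodesic generator is preserved by all wrapping continuation maps in the $\wfc$ system (by \cref{prop:fibrewrappingsgenerate}.(2)) and, by step (3), the identical statement holds in the $\pwfc$ system, conclude $i_\ast[\alpha_{xy}]=[\alpha_{xy}]$.

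\textbf{Main obstacle.} The delicate point is step (3): one must be sure that the partially wrapped morphism space, which is defined via its own cofinal wrapping toward $\La$, can be computed using wrappings that remain in $T^\ast\obic(\EuScript{S}')$ up to the truncation scale relevant to $[\alpha_{xy}]$ --- in other words, that the stop $\La$ only contributes classes of length much larger than $\ell_1$. This requires a geometric estimate that a Hamiltonian chord between $\fibre{x}$ and $\fibre{y}$ whose lift reaches a small neighborhood of a puncture (where $\La$ lives) must have length bounded below by roughly twice the distance from $\obic_{int}$ to that puncture, which we can make as large as we like by choosing the circles in $\EuScript{S}$ close enough to the marked points (and using that the metric is conical there, so geodesics to the puncture have infinite length). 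Once this separation of length scales is in hand, the cofinality arguments (as in \cref{lem:cofinality} and \cref{lem:ind-cofinality}) show that the two directed systems are eventually identical on any fixed action window, and \cref{prop:inclusionfullyfaithful} follows. I expect the bookkeeping of the truncation parameters $\EuScript{S}'$ versus $\EuScript{S}$ and the exact cofinal sequences to be the part requiring the most care, but all the necessary compactness and monotonicity inputs are already assembled in \cref{lem:localitylemma}, \cref{prop:wrappingfibrecont}, and the proof of \cref{thm:Familyfloerwrapped}.
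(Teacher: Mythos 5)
The geometric insight in your proposal is correct: because the metric is conical near the punctures and the stop $\Lambda$ sits far out near the punctures, no geodesic chord between $x$ and $y$ in any bounded action window ever approaches $\Lambda$. This is precisely the content of the paper's Step 1, where a stop contact form $\alpha$ is constructed so that its Reeb chords from $\partial F_x$ to $\partial F_y$ are in bijection with the $\la_{\std}$-geodesic chords.

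The gap is in your step (4). You propose to use the same fibre wrappings $\{F^j_x\}$ in both $\wfc$ and $\pwfc$; since the $F^j_x$ avoid $\Lambda$, you rightly observe that $CF_\Lambda(F^j_x,\fibre{y})$ and $CF(F^j_x,\fibre{y})$ are the \emph{same complex}, and the continuation maps are literally the same. But this trivially gives you the same direct limit on both sides, which does \emph{not} compute $HF^*_\Lambda(\fibre{x},\fibre{y})$ unless $\{F^j_x\}$ is cofinal in the stopped wrapping category of $\pwfc$. It is not: a cofinal family in $\wfc$ eventually sweeps through $\Lambda$, whereas a cofinal family in $\pwfc$ accumulates arbitrarily close to $\Lambda$ without crossing it; these are incompatible asymptotic behaviors, so no single family of wrappings can be cofinal in both categories. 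Your family, which stays inside $T^*\obic(\EuScript{S}')$ and hence bounded away from $\Lambda$, is cofinal in neither the $\wfc$ sense (except after identifying $\wfc\simeq\mathfrak{W}(T^*\obic(\EuScript{S}))$, which the paper does via choice of auxiliary function) nor the $\pwfc$ sense. Consequently the asserted identification of the two direct limits is a non-sequitur as stated.

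The paper resolves this precisely where your proposal hand-waves. It constructs \emph{two} cofinal families: $F_n$ flowed by $h_{\tbis}\cdot H_n$ (cofinal for $\wfc$) and $F^{stop}_n$ flowed by $h_{stop}\cdot H_n$ (cofinal for $\pwfc$), connected by a positive isotopy $F_n\wto F^{stop}_n$ coming from the inequality $h_{\tbis}\leq h_{stop}$. It then invokes Step 1 to identify the generators on both sides, orders them by geodesic length, and shows the induced continuation map $CF(F_n,\fibre{y})\to CF(F^{stop}_n,\fibre{y})$ is upper-triangular with ones on the diagonal because the action of a generator of length $\ell$ is $-\ell^2/2$ and continuation strips are action-nonincreasing. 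This upper-triangularity argument, together with the $h_{\tbis}\leq h_{stop}$ positive isotopy that makes the two cofinal families comparable, is the irreducible content that is missing from your proposal. Your ``separation of length scales'' heuristic captures the correct geometric reason why the argument works, but it substitutes for rather than supplies the action-filtration comparison between two genuinely different cofinal families, which is what makes the induced map on direct limits a well-defined quasi-isomorphism carrying the minimal geodesic to itself.
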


\begin{proof} 
In brief the core of the proof is using an action filtration argument combined with the fact that, since the metric is conical near the punctures, geodesics in the truncated region $\tbis(\EuScript{S})$ that enter the neighborhood of the punctures cannot leave.\footnote{Also, there is no conceptual importance to the factor $1/20$: it is just there for action estimates.} Let us provide the details of the argument.\\

First, we must construct a contact form $\alpha$ for $(T^\infty\obis,\xi_\std)$ whose Reeb chords correspond to geodesics between $x$ and $y$ while being stopped at $\La$, so as to be consistent with the partial wrapping in $\pwfc$. In precise terms, we claim that there exists a contact 1-form $\alpha\in\Omega^1(T^\infty\obis)$ for $\xi_\std=\ker \alpha$ such that:

\begin{itemize}
    \item[(i)] The Reeb flow of $\alpha$ is complete,
    \item[(ii)] $\alpha=\la_{st}|_{T^\infty S}$ outside a small standard neighborhood of $\lknot$,
    \item[(iii)] $\alpha$-Reeb chords between $\partial \fibre{x}$ and $\partial\fibre{y}$ are in bijection with $\la_{st}$-chords from $\partial \fibre{x}$ to $\partial \fibre{y}$.
\end{itemize}

The neighborhood in $(ii)$ is referred to as the stop region of $\alpha$. Let us argue that such $\alpha$ exists, as follows. By construction, $\lknot$ lies in a small neighborhood of the positive conormal of the circles in $\EuScript{S}$. In such a neighborhood, the geodesic flow for $(S,g)$ points outward, transverse to the boundary of $\tbis(\EuScript{S})$. By \cite[Lemma 3.29]{GPSCV}, there exists a contact form satisfying $(i)$,$(ii)$ with the stop region lying in this neighborhood. For this choice of neighborhood, $(iii)$ also holds. Indeed, an $\alpha$-Reeb chord from $x$ to $y$ is a geodesic chord away from the stop region. Now, on the one hand, if it entered such neighborhood of $\lknot$, the geodesic must enter the boundary in the transverse direction and then the conical metric structure implies that the original geodesic could not have returned to $y$. On the other hand, for the $\alpha$-Reeb chord to return to $y$, it must leave the stopping region but the only way the chord can leave it is by exiting the neighborhood of the positive conormal of $\EuScript{S}$. In that case the geodesic flow would further send the chord off to infinity. Therefore, geodesics between $x$ and $y$ could not have entered such neighborhood of the stop boundary and no $\alpha$-Reeb chord could have done the same. Thus $(iii)$ holds and we can choose a contact form $\alpha$ with the properties above.\\

Second, we proceed with an action filtration argument, in line with \cref{prop:fibrewrappingsgenerate}. By our choice of contact form above, the $\alpha$-Reeb chords are in bijection with geodesics between $x$ and $y$. By genericity of $x,y$, we can assume that the length of the geodesics are all distinct, the energy functional on the path space is Morse, and all the geodesics stay outside the stop collar region. We now consider two variations on the family of Hamiltonians from the proof of \cref{prop:fibrewrappingsgenerate}, one for $CF^\ast$ and one for $CF^\ast_\Lambda$.\\

(1) For $CF^\ast$, the wrapped case with no stop at $\La$, let $h_{\tbis}:T^\infty\tbis(\EuScript{S})\to\R$ be a smooth, non-negative function on the unit sphere bundle of $\tbis(\EuScript{S})$ with $\nabla h_{st}$ supported on the stop region, such that $h_{\tbis}\la_{\std}|_{T^\infty S}$ has complete Reeb flow. Let $H_n,\ell_n$ be as in the proof of \cref{prop:fibrewrappingsgenerate}, consider the linear-at-infinity Hamiltonians $h_{\tbis}\cdot H_n$ and let $F_n$ denote the time $1$-image of $\fibre{y}$ under the corresponding Hamiltonian flows. The Hamiltonian chords of $h_{\tbis}\cdot H_n$ all lie outside the stop region, and so the argument in \cref{prop:fibrewrappingsgenerate} implies that the Hamiltonian chords correspond to non-minimal geodesic of length $\ell$ less than $\ell_n$, and their actions are given by $-\ell^2/2$.\\

(2) For $CF^\ast_\La$, write the contact form $\alpha$ above as $\alpha=h_{stop}\la_{\std}$ and let $F^{stop}_n$ be the time-$1$ image of $\fibre{x}$ using the quadratic Hamiltonians $h^{stop}\cdot H_n$. By construction, the inequality $h_{\tbis}\leq h_{stop}$ holds and so there is a positive isotopy $F_n\wto F_n^{stop}$ and an induced continuation map $CF^\ast(F_n,\fibre{y})\to CF^\ast(F_n^{stop},\fibre{y})$. By Property $(iii)$ above, these two groups coincide as vector spaces. Since the actions are given by $-\ell^2/2$, and the length spectrum is discrete, we can consider a basis $[\gamma_1],[\gamma_2],\ldots, [\gamma_k]$, by ordering by length the geodesics of length bounded by $\ell_n$.\\

The last step is to argue that the continuation map is upper-triangular with respect to the basis $[\gamma_1],[\gamma_2],\ldots,[\gamma_k]$. Indeed, let $[\gamma_i]$ be a generator of $CF^\ast(F_n,\fibre{y})$ and consider the image $\sum n_j [\gamma_j]$ under the continuation map. By the geometric energy formula, we have
$A(\gamma_i)\leq A(\gamma_j)$ if $i\leq j$ and a continuation strip from $\gamma_i$ to itself must be constant. Therefore, the action formula implies that this image must be of the form $\displaystyle[\gamma_i]+\sum_{j<i} n_{ij}[\gamma_{j}]$. Thus the continuation map $CF(F_n,\fibre{y})\to CF(F_n^{stop},\fibre{y})$ is upper-triangular, with identity on the diagonal, and so it must be a quasi-isomorphism. Furthermore, the following diagram commutes, up to chain homotopy:
\[\begin{tikzcd}
	{CF(F_{n+1},\fibre{y})} & {CF(F^{stop}_{n+1},\fibre{y})} \\
	{CF(F_{n},\fibre{y})} & {CF(F^{stop}_n,\fibre{y})}.
	\arrow[from=1-1, to=1-2]
	\arrow[from=2-1, to=1-1]
	\arrow[from=2-1, to=2-2]
	\arrow[from=2-2, to=1-2]
\end{tikzcd}\]
In consequence, the induced map $WF(\fibre{x},\fibre{y})\to WF_{\lknot}(\fibre{x},\fibre{y})$ must also be a quasi-isomorphism. Finally, the minimal geodesic generator is preserved under these maps by arguing as in the proof of \cref{prop:fibrewrappingsgenerate}.
\end{proof}


\begin{proof}[Proof of \cref{thm:pwfcyonneda}]
By \cref{prop:inclusionfullyfaithful}, $i_{\ast}$ is cohomologically fully faithful, and the minimal geodesic elements are preserved under the inclusion functor. Consider the cohomology module of $\ycyl$ over $i_{\ast}$ restricted to the subcategory given by the cotangent fibres which is istelf identified with the fundamental groupoid. The cohomology module $\ycyl$ with the parallel transport morphisms is defined by the direct limit
$$\displaystyle\lim_{i\to \infty} CF(\fibre{z},\blag^i).$$
By \cref{prop:cofinalsequence}, such wrappings induce identity maps on the Floer chain complexes $CF_{\Lambda}(F_z^j,\blag^i)$ along arcs away from the spectral network $\snetwork$. So the local system given by (the cohomology module of) $\ycyl$ has trivial parallel transport over arcs that do not cross $\snetwork$. The same argument as in \cref{prop:wrappingfibrecont}.(2) implies that the associated parallel along an arc crossing $\snetwork$ must be as in the wall-crossing formula for non-abelianized local systems. The statement then follows by applying the argument used for \cref{thm:Familyfloerwrapped}.
\end{proof}

\begin{proof}[Proof of \cref{cor:pwfcyonnedamodule}]
By \cref{thm:pwfcyonneda}, $\ycyl\circ i_{\ast}\simeq \ylag$ over the cohomology subcategory of cotangent fibres. Since the inclusion functor $i_{\ast}$ is cohomologically fully faithful, and the cotangent fibres generate $\wfc$, the statement follows from the same argument as the proofs of \cref{thm:Familyfloerwrapped} and \cref{cor:familyfloerwrapped}.
\end{proof}

\color{black}
\subsection{Wrapping up the description of $\ycylv$}\label{subsection:partialyonneda}
In \cref{subsection:PWFC} we introduced the Yoneda $A_\infty$-module $\ycylv:\pwfc\lr\kmod$ associated to the cylindrization $\lcyl\sse(T^*S,\la_\std)$ of a Betti Lagrangian $L$. In a nutshell, \cref{thm:pwfcyonneda} and \cref{cor:pwfcyonnedamodule} give a description of this $A_\infty$-module on the $A_\infty$-subcategory given by the image of the functor $i_*:\mathfrak{W}(T^*\obis_{int})\lr\pwfc$, which is generated by the internal cotangent fiber. The goal of this section is to complete the description of the $A_\infty$-module $\ycylv$ on $\pwfc$. In order to describe the whole of $\ycylv:\pwfc\lr\kmod$, we construct in \cref{prop:gen_internal_infinity_fibers} a set of generators of $\pwfc$, containing the internal cotangent fiber, where we can compute the values of $\ycylv$.\\


\subsubsection{Generation by interior and infinity fibers}\label{sssec:generation_int_inf} Let $(S,\mkpts,{\bf\lknot})$ be a Betti surface, and denote by $\La_i\sse(T^\infty S,\xi_\std)$ the Legendrian link associated to the marked point $m_i\in\mkpts$. For simplicity, we shall henceforth assume that $\La_i$ has finitely many Reeb chords, each at a different angle, and that it is Reeb-positive (for instance, in the case of Stokes Legendrians without multiplicities \cref{prop:reebpositivestokes}, or the positive braid links described in \cref{rem:positivebraid}). This includes many of the most important classes, including the case where each Legendrian in ${\bf\lknot}$ is isotopic to the cylindrical closure of a positive braid. We consider the following Lagrangians in $T^*S$, all of which yield objects in $\pwfc$ once appropriately decorated and partially wrapped:

\begin{enumerate}
    \item An internal cotangent fiber $\fibre{z}$, for some point $z\in S$,\\
    
   \item The cotangent fiber $F_{m_i}$ near the puncture corresponding to $m_i$. It is given by the cotangent fiber at any point in $\obis$ arbitrarily close to the marked point $m_i\in\bis$ such that the geodesic between that point and $m_i$ does not intersect $\La_i$, i.e.~it is the cotangent fiber at infinity of $m_i$, {\it past} the Legendrian link $\La_i$. We denote $F_{\mkpts}:=\{F_1,\ldots,F_{|{\mkpts}|}\}$ the set of such fibers at the punctured infinity of $\obis$.\\
    
    \item The collection of linking disks $\mathbb{D}_i$ associated to the Legendrian $\La_i$. That is, each $\mathbb{D}_i:=\{D_{i,1},\ldots,D_{i,c_i}\}$ is the disjoint union of the linking disks of $\La_i$, as in \cite[Section 5.3]{GPSSD}, $c_i=|\pi_0(\La_i)|$, one for each component of the Legendrian link $\La_i$. See Figure \ref{fig:linking_disks}.
\end{enumerate}

\begin{center}
	\begin{figure}[h!]
		\centering
		\includegraphics[scale=1.8]{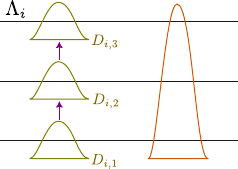}
		\caption{A front for the Legendrian $\La_i$: in this picture we assume $\La_i$ has three components, one per strand. The three linking disks $D_{i,1},D_{i,2}$ and $D_{i,3}$ for $\La_i$ are depicted, with the short Reeb chords between them highlighted in purple. The large linking disk $\EuScript{D}$, homologically obtained from coning the small linking disks along the short Reeb chords, is depicted in orange to their right.}
        \label{fig:linking_disks}
	\end{figure}
\end{center}

\noindent By \cite[Theorem 1.14]{GPSSD}, the set of Lagrangians $\{\fibre{z},\mathbb{D}_1,\ldots,\mathbb{D}_{|\mkpts|}\}$ generates $\pwfc$. By \cref{subsection:PWFC}, the spectral network $\snetwork$ allows us to describe $\ycylv$ at the generator $\fibre{z}$, by choosing $z\in\snetwork^c$ and using that the inclusion induces a fully faithful functor $\wfc\to\pwfc$. The value of $\ycylv$ at the linking disks in $\mathbb{D}_i$ is
$$\ycylv(D_i)=\Z.$$
Indeed, since $\partial^{\infty}\bcyl=\lknot$, $D_i$ intersects $\bcyl$ once. Since the same holds for a small enough positive push-off $\bcyl^{+}$ at the puncture $m_i$, $CF^\ast(D_i,\bcyl^+)=\Z$. Nevertheless, the value of $\ylagv$ on morphisms $\mbox{Hom}_{\pwfc}(D_{i,j},D_{i,k})$ between such linking disks, even for $j=k$, contains subtler information, encoding augmentations of Reeb chords at infinity which, in terms of $\snetwork$, correspond to augmented $\dfs$-trees.\footnote{These morphism groups between linking disks can themselves be difficult to describe.} In contrast, $\ycylv$ vanishes at the fibers $F_{\mkpts}$ at infinity. Therefore, a description of $\ycylv$ will follow if we can prove that $\pwfc$ is generated by $\{\fibre{z},F_{\mkpts}\}$. This is the content of the following Floer-theoretical result, which is independent of spectral networks:

\begin{proposition}[Generation by internal and infinity cotangent fibers]\label{prop:gen_internal_infinity_fibers} Let $(S,\mkpts,{\bf\lknot})$ be a Betti surface, $\pwfc$ the associated partially wrapped Fukaya category, $L\sse(T^*S,\omega_\std)$ a Betti Lagrangian and $V\in\Loc(L)$. Then $\pwfc$ is generated by $\{\fibre{z},F_{\mkpts}\}$.
\end{proposition}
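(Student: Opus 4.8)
The plan is to reduce the generation statement to the already-known generation by $\{\fibre{z},\mathbb{D}_1,\dots,\mathbb{D}_{|\mkpts|}\}$ from \cite[Theorem 1.14]{GPSSD}, by showing that each collection of small linking disks $\mathbb{D}_i$ lies in the triangulated subcategory generated by $\fibre{z}$ together with the fiber at infinity $F_i=F_{m_i}$. The key geometric observation, already hinted at in Figure \ref{fig:linking_disks}, is that the ``large linking disk'' $\EuScript{D}$, obtained by coning the small linking disks $D_{i,1},\dots,D_{i,c_i}$ along the short Reeb chords between consecutive strands of $\La_i$, is Lagrangian-isotopic (after partial wrapping) to the fiber at infinity $F_i$ sitting past $\La_i$. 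Concretely, near the puncture $m_i$ the Legendrian $\La_i$ is the cylindrical closure of a positive braid $\beta_i$, and the cotangent fiber $F_i$ at a point just outside $\La_i$ wraps, under a negative isotopy, through all the strands of $\La_i$; the resulting object is quasi-isomorphic to the iterated cone on the $D_{i,j}$'s along the Reeb-positive chords connecting them. I would first establish this cone relation precisely, working in a Weinstein neighborhood of the annular region $A_i$ and using the standard local model for linking disks of a positive braid closure (as in \cite[Section 5]{GPSSD} and the computations of \cite{CasalsNg22}).

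\textbf{Key steps, in order.} First, reduce to a single puncture: since the $\mathbb{D}_i$ for distinct $i$ are supported in disjoint annular regions $A_i$, it suffices to treat one $i$ at a time and to show $D_{i,1},\dots,D_{i,c_i}$ lie in $\langle \fibre{z}, F_i\rangle$. Second, set up the local model: identify a neighborhood of $A_i$ with the cotangent bundle of an annulus with a positive braid stop, and recall that in this model the linking disks $D_{i,j}$ generate the local partially wrapped category while the small Reeb chords among them realize the positive braid word $\beta_i$. Third, prove the cone relation: the fiber $F_i$ at infinity, once pushed past $\La_i$, is the twisted complex built from $D_{i,1},\dots,D_{i,c_i}$ and the continuation maps given by the crossings of $\beta_i$; this is essentially the statement that a cotangent fiber ``on the other side'' of a braid stop is the iterated mapping cone on the linking disks, which follows from the exact triangle associated to wrapping past one strand at a time (each such wrapping inserts a cone with a linking disk, cf.\ the proof of \cite[Lemma 3.21]{GPSCV} and the surgery formula). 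Fourth, invert the relation: since $\beta_i$ is a \emph{positive} braid, all the relevant chords are Reeb-positive (by \cref{prop:reebpositivestokes} in the meromorphic case, or by hypothesis on ${\bf\lknot}$), the degrees of the cone maps are all $0$, and the Euler characteristic / rank bookkeeping shows the complex is non-degenerate, so each individual $D_{i,j}$ is recovered from $F_i$ and the others by splitting off cones; inductively this places every $D_{i,j}$ in $\langle F_i\rangle \subseteq \langle \fibre{z},F_{\mkpts}\rangle$. Fifth, conclude: since $\{\fibre{z}\}\cup\{\mathbb{D}_i\}_i$ generates $\pwfc$ by \cite[Theorem 1.14]{GPSSD} and each $\mathbb{D}_i$ is now in $\langle \fibre{z},F_{\mkpts}\rangle$, the set $\{\fibre{z},F_{\mkpts}\}$ generates $\pwfc$.

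\textbf{Main obstacle.} The crux is Step 3 together with Step 4: making precise the identification of the fiber-at-infinity $F_i$ with the iterated cone on linking disks along the braid word, \emph{and} verifying the cone is non-degenerate so that it can be run backwards. The non-degeneracy requires controlling that no ``unexpected'' higher $A_\infty$-operations or differentials appear among the $D_{i,j}$ and the wrapping classes of $F_i$; this is exactly where Reeb-positivity of $\La_i$ is essential, since it forces all the structure maps entering the twisted complex to be concentrated in a single degree, so the twisted complex is, up to the obvious grading shifts, an honest (totally ordered) iterated extension. I would handle this by an action-filtration argument in the spirit of \cref{prop:inclusionfullyfaithful} and \cref{prop:fibrewrappingsgenerate}: choose a contact form at infinity adapted to the braid stop so that the Reeb chords between consecutive linking disks have strictly increasing action, which both pins down the cone structure and rules out back-differentials. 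A secondary (more bookkeeping) difficulty is keeping track of the precise decorations — spin structures / local systems and orientation signs — as one passes between $F_i$ and the $D_{i,j}$, but this is routine given the conventions already fixed in \cref{sssec:spin_structures} and \cref{subsection:WFC}. Once the cone relation and its invertibility are in hand, the generation statement is immediate.
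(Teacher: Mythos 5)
Your overall strategy is the right one and matches the paper's: reduce to the known generation of $\pwfc$ by $\{\fibre{z}\}\cup\{\mathbb{D}_i\}$ from \cite[Theorem 1.14]{GPSSD}, relate the fibers at infinity $F_{m_i}$ to linking disks via a mapping-cone/wrapping-triangle relation, and then exploit Reeb-positivity to ``invert'' that relation. The paper even sets this up almost exactly as you describe: the wrapping exact triangle $\fibre{z}\to F_m\to\ldisk_m$ followed by the iterated-cone description $\ldisk_m\simeq\cone(\ldots\cone(D_{p_1}\to D_{p_2})[-1]\to\ldots\to D_{p_n})[-1]$, where the cone maps are the short Reeb chords between consecutive linking disks.

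There is, however, a genuine gap in your ``invert the relation'' step, and it is precisely the step you flag as the crux. You assert that Reeb-positivity forces the cone maps to sit in degree $0$, and that an Euler-characteristic / non-degeneracy argument then lets you ``split off cones'' and recover each $D_{i,j}$. Neither half of this claim is right. First, Lemma~\ref{lem:properties_linkingdisks}.(3) shows that in the geometric grading the short Reeb chords between consecutive linking disks sit in cohomological degree $+1$, not $0$. Second, and more importantly, knowing that the cone maps sit in a single degree and that some complex is ``non-degenerate'' does not, in general, allow you to recover the factors of an iterated cone: from a triangle $A\to B\to C$ alone you only get $B\in\langle A,C\rangle$, and you are only handed $\fibre{z}$ and $F_i$, not the intermediate partial cones. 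The correct mechanism, which the paper uses and your action-filtration sketch does not land on, is a degree \emph{obstruction}: Lemma~\ref{lem:properties_linkingdisks}.(2), which relies on Reeb-positivity, forces the full endomorphism algebra of the collection of linking disks to be concentrated in degree $0$, while the short Reeb chords being coned against have degree $1$. Hence those cone maps vanish in cohomology, and by homological perturbation together with the functoriality of mapping cones (\cite[Lemma 3.30 and Section 3(e)]{SeidelZurich}) the twisted complex splits as a genuine direct sum $\ldisk_m\simeq D_{p_1}\oplus\ldots\oplus D_{p_n}$. It is this splitting — not any form of non-degeneracy — that places each $D_{p_i}$ in $\langle\fibre{z},F_{m_i}\rangle$ as a direct summand and completes the generation statement. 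Your write-up as given would not compile into a proof without replacing the degree claim and the ``run the complex backwards'' heuristic by this vanishing argument.
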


\begin{proof}
Let $\fibre{z}\sse T^*S$ be an internal cotangent fiber and choose a puncture $m\in\mkpts$ with its associated fiber $F_m$ and Legendrian link $\La_m$. Denote by $\mathbb{D}_m:=\{D_{p_1},\ldots,D_{p_n}\}$ a collection of linking disks chosen such that $D_{p_i}$ locally links $\La_m$ at the point $p_i\in\La_m$ and the ordered points $p_1,\ldots,p_n\in \La_m$ are located one per each strand of $\La_m$ and all in the same angle.\footnote{We can and do assume that there is no Reeb chord at that angle.} Consider the large linking disk $\ldisk_m$ associated to $\La_m$, which is defined as in Figure \ref{fig:linking_disks}. The wrapping exact triangle in \cite[Theorem 1.10]{GPSSD} gives the exact triangle
\begin{equation}\label{eq:interior_infinity_triangle}
\fibre{z}\to F_m\to\ldisk_m
\end{equation}
in $\pwfc$. \cref{eq:interior_infinity_triangle} implies that $\{\fibre{z},F_{\mkpts}\}$ generate the same category as $\{\fibre{z},\ldisk_{m_1},\ldots,\ldisk_{m_{|{\mkpts}|}}\}$.\footnote{It also follows from \cref{eq:interior_infinity_triangle}  that $\ycylv(\ldisk_m)\cong \Z^n$.}\\

\noindent The next step is to show that this latter collection $\{\fibre{z},\ldisk_{m_1},\ldots,\ldisk_{m_{|{\mkpts}|}}\}$ generates $\pwfc$, which we do by comparing the large linking disk $\ldisk_m$ to the linking disks $D_{p_1},\ldots,D_{p_n}$ at $\La_m$. This comparison is obtained by iterating the wrapping exact triangle for short Reeb chords, which gives the homological description of $\ldisk_m$ as
$$\ldisk_m\cong \cone(\ldots \cone(\cone(D_{p_1}\to D_{p_2})[-1]\to D_{p_3})[-1])\ldots\to D_{p_n})[-1].$$
as objects in the dg-category $\mbox{Tw}(\pwfc)$ of twisted complexes of $\pwfc$. Here the morphisms $D_{p_i}\to D_{p_{i+1}}$ being coned are given by the short Reeb chords, see e.g.~Figure \ref{fig:linking_disks}. In fact, by Lemma \ref{lem:properties_linkingdisks}.(1) and (2), we actually have a quasi-isomorphism
\begin{equation}\label{eq:ldisk_directsum}
\ldisk_m\cong D_{p_1}\oplus D_{p_2}\oplus\ldots \oplus D_{p_n},
\end{equation}
again in $\mbox{Tw}(\pwfc)$. Indeed, this is a consequence of functoriality of mapping cones and homological perturbation, as follows. By \cite[Lemma 3.30]{SeidelZurich}, exact triangles are sent to exact triangles and in Section 3(e) of ibid., given a different cocycle representative with the same cohomology class, a non-canonical isomorphism is constructed between the abstract mapping cones, that are identified with Yoneda modules of mapping cones. Thus given an $A_{\infty}$-equivalence $f:CF(\ldisk_m,\ldisk_m)\to H^0(\ldisk_m,\ldisk_m)$ which induces an isomorphism $\mbox{Tw}(f):\mbox{Tw}(CF(\ldisk_m,\ldisk_m))\to \mbox{Tw}(H^0(\ldisk_m,\ldisk_m))$, we can directly use the functoriality of mapping cones to conclude that
$$\mbox{Tw}(f)(cone(\mathfrak{r}))=cone(\mbox{Tw}(f)^1(\mathfrak{r})),$$
where $\mathfrak{r}$ is the short Reeb chord and use that the morphism ${\mbox{Tw}(f)}^1(\mathfrak{r})$ vanishes.

By \cite[Theorem 1.14]{GPSSD}, $\mbox{Tw}(\pwfc)$ is generated by the interior fiber $\fibre{z}$ and the objects $D_{p_1}\oplus D_{p_2}\oplus\ldots \oplus D_{p_n}$, taken over all $\La_m$ ranging over the marked points $m\in\mkpts$. Therefore \cref{eq:ldisk_directsum} implies that the large linking disks $\ldisk_m$, ranging over the marked points $m\in\mkpts$, along with the interior fiber $\fibre{z}$, also generate $\mbox{Tw}(\pwfc)$.
\end{proof}

\begin{remark}
In \cref{prop:gen_internal_infinity_fibers}, generation is defined as having quasi-equivalent categories of modules over twisted complexes, as in e.g.~ \cite{GPSCV,GPSMS}. Given our use of cones, we must work in a triangulated framework, and we use the enhancement of twisted complexes; see also \cite[Chapter 1]{SeidelZurich}.\hfill$\Box$
\end{remark}

\noindent Note that there is no canonical choice of grading for the linking disks $D_{p_i}$. By definition, we refer to the grading on $D_{p_i}$ as the \textit{geometric} grading if $CF^\ast(D_p,\bcyl)$ is concentrated in degree $0$. For a Legendrian knot and a linking disk, this geometric grading is the grading that is invariant under contact isotopies of (the boundary of) the linking disk around the Legendrian.\\

\noindent Let us summarize useful properties of the linking disks. Intuitively, these follow from the fact that the surgery models in \cite[Section 6.1] {BEEsurgery} and \cite[Lemma 6.4]{ELVknot} show that the dynamics around a (punctured) handle can be taken to be the geodesic flow inside of the handle and the given Reeb flow outside. In more detail:

\begin{lemma}[Properties of linking disks]\label{lem:properties_linkingdisks} In the notation above, the following properties hold:\\
\begin{enumerate}
    \item (Locality) Let $m_1,m_2$ be two distinct punctures with Legendrian links $\lknot_{m_1},\lknot_{m_2}$ and $D_1,D_2$ corresponding linking disks. Then  $HW^\ast(D_1,D_2)\cong 0$.\\
    
    \item Let $\mathbb{D}$ be a collection of linking disks for the Legendrian links ${\bf\lknot}$, each equipped with the geometric grading. Then its endomorphism algebra $HW^\ast(D,D)$ is concentrated in non-negative degree and, if links are Reeb-positive, then it is concentrated in the zero degree.\\
    
    \item Let $D_{p_i},D_{p_{i+1}}$ be two linking disks at the same puncture, as above. Then the short Reeb chords $D_{p_i}\to D_{p_{i+1}}$ have cohomological grading $1$.\\

\end{enumerate}
\end{lemma}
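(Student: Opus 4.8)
The three properties are geometric statements about linking disks in $\pwfc$ which follow from the surgery/handle models for partially wrapped Fukaya categories. My plan is to reduce each property to a computation in a standard local model near the corresponding Legendrian link, using that the dynamics near a punctured handle can be arranged to be the geodesic flow inside the handle and the ambient Reeb flow outside, as in \cite{BEEsurgery} and \cite{ELVknot}. For Part (1), the key observation is that the linking disks $D_1,D_2$ associated to \emph{distinct} punctures $m_1,m_2$ have disjoint conical supports at infinity: one can arrange the circles in $\EuScript{S}$ and the standard neighborhoods of $\lknot_{m_1},\lknot_{m_2}$ to be disjoint, so that $\partial^\infty D_1$ and $\partial^\infty D_2$ live in disjoint pieces of $T^\infty S$ and remain so after positive wrapping. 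Since $D_1,D_2$ are exact and each is contained (after small perturbation) in a Weinstein neighborhood of disjoint arcs, no pseudo-holomorphic strip can connect them — this is the standard locality/action argument, in the spirit of \cite[Proposition 3.29]{nho2024familyfloertheorynonabelianization}, using that the geodesic flow at infinity pushes $\partial^\infty D_1$ into the region complementary to $D_2$ and never into contact with it. Hence $HW^\ast(D_1,D_2)\cong 0$.

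For Part (2), I would work locally in the standard neighborhood of a single Legendrian link $\La_i$. Choose the geometric grading on each linking disk $D$, defined by requiring $CF^\ast(D,\bcyl)$ to sit in degree $0$; as noted this is the grading invariant under contact isotopy of $\partial D$ around $\La_i$. The endomorphism algebra $HW^\ast(D,D)$ is computed by counting Reeb chords of $\partial^\infty D$ under positive wrapping past the stop $\La_i$, via a quadratic-at-infinity Hamiltonian as in the proof of \cref{prop:fibrewrappingsgenerate}. In the surgery model, a linking disk $D$ is the conormal disk of a small arc transverse to $\La_i$, and its self-Reeb chords after wrapping correspond to the ``times'' the wrapped boundary circles back around $\La_i$; each such return contributes a Reeb chord whose degree is computed from the rotation number picked up by wrapping past the Legendrian crossings. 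By the Reeb-positivity hypothesis on $\La_i$ (which holds for Stokes Legendrians without multiplicities by \cref{prop:reebpositivestokes} and for cylindrical closures of positive braids), every crossing of the front is a positive crossing, so every such wrapping generator has strictly positive grading, and the only degree-$0$ generator is the unit. Without the Reeb-positivity assumption, mixed signs of crossings can still produce negative-degree chords in principle, but the degree-raising contribution of wrapping past the stop dominates, so $HW^\ast(D,D)$ is concentrated in non-negative degrees; this is the argument already sketched in the parenthetical reference to the surgery model. For Part (3), I would again use the local front model of Figure \ref{fig:linking_disks}: the two linking disks $D_{p_i},D_{p_{i+1}}$ sit at the same angular slice of $\La_i$ on adjacent strands, separated by a single short Reeb chord. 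The morphism $D_{p_i}\to D_{p_{i+1}}$ given by this short chord has its cohomological grading computed by the Conley–Zehnder-type index of the chord, which in the standard model of a short chord between adjacent conormal disks equals $1$ (the chord travels ``up one sheet'' with no crossings in between, contributing exactly one unit to the Maslov grading); this is a direct local computation in $J^1(\R)$ or equivalently in the plumbing/handle model.

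\textbf{Main obstacle.} The technically delicate point is Part (2), specifically pinning down that the geometric grading is the \emph{correct} one to make the degree bookkeeping work, and that the wrapping Hamiltonian past the stop $\La_i$ genuinely raises degree by at least the number of strands crossed — i.e.\ that Reeb-positivity of $\La_i$ translates cleanly into a lower bound on the gradings of all self-Reeb chords of a wrapped linking disk. This requires carefully matching the grading conventions of \cite{GPSSD} with the surgery-model description of \cite{BEEsurgery,ELVknot} and controlling the contribution of the rotation the wrapped conormal picks up as it slaloms past the Legendrian front. I would handle this by fixing once and for all a trivialization of the canonical bundle near $\La_i$, computing the grading of the short Reeb chords (which is Part (3), hence $1$) as the base case, and then arguing that each additional full wrap past $\La_i$ composes with these short chords and therefore adds at least the total number of strands to the grading, using that all crossings are positive. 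The locality statement Part (1) and the index computation Part (3) are comparatively routine once the local models are set up, so the weight of the proof rests on this grading analysis for the endomorphism algebra.
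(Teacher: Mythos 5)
Your proposal shares the general surgery–model framework with the paper, but it diverges in two places where the weight of the argument actually falls, and in one of those places the divergence opens a gap. \textbf{Part (1).} Your locality argument (the wrapped boundary of $D_1$ stays trapped near $m_1$ and can never reach $D_2$) coincides in substance with the paper's one-line proof and is fine. \textbf{Part (2).} You propose to use Part (3) as a base case and then argue that each additional full wrap past $\Lambda_i$ "composes with these short chords" and raises the grading by at least the number of strands. This is not well-founded as stated: the generators of $HW^\ast(D,D)$ are self-Reeb chords of $\partial D$ after wrapping past the stop, and these are not literally compositions of the short chords between \emph{distinct} linking disks that Part (3) is about. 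To make your route work you would need a structural result identifying all wrapped self-chords as compositions of elementary pieces with controlled grading, and you have not supplied one; as written, "adds at least the total number of strands" is an assertion, and it is exactly the step that is hard. The paper avoids this entirely by citing a pre-computed surgery-model result (the Reeb chord computation in the punctured-handle attachment, up to an action cut-off) and then moving between the open Liouville sector $W_\Lambda$ and the stopped category of \cite{GPSCV} via a truncation and deformation of sectors. If you want an independent proof you should either reproduce that chord computation in your chosen local model or find a different way to control \emph{all} self-chords, not just the first short one. \textbf{Part (3).} Here you take a genuinely different route — a direct Conley--Zehnder-type index computation in a local model — whereas the paper proves it by a purely categorical shift argument: choose the auxiliary grading on $D_{p_{i+1}}$ so that the short chord has degree $0$, observe that with this grading $\mathrm{cone}(D_{p_i}\to D_{p_{i+1}})[-1]$ has morphisms to $\blag$ identified with those of the large linking disk in its geometric grading, deduce that $D_{p_{i+1}}[-1]$ is the geometrically graded linking disk, and conclude the short chord is of degree $+1$ after reindexing. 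This trick sidesteps all grading-convention bookkeeping; your direct index computation is plausible and would give the same answer, but you have not actually carried it out, and the delicate part (fixing the trivialization and tracking the Maslov contribution of the cusps in the model front) is exactly where such computations tend to go wrong. In short: Part (1) matches; Part (3) is a different but reasonable route that you still owe a computation for; Part (2) has a real gap in the step that reduces all wrapped self-chords to short-chord compositions.
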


\begin{proof}
Part (1) Locality follows from the fact that the geodesic flow near the positive unit conormal cannot escape the neighborhood of the punctures. For Part (2), we translate the surgery partial wrapping introduced in \cite{Ekholmlekili} to the partial wrapping in \cite{GPSCV}. That is, we consider an open Liouville sector $W_{\Lambda}$ obtained by attaching the disk cotangent bundle of $\lknot\times [1,\infty)$, the sectorial analogue of a punctured handle attachment. The Reeb chords of cotangent fibres in $W_{\Lambda}$ are computed using \cite[Lemma 88]{Ekholmlekili}, up to some action cut-off, from which it follows that the Reeb chords all have non-negative degrees.\footnote{Strictly speaking, the form of the metric used in ibid.~is $f(r)d\theta^2+dr^2$ for some smooth, positive function $f(r)$, with the properties as in the discussion in Section B.3 in ibid., in order to ensure that the geodesics do not travel arbitrarily deep into infinity. In our case, the boundary is a circle and so using the conical metric is sufficient.} This open Liouville sector $W_{\Lambda}$ can be truncated to give a closed Liouville sector $W_{\Lambda}(T)$ given by attaching the disk cotangent bundle of $\lknot\times [1,T]$ instead, for a large enough $T\gg 1$. The inclusion functor induced from the inclusion of Liouville sectors $W_{\Lambda}(T)\to W_{\Lambda}(T')$, $T<T'$, is cohomologically fully faithful for $T$ sufficiently large enough. Therefore, it suffices to compute the wrapped Floer cohomology in $W_{\Lambda}(T)$. Now, $W_{\Lambda}(T)$ is deformation equivalent to the closed Liouville sector obtained by removing a standard 1-jet neighborhood of $\Lambda$ in the boundary, c.f.~\cite[Example 2.15]{GPSCV}, the wrapped Floer cohomology of which is isomorphic to the partially wrapped Floer cohomology in \cite{GPSCV}, by \cite[Corollary 3.9]{GPSSD}. Therefore, partially wrapped Floer cohomology must also be concentrated in non-negative degrees.

For Part (3), we first present the full argument for the rank $2$ case. Define the grading on $D_{p_{2}}$ so that the short Reeb chord lies in $\hom^0(D_{p_i},D_{p_{2}})$. For this grading
$$\hom(\cone(D_{p_1}\to D_{p_{2}})[-1],\blag)=\hom(\D_l,\blag)$$
with the large linking disk $\D_l$, connecting sum $D_{p_1}$ and $D_{p_{2}}$, being equipped with the geometric grading. In particular, $D_{p_{2}}[-1]$ is the graded Lagrangian object equivalent to the linking disk at $p_{2}$ equipped with the geometric grading. Therefore, since $$\hom^{\ast}(D_{p_1},D_{p_{2}}[-1])=\hom^{\ast-1}(D_{p_1},D_{p_{2}})$$
the short Reeb chord is a degree $+1$ element in $\hom^{\ast}(D_{p_1},D_{p_{2}}[-1])$, as claimed. The higher rank case follows from the exactly same argument. 
\end{proof}

\subsubsection{A concluding remark}\label{sssec:Stokes} There are more topological descriptions of $\wfc$ and $\pwfc$, not involving pseudo-holomorphic curves. As stated above, $\wfc$-$\kmod$ is quasi-equivalent to the category of local systems $\Loc(S)$ and thus perfect modules $\mbox{Perf}(\wfc)$ is equivalent to its compact objects $\Loc^c(S)$. More generally, \cite[Theorem 1.1]{GPSMS} shows that the category of perfect modules $\mbox{Perf}(\pwfc)$ is equivalent to the category $\Sh_{\lknot}^c(S)$ of compact objects in $\Sh_{\lknot}(S)$, the category of sheaves with singular support contained in $\lknot$. Independently, since $S$ is a surface in the study of Floer theory and spectral networks, one can also approach the generalization from $\wfc$ to $\pwfc$ as the generalization from local systems to Stokes local systems. We refer to \cite{Boalch21_TopologyStokes} for a detailed description of the latter.\\

We conjecture that the relation between partially wrapped Fukaya categories and spectral networks, as discussed above, is compatible with Stokes local systems. In an admittedly unimpressive degree of detail, we briefly sketch what we expect, as follows. Given a Betti Lagrangian $L\sse T^*S$ and a compatible spectral network $\snetwork\sse S$, we expect the existence of the following commutative diagram:

\begin{equation}\label{diag1}
\begin{tikzcd}[row sep=tiny]
& \pwfc \\
\Loc^c(L) \ar[ur, "\EuScript{Y}"] \ar[dr, "\Phi_{\snetwork}^{St}"'] & \\
& \mbox{St}(S,\lknot) \ar[uu,"\EuScript{M}"]
\end{tikzcd}
\end{equation}

\noindent where the notions in the diagram are:\\

\begin{enumerate}
    \item $\mbox{St}(S,\La)$ is the category of Stokes local systems, as described in \cite[Section 8]{Boalch21_TopologyStokes}, with the Stokes diagrams given by $\lknot$.\footnote{The category might be needed to be dg-enhanced, derived and consider compact objects to fit the diagram above, but all meaningful content is in ibid.} Technically, ibid.~defines Stokes local systems given irregular data -- which is naturally algebraic -- but the same definitions allow for the notion of a Stokes local system associated to any Legendrian link isotopic to (the lift of) a front given by the cylindrical closure of a positive braid.\\

    \item The Yoneda functor $\EuScript{Y}$ is given by $V\mapsto\ycylv$, as discussed in \cref{subsection:PWFC} and \cref{sssec:generation_int_inf} above.\\

    \item The functor $\Phi_{\snetwork}^{St}$ is essentially the functor $\Phi_{\snetwork}$ constructed and studied in \cref{section_Floer}. The only difference is that $\Phi_{\snetwork}(V)$ was described as a local system, and $\Phi_{\snetwork}^{St}(V)$ should be a Stokes local system. As discussed in \cite{GMN12_WallCrossCoupled,GMN13_Framed}, the non-abelianization process actually produces a Stokes local system, so this adjustment on the codomain is obtained by the same methods.\\

    \item The functor $\EuScript{M}$ can be described explicitly as follows. Given a Stokes local system $V_{St}\in\mbox{St}(S,\La)$, the $A_\infty$-module $\EuScript{M}(V_{St}):\pwfc\lr\kmod$ is determined by
    $$\EuScript{M}(V_{St})(\fibre{z})=V_z,\quad \EuScript{M}(V_{St})(F_\infty)=0,$$
    where $\fibre{z}$ is an internal fiber, $F_\infty$ any fiber at infinity and $V$ is the underlying local system of $V_{St}$. By \cref{prop:gen_internal_infinity_fibers}, this determines $\EuScript{M}$ on objects. Note that for any $z,w\in S$ two $\snetwork$-adapted points, we will also have $\EuScript{M}(V_{St})(i_*[\alpha_{zw}])=V([\alpha_{zw}])$.\\
\end{enumerate}

\noindent The commutativity of Diagram (\ref{diag1}) is a categorical analogue of the results in \cref{section_Floer}, comparing the non-abelianiation functor $\Phi_\snetwork$ to the Family Floer functor $\ff$.

\begin{remark} (1) Note that \cite{Boalch21_TopologyStokes} establishes equivalent descriptions of $\mbox{St}(S,\La)$, comparing to Stokes graded and Stokes filtered local systems. The category of Stokes filtered local systems is much closer to $\Sh_{\lknot}(S)$. In contrast, the Floer-theoretic context has generators that directly give a {\it grading}, provided by the Lagrangian intersections, and not just the filtration more naturally associated to $\Sh_{\lknot}(S)$. Therefore, the passage from $\Sh_{\lknot}(S)$ to $\pwfc$ provides, in a sense, a symplectic topological viewpoint on the splitting result \cite[Theorem 1.1]{Boalch21_TopologyStokes}, which produces a canonical Stokes grading from the filtration.\\

\noindent (2) It would be desirable to also have a conceptual description of $\EuScript{M}$, not given in terms of explicit generators of $\pwfc$. Specifically, we expect $\EuScript{M}$ to be an equivalence and its inverse should be given by an enhancement of the Family Floer methods in \cref{subsection:familyfloer}.\hfill$\Box$
\end{remark}

By \cref{thm:pwfcyonneda}, Diagram (\ref{diag1}) commutes if the Legendrian $\lknot$ at infinity is Reeb-positive. This is the case, for instance, if the Legendrian links at infinity are all cylindrical closures of positive braids, which includes the case of any irregular data with no multiplicities. In this Reeb-positive case, it also follows that the non-abelianization functor is injective on objects: it injectively sends local systems on $L$ to Stokes local systems on $S$, which was conjectured by the original works \cite{GMN12_WallCrossCoupled,GMN13_Framed}. Indeed, given $V,V'\in\Loc(L)$, wrapped cohomology $HW^\ast((\bcyl,V),(\bcyl,V'))$ is isomorphic to the Morse cohomology of $\blag$ twisted by $V^{\ast}\otimes V'$ coefficients. In particular, its degree-0 group is non-zero if and only if $V$ and $V'$ are isomorphic, from which injectivity follows.
\section{Spectral networks and weaves}\label{section_weave}


Weaves were introduced in \cite{legendrianweaves}, in the context of contact and symplectic topology. Since then, they have also been productively used in the study of cluster structures, see e.g.~ \cite{casals2023demazureweavesreducedplabic,casals2023microlocal,casals2024clusterstructuresbraidvarieties}.The object of this section is to present a first relation between weaves and spectral networks, in particular discussing \cref{thm:aug_graphs}. The necessary ingredients on weaves are presented in \cref{ssec:weaves_Betti_surfaces}, and the combinatorial construction of a spectral network $\snetwork_\ww$ from a Demazure weave $\ww$ is given in \cref{ssec:networks_Demazure_weaves}. We then conclude with \cref{ssec:explicit_comp} presenting a number of explicit examples and computations. Note that, given the rigorously established bridge between weaves and cluster algebras, cf.~\cite{casals2024clusterstructuresbraidvarieties}, this construction of $\snetwork_\ww$ provides a precise account on how cluster coordinate arise in spectral networks, in line with some predictions from supersymmetric computations, see e.g.~\cite{MR3263304}. In addition, since \cite{casals2023demazureweavesreducedplabic,casals2023microlocal,casals2024clusterstructuresbraidvarieties,} give explicit descriptions of Donaldson-Thomas transformations in terms of weaves, this connection between weaves and spectral networks therefore gives a direct method to compute the BPS spectrum generator.


\subsection{Weaves on Betti surfaces}\label{ssec:weaves_Betti_surfaces} Let $(S,\mkpts,{\bf\lknot})$ be a Betti surface of rank $n$ and let $\beta_i$ be a positive (cyclic) braid word in $n$-strands whose cylindrical closure gives the link $\La_i$ at the puncture $m_i\in\mkpts$, $i\in[1,|\mkpts|]$. The definition of a weave in \cite{legendrianweaves} is adapted to the context of Betti surfaces as follows:

\begin{definition}[Weaves]\label{def:weave}
A weave $\ww$ in $(S,\mkpts,{\bf\lknot})$ is a properly embedded graph in $S$ with edges decorated by permutations in $W(G)\cong S_n$ such that:
\begin{enumerate}
    \item There are only three types of vertices for $\ww$, as depicted in Figure \ref{fig:weaves_types_vertices}:

    \begin{center}
	\begin{figure}[h!]
		\centering
		\includegraphics[scale=2.1]{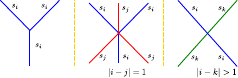}
		\caption{The three types of vertices allowed in a weave $\ww$. (Left) Trivalent vertex, where all edges are decorated with the same permutation $s_i\in W(G)$. (Center) Hexavalent vertex, with the edge decorations alternating between $s_i,s_j$ with $|i-j|=1$. (Right) Tetravalent vertex, with edge decorations as drawn with $|i-k|>1$.}
		\label{fig:weaves_types_vertices}
	\end{figure}
\end{center}
\vspace{-0.5cm}
    \item The asymptotics of the graph $\ww$ at a puncture $m_i\in\mkpts$ are such that the permutations decorating its edges coincide with the corresponding positive cyclic braid word $\beta_i$, for all $i\in[1,m]$.\hfill$\Box$
\end{enumerate}
\end{definition}

\noindent The asymptotic condition in \cref{def:weave}.(2) is drawn in Figure \ref{fig:weaves_near_boundary}. In figures, we often adopt the convention that colors \textcolor{blue}{blue} and \textcolor{red}{red} are adjacent, and \textcolor{blue}{blue} and \textcolor{DarkGreen}{green} are not adjacent, see e.g.~\cref{fig:weaves_types_vertices}. By convention, when drawing in small rank $n\in\N$, we typically use \textcolor{blue}{blue for $s_1$}, \textcolor{red}{red for $s_2$} and \textcolor{DarkGreen}{green for $s_3$}.

 \begin{center}
	\begin{figure}[h!]
		\centering
		\includegraphics[scale=2.1]{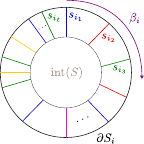}
		\caption{A weave $\ww$ near a circle $\dd S_i$ around a puncture $m_i\in\mkpts$, according to \cref{def:weave}.(2). The decorations on the edges of $\ww$ are written as $s_{i_j}$, and we use colors to emphasize that they might be different. The cyclic braid word being spelled is $\beta_i=s_{i_1}s_{i_2}s_{i_3}\ldots s_{i_\ell}$.}
		\label{fig:weaves_near_boundary}
	\end{figure}
\end{center}

In this section, we focus on the case of the Betti surface $(S^2,\{\infty\},\beta\delta(\beta))$ given by a once punctured 2-sphere $S=S^2$, with the braid around the unique puncture $\{\infty\}$ being the form $\beta\delta(\beta)$.\footnote{We can and will assume that $\delta(\beta)=w_0$, see e.g.~\cite{casals2024clusterstructuresbraidvarieties}.} In higher rank $G=\SL_n$, these Betti surfaces already lead to many interesting cases, including Bers-Nevins-Roberts \cite{BerkNevinsRoberts82_NewStokes} and all braid varieties \cite{casals2024clusterstructuresbraidvarieties}, thus all double Bruhat cells, positroids and double Bott-Samelson cases. To study weaves and spectral networks in $(S^2,\{\infty\},\beta\delta(\beta))$, we use the class of Demazure weaves introduced in \cite[Section 4]{casals2024algebraicweavesbraidvarieties}, cf.~also \cite[Section 4]{casals2024clusterstructuresbraidvarieties}. Recall that a Demazure weave $\ww:\beta\lr\delta(\beta)$ for $\beta=\s_{i_1}\cdots\s_{i_\ell}$ is a weave in $\R^2$ drawn vertically top-to-bottom such that\\
\begin{itemize}
    \item[(i)] There are two types of semi-infinite edges: north and south. The northern semi-infinite edges coincide with the vertical upwards semi-rays $\{j\}\times\R_+$ outside of a compact set, $j\in[1,\ell]$, and the souththern semi-infinite edges coincide with the vertical downwards semi-rays $\{k\}\times\R_+$, $k\in[1,\ell(\delta(\beta))$.\\

    \item[(ii)] The weave $\ww$ is never tangent to the horizontal lines $\R\times\{y\}$, $y\in\R$.\\

    \item[(iii)] The only vertices of $\ww$ are as literally as drawn in \cref{fig:weaves_types_vertices} when $\ww$ is scanned top-to-bottom. That is, the trivalent vertex cannot go from $s_i$ on top to $s_{i}s_i$ at the bottom: it must always go from $s_is_i$ at the top to $s_i$ at the bottom.\\
\end{itemize}

\noindent 
Here we identify $S^2\setminus\{\infty\}\cong\R^2$ via a diffeomorphism and choose Cartesian coordinates $(x,y)\in\R^2$. Demazure weaves are a diagrammatic description of a sequence of braid words where the only allowed moves are braid moves and the nil-Hecke type move $\s_i^2\to\s_i$, the sequence starts with $\beta$, corresponding to a horizontal slice in positive large $y$-height, and ends with $\delta(\beta)$, corresponding to a horizontal slice in negative large $y$-height. For the purpose of constructing spectral networks, we use the following modification of a Demazure weave:

\begin{definition}[Bending of Demazure weaves]\label{def:bent_demazure_weave}
Let $\ww:\beta\lr\delta(\beta)$ be a Demazure weave, as depicted in Figure \ref{fig:weaves_bending} (left). By definition, the right bending of $\ww$ is the weave in $\R^2$ drawn as in Figure \ref{fig:weaves_bending} (right).\hfill$\Box$
\end{definition}

\begin{center}
	\begin{figure}[h!]
		\centering
		\includegraphics[scale=0.8]{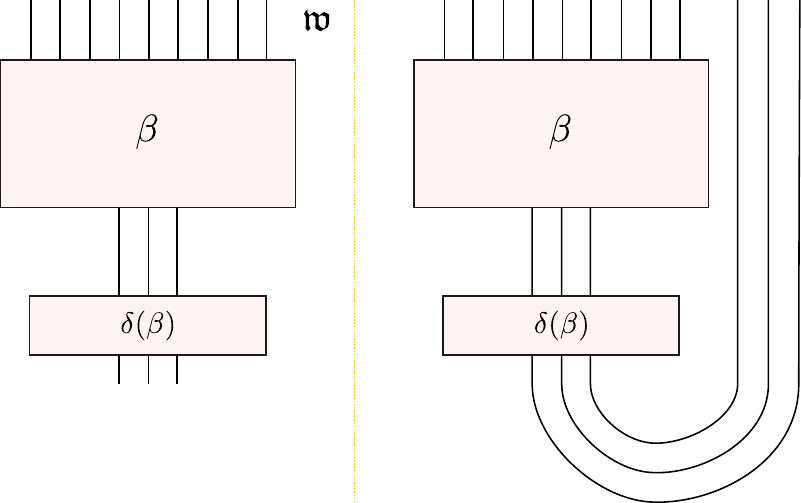}
		\caption{(Left) A general Demazure weave $\ww:\beta\to\delta(\beta)$. (Right) The right bending of $\ww$. The use of the bent weave, as depicted on the right, is that it can be seen as an embedded exact Lagrangian filling of the $(-1)$-closure of $\beta\delta(\beta)$.}
        \label{fig:weaves_bending}
	\end{figure}
\end{center}

\noindent The relevance of \cref{def:bent_demazure_weave} is that the bent Demazure weave represents a spatial wavefront whose Legendrian lift is such that its Lagrangian projection in $(\R^4,\omega_\std)$ an exact embedded Lagrangian filling of the $(-1)$-closure $\La_{\beta\delta(\beta)}\sse(\R^3,\xi_\std)$. This follows from the original construction of weaves in \cite{legendrianweaves} and the observation that bending a Demazure weave $\ww$ does not introduce any Reeb chords. It is the starting idea behind many recent results connecting symplectic topology and cluster algebras, cf.~e.g.~ \cite{casals2022conjugate,casals2023demazureweavesreducedplabic,casals2024clusterstructuresbraidvarieties,casals2024algebraicweavesbraidvarieties}. In short, the bending of a Demazure weave $\ww\sse\R^2$ describe an exact Betti Lagrangian $L_\ww$ for the Betti surface $(S^2,\{\infty\},\beta\delta(\beta))$.

\begin{remark} (1) The need for bending is to ensure there are no concave ends. A Demazure weave $\ww$, without bending, would naturally yield an exact embedded Lagrangian cobordism from $\La_{\delta(\beta)}$, in the concave end, up to $\La_{\beta}$, in the convex end. The bending effectively fills the concave end, yielding a Lagrangian filling of $\La_\beta$. We plan to study non-empty concave ends in future work as well.\\

\noindent (2) The theory of weaves, as developed in \cite{legendrianweaves}, works in an arbitrary Betti surface. For instance, \cite[Section 3.1]{legendrianweaves} builds weaves from $N$-triangulations of surfaces, whose spectral networks will match (and generalize) the level $N$ lift spectral networks from \cite{GMN14_Snakes}. Although it is possible to define a generalization of Demazure weaves to any Betti surface, with higher genus and punctures, the wealth of Demazure weaves in $S^2\times\{\infty\}$ and their associated spectral networks, as momentarily built in \cref{ssec:networks_Demazure_weaves}, already covers many interesting examples and applications, so we focus on those.\hfill$\Box$
\end{remark}


\subsection{Spectral networks for Demazure weaves}\label{ssec:networks_Demazure_weaves} By \cref{ssec:weaves_Betti_surfaces}, bending a Demazure weave $\ww\sse\R^2$ gives an exact Betti Lagrangian $L_\ww\sse(T^*\R^2,\la_\std)$ for the Betti surface $(S^2,\{\infty\},\beta\delta(\beta))$. The goal is to construct a Morse spectral network $\snetwork_\ww\sse\R^2$ for $L_\ww$ from the data of the weave $\ww$.

\begin{definition}[Augmentation forest of $\ww$]\label{def:augmentation_forest}
Let $\ww\sse\R^2$ be the right bending of a Demazure weave with $\ell$ trivalent vertices. By definition, the augmentation forest $\snetwork_\w\sse \R^2$ of $\ww$ is the spectral network built as follows:

\begin{enumerate}
\item Start with an empty prespectral network $\fnetwork_0=\snetwork_0=\emptyset$. Scan the trivalent vertices of $\ww$ bottom-to-top, indexing by $i\in[1,\ell]$ in this bottom-to-top order for the vertices: at the $i$th trivalent weave vertex $v_i$ of $\ww$, create three directed flowlines as in Figure \ref{fig:weaves_specnet_construction}.(1).\\

\item Flow each of the three edge flowlines in Figure \ref{fig:weaves_specnet_construction}.(1) from $v_i$ as follows. Flowlines (a) and (b) continue following the weave edges up until their reach the top of the weave. These flowlines go straight through both 4 and 6-valent vertices and continue up on the left of a trivalent, as depicted in Figure \ref{fig:weaves_specnet_construction}.(2); the permutation labels of the flowline change by conjugation with the labels of the weave lines. The flowline (c) is extended to the right, through weave edges, changing its labeling accordingly, until it reaches a weave edge whose permutation label coincides with that of the flowline (at that stage). Right before that point (to its left), the edge (c) is extended upward following that weave edge until it reaches the top. The result of these extensions is a prespectral network $\fnetwork_i$, which is obtained by adding these flowlines to the spectral network $\snetwork_{i-1}$.\\

\item Consider the prespectral network $\fnetwork_i$ and add additional flowlines to create a consistent extension of $\fnetwork_i$. The only new flowlines that need to be created appear at the intersection of the (new) flowlines both at $v_i$ with the flowlines of $\snetwork_{i-1}$. Then declare $\snetwork_i$ to be the resulting spectral network.\\

\item Declare $\snetwork_\ww$ to be $\snetwork_\ell$, the result of applying the above process to all trivalent vertices of $\ww$, scanned bottom to top.\hfill$\Box$
\end{enumerate}
   \end{definition}

\begin{center}
	\begin{figure}[h!]
		\centering
		\includegraphics[scale=1]{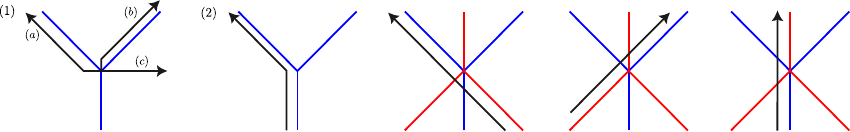}
		\caption{(1) The local model of the flowlines in the augmentation forest $\Theta_\w\sse \R^2$ near a trivalent vertex. (2) The local models for the flowline behaviour near trivalent vertices and hexavalent vertices north of their source trivalent vertex.}
        \label{fig:weaves_specnet_construction}
	\end{figure}
\end{center}

The spectral network $\snetwork_\w\sse \R^2$ in \cref{ssec:networks_Demazure_weaves} is constructed combinatorially from $\ww$, it is just a directed graph with labels, a priori without any analytic meaning or relation to $\dfs$-flowtrees. The adiabatic degeneration results in \cref{section_mnetwork} along with the spatial front associated to a weave $\ww$, as described in \cite{legendrianweaves}, imply that $\snetwork_\w$ is in fact obtained by adiabatically degenerating the pseudoholomorphic strips associated to the augmentation induced by the Lagrangian filling $L_\ww$ of the $(-1)$-closure of $\beta\delta(\beta)$. We record this fact as follows, which implies \cref{thm:aug_graphs}:

\begin{proposition}\label{prop:augforest_is_aug}
Let $\ww:\beta\to w_0$ be a Demazure weave for a braid word $\beta$ with $\delta(\beta)=w_0$. Consider the associated embedded exact Lagrangian filling $L_\ww\sse T^*\R^2$ of $\La_{\beta w_0}$ and the induced augmentation
$\e_\ww:\SA(\La_{\beta w_0})\lr k[H_1(L_\ww)]$
for its Legendrian contact dg-algebra. Then, the spectral network $\snetwork_\w\sse \R^2$ is the union of all the $\dfs$-trees obtained by adiabatically degenerating the rigid pseudo-holomorphic strips contributing to $\e_\ww$.\hfill$\Box$
\end{proposition}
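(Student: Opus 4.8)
\textbf{Proof plan for \cref{prop:augforest_is_aug}.}

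The plan is to reduce the statement to a sequence of local verifications, bootstrapped by the convergence results already established. First I would recall the geometric content: the bent Demazure weave $\ww \sse \R^2$ is the spatial front (in the sense of \cite{legendrianweaves}) of a Legendrian lift whose Lagrangian projection is the embedded exact filling $L_\ww$ of $\La_{\beta w_0}$, and by \cref{lem:Maslov0} this filling is Maslov $0$. Consequently, as argued in the proof of \cref{thm:pathdetourclassesareHaminvariant}, the augmentation $\e_\ww$ is computed (after the perturbation and neck-stretching of \cite{Morseflowtree,EHKexactLagrangiancobordisms} producing $\pneck$) by counting rigid index-$0$ pseudoholomorphic disks with a single positive puncture at a Reeb chord of $\dd_\infty L_\ww$, and the Maslov-$0$ condition rules out the index-$(-1)$ bifurcation disks, so this count is an honest invariant. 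By \cref{theorem:spectralnetworkandrigidflowtrees}, such rigid disks are in bijection with augmented $\dfs$-trees on any creative compatible spectral network, in particular on the augmentation forest $\snetwork_\ww$ of \cref{def:augmentation_forest}, \emph{provided} one knows that $\snetwork_\ww$ is indeed a creative Morse spectral network compatible with $L_\ww$ --- which is the content of \cref{thm:aug_graphs}.(1). So the logical backbone is: $\e_\ww$ counts rigid disks $\Leftrightarrow$ rigid disks $\leftrightarrow$ rigid flowtrees on $(\R^2, \neckg)$ (via \cite[Theorem 1.1]{Morseflowtree} and \cref{lemma:sneaky_trees}) $\leftrightarrow$ augmented $\dfs$-trees on $\snetwork_\ww$ (via \cref{prop:soliton}), and then the adiabatic degeneration statement --- that the strips actually degenerate onto $\snetwork_\ww$ as a \emph{subset} of $\R^2$ --- is supplied by \cref{prop:convergence_strips_D4tree} and \cref{thm:snetworkadg}, which force any such degenerating strip to converge to a trimmed $\dfs$-tree contained in $\snetwork_\ww$.

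The substantive step, and the one I would carry out in detail, is verifying that the combinatorially-defined augmentation forest $\snetwork_\ww$ of \cref{def:augmentation_forest} coincides, edge by edge, with the spectral network produced by the existence construction of \cref{thm:existence}.(i) applied to $L_\ww$. This I would do by induction on the trivalent vertices of $\ww$ scanned bottom-to-top, mirroring the inductive structure of \cref{def:augmentation_forest} itself. At each trivalent vertex $v_i$, the germ of $L_\ww$ near the corresponding branch point is exactly a $D_4^-$-singularity, whose three initial flowline rays are the three directed edges of Figure \ref{fig:weaves_specnet_construction}.(1) --- this is the local computation of \cref{subsection:Morselines}, Case (2), combined with the explicit parametrization of the weave front. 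The claim that these flowlines propagate \emph{through} hexavalent and tetravalent weave vertices with the stated conjugation of the permutation labels, and spiral up the left of a trivalent vertex as in Figure \ref{fig:weaves_specnet_construction}.(2), is a direct unwinding of the flowline equation \eqref{eq:braidflowline} on the slices of the weave front, using that along a weave edge labeled $s_j$ the two relevant sheets cross and the difference function $y_i - y_j$ changes sign exactly as dictated by the braid word. The ``stopping'' rule for flowline (c) --- that it turns upward right before meeting a weave edge carrying its own label --- is precisely the statement that at such a meeting the flowline would hit a Reeb chord ray, i.e.\ it asymptotes to a Reeb chord of $\dd_\infty L_\ww$ (here is where \cref{lemma:Reebpositivity} and the Reeb-positivity of the boundary braid, via \cref{rem:positivebraid} or \cref{prop:reebpositivestokes}, get used to pin down that the limiting behavior is a positive crossing, hence index $1$). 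The new flowlines created in step (3) of \cref{def:augmentation_forest} at intersections with $\snetwork_{i-1}$ are exactly the creation-joint edges of the consistent extension built in \cref{subsection:constructionofexactnetwork}; one checks they are creation joints, never 6-valent or annihilation, because $L_\ww$ being an \emph{embedded} filling of a braid of the form $\beta w_0$ means each interaction involves precisely two incoming walls meeting a branch-point ancestry with no multiplicities, so by \cref{prop:positivesingularityinterractionchain} the process terminates and is creative. This gives \cref{thm:aug_graphs}.(1) as a byproduct.

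The main obstacle I anticipate is the bookkeeping of \emph{orientations and labels} through the induction: one must check that the pair-of-sheets decoration $(ij)$ on each edge of $\snetwork_\ww$, as it is conjugated past weave vertices, agrees with the decoration forced by the initial-vertex condition of \cref{def:flownetwork}.(4) and propagates consistently through Figure \ref{fig:vertices_spec_net2} --- and that the soliton classes $\soliton{z;\wall,\snetwork_\ww}$ of \cref{def:D4tree_from_walls} match the relative homology classes in $H_1(L_\ww)$ that the disks contributing to $\e_\ww$ actually represent. For the latter, I would use \cref{lemma:flow-energy} to identify the flow-energy of each $\dfs$-tree with the $\la_\std$-period of its lift, and then invoke \cref{prop:convergence_strips_D4tree}.(i) (uniqueness of the relative homology class of a degenerating strip through a point of $\snetwork$) together with the fact that for an exact Betti Lagrangian the set of $\dfs$-trees associated to a wall is a singleton (the remark after \cref{def:D4tree_from_walls}), so the class is determined and equals the one read off combinatorially from $\ww$ by tracking which sheets the tree traverses. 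Once these identifications are in place, \cref{prop:augforest_is_aug} follows: the rigid pseudoholomorphic strips contributing to $\e_\ww$ adiabatically degenerate (by \cref{thm:snetworkadg}.(1)--(2) applied along $\snetwork$-adapted paths, exactly as in \cref{sssec:proof_characterization_exact}) to trimmed $\dfs$-trees on $\snetwork_\ww$, the trimming is removed by the sneaky-tree local model of \cref{lemma:sneaky_trees} near each branch point, and by \cref{theorem:spectralnetworkandrigidflowtrees}.(2) the count is exact, so $\snetwork_\ww$ is the union of all these $\dfs$-trees and nothing more.
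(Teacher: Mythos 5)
Your plan is correct and matches the paper's intended argument, which the paper leaves implicit as a one-paragraph remark preceding the proposition: the authors assert that the adiabatic degeneration results of \cref{section_mnetwork} together with the spatial-front description of weaves yield the statement, and your proposal fills in precisely the required reductions --- that $\e_\ww$ is computed by rigid punctured disks with one positive puncture (Maslov $0$, \cref{lem:Maslov0}, and the invariance argument of \cref{thm:pathdetourclassesareHaminvariant}), that these are bijective with rigid flowtrees and hence with augmented $\dfs$-trees (\cref{theorem:spectralnetworkandrigidflowtrees}, via \cref{lemma:sneaky_trees}), that the strips degenerate onto trimmed $\dfs$-trees contained in a compatible spectral network (\cref{prop:convergence_strips_D4tree}, \cref{thm:snetworkadg}), and the substantive step of matching the combinatorial augmentation forest $\snetwork_\ww$ with the output of the existence construction of \cref{thm:existence}.(i) applied to $L_\ww$. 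Your identification of the vertex-by-vertex induction and the label/soliton-class bookkeeping as the real work, together with the observation that this simultaneously establishes \cref{thm:aug_graphs}.(1) (which you should present as established before the backbone that quotes it), is exactly the content the paper compresses into that remark.
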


\cref{def:augmentation_forest} and \cref{prop:augforest_is_aug} provide a wealth of interesting spectral networks directly related to Floer theory. In particular, given the rigorously establish existence of and weave calculus for cluster structures, they yield a versatile set of tools for computing BPS states. For instance, the Donaldson-Thomas transformations for Demazure weaves are explicitly described in \cite[Section 8.3]{casals2024clusterstructuresbraidvarieties}, cf.~also \cite[Section 5.4]{casals2023microlocal}, which readily yield the BPS spectrum generator, a.k.a.~BPS monodromy, for the associated physical system.

\begin{remark}
(1) In the meromorphic case, some of the $\dfs$-trees in $W_\ww$ with a positive puncture at a Reeb chord in $\La_{\dd_\ww}$ align with the BPS solitons described by the (non-degenerate) open finite webs in \cite[Section 3.2]{GNMSN}.\\

\noindent (2) The Lusztig cycles of a Demazure weave $\ww$ are introduced in \cite[Section 4]{casals2024clusterstructuresbraidvarieties}, see also \cite[Section 3]{casals2023microlocal}. Intuitively, in the meromorphic case, the mutable Lusztig cycles relate to $W_\ww$ in that they should correspond to finite webs representing 4d BPS states, as in \cite[Section 3.1]{GNMSN}, for a degenerate spectral network related to (but in a different phase than) the non-degenerate network $W_\ww$. This relation remains to be explored in detail.\hfill$\Box$
\end{remark}


\subsection{Explicit computations of spectral networks from Demazure weaves}\label{ssec:explicit_comp} In this subsection, we discuss several cases of \cref{def:augmentation_forest} and \cref{prop:augforest_is_aug} in detail, establishing the comparison between the flowlines of spectral networks and the adiabatic degenerations of pseudoholomorphic strips.


\subsubsection{A 2-stranded example in detail}\label{sssec:2strands_examples_1}

Consider the braid word $\beta=\s_1^6$ in 2-strands and the Legendrian dg-algebra $\SA_\beta$ of the $(-1)$-closure of $\beta\delta(\beta)$. Let us denote its degree-0 Reeb chords by the variables $z_6,z_5,z_4,z_3,z_2,z_1,w_1$, read left-to-right, with $w_1$ corresponding to the crossing of $\delta(\beta)=\s_1$. This labeling is drawn in Figure \ref{fig:weaves_example111111_specnet}. Consider the Demazure weave $\ww:\beta\lr\delta(\beta)$ depicted in Figure \ref{fig:weaves_example111111_specnet} (left). In the language of pinching sequences, the Lagrangian filling $L_\ww$ is obtained by pinching the crossings in the Lagrangian projection associated to $z_1,z_3,z_4,z_5,z_2$, in this order. A computation, as in \cite{CasalsNg22}, shows that $L_\ww$ gives the augmentation
$$\e_\ww:\SA_\beta\lr k[H_1(L_\ww,T)]$$
uniquely determined by its non-zero values:
\begin{align*} 
\begin{split}
\e_\ww(z_6) &=  \frac{1}{s_2 s_3^2 s_4^2 s_5^2}-\frac{1}{s_5} \\ 
\e_\ww(z_5) &= s_5-\frac{1}{s_4}\\
\e_\ww(z_4) &= s_4-\frac{1}{s_3}\\
\e_\ww(z_3) &= s_3\\
\e_\ww(z_2) &= s_2-\frac{1}{s_1}-\frac{1}{s_3}+\frac{1}{s_3^2 s_4}-\frac{1}{s_3^2 s_4^2 s_5}\\
  \end{split}
\quad
  \begin{split}
\e_\ww(z_1) &= s_1\\
\e_\ww(w_1) &= \frac{1}{s_1^2 s_2}-\frac{1}{s_1}\\
\e_\ww(t_1) &= -\frac{1}{s_1 s_2 s_3 s_4 s_5}\\
\e_\ww(t_2) &= s_1 s_2 s_3 s_4 s_5.
 \end{split}
\end{align*}
where $t_1,t_2$ are marked points $T=\{t_1,t_2\}$ at the (right at the) right of $w_1$, $t_i$ in the $i$th strand, counting from the bottom, and $s_i\in H_1(L_\ww,T)$ is the relative cycle Poincar\'e dual to the trivalent vertex whose associated top-right edge corresponds to $z_i$.\\

By construction, $\e_\ww$ counts rigid pseudo-holomorphic strips and records their (relative) homology classes. Therefore, there ought to be a precise correspondence between the non-zero terms in $\e_\ww(z_i)$, and $\e_\ww(w_1)$, and the rigid flowtrees of the spectral network $\snetwork_\ww$ for $L_\ww$ which are asymptotic to $z_i$, and $w_1$. This is indeed the case: $\snetwork_\ww$ is drawn in Figure \ref{fig:weaves_example111111_specnet} (right), and each term of $\e_\ww$ above exactly corresponds to such flowtrees. For instance, the flowtrees in the spectral network that are coloured not black correspond to the terms of
$$\e_\ww(z_2) = s_2-\frac{1}{s_1}-\frac{1}{s_3}+\frac{1}{s_3^2 s_4}-\frac{1}{s_3^2 s_4^2 s_5},$$
as they are asymptotic to the Reeb chord $z_2$. Specifically, the green flowline gives the term $-s_3^{-1}$, the yellow gives $s_3^{-2}s_4$, the orange $-s_3^{-2}s_4^{-2}s_5^{-1}$, the purple gives $s_2$ and the rightmost cardinal flowline $-s_1^{-1}$.

\begin{center}
	\begin{figure}[h!]
		\centering
		\includegraphics[scale=1.8]{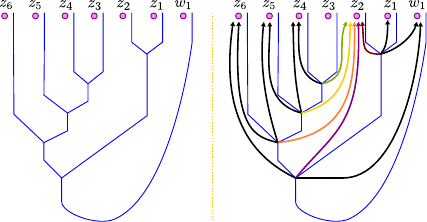}
		\caption{(Left) A Demazure weave $\ww$ for $\beta=\s_1^6$, with its Reeb chords at the (top) boundary marked with a pink dot and labeled. (Right) The spectral network $\snetwork_\ww$ compatible with $L_\ww$, as built above.}
        \label{fig:weaves_example111111_specnet}
	\end{figure}
\end{center}


\subsubsection{Mutation in 2-strands}\label{sssec:2strands_examples_2} Similar to the example above, this correspondence between terms of the augmentation $\e_\ww$ and the flowlines in $\snetwork_\ww$ works in the same lin for other 2-stranded braids and their Demazure weaves. The mutation in spectral networks can be analyzed in the local model of $\beta=\s_1^3$. The two sides of a weave mutations are depicted in Figure \ref{fig:weaves_example1111_specnet}, $\ww_1$ on the left and $\ww_2$ on the right.

\begin{center}
	\begin{figure}[h!]
		\centering
		\includegraphics[scale=1.8]{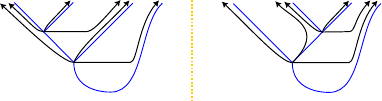}
		\caption{The two sides of a weave mutation, $\ww_1$ and $\ww_2$, and their associated spectral networks $\snetwork_{\ww_1}$ and $\snetwork_{\ww_2}$.}
        \label{fig:weaves_example1111_specnet}
	\end{figure}
\end{center}
\noindent If we label the degree-0 Reeb chords associated to the crossings of $\beta\delta(\beta)=\s_1^4$ by $z_3,z_2,z_1,w_1$, reading left-to-right, and use $T=\{t_1,t_2\}$ as above, the two augmentations
$$\e_{\ww_i}:\SA_{\sigma_1^3}\lr k[H_1(L_{\ww_i},T)],\quad i=1,2,$$
are given by

\begin{align*} 
\begin{split}
\e_{\ww_1}(z_3) &= \frac{1}{s_1 s_2^2}-\frac{1}{s_2} \\ 
\e_{\ww_1}(z_2) &= s_2\\
\e_{\ww_1}(z_1) &= s_1-\frac{1}{s_2}\\
\e_{\ww_1}(w_1) &= -\frac{1}{s_1}\\
\e_{\ww_1}(t_1) &= -\frac{1}{s_1 s_2}\\
\e_{\ww_1}(t_2) &= -s_1 s_2.
  \end{split}
\quad
  \begin{split}
\e_{\ww_2}(z_3) &= -\frac{1}{s_2} \\ 
\e_{\ww_2}(z_2) &= s_2-\frac{1}{s_1}\\
\e_{\ww_2}(z_1) &= s_1\\
\e_{\ww_2}(w_1) &= \frac{1}{s_1^2 s_2}-\frac{1}{s_1}\\
\e_{\ww_2}(t_1) &= -\frac{1}{s_1 s_2}\\
\e_{\ww_2}(t_2) &= -s_1 s_2.
 \end{split}
\end{align*}

\noindent Note that in the $z$-coordinates of the braid variety $X(\s_1^3)$, the mutable cluster variable $A_1$ for the torus chart $T_{\ww_1}\sse X(\s_1^3)$ associated to $L_{\ww_1}$ is $A_1=z_2$, and the frozen is $z_2z_3-1$. The quiver $Q_{\ww_2}$ has a unique arrow from the frozen to the mutable. After mutation, the mutable cluster variable $A_2$ for the torus chart $T_{\ww_2}\sse X(\s_1^3)$ associated to $L_{\ww_2}$ is $A_2=z_3$, and the frozen $z_3z_2-1$. The quiver $Q_{\ww_1}$ has a unique arrow from the mutable to the frozen. By the mutation formula, we must have that the mutated variable $A_1'$ of $A_1=z_2$ is such that
$$A_1A_1'=1+(z_2z_3-1)$$
and thus indeed $A_1'=A_2=z_3$. The corresponding mutation in the spectral networks $\snetwork_{\ww_1}$ and $\snetwork_{\ww_2}$, i.e.~its change in the flowtrees, is drawn in Figure \ref{fig:weaves_example1111_specnet}. The change in the flowlines, seen as adiabatic limits of pseudo-holomorphic strips, matches the difference in the augmentation $\e_{\ww_1}$ and $\e_{\ww_2}$ and, using $z$-coordinates, it becomes the standard cluster mutation.

\begin{center}
	\begin{figure}[h!]
		\centering
		\includegraphics[scale=1.3]{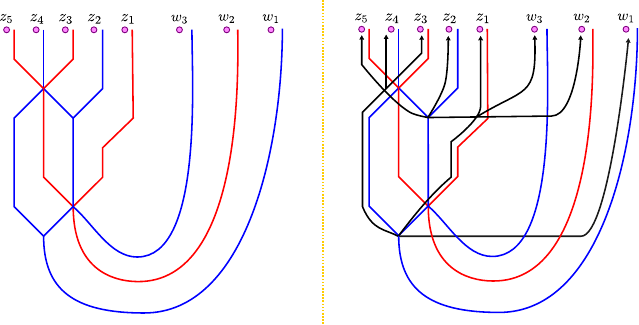}
		\caption{(Left) The weave $\ww_{bnr}$ obtained by considering the front of the Legendrian lift of the real part of the Bers-Nevins-Roberts spectral curve $\Sigma_{bnr}$. (Right) The spectral network $\snetwork_{\ww_{bnr}}$ according to \cref{ssec:networks_Demazure_weaves}.}
        \label{fig:weaves_exampleBNR_specnet}
	\end{figure}
\end{center}


\subsubsection{The Berk-Nevins-Roberts spectral network}\label{sssec:3strands_examples_BRN} The remarkable article \cite{BerkNevinsRoberts82_NewStokes} first introduced the {\it new} Stokes lines, born at certain intersections of two ordinary Stokes lines and studied them in the context of the WKB asymptotics, c.f.~\cref{ex:irregularclass_Legendrian}. These flowlines only appear at higher rank $n\geq3$, and mark a crucial difference between the rank-2 case and higher ranks. The specific example studied in \cite{BerkNevinsRoberts82_NewStokes} corresponds to the spectral curve
$$\Sigma_{bnr}:=\{(z,w)\in T^*\C: w^3-3w+x=0\}.$$
The weave $\ww_{bnr}$ associated to its real part $\Re(\Sigma)$ is depicted in Figure \ref{fig:weaves_exampleBNR_specnet} (left). It is already depicted with the bending, so that it is apparent that can be seen as a bent Demazure weave for the braid word $\beta=\s_2\s_1\s_2\s_1\s_2$. Note that the associated spectral network $\snetwork_{bnr}$, depicted in \cite[Figure 1]{BerkNevinsRoberts82_NewStokes} was analyzed in \cite[Section 5]{MR3322389} from the perspective of harmonic maps to buildings.\\

\begin{center}
	\begin{figure}[h!]
		\centering
		\includegraphics[scale=1.4]{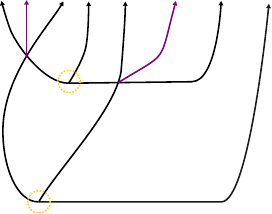}
		\caption{The spectral network $\snetwork_{\ww_{bnr}}$, which indeed coincides with the spectral network $\snetwork_{bnr}$ from \cite{BerkNevinsRoberts82_NewStokes}. In this figure, the two new Stokes line are highlighted in purple, and the two trivalent $\dfs$-vertices are circled in dashed yellow.}
        \label{fig:weaves_exampleBNR2_specnet}
	\end{figure}
\end{center}

From the Floer-theoretic perspective, we label the Reeb chords of $\beta\delta(\beta)=\s_2\s_1\s_2\s_1\s_2(\s_1\s_2\s_1)$ as in Figure \ref{fig:weaves_exampleBNR_specnet}: $z_1,z_2,z_3,z_4,z_5$ correspond to the crossings of $\beta$, reading right-to-left, and $w_1,w_2,w_3$ to those of $\delta(\beta)$, also reading right-to-left. The spectral network $\snetwork_{\ww_{bnr}}$ associated to $\ww_{bnr}$ according to \cref{ssec:networks_Demazure_weaves} is depicted in Figure \ref{fig:weaves_exampleBNR_specnet} (right). We have also drawn $\snetwork_{\ww_{bnr}}$ on its own in Figure \ref{fig:weaves_exampleBNR2_specnet}, where the two {\it new} Stokes lines, born at creation vertices, are depicted in purple. Here we can readily see that the spectral network $\snetwork_{\ww_{bnr}}$ obtained by adiabatic degeneration of pseudoholomorphic strips coincides with the original spectral network $\snetwork_{bnr}$ from \cite{BerkNevinsRoberts82_NewStokes} associated to $\Sigma_{bnr}$.


\subsubsection{An example of a cancellation pair}\label{sssec:3strands_examples_cancellation} Let us consider a local model for a cancellation pair coming from applying a Reidemeister III move twice to $\beta=\s_2\s_1\s_2$. This corresponds to the sequence of braid words $\s_2\s_1\s_2\to\s_1\s_2\s_1\to\s_2\s_1\s_2$. First, we will consider one Reidemeister III move $\s_2\s_1\s_2\to\s_1\s_2\s_1$ on its own, as realized by the weave $\ww_{R3}$ in Figure \ref{fig:weaves_example_cancelation_pair}. This is understood as a local move that can be inserted anywhere in a weave where there is a subword $\s_{i+1}\s_i\s_{i+1}$.

\begin{center}
	\begin{figure}[h!]
		\centering
		\includegraphics[scale=1.2]{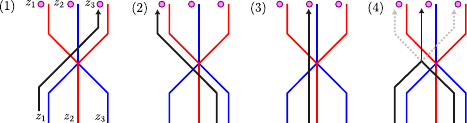}
		\caption{The four degenerations of pseudoholomorphic strips contributing to the dg-algebra morphism induced by a Reidemeister III move on a Legendrian front. In (4), the bifurcation occurs as a creation vertex from the flowlines in (1) and (2), the traces of which are drawn in dashed gray in (4).}
        \label{fig:weaves_example_cancelation_pair}
	\end{figure}
\end{center}

\noindent The embedded exact Lagrangian cobordism associated to $\ww_{R3}$ gives a dg-algebra morphism $\Psi_{R3}$ between the Legendrian contact dg-algebra of the convex end, which has the piece $\s_2\s_1\s_2$, to the concave end, which contains $\s_1\s_2\s_1$. Denote by $z_1,z_2,z_3$ the Reeb chords next to the crossings associated to $\s_2\s_1\s_2$, and keep the same notation for those in $\s_1\s_2\s_1$, as depicted in Figure \ref{fig:weaves_example_cancelation_pair}.(1). This morphism $\Psi_{R3}$ is computed in \cite[Section 4.1.2]{CasalsNg22} and, locally in this model, reads as follows:
$$\Psi_{R3}(z_1)=z_3,\quad \Psi_{R3}(z_3)=z_1,\quad \Psi_{R3}(z_2)=z_2\pm z_1z_3,$$
where we do not specify the sign, cf.~\cite{CasalsNg22} for the choices that give precise signs. Each of these four terms exists because of a pseudoholomorphic strip. Their adiabatic degenerations, in the form of flowtrees, are depicted in Figure \ref{fig:weaves_example_cancelation_pair}. Namely, Figure \ref{fig:weaves_example_cancelation_pair}.(1) explains $\Psi_{R3}(z_3)=z_1$, \ref{fig:weaves_example_cancelation_pair}.(2) explains $\Psi_{R3}(z_1)=z_3$, \ref{fig:weaves_example_cancelation_pair}.(3) is the tree corresponding to the term $z_2$ in $\Psi_{R3}(z_2)=z_2\pm z_1z_3$ and the $\dfs$-tree in \ref{fig:weaves_example_cancelation_pair}.(4) corresponds to the term $z_1z_3$.\\

Let us now perform two consecutive Reidemeister moves, inverses of each other. The corresponding weave $\ww_{c}$ is depicted in Figure \ref{fig:weaves_example_cancelation_pair2}.(1). By the candy twist move in \cite[Figure 19]{legendrianweaves}, this weave $\ww_c$ is equivalent to the trivial weave with three strands labeled $s_1,s_2,s_1$ and no crossings. Therefore, by Hamiltonian invariance of the associated exact embedded Lagrangian cobordisms, it must be that the induced dg-algebra map given by $\ww_{c}$ is the identity and there {\it has to} be a cancelation account for the term $z_1z_3$ above appearing and disappearing.

\begin{center}
	\begin{figure}[h!]
		\centering
		\includegraphics[scale=1.2]{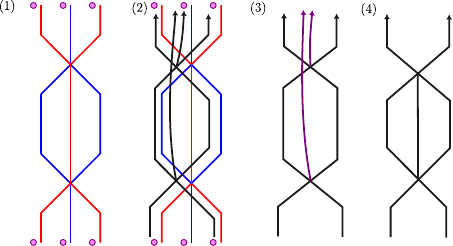}
		\caption{(1) The weave corresponding to two consecutive Reidemeister III moves $\s_2\s_1\s_2\to\s_1\s_2\s_1\to\s_2\s_1\s_2$. (2) The associated spectral network, which coincides with the adiabatic $\dfs$-tree degeneration of the pseudoholomorphic strips corresponding to the induce dg-algebra maps. (3) The spectral network on its own, with the two canceling terms highlighted in purple. (4) The result of deleting the walls of the spectral network in (3) that cancel each other. In this case, the interaction vertex that is not creation would be of {\it annihilation} type. The actual spectral network in (3) witnesses both canceling flowlines, and their cancelation is seen in the BPS index. More intuitively, the $\dfs$-tree corresponding to the upper purple flowline has a self-intersection point, unlike the tree for the purple flowline, hence its sign is opposite.}
        \label{fig:weaves_example_cancelation_pair2}
	\end{figure}
\end{center}

\noindent This cancellation is indeed seen in the spectral network $\snetwork_{\ww_c}$, which is depicted in Figure \ref{fig:weaves_example_cancelation_pair2}.(2). The two flowlines, with the same asymptotic end, that cancel each other are highlighted in purple in Figure \ref{fig:weaves_example_cancelation_pair2}.(3). In the physics literature, this configuration is sometimes depicted as in Figure \ref{fig:weaves_example_cancelation_pair2}.(4), where the parts of the walls of the canceling pair that have vanishing BPS index are not drawn: this is, with phases appropriately understood, related to the {\it setting sun} model in \cite[Section 7.4]{GNMSN}.


\subsubsection{A more elaborate 3-stranded example}\label{sssec:3strands_examples} Consider the 3-stranded braid word $\beta=(\s_2\s_1)^3\s_2$, so that $\beta\delta(\beta)=(\s_2\s_1)^5\s_2$ and the (-1)-closure of $\beta\delta(\beta)$ is the max-tb $(3,2)$-torus knot, i.e.~the max-tb trefoil presented in 3-strands. Consider its right inductive weave $\ww$, as depicted in Figure \ref{fig:weaves_example_3strands_specnet_1} (left), and its associated embedded exact Lagrangian filling $L_\ww$. This example illustrates how the correspondence between the flowlines of the spectral network $\snetwork_\ww$ and adiabatic degenerations of the pseudoholomorphic strips contributing to the augmentation  $\e_\ww$ induced by $L_\ww$. It is particularly helpful in understanding how terms can (and do) cancel, locally as in \cref{sssec:3strands_examples_cancellation}, in a globally interesting example.

\begin{center}
	\begin{figure}[h!]
		\centering
		\includegraphics[scale=0.9]{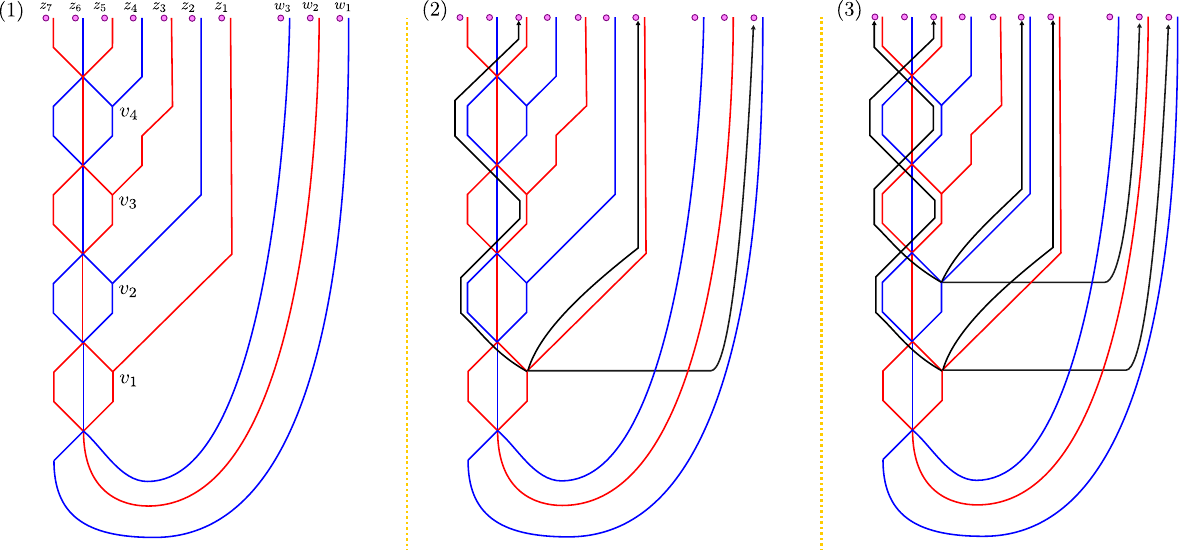}
		\caption{(1) The right inductive weave $\ww$ for $\beta=(\s_2\s_1)^3\s_2$. The four trivalent vertices are labeled $v_1,v_2,v_3,v_4$, the Reeb chords associated to crossings in $\beta$ are denoted $z_1,\ldots,z_{7}$ and those for $\delta(\beta)$ are denoted $w_1,w_2,w_3$. (2) The first prespectral network $\fnetwork_1$, consisting of the three flowlines starting at the trivalent $\dfs$-vertex $v_1$. (3) The second prespectral network $\fnetwork_2$, consisting of the flowlines starting at the two $\dfs$-vertices $v_1,v_2$. Notice that it is {\it not} consistent, as there are 4-valent vertices in $\fnetwork_2$ which should be creation vertices. A consistent extension is drawn in Figure \ref{fig:weaves_example_3strands_specnet_2}.(4).}
        \label{fig:weaves_example_3strands_specnet_1}
	\end{figure}
\end{center}

First, from the Floer-theoretic side, we label the Reeb chords as in Figure \ref{fig:weaves_example_3strands_specnet_1} (left). In these coordinates, the Lagrangian filling $L_\ww$ gives the augmentation
$$\e_\ww:\SA_\beta\lr k[H_1(L_\ww,T)]$$
which determined by its non-zero values:

\begin{align}\label{eq:augmentation_3strand_example}
\begin{split}
\e_\ww(z_7) &= -\frac{s_3}{s_2 s_4^2}-\frac{1}{s_4} \\ 
\e_\ww(z_6) &= \frac{s_2}{s_1 s_3^2}-\frac{1}{s_3}+\frac{1}{s_1 s_3 s_4} \\ 
\e_\ww(z_5) &= \frac{s_2 s_4}{s_1 s_3^2}-\frac{s_4}{s_3}\\
\e_\ww(z_4) &= s_4\\
\e_\ww(z_3) &= s_3\\
\e_\ww(z_2) &= s_2+\frac{s_3}{s_4}\\
\e_\ww(z_1) &= s_1-\frac{s_2}{s_3}-\frac{1}{s_4}
  \end{split}
\quad
  \begin{split}
\e_\ww(w_3) &= \frac{s_1}{s_2}-\frac{1}{s_3}\\
\e_\ww(w_2) &= -\frac{1}{s_2}\\
\e_\ww(w_1) &= -\frac{1}{s_1}\\
\e_\ww(t_1) &= -\frac{1}{s_1 s_3}\\
\e_\ww(t_2) &= -\frac{s_1 s_3}{s_2 s_4}.\\
\e_\ww(t_3) &= -s_2 s_4.
 \end{split}
\end{align}

where $t_1,t_2,t_3$ are marked points $T=\{t_1,t_2,t_3\}$ at the (right at the) right of $w_1$, $t_i$ in the $i$th strand, counting from the bottom. As above, $s_i\in H_1(L_\ww,T)$ denotes the relative cycle Poincar\'e dual to the trivalent vertex whose associated top-right edge corresponds to $z_i$.\\

\begin{center}
	\begin{figure}[h!]
		\centering
		\includegraphics[scale=0.9]{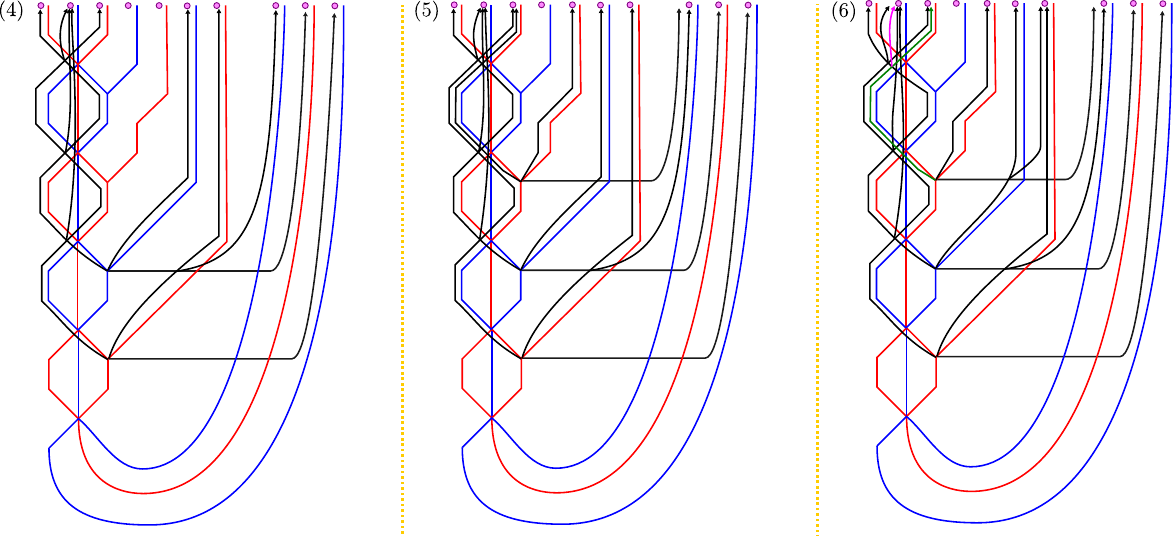}
		\caption{{Prespectral networks in the steps towards building $\snetwork_\ww$}. In (4) we added flowlines to make the network in Figure \ref{fig:weaves_example_3strands_specnet_1}.(3) consistent. In (5) the flowlines born from $v_3$ are added and (6) depicts the associated consistent extension.}
        \label{fig:weaves_example_3strands_specnet_2}
	\end{figure}
\end{center}

Second, independent of the Floer-theoretic computation, we consider the spectral network $\snetwork_\ww$ associated to the weave $\ww$, as built in \cref{ssec:networks_Demazure_weaves}. The final spectral network $\snetwork_\ww$ is drawn in Figure \ref{fig:weaves_example_3strands_specnet_4}. The process to obtain it is depicted in several steps and described as follows:

\begin{itemize}
    \item[(i)] The first prespectral network $\fnetwork_1$ is depicted in Figure \ref{fig:weaves_example_3strands_specnet_1}.(2), where the three walls of $\fnetwork_1$ are drawn, being born at the trivalent $\dfs$-vertex $v_1$.\\

    \item[(ii)] The second prespectral network $\fnetwork_2$ is depicted in Figure \ref{fig:weaves_example_3strands_specnet_1}.(3), where three additional walls are born at the $\dfs$-vertex $v_2$, in addition to those for $\fnetwork_1$ are drawn.\\

    \item[(iii)] Since the prespectral network $\fnetwork_2$ not consistent, we proceed by considering a consistent extension $\fnetwork_3$, as drawn in Figure \ref{fig:weaves_example_3strands_specnet_2}.(4). It is obtained by adding four flowlines to $\fnetwork_2$, resulting in four creation vertices. The prespectral network $\fnetwork_3$ is consistent.\\

    \item[(iv)] The next prespectral network $\fnetwork_4$ is depicted in Figure \ref{fig:weaves_example_3strands_specnet_2}.(5), where the three flowlines born in the $\dfs$-vertex $v_3$ are added. Such $\fnetwork_4$ is inconsistent, and thus we consider its consistent extension $\fnetwork_5$, which is drawn in Figure \ref{fig:weaves_example_3strands_specnet_2}.(6). The only flowline needed to be added to $\fnetwork_4$ for consistency is depicted in pink, which itself is caused by the flowline born in $v_3$ highlighted in green interacting with a flowline born in $v_2$.\\

    \item[(v)] The final step is to add the three flowlines born at $v_4$ and the additional flowlines to make the resulting prespectral network consistent. The resulting spectral network is $\snetwork_\ww$, which is drawn in Figure \ref{fig:weaves_example_3strands_specnet_3} (left), on top of the weave $\ww$. It is drawn on its own in Figure \ref{fig:weaves_example_3strands_specnet_3} (right).
\end{itemize}

\begin{center}
	\begin{figure}[h!]
		\centering
		\includegraphics[scale=1.2]{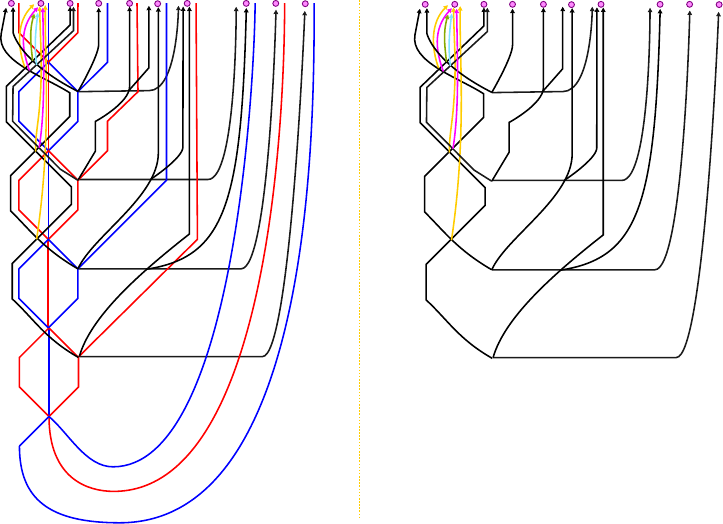}
		\caption{(Left) The spectral network $\snetwork_\ww$ on the weave $\ww$. (Right) The spectral network $\snetwork_\ww$ drawn on its own. The colors for some of the flowlines indicate that they are creation flowlines: those that share a color are created by (repeated) intersections of the same two flowlines. Some pairs of flowlines of the same color cancel each other, e.g.~the two pink flowlines cancel each other.}
        \label{fig:weaves_example_3strands_specnet_3}
	\end{figure}
\end{center}

The spectral network $\snetwork_\ww$, with all its flowlines, is drawn in Figure \ref{fig:weaves_example_3strands_specnet_3} (right), but some of these flowlines cancel each other. That is, when considering counts of BPS states or pseudoholomorphic strips, there are pairs of such flowlines where each of the two flowlines contributes the same term, but they do so with opposite signs. Specifically, this occurs in this example in two cases: the two pink flowlines in Figure \ref{fig:weaves_example_3strands_specnet_3} (right) cancel each other, and two of the yellow flowlines (the two ones born lower than the third one) also cancel each other. We have depicted the spectral network obtained by removing such flowlines in Figure \ref{fig:weaves_example_3strands_specnet_4}, where the two canceling yellow lines are drawn dashed and the two pink flowlines have been erased.

\begin{center}
	\begin{figure}[h!]
		\centering
		\includegraphics[scale=1.4]{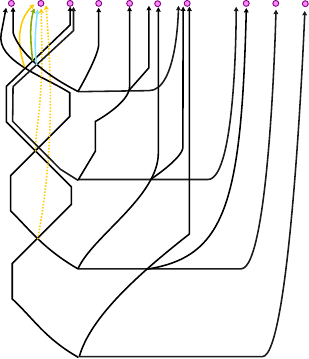}
		\caption{The spectral network $\snetwork_\ww$ associated to the weave $\ww$. The two dashed flowlines, drawn in yellow, cancel each other. The (solid) flowlines in yellow, green and blue are the flowlines born in creation vertices in the last step of the process, after adding the flowlines born in $v_4$ and making the resulting prespectral network consistent.}
        \label{fig:weaves_example_3strands_specnet_4}
	\end{figure}
\end{center}

\noindent We conclude by noting that the flowlines for the spectral network $\snetwork_\ww$, as drawn in Figure \ref{fig:weaves_example_3strands_specnet_4}, indeed match with the terms of the augmentation $\e_\ww$ as described in \cref{eq:augmentation_3strand_example}.

\newpage


\appendix
\section{Aspects of the geometry of pseudo-holomorphic curves}

This appendix collects three technical results that are used in the manuscript: geometric boundedness of meromorphic spectral curves, a type of reverse isoperimetric inequality and a brief discussion on spin structures.

\subsection{Geometric boundedness of meromorphic spectral curves}
In this section, we show that the meromorphic spectral curves that satisfy the $O(-1)$-end conditions are geometrically bounded, in the sense of \cite[Definition 2.7]{JboundedGroman}. This is achieved by showing that the $C^\ell$-norm of the second fundamental form is uniformly bounded. We use the Sasaki metric obtained from metrics of the form $\abs{z}^{-2}\abs{dz}^2$ near the poles. This metric is geometrically bounded outside $T^{\ast}K$ for some compact subset $K\sse C$, since the metric $\abs{z}^{-2}\abs{dz}^2$ is flat. This is used in Sections \ref{section_adg}, \ref{section_Floer} and \ref{section_wfc}. The precise result reads as follows:


\begin{proposition}\label{prop:geombounded}
Let $\scurve\sse T^*C$ be a meromorphic spectral curve. Then the smooth sheets of the scaled curves $\e\scurve\sse T^*C$ near infinity are uniformly geometrically bounded, uniformly with respect to $\e$, for any $\e\in\R_+$.
\end{proposition}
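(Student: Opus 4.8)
The plan is to reduce the claim to a pointwise bound on the second fundamental form of the smooth sheets of $\e\scurve$, together with bounds on all its covariant derivatives, and then to extract these from the explicit Puiseux-series description of the sheets near a pole together with the $O(-1)$-end hypothesis. Recall from \cref{subsubsection:iregcurvesandmeroscurves} that near a pole $z=0$ each smooth sheet of $\scurve$ is the Lagrangian graph $\mathrm{gr}(d\irregsheet_i)$ of a local Puiseux series $\irregsheet_i$ with finite polar part, and that after the inversion sending $+\infty$ to $0$ the chosen K\"ahler metric is $|z|^{-2}|dz|^2 = d\theta^2 + r^{-2}dr^2$, which is flat. So the ambient Sasaki metric $J_g$ on $T^*C$ is geometrically bounded outside $T^*K$ for a compact $K\sse C$; what remains is to show the submanifolds $\e\scurve$ sit inside $(T^*C, J_g)$ with uniformly controlled geometry. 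Since the fiberwise scaling by $\e$ is a symplectomorphism but not an isometry for $J_g$, I would instead use \cref{eq:salamon-weber equvialence}: scaling $\e\scurve$ in $(T^*C, J_g)$ corresponds to keeping $\scurve$ fixed but rescaling the base metric $g \to \e^{-1}g$. Near the poles this only multiplies the flat metric by a constant, which does not change curvature bounds or injectivity radius lower bounds in any essential way, so it suffices to treat $\e=1$ and track that all constants are scale-invariant.

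The core estimate is then the following: writing a smooth sheet over the punctured-disk region as the graph of the $1$-form $\eta_i := d\irregsheet_i$, I would compute the second fundamental form of $\mathrm{gr}(\eta_i)\sse (T^*C, \omega_\std, J_g)$ in terms of $\nabla^k \eta_i$ for $k\le 2$ (and $\nabla^{k+1}\eta_i$ for the $\ell$-th covariant derivative of the second fundamental form), where $\nabla$ is the Levi-Civita connection of the flat metric $|z|^{-2}|dz|^2$. In the flat coordinate $w = \log z$ (so the metric becomes the standard flat metric $|dw|^2$ on a half-cylinder $\mathrm{Re}(w) \le -R$), the sheets are graphs of holomorphic $1$-forms $\eta_i = \varphi_i(w)\,dw$, and the $O(-1)$-end condition translates — as explained after \cref{eq:logdifference} and in the remark following it — into the statement that $\varphi_i - \varphi_j$ has at worst the growth of a constant in $w$ as $\mathrm{Re}(w)\to -\infty$ (the polar terms $z^{-k}$ become exponentials $e^{-kw}$ which decay, and the residual $C/z$ term becomes a bounded constant; there are no positive powers of $z$, i.e.\ no terms growing in $w$). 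Hence each $\varphi_i$ and all its $w$-derivatives are uniformly bounded on the half-cylinder, and so are the differences. Since the flat metric in the $w$-coordinate is standard Euclidean, $\nabla^k\eta_i = \partial_w^k \varphi_i\, dw$ has uniformly bounded norm, and a standard computation (as in the proof of \cref{lem:Maslov0}, or directly from the graph formula) shows the second fundamental form of $\mathrm{gr}(\eta_i)$ and all its covariant derivatives are uniformly bounded in terms of these.

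For the injectivity-radius lower bound, I would observe that in the $w$-coordinate each sheet is, up to a uniformly bounded perturbation of a totally geodesic (flat) cylinder, a properly embedded flat half-cylinder whose core circle has length bounded below (this uses that distinct sheets stay uniformly separated, i.e.\ the weak-boundedness of the multigraph, \cref{eq:weaklyboundedmultigraph}, combined with the $O(-1)$ gap condition $\inf|\varphi_i-\varphi_j|>0$ near the pole, which follows because the leading term of $\varphi_i-\varphi_j$ dominates as in \cref{prop:reebpositivestokes}); hence the injectivity radius of each sheet is bounded below uniformly. Finally, geometric boundedness of the \emph{pair} $(\scurve, T^*C, J_g)$ in the sense of \cite[Definition 2.7]{JboundedGroman}/\cite[Section 5]{Audin1994HolomorphicCI} follows by combining: (i) the ambient $(T^*C,J_g)$ is geometrically bounded outside $T^*K$; (ii) each sheet has uniformly bounded second fundamental form and uniform injectivity-radius lower bound; and (iii) scale-invariance of all the constants under $g\to \e^{-1}g$, handled via \cref{eq:salamon-weber equvialence}. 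I expect the main obstacle to be bookkeeping the covariant-derivative bounds cleanly — in particular, verifying that passing to the $w=\log z$ coordinate genuinely flattens the metric and that the $O(-1)$ hypothesis really does rule out all the $w$-growing terms uniformly in the ramified-cover parameter — rather than any deep geometric point; once the estimate $\|\nabla^k\eta_i\|_g = O(1)$ is in hand the rest is standard.
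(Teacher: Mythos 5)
Your outline is in the right spirit — pass to a logarithmic coordinate where $|z|^{-2}|dz|^2$ becomes flat and bound the second fundamental form of the graphs — but the key estimate on which the whole argument hinges is false, and the error inverts the actual geometry. With your sign conventions (pole at $z=0$, so $\mathrm{Re}(w) = \log|z| \to -\infty$), the polar term of a sheet $\lambda_i(z) \sim c\, z^{-m}$ with $m = k/n > 1$ (the generic Puiseux case with ramification $n$ and pole order $k>n$) transforms, as a $1$-form in the $w$-coordinate, into
\begin{equation*}
\lambda_i(z)\,dz \;=\; c\, z^{-m}\,dz \;=\; c\, e^{(1-m)w}\,dw,\qquad 1-m < 0,
\end{equation*}
and since $\mathrm{Re}(w)\to -\infty$ one has $\lvert e^{(1-m)w}\rvert = e^{(1-m)\mathrm{Re}(w)} \to +\infty$. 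So $\varphi_i$ and all its $w$-derivatives \emph{diverge exponentially} as you approach the puncture, rather than stay bounded. Your parenthetical "the polar terms $z^{-k}$ become exponentials $e^{-kw}$ which decay" has the direction backwards: those exponentials grow toward the pole. (The $O(-1)$ hypothesis says the sheets blow up \emph{at least} as fast as $z^{-1}$; the only case where your claim holds is the threshold $m=1$, where the sheet is asymptotically constant in $w$ and everything is trivially flat.) Since your logical chain runs $\varphi_i$ bounded $\Rightarrow$ $\nabla^k\eta_i$ bounded $\Rightarrow$ second fundamental form bounded, the argument has no starting point in the case of interest.

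The paper's proof explicitly acknowledges this divergence ("$\lvert\exp((-m+1)W)\rvert$ converges to $+\infty$ as $W\to -\infty$ because $m>1$") and makes a further coordinate change $Q := \exp(c_1(-m+1)W)$, after which the sheets are parametrized as $(b_0\log Q,\, b_1 Q + O(Q^{1/(k-1)}))$ with $|Q|\to\infty$. The substance of the lemma is then the explicit calculation showing that the Christoffel symbols and second fundamental form of the model graph $\{(\log Q, Q): |Q|\ge 1\}\sse\mathbb{C}^2$ are all $O(|Q|^{-1})$: there is a cancellation between the linear growth of the fiber in $Q$ and the logarithmic slowness of the base in $Q$, which is precisely the point you miss by asserting boundedness in the $w$-coordinate. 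Finally, the $\e$-uniformity is obtained differently than you suggest: the paper observes that $(\log Q, \e Q) = (\log P - \log\e,\, P)$, i.e.\ fiber-scaling becomes a \emph{translation} in the base coordinate, hence an isometry of the flat cylinder. Your route via Salamon--Weber metric rescaling $g\mapsto \e^{-1}g$ does not obviously give uniformity, since injectivity radius lower bounds are not scale-invariant (they scale by $\e^{-1/2}$), so you would at minimum need to argue why the relevant constants are nonetheless preserved.
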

\begin{proof}
The argument is essentially finding an appropriate coordinate system where geometric boundedness is apparent. First, by construction, the germ of $\scurve$ at infinity can be written as
$$z\to (z,c_1t^{-k}+c_2t^{-k+1}+\ldots+c_{k-n}t^{-n}),\quad\mbox{ for some }n<k,\quad t^n=z.$$
Choose a branch cut for the logarithm and consider the flat conformal coordinate $W:=\int z^{-1}dz=\log(z)$; we can and do assume that $\abs{z}<1$, so that $W$ has \textit{negative} real part. Under this coordinate transformation, the smooth sheets of the meromorphic spectral curve transform into
\[(W,c_1\exp((-k/n+1)W)+\ldots+c_k).\] 
Set $m=k/n$ and note that $\abs{\exp((-m+1)W)}$ converges to $+\infty$ as $W\to -\infty$ because $m>1$. Also, since the metric $\abs{z}^{-2}\abs{dz}^2$ transforms into $\abs{dW}^2$, the metric on $\mathbb{C}^2_{W,dW}$ is the standard flat metric. By denoting $Q:=\exp(c_1(-m+1)W)$, and using the branch cut, we can write $\log(Q)=c_1(-m+1)W$. This coordinate $Q$ is the one we will use to prove geometric boundedness, as follows. Reparametrizing the sheets in the $Q$-coordinate we obtain
\[(b_0\log(Q),b_1 Q+O(Q^{1/(k-1)}))\]
for some $0<\alpha<1$ and  non-zero constants $b_0,b_1$. We now show that curves in $\C^2$ of the form 
\[(\log(Q),Q)\] are geometrically bounded over the entire complex plane, with the more general case with the $O(Q^{1/(k-1)})$ term follows similarly. Consider a curve of the form $(\log(Q),Q)$. If $\abs{Q}\leq 1$ then re-write $W=\log(Q)$ so that $\abs{Q}\leq 1$ is equivalent to $W$ having negative real part. The geometric boundedness of the curve for $\abs{Q}\leq 1$ then follows from the geometric boundedness of the curve $(W,e^{W})$ for $\Re{(W)}<0$.\\

\noindent If $\abs{Q}\geq 1$, note that given a holomorphic curve of the form $(f(z),g(z))$ in $\C^2$, the tangent vectors are given by $(f'(z),g'(z))$, $(if'(z),ig'(z))$ and the normal vectors are given by $(-\overline{g'(z)},\overline{f'(z)})$ and $(-i\overline{g'(z)},i\overline{f'(z)})$. Specializing to our case, the tangent vectors are given by $(1/Q,1)$ and $(i/Q, i)$, and the normal vectors are given by $(-1,1/\bar{Q})$ and $(-i,i/\bar{Q})$. The induced metric on the curve is $(\abs{Q}^{-2} + 1)\abs{dQ}^2$. Therefore, the non-vanishing Christoffel symbols are given by 
\begin{align*}
\Gamma^{Q}_{QQ}= (1+\abs{Q}^{-2})^{-1}(\bar{Q}^{-1}),\\
\Gamma^{\bar{Q}}_{\bar{Q}\bar{Q}}=(1+\abs{Q}^{-2})^{-1}(Q^{-1}),
\end{align*}
and the second fundamental form reads
$$B(\partial_z,\partial_z)= [-Q^{-2}/(1+\abs{Q}^{-1})^{3/2}](-1,1/\bar{Q}).$$

A direct computation implies that the derivatives of the second fundamental form, the Christofell symbols, and their derivatives are of the form $O(\abs{Q}^{-1})$. The norm of $\partial_z$ is uniformly positive, and the derivatives are again $O(\abs{Q}^{-1})$. Therefore, $C^\ell$ norm of the second fundamental form must be bounded as well. In addition, these bounds are uniform with respect to $\e$ because we can reparametrize
$(\log(Q),\epsilon Q)=(\log(P)-\log(\epsilon),P)$ and use the boundedness of $(\log(Q),Q)$.
\end{proof}

A consequence of \cref{prop:geombounded} is the following a priori diameter estimate for pseudoholomorphic disks with boundaries on Betti Lagrangians and conormals:

\begin{lemma}\label{lem:diametercontrol}
Let $\obis$ be a Betti surface and $\blag\sse T^*\obis$ a Betti Lagrangian. If $\blag$ is not exact, suppose that it is meromorphic near the punctures. Let $\precptset\subset \obis$ be a precompact submanifold of $\obis$ containing $\pi(\caustL)$ in its interior. Then there exists $\e_0=\e_0(M,g,\multigraph, K)\in\R_+$ such that for any $\e\in(0,\e_0)$ and $J_g$-holomorphic half-disk $u:(\halfdisk_{1},\partial^{+} \halfdisk_{1})\to (D^{\ast}M,D^{\ast}(\precptset)
\cup \e \multigraph)$ with energy less than $\e E$, the image of $u$ is contained in the disk bundle $D^{\ast}N_{CE}(\precptset)$, where $C\in\R_+$ depends only on $(M,g,\multigraph,K)$. 
\end{lemma}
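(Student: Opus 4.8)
The plan is to reduce the statement to a monotonicity-type estimate for pseudoholomorphic curves, using the uniform geometric boundedness of $\multigraph$ (from \cref{prop:geombounded} in the meromorphic case, or from the conical/uniformly bounded hypothesis of \cref{eq:weaklyboundedmultigraph} in the exact case) together with the geometric boundedness of the background metric away from $T^*K$. First I would fix the Sasaki almost-complex structure $J_g$ and note that on $D^*M \setminus D^*K$ the metric $(M,g)$ is geometrically bounded, so the induced metric on $D^*M$ is geometrically bounded there as well. Then I would observe that the relevant Lagrangian boundary conditions — the conormal $D^*K$ (which is totally geodesic, hence geometrically bounded) and the rescaled multigraph $\e\multigraph$ — are \emph{uniformly} geometrically bounded in $\e$: this is exactly the content of \cref{prop:geombounded} for the meromorphic ends, and follows directly from the uniform $C^k$-bounds of \cref{eq:weaklyboundedmultigraph} in the exact case (the second fundamental form of $\e\multigraph$ is uniformly controlled since the generating functions $f_i$ have $\e$-independent $C^k$-bounds after rescaling). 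Hence there is a uniform constant $c = c(M,g,\multigraph,K)>0$, independent of $\e$, giving a lower bound on the area of any $J_g$-holomorphic half-disk of radius $\leq c$ centered at a point of $u(\halfdisk_1)$ with boundary on $D^*K \cup \e\multigraph$ — this is the standard monotonicity lemma for geometrically bounded almost-Kähler manifolds with geometrically bounded Lagrangian boundary, in the form of \cite[Section 5]{Audin1994HolomorphicCI} or the version used in the proof of \cite[Proposition 3.19]{GPSCV}.

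\textbf{Main argument.} With the uniform monotonicity constant $c$ in hand, I would run the usual diameter-from-area estimate. Suppose the image of $u$ were \emph{not} contained in $D^*N_{CE}(K)$ for a constant $C$ to be chosen. Since $\partial^+\halfdisk_1$ maps into $D^*K \cup \e\multigraph$, and $\e\multigraph$ lies in a fixed-radius disk bundle $D^*_R M$ over a set whose projection is $C^\infty$-close to the zero section away from $N_{2\,\mathrm{inj}(g)}(K)$ (using that $\multigraph$ is weakly bounded, hence so is each $\e\multigraph$ uniformly), the geodesic projection of $u(\partial^+ \halfdisk_1)$ stays within a bounded distance of $K$ — more precisely, within $N_{D(K,\e)}(K)$ where $D(K,\e)$ is governed by the $\lambda$-length of flowlines connecting the sheets, which by \cref{lemma:flow-energy} is controlled by the energy $\e E$. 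So if the image of $u$ escaped the $CE$-neighborhood in the base, there would be a point $p \in u(\halfdisk_1)$ at distance at least, say, $\frac{1}{2}CE \cdot \e^{-1}$ (in the rescaled picture) — or, working in the original picture, at distance $\gtrsim CE$ from $K$ after undoing the $\e$-rescaling via the correspondence $v(s,t)=\e^{-1}u(s,t)$ as in \cref{eq:salamon-weber equvialence}, which identifies $J_g$-holomorphic disks on $\e\multigraph$ with $J_{\e^{-1}g}$-holomorphic disks on $\multigraph$. Covering a minimizing geodesic from $p$ to the region near $\partial^+\halfdisk_1$ by roughly $CE / c$ disjoint balls of radius $c/2$, each of which the curve must cross and hence contribute at least a fixed amount of area by monotonicity, I would get that $\mathrm{area}(u) \geq (\text{const}) \cdot C E$, which contradicts $\mathrm{area}(u) \leq \e E \leq \e_0 E$ once $\e_0$ is small and $C$ is chosen to be a fixed multiple of the monotonicity constant. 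Choosing $\e_0$ so that $\e_0 < (\text{monotonicity const})$ and then $C$ accordingly closes the loop.

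\textbf{The main obstacle.} The delicate point is the \emph{uniformity in $\e$} of the monotonicity constant, and correspondingly making sure that the "$C^\infty$-close to the zero section" statement for $\e\multigraph$ is genuinely uniform. For this I would lean on \cref{prop:geombounded} (meromorphic case) — whose whole point is uniform $C^\ell$-bounds on the second fundamental form of $\e\multigraph$ independent of $\e$ — and on \cref{eq:weaklyboundedmultigraph} together with the observation, already made in the proof of \cref{lem:Maslov0}, that the rescaled sheets $\e\multigraph$ converge $C^\infty$ to the zero section uniformly on precompact sets outside a fixed neighborhood of $K$. The rescaling trick $v = \e^{-1}u$ is what lets me trade "small area, small $\e$" for "bounded area, fixed metric $J_{\e^{-1}g}$", and then the geometric boundedness of $(M, \e^{-1}g)$ away from $K$ (which is again uniform, since scaling a geometrically bounded metric keeps it geometrically bounded with the same curvature-norm bounds up to scaling that only help as $\e \to 0$) yields the uniform monotonicity I need. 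A secondary technical nuisance is that $\e\multigraph$ has a branch/caustic locus $\caustL$ over $K$, so near $\partial^+\halfdisk_1$ the boundary condition $D^*K \cup \e\multigraph$ is only a \emph{union} of Lagrangians meeting along $\caustL$; but since $K$ was chosen with $\pi(\caustL)$ in its interior, any part of $u$ near the singular locus already lies in $D^*N_{CE}(K)$ for free, so this does not affect the escape argument — the monotonicity balls I use are centered at points \emph{far} from $K$, where $\e\multigraph$ is smooth and its geometry is uniformly controlled.
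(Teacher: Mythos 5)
Your general approach — uniform geometric boundedness plus monotonicity — is the same as the paper's, but there is a genuine gap in the way you run the monotonicity estimate. You propose covering a geodesic by \emph{fixed-radius} balls (radius $c/2$, independent of $\e$) and asserting that each ball contributes ``at least a fixed amount of area by monotonicity.'' That is false once you are working with the multi-sheeted Lagrangian $\e\multigraph$: as $\e\to 0$, \emph{all} $n$ sheets collapse uniformly to the zero section (cf.\ \cref{eq:weaklyboundedmultigraph}), so a ball of any fixed radius eventually intersects every sheet, and the monotonicity lower bound ``$\mathrm{area}\geq c\,r^2$'' breaks down — a thin strip between two sheets separated by $O(\e)$ can have length $\sim r$ but area $\sim r\e \ll r^2$. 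This is not a caustic issue: your proposal correctly disposes of the caustic locus (it sits over $K$), but it conflates that with the separate phenomenon of sheets getting close to each other \emph{away} from $K$, which is where your monotonicity balls actually live. If you run your numbers through naively, the ``contradiction'' you reach holds for every $C>0$ once $\e$ is small enough, which would give a conclusion that is uniformly too strong (the image contained in a neighborhood of $K$ whose size tends to $0$) and is in fact incorrect for the multi-sheet case.

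The paper's proof handles exactly this: it shrinks the monotonicity ball radius to $\rho\e$, where $\rho$ is the sheet-gap parameter of \cref{eq:weaklyboundedmultigraph}, so that each ball sees only one smooth sheet. Running the same area-versus-diameter count with radius $\rho\e$ instead of a fixed radius degrades the estimate by a factor $\rho^{-1}\e^{-1}$, and this exactly cancels against the energy bound $\e E$ to produce an $\e$-independent diameter bound $CE$ with $C\sim\rho^{-1}\cdot C_0$, where $C_0$ is the single-sheet constant from \cite[Chapter V]{Audin1994HolomorphicCI}. Notice that the constant $C$ therefore \emph{must} carry a $\rho^{-1}$ factor, which your proposal never produces. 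Your rescaling step $v=\e^{-1}u$ via \cref{eq:salamon-weber equvialence} is not used in the paper's proof and does not clearly rescue the argument either: after it the boundary sits on $\multigraph$ (sheet gap $\rho$, fine), but base distances in the $\e^{-1}g$-metric scale by $\e^{-1/2}$, not $\e^{-1}$, and you would have to retrace the interaction between the Sasaki metric for $\e^{-1}g$, the sheet gap, and the ball radii to recover the $\e$-independent $CE$ bound — which your writeup does not do. The cleanest fix is to drop the rescaling, keep $\e\multigraph$ as is, and use monotonicity balls of radius $\rho\e$, accepting the $\rho^{-1}$ in $C$; that is precisely what the paper does.
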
 
\begin{proof}
By \cref{prop:geombounded}, smooth sheeets of $\e\scurve$ are uniformly geometrically bounded near infinity. Thus an argument as in \cite[Chapter V]{Audin1994HolomorphicCI} implies that that given any enery cut-off $E\in\R_+$, there is a constant $C_0\in\R_+$ such that pseudo-holomorphic half-disks with boundary on a \textit{single} smooth sheet of $\e\scurve$ satisfy the estimate. Since $\e\scurve$ is multi-sheeted, we set $C=\max(\rho^{-1},1)\cdot C_0$ and iterate. For the monotonicity argument, we need to choose a small neighborhood of points on $\blag$ such that its intersection with $\e\multigraph$ is contractible and connected. The size of such a neighborhood will be $\rho\e$ and shrinking the size of the neighborhood affects the diameter given by monotonicity estimate by $\rho^{-1}\e^{-1}$. Since $\rho^{-1}$ is fixed, the diameter is bounded by $CE$, as required.
\end{proof}

\subsection{Truncated reverse isoperimetric inequality} Reverse isoperimetric inequalities in the context of pseudoholomorphic curves bounded by Lagrangians appeared in \cite{Gromanreverse}, cf.~also \cite{Duvalreverse}. As pointed out in \cref{section_adg}, we believe an inequality of this type is also required in the arguments of \cite{Morseflowtree} and, for completeness, we include a proof here.

Let $X$ be a compact almost K\"{a}hler manifold and $K$ a union of finitely many balls in $X$. Let $L\sse X$ be a Lagrangian submanifold and $r_0$ the radial injectivity radius of $L$ outside $K$. We consider $r_0$ to be small enough such that outside of $K$, the squared radial distance function $h:=\rho^2:D_{r_0}^{\ast}L\to \mathbb{R}$ is strictly plurisubharmonic with respect to $J$. For $\delta\in\R_+$, we write $N_{\delta}(K)$ for the $\delta$-neighborhood of $K$. The necessary reverse isoperimetric inequality reads as follows:

\begin{theorem}\label{thm:truncatedreverseiso}
In the notation above, there exist a choice of $\delta\in\R_+$ and a constant $T\in\R_+$ such that any $J$-holomorphic curve
$$u:(S,\partial S)\to (X,L\cup K)\quad\mbox{ with energy}\quad \frac{1}{2}\int \abs{du}^2\leq E$$ has boundary length outside $N_{2\delta}(K)$ bounded above by $TE$.



\end{theorem}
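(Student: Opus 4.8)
\textbf{Proof strategy for Theorem \ref{thm:truncatedreverseiso}.}

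The plan is to adapt the argument of \cite{Gromanreverse} (see also \cite{Duvalreverse}) to the truncated setting, working with the plurisubharmonic function $h=\rho^2$ on the Weinstein neighbourhood $D^*_{r_0}L$ and carefully localizing away from the ball cluster $K$. The key observation is that reverse isoperimetric inequalities only require control near the Lagrangian boundary, so what is special about the present statement is that we must also cut off against $K$, where the curve $u$ has boundary on $K$ rather than on $L$, and where $h$ need not be well-behaved. First I would fix $\delta\in\R_+$ small enough that $N_{2\delta}(K)$ is still contained in the region where a neighbourhood of $L\setminus N_{2\delta}(K)$ in $X$ is identified with a domain in $D^*_{r_0}L$, that $h$ remains strictly $J$-plurisubharmonic there, and that $\partial N_{\delta}(K)\cap L$ is a hypersurface in $L$ transverse to the gradient flow of the distance to $K$. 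I would then introduce a cutoff function $\beta:X\to[0,1]$ supported outside $N_{\delta}(K)$, equal to $1$ outside $N_{2\delta}(K)$, with $|d\beta|$ bounded by a universal constant $C_\beta/\delta$.

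The core of the argument is a pointwise differential inequality along the boundary. Following \cite{Gromanreverse}, one writes the boundary length element of $u$ restricted to $L$ in terms of the outward normal derivative of $h\circ u$. Concretely, for a $J$-holomorphic curve with boundary on a Lagrangian, the curvature of the boundary curve and the second fundamental form of $L$ enter, but the dominant term is controlled by $\int_{\partial S\cap u^{-1}(L)} \beta \cdot (\partial_\nu (h\circ u))$, where $\nu$ is the outward normal along $\partial S$. The subharmonicity of $h\circ u$ (a consequence of $h$ being $J$-psh and $u$ being $J$-holomorphic) combined with the divergence theorem on $S$ converts this boundary integral into an interior integral of $\Delta(h\circ u)\,\beta$ plus a term involving $d\beta$. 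The first of these is bounded by $C\cdot(\text{energy in the neighbourhood of }L)$ using the standard estimate $\Delta(h\circ u)\geq c|du|^2$ near $L$ together with $h\leq r_0^2$; the second, the $d\beta$ term, is supported in the annulus $N_{2\delta}(K)\setminus N_\delta(K)$ and is bounded by $(C_\beta/\delta)\cdot E$. Along $\partial S\cap u^{-1}(K)$ no contribution arises because $\beta$ vanishes on $K$. Assembling these estimates yields a bound of the form $\mathrm{Length}(\partial S\setminus u^{-1}(N_{2\delta}(K)))\leq T(\delta)\cdot E$ for a constant $T(\delta)$ depending on $r_0$, the geometry of $L$ and $X$, and the chosen $\delta$; since $\delta$ was fixed first, $T:=T(\delta)$ is a genuine constant, proving the statement.

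The main obstacle I anticipate is the careful treatment of the transition region $N_{2\delta}(K)\setminus N_\delta(K)$: one must ensure that the boundary of $u$ there, which may oscillate between being close to $L$ and close to $K$, does not produce uncontrolled contributions when one integrates by parts. This requires choosing $\delta$ so that $L$ and $K$ meet cleanly (or at least so that the relevant hypersurfaces are transverse), and possibly smoothing the corner where $L$ exits $N_{2\delta}(K)$. A related subtlety is that the reverse isoperimetric inequality is genuinely a statement about curves, not just about the Lagrangian: one needs that $u$ does not develop long thin ``fingers'' running parallel to $L$ inside the good region, which is precisely what the psh function $h$ controls via a monotonicity-type argument. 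A secondary technical point is the dependence on $E$ versus the number of boundary components: as in \cite{Gromanreverse}, if one wanted the inequality to be independent of topology one would need the Lagrangian to bound no holomorphic disks of small area, but here the energy bound $E$ is fixed in advance, so this issue does not arise and a crude bound suffices. I would present the $d\beta$-estimate and the interior subharmonicity estimate as two separate lemmas, then combine them in a short final paragraph.
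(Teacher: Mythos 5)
The central formula in your ``core argument'' does not work: with $h=\rho^2$, the $1$-form $dh=2\rho\,d\rho$ vanishes identically on the Lagrangian $L$, so at any boundary point $z_0\in\partial S$ with $u(z_0)\in L$ the normal derivative $\partial_\nu(h\circ u)(z_0)=dh\big(du(\nu)\big)=0$. Hence the boundary integral $\int_{\partial S\cap u^{-1}(L)}\beta\,\partial_\nu(h\circ u)\,dl$ that you propose as an upper bound for the boundary length is identically \emph{zero}, and a Green's-formula identity $\int_{\partial S}\beta\,\partial_\nu(h\circ u)=\int_S\Delta(h\circ u)\,\beta+\text{(}d\beta\text{-terms)}$ cannot deliver a lower bound on $\mathrm{Length}\big(u(\partial S)\cap L\big)$.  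You have conflated two different functions: the length density along the boundary is indeed captured by $\partial_\nu(\rho\circ u)=|du(\nu)|=|du(\tau)|$ (using $J\,du(\tau)\perp TL$), but $\rho$ itself is not pluri\-subharmonic, only $h=\rho^2$ is — and $\rho^2$ has vanishing normal derivative on $L$. This is exactly the structural obstruction that forces reverse isoperimetric arguments to proceed by monotonicity on \emph{level sets} of $\rho$ rather than by a direct Green's formula on $\partial S$.

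The paper handles this via a differential inequality.  It introduces the truncated area and level-set length
$a(r)=\mathrm{Area}\big(u\cap\{\rho\le\beta r\}\cap N^c\big)$ and
$l^\beta(r)=\int_{\{\rho=\beta r\}\cap u\cap N^c}\beta\,dl$,
and proves (coarea + Stokes + the estimate $\tfrac12 dd^c h\ge(1-Dr)\omega$) the inequality
\[
r\,a'(r)\ \ge\ a(r)\,\frac{1-Dr}{1+Cr},
\]
which integrates to say that $r\mapsto a(r)/\big(r(1-Dr)^{-(C+D)/D}\big)$ is non-decreasing.  The boundary length then appears as the Lelong-type limit $\lim_{s\to0} a(s)/s$, which is bounded by $T\,a(r)/r\le T\,E/r$.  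Note that the vanishing $d^c h|_L=0$ is a \emph{feature} here — it kills the contribution from $\partial u\cap L$ in the Stokes argument — not something that furnishes the length estimate directly.  Your final paragraph correctly identifies that ``a monotonicity-type argument'' controls the fingers of $u$ parallel to $L$, but this observation must be promoted from a secondary caveat to the entire mechanism of the proof; the Green's-formula step you present as central should be abandoned.
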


\begin{proof}
At core, the argument uses the coarea formula to slice in terms of the level sets of the distance function $\rho$ and then use the Lagrangian boundary condition to study bounds for the length. We have structured the proof in four steps: summarizing, we will choose an appropriate bump function $\beta$ around $K$ and define functions $a(r)$ which coarsely measures the area outside $K$. Then we show that the inequality
\[ra'(r)\geq a(r)\frac{1-Dr}{1+Cr}\] for some constants $C,D\in\R_+$ that only depend on $(X,K,L,r_0)$ but not on $u$. The required bound on length will follows from that.

First, let us construct the support function $\beta:X\to \R$, which typically depends on $u$, though the resulting constants $C,D$ will not be affected. Fix a reference constant $\delta\in(0,r_0/2)$ and $\delta_u\in(0,\delta)$. We then choose a non-negative smooth function $\beta:X\to \R_{\geq 0}$ such that
\begin{enumerate}
    \item $\beta$ vanishes on $N_{\delta_0}(K)$,
    \item $\beta$ is strictly positive on $N_{\delta_0}(K)^c-\partial N_{\delta_0}(K)$,
    \item $\beta$ is equal to $1$ outside $N_{2\delta}(K)$.
\end{enumerate}
Choose $\delta_u$ above generically enough so that $u$ is transverse to $\partial N_{\delta}(K)$, and sufficiently close to $\delta$ so that the corresponding bump function $\beta$ has its derivative bounded above by $C\delta^{-1}$, for some constant $C\in\R_+$, independent of $\delta_u$. The transversality condition is needed because in Step 3 below we will need $u^{-1}(\partial N_{\delta}(K))$ to have \textit{finite} length, although its length does not contribute to our inequality.\\

Set $N:=N_{\delta_{0}}(K)$ for the $\delta_0$-neighborhood of $K$ and choose a strictly decreasing sequence $\delta_m\to  \delta_u$ so that $\rho/\beta$ is Lipshitz on the $\delta_m$-neighborhood $N_m:=N_{\delta_m}(K)$. The functions $a(r)$ and $l^{\beta}(r)$ are defined, for almost every real number, as follows.
		\begin{align}
		&	a(r):=\int_{\{\rho\leq \beta r\}\cap u\cap {N^c}} dA
		&&a_m(r):=\int_{\{\rho\leq \beta r\}\cap u\cap N_m^c} dA\\
        &l^\beta(r):= \int_{\{\rho=\beta r\}\cap u\cap N^c}\beta dl.
        &&l_m^{\beta}(r):=\int_{\{\rho=\beta r\}\cap u\cap N_m^c}\beta dl.
	\end{align}
Here, the integral is taken with respect to the induced metric on the image of $u$. The reason why $a_m(r)$ and $l_m^{\beta}(r)$ are introduced is that we may apply the co-area formula. Since the integrand is always non-negative, by the monotone convergence theorem, we can freely replace the integral and limit.\\

    {\bf Step 1.} We show that the function $a_m(r)$ is absolutely continuous and satisfies
\begin{align}\label{coarea3}
		a_m(r)\geq \int_0^{r} \frac{l_m^{\beta}(\tau)}{1+\tau C}d\tau\quad\text{and}\quad a'_m(r)\geq \frac{l_m^{\beta}(r)}{1+rC}\quad \text{ a.e}. 
	\end{align}
We deduce \cref{coarea3} from the coarea formula, as follows. Since $\beta$ is positive on the complement $N^c$, it follows that $\rho/\beta$ is Lipschitz on $N_m^c$ for any $m\in\N$.  Applying the coarea formula, we obtain
	\begin{align}\label{coarea1}
		a_m(r)=\int_{\{\rho\leq \beta r\}\cap u\cap N_m^c} dA=\int_{\{\rho\leq \beta r\}\cap u\cap N_m^c} \frac{1}{\abs{\nabla (\rho/\beta)}}\cdot {\abs{\nabla (\rho/\beta)}}dA=\int_{0}^r \int_{\{\rho=\beta \tau\}\cap u\cap N_m^c}  \frac{1}{\abs{\nabla (\rho/\beta)}} dl d\tau.
	\end{align}
Since $\abs{\nabla \rho}=1$, in the region $\rho=\tau\beta$ we have the estimate
	\begin{align}\label{bound1}
		\abs{\nabla(\rho/\beta)}\leq  \frac{1}{\beta}\abs{\nabla\rho}+\frac{\rho\abs{\beta'}}{\beta^2}
        \leq \frac{1}{\beta}\left(\abs{\nabla\rho}+\tau\abs{\beta'}\right)\leq \frac{1+\tau C}{\beta}.
	\end{align}Thus \eqref{coarea3} follows from from  \eqref{coarea1} and \eqref{bound1}.
Since $a_m(r)$ is a absolutely continuous non-negative function with non-negative derivative, it follows that $a(r)=\lim_{m\to \infty}a(r)$ is also absolutely continuous. Furthermore, by the monotone convergence theorem applied with respect to $\delta_m$,
\begin{align}\label{coarea2}
		a(r)\geq \int_0^{r} \frac{l^{\beta}(\tau)}{1+\tau C}d\tau\quad\text{and}\quad a'(r)\geq  \frac{l^{\beta}(r)}{1+rC}\quad \text{ a.e} . 
	\end{align}

    {\bf Step 2.} Recall that we denote $h=\rho^2$ for the square of the distance function to $L$. Let us show the inequality
$$rl^{\beta}(r)\geq \frac{1}{2}\int_{\{\rho\leq \beta r\}\cap u\cap N^c}dd^ch.$$
The key observation is that, because $\beta$ vanishes near $\partial(N_{\delta}(K))$, we can suppress the contribution to the length coming from $u\cap \partial(N_{\delta}(K))$. For almost every $r$, we have the following:
	\begin{align}
		rl_m^{\beta}(r)&=\int_{\{\rho=\beta r\}\cap u\cap N_m^c} r\beta dl\geq \int_{\{\rho=\beta r\}\cap u\cap N_m^c} \frac{1}{2} \inner{\nabla h}{\nu}dl\label{eq:length-area1line1} \\
		&=\int_{\{\rho= \beta r\}\cap u\cap N_m^c} \frac{1}{2} d^c h=\frac{1}{2}\int_{\{\rho\leq \beta r\}\cap u\cap N_m^c} dd^c h-	
		\int_{\{\rho\leq \beta r\}\cap u\cap \partial N_m} d^c h. \label{eq:length-area1line2}
	\end{align}
    Here we have parametrized the oriented smooth curve $\{\rho=\beta r\}\cap u\cap N_m^c\}$ via $l(t)$ and took its unit normal $\nu(t)=-J(du\circ l(t))\abs{du\circ l'(t)}^{-1}$. We are also using the fact that the critical points of $u$ do not lie on $l(t)$ for generic $r$. For the inequality in \eqref{eq:length-area1line1}, we use that $\abs{\nabla \rho}=1$ and, from $\inner{\nabla h}{\nu}dl$ to $d^c h$, we used 
	\begin{align*}
		\inner{\nabla h}{\nu}dl=\inner{\nabla h}{\nu}\abs{du(l'(t))}l'(t)dt=-l'(t)dh\circ (J\circ du\circ \nu(t))=d^c h.\end{align*} The critical points of $u$ do not contribute because they are discrete, which is measure $0$. The first equality in \eqref{eq:length-area1line2} is more involved. For that, we first use Stokes' theorem to obtain
	\begin{align}\label{eq:boundaryparts}
		\int_{\{\rho\leq \beta r\}\cap u\cap N_m^c} dd^c h&=
		\int_{\{\rho\leq \beta r\}\cap u\cap \partial N_m} d^c h+\int_{\{\rho\leq\beta r\}\cap \partial u\cap N_m^c}d^c h\nonumber \\
		&+\int_{\{\rho=\beta r\}\cap u\cap N_m^c}d^c h.
	\end{align}
Here, $d^c h=0$ on $\{\rho\leq\beta r\}\cap \partial u$  since this set is contained in $L$. This explains why the second term in \eqref{eq:boundaryparts} vanishes. We now need to show that the term 	$\int_{\{\rho\leq \beta r\}\cap u\cap \partial N_m} d^c h$ converges to zero as $m\to 0$. For that, note that the set is contained in $u^{-1}(\partial N_m)=u^{-1}(\partial N_{\delta_m}(K))$, whose length is uniformly bounded for sufficiently large $m\in\N$. This is the only place where $u$ being transverse to $\partial(N_{\delta}(K))$ is used. Therefore, we have
\begin{align}
\int_{\{\rho\leq \beta r\}\cap u\cap \partial N_m} d^c h\leq \int_{\{\rho\leq \beta r\}\cap u\cap N_m^c} \frac{1}{2} \inner{\nabla h}{\nu}dl\leq \sup_{\partial{N_{\delta_m}(K)}} \abs{\beta} r_0 \cdot \text{Length}(u^{-1}(\partial N_{m}(K))). 
\end{align}
Now, since $\sup_{\partial{(N_{\delta_m}(K))}} \abs{\beta}$ converges to zero as $m\to \infty$, this term uniformly converges to zero as $m\to \infty$. The remaining term $\frac{1}{2}\int_{\{\rho\leq \beta r\}\cap u\cap N_m^c} dd^c h$ in $\eqref{eq:length-area1line2}$ converges to $\int_{{\rho\leq \beta r}\cap u\cap K^c}dd^ch$ by monotone convergence theorem, since $dd^c h$ is $J$-plurisubharmonic that the integrand is necessarily non-negative. In consequence, we obtained the claimed inequality
\begin{align}
	rl^{\beta}(r)=\lim_{m\to \infty} rl_m^{\beta}(r)\geq \frac{1}{2}\int_{\{\rho\leq \beta r\}\cap u\cap N^c} dd^c h.
\end{align}

{\bf Step 3.} Let us prove the inequality
$$\frac{1}{2}dd^c h\geq (1-Dr)\omega,\quad\mbox{ for some }D\in\R_+.$$
Indeed, for each point $p\in L$, we can trivialize the metric and bring $(X,L,J)$ to the configuration $(\mathbb{C}_{(x,y)}^n,\mathbb{R}_x^n,J)$ such that $h=\sum_{i,j} a_{ij}y^iy^j+ O(\abs{y}^3)$, $a_{ij}(x)=\delta_{ij}+O(\abs{x}^2)$, $J-i=O(\abs{y})$ and $g=g_{std}+O(\abs{y}^2)+O(\abs{x}^2)$. In particular, $\frac{1}{2}dd^c h$ restricted to $x=0$ is $\omega_{std}+O(\abs{y})$ and $g$ restricted to $x=0$ is $g_{std}+O(\abs{y}^2)$. It follows that $\omega=\omega_{std}+O(\abs{y})$ and so $\omega=dd^c h+O(\abs{y})$. For sufficiently small $y$, it follows that $dd^c h\geq (1-Dr)\omega$.\\

{\bf Step 4.} Let us combine the inequalities thus far to show that \[ra'(r)\geq a(r)\frac{1-Dr}{1+Cr}\] and conclude the proof of the truncated reverse isoperimetric inequality. Step 3 implies that $\lim rl^{\beta}(r)\geq (1-Dr)a(r)$ for some $D\in\R_+$.	Combining \eqref{coarea2} and \eqref{eq:length-area1line1}--\eqref{eq:length-area1line2}, we obtain the inequality
\begin{align}\label{eq:differentialinequality1}
	 ra'(r)\geq a(r)\frac{1-Dr}{1+C r}.
	 \end{align}
	This implies the inequality
	\begin{align}\label{eq:differentialinequality2}
	\frac{d}{dr} \log \left(a(r)\cdot \frac{C r+1}{r(1-Dr)}\right)\geq 0,
	\end{align}
	which implies that the function
	\[r\to  {\frac{a(r)}{r(1-Dr)^{-(C+D)/D}}}\]
	is non-decreasing. For $r\ll\min{(D^{-1},C^{-1})}$ small enough, choose a constant $T\in\R_+$ greater than $(1-Dr)^{-(C+D)/D}$ so that
	\begin{align}
		T\frac{a(r)}{r}\geq \lim_{s\to 0} \frac{\text{Area}(u;\{\rho\leq s\}\cap N_{2\delta}(K)^c)}{s} \frac{C s+1}{(1-sD)}.
	\end{align}
	This implies $a(r)T\geq rl$, as required.
\end{proof}


\subsection{Local systems and twisted local systems}\label{ssec:lsys_vs_tlsys} In \cref{section_Floer} we directly work with twisted local systems. Since \cite{GPSCV} is presented only for (untwisted) local systems, \cref{section_wfc} is written in the context of local systems. In the framework of Betti Lagrangians, being spin, there is an equivalence between twisted local systems and local systems. To unify both approaches, we briefly set up the non-abelianization functor $\Phi_\snetwork$ in terms of spin structures and local  systems, instead of twisted local systems. The notation follows that of the manuscript.
Let $\basepath\sse\obis$ be a path transverse to $\snetwork$. The difference line bundle $\beta=\widetilde{\mathfrak{s}}-\pi^{\ast}\mathfrak{s}$ can be understood as a $O(1;\mathbb{R})$-local system on the trivial bundle over $\blag-\caustL$. Such local systems are almost-flat, in that their monodromy along a small loop encircling a point in $\caustL$ is $-Id$. 

As in \cref{subsubsection:pathdetourclasses} , we will define a functor
$$\Phi^{\locsys, \beta}_{\snetwork}:\Loc_1(L)\lr\Loc_n(S).$$ We will adopt the same notational conventions as introduced in \cref{section_Floer}, using genuine paths. As before, we will write $\cotpath^i$ to denote the cotangent lifts of the paths. The analogue of \cref{def:detourpath} reads as follows.

\begin{definition}\label{def:detourpathspin}
Let $\locsys\in\Loc_1(\blag)$ be a $\GL_1(\C)$-local system on $\blag$ and $\basepath:[0,1]\to \bis$ a path transverse to $\snetwork$. Let $\spins,\spinb$ be spin structures on $\blag$ and $\obis$, respectively, and $\beta$ be their difference line bundle. 

\begin{enumerate}[label=(\roman*)]
\item If $\basepath$ is represented by a free $\snetwork$-adapted path, then
 \[\Phi^{\locsys, \beta}_{\snetwork}(\basepath)=\sum_{i=1}^n \Phi^{\locsys,\beta}(\cotpath^{i})[\cotpath^{i}],\]
 where the sum runs over the $n$ lifts $\cotpath^{i}$ to $\blag$, $i\in [1,n].$
 \item If $\basepath$ is short and intersects $\snetwork$ at $z$, and $\ppp^u$ the unit velocity sphere bundle lift, then
\[\Phi^{\locsys\otimes \beta}_{\snetwork}(\basepath)=\sum_{i=1}^n \Phi^{\locsys\otimes \beta}(\cotpath^{i})[\cotpath^{i}]+\sum_{\soliton{z;\wall}\in \mathfrak{S}(z;\wall)} {\mu (\ccc{(z)})}\Phi^{\locsys\otimes \beta}\big(\pi\big(\ccc{(\ppp^u)}))[\pi\big(\ccc{(\ppp^u)}\big)].\]
 
 \item  If $\basepath$ is given as a composition $\basepath=\basepath_1\circ \basepath_2 \circ\ldots \circ \basepath_n$, where each $\basepath_i$ is either short or free, then
 \[\Phi^{\locsys,\beta}_{\snetwork}(\basepath)=\Phi^{\locsys, \beta}_{\snetwork}(\basepath_1)\circ \Phi^{\locsys,\beta}_{\snetwork}(\basepath_2)\circ \cdots \circ \Phi^{\locsys, \beta}_{\snetwork}(\basepath_k).\] 
 \hfill$\Box$
 \end{enumerate}
 \end{definition}


The spin structure $\spins$ can be seen as a fibrewise double cover $\spins:\sphsc\to \sphsc$. Then the pull-back $\spins^{\ast}\locsys$ has monodromy $Id$ along $H$, and so using $\spins$ to choose a trivialization $\sphsc\simeq \sphsc\times U(1)_{\theta}$, and pulling back $\spins^{\ast}V$ by the $U(1)$-section $\scurve\to \sphsc$, we can regard $\spins^{\ast}V$ as a local system on $\blag$. Conversely, given a rank-$1$ flat connection $\nabla$ on the trivial bundle of $\blag$, we can use $\spins$ to trivialize $\sphsc\simeq \blag\times U(1)_{\theta}$ and consider the twisted connection $\nabla+\pi id\theta$. The pull-back under the fibrewise square map gives $\nabla+2\pi i d\theta$, which has trivial monodromy along the $U(1)$ component. The same construction recovers $\nabla$.\\

\noindent Similarly, the spin structure $\spinb$ trivializes the sphere bundle on $\obis$. Let $\basepath$ be a $\snetwork$-transverse path, and let $\ppp^{\spinb}(\basepath)$ be the lift of $\basepath$ via the trivialization $\spinb:\sphb\simeq \obis\times U(1)$. By construction, $\ppp^{\spinb}(\basepath)$ is a $\snetwork$-adapted path. We identify the stalk of $\spins^{\ast}V$, regarded as a local system on $\blag$, with that of $V$. The relation with $\Phi_\snetwork$ in \cref{section_Floer} reads:



\begin{proposition}\label{prop:spintosphere}
$\tdfd(\ppp^{\spinb}(\basepath))=\Phi^{\spins^{\ast}V, \beta}_{\snetwork}(\basepath)$
\end{proposition}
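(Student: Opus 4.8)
The plan is to establish \cref{prop:spintosphere} by a direct comparison of the two functors on a generating set of morphisms in the path groupoid, namely on free short $\snetwork$-adapted paths and their compositions, since both $\tdfd$ and $\Phi^{\spins^{\ast}V,\beta}_{\snetwork}$ are defined by the same compositional recipe (\cref{def:detourpath}.(iii) and \cref{def:detourpathspin}.(iii)). Because both assignments are built to be independent of the representative within a (twisted, resp. untwisted) homotopy class, it suffices to check the identity on a single convenient representative of each local model. First I would unwind the definitions: the left-hand side $\tdfd(\ppp^{\spinb}(\basepath))$ lives over the unit sphere bundles $\sphb,\sphsc$, and uses the twisted local system $V$ on $\sphsc$ together with the canonical lift $\ppp^{\spinb}(\basepath)$ determined by the trivialization $\spinb\colon\sphb\cong\obis\times U(1)$; the right-hand side lives downstairs on $\obis$ and $\blag$, uses $\locsys=\spins^{\ast}V$ regarded as a rank-$1$ local system on $\blag$, twisted by the difference bundle $\beta=\spins-\pi^{\ast}\spinb$. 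The key dictionary, which should be recorded as a preliminary lemma, is that under the identifications $\sphsc\cong\blag\times U(1)_\theta$ (via $\spins$) and $\sphb\cong\obis\times U(1)_\theta$ (via $\spinb$), the branched cover $\mathring{\pi}\colon\sphsc\to\sphb$ becomes $(\pi,\mathrm{id})$ up to the difference cocycle, so that parallel transport of $V$ along a sphere-bundle lift of a path equals parallel transport of $\spins^{\ast}V\otimes\beta$ along the projected path; this is exactly the content of \cref{rmk:loc_vs_twloc}.(1) and \cref{rmk:loc_vs_twloc}.(3), combined with the fact that the $H$-twist in a soliton detour class is absorbed by the $U(1)_\theta$-factor.

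\textbf{Main steps.} (1) Free case: for $\basepath$ free, $\ppp^{\spinb}(\basepath)$ has $n$ cotangent lifts $(\ppp^{\spinb}(\basepath))^i$ in $\sphsc$, and under the trivializations these are precisely the lifts to $\blag$ of $\basepath^i$ decorated by the difference cocycle $\beta$ along each lift. Thus $V((\ppp^{\spinb}(\basepath))^i)=\locsys(\cotpath^i)\cdot\beta(\cotpath^i)=(\spins^{\ast}V\otimes\beta)(\cotpath^i)$, and summing over $i$ gives \cref{def:detourpathspin}.(i). Here I would also verify the homotopy class bookkeeping $[(\ppp^{\spinb}(\basepath))^i]\mapsto[\cotpath^i]$, which is immediate from $\mathring{\pi}=(\pi,\mathrm{id})$. (2) Short case: for $\basepath$ short, crossing $\snetwork$ at $z$ in an $(ij)$-wall, the diagonal terms are handled exactly as in (1); the new point is the detour term. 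The soliton detour class $[\ccc{(\ppp^u)}]$ in \cref{def:solitondetour} is a regular homotopy class in $\sphsc$ with relative endpoints $\ppp^i(-\varepsilon),\ppp^j(\varepsilon)$; its image in $\sphb$ under $\mathring{\pi}$, after applying the trivialization, is $\pi(\ccc{(\ppp^u)})$. Tracking the $U(1)_\theta$-component: the extra $H$-winding that makes $[\ccc{(\ppp)}]$ a genuine class for the \emph{twisted} local system is precisely cancelled against the $+\pi\,id\theta$ (equivalently, the $-\mathrm{Id}$ monodromy around $\caustL$) encoded in $\beta$, so that $\mu(\ccc{(z)})\,V(\ccc{(\ppp^u)})=\mu(\ccc{(z)})\,(\spins^{\ast}V\otimes\beta)(\pi(\ccc{(\ppp^u)}))$ with the sign $\sgn(\basepath)(z)$ matching on both sides. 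This reproduces the second sum in \cref{def:detourpathspin}.(ii). (3) Compatibility with composition: both $\tdfd$ and $\Phi^{\locsys,\beta}_{\snetwork}$ send $\basepath=\basepath_1\circ\cdots\circ\basepath_k$ to the composite of the individual transports, and $\ppp^{\spinb}(\basepath_1\circ\cdots\circ\basepath_k)=\ppp^{\spinb}(\basepath_1)\circ\cdots\circ\ppp^{\spinb}(\basepath_k)$ up to homotopy because the canonical $\spinb$-lift is defined pointwise by the trivialization; hence the identity propagates from the local models to all $\snetwork$-adapted paths. (4) Well-definedness: finally I would note that $\Phi^{\locsys,\beta}_{\snetwork}$ does define a genuine local system on $\obis$, which follows because $\tdfd$ does (established in \cref{section_Floer} via the Floer-theoretic interpretation, \cref{thm:familyFloer}) and the two agree by steps (1)--(3); alternatively one invokes the combinatorial check as in \cite[Section 5.6]{GNMSN} twisted by $\beta$.

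\textbf{Main obstacle.} The delicate point is the sign/orientation comparison in step (2): one must verify that the sphere-bundle convention used to produce signs for $\tdfd$ (the frame $\langle\snetwork'(z),\basepath'(0)\rangle$ orientation and the $H$-twist rule $\tdfd(H\ppp)=-\tdfd(\ppp)$) translates \emph{exactly} into the difference-line-bundle twist $\beta$ in \cref{def:detourpathspin}, with no spurious sign. Concretely, this amounts to checking that the trivialization $\spins\colon\sphsc\cong\blag\times U(1)_\theta$ is compatible with the trivialization $\spinb\colon\sphb\cong\obis\times U(1)_\theta$ along lifts of $\basepath$ in the sense that $\spins^{\ast}V$ picks up $\beta$ and nothing else, and that the $U(1)$-section used to push $\spins^{\ast}V$ down to $\blag$ is the one induced by $\spinb$. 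I expect this to require a careful but routine local computation near a branch point, using the explicit $D_4^-$ model of \cref{subsection:Morselines} and the orientation discussion of \cref{rmk:orientations}, together with the already-established \cref{cor:spin} which records the spin-versus-twisted dictionary at the level of Floer complexes. Once the sign dictionary is pinned down, the rest is bookkeeping.
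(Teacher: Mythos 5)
Your proposal is correct and follows essentially the same route as the paper: reduce to free and short paths (both definitions being compositional by construction), verify the free case directly from $\beta=\spins-\pi^{\ast}\spinb$, and handle the detour term for the short case by tracking the $H$-winding of the soliton class against the twist encoded in $\beta$. The paper's proof is terser, summarizing the short-case sign bookkeeping in one line, namely that $\ccc(\ppp^u)$ and $\ccc(\ppp^{\spinb}(\basepath))$ differ by $\langle\beta,\partial\ccc(\ppp^u)\rangle H$ modulo $2H$, which is precisely the cancellation you describe informally in your step (2).
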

\begin{proof}
It suffices to verify the formula for free paths and short paths. For free paths,
\begin{align*}
 \Phi^{\spins^{\ast}V\otimes \beta}_{\snetwork}(\basepath)=\sum_{i=1}^n \Phi^{\spins^{\ast}V\otimes \beta}(\basepath^{i})[\cotpath^{i}]=\sum_{i=1}^n \Phi^{V}(\ppp^{\spinb}(\basepath)^i)[\cotpath^{i}],
 \end{align*}
 since $\beta=\spins-\pi^{\ast}\spinb$. For short paths,
 \begin{align*}
 {\mu (\ccc{(z)})}\Phi^{\spins^{\ast}\locsys\otimes \beta}\big(\pi\big(\ccc{(\ppp^u)})\big)[\pi\big(\ccc{(\ppp^u)}\big)]=  {\mu (\ccc{(z)})}\Phi^{\locsys}\big(\ccc{(\ppp^{\spinb}(\basepath)})\big)[\pi\big(\ccc{(\ppp(\basepath)}\big)],
 \end{align*}
 since $\ccc(\ppp^u)$ and $\ccc(\ppp^{\spinb}(\basepath))$ differ by $\inner{\beta}{\partial \ccc(\ppp^u)}H$ modulo $2H$.
\end{proof}

\noindent Finally, given a local system $\locsys\in\Loc(\blag)$ and $V\in\loct(\blag)$ a twisted local system on $\sphsc$ such that $\spins^{\ast}V=\locsys$, we can define the non-abelianization of $\spins^{\ast}V$ to be $\spinb^{\ast}\Phi_\snetwork(V)$. This is consistent with the main body of the manuscript by \cref{prop:spintosphere}.

\printbibliography

\end{document}